
\documentclass[12pt]{UOthesis}
\usepackage{geometry}                
\geometry{letterpaper}                   
\usepackage[parfill]{parskip}    
\usepackage{graphicx}
\usepackage{amssymb}
\usepackage{epstopdf}
\usepackage{amsmath}
\usepackage{amsthm}
\usepackage{dsfont}
\usepackage{csquotes}
\usepackage{float}
\usepackage[linesnumbered,vlined]{algorithm2e}

\DeclareGraphicsRule{.tif}{png}{.png}{`convert #1 `dirname #1`/`basename #1 .tif`.png}
\newtheorem{thm}[theo]{Theorem}
\newtheorem{remark}[theo]{Remark}
\newtheorem{def2}[theo]{Definition}  
\newtheorem{algo}[theo]{Algorithm}
\newtheorem{question}[theo]{Question}

\newtheorem{problem}[theo]{Problem}
\newtheorem{conjecture}[theo]{Conjecture}

\AtEndEnvironment{defn}{\qed}

\newcommand{\F}{\mathcal{F}}
\newcommand{\G}{G_{\F}}
\newcommand\hidemath{$k$} 

\allowdisplaybreaks

\NoSignatures

\bibliographystyle{plain}

\setUOname{Andrew Wagner}         
\setUOcpryear{2019}               
\setUOtitle{Eulerian Properties of\\ Design Hypergraphs and Hypergraphs with Small Edge Cuts}  


\phd                


{
  \newtheoremstyle{remarkstyle}{\topsep}{\topsep}{\rm}{}{\bfseries}{.}{.5em}{}
  \theoremstyle{remarkstyle}

}

\begin{document}

\baselineskip=21pt
\notachapter{Abstract}
An {\em Euler tour} of a hypergraph is a closed walk that traverses every edge exactly once; if a hypergraph admits such a walk, then it is called {\em eulerian}.  Although this notion is one of the progenitors of graph theory --- dating back to the eighteenth century --- treatment of this subject has only begun on hypergraphs in the last decade.  Other authors have produced results about {\em rank-2 universal cycles} and {\em 1-overlap cycles}, which are equivalent to our definition of Euler tours.  

In contrast, an {\em Euler family} is a collection of nontrivial closed walks that jointly traverse every edge of the hypergraph exactly once and cannot be concatenated simply.  Since an Euler tour is an Euler family comprising a single walk, having an Euler family is a weaker attribute than being eulerian; we call a hypergraph {\em quasi-eulerian} if it admits an Euler family.  Due to a result of Lov{\' a}sz, it can be much easier to determine that some classes of hypergraphs are quasi-eulerian, rather than eulerian; in this thesis, we present some techniques that allow us to make the leap from quasi-eulerian to eulerian.

A {\em triple system} of order $n$ and index $\lambda$ (denoted TS($n,\lambda$)) is a 3-uniform hypergraph in which every pair of vertices lies together in exactly $\lambda$ edges.  A {\em Steiner triple system} of order $n$ is a TS($n$,1).  We first give a proof that every TS($n,\lambda$) with $\lambda\geq 2$ is eulerian.  Other authors have already shown that every such triple system is quasi-eulerian, so we modify an Euler family in order to show that an Euler tour must exist.  We then give a proof that every Steiner triple system (barring the degenerate TS(3,1)) is eulerian.  We achieve this by first constructing a near-Hamilton cycle out of some of the edges, then demonstrating that the hypergraph consisting of the remaining edges has a decomposition into closed walks in which each edge is traversed exactly once.  

In order to extend these results on triple systems, we define a type of hypergraph called an {\em $\ell$-covering $k$-hypergraph}, a $k$-uniform hypergraph in which every $\ell$-subset of the vertices lie together in at least one edge.  We generalize the techniques used earlier on TS($n,\lambda$) with $\lambda\geq 2$ and define {\em interchanging cycles}.  Such cycles allow us to transform an Euler family into another Euler family, preferably of smaller cardinality.  We first prove that all
2-covering 3-hypergraphs are eulerian by starting with an Euler family that has the minimum cardinality possible, then demonstrating that if there are two or more walks in the Euler family, then we can rework two or more of them into a single walk.  We then use this result to prove by induction that, for $k\geq 3$, all $(k-1)$-covering $k$-hypergraphs are eulerian.

We attempt to extend these results further to all $\ell$-covering $k$-hypergraphs for $\ell\geq 2$ and $k\geq 3$.  Using the same induction technique as before, we only need to give a result for 2-covering $k$-hypergraphs.  We are able to use Lov{\' a}sz's condition and some counting techniques to show that these are all quasi-eulerian.

Finally, we give some constructive results on hypergraphs with small edge cuts.  There has been analogous work by other authors on hypergraphs with small vertex cuts.  We reduce the problem of finding an Euler tour in a hypergraph to finding an Euler tour in each of the connected components of the edge-deleted subhypergraph, then show how these individual Euler tours can be concatenated.
\cleardoublepage

\notachapter{Dedications}   
I dedicate this work to Irene Watt, a lifelong friend, teacher, coach, teammate, and advocate.  It is my utmost regret that this was not completed in time for her to see it.  
\cleardoublepage

\notachapter{Acknowledgement} 
Many thanks to the University of Ottawa and the Ontario Graduate Scholarship fund for their financial support.

Most of all, thank you to my thesis supervisor, Dr. Mateja \v{S}ajna, for your many contributions to this work.
\cleardoublepage

\tableofcontents
\cleardoublepage



%
%
%
\PrintNomenclature
\cleardoublepage

%


\pagenumbering{arabic}

\chapter{Introduction}


In this thesis, we explore some necessary and sufficient conditions for a hypergraph to admit an Euler tour.  We will focus particularly on hypergraphs that come from design theory, and as such, we borrow some nomenclature from hypergraphs and some from designs.  However, both of these kinds of objects --- from our perspective --- are solidly rooted in graph theory.

The problem of finding Euler tours in graphs is older than graph theory itself.  It begins, like so many disciplines in combinatorics, with a recreational puzzle that has come to be known as ``The Bridges of K{\"o}nigsberg''\cite{HW}.  As the (perhaps apocryphal) story goes, in the eighteenth century, residents of the Prussian city of K{\"o}nigsberg became fascinated with a simple game involving its seven bridges (see Figure \ref{fig:koenigsberg}).  It is said that they would spend an idle Sunday afternoon touring around the city, trying to devise a route that would lead them over each bridge exactly once.  However, despite their efforts, nobody could find a way to do so.

\begin{figure}[ht]
\centering
\includegraphics[scale=0.12]{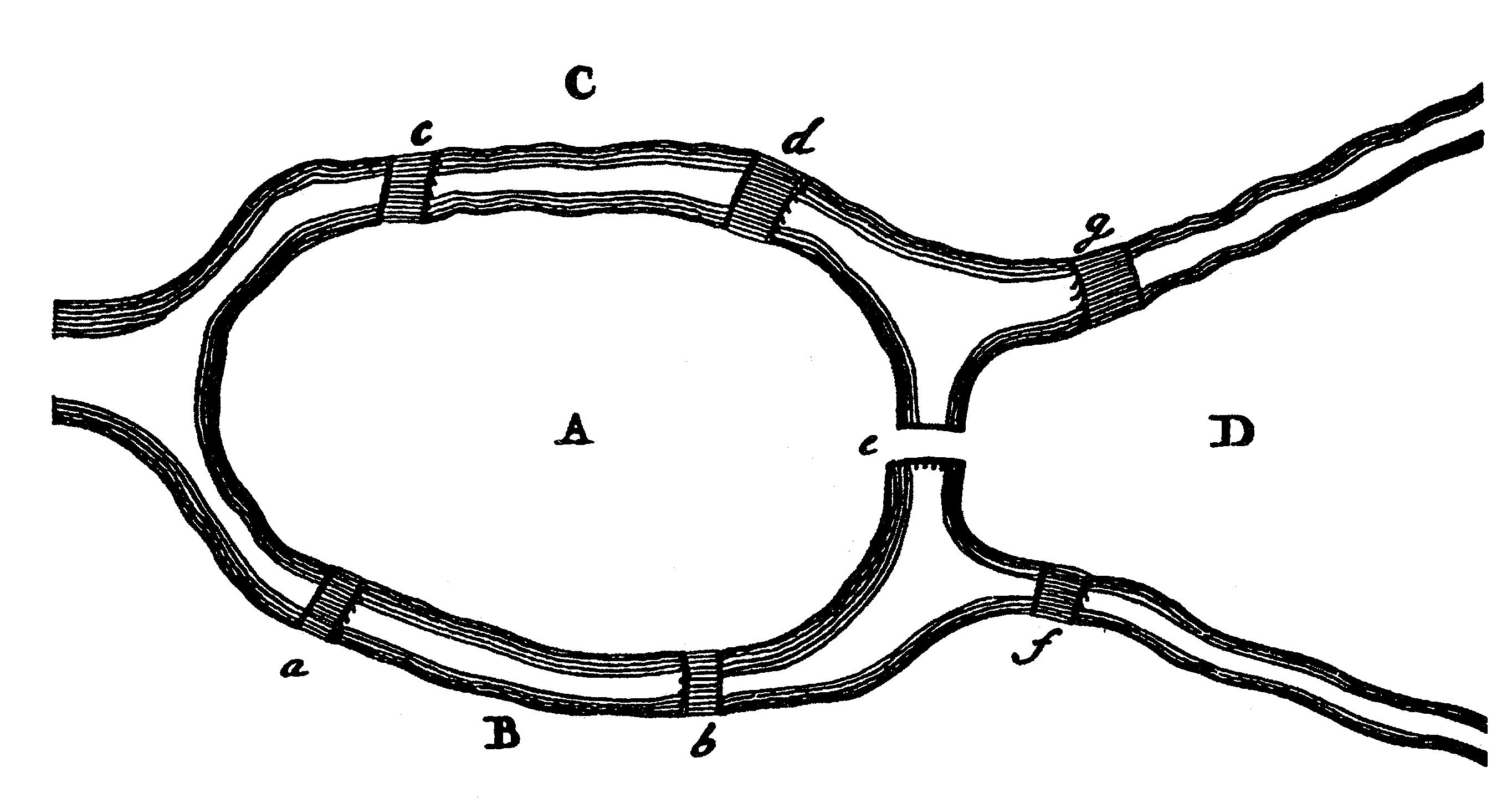}
\caption{Euler's diagram of K{\"o}nigsberg. \cite{E}\label{fig:koenigsberg}}
\end{figure}

As it happened, from 1735 to 1742, Leonhard Euler was in correspondence with Carl Leonhard Gottlieb Ehler, the mayor of what was then called Danzig (now the Polish city of Gda{\'n}sk) \cite{HW}.  From what physical evidence of this correspondence remains, we can surmise that Ehler may have introduced Euler to the K{\"o}nigsberg bridges problem, and implored him for a solution and complete proof.  Euler's response was an unequivocal rebuke:

\begin{displayquote}
{\em ``...Thus you see, most noble Sir, how this type of solution bears little relationship to mathematics, and I do not understand why you expect a mathematician to produce it, rather than anyone else, for the solution is based on reason alone, and its discovery does not depend on any mathematical principle. Because of this, I do not know why even questions which bear so little relationship to mathematics are solved more quickly by mathematicians than by others...''} \cite{HW}
\end{displayquote}

Euler gave the impression that this problem was beneath him.  However, the chronology of his other correspondences shows that even before he gave this icy rebuke to Ehler, Euler had confided in another mathematician that he had already solved it.

\begin{displayquote}
{\em ``This question is so banal, but seemed to me worthy of attention in that geometry, nor algebra, nor even the art of counting was sufficient to solve it. ... And so, after some deliberation, I obtained a simple, yet completely established, rule with whose help one can immediately decide for all examples of this kind, with any number of bridges in any arrangement, whether such a round trip is possible, or not...''} \cite{HW}
\end{displayquote}

Euler did indeed solve all problems of this kind with his method, and he invented a very primitive version of graph theory to do it, outlined in his 1736 paper \cite{E}.  His first idea was to focus on the land masses of K{\"o}nigsberg rather than the bridges directly.  He reasoned that, in a successfully constructed walk, each region would need to be visited a number of times equal to half of its bridges, rounded up.  However, with seven bridges, a walk can only visit eight regions (counting multiplicities), so one cannot round up for too many regions or a route becomes impossible.  Euler proved that, if such a walk was possible, each region had to have an even number of bridges incident with it, or else there could be exactly two with an odd number.  However, he evaded the proof of whether this is sufficient to guarantee a walk, concluding instead the following:

\begin{displayquote}
{\em ``When it has been determined that such a journey can be made, one still has to find how it should be arranged. For this I use the following rule: let those pairs of bridges which lead from one area to another be mentally removed, thereby considerably reducing the number of bridges; it is then an easy task to construct the required route across the remaining bridges, and the bridges which have been removed will not significantly alter the route found, as will become clear after a little thought. I do not therefore think it worthwhile to give any further details concerning the finding of the routes.''} \cite{HW}
\end{displayquote}

Though he was, of course, correct in that a simple greedy algorithm can solve the problem, a rigorous proof of his claims was not forthcoming until an 1871 paper, written and published posthumously by Carl Hierholzer \cite{H}.  Furthermore, it was not until 1878, in a paper of J. J. Sylvester's \cite{S}, that the word ``graph'' was uttered in any context resembling modern graph theory.  Sylvester was using graphs as a means of modeling molecules, so it did not even have anything to do with Euler's earlier work.

On the other hand, contrary to the story with graphs --- in which eulerian properties were discovered before graph theory was developed --- hypergraphs have been studied for the last few decades, yet very little work on Euler tours has cropped up.  It is our goal to make a move toward settling the matter, although it is not a simple problem and certainly not ``banal,'' as Euler put it!

This thesis is divided into broad chapters, as follows.  The proceeding chapter contains all the background information required to understand the graph theory and hypergraph theory used throughout.  We also include a (too-brief!) summary of results that pertain to Euler tours in hypergraphs, and it is there that we will outline exactly the results that we are advancing.  Each chapter thereafter represents a thorough investigation of one or more sufficient conditions to guarantee an Euler tour.  These results are, in some cases, related to one another, but we attempt to present them so that they may be read and appreciated independently of the others.
\cleardoublepage

\chapter{Preliminaries}

\section{Graphs}
We use Bondy and Murty's text \cite{BM} as a starting point for our graph theory definitions.  We will be careful to point out where our definitions diverge from theirs.\\

\begin{defn} {\rm {\bf (The Basics)} A {\em graph} $G=(V,E)$ is an ordered pair in which $V$ is a non-empty finite set of objects called {\em vertices}, and $E$ is a finite set of objects called {\em edges,} with $V\cap E=\emptyset$.  In the event that these sets are not named, we can also refer to the vertex set of $G$ by $V(G)$ and the edge set of $G$ by $E(G)$.

$G$ is also equipped with an {\em incidence function} $\psi$ that associates with each edge an unordered pair of (not necessarily distinct) vertices.  Let $e\in E(G)$ and suppose $\psi(e)=\{u,v\}$.  It is common to omit reference to the incidence function and instead say that $e = \{u,v\}$ if this does not cause ambiguity --- usually, this is when $\psi$ is injective.  If $\psi$ is not injective, then referring to $\psi(E)$ simply by $E$ results in $E$ being a multiset.  We generally will also omit the set braces in edges and write $e=uv$ instead of $e=\{u,v\}.$

If $e=uv$, then we say that $e$ is {\em incident with} $u$ and $v$ and that $e$ {\em joins} the vertices $u$ and $v$, which are also called the {\em ends} of $e$.  We call $u$ and $v$ {\em adjacent} ({\em via} edge $e$), or {\em neighbours}, since there is an edge joining them.  We may also call two edges {\em adjacent} if they have a vertex in common.
}
\end{defn}

Euler certainly did not invent the concept of graphs in his proof of the K{\"o}nigsberg bridge problem, but he had the same idea.  In his paper \cite{E}, the landmasses can be represented by vertices and the bridges can be represented by the edges of a graph.  Two vertices are adjacent if and only if the corresponding landmasses have a bridge from one to the other.  This can be represented visually as in Figure~\ref{fig:graph}.

\begin{figure}[ht]
\centering
\includegraphics[scale=0.12]{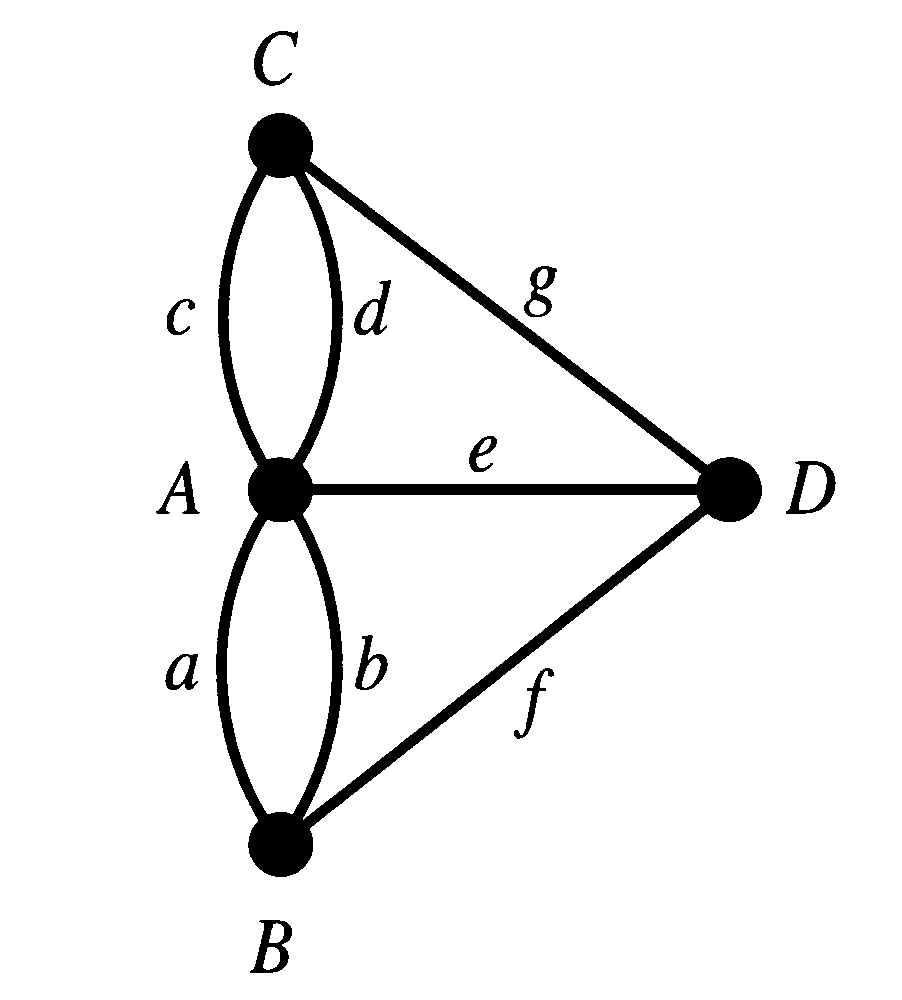}
\caption{A graph representation of K{\"o}nigsberg. \cite{HW} \label{fig:graph}}
\end{figure}

The {\em order} of a graph is its number of vertices, and the {\em size} of a graph is its number of edges.  A graph that has no edges is called {\em empty} or {\em edgeless}, while a graph with just a single vertex (but any number of edges) is called {\em trivial}.

An edge is called a {\em loop} if both of its ends are the same vertex, while two distinct edges are called {\em parallel} if they have the same ends.  Note that a vertex is not adjacent to itself unless it is in a loop.  A graph that has no loops and no parallel edges is called {\em simple}, whereas a graph that explicitly allows loops and parallel edges is called a {\em multigraph}.

The {\em degree} of a vertex $v$ in $G$, denoted $\deg_G(v)$ (or simply $\deg(v)$ if it is clear that we are talking about $G$), is the number of edges incident with $v$, although each loop counts as two incident edges, not one.  If every vertex of $G$ has the same degree $k$, then we call $G$ {\em $k$-regular}.  If every vertex of $G$ has even degree, then we call $G$ {\em even}.

One of the most rudimentary results in graph theory is called the Handshaking Lemma.  Euler implicitly discovered this result when he made his counting argument about a possible tour around K{\"o}nigsberg.\\

\begin{lem}\label{lem:handshaking}{\bf Handshaking Lemma} Let $G=(V,E)$ be a graph.  Then $$\sum\limits_{v\in V}\deg(v) = 2|E|.$$
\end{lem}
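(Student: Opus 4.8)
The plan is to prove the Handshaking Lemma by double counting the set of incidences between vertices and edges. Consider the set $I = \{(v,e) : v \in V,\ e \in E,\ v \text{ is an end of } e\}$, where a loop $e = vv$ contributes the incidence $(v,e)$ with multiplicity two. I will count $|I|$ in two ways, once by summing over vertices and once by summing over edges.

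First I would fix a vertex $v$ and count how many pairs of $I$ have $v$ as their first coordinate. By the definition of degree given in the excerpt --- the number of edges incident with $v$, with each loop at $v$ counted twice --- this count is exactly $\deg(v)$. Summing over all $v \in V$ therefore gives $|I| = \sum_{v \in V} \deg(v)$.

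Next I would fix an edge $e$ and count how many pairs of $I$ have $e$ as their second coordinate. Since $\psi(e)$ is an unordered pair of (not necessarily distinct) vertices, there are two cases: if $e = uv$ with $u \neq v$, then $e$ contributes the two pairs $(u,e)$ and $(v,e)$; if $e = vv$ is a loop, then by our counting convention $e$ again contributes two (the pair $(v,e)$ taken with multiplicity two). In either case $e$ accounts for exactly $2$ elements of $I$, so summing over all $e \in E$ gives $|I| = 2|E|$.

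Equating the two expressions for $|I|$ yields $\sum_{v \in V} \deg(v) = 2|E|$, as claimed. The only subtlety --- and the one point I would be careful to address explicitly --- is the bookkeeping for loops, ensuring that the ``count each loop twice'' convention in the definition of degree is matched by counting the loop's single incidence with multiplicity two on the edge side; everything else is a routine double-counting argument.
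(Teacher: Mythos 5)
Your proof is correct and uses essentially the same double-counting argument as the paper: counting vertex--edge incidences once by vertices (giving $\sum_{v\in V}\deg(v)$) and once by edges (giving $2|E|$). Your explicit bookkeeping for loops is in fact slightly more careful than the paper's version, which simply asserts each edge is incident with exactly two vertices and relies on the degree convention to cover loops.
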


\begin{proof} We sum the degrees of each vertex of $G$ in two ways.  Summing over all $v\in V$ yields the quantity on the left-hand side.  On the other hand, each edge contributes 2 to $\sum\deg(v)$ as it is incident with exactly two vertices; therefore, summing the degrees of the ends of an edge $e$ over all $e\in E$ yields $2|E|$.
\end{proof}

\begin{defn}{\rm {\bf (Subgraphs)} Let $G=(V,E)$ be a graph.  A {\em subgraph of $G$} is a graph $G'=(V',E')$ with $V'\subseteq V$ and $E'\subseteq E$.  For any $V'\subseteq V$, we can define the subgraph of $G$ {\em induced by $V'$} as $G[V']=(V',\{e\in E: e\subseteq V'\})$.
}
\end{defn}

\begin{defn}{\rm {\bf (Bipartite Graphs)} $G$ is called {\em bipartite} if $V$ partitions into two sets $X$ and $Y$ such that every $e\in E$ has one end in $X$ and the other end in $Y$.  If this is the case, then $\{X,Y\}$ is called a {\em bipartition} of $G$.
}
\end{defn}

\begin{defn}{\rm {\bf (Operations on Graphs)} Let $G=(V,E)$ be a graph.  For $V'\subsetneq V$, the {\em vertex-deleted subgraph}, obtained by {\em deleting} $V'$ from $G$, is defined as $G-V'= G[V\setminus V']$.  If $V'$ consists of a single vertex $v$, then we may also write $G-v$ to denote $G-\{v\}$.  For $E'\subseteq E$, the {\em edge-deleted subgraph}, obtained by {\em deleting} $E'$ from $G$, is defined as $G\setminus E' = (V, E\setminus E')$.  If $E'$ comprises a single edge $e$, then we may also write $G\setminus e$ to denote $G\setminus\{e\}$.

Let $G_1=(V_1,E_1)$ and $G_2=(V_2,E_2)$ be graphs, neither of which has parallel edges.  The {\em symmetric difference} of $G_1$ and $G_2$, denoted $G_1\Delta G_2$, is the graph $(V_1\cup V_2, E_1\Delta E_2)$, where $E_1\Delta E_2$ is the symmetric difference of $E_1$ and $E_2$.
}
\end{defn}

We remark that our definition of symmetric difference of graphs $G_1$ and $G_2$ differs from that of Bondy and Murty \cite{BM}.  Their definition assumes that the vertex sets of $G_1$ and $G_2$ are the same; however, it will be more convenient for us if we allow $G_1$ and $G_2$ to have different vertex sets.\\

\begin{defn}\label{def:graphwalks}{\rm {\bf (Walks) } A {\em walk (of length $k\geq 0$),} in a graph $G$ is a sequence $W=v_0e_1v_1e_2\dotso v_{k-1}e_kv_k$ alternating between vertices $v_0,\dotso , v_k$ and edges $e_1,\dotso , e_k$, in which every pair of consecutive vertices of $W$ are adjacent via the interposing edge.  The first vertex $v_0$ and the last vertex $v_k$ are called the {\em initial} and {\em terminal} vertices of $W$, respectively, and the vertices $v_1,\dotso , v_{k-1}$ are called {\em internal} vertices of $W$.  We may be more specific by calling $W$ a $v_0v_k$-walk.  A {\em shortest} $v_0v_k$-walk is one of minimum length.  If $k=0$, then the walk consists of a single vertex with no edges and is called a {\em trivial} walk.  We say that $W$ {\em traverses} (or {\em visits}) the vertices $v_0,\dotso , v_k$ and edges $e_1,\dotso , e_k$ once for each time they appear in the sequence of $W$.\footnotemark

\footnotetext{If $v_0=v_k$, then the first and last traversal of $v_0$ count as just one.}

Note that if $G$ is a simple graph, it is not necessary to explicitly list the edges in the sequence of $W$.  Since each pair of consecutive vertices of a walk must be adjacent, the sequence of vertices alone uniquely determines the edges that are traversed.

There are a few different types of walks that $W=v_0e_1v_1e_2\dotso v_{k-1}e_kv_k$ can be:

\begin{description}
\item[(1)] If $v_0=v_k$, then $W$ is called {\em closed}; otherwise, it is {\em open}.  (In this exceptional case, the vertex $v_0$ is traversed a number of times equal to one less than the number of times it appears in the sequence of $W$.)
\item[(2)] If $e_1,\dotso , e_k$ are pairwise distinct, then $W$ is a {\em trail}.  (We may have $k=0$; if that is the case, then $W$ is a {\em trivial} trail.)
\item[(3)] If $v_0,v_1,\dotso , v_k$ are pairwise distinct, then $W$ is a {\em path}.
\item[(4)] If $e_1,\dotso , e_k$ and $v_0,v_1,\dotso , v_{k-1}$ are pairwise distinct with $k\geq 1,$ and $W$ is closed, then $W$ is a {\em cycle}.  
\end{description}

A cycle of length $k$ is often called a {\em $k$-cycle}, and a cycle that traverses every vertex of the graph is called a {\em Hamilton cycle}.

Any (contiguous) subsequence of a walk $W$ that is itself a walk can be called a {\em subwalk} of $W$.  Likewise, if $W$ is a path or trail, a subwalk of $W$ can be called a {\em subpath} or {\em subtrail} of $W$, respectively.

We can associate with any trail $T$ a graph $G(T)$ whose vertex and edge sets are equal to the sets of vertices and edges it traverses, respectively.  Note that, though vertices may be traversed more than once by $T$, they nevertheless appear only once in $G(T)$. 
}
\end{defn}




\begin{defn}{\rm {\bf (Connectivity)} A graph $G=(V,E)$ is called {\em connected} if, for all pairs of vertices $u,v\in V$, there exists a $uv$-walk in $G$.  Similarly, we may call two vertices $u$ and $v$ {\em connected (in $G$)} if $G$ admits a $uv$-walk.  
A {\em connected component} of $G$ is a maximal connected subgraph of $G$.  It is simple to see that connectedness is an equivalence relation on $V$, and hence the vertex sets of all the connected components of $G$ partition $V$.  The number of connected components of $G$ is denoted by $c(G)$.

A vertex $v\in V(G)$ is a {\em cut vertex} of $G$ if $c(G-v) > c(G)$; that is, if the deletion of $v$ increases the number of connected components.  Likewise, an edge $e\in E(G)$ is a {\em cut edge} of $G$ if $c(G\setminus e) > c(G)$.

A graph $G$ is called {\em $2$-edge-connected} if it is connected, non-trivial, and has no cut edge.

If $\emptyset\neq S\subsetneq V(G)$, then $[S,\overline{S}] = \{e\in E(G): e\cap S\neq\emptyset, e\cap\overline{S}\neq\emptyset\}$ is called an {\em edge cut} of $G$, where $\overline{S}=V(G)\setminus S$ denotes the complement of $S$ in $V(G)$.  
}
\end{defn}

\begin{remark}{\rm
It is easy to see that, for any two vertices $u,v$ in a graph $G$, that a $uv$-walk exists if and only if a $uv$-path exists.  To construct a $uv$-path from a $uv$-walk $W$, we need only delete any closed subwalks of $W$, so that the path does not traverse any vertex twice.  On the other hand, a $uv$-path is already a $uv$-walk.

Using this fact, we can say that $G$ is connected if and only if it admits a $uv$-{\em path}, for any $u,v\in V(G)$.
}
\end{remark}

We can now make a simple observation about connectivity in edge-deleted subgraphs that will come in handy later.\\

\begin{prop}\label{prop:G-uv} Let $G$ be a connected graph and $uv\in E(G)$.  If $G\setminus uv$ admits a $uv$-walk, then $G\setminus uv$ is connected.
\end{prop}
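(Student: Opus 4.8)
The plan is to show that every vertex of $G\setminus uv$ is connected (in $G\setminus uv$) to $u$, which suffices for connectedness since $G\setminus uv$ has the same vertex set as $G$ and connectivity of a graph is equivalent to all vertices being connected to a fixed one. So fix an arbitrary vertex $w\in V(G\setminus uv)=V(G)$; I want to produce a $wu$-walk in $G\setminus uv$.

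First I would use that $G$ is connected to obtain a $wu$-walk $P$ in $G$ (we may take it to be a $wu$-path, by the remark immediately preceding the proposition). If $P$ does not traverse the edge $uv$, then $P$ is already a walk in $G\setminus uv$ and we are done. Otherwise, since $P$ is a path, it traverses $uv$ exactly once, and because $u$ is the terminal vertex of $P$, the edge $uv$ must be the last edge of $P$; that is, $P$ has the form $P = w \dotso v\, (uv)\, u$. Deleting this last edge (and the vertex $u$ from the end) leaves a $wv$-walk $P'$ in $G\setminus uv$.

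Now I invoke the hypothesis: $G\setminus uv$ admits a $uv$-walk, call it $Q$, which is also a $vu$-walk by reversing it. Concatenating $P'$ (a $wv$-walk in $G\setminus uv$) with the reversal of $Q$ (a $vu$-walk in $G\setminus uv$) yields a $wu$-walk in $G\setminus uv$. Hence $w$ and $u$ are connected in $G\setminus uv$. Since $w$ was arbitrary, every vertex of $G\setminus uv$ is connected to $u$, so $G\setminus uv$ is connected.

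The only mildly delicate point is the observation that, when a $wu$-\emph{path} uses the edge $uv$, that edge must be its final edge — this is exactly where we use that $P$ is a path (so $u$ appears only at the end) rather than merely a walk. Everything else is routine concatenation and reversal of walks, so I do not anticipate a real obstacle here.
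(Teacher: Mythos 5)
Your proof is correct and uses essentially the same idea as the paper: take a path in $G$ between the relevant vertices and, if it traverses the deleted edge $uv$, reroute through the hypothesized $uv$-walk in $G\setminus uv$. The only cosmetic difference is that you reduce to connecting every vertex to the fixed vertex $u$ (so that $uv$ is forced to be the final edge of your path and the walk is appended at the end), whereas the paper takes two arbitrary vertices and splices the $uv$-walk into the middle of the path; both steps are sound.
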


\begin{proof} Let $x, y\in V(G\setminus uv).$  Since $G$ is connected, there exists an $xy$-path $P$ in $G$.  If $P$ does not traverse the edge $uv$, then $P$ is an $xy$-walk in $G\setminus uv$.  Otherwise, without loss of generality, write $P=xP_1uvP_2y$, for some subpaths $P_1, P_2$ of $W$ that do not traverse $uv$.  Since we assume $G\setminus uv$ admits a $uv$-walk, let $W$ be a $uv$-walk in $G\setminus uv$.  Then $xP_1uWvP_2y$ is an $xy$-walk in $G\setminus uv$.  

Therefore, we conclude that $G\setminus uv$ is connected.
\end{proof}
Euler had a particular kind of walk in mind when he was considering the bridges of K{\"o}nigsberg.  He was looking for a trail that traversed every edge exactly once, as this corresponds to crossing every bridge exactly once.\\

\begin{defn}{\rm {\bf (Euler Trails and Euler Tours)} Let $G$ be a graph.  A trail that traverses every edge of $G$ exactly once is called an {\em Euler trail} of $G$ if it is open, or an {\em Euler tour} of $G$ if it is closed.  If $G$ has an Euler tour, then $G$ is called {\em eulerian.}
}
\end{defn}
We note that other sources may refer to an Euler trail as an ``Eulerian trail'' while an Euler tour is often called an ``Eulerian circuit.''  We will reserve the adjectival form {\em eulerian} (with a lower-case ``e'') to describe to a graph that admits an Euler tour, but never to refer to the trail itself.

Finally, we conclude this section with a rudimentary result about eulerian graphs, which we will make repeated use of.\\

\begin{prop}\label{prop:2-edge-connected}
Let $G=(V,E)$ be a non-trivial connected eulerian graph.  Then $G$ is 2-edge-connected.
\end{prop}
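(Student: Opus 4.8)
The plan is to check the three defining conditions of 2-edge-connectedness directly. Since $G$ is connected and non-trivial by hypothesis, the only thing left to verify is that $G$ has no cut edge. I would argue by contradiction: suppose some $e=uv\in E(G)$ is a cut edge, and then use the Euler tour to exhibit a $uv$-walk in $G\setminus e$, which contradicts $e$ being a cut edge via Proposition~\ref{prop:G-uv}.

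In detail: as $G$ is eulerian, fix an Euler tour $W$ of $G$; being a closed trail, $W$ traverses $e$ exactly once. A closed walk may be re-read starting from any occurrence of one of its vertices, so after cyclically rotating $W$ I may assume it begins by traversing $e$ from $u$ to $v$, i.e.\ $W=u\,e\,v\,W'\,u$ where $W'$ is a $vu$-walk. Since $W$ is a trail, $W'$ does not traverse $e$, so $W'$ is a $vu$-walk in $G\setminus e$, and reading it gives a $uv$-walk in $G\setminus e$ (the remark on walks and paths lets us not worry about whether it repeats vertices). Proposition~\ref{prop:G-uv} then yields that $G\setminus e$ is connected, so $c(G\setminus e)=1=c(G)$ and $e$ is not a cut edge --- a contradiction. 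Hence no edge of $G$ is a cut edge, and $G$ is 2-edge-connected.

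The only point requiring care is the degenerate case where $e$ is a loop (so $u=v$): then a ``$uv$-walk'' just means a closed walk based at $u$, and the trivial walk at $u$ already shows $G\setminus e$ is connected (deleting a loop never disconnects a graph), so the argument still goes through. I would also note in passing that the Euler tour is automatically non-trivial, since a connected graph on at least two vertices has at least one edge, though only the existence of the tour (not its non-triviality) is actually used. I therefore expect this proof to be quite short; there is no substantial obstacle beyond phrasing the cyclic rotation of $W$ cleanly and invoking Proposition~\ref{prop:G-uv}.
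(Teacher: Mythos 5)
Your proof is correct and follows essentially the same route as the paper's: both extract from the Euler tour a $uv$-walk in $G\setminus e$ avoiding $e$ (you by cyclically rotating the whole tour, the paper by taking a closed subtrail through $e$ and reversing the remainder) and then invoke Proposition~\ref{prop:G-uv} to conclude $e$ is not a cut edge. Your explicit handling of the loop case is a small extra care the paper glosses over, but the argument is the same.
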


\begin{proof}
Since $G$ is non-trivial and connected, its Euler tour contains an edge, so $|E|\geq 1$.  Let $e=uv\in E$ be arbitrary, and let $T$ be an Euler tour of $G$.  Let $T'$ be a shortest closed subtrail of $T$ traversing $e$, and without loss of generality, write $T'=ueve_1v_1\dotso v_{k-1}e_ku$.  Since $T'$ is a trail, it does not traverse $e$ twice.

Then $W=ue_kv_{k-1}\dotso v_1e_1v$ does not traverse $e$ at all, so $W$ is a $uv$-walk in $G\setminus uv$.  Then we conclude that $G\setminus uv$ is connected by Proposition~\ref{prop:G-uv}.  Then $uv$ is not a cut edge, and since $uv$ was arbitrary, there are no cut edges in $G$.

Hence $G$ is 2-edge-connected.
\end{proof}

\begin{remark}\label{remark:closedtrails}{\rm
Let $T$ be a non-trivial closed trail and $G(T)$ be the graph associated with $T$.  Then $G(T)$ is connected since $T$ traverses each of its vertices, and $G(T)$ is eulerian, evidenced by the fact that $T$ is a closed trail in $G(T)$ that traverses each of its edges exactly once.  Once we start talking about hypergraphs, we will regularly use Proposition~\ref{prop:2-edge-connected} alongside this fact to show that $G(T)$ is 2-edge-connected.
}
\end{remark}

\section{Hypergraphs}

Hypergraphs, as we use them, are meant to be generalisations of graphs.  Therefore, we would like it if any definition that applies to graphs can also apply to hypergraphs.  Sometimes, there are multiple ways to extend a concept or we need to define something that has no analogy in graphs; we will turn our attention to these differences.  

There is no definitive authority on hypergraph concepts, but there are commonalities in the different texts available.  We follow the definitions put forth by Bahmanian and {\v S}ajna in \cite{BS1}, but these are by no means universal.

\subsection{The Basics}
\begin{defn}{\rm \cite{BS1} {\bf (Basic Terminology)} A {\em hypergraph} $H$ is a pair $(V, E)$ with $V\cap E=\emptyset$, where $V=V(H)$ is a non-empty finite set of {\em vertices} and $E=E(H)$ is a finite set of {\em edges}.  Each edge is associated with a subset of $V$ via the {\em incidence function} $\psi: E\rightarrow 2^V$.  As in graphs, we can often omit the incidence function and simply say that each edge $e$ is itself a subset of $V$ (given by $\psi(e)$). However, referring to $\psi(E)$ as $E$ results in $E$ being a multiset if $\psi$ is not injective, so some care is required. We may further represent each edge as a set with set braces and commas omitted: for example, we may write $e=uvw$ instead of $e=\{u,v,w\}$, so long as this does not cause confusion.  Since we regard each edge as a set, certain set-theoretic terms ({\em e.g. singleton, pair, triple, empty}) and operations (generally union and intersection) apply to them as well.

For any pair of distinct vertices $u,v\in V(H)$, we say that $u$ and $v$ are {\em adjacent} to each other ({\em via} an edge $e\in E(H)$) --- and neighbours of each other --- if they are contained in an edge $e$ together.  In that case, we also say that $e$ {\em joins} $u$ and $v$, and that $e$ is {\em incident} with each of the vertices it contains.  Two edges can also be called {\em adjacent} to each other if they are incident with a common vertex.  The {\em degree} of a vertex $v\in V(H)$, denoted $\deg_H(v)$ (or $\deg(v)$ if the choice of hypergraph is clear) is the number of edges in $H$ that are incident with $v$.

Two distinct edges $e,f\in E(H)$ are {\em parallel} if $\psi(e)=\psi(f)$.  A hypergraph is called {\em simple} if it has no parallel edges.

The set of {\em flags} of $H$, denoted $F=F(H)$, is the collection of all pairs $(v,e)$ with $v\in V, e\in E,$ and $v\in e$.  Note that if $E$ is considered a multiset, then $F$, too, is a multiset.  In that case, the multiplicity of $(v,e)$ in $F$, for each $v\in e$, is inherited from the multiplicity of $e$ in $E$.
}
\end{defn}

\begin{defn}{\rm \cite{BS1} {\bf (Properties of Hypergraphs)} Let $H=(V,E)$ be a hypergraph.  The {\em order} of $H$ is $|V|$ and is usually denoted by $n$.  The {\em size} of $H$ is the number of edges of $H$ (counting multiplicities if $E$ is considered a multiset), and is usually denoted by $|E|=m$.  If $|V|=1$, then $H$ is {\em trivial}; if $|E|=0$, then $H$ is called {\em empty} or {\em edgeless}.

If every edge of $H$ has cardinality $k$, then $H$ is called {\em $k$-uniform}.  If every vertex of $H$ has degree $r$, then $H$ is called {\em $r$-regular}.  $H$ is called {\em linear} if no two edges of $H$ have more than one vertex in common.
}
\end{defn}

\begin{defn}{\rm {\bf (Design Hypergraphs)} A {\em triple system of order $n$ and index $\lambda$}, denoted TS($n,\lambda$), is a non-empty, 3-uniform hypergraph $H$ in which every pair of vertices lie together in exactly $\lambda$ edges.  A TS($n$,1) is also called a {\em Steiner triple system}, and can be denoted STS($n$).  A TS($n$,2) is also called a {\em twofold triple system.}

More generally, an $\ell$-covering $k$-hypergraph is a non-empty, $k$-uniform hypergraph $H$ in which every $\ell$-subset of the vertex set lie together in at least one edge.  If $\ell=k-1$, then we may refer to this simply as a {\em covering $k$-hypergraph}.
}
\end{defn}

We refer to the kinds of hypergraphs defined above as ``design hypergraphs,'' since these concepts come from design theory.  We also call the set of all $\ell$-covering $k$-hypergraphs as ``covering hypergraphs.''  Observe that triple systems are instances of covering hypergraphs.  Eulerian properties of triple systems, and particularly of Steiner triple systems, have been well studied by other authors, as we will see in the following chapter.

\subsection{Subhypergraphs}

Though the notion of subgraphs is quite simple, there are many ways in which we can construct a hypergraph that is some substructure of another hypergraph.  The canonical substructure that we can construct is called a {\em subhypergraph}, but this is not always the most useful possibility.\\

\begin{defn}{\rm \cite{BS1,DPP} {\bf (Subhypergraphs and Strong Subhypergraphs)} Let $H=(V,E)$ be a hypergraph.  A {\em subhypergraph} of $H$ is a hypergraph $H'=(V', E')$ with $V'\subseteq V$ and $E'=\{e\cap V': e\in E''\}$ for some $E''\subseteq E$.  In other words, the vertex set of $H$ is restricted to $V'$, we remove vertices of $V\setminus V'$ from each edge, and then discard whichever edges originated from $E\setminus E''$.    

A subhypergraph of $H$ {\em induced} by $V'\subseteq V$ is the subhypergraph $H[V']=(V',E')$ where $E'=\{e\cap V': e\in E, e\cap V'\neq\emptyset\}$.

For any $V'\subsetneq V$, we can {\em delete} $V'$ from $H$ to obtain the {\em vertex-deleted subhypergraph} $H-V'=H[V\setminus V']$.  If $V'=\{v\}$ is singleton, then we may simply write $H-v=H[V\setminus\{v\}]$.


A subhypergraph $H'=(V', E')$ of $H$ is called {\em strong} if $V'\subseteq V$ and $E'\subseteq E$.

If $e\in E$, then $H\setminus e$ denotes the {\em edge-deleted subhypergraph} $(V, E\setminus \{e\})$.  More generally, if $E'\subseteq E$, then $H\setminus E'$ denotes the hypergraph $(V,E\setminus E')$, and such a subhypergraph is also known as a {\em partial} hypergraph of $H$. \cite{D}

A {\em spanning} subhypergraph of $H=(V,E)$ is one whose vertex set is $V$.

We next define hypergraphs obtained by adding edges to $H$.  If $e\in 2^V$, then $H+e$ is obtained from $H$ by adjoining a new copy of the edge $e$ to $E$.  Similarly, if $E'$ is a collection of subsets of $2^V$, then $H+E'$ is the hypergraph obtained from $H$ by adjoining a new copy of each edge in $E'$ to $E$.

If $H_1=(V_1,E_1)$ and $H_2=(V_2,E_2)$ are hypergraphs, then we may define their union $H_1\cup H_2=(V_1\cup V_2, E_1\cup E_2)$, in which $V_1\cup V_2$ is set union and $E_1\cup E_2$ is multiset union.
}
\end{defn}

\begin{remark}{\rm Note that definition of the vertex-induced subhypergraph $H[V']$ does {\em not} extend from the same definition on graphs.  To obtain $H[V']$ from $H$, we delete the vertices of $V(H)\setminus V'$ from $H$ and from each edge of $H$, then clean up the hypergraph by removing any resultant empty edges.  In a graph, such an operation could result in edges of cardinality 1, which is not allowed.
}
\end{remark}

\subsection{Graphs Associated with Hypergraphs}
\begin{defn}{\rm \cite{BS1} {\bf (Incidence Graph)} The {\em incidence graph} $\mathcal{G}(H)=(V_G, E_G)$ of a hypergraph $H$ is defined as follows:
\begin{itemize}
\item $V_G = V\cup E$;
\item $E_G = \{ve: (v,e)\in F(H)\}$.
\end{itemize}

In $V_G$, the vertices from $V$ are called {\em v-vertices} and the vertices from $E$ are called {\em e-vertices}. 
}
\end{defn}

\begin{remark}{\rm
For a hypergraph $H=(V,E)$, we can observe that $\mathcal{G}(H)$ is simple and bipartite with bipartition $(V,E)$.  We will later see that some subgraphs of $\mathcal{G}(H)$ correspond to subhypergraphs of or walks in $H$.
}
\end{remark}

The following definition originates from \cite{D}, but we have modified it (see below).\\

\begin{defn}\label{def:2-section}{\rm \cite{D} {\bf (2-section)} Let $H=(V,E)$ be a hypergraph with incidence function $\psi$.  The {\em 2-section} of $H$ is a graph $G$, not necessarily simple, constructed as follows:
\begin{itemize}
\item $V(G)=V(H)$; and
\item For all distinct $u,v\in V$, the multiplicity of $uv$ in $E(G)$ is $|\{e\in E(H): \{u,v\}\subseteq \psi(e)\}|$.
\end{itemize}
}
That is, we have exactly one edge $uv$ in $G$ for each edge $e\in E(H)$ that is incident with both $u$ and $v$.
\end{defn}

We will generally be interested in the definition of a 2-section that allows for multiple edges.  However, many sources ({\em e.g.} \cite{D}, \cite{V}) consider the underlying simple graph instead.  We shall call such a graph the {\em simple 2-section} (of a hypergraph $H$), and consider the non-simple version to be the default.\\

\begin{defn}\label{defn:linegraph}{\rm \cite{BS1} {\bf (Line Graph)} Let $H=(V,E)$ be a hypergraph.  The {\em line graph} (or {\em intersection graph}) of $H$, denoted $\mathcal{L}(H)$, is the (simple) graph with vertex set $E$ and edge set $\{ef: e, f\in E,\text{ $e$ and $f$ are adjacent to each other in $H$}\}.$

More generally, for any positive integer $\ell$, the {\em level-$\ell$ line graph} of $H$, denoted $\mathcal{L}_{\ell}(H)$, is the (simple) graph with vertex set $E$ and edge set $\{ef: e,f\in E,e\neq f, |e\cap f|= \ell\}$.  The {\em level-$\ell^*$ line graph} of $H$ is denoted $\mathcal{L}^*_{\ell}(H)$ and is the (simple) graph with vertex set $E$ and edge set $\{ef: e,f\in E,e\neq f, |e\cap f|\geq \ell\}$.
}
\end{defn}

\subsection{Walks}
\begin{defn}\label{def:euler}{\rm \cite{BS1} {\bf (Walks, Paths, Trails, and Cycles)} Let $H=(V,E)$ be a hypergraph.  A {\em walk (of length $k\geq 0$),} in $H$ is defined as a sequence $W=v_0e_1v_1\dotso v_{k-1}e_kv_k$ such that $v_0,\dotso ,v_k\in V$, and $e_1,\dotso e_k\in E$; for $i=1,\dotso ,k$, we have that $v_i$ is adjacent to $v_{i-1}$ via edge $e_i$.  In this case, we may call $W$ specifically a $v_0v_k$-walk.  

We say that $W$ {\em traverses} (or {\em visits}) the vertices $v_0,\dotso , v_k$ and {\em traverses} edge $e_i$ ({\em via vertices} $v_{i-1}$ and $v_i$), for all $i=1,\dotso,k$, each time they appear in the sequence of $W$.  (If $v_0=v_k$, then we count the first and last traversals of $v_0$ as just one.)  The vertices that are traversed by $W$ form the set of {\em anchors} $V(W)$ of $W$.  Any vertex that is in $e_1\cup \dotso\cup e_k$ that is not an anchor is called a {\em floater} vertex of $W$. In $W$, an {\em anchor flag} is a flag $(v_i,e_i)$ or $(v_{i-1},e_i)$ for $i=1,\dotso ,k$, and each of these flags $(v,e)$ is traversed once for each time $ve$ or $ev$ appears in the sequence of $W$.  The edges $e_1,\dotso ,e_k$ traversed by $W$ form the set $E(W)$ of edges of $W$.  

If $\mathcal{W}$ is a collection of walks, then we may say that $\mathcal{W}$ {\em traverses} a vertex $v$ (or {\em traverses} an edge $e$ {\em via vertices} $u$ and $v$) if there exists a walk $W\in\mathcal{W}$ that traverses $v$ (or traverses $e$ via $u$ and $v$, respectively).  The number of times that a vertex or edge is traversed by $\mathcal{W}$ is equal to the sum of the number of times each $W\in\mathcal{W}$ traverses that vertex or edge.

As is the case for graphs, there are a few different types that a walk $W=v_0e_1v_1\dotso v_{k-1}$ $e_kv_k$ can be, as follows:

\begin{description}
\item[(1)] If $v_0=v_k$, then $W$ is called {\em closed}; otherwise, it is {\em open}.
\item[(2a)] If $W$ does not traverse any anchor flags more than once, then it is called a {\em trail}.
\item[(2b)] If $e_1,\dotso , e_k$ are pairwise distinct, then $W$ is called a {\em strict trail}.
\item[(3a)] If $v_0,\dotso , v_k$ are pairwise distinct and $W$ does not traverse any anchor flags more than once, then $W$ is called a {\em pseudo-path}.
\item[(3b)] If $W$ is a pseudo-path and we further have that $e_1,\dotso , e_k$ are distinct, then $W$ is called a {\em path}.
\item[(4a)] If $k\geq 2$, and $v_0,\dotso , v_{k-1}$ are pairwise distinct, and $v_0=v_k$, then $W$ is called a {\em pseudo-cycle}.
\item[(4b)] If $W$ is a pseudo-cycle and we further have that $e_1,\dotso , e_k$ are distinct, then $W$ is called a {\em cycle}.
\end{description}
}
\end{defn}

\begin{remark}\label{remark:traversals}{\rm
As the reader will note, there are many more categories of walks in hypergraphs than there are for graphs.  These divergent definitions represent alternative ways of extending notions from graphs to hypergraphs.  We will generally disregard pseudo-paths, pseudo-cycles, and trails; and we will be much more interested in paths, cycles, and strict trails.  Each of these traversals corresponds to a traversal in the incidence graph.

Let $W=v_0e_1v_1\dotso v_{k-1}e_kv_k$ be a walk in a hypergraph $H$. Then $W'=v_0e_1v_1\dotso$ $v_{k-1}e_kv_k$ is a walk in the incidence graph $G$ of $H$.  (Note that, since $G$ is a simple graph, it suffices to list the vertices of $W'$.)
\begin{description}
\item[(1)] If $W$ is open (closed), and it corresponds to a traversal in $G$ as described below, then $W'$ is open (closed).
\item[(2a)] $W$ is a trail in $H$ if and only if $W'$ is a trail in $G$. \cite[Lemma 3.6]{BS1}
\item[(2b)] $W$ is a strict trail in $H$ if and only if $W'$ is a trail in $G$ that traverses each e-vertex at most once. \cite[Lemma 3.6]{BS1}
\item[(3)] $W$ is a path in $H$ if and only if $W'$ is a path in $G$. \cite[Lemma 3.6]{BS1}
\item[(4)] $W$ is a cycle in $H$ if and only if $W'$ is a cycle in $G$. \cite[Lemma 3.6]{BS1}\\
\end{description}
Note that if $W'$ is a trail, then it corresponds to a subgraph $G(W')$ of $G$.  We can also skip this intermediate step involving $W'$ and define $G_W = G(W')$, the subgraph of $G$ associated with $W$.  This will get used quite often to prove rudimentary relationships between traversals in $H$ and subgraphs of $G$ in Chapter~\ref{chapter:previousresults}.\\
}
\end{remark}

\begin{defn}{\rm {\bf (Concatenation of (Sub)walks)} If $W_1=v_0e_1v_1\dotso e_kv_k$ is a $v_0v_k$-walk in a hypergraph $H$ and $W_2=v_ke_{k+1}v_{k+1}\dotso e_{\ell-1}v_{\ell}$ is a $v_kv_{\ell}$-walk in $H$, then the {\em concatenation} of $W_1$ and $W_2$ is the $v_0v_{\ell}$-walk $v_0e_1v_1\dotso v_{k-1}e_k$ $v_ke_{k+1}v_{k+1}\dotso v_{\ell-1}e_{\ell}v_{\ell}$, denoted $W_1W_2$.
}
\end{defn}

\begin{remark}{\rm If $T_1=v_0e_1 \dotso e_kv_0$ and $T_2=u_0f_1\dotso f_{\ell}u_0$ are edge-disjoint closed strict trails of $H$ such that $v_i=u_0\in V(T_1)\cap V(T_2)$, then we may concatenate $T_1$ and $T_2$.  The concatenation of $T_1$ and $T_2$ is a strict closed trail $T=v_0e_1\dotso e_iv_iT_2v_ie_{i+1}\dotso e_k$ $v_0$.
}
\end{remark}

\subsection{Connectivity}

\begin{defn}{\rm \cite{BS1} {\bf (Connectedness)} A hypergraph $H=(V,E)$ is said to be {\em connected} if, for every $u, v\in V$, there exists a $uv$-walk.  Two vertices $u,v\in V$ are called {\em connected} if there exists a $uv$-walk in $H$.  A {\em connected component} of $H$ is a maximal connected strong subhypergraph of $H$ that does not contain any empty edges. Observe that connectedness is an equivalence relation on $V$, and so the vertex sets of each connected component of $H$ partition $V$.

The number of connected components of $H$ is denoted by $c(H)$.}
\end{defn}

\begin{defn}{\rm \cite{BS1} {\bf (Cut Edges and Cut Vertices)} Let $H$ be a hypergraph.  A {\em cut edge} of $H$ is an edge $e\in E(H)$ such that $c(H\setminus e)>c(H)$.  If $c(H\setminus e) = c(H) + |e| - 1$, then $e$ is called a {\em strong cut edge}.  A cut edge is {\em trivial} if the number of non-trivial connected components in $H\setminus e$ is the same as in $H$.

A {\em cut vertex} of $H$ is a vertex $v\in V(H)$ such that $c(H - v)>c(H)$.
}
\end{defn}

\begin{remark}\label{remark:cutedge}{\rm \cite[Lemma 3.15, Theorem 3.17]{BS1} Note that for any edge $e$ in a hypergraph $H$, we have $c(H\setminus e)\leq c(H) + |e| - 1$. If $e$ is a strong cut edge, then not only does this hold with equality, but we have that the vertices of $e$ lie in distinct connected components of $H\setminus e$.
}
\end{remark}

There is a nice relationship between the cut vertices or edges of a hypergraph and the cut vertices of its incidence graph, as follows.\\

\begin{thm}{\rm \cite[Theorem 3.23]{BS1}}\label{thm:cutvvertex} Let $H$ be a non-trivial hypergraph whose edges all have cardinality at least 2, and let $G$ be the incidence graph of $H$.  
\begin{description}
\item[(1)]For any $v\in V(H)$, we have that $v$ is a cut vertex of $H$ if and only if $v$ is a cut vertex of $G$.
\item[(2)]For any $e\in E(H)$, we have that $e$ is a cut edge of $H$ if and only if $e$ is a cut vertex of $G$.\qed
\end{description} 
\end{thm}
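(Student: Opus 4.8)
The plan is to deduce both statements from one structural fact about incidence graphs: for \emph{any} hypergraph $H'$ whose edges are all nonempty, $c(\mathcal{G}(H')) = c(H')$, via a bijection between components that is compatible with the deletions appearing in the definitions of cut vertex and cut edge.

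First I would prove that lemma. Since $\mathcal{G}(H')$ is bipartite with parts $V(H')$ and $E(H')$, any walk between two v-vertices $u,w$ alternates v-vertices and e-vertices, and reading off the v-vertices (with the interposing e-vertices as edges) shows that $u,w$ are connected in $\mathcal{G}(H')$ if and only if they are connected in $H'$; conversely a $uw$-walk in $H'$ is such an alternating walk. Moreover each e-vertex $f$ is adjacent in $\mathcal{G}(H')$ to every vertex it contains, and since $f$ is nonempty and all of its vertices are pairwise connected in $H'$ (trivially if $|f|\le 1$, via $f$ itself otherwise), the vertices of $f$ lie in a single component of $H'$; hence $f$ joins the component of $\mathcal{G}(H')$ containing that $H'$-component. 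Assigning to each component of $H'$ the set of its vertices together with the e-vertices of the edges it contains therefore gives the desired bijection, so $c(\mathcal{G}(H')) = c(H')$. Applying this to $H$ itself gives $c(\mathcal{G}(H)) = c(H)$.

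For part (1), I would check the identity $\mathcal{G}(H) - v = \mathcal{G}(H-v)$ for each $v \in V(H)$: deleting the v-vertex $v$ from $\mathcal{G}(H)$ removes $v$ and the flags at $v$, which is exactly the incidence structure of $H-v$ --- \emph{provided} no edge of $H$ is killed by removing $v$, and this is precisely what the hypothesis ``every edge has cardinality at least $2$'' guarantees (so $H-v$ keeps one e-vertex per edge of $H$). Since $H-v$ has no empty edges, the lemma gives $c(\mathcal{G}(H)-v) = c(\mathcal{G}(H-v)) = c(H-v)$, and combined with $c(\mathcal{G}(H)) = c(H)$ this yields $c(\mathcal{G}(H)-v) > c(\mathcal{G}(H))$ if and only if $c(H-v) > c(H)$, i.e.\ $v$ is a cut vertex of $\mathcal{G}(H)$ iff it is a cut vertex of $H$. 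For part (2) the analogue $\mathcal{G}(H) - e = \mathcal{G}(H\setminus e)$ holds for every e-vertex $e$ with no hypothesis needed (and $H\setminus e$ plainly has no empty edges), so the lemma gives $c(\mathcal{G}(H)-e) = c(H\setminus e)$ and the same argument shows $e$ is a cut vertex of $\mathcal{G}(H)$ iff $e$ is a cut edge of $H$.

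The only delicate point --- the main obstacle --- is the mismatch in how vertex deletion is defined on hypergraphs versus graphs: $H-v$ shrinks edges rather than discarding them, so one must confirm that this coincides with ordinary vertex deletion in $\mathcal{G}(H)$, and it is exactly here that the cardinality-$\ge 2$ assumption is essential (otherwise $\mathcal{G}(H)-v$ could acquire an isolated e-vertex absent from $\mathcal{G}(H-v)$, spoiling the count). One should also note in passing that isolated vertices of $H'$ correspond consistently to isolated v-vertices of $\mathcal{G}(H')$, and that the ``non-trivial'' hypothesis merely rules out the degenerate edgeless case; the remaining verifications are routine.
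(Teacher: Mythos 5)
Your argument is correct. Note, though, that the thesis does not actually prove this statement --- it is quoted from Bahmanian and \v{S}ajna \cite[Theorem 3.23]{BS1} with the proof omitted --- so there is no in-paper proof to measure you against; what you have written is a sound self-contained derivation. Your two ingredients are in fact strengthenings or analogues of results the thesis already quotes from the same source: your counting lemma $c(\mathcal{G}(H'))=c(H')$ is the component-count version of Theorem~\ref{thm:inc1} (connected iff connected), and the identities $\mathcal{G}(H)-v=\mathcal{G}(H-v)$ and $\mathcal{G}(H)-e=\mathcal{G}(H\setminus e)$ are exactly Lemma~\ref{thm:inc2} and \cite[Lemma 2.19]{BS1}, respectively. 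You also correctly isolate the one genuinely delicate point: hypergraph vertex deletion shrinks edges rather than discarding them, so the cardinality-at-least-2 hypothesis is what prevents $\mathcal{G}(H)-v$ from acquiring an isolated e-vertex (from a singleton edge $\{v\}$) that has no counterpart in $\mathcal{G}(H-v)$, and it likewise guarantees $H-v$ has no empty edges so your lemma applies after deletion. Two small details worth writing out in a full version: when you convert an incidence-graph walk between v-vertices into a hypergraph walk you must suppress repetitions $v_{i-1}=v_i$ (adjacency in a hypergraph is defined only for distinct vertices), and in the bijection of components you should observe that no component of $\mathcal{G}(H')$ consists solely of e-vertices, since every nonempty edge is adjacent to at least one v-vertex; both are routine.
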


\vspace{-28px}We also define {\em edge cuts} analogously to how we define them for graphs.  We will discuss edge cuts more in Chapter~\ref{chapter:edgecuts}.\\

\begin{defn}{\rm {\bf (Edge Cut)} Let $H$ be a hypergraph and let $S\subsetneq V(H)$ be non-empty.  Then $[S,\overline{S}] = \{e\in E(H): e\cap S\neq\emptyset, e\cap\overline{S}\neq\emptyset\}$ is called an {\em edge cut} of $H$, where $\overline{S}$ denotes $V(H)\setminus S$, the complement of $S$ in $V(H)$.
}
\end{defn}

\subsection{Duality}
One advantage of hypergraphs is that the set of non-empty hypergraphs is closed under duals, something that is not true of the set of graphs.\\

\begin{defn}{\rm {\bf (Hypergraph Dual)}\label{def:dual} Let $H=(V,E)$ be a hypergraph, where $V=\{v_1,\dotso ,v_n\}$ and $E=\{e_1,\dotso ,e_m\}$ with $m\geq 1$.  Then the {\em dual} hypergraph of $H$ is the hypergraph $D=(V(D),E(D))$ with incidence function $\psi$, defined as follows.

\begin{itemize}
\item $V(D)=E(H)$;
\item $E(D)=\{e_v: v\in V(H)\}$;
\item $\psi(e_v) = \{e\in V(D): e\text{ is incident with $v$ in $H$}\}$ for all $e_v\in E(D)$.
\end{itemize}

When the dual of $H$ is defined, we also refer to $H$ as the {\em primal} hypergraph.
}
\end{defn}
There are many facts about dual hypergraphs that are immediate.  First, any 2-uniform hypergraph ({\em i.e.} loopless graph) has a 2-regular dual.  In general, the dual of any $k$-uniform hypergraph is $k$-regular.  A second immediate observation is that the incidence graphs of a hypergraph and its dual are isomorphic (and obtained from each other by exchanging the roles of v-vertices and e-vertices).  

\subsection{Euler Tours and Families}

\begin{defn}{\rm \cite{BS2} {\bf (Euler Trails, Euler Tours, Euler Families)} Let $H$ be a hypergraph.  A strict trail $T$ of $H$ that traverses every edge of $H$ exactly once is called an {\em Euler trail} if $T$ is open, and an {\em Euler tour} if $T$ is closed.  A hypergraph that admits an Euler tour is called {\em eulerian}.  

An {\em Euler family} $\F$ is a collection of pairwise edge-disjoint, anchor-disjoint non-trivial strict closed trails of $H$ such that $\F$ traverses every edge of $H$ (exactly once, necessarily).  The trails of $\F$ are called the {\em components} of $\F$.  A hypergraph that admits an Euler family is called {\em quasi-eulerian}.  $\F$ is called {\em minimum} if there does not exist an Euler family of $H$ with fewer closed trails than $\F$ has.  If $|\F|\leq 1$, then we associate $\F$ with an Euler tour of $H$: the single trail of $\F$ is an Euler tour if $|\F|=1$, and any trivial walk of $H$ is an Euler tour if $|\F|=0$.

An Euler trail, Euler tour, or Euler family is called {\em spanning} if it traverses every vertex of $H$.
}
\end{defn}

To familiarise ourselves with some of these new definitions, we present a basic result to check when certain small (in size) hypergraphs can have Euler tours or families.\\

\begin{lem}\label{lem:trivialcases}
Let $H$ be a connected hypergraph.
\begin{description}
\item[(1)] If $|E(H)|=0$, then any trivial walk of $H$ is an Euler tour of $H$, and $\emptyset$ is an Euler family of $H$.
\item[(2)] If $|E(H)|=1$, then $H$ does not have an Euler family.
\end{description}
\end{lem}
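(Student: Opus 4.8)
The plan is to prove both parts by simply unwinding the relevant definitions; no machinery is needed. For part (1), I would first observe that a connected hypergraph with no edges cannot contain two distinct vertices (there would be no walk joining them), so $H$ is trivial, say $V(H)=\{v\}$. Then the trivial walk $W=v$ is closed (its initial and terminal vertices coincide) and vacuously a strict trail traversing every edge of $H$ exactly once, so $W$ is an Euler tour. Likewise, $\emptyset$ is vacuously a collection of pairwise edge-disjoint, anchor-disjoint, non-trivial strict closed trails, and it traverses every edge of $H$ vacuously since $E(H)=\emptyset$; hence $\emptyset$ is an Euler family. I would note that both assertions are in any case immediate from the convention built into the definition of Euler family for the case $|\F|=0$, so part (1) is essentially a sanity check that the conventions behave as intended.

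For part (2), I would argue by contradiction: suppose $\F$ is an Euler family of $H$ and write $E(H)=\{e\}$. Since $\F$ traverses $e$, some component $T\in\F$ traverses $e$; write $T=v_0e_1v_1\dots v_{k-1}e_kv_0$. As $T$ is non-trivial, $k\geq 1$, and as $T$ is a strict trail the edges $e_1,\dots,e_k$ are pairwise distinct; but each $e_i$ must be $e$, forcing $k=1$. Thus $T=v_0e_1v_1$ with $v_1=v_0$ (as $T$ is closed). However, for $T$ even to be a walk, $v_1$ must be adjacent to $v_0$ via $e_1$, and adjacency is only defined between distinct vertices — contradicting $v_0=v_1$. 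Hence $H$ has no Euler family.

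The argument is routine throughout; the only step requiring any care is the final one of part (2), where one must explicitly invoke the convention that adjacency (and hence an edge-traversal in a walk) requires two distinct vertices, which is precisely what rules out a closed walk of length $1$. An equally short alternative would be to pass to the incidence graph $\mathcal{G}(H)$: a non-trivial strict closed trail of $H$ corresponds to a non-trivial closed trail of $\mathcal{G}(H)$ visiting the single e-vertex $e$ at most once, which would require two distinct v-vertex neighbours of $e$ and a return path through a second e-vertex — impossible with only one edge. I would present the direct version and mention this alternative in passing.
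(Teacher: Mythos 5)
Your proposal is correct and follows essentially the same route as the paper: part (1) is the same vacuous verification of the definitions, and part (2) is the same contradiction — the unique component would have to be the length-one closed sequence $v e v$, which fails to be a walk/trail because consecutive anchors must be distinct. Your extra observation that $H$ must be trivial in part (1) and the incidence-graph alternative are harmless additions but not needed.
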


\begin{proof}
{\bf (1)} Assume $|E(H)|=0$.  Observe that a trivial walk of $H$ satisfies the definition of an Euler tour of $H$, and $\emptyset$ is a collection of anchor-disjoint, edge-disjoint closed strict trails that traverses every edge of $H$.

{\bf (2)} Assume $|E(H)|=1$, and suppose $\F$ is an Euler family of $H$.  Then $\F$ is non-empty since $H$ is non-empty, and there must be a single closed strict trail of $\F$ that traverses one edge, so it must be of the form $T=vev$.  However, this is not a trail, since consecutive anchor vertices must be different --- a contradiction.  Hence $H$ does not have an Euler family.
\end{proof}

Analogous to the explosion of definitions for traversals when moving from graphs to hypergraphs, there are multiple extensions of the term ``Euler tour'' from graphs.  In addition to the Euler tour and Euler family definitions given above, we can also define a {\em flag-traversing tour} to be a trail (not necessarily strict) that traverses every flag of the hypergraph exactly once \cite{BS2}.   Applying the hypergraph definitions of Euler tour, flag-traversing tour, and Euler family to a connected loopless graph yields essentially the same object, which we will see is not at all the case for hypergraphs.  We can demonstrate the coincidence of Euler families and Euler tours for loopless connected graphs with the following lemma.\\

\begin{lem}\label{lem:eulerfamilyingraphs} Let $G$ be a 2-uniform connected hypergraph, and let $\F$ be an Euler family of $G$.  Then $|\F|\leq 1,$ so $\F$ corresponds to an Euler tour of $G$.
\end{lem}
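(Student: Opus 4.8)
The plan is to exploit the special structure of walks in a $2$-uniform hypergraph: since every edge has exactly two vertices, a walk $W = v_0e_1v_1\dotso e_kv_k$ must satisfy $e_i = \{v_{i-1},v_i\}$ for each $i$, so that $e_1\cup\dotso\cup e_k \subseteq V(W)$. In other words, $W$ has no floater vertices, and its anchor set is exactly the union of the vertices appearing in its edges. In particular, for every closed strict trail $T$ of $G$ we have $V(T) = \bigcup_{e\in E(T)} e$. I would establish this observation first, as everything else hinges on it.

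Next I would dispose of the degenerate case. If $E(G)=\emptyset$, then $G$ has no non-trivial strict closed trail at all, so the only Euler family is $\F=\emptyset$ and $|\F|=0\leq 1$. So I may assume $E(G)\neq\emptyset$; since $G$ is $2$-uniform this forces $|V(G)|\geq 2$, and since $G$ is connected every vertex of $G$ then lies in some edge (any vertex is an endpoint of a walk of positive length).

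Now I would suppose, toward a contradiction, that $\F=\{T_1,\dotso,T_k\}$ with $k\geq 2$. Since $\F$ is an Euler family, the edge sets $E(T_1),\dotso,E(T_k)$ partition $E(G)$. Put $S_j = \bigcup_{e\in E(T_j)} e = V(T_j)$ for $j=1,\dotso,k$. Then each $S_j$ is non-empty (a non-trivial trail has an edge), the $S_j$ are pairwise disjoint (the $T_j$ are anchor-disjoint, by the definition of an Euler family), and $\bigcup_j S_j = V(G)$ (every vertex lies in some edge, and every edge lies in some $E(T_j)$). So $\{S_1,\dotso,S_k\}$ is a partition of $V(G)$ into $k\geq 2$ non-empty parts such that no edge of $G$ joins two distinct parts: if $e=\{u,v\}$ had $u\in S_i$, $v\in S_j$ with $i\neq j$, then $e\in E(T_\ell)$ for some $\ell$ would force $u,v\in S_\ell$, hence $i=\ell=j$, a contradiction. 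This contradicts the connectedness of $G$. I conclude $|\F|\leq 1$, and by the definition of an Euler family, $\F$ then corresponds to an Euler tour of $G$.

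The argument has no genuinely hard step; the only delicate point will be the book-keeping around the definitions — verifying the ``no floater'' claim carefully from the definition of a walk, and isolating the empty-edge-set hypergraph so that one may legitimately assume every vertex of $G$ is covered by an edge. (One could instead argue through the incidence graph $\mathcal{G}(G)$, where every e-vertex has degree $2$, so each component of $\F$ lifts to a closed trail using each e-vertex at most once and anchor-disjointness becomes vertex-disjointness of these trails, again disconnecting $\mathcal{G}(G)$ and hence $G$ when $k\geq 2$; but I would prefer the direct argument above since it avoids translating back and forth.)
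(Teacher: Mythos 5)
Your proof is correct and follows essentially the same route as the paper's: the paper also notes that in a 2-uniform hypergraph anchor-disjointness of the trails forces vertex-disjointness, so $k\geq 2$ edge-partitioning closed trails would disconnect $G$. You simply spell out the "no floater" observation and the empty-edge degenerate case that the paper leaves implicit.
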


\begin{proof} Let $\F=\{T_1,\dotso , T_k\}$ be an Euler family of $G$.  By definition, we have that $T_1,\dotso , T_k$ are mutually anchor-disjoint (and hence vertex-disjoint), yet jointly traverse every edge of $G$.  Hence $k\geq 2$ implies that $G$ is not connected, a contradiction.  Then $k=0$ or $k=1$, so $\F$ corresponds to an Euler tour.
\end{proof}

On the other hand, Euler tours, Euler families, and flag-traversing tours are not only quite different concepts from each other for hypergraphs, but they give rise to fundamentally different problems.  We will not be interested in flag-traversing tours at all, and likewise we will find no use for (non-strict) trails.  In subsequent chapters, we shall say {\em trails} to exclusively mean {\em strict trails}.
\cleardoublepage

\chapter{Previous Results}\label{chapter:previousresults}

\section{Euler Tours in Graphs}
We begin with one of many characterizations of eulerian graphs, first proved by Veblen in 1912.  A {\em cycle decomposition} of a graph $G$ is a collection $\{C_1,\dotso , C_k\}$ of cycles in $G$ such that every edge of $G$ is traversed in exactly one cycle.

Although Veblen's Theorem was proven after Carl Hierholzer proved Euler's Theorem about the Bridges of K\"{o}nigsberg, it can be used to prove Euler's Theorem quite simply.  We present original proofs of these theorems using the modern notation we have cultivated so far.\\

\begin{thm}\label{thm:veblen}{\rm (Veblen's Theorem, 1912 \cite{Veblen}) Let $G$ be a connected graph.  Then $G$ is eulerian if and only if $G$ admits a cycle decomposition.
}
\end{thm}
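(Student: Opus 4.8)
The plan is to prove both implications by induction, in each direction peeling off a single cycle. For the forward direction, suppose $G$ is connected and eulerian, and induct on $|E(G)|$. When $|E(G)|=0$ the empty collection vacuously decomposes $G$ into cycles, so assume $|E(G)|\ge 1$ and fix an Euler tour $T=v_0e_1v_1\dotso e_rv_0$. First I would extract a cycle: let $i$ be the least index with $v_i\in\{v_0,\dotso ,v_{i-1}\}$ (such an $i$ exists since $v_r=v_0$), say $v_i=v_j$; then $C=v_je_{j+1}v_{j+1}\dotso e_iv_i$ is closed, has at least one edge, traverses pairwise distinct edges (being a subsequence of a trail), and has pairwise distinct internal vertices by the minimality of $i$, so $C$ is a cycle. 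Deleting the subwalk $C$ from $T$ (using $v_i=v_j$) leaves a closed trail $T'=v_0e_1v_1\dotso e_jv_je_{i+1}v_{i+1}\dotso e_rv_0$ that traverses exactly the edges of $G\setminus E(C)$. If $T'$ is trivial then $\{C\}$ is the desired decomposition; otherwise the associated graph $G(T')$ is connected and eulerian (Remark~\ref{remark:closedtrails}), with strictly fewer edges, so by the induction hypothesis it has a cycle decomposition $\{C_1,\dotso ,C_m\}$, and $\{C,C_1,\dotso ,C_m\}$ decomposes $G$ since $E(C_1),\dotso ,E(C_m)$ partition $E(G)\setminus E(C)$.

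For the converse, suppose $G$ is connected with cycle decomposition $\{C_1,\dotso ,C_k\}$, and induct on $k$. If $k=0$ then $G$ is edgeless, hence trivial, and a trivial walk is an Euler tour. If $k\ge 1$, remove $E(C_1)$: the spanning subgraph $G'=(V(G),E(G)\setminus E(C_1))$ inherits the cycle decomposition $\{C_2,\dotso ,C_k\}$, and since each $C_i$ is connected it lies inside a single connected component of $G'$, so the non-trivial components $D_1,\dotso ,D_m$ of $G'$ each inherit a cycle decomposition of size at most $k-1$; by the induction hypothesis each $D_t$ is eulerian, say with Euler tour $T_t$. The key remaining step is to merge $C_1,T_1,\dotso ,T_m$ into one closed trail. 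I would first argue that every $D_t$ meets $C_1$ in a vertex: otherwise no edge of any cycle could have exactly one end in $V(D_t)$, so $V(D_t)$ would be a union of connected components of $G$, contradicting connectivity of $G$ since $V(C_1)$ is nonempty and disjoint from $V(D_t)$. Choosing $v_t\in V(C_1)\cap V(D_t)$, I then concatenate: starting from the closed trail $C_1$, for $t=1,\dotso ,m$ splice $T_t$ into the current closed trail at $v_t$ (which is still an anchor, since all edges of $C_1$ are present throughout). The edge sets being pairwise disjoint, each splice yields a closed trail, and the final one has edge set all of $E(G)$, i.e., an Euler tour of $G$.

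The step I expect to be the main obstacle is the merging step in the converse: ensuring the pieces can actually be glued together, which is precisely where connectivity of $G$ enters, through the claim that each component of $G\setminus E(C_1)$ shares a vertex with $C_1$. In the forward direction the only point requiring care is checking that the peeled-off closed subwalk really is a cycle — no repeated internal vertex, no repeated edge, and at least one edge — which is handled by taking the \emph{first} repeated vertex along the Euler tour.
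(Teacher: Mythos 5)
Your proof is correct. The forward direction is essentially the paper's argument: both peel a cycle off the Euler tour and induct, the only differences being cosmetic (you induct on $|E(G)|$ and cut at the \emph{first} repeated vertex, the paper inducts on the number of vertex repetitions $\ell_T$ and cuts at a closest repeated pair; either choice makes the peeled subwalk a cycle). The converse, however, is organized along a genuinely different route. The paper keeps the whole collection of cycles and iteratively concatenates two members sharing a vertex until the collection is vertex-disjoint, at which point it is an Euler family and Lemma~\ref{lem:eulerfamilyingraphs} (plus a short case analysis for loops) forces a single trail; the termination of that merging process is left somewhat terse (``repeatedly applying this argument\dots''). You instead induct on the number $k$ of cycles: delete one cycle $C_1$, observe that the remaining cycles decompose each non-trivial component of $G\setminus E(C_1)$, obtain Euler tours of those components by induction, and splice them into $C_1$ after proving explicitly --- via connectivity of $G$ --- that every such component shares a vertex with $C_1$. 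What your version buys is that the gluing step is made fully explicit and self-contained (the connectivity claim replaces both the appeal to Lemma~\ref{lem:eulerfamilyingraphs} and the unquantified ``repeat until done'' step), and loops need no separate treatment since a loop is just a $1$-cycle in your induction; what the paper's version buys is brevity, since the Euler family machinery is already available there. Your use of Remark~\ref{remark:closedtrails} to see that $G(T')$ is connected and eulerian in the forward direction is also exactly how the paper justifies the analogous step.
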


\begin{proof}
$\Rightarrow$: Let $T = v_0e_1v_1\dotso v_{m-1}e_mv_0$ be an Euler tour of $G$.  For any closed trail $W=u_0f_1u_1\dotso u_{r-1}e_ru_0$, define a parameter $\ell_W = r - |\{u_0,\dotso , u_{r-1}\}|$; that is, $\ell_W$ is the number of indices $i\in\{0,\dotso , r-1\}$ such that $u_i=u_j$ for some $j<i$.  Let $G(T)$ be the graph associated with $T$.  We will prove by induction on $\ell_T$ that $G(T)$ has a cycle decomposition.

If $\ell_T=0$, then $T$ does not traverse the same vertex twice, so it is itself either a cycle, or trivial.  In the former case, we can see that $\{T\}$ is a cycle decomposition of $G(T)$; in the latter, the empty set is a cycle decomposition of $G(T)$.

Now assume that for some $k\geq 0$, if $\ell_T\leq k$, then $G(T)$ has a cycle decomposition.

Suppose that $\ell_T=k+1$.  Then there is a vertex of $T$ that is traversed twice.  Let $i,j\in\{0,\dotso, m-1\}$ be indices with $i<j$ and $v_i=v_j$, such that $j-i$ is as small as possible.

Define $C=v_ie_{i+1}v_{i+1}\dotso e_jv_j$. If two vertices among $v_i,v_{i+1},\dotso , v_{j-1}$ are the same, then the minimality of $j-i$ is violated, contradicting our stipulation on $i$ and $j$.  Hence we may assume that $C$ is a cycle.

Let $T'=v_0e_1\dotso v_ie_{j+1}v_{j+1}\dotso v_{m-1}e_mv_0$ be a closed trail of $G(T)\setminus E(C)$ obtained by deleting the cycle $C$ from $T$.  Then $T'$ has at least one less repetition of vertices in its sequence than $T$ does, so $\ell_{T'}\leq k$.  By the induction hypothesis, we can decompose $G(T)\setminus E(C)$ into cycles $C_1,\dotso , C_r.$  Then $\{C,C_1,\dotso , C_r\}$ is a decomposition of $G(T)$ into cycles.  Since $T$ traverses every edge of $G$ exactly once, this gives a cycle decomposition of $G$.

By strong induction on $\ell_T$, we have shown that any Euler tour $T$ gives rise to a cycle decomposition of $G$.

$\Leftarrow$: Let $\mathcal{C}=\{C_1,\dotso , C_k\}$ be a cycle decomposition of $G$.  Since every edge of $G$ is traversed in exactly one cycle, and all cycles are closed trails, we have that either $\mathcal{C}$ is an Euler family of $G$; or else two of the cycles have a vertex in common.

{\sc Case 1: $\mathcal{C}$ is an Euler family of $G$.}  If $G$ is loopless, then Lemma~\ref{lem:eulerfamilyingraphs} implies that $G$ is eulerian, which completes the proof.  Hence we assume $G$ has some loop $C_i\in\mathcal{C}$.  If $k=1$, then $C_i=C_1$ is an Euler tour of $G$.  Hence we may assume $k\geq 2$.  Since $G$ is connected, we cannot have that $C_i$ traverses an isolated vertex, so there exists a different cycle $C_j\in\mathcal{C}$ that traverses the same vertex that $C_i$ does.  But then $\mathcal{C}$ is not vertex-disjoint, so it cannot be an Euler family, contradicting our assumption.

{\sc Case 2: there exist $C_i, C_j\in\mathcal{C}$, with $i\neq j$, such that $C_i$ and $C_j$ traverse a common vertex.} We can concatenate $C_i$ and $C_j$ as trails and obtain a smaller collection of closed trails.  Repeatedly applying this argument to the new collection of closed trails will eventually yield an Euler family of $G$, which demonstrates that $G$ is eulerian.
\end{proof}

Unfortunately, a cycle decomposition in a hypergraph is not enough to guarantee that the hypergraph is eulerian, only quasi-eulerian --- we will explore more on this subject later.  For graphs, Veblen's result is quite useful, and we will use it to easily prove the most well-known characterization of eulerian graphs: the one conjectured by Euler himself.  A modern statement of the theorem is presented in a tome by Fleischner \cite{F1}, and we present our proof that leans on Veblen's Theorem.\\

\begin{thm}\label{thm:eulertour}{\rm \cite[Theorem IV.1]{F1}} Let $G$ be a connected graph.  Then $G$ has an Euler tour if and only if $G$ is even.
\end{thm}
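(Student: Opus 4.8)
The plan is to prove both directions using Veblen's Theorem (Theorem~\ref{thm:veblen}) as the bridge, so that we never have to construct an Euler tour directly.

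For the forward direction, suppose $G$ is connected and eulerian. By Veblen's Theorem, $G$ admits a cycle decomposition $\{C_1,\dotsc,C_k\}$. I would then compute $\deg_G(v)$ by summing the contributions of each cycle: since a cycle either avoids $v$ entirely or passes through $v$ some number of times, each traversal of $v$ by a cycle contributes exactly $2$ to $\deg_G(v)$ (the two edges of that cycle incident with $v$ at that traversal), and because the cycles partition $E(G)$, every edge incident with $v$ is counted exactly once this way. Hence $\deg_G(v)$ is a sum of $2$'s, so it is even; since $v$ was arbitrary, $G$ is even.

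For the converse, suppose $G$ is connected and even. The strategy is to build a cycle decomposition and invoke Veblen's Theorem. I would argue by induction on $|E(G)|$. If $|E(G)|=0$ there is nothing to do (the empty set is a cycle decomposition). Otherwise, because $G$ is even and has an edge, every vertex incident with an edge has degree at least $2$, so $G$ has no vertex of degree $1$; starting from any edge and greedily extending a trail, we must eventually revisit a vertex, and extracting the segment between two visits to the same vertex yields a cycle $C$ in $G$ (being slightly careful: either we find a non-loop cycle of length $\geq 2$, or an edge is a loop, which is itself a cycle). Then $G\setminus E(C)$ is again even (each vertex of $C$ loses exactly $2$ from its degree, loops handled by the degree convention) and has strictly fewer edges. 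Applying the induction hypothesis to each connected component of $G\setminus E(C)$ — each of which is connected and even — gives a cycle decomposition of $G\setminus E(C)$, and adjoining $C$ gives a cycle decomposition of $G$. By Veblen's Theorem, $G$ is eulerian.

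The main obstacle is the bookkeeping around loops in the converse: a loop at $v$ contributes $2$ to $\deg_G(v)$, so removing it preserves evenness, but one must phrase the "greedily extend a trail" step so that a loop counts as a length-one closed walk and hence qualifies as (the trace of) a cycle under Definition~\ref{def:graphwalks}(4), which requires $k\geq 1$. A cleaner alternative that sidesteps the explicit trail-extension is to observe that in an even graph with an edge, either some edge is a loop (take $C$ to be that loop) or, by the handshaking-type argument, every component with an edge contains a cycle because a maximal path cannot have both endpoints of degree $\geq 2$ unless it closes up; I would present whichever version keeps the loop case shortest.
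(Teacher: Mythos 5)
Your proposal is correct and follows essentially the same route as the paper: the converse is proved by inductively building a cycle decomposition of the even graph and then invoking Veblen's Theorem~\ref{thm:veblen}, exactly as the paper does (the paper peels off a longest closed trail rather than a single cycle at each step, which is only a cosmetic difference, and your handling of loops and of evenness after deleting a cycle is sound). Your forward direction routes through Veblen's cycle decomposition where the paper counts the two edge-contributions at each visit of the Euler tour directly; both are valid, and there is no circularity since Veblen's Theorem is proved independently of this statement.
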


\begin{proof} $\Rightarrow:$ Let $T=v_0e_0v_1\dotso e_{i-1}v_ie_i\dotso v_{m-1}e_{m-1}v_0$ be an Euler tour of $G$.  Then, for each $i\in\mathds{Z}_m$, note that $e_{i-1}$ and $e_i$ each contribute 1 to the degree of $v_i$; this total contribution is even.  Since every edge of $G$ appears exactly once in $T$ and the degree of each of its ends is counted in this fashion, every vertex of $G$ must have even degree.

$\Leftarrow:$ We will first prove that every even graph of size $m$ has a cycle decomposition by strong induction on $m$.

Let $G'$ be an even graph of size $m$.  If $m=0$, then $G'$ has an empty cycle decomposition.

Now assume that there exists some $k\geq 0$ such that if $m\leq k$, then $G'$ has a cycle decomposition.

Suppose $m=k+1$.  Fix some $u\in V(G')$ that is not isolated, and let $T$ be a longest trail in $G'$ whose initial vertex is $u$.

Suppose the terminal vertex of $T$ is some $v\neq u$.  Then the number of edges incident with $v$ traversed by $T$ must be odd: one edge that is the last edge traversed by $T$, and two edges for each time $T$ visits $v$ earlier in the sequence.

Since $v$ has even degree in $G'$, it has another incident edge that is not traversed by $T$, so we can form a longer trail by appending this edge to $T$, contradicting the assumption that $T$ is as long as possible.  Hence $T$ is closed.

If $T$ traverses every edge of $G'$, then $T$ is an Euler tour of $G'$, and so $G'$ has just one non-trivial connected component.  This connected component has a cycle decomposition by Theorem~\ref{thm:veblen}, which serves as a cycle decomposition for $G'$ since they have the same edge set.  

Otherwise, let $G(T)$ denote the graph associated with $T$.  We know that $T$ has length no greater than $k$, so $G(T)$ and $G'\setminus E(G(T))$ are both graphs of size at most $k$.  Since we have already proven that all connected eulerian graphs are even (in the forward direction of this proof), and $G(T)$ is connected and eulerian, we conclude that $G(T)$ is even.  Then, since $G'$ is also even, we have that $G'\setminus E(G(T))$ is even as well.  Hence $G(T)$ and $G'\setminus E(G(T))$ each satisfy the conditions of the induction hypothesis.  Applying the induction hypothesis yields cycle decompositions $\mathcal{C}_1$ of $G(T)$ and $\mathcal{C}_2$ of $G'\setminus E(G(T))$.

Since $G(T)$ and $G'\setminus E(G(T))$ are edge-disjoint, so are their cycle decompositions.  Hence $\mathcal{C}_1\cup\mathcal{C}_2$ is a cycle decomposition of $G'$.

In both cases, we have concluded that $G'$ has a cycle decomposition, so by induction, every even graph has a cycle decomposition.  In particular, we have that $G$ has a cycle decomposition.  By Veblen's Theorem~\ref{thm:veblen}, since $G$ is connected and has a cycle decomposition, we conclude that $G$ is eulerian.
\end{proof}

Interestingly, Fleischner defines ``eulerian'' for a connected graph to mean ``even,'' which is of course, by this theorem, equivalent to admitting an Euler tour.  

There are also a number of parity results on eulerian graphs that can be found in \cite{F3}, of which we present just one.\\

\begin{thm}\label{thm:oddnumberofcycles}{\rm \cite{M}} Let $G$ be a graph.  Then every vertex of $G$ has even degree if and only if every edge of $G$ is contained in an odd number of cycles (as subgraphs).\qed
\end{thm}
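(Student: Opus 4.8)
The plan rests on a single reduction: for a fixed edge $e=uv$, a cycle of $G$ containing $e$ is exactly the union of $e$ with a $uv$-path of $G\setminus e$, and this correspondence is a bijection, so if $c(e)$ denotes the number of cycles of $G$ through $e$ then $c(e)=\#\{uv\text{-paths in }G\setminus e\}$. For the direction ``every edge in an odd number of cycles $\Rightarrow$ $G$ even'' I would argue directly. Fix $w\in V(G)$ and double-count the pairs consisting of a cycle $C$ together with an edge of $C$ incident with $w$: counting by cycles gives $\sum_{C\ni w}2\equiv 0\pmod 2$, while counting by edges gives $\sum_{f\ni w}c(f)$, a sum of $\deg(w)$ odd integers, hence $\equiv\deg(w)\pmod 2$. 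So every degree is even. (Equivalently: the vector $z\in\mathbb{F}_2^{E(G)}$ with $z_e\equiv c(e)$ always lies in the cycle space of $G$, and the hypothesis says $z=\mathbf 1$.)

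For the converse, assume $G$ is even. Then $G$ has no cut edge, since by the Handshaking Lemma (Lemma~\ref{lem:handshaking}) one side of a cut would contain exactly one odd-degree vertex; hence for each $e=uv$ the endpoints $u,v$ lie in the same component of $H:=G\setminus e$, whose odd-degree vertices are exactly $u$ and $v$. It remains to prove, by strong induction on $|E(H)|$: \emph{if $H$ is a connected multigraph whose odd-degree vertex set is $\{u,v\}$, then $\#\{uv\text{-paths in }H\}$ is odd}. If $u$ (say) is a leaf, with unique incident edge $uw$, then every $uv$-path begins $u,w,\dots$, so the count equals $\#\{wv\text{-paths in }H-u\}$; the graph $H-u$ is still connected and has odd-degree set $\{v\}\triangle\{w\}$, which is empty if $w=v$ (giving count $1$) and $\{w,v\}$ otherwise (apply the induction hypothesis). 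If $H$ has a cut edge $f=xy$, a handshaking count forces $u$ and $v$ into different components of $H\setminus f$, say $L\ni x$ and $R\ni y$; every $uv$-path uses $f$ exactly once, and the count factors as $\#\{ux\text{-paths in }L\}\cdot\#\{yv\text{-paths in }R\}$, where in each factor the two named vertices form the odd-degree set of the piece (or coincide, contributing $1$); the induction hypothesis finishes this. A cut vertex is handled the same way, decomposing $H$ along the blocks that meet it (and the two-vertex case, where $H$ is an odd bundle of parallel $uv$-edges, is immediate).

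The main obstacle is the remaining case, in which $H$ is $2$-connected with at least three vertices, so $\deg u,\deg v\geq 3$. The obvious recursion --- delete an edge at $u$ --- fails, since it makes $u$ even-degree and so leaves the class to which the lemma applies. To push through I expect one needs either (i) a strengthening of the inductive statement computing $\#\{uv\text{-paths}\}\bmod 2$ for an \emph{arbitrary} pair of vertices in an \emph{arbitrary} multigraph, retaining enough of the cycle structure that the edge-deletion recursion closes; or (ii) a structural reduction staying inside the $2$-connected class --- peeling off the last ear of an ear decomposition (contracted to a single edge), or splitting off a pair of edges at an internal even-degree vertex in a connectivity-preserving way --- together with careful bookkeeping of which $uv$-paths are created and destroyed. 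One can also recast the theorem over $\mathbb{F}_2$ as the assertion $z=\mathbf 1$, both vectors lying in the cycle space (the latter because $G$ is even); but pairing $z-\mathbf 1$ with all cycles only shows it is a bicycle, so this reformulation by itself does not close the argument. This $2$-connected case is where I expect the genuine work to lie.
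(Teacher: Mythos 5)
Your proposal is not a complete proof: the direction that actually carries the content of the theorem is left open, and you say so yourself. The easy half is fine --- the double count of pairs (cycle $C$, edge of $C$ incident with $w$) correctly gives $\deg(w)\equiv\sum_{f\ni w}c(f)\equiv\deg(w)\cdot 1\pmod 2$ when every $c(f)$ is odd, hence all degrees even. The reduction of the converse is also sound: cycles through $e=uv$ biject with $uv$-paths of $G\setminus e$, and in an even graph $u$ and $v$ lie in one component $H$ of $G\setminus e$ whose odd-degree vertices are exactly $u,v$; so everything hinges on the claim that a connected graph with odd-degree set $\{u,v\}$ has an odd number of $uv$-paths. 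But your inductive reductions (leaf at $u$ or $v$, cut edge, cut vertex, parallel-edge bundle) only dispose of degenerate configurations; when $H$ is $2$-connected with $\deg u,\deg v\geq 3$ none of them applies, and that case --- which is essentially Toida's half of the Toida--McKee theorem --- is precisely where you stop, offering only speculative strategies (a strengthened induction hypothesis for arbitrary vertex pairs, ear/splitting-off reductions, or the $\mathbb{F}_2$ cycle-space reformulation, which as you note only shows the discrepancy vector is a bicycle). Naming the obstacle accurately is not the same as overcoming it, so as it stands the "if" direction of the theorem is unproven.

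For comparison: the thesis does not prove this statement at all --- it is quoted from McKee with a \verb|\qed|, and the surrounding remark only notes that the known proof leans on the evenness of eulerian graphs before giving counterexamples to hypergraph analogues. So there is no in-paper argument your sketch could be measured against; to make your route work you would need to supply the genuinely nontrivial parity argument for the $2$-connected case (for instance, an induction on $|E(H)|$ in which an edge at an even-degree vertex is deleted or a pair of edges is split off while tracking how the $uv$-path count changes modulo $2$), or else cite Toida/McKee as the paper does.
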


\begin{remark}{\rm Note that when we count cycles, we are treating all sequences that are reversals or cyclic rotations of each other as the same cycle; that is, we are counting subgraphs isomorphic to a cycle.

The proof of Theorem~\ref{thm:oddnumberofcycles} relies heavily on the fact that an eulerian graph is even; that is, all its vertices have even degree.  This result does not extend in any meaningful way to hypergraphs, as we will show with a couple of counterexamples.

Consider the hypergraph $H_1=(V_1,E_1)$ with $V_1=\{1,2,3,4\}$ and $E_1=\{a,b\}$.  Define $a=b=1234.$  The incidence graph of $H_1$ is shown in Figure \ref{fig:oddcycles1} below.  Note that $H_1$ is eulerian with an Euler tour $1a2b1$ (among others), which corresponds to a closed trail in the incidence graph, per Remark~\ref{remark:traversals}.  However, each edge is in $\binom{4}{2}=6$ cycles, which is not an odd number.

\vspace{15px}
\begin{figure}[h]
\centerline{\includegraphics[scale=0.6]{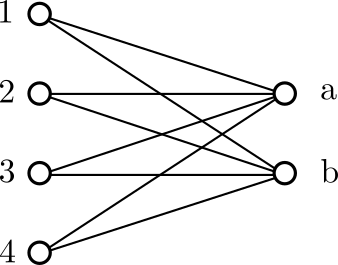}}
\caption{Incidence graph of $H_1$. \label{fig:oddcycles1}}
\end{figure}

However, perhaps our reckoning for the number of cycles needs to be different in hypergraphs.  After all, all the cycles above were, in some sense, the same: they merely traversed different anchor vertices but the edges traversed were the same.  There are a number of ways we could attempt to count cycles, so perhaps we were too na\"{i}ve with the first attempt.  We could regard two cycles as different only if the cyclic sequences of edges they traverse are different; or perhaps we could construct a hypergraph associated with a cycle that includes all the edges traversed, along with all the anchors and floaters, and count these hypergraphs instead.  Maybe then, Theorem~\ref{thm:oddnumberofcycles} will extend nicely!

Unfortunately, this is not the case either: we can dash both of these ideas with one counterexample.  Let $H_2=(V_2,E_2)$ with $V_2=\{1,2,3,4,5\}$ and $E_2=\{a,b,c,d\}.$  Define $a=123, b=234, c=345,$ and $d=15$.  The incidence graph of $H_2$ is shown in Figure \ref{fig:oddcycles2} below.  It can be observed that any cycle traversing $d$ must also traverse $a$ and $c$, and so we can count five cycles whose edge traversals are different.  They are $2a3b2, 3b4c3, 2a3c4b2, 1a3c5d1,$ and $1a2b3c5d1$, the last of which is itself an Euler tour.  Then every edge is in fact in an even number of these cycles, yet $H$ is eulerian.  Furthermore, every hypergraph that we could associate with these cycles will be different, so once again, each edge of $H$ is in an even number of those, too.

\vspace{15px}
\begin{figure}[h]
\centerline{\includegraphics[scale=0.6]{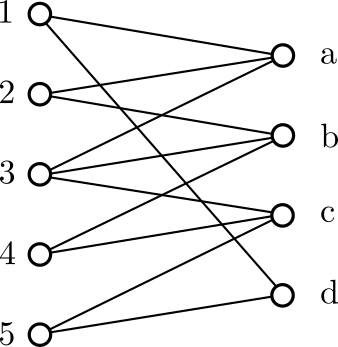}}
\caption{Incidence graph of $H_2$. \label{fig:oddcycles2}}
\end{figure}

We are left to conclude that Theorem~\ref{thm:oddnumberofcycles} does not extend to hypergraphs in any obvious way.
}
\end{remark}





\section{Euler Tours in Hypergraphs}

\subsection{Strongly Connected Hypergraphs}

One of the most advanced (and, in our estimation, the first) papers on eulerian hypergraphs is due to Lonc and Naroski \cite{LN}.  They prove a necessary and sufficient condition for a hypergraph to be eulerian on a special class of hypergraphs, defined below.\\

\begin{defn}{\rm \cite{LN} Let $H$ be a $k$-uniform hypergraph without isolated vertices.  If the level-($k-1$) line graph $\mathcal{L}_{k-1}(H)$ is connected, then we call $H$ {\em strongly connected}. }
\end{defn}

Lonc and Naroski prove the following theorem about strongly connected hypergraphs:\\

\begin{thm}\label{thm:LN2}{\rm \cite[Theorem 2]{LN}} Let $H$ be a strongly connected $k$-uniform hypergraph and let $V_{\text{odd}}(H)$ be the set of vertices of odd degree in $H$.  Then $H$ is eulerian if and only if $|V_{odd}(H)|\leq (k-2)|E(H)|. \qed$
\end{thm}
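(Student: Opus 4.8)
\emph{Necessity.} I would prove this direction by a direct counting argument, not using strong connectivity. Given an Euler tour $T=v_0e_1v_1\cdots e_mv_0$ of $H$ (so $m=|E(H)|$, the $e_i$ are distinct, $\{v_{i-1},v_i\}\subseteq e_i$, and $v_{i-1}\neq v_i$), split, for each vertex $v$, the edges incident with $v$ into those in which $v$ is an \emph{anchor} of its traversal (i.e.\ $v\in\{v_{i-1},v_i\}$) and those in which $v$ is a \emph{floater} (i.e.\ $v\in e_i\setminus\{v_{i-1},v_i\}$), with counts $\alpha(v)$ and $f(v)$; then $\deg_H(v)=\alpha(v)+f(v)$. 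Since consecutive anchors of $T$ differ, $\alpha(v)=2t(v)$, where $t(v)$ is the number of occurrences of $v$ among $v_0,\dots,v_{m-1}$; hence $\alpha(v)$ is even, $f(v)\equiv\deg_H(v)\pmod 2$, and in particular $f(v)\geq 1$ for every $v\in V_{\text{odd}}(H)$. As each $e_i$ has exactly $k-2$ floaters, $\sum_v f(v)=(k-2)m$, so $|V_{\text{odd}}(H)|\leq\sum_{v\in V_{\text{odd}}(H)}f(v)\leq(k-2)|E(H)|$.

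\emph{Sufficiency --- reduction to an ordinary Euler-tour problem.} For the converse I would first dispose of degenerate cases: $H$ has no isolated vertices so $|E(H)|\geq 1$, and $|E(H)|=1$ is impossible since a single $k$-edge has $k=|V_{\text{odd}}(H)|>k-2$; and if $k=2$ then the hypothesis forces $H$ to be even, while $H$ is connected (any two $\mathcal L_{k-1}(H)$-adjacent edges share a vertex, so strong connectivity plus the absence of isolated vertices makes $H$ connected), so $H$ is eulerian by Theorem~\ref{thm:eulertour}. Henceforth assume $k\geq 3$ and $m:=|E(H)|\geq 2$. Call a choice of a $2$-subset $A_e\in\binom{e}{2}$ for every $e\in E(H)$ an \emph{anchor assignment}; let $G^{*}$ be the multigraph on $V(H)$ with one edge $A_e$ for each hyperedge $e$, and let $G^{\circ}$ be the subgraph of $G^{*}$ induced on its non-isolated vertices. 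Then $H$ is eulerian if and only if some anchor assignment makes $G^{\circ}$ connected and even: given such an assignment, $G^{\circ}$ is connected and even, hence eulerian by Theorem~\ref{thm:eulertour}, and an Euler tour $u_0A_{e_1}u_1\cdots A_{e_m}u_0$ of $G^{\circ}$ lifts to the Euler tour $u_0e_1u_1\cdots e_mu_0$ of $H$ (the $e_i$ are pairwise distinct, so this is a strict trail); conversely the anchors of an Euler tour of $H$ provide such an assignment.

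\emph{Sufficiency --- building the anchor assignment.} It remains to construct an anchor assignment with $G^{\circ}$ even and connected, which I would do in two stages. \emph{Stage 1 (evenness).} For any anchor assignment, $\deg_{G^{*}}(v)=\deg_H(v)-f(v)$, where $f(v)$ is the number of edges of $H$ in which $v$ is a floater, so $G^{*}$ is even iff $f(v)\equiv\deg_H(v)\pmod 2$ for all $v$. Choosing floater-sets $F_e=e\setminus A_e$ of size $k-2$ realizing these parities is a degree-and-parity-constrained subgraph problem in the incidence graph of $H$; since each vertex of odd degree must lie in at least one $F_e$ while the floater-incidences total exactly $(k-2)m$, an augmenting-path argument (transporting ``parity defects'' between edges along $\mathcal L_{k-1}(H)$) shows such a choice exists precisely when $|V_{\text{odd}}(H)|\leq(k-2)m$, which is the hypothesis. \emph{Stage 2 (connectivity).} Among all even anchor assignments, take one whose support $G^{\circ}$ has the fewest components, say $r$, and suppose $r\geq 2$. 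The hyperedges are partitioned according to which component of $G^{\circ}$ contains their anchors, so since $\mathcal L_{k-1}(H)$ is connected there are hyperedges $e,e'$ in distinct components with $|e\cap e'|=k-1\geq 2$. Because $k\geq 3$ leaves a spare floater in every edge, I would re-anchor a short $\mathcal L_{k-1}(H)$-path joining the two components so that the changed anchor-sets both chain the components together and have symmetric differences that cancel in pairs, keeping every degree parity intact; this yields an even anchor assignment with fewer than $r$ support components, contradicting minimality. Hence $r=1$, and $H$ is eulerian.

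\emph{Main obstacle.} The delicate point is the interaction between the two stages: the connectivity-improving re-anchoring in Stage 2 must preserve evenness, yet a single-anchor swap, or re-anchoring just one pair of edges, generically flips some degree parities. Engineering a parity-neutral merge of two support components is the technical heart of the proof, and it is exactly here that $k\geq 3$ and, crucially, strong connectivity of $H$ (connectivity of the level-$(k-1)$ line graph, not merely of $H$) are needed, and where the threshold $(k-2)|E(H)|$ turns out to be sharp.
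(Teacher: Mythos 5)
Your necessity argument is correct, and it is essentially the remark the paper itself makes immediately after the statement (via Theorem~\ref{thm:incidencegraph}): each edge of an Euler tour has exactly $k-2$ floaters, every vertex of odd degree must be a floater in at least one edge, and the bound follows by counting floater slots. Be aware, however, that the paper contains no proof of Theorem~\ref{thm:LN2} to compare against: it is quoted from Lonc and Naroski, who obtain it from their Theorem 3 (the case $k>3$) and their substantially harder Theorem 4 (the case $k=3$, which required the exceptional class $\mathcal{H}$ and several intermediate results).

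The sufficiency half of your proposal has genuine gaps at exactly the two places where the theorem's content lives. Stage 1, the claim that an ``augmenting-path argument transporting parity defects along $\mathcal{L}_{k-1}(H)$'' produces an even anchor assignment precisely when $|V_{\text{odd}}(H)|\leq (k-2)|E(H)|$, is not a routine lemma: it is equivalent to asserting that every strongly connected $k$-uniform hypergraph satisfying the inequality is quasi-eulerian, and the elementary defect-transport you sketch does not go through as stated --- modifying the anchor pair of a single edge $e$ can flip the parities of exactly two vertices $u,w\in e$ only when exactly one of $u,w$ is currently an anchor of $e$, and for $k=3$, where each edge has a single floater, the bookkeeping is so tight that the known proof needs the structural analysis via $\mathcal{H}$ rather than a local swapping argument (the analogous parity statements in this thesis are obtained only through heavy tools such as Fleischner's theorem or Lov\'{a}sz's $(g,f)$-factor theorem). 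Stage 2 is likewise asserted rather than proved: the natural move of re-anchoring $e$ and $e'$ with $|e\cap e'|=k-1$ onto common shared vertices does join the two support components, but it flips the parities of the (up to four) displaced anchor vertices, and the compensating changes needed to restore evenness can disconnect the support again; you yourself identify this parity-neutral merge as the ``technical heart'' of the proof without supplying it. As it stands, then, the proposal is a plausible outline whose two crucial steps --- the ones where the hypothesis of strong connectivity and the threshold $(k-2)|E(H)|$ actually do their work --- are left as assertions.
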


It is perhaps not difficult to see, once we apply the upcoming Theorem~\ref{thm:incidencegraph}, that for any $k$-uniform hypergraph $H$, the inequality $|V_{odd}(H)|\leq (k-2)|E(H)|$ is necessary for $H$ to be eulerian.  We can regard the construction of an Euler tour as a deletion of some edges of the incidence graph: for each e-vertex, we must choose $k-2$ incident edges to delete, and when we are done, we need all the v-vertices to have even degree.  If there are more odd-degree v-vertices from the outset than there are edges to be deleted, then it will be impossible to end up with all v-vertices of even degree.

Furthermore, if $k=2$, then $H$ is a connected graph, and then Theorem~\ref{thm:LN2} states, as usual, that being eulerian is equivalent to being even.

Lonc and Naroski, in the process of proving Theorem~\ref{thm:LN2}, also make use of the following general numerical result.  We will have tools later in this chapter that help us prove this more simply, but for now we must make do without.\\

\begin{prop}{\rm \cite[Proposition 1]{LN}} Let $H$ be an eulerian hypergraph.  Then $$\sum_{v\in V(H)}\left\lfloor\frac{\deg(v)}{2}\right\rfloor \geq |E(H)|.$$
\end{prop}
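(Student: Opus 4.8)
The plan is to exploit the correspondence between Euler tours and subgraphs of the incidence graph that the excerpt has been setting up. Let $H$ be eulerian with Euler tour $T$, and let $G$ be the incidence graph of $H$. By Remark~\ref{remark:traversals}, $T$ corresponds to a closed trail $T'$ in $G$ that traverses each e-vertex at most once; since $T$ traverses every edge of $H$ exactly once, $T'$ in fact traverses every e-vertex exactly once and visits exactly two of the edges (in $G$) incident with each e-vertex. Thus $T'$ determines a spanning connected even subgraph $G_T$ of $G$ in which every e-vertex has degree exactly $2$. I would then count edges of $G_T$ in two ways: summing degrees over e-vertices gives $2|E(H)|$, and summing degrees over v-vertices gives $\sum_{v\in V(H)}\deg_{G_T}(v)$, so by the Handshaking Lemma applied to $G_T$,
\[
\sum_{v\in V(H)}\deg_{G_T}(v) \;=\; 2|E(H)|.
\]

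The key remaining observation is that each v-vertex $v$ has $\deg_{G_T}(v)$ even (because $G_T$ is even, being associated with a closed trail) and $\deg_{G_T}(v)\le\deg_G(v)=\deg_H(v)$. Hence $\deg_{G_T}(v)\le 2\lfloor\deg_H(v)/2\rfloor$ for every $v$. Summing this inequality over all $v\in V(H)$ and combining with the displayed identity gives
\[
2|E(H)| \;=\; \sum_{v\in V(H)}\deg_{G_T}(v) \;\le\; \sum_{v\in V(H)} 2\left\lfloor\frac{\deg_H(v)}{2}\right\rfloor,
\]
and dividing by $2$ yields the claim.

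The only real subtlety — and the step I would be most careful about — is verifying that the incidence-graph subgraph associated with $T$ genuinely has every e-vertex of degree exactly $2$ and is even at every v-vertex. That an e-vertex gets degree exactly $2$ follows because $T$ traverses each edge of $H$ exactly once and "traverses edge $e_i$ via $v_{i-1}$ and $v_i$" means precisely two incidences of $e_i$ are used; that every v-vertex has even degree in $G_T$ follows from the forward direction of Theorem~\ref{thm:eulertour} (a closed trail's associated graph is even) applied to $G_T$, or directly from the fact that $T'$ is a closed trail in $G$. Once those structural facts are pinned down, the proposition is just the Handshaking Lemma plus the elementary bound $x\le 2\lfloor x/2\rfloor$ for even $x$. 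If one wished to avoid passing through the incidence graph entirely, an alternative is to argue directly: the Euler tour $T$ visits each anchor vertex $v$ some number of times, using two edge-traversals per visit, and the edges traversed at $v$ are distinct (strictness), so the number of edge-traversals at $v$ is even and at most $\deg_H(v)$ — the same computation without the graph-theoretic scaffolding.
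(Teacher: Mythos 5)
Your proof is correct and takes essentially the same route as the paper's: both pass to the subgraph $G_T$ of the incidence graph determined by the Euler tour, note that every e-vertex has degree exactly $2$ while every v-vertex has even degree at most $\deg_H(v)$, and finish by double counting. The paper merely packages the count through an auxiliary graph $H'$ whose incidence graph is $G_T$ before invoking the Handshaking Lemma, which is a cosmetic difference from your direct bipartite count.
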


\begin{proof} Let $G$ be the incidence graph of $H$, let $T$ be an Euler tour of $H$, and let $G_T$ be the subgraph of $G$ corresponding to $T$ as described at the end of Remark~\ref{remark:traversals}.  Note, since $T$ is an Euler tour of $H$, that $G_T$ contains all the e-vertices of $G$ and is itself eulerian (the latter observed in Remark~\ref{remark:closedtrails}).  Since $G_T$ is eulerian, it is even by Theorem~\ref{thm:eulertour}.  Observe that each e-vertex $e$ of $G_T$ has exactly two neighbours: namely, the two anchors via which $e$ is traversed in $T$.

Now, there exists a hypergraph $H'$ that has $G_T$ as its incidence graph.  Now, since all the edges of $H'$ have cardinality 2 and $H'$ has no loops, it can be considered to be a graph.  Since $G_T$ is even, all the vertices of $H$ have even degree as well.

Then, by the Handshaking Lemma~\ref{lem:handshaking}, we have $$\sum_{v\in V(H')}\left\lfloor\frac{\deg_{H'}(v)}{2}\right\rfloor = \sum_{v\in V(H')}\frac{\deg_{H'}(v)}{2} = |E(H')|.$$

Now, we have $|E(H')|=|E(H)|$ because these quantities are both equal to the number of e-vertices of $G_T$.  Furthermore, for each $v\in V(H)$, we have that either $v'\not\in V(H')$ or $\deg_H(v)\geq\deg_{H'}(v)$.  Therefore, we have $$\sum_{v\in V(H)}\left\lfloor\frac{\deg_H(v)}{2}\right\rfloor \geq \sum_{v\in V(H')}\left\lfloor\frac{\deg_{H'}(v)}{2}\right\rfloor = |E(H)|.$$

\end{proof}

To prove Theorem~\ref{thm:LN2}, Lonc and Naroski first proved the easier case, when $k>3$.  (The case $k=2$ is self-evident, as our earlier observations showed.)\\

\begin{thm}{\rm \cite[Theorem 3]{LN}} Let $k>3$.  A strongly connected $k$-uniform hypergraph $H$ has an Euler tour if and only if $|E(H)|\neq 1$. \qed
\end{thm}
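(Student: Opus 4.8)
The plan is to prove both directions. The forward direction is immediate: if $H$ is a strongly connected $k$-uniform hypergraph with $|E(H)|=1$, then by Lemma~\ref{lem:trivialcases}(2) $H$ has no Euler family, hence certainly no Euler tour (after noting that a $k$-uniform hypergraph with a single edge and no isolated vertices is connected). So the real content is the converse: assuming $k>3$, $H$ strongly connected, and $|E(H)|\geq 2$, we must construct an Euler tour.

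For the converse I would argue by induction on $|E(H)|$, or equivalently build the tour edge by edge. The key structural fact is that strong connectedness means $\mathcal{L}_{k-1}(H)$ is connected: any two edges $e,f$ sharing $k-1$ vertices can serve as consecutive edges of a strict trail, traversed via the two distinct vertices of the symmetric difference plus a shared anchor. So a spanning tree (or just connectivity) of $\mathcal{L}_{k-1}(H)$ gives an ordering $e_1,\dots,e_m$ of the edges in which each $e_{i+1}$ meets some earlier edge in $k-1$ vertices; the idea is to thread a single strict trail through all of them, using the large overlap ($k-1\geq 3$) to have enough slack in choosing anchors so that we can always both enter and leave each edge at vertices not yet "used up." Concretely: when we splice $e_{i+1}$ (overlapping $e_j$, $j\le i$, in a set $S$ of size $k-1$) into the current trail at the point where $e_j$ is traversed, we detour through $e_{i+1}$: enter $e_{i+1}$ at one vertex of $S$ and leave at another. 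Because $|S|=k-1\ge 3$ and the incidence of $e_j$ in the trail occupies only two of its $k$ anchor-flags, there remain at least one (in fact several) vertices of $S$ free to serve as the detour's endpoints without creating a repeated anchor flag — this is precisely where $k>3$ is used and where the $k=3$ argument breaks down.

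The main obstacle is the bookkeeping to guarantee that this splicing never forces a repeated anchor flag and that, at the end, we have a \emph{closed} strict trail traversing every edge exactly once — i.e., that the resulting object is a genuine Euler tour and not merely an Euler family. The cleanest way to handle this, which I expect the authors take, is to pass to the incidence graph $G=\mathcal{G}(H)$ and the characterization via Remark~\ref{remark:traversals}: an Euler tour of $H$ corresponds to a closed trail in $G$ visiting every e-vertex exactly once and each such e-vertex with exactly two incident edges in the trail. One then wants to choose, for each edge $e$, a pair of its $k$ incident v-vertices (the two "anchor" vertices) so that the resulting 2-uniform sub-structure is connected and even, then invoke Theorem~\ref{thm:eulertour}. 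Strong connectedness of $H$ together with $|E(H)|\ge 2$ and $k>3$ gives enough freedom in these pairwise choices — effectively a routing/parity argument on the graph whose vertices are the edges of $H$ — to force connectedness and all-even degrees simultaneously. The delicate point, and the crux of the proof, is showing these two requirements can be met at once; everything else is routine verification against the definitions of strict trail and Euler tour.
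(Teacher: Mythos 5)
The paper itself offers no proof of this theorem --- it is quoted from \cite{LN} with the proof omitted --- so your proposal can only be judged on its own merits. Your forward direction is fine (Lemma~\ref{lem:trivialcases}(2), after noting a strongly connected hypergraph with one edge is connected). The converse, however, has a genuine gap, and it sits exactly where you place the weight. Your splicing step is not a valid trail operation as described: a ``detour'' through the single new edge $e_{i+1}$ that enters at one vertex of $S$ and leaves at \emph{another} free vertex of $S$ cannot be inserted into a closed strict trail, because those vertices need not be anchors of the current trail, and after leaving $e_{i+1}$ you have no unused edge with which to return to the point where you departed (you may not reuse $e_j$ or $e_{i+1}$). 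Your fallback --- choose two anchors per edge so that the resulting subgraph of the incidence graph is even, has all e-vertices of degree $2$, and is connected, then invoke Theorem~\ref{thm:incidencegraph} --- is not a proof either: the simultaneous satisfiability of evenness and connectedness is precisely the content of the theorem, and you explicitly defer it (``the delicate point \dots is showing these two requirements can be met at once'').

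The gap is fixable, and the repair uses $k>3$ exactly where you predict. Suppose the current closed strict trail contains the segment $u\,e_j\,v$ and $|e_{i+1}\cap e_j|=k-1$. Since $|e_j\setminus e_{i+1}|=1$, at least one of the anchors $u,v$ lies in $e_{i+1}$; say $u$ does. Since $k-1\geq 3$, there is a spare vertex $w\in (e_j\cap e_{i+1})\setminus\{u,v\}$. Replace the segment $u\,e_j\,v$ by $u\,e_{i+1}\,w\,e_j\,v$: one endpoint of the new edge's traversal is the \emph{existing} anchor $u$, the other is the spare vertex $w$, and the traversal of $e_j$ itself is re-anchored from $\{u,v\}$ to $\{w,v\}$. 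The trail stays closed and strict and now traverses $e_{i+1}$. Starting from the $2$-cycle $u\,e_1\,w\,e_2\,u$ on two $(k-1)$-adjacent edges (possible since $|E(H)|\geq 2$ and $\mathcal{L}_{k-1}(H)$ is connected) and inserting the remaining edges in breadth-first order of $\mathcal{L}_{k-1}(H)$, one obtains an Euler tour by induction. For $k=3$ the spare vertex $w$ may fail to exist, which is why this easy argument collapses there and why \cite{LN} must work much harder in that case --- consistent with your closing remark, but your write-up as it stands does not carry out either route.
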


The remaining case, for $k=3$, was significantly more difficult and required a few successive results to pin down.  Some of these results refer to a class of hypergraphs which, roughly speaking, have a minimum number of edges among strongly connected hypergraphs of the same order.\\

\begin{def2}{\rm \cite{LN} Define $\mathcal{H}$ to be the smallest class of 3-uniform hypergraphs such that the following both hold:

\begin{description}
\item[(1)] The 3-uniform hypergraph of order 3 with a single edge belongs to $\mathcal{H}$.
\item[(2)] Let $H=(V,E)$.  If $H\in\mathcal{H}$, then the hypergraph $(V\cup \{v\}, E\cup \{e\})$ belongs to $\mathcal{H}$, for all $v,e$ such that $v\not\in V, v\in e, |e|=3,$ and $e\setminus\{v\}\subseteq e'$ for some $e'\in E$.\qed
\end{description}
}
\end{def2}

\begin{thm}{\rm \cite[Theorem 4]{LN}} Let $H$ be a strongly connected 3-uniform hypergraph.  The following are equivalent:

\begin{description}
\item[(1)] $H$ is eulerian;
\item[(2)] $|V_{\text{odd}}(H)|\leq |E(H)|$;
\item[(3)] $H\not\in \mathcal{H}$ or $H$ has a vertex of even degree. \qed
\end{description}
\end{thm}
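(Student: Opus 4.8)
The plan is to prove the cycle of implications $(1)\Rightarrow(2)$, $(2)\Leftrightarrow(3)$, and $(2)\Rightarrow(1)$, with the last implication carrying essentially all of the difficulty. Throughout I would work in the incidence graph $G=\mathcal{G}(H)$ and exploit the dictionary of Remark~\ref{remark:traversals}: an Euler tour of $H$ is the same datum as a connected subgraph $G'$ of $G$ containing every e-vertex, in which every e-vertex has degree exactly $2$ and every v-vertex has even degree. Indeed, such a $G'$ is connected and even, hence eulerian by Theorem~\ref{thm:eulertour}, and an Euler tour of $G'$ reads back in $H$ as a strict closed trail through every edge exactly once; conversely, the subgraph associated with an Euler tour of $H$ has precisely these three properties. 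Specifying such a $G'$ amounts to choosing, for each edge $e$ of $H$, a \emph{skipped vertex} $s(e)\in e$ (the vertex of $e$ incident with $e$ in $H$ but not in $G'$); writing $x_v=|\{e\in E(H):s(e)=v\}|$, the degree of $v$ in $G'$ is $\deg_H(v)-x_v$, so $G'$ has all even v-degrees iff $x_v\equiv\deg_H(v)\pmod 2$ for every $v$, while always $\sum_{v}x_v=|E(H)|$. For $(1)\Rightarrow(2)$ this is immediate: an Euler tour yields such an $s$, each $v\in V_{\mathrm{odd}}(H)$ forces $x_v\ge 1$, and therefore $|E(H)|=\sum_v x_v\ge|V_{\mathrm{odd}}(H)|$ --- the necessity already noted after Theorem~\ref{thm:LN2}.

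For $(2)\Leftrightarrow(3)$ I would isolate two elementary facts. First, the Handshaking Lemma~\ref{lem:handshaking} gives $\sum_v\deg_H(v)=3|E(H)|$, so $|V_{\mathrm{odd}}(H)|\equiv|E(H)|\pmod 2$. Second, a strongly connected $3$-uniform hypergraph satisfies $|V(H)|\le|E(H)|+2$, with equality exactly when $H\in\mathcal{H}$: list the edges $e_1,\dots,e_m$ along a spanning tree of $\mathcal{L}_2(H)$ so that each $e_i$ (for $i\ge 2$) meets some earlier $e_j$ in exactly two vertices; adjoining $e_i$ raises $|E|$ by $1$ and $|V|$ by $0$ or $1$, so $|V|-|E|$ is non-increasing from its initial value $3-1=2$, and $|V|-|E|=2$ throughout forces every $e_i$ to introduce a brand-new vertex $c_i$ with $e_i\setminus\{c_i\}\subseteq e_j$ --- which is exactly the recursive rule defining $\mathcal{H}$ (and, conversely, every member of $\mathcal{H}$ is strongly connected with $|V|=|E|+2$). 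Given these, $(2)\Leftrightarrow(3)$ is purely formal: $\neg(2)$ says $|V_{\mathrm{odd}}(H)|>|E(H)|$, hence by parity $|V_{\mathrm{odd}}(H)|\ge|E(H)|+2$; since $|V_{\mathrm{odd}}(H)|\le|V(H)|\le|E(H)|+2$, both inequalities are forced to be equalities, i.e. every vertex of $H$ has odd degree and $H\in\mathcal{H}$, which is precisely $\neg(3)$.

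The real content is $(2)\Rightarrow(1)$, and here I would construct the subgraph $G'$ above by induction on $|E(H)|$, maintaining the conditions $x_v\equiv\deg_H(v)\pmod 2$, $\sum_v x_v=|E(H)|$ together with connectivity. In the inductive step I would pick an edge $e_0$ of $H$ that is a non-cut-vertex of $\mathcal{L}_2(H)$, so that $H\setminus e_0$ (after discarding any vertex it isolates) is again strongly connected, and that can be chosen compatibly with the parity budget so that $H\setminus e_0$ still satisfies $(2)$; apply the inductive hypothesis to get a subgraph $G''$ for $H\setminus e_0$, then reinsert the e-vertex of $e_0$ by routing it through a suitable pair of its vertices and flipping parities along a short correcting walk in $G''$ to restore the even-degree condition without breaking connectivity. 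The small and exceptional cases --- $|E(H)|\le 2$, and the case $H\in\mathcal{H}$, where no removable edge of the above kind is available --- would be treated directly; in the latter case $(2)$, via $(2)\Leftrightarrow(3)$, supplies a vertex of even degree, which one uses to anchor the construction along the recursive build of $H$ from the definition of $\mathcal{H}$.

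I expect the genuine obstacle to be exactly this last implication, specifically the tension between the parity budget and connectivity: the system ``$x_v\ge 1$ for every odd $v$ and $\sum_v x_v=|E(H)|$'' is trivial to solve in isolation, but producing a solution for which the resulting $G'$ is \emph{connected} is delicate, and it is here that Lonc and Naroski's preparatory analysis --- of removable edges in strongly connected hypergraphs and of the structure of $\mathcal{H}$, the same machinery underlying Theorem~\ref{thm:LN2} and its precursors --- does the heavy lifting; a self-contained proof would essentially have to redevelop that case analysis.
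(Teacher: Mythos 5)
The thesis does not actually prove this theorem --- it is quoted from Lonc and Naroski with a \qed and no argument --- so there is no in-paper proof to compare against; judged on its own terms, the easy parts of your plan are correct. The skipped-vertex bookkeeping via Theorem~\ref{thm:incidencegraph} gives $(1)\Rightarrow(2)$ exactly as the paper's remark after Theorem~\ref{thm:LN2} anticipates, and your $(2)\Leftrightarrow(3)$ argument is a nice self-contained one: ordering the edges along a spanning tree of $\mathcal{L}_2(H)$ shows $|V(H)|\le |E(H)|+2$ with equality precisely when $H\in\mathcal{H}$ (the tree-parent $e_j$ forces $e_i\setminus\{c_i\}=e_i\cap e_j$ in the equality case), and combining this with $|V_{\text{odd}}(H)|\equiv |E(H)| \pmod 2$ from the Handshaking Lemma makes the equivalence purely formal.

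The genuine gap is $(2)\Rightarrow(1)$, which you yourself flag but do not close, and as stated the inductive step would not go through. First, deleting an edge $e_0$ flips the degree parity of each of its three vertices, so $|V_{\text{odd}}(H\setminus e_0)|$ can jump to $|V_{\text{odd}}(H)|+3$ while $|E|$ drops only by $1$; hence choosing $e_0$ merely to be a non-cut vertex of $\mathcal{L}_2(H)$ does not preserve condition $(2)$, and you give no argument that a choice preserving simultaneously strong connectivity, condition $(2)$, and non-membership of the residual hypergraph in the all-odd subclass of $\mathcal{H}$ always exists (nor do you handle vertices that become isolated). Second, the reinsertion step --- ``flipping parities along a short correcting walk'' --- is not a free operation: every e-vertex of the subgraph must retain degree exactly $2$, so the correction has to be an alternating modification in the spirit of the $\F$-interchanging cycles of Chapter~\ref{chapter:covering}, and proving that such a correction exists, repairs parity at exactly the right v-vertices, and does not disconnect the subgraph is precisely the combinatorial core of the theorem. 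That case analysis is the bulk of Lonc and Naroski's paper, so what you have is a correct reduction of the problem to its hard kernel together with a proof of the two easy implications, not a proof of the theorem.
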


For the remaining results, we need to define the {\em skeleton} of a hypergraph and a $wheel$ hypergraph.\\

\begin{defn}{\rm \cite{LN} Let $H=(V,E)$ be a 3-uniform hypergraph.  The {\em skeleton} of $H$, denoted $S(H)$, is a graph induced by the edge set $\{e\cap f: e,f\in E, |e\cap f|=2\}$. \phantom{aaa}
}
\end{defn}

\begin{defn}{\rm \cite{LN} Let $H$ be a 3-uniform hypergraph.  If $\mathcal{L}_{k-1}(H)$ is a cycle of length $\ell$, for $\ell\geq 3$, and there is a vertex of $H$ that belongs to all the edges of $H$, then $H$ is called a wheel, denoted $W_{\ell}$.
}
\end{defn}

\begin{thm}{\rm \cite[Theorem 6]{LN}} Let $H$ be a 3-uniform hypergraph whose skeleton is connected.  Then $H$ is eulerian if both of the following hold:

\begin{description}
\item[(1)] $H\not\in\mathcal{H}$ or $H$ has a vertex of even degree; and
\item[(2)] $H$ is not a wheel. \qed
\end{description}
\end{thm}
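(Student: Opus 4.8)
The plan is to reduce the claim about eulerian-ness to the connectivity hypothesis already available in the literature, namely that a hypergraph whose skeleton is connected is strongly connected (this is immediate, since an edge of the skeleton $S(H)$ records a pair of edges of $H$ meeting in two vertices, which is exactly an edge of $\mathcal{L}_{k-1}(H)=\mathcal{L}_2(H)$; so a path in $S(H)$ yields a path in $\mathcal{L}_2(H)$, and connectedness of $S(H)$ forces connectedness of $\mathcal{L}_2(H)$, hence $H$ strongly connected). Once we know $H$ is strongly connected, the machinery of Theorem~\ref{thm:LN2} and, more precisely, \cite[Theorem 4]{LN} applies directly: a strongly connected 3-uniform hypergraph is eulerian if and only if $H\notin\mathcal{H}$ or $H$ has a vertex of even degree. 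So under hypothesis (1) of the statement we would \emph{already} be done, and hypothesis (2) — that $H$ is not a wheel — appears to be redundant. I would therefore expect the author's proof to simply invoke \cite[Theorem 4]{LN} after observing that a connected skeleton implies strong connectedness, and I would want to double-check whether the ``not a wheel'' condition is genuinely needed or is included only for emphasis (perhaps because wheels $W_\ell$ are precisely the strongly connected 3-hypergraphs with connected skeleton that lie in $\mathcal{H}$, or fail some edge-count, so excluding them is a convenient sufficient substitute for the disjunction in (1)).

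Concretely, here is how I would carry it out. First I would prove the lemma ``connected skeleton $\Rightarrow$ strongly connected'' for 3-uniform $H$: take two edges $e,f\in E(H)$; since $e$ has three vertices and every pair of them is an edge of $S(H)$ only if some other edge of $H$ contains that pair, I need to be slightly careful — what I really want is that each edge $e\in E(H)$ corresponds to a vertex of $\mathcal{L}_2(H)$, and I must show these vertices lie in one component. Given $e\ne f$, pick vertices $a\in e$, $b\in f$; connect $a$ to $b$ by a path in the connected graph $S(H)$; lift each skeleton-edge $xy$ on that path to a hyperedge of $H$ containing $\{x,y\}$; consecutive lifts share a vertex, hence meet in at least one vertex, but to get an edge of $\mathcal{L}_2(H)$ I need them to meet in exactly two — so instead I should lift more carefully, noting that if $xy,yz$ are consecutive skeleton edges then the two hyperedges witnessing them both contain $y$, and by choosing witnesses appropriately (or passing through the hyperedge that realizes both pairs when one exists) one threads a walk in $\mathcal{L}_2(H)$. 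This is the one genuinely fiddly point.

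The main obstacle, then, is exactly that lifting step: translating a path in the skeleton $S(H)$ into a walk in the level-2 line graph $\mathcal{L}_2(H)$, because a shared \emph{vertex} of two hyperedges is an edge of $\mathcal{L}_1^*(H)$ but not necessarily of $\mathcal{L}_2(H)$. I would handle this by working edge-by-edge along the skeleton path: for a skeleton edge $\{x,y\}=e\cap f$ with $e,f\in E(H)$, the pair $e,f$ is itself an edge of $\mathcal{L}_2(H)$; for two consecutive skeleton edges $\{x,y\}$ and $\{y,z\}$, if they are realized by the \emph{same} hyperedge $g$ (i.e. $\{x,y,z\}=g$) the corresponding $\mathcal{L}_2$-vertices are already linked through $g$, and otherwise the realizing pair $\{e,f\}$ for $\{x,y\}$ and $\{e',f'\}$ for $\{y,z\}$ all contain $y$, and among 3-uniform edges through $y$ one can check that any two sharing a second vertex are $\mathcal{L}_2$-adjacent, chaining them up. Once strong connectedness is in hand, I would invoke \cite[Theorem 4]{LN}: hypothesis (1) is verbatim condition (3) of that theorem, which is equivalent to $H$ being eulerian, so the conclusion follows, and I would remark that (2) is not actually needed for this direction (or explain its role if, after checking, it turns out to be).
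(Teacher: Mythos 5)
Your reduction fails at its first and essential step: for $3$-uniform hypergraphs, a connected skeleton does \emph{not} imply strong connectedness, and this is precisely the point you flagged as ``fiddly'' and then waved away. Concretely, let $V(H)=\{1,\dots,7\}$ and $E(H)=\{123,\,124,\,134,\,256,\,257,\,267\}$. The pairs of hyperedges meeting in exactly two vertices are exactly the pairs within $\{123,124,134\}$ and within $\{256,257,267\}$, so the skeleton has edge set $\{12,13,14,25,26,27\}$: a tree spanning all of $V(H)$, hence connected. But every hyperedge of the first group meets every hyperedge of the second group in at most the single vertex $2$, so $\mathcal{L}_2(H)$ is a disjoint union of two triangles and $H$ is not strongly connected. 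Your proposed repair --- chaining the witnesses of consecutive skeleton edges through their common vertex $y$ by hyperedges through $y$ that pairwise share a second vertex --- fails on this example: the witnesses of the skeleton edges $12$ and $25$ all contain the vertex $2$, yet no hyperedge of one group shares two vertices with any hyperedge of the other, and there are no intermediate hyperedges to thread through. So you cannot invoke \cite[Theorem 4]{LN}, and the argument (including your claim that hypothesis (2) is redundant) is unsupported as it stands.

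There is also a structural reason this route cannot be repaired. By \cite[Theorem 7]{LN} (Theorem~\ref{thm:complexity2} in this thesis), EULER TOUR is NP-complete on the class of $3$-uniform hypergraphs with connected skeleton, whereas on strongly connected $3$-uniform hypergraphs Theorem~\ref{thm:LN2} and \cite[Theorem 4]{LN} reduce the question to the polynomial-time check $|V_{\text{odd}}(H)|\leq |E(H)|$. If ``connected skeleton'' implied ``strongly connected,'' these two facts together would yield P $=$ NP. This is exactly why Lonc and Naroski state Theorem 6 separately, only as a sufficient condition, and with the wheel exclusion: the class of hypergraphs with connected skeleton strictly contains the strongly connected ones, and genuinely more work is needed there. (Note, too, that this thesis only quotes the theorem from \cite{LN} without proof, so there is no in-paper argument to compare against; but the gap above is independent of that.)
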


\subsection{Euler Tours in Design Hypergraphs}
In Chapters~\ref{chapter:STS}, \ref{chapter:covering}, and~\ref{chapter:quasi-eulerian}, we will be investigating so-called ``design hypergraphs,'' that is, hypergraphs that are uniform, regular, and satisfy some balancing property; triple systems are an example of such a hypergraph.  Such hypergraphs are implicitly studied in design theory, and there are some results about Euler tours in these designs.  Dewar and Stevens proved in 2012 \cite{DS} that certain kinds of triple systems are eulerian, while Horan and Hurlbert proved in 2013 \cite{HH,HH2} the existence of certain eulerian designs of every possible order.  We will discuss all of these results in this section, particularly as they apply to our new results.\\

\begin{defn}{\rm Let $H_1$ and $H_2$ be hypergraphs with incidence functions $\psi_1$ and $\psi_2$, respectively.  We call $H_1$ and $H_2$ {\em isomorphic} if there exist bijections $\alpha: V(H_1)\rightarrow V(H_2)$ and $\beta: E(H_1)\rightarrow E(H_2)$ such that, for all $e\in E(H_1)$, we have $\psi_1(e) = \{v_1,v_2,\dotso , v_k\}$ if and only if $\psi_2(\beta(e)) = \{\alpha(v_1),\alpha(v_2),\dotso , \alpha(v_k)\}$.  The pair $(\alpha,\beta)$ is called an {\em isomorphism} from $H_1$ to $H_2$.

In the event that $H_1$ and $H_2$ are simple, then $\beta$ can be induced from a bijection $\alpha: V(H_1)\rightarrow V(H_2)$ that maps edges of $H_1$ to edges of $H_2$.

An {\em automorphism} is an isomorphism from a hypergraph to itself, and the collection of all automorphisms admitted by a hypergraph forms a group under composition, called the {\em automorphism group.}
}
\end{defn}

\begin{defn}{\rm \cite{DS} Let $H$ be a hypergraph of order $n\geq 2$.  We call $H$ {\em cyclic} if its automorphism group contains a cyclic subgroup of order $n$.  
}
\end{defn}

\begin{thm}{\rm \cite[Corollary 5.10]{DS}}\label{thm:ds1} Let $H$ be a cyclic Steiner triple system of order $n>3.$  Then $H$ is eulerian.\qed
\end{thm}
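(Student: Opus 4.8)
The plan is to exploit the cyclic structure of $H$ to build an Euler tour explicitly, working in the incidence graph $G = \mathcal{G}(H)$. Let the vertex set be $\mathbb{Z}_n$ and let $\sigma$ be the cyclic automorphism of order $n$, which we may take to be $v \mapsto v+1$. Under the action of $\langle\sigma\rangle$, the edge set of $H$ splits into orbits, and since $H$ is a Steiner triple system and $n > 3$, each orbit is either a \emph{full orbit} of length $n$ or (when $3 \mid n$) possibly a \emph{short orbit} of length $n/3$ consisting of translates of a single base block $\{0, n/3, 2n/3\}$. First I would pick one base block from each orbit and use the orbit representatives together with the translation $\sigma$ to organize the edges: every edge of $H$ is $\sigma^i(B_j)$ for a unique pair (representative $B_j$, shift $i$), with the obvious caveat in a short orbit.

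Next I would try to stitch a tour together orbit by orbit. For a single full orbit generated by a base block $B = \{0, a, b\}$ with $0 < a < b < n$, the natural idea is to traverse $\sigma^i(B)$ for $i = 0, 1, \dots, n-1$ in order, each time entering the edge at one anchor and leaving at another, and arranging the anchors so that the exit vertex of $\sigma^i(B)$ is the entry vertex of $\sigma^{i+1}(B)$. Since $\sigma^{i+1}(B)$ is just $\sigma^i(B)$ shifted by one, one wants to traverse each copy "via vertices" $\{x, x+1\}$ for a suitable residue $x$ depending on $i$, so that consecutive copies share a vertex; a bookkeeping computation (of the sort I will not grind through here) shows this produces a closed strict trail on the $n$ edges of the orbit, returning to its start after a net shift of $n \equiv 0$. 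One then needs to concatenate the trails coming from different orbits: by Remark on closed trails, each orbit-trail's associated graph is connected, and because $H$ is an STS every pair of vertices is covered, so any two orbit-trails share an anchor vertex and can be concatenated as strict trails (using that distinct orbits are edge-disjoint). Iterating the concatenation collapses everything into one closed strict trail traversing every edge exactly once, i.e., an Euler tour. The short-orbit case ($3\mid n$, base block $\{0,n/3,2n/3\}$) has only $n/3$ edges and must be handled as a small separate subcase, attaching its trail to the rest by a shared vertex.

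The main obstacle I anticipate is the anchor-bookkeeping within a single orbit: one must choose, for each $i$, which two of the three vertices of $\sigma^i(B)$ serve as its entry and exit anchors so that (a) consecutive copies chain up, (b) the resulting walk is a \emph{strict} trail — no edge repeated, which is automatic, but also each edge is entered and left at two \emph{distinct} anchors, so no copy degenerates to $vev$ — and (c) the walk closes up after all $n$ shifts. Condition (c) is where the hypothesis $n > 3$ really bites: for $n = 3$ the unique STS has a single edge and cannot be eulerian (consistent with the statement's exclusion), and the chaining pattern only has room to breathe once $n \geq 5$ (odd, since an STS forces $n \equiv 1, 3 \pmod 6$). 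A secondary technical point is making sure the short orbit, when present, does not obstruct connectivity of the concatenation — but since every vertex of the short orbit lies in other (full-orbit) edges as well, its trail always meets the rest, so this is a finite check rather than a real difficulty.
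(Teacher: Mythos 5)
There is a genuine gap, and it sits exactly where you wave your hands: the within-orbit chaining. You propose to traverse the translates $\sigma^i(B)$, $i=0,1,\dots,n-1$, in order, traversing each copy via two vertices of the form $x,x+1$ so that consecutive copies share a vertex. But $x$ and $x+1$ can both lie in $\sigma^i(B)$ only if $1$ is a difference of the base block $B$, i.e.\ only if $B\cap(B+1)\neq\emptyset$; equivalently, consecutive translates intersect only for such blocks. In a cyclic STS each difference occurs in exactly one base block, so at most one orbit can be chained this way, and for $n\geq 13$ most orbits cannot: e.g.\ in the standard cyclic STS$(13)$ with base blocks $\{0,1,4\}$ and $\{0,2,7\}$, the block $B=\{0,2,7\}$ satisfies $B\cap(B+1)=\emptyset$, so your walk cannot even take its second step. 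The natural repair --- process the orbit in steps of some difference $d$ of $B$, so that $\sigma^{id}(B)$ and $\sigma^{(i+1)d}(B)$ share a vertex --- requires $\gcd(d,n)=1$ to close up after all $n$ translates, and this too can fail: a base block with difference triple $\{3,6,9\}$ over $\mathbb{Z}_{21}$ has all its differences sharing a factor with $n$, so stepping by any of them closes after $n/3$ translates and splits the orbit into three closed trails rather than one. So the ``bookkeeping computation'' you decline to grind through is not routine bookkeeping; it is the entire content of the result, and the statement you need is false in the naive form you assert it. (The cross-orbit concatenation step is fine in principle --- edge-disjoint closed strict trails sharing an anchor concatenate --- but it is contingent on the orbit trails existing, and if an orbit decomposes into several closed trails you must also argue they can all be absorbed into a single tour.)

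For calibration: the thesis does not prove this theorem at all --- it is quoted from Dewar and Stevens, whose proof of the corollary runs through their machinery of rank-2 universal cycles for cyclic designs and is considerably more delicate than translate-by-translate chaining. Moreover, the thesis later proves the stronger Theorem~\ref{thm:eulerianSTS} (every STS$(n)$ with $n\geq 13$ is eulerian, cyclic or not) by a completely different, non-constructive route: build an $(n-1)$-cycle in the labeled 2-section avoiding one vertex, show the leftover hypergraph has no cut edges, and concatenate the cycle with an Euler family obtained from Theorem~\ref{thm:eulerfamily}. If you want a proof you can actually complete, either repair the orbit argument by specifying, for each base block, an ordering of its translates and anchor choices that provably chain and close up (handling the $\gcd$ obstruction and the short orbit), or abandon cyclicity and follow the covering/cut-edge route of Chapter~\ref{chapter:STS}.
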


\begin{thm}{\rm \cite[Corollary 5.11]{DS}}\label{thm:ds2} Let $H$ be a cyclic twofold triple system.  Then $H$ is eulerian.\qed
\end{thm}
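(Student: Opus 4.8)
The plan is to realize an Euler tour of $H$ as a subgraph of the incidence graph $\mathcal{G}(H)$ and to construct that subgraph using the cyclic automorphism. As in Remark~\ref{remark:traversals} (and in the analysis showing that an Euler tour of $H$ induces an even connected subgraph of $\mathcal{G}(H)$ in which every e-vertex has degree $2$), it is enough to choose, for each edge $e$ of $H$, a single vertex $\omega(e)\in e$ to \emph{omit}, and then delete the flag $(\omega(e),e)$ (equivalently, the edge $\omega(e)e$ of $\mathcal{G}(H)$). This leaves a spanning subgraph $G'$ of $\mathcal{G}(H)$ in which every e-vertex has degree exactly $2$; if in addition every v-vertex of $G'$ has even degree and $G'$ is connected, then $G'$ is eulerian by Theorem~\ref{thm:eulertour}, and an Euler tour of $G'$ — which alternates v- and e-vertices and visits each e-vertex once — reads off as an Euler tour of $H$. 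Since $H$ is a TS($n,2$), every vertex has degree $n-1$, so the requirement on $\omega$ becomes: each vertex is omitted a number of times congruent to $n-1\pmod 2$, and the resulting $G'$ is connected.

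Next I would fix the cyclic structure: identify $V(H)$ with $\mathds{Z}_n$ so that $\rho\colon i\mapsto i+1$ is an automorphism of $H$, and split $E(H)$ into $\rho$-orbits. On a \emph{full} orbit $O=\{B_i:i\in\mathds{Z}_n\}$ with base block $B_0=\{0,a,c\}$ I would take $\omega$ $\rho$-equivariant, i.e. $\omega(B_i)=\omega(B_0)+i$; such a choice omits every vertex of $\mathds{Z}_n$ exactly once over $O$, and deciding which of $0,a,c$ to omit from $B_0$ amounts to \emph{activating} one of the three difference classes $\pm a,\pm c,\pm(c-a)$, in the sense that the two retained flags of each $B_i$ then join v-vertices differing by the activated difference. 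The point is that \emph{every} nonzero element of $\mathds{Z}_n$ occurs as a difference of $H$ (the pair $\{0,1\}$ lies in an edge), so for $n>3$ some full orbit $O^\ast$ carries the class $\pm 1$; activating $\pm 1$ on $O^\ast$ makes the retained flags of $O^\ast$ a single $2n$-cycle through all of $\mathds{Z}_n$, and then every remaining e-vertex, having degree $2$, hangs off this cycle, so $G'$ is automatically connected. Thus connectivity never costs anything, and the whole problem reduces to arranging the omissions so the parity constraint holds.

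When $n\equiv 1\pmod 3$ this finishes the argument with essentially no work: there are no short orbits, so $E(H)$ splits into exactly $(n-1)/3$ full orbits, each vertex is omitted exactly $(n-1)/3$ times under any $\rho$-equivariant $\omega$, and its degree in $G'$ is $(n-1)-(n-1)/3=2\cdot\tfrac{n-1}{3}$, which is even because $3\mid n-1$. So taking $\omega$ $\rho$-equivariant with $\pm 1$ activated on $O^\ast$ yields an even, connected $G'$ with every e-vertex of degree $2$, proving $H$ eulerian. (The degenerate case $n=3$ is handled directly: a TS($3,2$) is a doubled edge $\{0,1,2\}$ with Euler tour $0\,e_1\,1\,e_2\,0$.)

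The case $n\equiv 0\pmod 3$ is the one I expect to cause trouble, and it is the step I would spend the most care on, because $H$ may now contain \emph{short} orbits — translates of the block $\{0,\tfrac n3,\tfrac{2n}{3}\}$, each of length $\tfrac n3$ — and $\omega$ cannot be $\rho$-equivariant on a short orbit, so the ``omitted once per orbit'' mechanism breaks down. I would rescue it with a counting argument: writing $n=3m$ and balancing edge counts gives $3f+s=3m-1$, where $f$ and $s$ are the numbers of full and short orbits, so $s\equiv 2\pmod 3$, while counting incidences of blocks with the $3m$ pairs at distance $\tfrac n3$ forces $s\le 2$; hence $s=2$ and $f=\tfrac n3-1$. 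Then I would keep $\omega$ $\rho$-equivariant on the full orbits (with $\pm 1$ activated on $O^\ast$) and have \emph{both} short orbits omit the same $\tfrac n3$ vertices — say the ``first'' vertex of each of their blocks — so that every vertex is omitted either $\tfrac n3-1$ or $\tfrac n3+1$ times, both congruent to $\tfrac n3-1\equiv n-1\pmod 2$; again $G'$ is even, connected, and has all e-vertices of degree $2$. The delicate point is precisely this interplay: that the short-orbit count is forced to be exactly $2$ and that the parity ledger then balances at every vertex, so that a non-$\rho$-symmetric choice is unavoidable yet does no harm. Granting that, $H$ is eulerian in every case.
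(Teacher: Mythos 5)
Your construction is correct, but it takes a genuinely different route from the one the thesis uses. In the thesis, Theorem~\ref{thm:ds2} is quoted from Dewar and Stevens without proof, and the fact itself is later subsumed by Theorem~\ref{thm:eulerianTS}: every $3$-uniform hypergraph in which each pair of vertices lies in at least two edges is eulerian, so every twofold triple system --- cyclic or not --- is eulerian; that proof starts from a minimum Euler family (via Corollary~\ref{cor:quasieuleriancovering}) and re-routes two components of the corresponding subgraph of the incidence graph through two edges containing a common pair of non-cut v-vertices, and the cyclic hypothesis plays no role. You instead exploit the cyclic automorphism directly, in the spirit of Dewar and Stevens: delete one flag per block, equivariantly on full orbits, and balance the two short orbits by hand when $3\mid n$. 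Your bookkeeping checks out: a non-special block has trivial set-stabilizer, so its labelled-edge orbit has length exactly $n$, short orbits consist of the $n/3$ triples $\{i,i+n/3,i+2n/3\}$, the count $3f+s=n-1$ together with the $2n$ incidences between difference-$\pm n/3$ pairs and blocks forces $s=2$ when $3\mid n$, the parity ledger closes because $2(n-1)/3$, $2n/3$, and $2n/3-2$ are even, and activating the difference $\pm 1$ on a full orbit (which exists for $n>3$, since the two blocks on the pair $\{0,1\}$ cannot be special triples) produces a spanning $2n$-cycle to which every remaining degree-$2$ e-vertex attaches, so Theorem~\ref{thm:incidencegraph}(2) applies. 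What your approach buys is an explicit, constructive tour and a clear view of exactly where cyclicity enters (only to organize the flag deletions); what the thesis's approach buys is generality, since the cyclic hypothesis disappears and the statement holds for all TS($n,\lambda$) with $\lambda\geq 2$. Two small points: your parenthetical treatment of $n=3$ logically belongs with the $n\equiv 0\pmod 3$ case (it is the one order with no full orbit carrying $\pm 1$), and you are implicitly reading ``cyclic'' in the standard design-theoretic sense of an automorphism acting as a single $n$-cycle on the points, which is the intended meaning of the thesis's definition.
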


The theorems above were originally stated in terms of {\em rank-2 universal cycles}, for which the definition follows.  For the purposes of our discussion, a {\em rank function} is an integer-valued function on a set of combinatorial objects, and the {\em rank} of such an object is the value such a rank function assigns to that object.\\

\begin{defn}{\rm \cite{DS} Let $V$ be a ground set, and $\F_n=\{F_0,\dotso , F_{m-1}\}$ be a set of combinatorial objects of rank $n$.

For each $F\in\F_n$, let $\mathcal{R}(F)$ be a set of sequences of length $n$ with elements from $V$; these are the sequences representing the combinatorial object $F$.

A cyclic sequence $\mathcal{U}=(x_0,x_1,\dotso , x_{m-1})$ is called a {\em universal cycle} of rank $n$ for $\F_n$ if there exists a function $$f:\mathds{Z}_m\rightarrow\bigcup_{F\in\F_n}\mathcal{R}(F)$$ such that $f(i)=(x_i,x_{i+1},\dotso , x_{i+n-1})\in\mathcal{R}(F_i)$, for all $i\in\mathds{Z}_m$.
}
\end{defn}

\begin{remark}{\rm Let $H=(V,E)$.  We will see how a rank-2 universal cycle of $H$ is equivalent to an Euler tour of $H$.  The ``combinatorial objects'' that we are considering are the edges of $H$, so $\F_n=E$, and the ``ground set'' is the set of vertices $V$.  Then we designate the set of rank-2 representatives of each $e\in E$ by $\mathcal{R}(e)=\{(u,v):u,v\in e, u\neq v\}$.

Suppose $T=v_0e_1v_1e_1\dotso v_{m-1}e_mv_0$ is an Euler tour of $H$.  Then $\mathcal{U}=(v_0,v_1,\dotso , v_{m-1})$ is a universal cycle corresponding to $T$, accompanied by the function $f(e_i)=(v_{i-1},v_i)$ for all $i\in\mathds{Z}_m$.

Conversely, if $\mathcal{U}=(v_0,v_1,\dotso , v_{m-1})$ is a universal cycle for $E$ (with $\mathcal{R}(e)$ as defined above), then there is an ordering $e_1,\dotso , e_m$ of the edges of $H$ such that for all $i\in\mathds{Z}_m$, we have $v_{i-1}v_i\in e_i$.  Since $v_{i-1}\neq v_i$ for all $i$ by the definition of $\mathcal{R}(e)$, we see that $v_0e_1v_1\dotso v_{m-1}e_mv_0$ is an Euler tour of $H$.
}
\end{remark}

In later chapters, we will extend Dewar and Stevens's results by demonstrating that all triple systems are eulerian.

Horan and Hurlbert, in slight contrast, stated their results in terms of 1-overlap cycles, which we define as follows.  Let $H$ be an $r$-uniform\footnotemark~hypergraph.  A cyclic sequence $\mathcal{U}=(v_0,v_1,\dotso , v_{m-1})$ of vertices of $H$ is called a {\em 1-overlap cycle} for $H$ if each subsequence $(v_{i(r-1)},v_{i(r-1)+1},\dotso , v_{(i+1)(r-1)})$ is an ordering of the vertices of an edge of $H$, and the list of all such subsequences for $i=0,\dotso , \frac{m}{r-1} - 1$ contains exactly one representative of each edge of $H$.  (Note that the first entry of each subsequence ``overlaps'' the last entry of the previous.) Horan and Hurlbert immediately remarked in \cite{HH2} that a 1-overlap cycle is equivalent to a rank-2 universal cycle.
 
\footnotetext{We note that Horan and Hurlbert define 1-overlap cycles more generally, not just for uniform hypergraphs; however, we present only a simple definition.}

In their work, Horan and Hurlbert gave explicit constructions of Euler tours in an infinite family of both Steiner triple systems \cite{HH} and Steiner quadruple systems \cite{HH2}, the latter of which is defined as follows.\\

\begin{defn}{\rm
A {\em Steiner quadruple system} is a 4-uniform hypergraph $H$ in which every triple of vertices lie together in exactly one edge.
}
\end{defn}

Their main results are the following.\\

\begin{thm}\label{thm:hh1}{\rm \cite[Corollary 23]{HH}} Let $n\equiv 1$ or 3 (mod 6) with $n\geq 7$.  Then there exists an eulerian STS($n$).\qed
\end{thm}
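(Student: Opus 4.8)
The statement to establish is Theorem~\ref{thm:hh1}: for every admissible order $n \equiv 1$ or $3 \pmod 6$ with $n \geq 7$, there exists an eulerian STS($n$). The plan is to give a direct constructive argument rather than an existence argument: since we are not asked to show \emph{every} STS($n$) is eulerian, it suffices to exhibit, for each such $n$, one particular Steiner triple system together with an explicit Euler tour (equivalently, by the Remark preceding the theorem, an explicit rank-2 universal cycle, or a 1-overlap cycle). The natural source of such systems is the classical family of cyclic Steiner triple systems: for $n \equiv 1 \pmod 6$ use a Heffter-type difference family on $\mathds{Z}_n$, and for $n \equiv 3 \pmod 6$ use the Bose-type construction on $\mathds{Z}_{n/3} \times \mathds{Z}_3$ (the Netto systems and the patterned starters). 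First I would fix, once and for all, a concrete STS($n$) in each residue class, described by a small set of base blocks under the action of the cyclic (or near-cyclic) automorphism group.

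Next I would build the Euler tour. The key idea is to exploit the cyclic automorphism $\sigma$ of order $n$ (or $n/3$): the edge set partitions into short orbits under $\langle\sigma\rangle$, and one hopes to find, within a single base-block orbit together with the ``short orbit'' coming from the block $\{0, n/3, 2n/3\}$ in the $n\equiv 3$ case, a closed walk on a few vertices that covers one representative of each orbit, and then translate that walk by $\sigma$ and concatenate the $n$ (or $n/3$) translates end-to-end. Concretely, one writes down a short word $w = v_0 e_1 v_1 \cdots e_t v_t$ traversing one edge from each orbit, arranged so that $\sigma(v_0) = v_t$ up to reindexing; then $w,\ \sigma(w),\ \sigma^2(w),\dots$ chain together because the terminal vertex of each block matches the initial vertex of the next, and after $n$ steps we return to $v_0$. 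Verifying that the resulting closed walk is a \emph{strict} trail — that no edge is repeated — reduces to checking that the $t$ chosen base edges lie in $t$ distinct orbits and that the ``seam'' edges introduced at each translation are themselves accounted for; this is a finite check per residue class, but it is the delicate part.

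The main obstacle, and the step I expect to consume most of the work, is handling the exceptional structure that prevents a completely uniform construction: the short orbit of length $n/3$ present when $n \equiv 3 \pmod 6$ (the block $\{0,n/3,2n/3\}$ and its two translates), the precise parity/divisibility conditions needed so that the base word closes up under $\sigma$, and small-order anomalies near $n = 7, 9$. I would deal with these by treating $n \equiv 1 \pmod 6$ and $n \equiv 3 \pmod 6$ as separate cases, possibly with a further split according to $n \bmod 18$ or similar, and by verifying the smallest cases $n = 7$ and $n = 9$ by hand (exhibiting an explicit Euler tour of the Fano plane and of STS($9$)) to anchor the family. Once the base word is shown to close up and to hit each edge orbit exactly once, the fact that the concatenation is a closed strict trail traversing every edge — hence an Euler tour — is immediate from the definitions, and by the Remark it is exactly the claimed 1-overlap cycle. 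The only genuinely nontrivial content is the combinatorial design of the base word in each class; everything downstream is bookkeeping.
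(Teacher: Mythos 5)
There is a genuine gap: what you have written is a proof plan, not a proof. The entire mathematical content of Theorem~\ref{thm:hh1} --- which this thesis does not reprove but simply cites from Horan and Hurlbert \cite{HH} --- lies exactly in the step you defer: choosing, for each residue class, explicit base blocks and an explicit base word $w$, and verifying that the $n$ (or $n/3$) translates of $w$ under $\sigma$ concatenate into a single closed \emph{strict} trail hitting every edge exactly once. You acknowledge this is ``the delicate part,'' but no base word is exhibited and no closure or distinctness verification is carried out, so nothing has actually been proved. Moreover, the scheme as described breaks for $n\equiv 3\pmod 6$: the short orbit of $\{0,n/3,2n/3\}$ under $\mathds{Z}_n$ contains only $n/3$ distinct blocks, so a base word containing one representative of \emph{every} orbit, concatenated over all $n$ translates, would traverse each short-orbit block three times, violating strictness. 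You flag the short orbit as an ``obstacle'' but give no mechanism (e.g., splicing it into only every third translate, or handling it by a separate gadget) for resolving it, and it is precisely such mechanisms that make up the actual construction in \cite{HH}. A further unaddressed point is that consecutive anchors at each seam must be distinct and the seam vertex must lie in both adjacent edges; this is a nontrivial constraint on the choice of representatives, not mere bookkeeping.

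It is also worth noting that, within this thesis, a complete and much shorter proof of the statement is available without any cyclic construction: Theorem~\ref{thm:eulerianSTS} shows every STS($n$) with $n\geq 13$ is eulerian, Lemma~\ref{lem:smallSTS} handles $n=7,9$ (so Theorem~\ref{thm:STS} covers all cases), and the classical existence of an STS($n$) for every $n\equiv 1,3\pmod 6$ then immediately yields an eulerian STS($n$) for every $n\geq 7$. If you want a self-contained argument in the spirit of \cite{HH}, you must supply the explicit difference families and base words per residue class and verify the trail conditions; if you only need the existence statement, the route through Theorem~\ref{thm:STS} closes the argument with no further work.
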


\begin{thm}\label{thm:hh2}{\rm \cite[Theorem 1.2]{HH2}} Let $n\equiv 2$ or 4 (mod 6) with $n > 4$.  Then there exists an eulerian Steiner quadruple system of order $n$.\qed
\end{thm}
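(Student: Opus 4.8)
The statement is established by an explicit recursive construction, in the spirit of Hanani's proof that a Steiner quadruple system (SQS) of order $n$ exists for every $n \equiv 2$ or $4 \pmod 6$, but with a 1-overlap cycle carried along at every stage. By the equivalences recorded above (a 1-overlap cycle for a $4$-uniform hypergraph is the same as a rank-$2$ universal cycle, which is the same as an Euler tour), it suffices to produce, for each admissible $n > 4$, an SQS of order $n$ together with a cyclic word over its point set whose consecutive length-$4$ windows --- each overlapping the next in a single symbol --- are exactly its blocks, each listed once. A purely cyclic construction (as in Theorems~\ref{thm:ds1} and~\ref{thm:ds2} for triple systems) is not available for all such $n$, since cyclic SQS do not exist for every admissible order, so a recursion seems unavoidable.

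The first step is to handle the base cases. Hanani's recursive constructions reduce every admissible order to a finite list of small orders; for each order on that list I would exhibit a 1-overlap cycle explicitly --- by hand or a short cyclic development for the very smallest, and by a bounded search otherwise. This part is tedious but routine.

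The substance is the inductive step: from an eulerian SQS of order $v$ equipped with a 1-overlap cycle $\mathcal{U}$, produce an eulerian SQS of order $2v$ --- and likewise for the other Hanani-type recursions --- again equipped with a 1-overlap cycle. In the doubling construction on a point set $X \cup X'$ with $|X| = |X'| = v$, the blocks fall into a few families: the two disjoint copies of the original system on $X$ and on $X'$, and a large family of ``mixed'' blocks meeting both halves, indexed by a $1$-factorization of $K_v$. The job is to reorganize all of these into one long cyclic word in which consecutive windows overlap in exactly one symbol and every block occurs once. I would thread $\mathcal{U}$ and its primed copy through the two internal systems, and then splice the mixed blocks in runs, exploiting the fact that the pair structure indexing them behaves like the complete graph $K_v$, which is eulerian, so its edges admit a linear ordering with the overlaps one needs; stitching each such run into $\mathcal{U}$ at an appropriately chosen symbol is the delicate bookkeeping. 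An analogous but more intricate splicing handles the $v \mapsto 3v - 2$ (and further) recursions.

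Simultaneously meeting the ``each block exactly once'' and ``consecutive windows overlap in exactly one symbol'' requirements across \emph{all} block families during this reorganization is the main obstacle. Granting it, every $n \equiv 2$ or $4 \pmod 6$ with $n > 4$ is reached from a base order by a sequence of these recursions, so induction on $n$ gives an eulerian SQS of every such order, paralleling the analogous result for Steiner triple systems in Theorem~\ref{thm:hh1}.
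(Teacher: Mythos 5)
This statement is quoted in the thesis as a known result of Horan and Hurlbert \cite{HH2}; no internal proof is given, so your proposal must stand on its own, and as written it does not. The entire technical content of the theorem is the splicing step that you explicitly defer: you describe threading $\mathcal{U}$ and its primed copy through the two internal systems and stitching the mixed blocks in runs, but then write ``Granting it'' for precisely the simultaneous requirements (every block exactly once, consecutive windows overlapping in exactly one symbol) that make the construction hard. That is the theorem, not a lemma one may grant; Horan and Hurlbert's paper is essentially an extended verification of exactly this bookkeeping, together with explicitly checked base cases, which you likewise defer to ``a bounded search.'' Moreover, one concrete step in your sketch fails as stated: you justify ordering the runs of mixed blocks by saying the indexing pair structure ``behaves like the complete graph $K_v$, which is eulerian.'' In every relevant case $v\equiv 2$ or $4\pmod 6$ is even, so every vertex of $K_v$ has odd degree $v-1$ and $K_v$ is not eulerian; in addition, the mixed blocks of the doubling construction are indexed by pairs of one-factors of a one-factorization (one on each half), not by single edges of $K_v$, so the object whose traversal you need is structurally different from what you invoke.

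A cleaner route is available from the thesis itself and avoids redoing the overlap-cycle construction: by Hanani's existence theorem (see \cite{CD}) a Steiner quadruple system of order $n$ exists for every $n\equiv 2$ or $4\pmod 6$, and such a system with $n>4$ is a covering $4$-hypergraph with at least two edges, hence eulerian by Theorem~\ref{thm:coveringinduction}. This yields the stated existence result (indeed the stronger statement that \emph{every} Steiner quadruple system of order greater than $4$ is eulerian) without any of the splicing machinery, at the cost of being non-constructive, whereas the Horan--Hurlbert approach you are sketching produces an explicit $1$-overlap cycle.
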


Since a Steiner triple system of order $n$ exists if and only if $n\equiv 1$ or 3 (mod 6), Theorem~\ref{thm:hh1} covers every possible order, excepting the degenerate case $n=3$.  Likewise, a Steiner quadruple system of order $n$ exists if and only if $n\equiv 2$ or 4 (mod 6), so their result on quadruple systems is comprehensive as well.  Once again, our results will complete Horan and Hurlbert's above results by proving that all such designs are eulerian.

\subsection{Complexity of EULER TOUR}

There are a few results about the complexity of recognizing an eulerian hypergraph.  We begin by defining the EULER TOUR problem.\\

\begin{problem}EULER TOUR

{\rm \textsc{Given:}} A hypergraph $H$.

{\rm \textsc{Decide:}} Does $H$ have an Euler tour?
\end{problem}

If we consider only the set of 2-uniform hypergraphs ({\em i.e.} graphs), then this problem reduces to checking that the degree of every vertex is even, hence it is in the class P.  However, when we look at $k$-uniform hypergraphs for $k\geq 3$, this is no longer the case.  The proof \cite{LN} makes use of a known NP-complete problem that we adapt for our purpose.\\

\begin{problem}HAMILTON CYCLE

{\rm \textsc{Given:}} A graph $G$.

{\rm \textsc{Decide:}} Does $G$ have a Hamilton cycle?
\end{problem}

Famously, the HAMILTON CYCLE problem is NP-complete over several different classes of graphs.  Of interest to us is the following result by Garey, Johnson, and Tarjan from 1976:

\begin{thm}{\rm \cite{GJT}}\label{thm:Hamcycle}
The HAMILTON CYCLE problem is NP-complete over the set of planar, 3-regular, 3-connected graphs.
\end{thm}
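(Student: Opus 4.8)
The plan is to verify membership in NP and then to establish NP-hardness by a polynomial-time reduction from 3-SATISFIABILITY (3-SAT), a standard NP-complete problem. Membership is routine: a Hamilton cycle, presented as a cyclic ordering of $V(G)$, is a certificate of size $O(|V(G)|)$, and checking that consecutive vertices are adjacent and that each vertex occurs exactly once takes polynomial time. Thus all of the work lies in the hardness reduction, and the whole point of the theorem is that the reduction can be arranged so that the output graph is simultaneously planar, $3$-regular, and $3$-connected.

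Given a 3-CNF formula $\phi$ with variables $x_1,\dots,x_n$ and clauses $C_1,\dots,C_m$, I would assemble $G_\phi$ out of three kinds of planar fragments. First, a \emph{variable gadget} for each $x_i$: a small subgraph that a Hamilton cycle can traverse in exactly two ways, interpreted as assigning $x_i$ the value \emph{true} or \emph{false}, whose two ``sides'' each expose one connection point per occurrence of the corresponding literal. Second, a \emph{clause gadget} for each $C_j$, built from a Tutte-type cubic planar fragment attached to the three literals of $C_j$ and designed so that the cycle can cover the gadget precisely when at least one incident literal has been set \emph{true}. Third, \emph{exclusive-or (consistency) gadgets} wired between the ``active'' side of each variable gadget and the clause attachments, forcing the literals used to satisfy clauses to be exactly those assigned \emph{true}. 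Placing the variable gadgets in a row and routing the clause gadgets and wires around them in the plane keeps the graph planar. One then argues the two directions: a satisfying assignment of $\phi$ yields a Hamilton cycle of $G_\phi$ by threading each variable gadget on the chosen side and routing through each clause gadget via a true literal; conversely, a Hamilton cycle of $G_\phi$ forces a coherent choice at each variable gadget and must visit every clause gadget, so the induced assignment satisfies $\phi$.

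With the construction fixed, I would discharge the structural requirements one at a time. Polynomiality: each gadget has bounded size and there are $O(n+m)$ of them together with $O(nm)$ connecting edges, so $G_\phi$ has polynomial size and is computable in polynomial time. Planarity: exhibit an explicit embedding from the layout above, checking that the wires between gadgets can be routed without crossings (the classical device being to bring all occurrences of a variable onto a single face). Cubicity: check that every vertex, including the seams where wires meet gadgets, has degree exactly $3$ — this is precisely why the clause and consistency gadgets must be designed with care rather than taken naively. $3$-connectivity: show $G_\phi$ has no cut vertex and no $2$-vertex cut, which follows once each gadget is internally $3$-connected and the gadgets are glued along enough internally disjoint paths; concretely, one traces, for any pair of deleted vertices, a path between any two remaining vertices through the rigid gadget skeleton.

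The main obstacle is the simultaneous enforcement of planar, $3$-\emph{regular}, \emph{and} $3$-connected while still faithfully encoding satisfiability. Any one or two of these constraints is comparatively easy, but $3$-regularity is severe — every gadget must branch with degree exactly three — and it interacts awkwardly with $3$-connectivity, since degree-$3$ junctions are exactly the places where stray $2$-cuts tend to appear. The resolution is to use Tutte's planar cubic fragment (and the related forcing fragments) as the building block for the clause and exclusive-or gadgets: these are cubic, planar, $3$-connected, and have exactly the constrained Hamilton-path behaviour needed to simulate logical OR and XOR. Verifying that the assembled graph inherits $3$-connectivity, rather than picking up a $2$-cut at some gluing seam, is the most delicate part of the argument, and is where the bulk of the case analysis would go.
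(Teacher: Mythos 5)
This theorem is not proved in the thesis at all: it is quoted from Garey, Johnson, and Tarjan \cite{GJT} as a known result, and the text immediately weakens it (Corollary~\ref{cor:Hamcycle2}) by discarding planarity and keeping only simplicity and $3$-regularity, which is all that the EULER TOUR reduction needs. So there is no in-paper argument to compare yours against; your proposal has to be judged as a reconstruction of the external proof, and its general strategy — NP membership plus a reduction from 3-SAT built from planar cubic variable, clause, and exclusive-or gadgets in the spirit of Tutte's fragments — is indeed the route that Garey, Johnson, and Tarjan take.

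As a proof, however, what you have written is an outline with the decisive step missing. The claim that ``placing the variable gadgets in a row and routing the clause gadgets and wires around them in the plane keeps the graph planar'' does not hold for an arbitrary 3-CNF formula: the variable--clause incidence structure is in general non-planar, so crossings between the consistency (XOR) connections are unavoidable. One cannot evade this by invoking a planar restriction of 3-SAT either, since that would itself require proof (and historically came later than \cite{GJT}). The core technical content of the Garey--Johnson--Tarjan argument is precisely an explicit planar, cubic exclusive-or gadget whose pairwise crossings can be replaced by a planar crossover construction, together with clause and variable gadgets whose seams preserve $3$-regularity and $3$-connectivity; you name these requirements but defer all of them (``this is where the bulk of the case analysis would go''). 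Until those gadgets are exhibited and the degree, planarity, and $3$-connectivity checks at every gluing seam are actually carried out, the reduction is a plan rather than a proof — the gap is not a technicality, it is the theorem's main content.
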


We are not very interested in what it means for a graph to be planar, so we will relax that qualification.  A {3-connected} graph is a connected graph of order at least 3 that is connected after deletion of any set of up to two vertices.  Observe that a 3-regular, 3-connected graph must also be simple, so we can reframe Theorem~\ref{thm:Hamcycle} as follows.\\

\begin{cor}\label{cor:Hamcycle2}
The HAMILTON CYCLE problem is NP-complete over the set of simple 3-regular graphs.
\end{cor}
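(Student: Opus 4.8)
The plan is to obtain the corollary from Theorem~\ref{thm:Hamcycle} by an essentially trivial (identity) reduction, together with a one-line argument for membership in NP. First I would verify the parenthetical claim that every 3-regular, 3-connected graph is simple: a loop at a vertex $v$ would contribute $2$ to $\deg(v)$, forcing $v$ to have exactly one further incident edge, and then deleting the other end of that edge disconnects $v$ from the rest of the graph (which has order at least $3$), contradicting 3-connectivity; similarly, if $u$ and $v$ were joined by two parallel edges, 3-regularity leaves each of $u,v$ with exactly one additional incident edge, and deleting the two ``outside'' neighbours isolates $\{u,v\}$, again contradicting 3-connectivity. Hence the class of planar, 3-regular, 3-connected graphs is a subclass of the class of simple 3-regular graphs.

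Next I would establish NP-hardness. Since every instance of HAMILTON CYCLE restricted to planar, 3-regular, 3-connected graphs is already an instance of HAMILTON CYCLE restricted to simple 3-regular graphs, and the answer is the same for both, the identity map is a polynomial-time (indeed, constant-time) reduction from the former problem to the latter. By Theorem~\ref{thm:Hamcycle}, the former is NP-complete, hence NP-hard; therefore HAMILTON CYCLE over simple 3-regular graphs is NP-hard as well.

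Finally I would note that the problem lies in NP: given a graph $G$ and a candidate cyclic sequence of vertices, one checks in polynomial time that the sequence uses each vertex of $G$ exactly once and that consecutive vertices (cyclically) are adjacent in $G$, so a Hamilton cycle serves as a polynomially verifiable certificate. Combining NP-hardness with membership in NP gives that HAMILTON CYCLE over simple 3-regular graphs is NP-complete.

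The only point requiring any care is the direction of the reduction — one must restrict \emph{from} the known hard subclass \emph{into} the larger class of simple 3-regular graphs, not the other way around — and the small structural lemma that 3-connected 3-regular graphs are automatically simple, which is what makes the identity reduction land inside the target class. Neither of these is a genuine obstacle.
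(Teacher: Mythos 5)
Your proposal is correct and is essentially the paper's own argument: the paper likewise just observes that a 3-regular, 3-connected graph must be simple and then reads Corollary~\ref{cor:Hamcycle2} off Theorem~\ref{thm:Hamcycle} via the identity reduction, with membership in NP routine. One pedantic caveat on your structural lemma: in the parallel-edge case the ``outside'' neighbours may fail to exist (a triple edge between $u$ and $v$, ruled out instead by connectedness and order at least $3$), and deleting them only contradicts 3-connectivity when some vertex other than $u,v$ survives (the 4-vertex graph consisting of two double edges joined by two single edges escapes your particular deletion, but deleting $u$ together with the outside neighbour of $v$ disconnects it), so the observation — and hence the corollary — stands after these trivial patches.
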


Now we will look at and prove some results about the complexity of EULER TOUR in hypergraphs.\\

\begin{thm}\label{thm:complexity}\hfill
\begin{description}
\item[(1)] {\rm \cite[Theorem 2.29]{BS2}} EULER TOUR is NP-complete on the set of 3-uniform, 2-regular, linear hypergraphs.
\item[(2)] {\rm \cite[Theorem 1]{LN}} Let $k\geq 3$.  Then EULER TOUR is NP-complete on the set of $k$-uniform hypergraphs.
\end{description}
\end{thm}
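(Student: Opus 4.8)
The plan is to establish both parts by polynomial-time many-one reductions, after observing that EULER TOUR $\in$ NP: an Euler tour of $H$ is a sequence of length $|E(H)|$, hence a polynomial-size certificate, and checking that it is a strict closed trail traversing every edge exactly once takes polynomial time. So all the work is in the hardness reductions.

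For part (1), I would reduce from HAMILTON CYCLE on simple $3$-regular graphs, which is NP-complete by Corollary~\ref{cor:Hamcycle2}; we may assume the input graph $G$ is connected (a $3$-connected graph is connected, and a disconnected graph is a trivial NO instance). The reduction sends $G$ to its dual hypergraph $D$ (Definition~\ref{def:dual}), which is computable in polynomial time. Since $G$ is loopless and $3$-regular, the edge $e_v$ of $D$ has size $\deg_G(v)=3$, so $D$ is $3$-uniform; each vertex of $D$ is an edge $uv$ of $G$ and lies in exactly the two edges $e_u,e_v$ of $D$, so $D$ is $2$-regular; and $e_u\cap e_v$ is the set of edges of $G$ joining $u$ and $v$, of size at most $1$ as $G$ is simple, so $D$ is linear. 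The heart of the proof is the equivalence
\[
G \text{ has a Hamilton cycle} \iff D \text{ is eulerian}.
\]
Given a Hamilton cycle $v_1v_2\cdots v_nv_1$ of $G$ (here $n=|V(G)|\geq 4$), set $x_i$ to be the edge $v_iv_{i+1}$ of $G$ (indices mod $n$); then $x_0e_{v_1}x_1e_{v_2}x_2\cdots e_{v_n}x_0$ is a strict closed trail of $D$, since $x_{i-1},x_i\in e_{v_i}$ and $x_{i-1}\neq x_i$, and it traverses each of the $n$ edges of $D$ exactly once, so $D$ is eulerian. Conversely, an Euler tour of $D$ lists the edges of $D$ as $e_{v_1},\dots,e_{v_n}$ for a permutation $v_1,\dots,v_n$ of $V(G)$, and the anchor flanked by $e_{v_i}$ and $e_{v_{i+1}}$ is an edge of $G$ whose ends are $v_i$ and $v_{i+1}$ (using that $G$ is simple and $v_i\neq v_{i+1}$); hence $v_1v_2\cdots v_nv_1$ is a Hamilton cycle of $G$.

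For part (2), the case $k=3$ is immediate: the hypergraphs produced in part (1) are $3$-uniform, so EULER TOUR on the class of $3$-uniform hypergraphs is already NP-hard, and it is in NP. For $k>3$ I would reduce from this $3$-uniform case. Given a $3$-uniform hypergraph $H$, form $H^{*}$ by adjoining to each edge $e$ of $H$ a set of $k-3$ new vertices $w_{e,1},\dots,w_{e,k-3}$, all distinct from one another and from $V(H)$; then $H^{*}$ is $k$-uniform, and since $k$ is fixed, $H^{*}$ is constructed in polynomial time. Each new vertex has degree $1$ in $H^{*}$, and a degree-$1$ vertex can never be an anchor of a strict closed trail (the two distinct edges flanking an anchor must both be incident with it). Consequently any Euler tour of $H^{*}$ uses only vertices of $V(H)$ as anchors and traverses each enlarged edge via two vertices of the corresponding edge of $H$; erasing the padding vertices turns it into an Euler tour of $H$. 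Conversely, any Euler tour of $H$ is, verbatim, an Euler tour of $H^{*}$. Hence $H^{*}$ is eulerian if and only if $H$ is, which completes the reduction.

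The step I expect to be the main obstacle is the equivalence in part (1): making the bookkeeping between anchors of $D$ (which are edges of $G$) and vertices of $G$ precise, and in particular verifying that the closed sequence $v_1v_2\cdots v_nv_1$ extracted from an Euler tour really has no repeated vertices, so that it is a genuine Hamilton cycle rather than merely a closed spanning walk, together with the small routine checks that $D$ is connected and has more than one edge so that ``$D$ eulerian'' is not vacuously true. Once that correspondence is nailed down, the remaining arguments are routine.
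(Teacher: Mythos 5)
Your proposal is correct and takes essentially the same approach as the paper: membership in NP by certificate checking, NP-hardness of (1) via the dual of a simple 3-regular graph (which is 3-uniform, 2-regular, and linear) together with the Hamilton-cycle/Euler-tour correspondence, and (2) by padding edges with degree-1 dummy vertices that can never serve as anchors. The only cosmetic difference is that you reduce the $k$-uniform case from the 3-uniform EULER TOUR problem, whereas the paper applies the same padding gadget directly to the dual hypergraph of the 3-regular graph.
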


\begin{proof}
The proofs of these theorems are similar, as we will now show.

First of all, verification that a given sequence is an Euler tour is a matter of determining that it is a closed walk of the appropriate length that does not repeat any edges, which is polynomial in the order and size of the hypergraph.  Therefore, EULER TOUR is in NP.

To show that EULER TOUR is also NP-hard, we reduce the HAMILTON CYCLE problem to EULER TOUR.  Let $G=(V(G),E(G))$ be a simple 3-regular graph.  Since $G$ is loopless, we may regard it as a 2-uniform hypergraph.  Let $H=(V,E)$ be the dual hypergraph of $G$ as given in Definition~\ref{def:dual}.  Note that $H$ is 3-uniform because $G$ is 3-regular; 2-regular because $G$ is 2-uniform; and no two edges of $H$ share more than one common vertex because $G$ has no parallel edges (and, therefore, no two vertices of $G$ are the ends of two distinct edges).

To complete the proof of (1), we show that $G$ has a Hamilton cycle if and only if $H$ has an Euler tour.  First, suppose $C=v_0e_1v_1e_2\dotso v_{n-1}e_nv_0$ is a Hamilton cycle of $G$.  Then $T=e_1e_{v_1}e_2e_{v_2}\dotso e_{n-1}e_{v_{n-1}}e_ne_{v_0}e_1$ is a closed walk of $H$.  It traverses every edge $e_{v_0},\dotso e_{v_{n-1}}$ of $H$ exactly once because $C$ traverses every vertex $v_0,\dotso , v_{n-1}$ of $G$ exactly once.  Hence $T$ is an Euler tour of $H$.

Conversely, suppose $T=e_1e_{v_1}e_2e_{v_2}\dotso e_{n-1}e_{v_{n-1}}e_ne_{v_0}e_1$ is an Euler tour of $H$.  We claim that $C=v_0e_1v_1e_2\dotso v_{n-1}e_nv_0$ is a Hamilton cycle of $G$. It is clear from the duality of $G$ and $H$ that $C$ is a closed walk of $G$ that traverses every vertex exactly once.  It necessarily traverses no edge more than once because that would imply that it traverses some vertex more than once.  Hence $C$ is indeed a Hamilton cycle of $G$.

Since HAMILTON CYCLE is NP-complete on the set of simple 3-regular graphs, this shows that EULER TOUR is NP-hard (and, therefore, NP-complete) on the set of 3-uniform, 2-regular, linear hypergraphs.

To complete the proof of (2), we must modify the dual hypergraph $H$.  Of course, if $k=3$, then we can complete the proof as we did for (1), but if $k\geq 4$ then we will need to increase the cardinality of the edges somehow.  We accomplish this by introducing ``dummy'' vertices that will fill out the cardinalities of the edges, but which only have degree 1, so they cannot be traversed by an Euler tour.

Fix $k\geq 3$.  For each $v\in V(G)$, define a set of new vertices $\{u_v^1,\dotso , u_v^{k-3}\}$, such that these sets are pairwise disjoint and disjoint from $V(H)$.  Define a new hypergraph $H'$ from $H$ using these extra vertices in the following way:
\begin{itemize}
\item $V(H') = V(H)\cup\big(\bigcup_{v\in V(G)}\{u_v^1,\dotso , u_v^{k-3}\}\big)$;
\item $E(H') = \{e'_v: v\in V(G)\}$;
\item $\psi(e'_v) = e_v\cup \{u_v^1,\dotso , u_v^{k-3}\}$, for each $e'_v\in E(H')$.
\end{itemize}

Now, observe that $H'$ is $k$-uniform and its vertices have degree 2 if they come from $V(H)$, and degree 1 otherwise.  None of the degree 1 vertices can be in an Euler tour, so an Euler tour of $H'$ truly does correspond to a Hamilton cycle of $G$ since it traverses only vertices and edges of $H'$ that correspond to edges and vertices of $G$.  Much as in the proof of (1), we can also see that a Hamilton cycle of $G$ corresponds to an Euler tour of $H'$.  Hence EULER TOUR is NP-hard (and, therefore, NP-complete) on the set of $k$-regular hypergraphs.  (Note that we have also shown that EULER TOUR is NP-complete on the set of $k$-regular {\em linear} hypergraphs.)
\end{proof}

Lonc and Naroski also proved a complexity result about a more specific class of 3-uniform hypergraphs.  Though it is more technical and of less interest to us, we present it here for completeness.\\

\begin{thm}\label{thm:complexity2}{\rm \cite[Theorem 7]{LN}} The EULER TOUR problem on the set of 3-uniform hypergraphs whose skeleton is connected is NP-complete.
\end{thm}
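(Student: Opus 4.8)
The plan is to follow the template of the proof of Theorem~\ref{thm:complexity}: first observe membership in NP, then establish NP-hardness by a polynomial reduction from HAMILTON CYCLE on simple 3-regular graphs (Corollary~\ref{cor:Hamcycle2}). Membership in NP is immediate and identical to the earlier argument: a candidate sequence is verified to be an Euler tour in polynomial time by checking that it is a closed walk of the correct length that repeats no edge. The difficulty is entirely on the hardness side, because the dual construction used for Theorem~\ref{thm:complexity}(1) outputs a \emph{linear} hypergraph, in which no two edges meet in two vertices; hence its skeleton is edgeless and, once the hypergraph has more than one edge, disconnected. So we must design a reduction whose output is a 3-uniform hypergraph whose skeleton is forced to be connected.

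First I would take the dual hypergraph $H$ of a simple 3-regular graph $G$, exactly as in the proof of Theorem~\ref{thm:complexity}(1), so that $H$ is 3-uniform and $G$ has a Hamilton cycle if and only if $H$ has an Euler tour. Then I would augment $H$ to a hypergraph $H'$ by attaching to each edge $e=\{a,b,c\}$ of $H$ a small \emph{gadget} of auxiliary edges built mostly on fresh, private vertices, with the gadgets of distinct edges chained together so that, using connectivity of $G$, the resulting skeleton $S(H')$ is connected. The gadget edges have to meet edges of $H$ — and each other — in two common vertices in order to contribute edges to $S(H')$, but they must be designed so as never to create an Euler tour of $H'$ that fails to correspond to an Euler tour of $H$. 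The tool for this is the one already used in the proof of Theorem~\ref{thm:complexity}(2): include vertices of degree~$1$ and exploit simple parity counts on the low-degree private vertices to force each gadget to be traversed rigidly, namely as a closed detour based at a vertex of the edge of $H$ to which it is attached — a detour that can always be spliced into, or excised from, an Euler tour at that anchor.

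With such a gadget available the two directions of the reduction follow the usual pattern: from a Hamilton cycle of $G$ produce an Euler tour of $H$ as in Theorem~\ref{thm:complexity}(1) and splice in one closed detour per gadget at the anchor where the corresponding edge of $H$ is traversed; conversely, from an Euler tour of $H'$ delete the gadget edges — which by rigidity occur as closed subtrails hanging off single anchors — to recover an Euler tour of $H$, hence a Hamilton cycle of $G$. Since the construction is polynomial, NP-hardness follows, and together with membership in NP this shows EULER TOUR is NP-complete on 3-uniform hypergraphs with connected skeleton. The main obstacle is precisely the gadget design: one must balance the requirement that the gadget edges overlap enough (in pairs of vertices) to connect the skeleton against the requirement that this extra overlap not open up spurious traversals, and then verify rigidity through a careful degree-and-parity analysis of the low-degree vertices. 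The same gadget would equally let one reduce from EULER TOUR on arbitrary 3-uniform hypergraphs (Theorem~\ref{thm:complexity}(2) with $k=3$) instead of from the dual construction, which may be a cleaner route.
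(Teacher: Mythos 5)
Your proposal is a plan rather than a proof: everything except the routine parts (membership in NP, and the splicing/excision bookkeeping) is delegated to a gadget that you never construct, and the gadget is exactly where the theorem lives. You need 3-uniform auxiliary edges that meet the original edges (and one another) in \emph{two} vertices in order to put edges into the skeleton, yet you also need every Euler tour of the augmented hypergraph $H'$ to restrict to an Euler tour of $H$. You name this tension but do not discharge it. In particular, the tool you invoke from the proof of Theorem~\ref{thm:complexity}(2) does not do what you need: a gadget edge $g$ with $|g\cap e|=2$, say $g=\{a,b,x\}$ with $x$ private, can be traversed via $a$ and $b$, bypassing $x$ entirely, so giving $x$ degree $1$ (or low degree) only guarantees that $x$ is never an anchor --- it does not force the gadget to be consumed as a ``rigid closed detour anchored at a single vertex of $e$,'' does not prevent a tour from entering the gadget at $a$ and leaving at $b$ (thereby using gadget edges as shortcuts between parts of $H$), and does not rule out Euler tours of $H'$ that exist even when $H$ has none, or whose restriction to $E(H)$ falls apart into several closed trails rather than one. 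The ``careful degree-and-parity analysis'' you defer is the entire content of the hardness proof; until a concrete gadget is written down and the rigidity and excision claims are verified for it, the reduction does not exist.

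For comparison, the thesis does not prove Theorem~\ref{thm:complexity2} at all: it is quoted from Lonc and Naroski \cite{LN}, and the only remark made is that their argument relates this problem to EULER TOUR on connected 3-uniform hypergraphs, which Theorem~\ref{thm:complexity} already shows to be NP-complete. That is precisely the route you mention only in your final sentence: take an arbitrary 3-uniform instance and augment it so that its skeleton becomes connected while the existence of an Euler tour is preserved. Starting instead from HAMILTON CYCLE via the dual construction of Corollary~\ref{cor:Hamcycle2} is not wrong, but it buys you nothing --- the dual is linear, so you face the identical augmentation problem --- and it makes the write-up longer. If you want to complete the argument, develop the augmentation-with-preservation lemma directly; that single lemma, stated and proved for an explicit construction, is the missing piece.
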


Lonc and Naroski prove Theorem~\ref{thm:complexity2} by reducing the problem to the EULER TOUR problem on the set of connected 3-uniform hypergraphs, which we have seen is NP-complete.

\subsection{Conditions for a Hypergraph to be (Quasi)-eulerian}
The paper by Bahmanian and \v{S}ajna \cite{BS2} was meant to be the definitive paper on eulerian hypergraphs.  A lot of focus was also given to Euler families, as these are weaker versions of Euler tours, and results about eulerian and quasi-eulerian hypergraphs are closely related.  They presented several necessary and sufficient conditions for a hypergraph to be eulerian or quasi-eulerian.  We begin with the block characterization, for which additional terminology is required.\\



\begin{defn}{\rm \cite{BS2} Let $H$ be a hypergraph with no empty edges.  A vertex $v\in V$ is a {\em separating vertex} for $H$ if $H$ decomposes into two non-empty connected hypersubgraphs with just $v$ in common.  A {\em block} of $H$ is a connected strong subhypergraph of $H$ with no separating vertices, maximal with respect to this property.
}
\end{defn}

\begin{remark}{\rm A separating vertex of a hypergraph $H$ is either a cut vertex of $H$ or a vertex in a singleton edge \cite{BS1}.   However, it is self-evident that if $H$ has a singleton edge, then it cannot be eulerian or quasi-eulerian, as such an edge cannot be part of a walk.  Therefore, a block can usually be regarded as a maximally connected strong subhypergraph with no cut vertices --- in particular, this interpretation does not change the result that follows.\\
}
\end{remark}

\begin{thm}\label{thm:blocks}{\rm \cite[Theorem 2.24]{BS2}} Let $H$ be a hypergraph.  Then

\begin{description}
\item[(1)] $H$ is quasi-eulerian if and only if each block of $H$ is quasi-eulerian.
\item[(2)] $H$ is eulerian if and only if each block $B$ of $H$ has an Euler tour that traverses every cut vertex of $H$ contained in $B$.\qed
\end{description}
\end{thm}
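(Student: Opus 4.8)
The plan is to prove both statements by reducing global (quasi-)eulerianity of $H$ to that of its blocks, using the fact that distinct blocks share at most one vertex and that a closed strict trail cannot leave one block and return without passing through a shared (separating) vertex. Throughout I assume $H$ has no empty edges and no singleton edges, since by the preceding remark a hypergraph with a singleton edge is neither eulerian nor quasi-eulerian, and in that case every block that is a single singleton edge also fails, so the biconditionals hold trivially; thus I may treat separating vertices as cut vertices.

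For part (1), the forward direction is the easy one: given an Euler family $\F$ of $H$, I would show each component (closed strict trail) $T$ of $\F$ lies entirely within a single block. Indeed, $G(T)$ — the graph-like substructure traversed by $T$ — is connected and $2$-edge-connected in the incidence-graph sense (Remark~\ref{remark:closedtrails} together with Proposition~\ref{prop:2-edge-connected}), so $T$ traverses no cut edge of $H$ and, via Theorem~\ref{thm:cutvvertex}, cannot cross a cut vertex in a way that separates its edge set; hence $E(T)$ lies in one block. Partitioning $\F$ according to which block each component falls into yields, for each block $B$, a sub-collection of $\F$ that traverses exactly the edges of $B$ and is still anchor- and edge-disjoint — an Euler family of $B$. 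For the converse, I would take an Euler family $\F_B$ of each block $B$ and form $\F = \bigcup_B \F_B$; since blocks are edge-disjoint, $\F$ traverses every edge exactly once, and I must check anchor-disjointness, which can be arranged since components in different blocks can only overlap at a cut vertex and one can adjust the collection (or simply note that two anchor-disjoint families on edge-disjoint block structures remain a valid Euler family after relabelling — the only subtlety is if a cut vertex is an anchor in two blocks, which is permitted to be resolved because the definition of Euler family is about edges being covered once, and anchor-disjointness within the union can be obtained). So $H$ is quasi-eulerian.

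For part (2), the forward direction: given an Euler tour $T$ of $H$, I would, for each block $B$, look at the subsequence of $T$ restricted to edges of $B$. Because $T$ can only pass between $B$ and the rest of $H$ at cut vertices of $H$ lying in $B$, these excursions decompose $T$ into closed strict subtrails within $B$, all meeting at cut vertices; concatenating them (legitimately, since they are edge-disjoint and share those cut vertices) produces a single closed strict trail of $B$ that traverses every edge of $B$ and visits every cut vertex of $H$ in $B$ — an Euler tour of $B$ with the required property. Conversely, given for each block $B$ an Euler tour $T_B$ traversing all cut vertices of $H$ in $B$, I would build a global Euler tour by a "block-tree" traversal: the blocks and cut vertices of $H$ form a tree (the block-cut tree), and I would walk around this tree, splicing each $T_B$ into the tour at the moment the traversal first reaches a cut vertex of $B$ that has already been visited — this is possible precisely because $T_B$ visits every cut vertex of $B$, so whichever cut vertex the global traversal arrives at, $T_B$ can be entered and exited there. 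Formally this is an induction on the number of blocks: peel off a leaf block $B_0$ of the block-cut tree, attached at a single cut vertex $v$; by induction $H \setminus E(B_0)$ (restricted to its nontrivial part, plus $v$) has an Euler tour $T'$ visiting every cut vertex including $v$, and then splice $T_{B_0}$ into $T'$ at $v$.

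The main obstacle will be the converse of part (2): making the splicing argument rigorous, in particular verifying that the block-cut structure of a hypergraph is genuinely a tree (so the induction on leaf blocks is valid) and that the spliced object is a \emph{strict} trail — one must confirm that splicing $T_{B_0}$ (which uses only edges of $B_0$) into $T'$ (which uses only edges of the other blocks) introduces no repeated edge, which follows from edge-disjointness of blocks, and no repeated anchor flag at $v$, which is where the hypothesis that $T_{B_0}$ \emph{visits} $v$ (rather than merely containing it as a floater) is essential. A secondary technical point is the anchor-disjointness bookkeeping in the converse of part (1); this is routine but must be stated carefully given the precise definition of Euler family.
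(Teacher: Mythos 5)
First, note that the thesis states this theorem without proof, citing \cite[Theorem 2.24]{BS2}, so your argument can only be judged on its own terms; and as written it has genuine gaps. The most serious is the opening claim in part (1) that every component $T$ of an Euler family lies in a single block. This is false: take the $2$-uniform hypergraph consisting of two triangles sharing a vertex $v$. Its blocks are the two triangles, but by Lemma~\ref{lem:eulerfamilyingraphs} its only Euler family is a single Euler tour, which traverses edges of both blocks. Your justification also does not work: $G(T)$ being $2$-edge-connected says nothing about cut \emph{vertices}, and a closed strict trail may pass through a cut vertex of $H$ freely. The correct move (and the one you in fact use in part (2)) is to restrict $T$ to each block $B$ and splice out the excursions, using the structural fact that an excursion leaving $E(B)$ at a cut vertex must re-enter at the \emph{same} vertex (otherwise maximality of the block is violated); this yields one closed strict trail of $B$ per component of $\F$ meeting $B$, and anchor-disjointness is inherited. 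The same example shows the converse of (1) cannot be a plain union: the block families of the two triangles both anchor at $v$, so their union is not anchor-disjoint, and the fix is not a re-reading of the definition (anchor-disjointness is part of it) but repeated concatenation of trails sharing an anchor, as in Case 2 of the proof of Theorem~\ref{thm:veblen}.

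In part (2) the hypergraph-specific crux is asserted rather than proved: in the forward direction you claim the trail obtained by restricting the Euler tour to $B$ ``visits every cut vertex of $H$ in $B$.'' Unlike in graphs, an edge containing a cut vertex $v$ may be traversed via two other vertices, so this needs an argument --- essentially that every edge incident with $v$ whose remaining vertices lie in the component of $H-v$ containing $B-v$ belongs to $B$, and that the tour must enter that component through such an edge anchored at $v$; this rests on block/cut-vertex structure facts for hypergraphs that you neither prove nor cite. Finally, the induction in the converse of (2) does not close: after deleting a leaf block $B_0$ attached at $v$, the vertex $v$ need not be a cut vertex of the smaller hypergraph, so the induction hypothesis only yields \emph{some} Euler tour of $H\setminus E(B_0)$, not one anchored at $v$, and the splice may be impossible. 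You must either strengthen the inductive statement (for instance, that the tour produced traverses every cut vertex of the original $H$) or avoid deletion altogether and splice the given block tours along the block--cut structure, using that each $T_B$ anchors at every cut vertex of $H$ in $B$, so every attachment point is available on both sides.
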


Theorem~\ref{thm:blocks} illustrates that we need to consider cut vertices (and similarly, cut edges) carefully when constructing Euler tours and families.  We will explore this concept more in Chapter~\ref{chapter:edgecuts}.

Now we turn our attention to a necessary condition that has an impact on other results on the hamiltonicity of line graphs.\\

\begin{thm}\label{thm:hamiltonianlinegraph}{\rm \cite[Theorem 2.35]{BS2}} Let $H$ be a hypergraph and $\mathcal{L}(H)$ be its line graph.  If $H$ is eulerian, then $\mathcal{L}(H)$ is hamiltonian.  \qed
\end{thm}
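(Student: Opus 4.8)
If $H$ is eulerian, then its line graph $\mathcal{L}(H)$ is hamiltonian.

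The plan is to extract a Hamilton cycle of $\mathcal{L}(H)$ directly from an Euler tour of $H$. Recall that $V(\mathcal{L}(H)) = E(H)$, and two edges of $H$ are adjacent in $\mathcal{L}(H)$ precisely when they share a vertex. So I would start with an Euler tour $T = v_0 e_1 v_1 e_2 \dotsb v_{m-1} e_m v_0$ of $H$; since $T$ is a strict trail traversing every edge exactly once, the edges $e_1, \dotsc, e_m$ are pairwise distinct, so they are exactly the $m$ distinct vertices of $\mathcal{L}(H)$. The natural candidate for a Hamilton cycle of $\mathcal{L}(H)$ is the cyclic sequence $e_1 e_2 \dotsb e_m e_1$.

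The key step is then to verify that consecutive edges in this sequence are genuinely adjacent in $\mathcal{L}(H)$. For each $i$, the edges $e_i$ and $e_{i+1}$ (indices mod $m$) are both incident with the anchor vertex $v_i$ of $T$ — this is immediate from the definition of a walk, since $v_i$ is the vertex via which $e_i$ is traversed on one side and $e_{i+1}$ on the other. Hence $e_i e_{i+1} \in E(\mathcal{L}(H))$ for every $i \in \mathds{Z}_m$. Combined with the fact that $e_1, \dotsc, e_m$ are pairwise distinct and exhaust $V(\mathcal{L}(H))$, this shows $e_1 e_2 \dotsb e_m e_1$ is a Hamilton cycle of $\mathcal{L}(H)$ (for $m \geq 3$; the small cases $m \leq 2$ should be checked separately — if $H$ is eulerian with $m \le 1$ it is trivial, and $m = 2$ cannot occur for a hypergraph admitting an Euler tour with all edges distinct and the tour closed, unless the two edges share two vertices, in which case $\mathcal{L}(H)$ is a single edge and one should note the convention on what ``hamiltonian'' means there, or simply observe $m\geq 3$ whenever $\mathcal{L}(H)$ has at least $3$ vertices).

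I do not expect a serious obstacle here: the argument is essentially a translation of ``consecutive edges of a walk meet at the intervening vertex'' into line-graph language. The only point requiring a little care is the degenerate small cases and the bookkeeping that the $e_i$ are distinct — the latter is exactly the strictness of an Euler tour, and this is what makes the sequence $e_1 \dotsb e_m$ hit every vertex of $\mathcal{L}(H)$ exactly once. One could alternatively phrase the proof via the incidence graph using Remark~\ref{remark:traversals}, but the direct argument above is cleaner.
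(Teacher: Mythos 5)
Your argument is correct, and it is the standard proof of this result: the thesis itself states the theorem as a citation of Bahmanian and \v{S}ajna with the proof omitted, and their argument is exactly this translation of an Euler tour $v_0e_1v_1\dotso v_{m-1}e_mv_0$ into the cyclic sequence $e_1e_2\dotso e_me_1$, using that consecutive edges share the intervening anchor and that strictness makes $e_1,\dotso,e_m$ an enumeration of $V(\mathcal{L}(H))$. Your caveat about small $m$ is not vacuous, though: $m=2$ really does occur for eulerian hypergraphs (the paper's own example $H_1$ with two parallel edges $1234$ has Euler tour $1a2b1$, and $\mathcal{L}(H_1)=K_2$ has no cycle), so the statement implicitly assumes $|E(H)|\geq 3$ or a convention on short cycles; your main argument settles all cases with $m\geq 3$, which is the intended content.
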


\begin{remark}{\rm
There has been much work done on the hamiltonicity of line graphs for certain designs (see, for example, \cite{AHM, HR}), and the matter has largely been settled for designs such as triple systems.  However, more recent work in the area is concerned with variations of the line graph rather than variations on these designs ({\em e.g.} \cite{PVW, EP}).

The reader must take caution, however: the converse of Theorem~\ref{thm:hamiltonianlinegraph} does not hold in general.  It is sufficient that the {\em dual} of a hypergraph $H$ be hamiltonian in order for $H$ to be eulerian; however, the line graph is isomorphic to the simple 2-section of the dual, and we know that information about $H$ is lost when examining the 2-section.  The next result gives conditions under which the converse does hold.\\
}
\end{remark}


\begin{thm}{\rm \cite[Theorem 2.17]{BS2}} Let $H$ be a hypergraph.  Let $L$ be a graph and assume one of the following holds:

\begin{description}
\item[(1)] $L=\mathcal{L}(H)$ and $\text{deg}_H(v)\leq 2$ for all $v\in V$; or
\item[(2)] $L=\mathcal{L}_2(H)$ and $L$ is bipartite; or
\item[(3)] $L=\mathcal{L}_3(H)$.
\end{description}

If $L$ is hamiltonian, then $H$ is eulerian.  If $L$ admits a 2-factor, then $H$ is quasi-eulerian.\qed
\end{thm}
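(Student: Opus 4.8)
The strategy is the same for all three variants, and it reduces the theorem to a small combinatorial fact about list-colouring a cycle. By definition of the level-$\ell$ line graph, a Hamilton cycle of $L$ is nothing but a cyclic ordering $e_1,e_2,\dots,e_m$ of \emph{all} the edges of $H$ in which consecutive edges $e_i,e_{i+1}$ (indices taken in $\mathds{Z}_m$) satisfy $e_i\cap e_{i+1}\neq\emptyset$ in case~(1), $|e_i\cap e_{i+1}|=2$ in case~(2), and $|e_i\cap e_{i+1}|=3$ in case~(3). Writing $S_i=e_i\cap e_{i+1}$, suppose we can pick an anchor $v_i\in S_i$ for each $i\in\mathds{Z}_m$ such that $v_{i-1}\neq v_i$ for all $i$. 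Then $W=v_0e_1v_1e_2\dots v_{m-1}e_mv_0$ is a closed walk of $H$ --- the inequalities $v_{i-1}\neq v_i$ are exactly what is needed for $v_{i-1}$ and $v_i$ to be adjacent via $e_i$ --- it is a strict trail because $e_1,\dots,e_m$ are pairwise distinct, and it traverses every edge exactly once because $\{e_1,\dots,e_m\}=E(H)$. Hence $W$ is an Euler tour. So it suffices to show that, under each of the three hypotheses, the anchors $v_i$ can be chosen with $v_{i-1}\neq v_i$; this is precisely a proper list-colouring of the cycle $C_m$ with the lists $S_i$. (Since $L$ is hamiltonian, $|E(H)|=m\geq 3$.)

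For case~(1) the degree bound does everything: \emph{any} choice of anchors works, for if $v_{i-1}=v_i=v$ then $v$ lies in the three edges $e_{i-1},e_i,e_{i+1}$, which are distinct since $\mathcal{L}(H)$ is simple and $m\geq3$, giving $\deg_H(v)\geq 3$, a contradiction. For case~(3) a greedy sweep works: take $v_1\in S_1$, then $v_i\in S_i\setminus\{v_{i-1}\}$ for $i=2,\dots,m-1$, and finally $v_m=v_0\in S_m\setminus\{v_{m-1},v_1\}$, which is non-empty because $|S_m|=3$. For case~(2) we have $|S_i|=2$ for all $i$, and since $\mathcal{L}_2(H)$ is bipartite the cycle $C_m$ has even length. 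An even cycle can always be list-coloured from lists of size $2$: if all the $S_i$ equal a common pair $\{a,b\}$, colour the slots alternately $a,b,a,b,\dots$, consistently around the cycle because $m$ is even; otherwise some consecutive pair of lists differs, so after rotating the labels we may assume $S_m\neq S_1$, and then picking $v_1\in S_1\setminus S_m$ (non-empty, as these are distinct $2$-sets) followed by $v_i\in S_i\setminus\{v_{i-1}\}$ for $i=2,\dots,m$ does the job, the wrap-around constraint $v_m\neq v_1$ holding automatically since $v_1\notin S_m\ni v_m$.

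For the $2$-factor statement, note that a $2$-factor of $L$ is a disjoint union of cycles $C^1,\dots,C^t$ whose vertex sets partition $V(L)=E(H)$, each of length at least $3$. Applying the single-cycle construction above to each $C^j$ yields a non-trivial closed strict trail $T_j$ of $H$, and the $T_j$ are pairwise edge-disjoint because the $C^j$ partition $E(H)$. They need not be anchor-disjoint, but whenever two of them share an anchor we concatenate them at that vertex --- the concatenation of two edge-disjoint closed strict trails with a common vertex is again a closed strict trail --- which strictly decreases the number of trails; after finitely many such steps we are left with an edge-disjoint, anchor-disjoint collection of non-trivial closed strict trails jointly covering $E(H)$, that is, an Euler family of $H$. (Alternatively, the anchors describe a subgraph $G'$ of the incidence graph of $H$ in which every $e$-vertex has degree exactly $2$ and every $v$-vertex has even degree; its components are automatically vertex-disjoint, connected and even, hence eulerian, and so are the components of an Euler family.)

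The routine parts are the verification that $W$ is a strict trail and the $2$-factor bookkeeping; cases~(1) and~(3) are immediate once the reduction is made. The one genuinely delicate step is case~(2), where neither an arbitrary nor a greedy choice of anchors is guaranteed to work and one must actually solve the cyclic system of inequalities: the whole force of the hypothesis ``$\mathcal{L}_2(H)$ is bipartite'' is to make the cycle $C_m$ have even length, which removes the sole obstruction, namely an odd cycle all of whose anchor-lists coincide.
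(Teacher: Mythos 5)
The paper states this theorem as a cited result from Bahmanian and \v{S}ajna [BS2] with no proof of its own, so there is no in-paper argument to compare against; judged on its own, your proof is correct. The reduction of anchor selection to properly list-colouring the cycle $C_m$ with lists $S_i=e_i\cap e_{i+1}$, the use of bipartiteness solely to force even cycle length in the one delicate case $L=\mathcal{L}_2(H)$, and the repair of anchor-disjointness in the 2-factor case by concatenating edge-disjoint closed strict trails at shared anchors (or, equivalently, invoking the incidence-graph characterization) are all sound and consistent with the paper's definitions.
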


We next present a weakened statement of a result that will nonetheless provide a few interesting corollaries.  The full version of the theorem will be presented, along with its proof, in Section~\ref{chapter:tools}.\\

\begin{thm}\label{thm:incidencegraph1}{\rm \cite[Theorem 2.18]{BS2}} Let $H=(V,E)$ be a hypergraph and $G$ be its incidence graph.  If there exists a spanning subgraph $G'$ of $G$ such that $\text{deg}_{G'}(e) = 2$ for all $e\in E(H)$ and $\text{deg}_{G'}(v)$ is even for all $v\in V(H),$ then $H$ is quasi-eulerian.  If $G'$ additionally has at most one non-trivial connected component, then $H$ is eulerian.
\end{thm}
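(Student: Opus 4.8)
The plan is to translate the hypothesis about the spanning subgraph $G'$ of the incidence graph $G$ directly into a collection of closed walks in $H$. The key observation is that $G'$ is bipartite with parts $V(H)$ and $E(H)$, every e-vertex has degree exactly $2$ in $G'$, and every v-vertex has even degree in $G'$. First I would discard from $G'$ any isolated v-vertices and any e-vertices not actually present — but since $\deg_{G'}(e)=2$ for every $e\in E(H)$, all e-vertices survive, and we may as well restrict attention to the subgraph $G''$ of $G'$ induced by the e-vertices together with the v-vertices incident (in $G'$) with at least one edge. Then $G''$ is still even (deleting isolated vertices does not change parity of the rest), so each connected component of $G''$ is a connected even graph and hence eulerian by Theorem~\ref{thm:eulertour}.

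Next I would take an Euler tour of each nontrivial connected component of $G''$. Each such tour is a closed walk in a bipartite graph that alternates between v-vertices and e-vertices (since $G''\subseteq G$ is bipartite with those parts), and it traverses each e-vertex of that component exactly once, because $\deg_{G''}(e)=2$ means $e$ is traversed exactly once by any Euler tour (an Euler tour passing through a degree-$2$ vertex visits it once). By the correspondence recorded in Remark~\ref{remark:traversals}, item (2b) — a strict trail of $H$ corresponds to a trail in $G$ traversing each e-vertex at most once — each of these Euler tours pulls back to a closed \emph{strict} trail of $H$. These trails are pairwise edge-disjoint because the components of $G''$ are vertex-disjoint and in particular partition the e-vertices, i.e. the edges of $H$. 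They are pairwise anchor-disjoint for the same reason: a v-vertex of $H$ that is an anchor of one of these trails is a vertex of the corresponding component of $G''$, and the components are vertex-disjoint. Finally each such trail is nontrivial since it traverses at least one edge. Hence this collection of closed strict trails is an Euler family of $H$ (every edge of $H$ is traversed, exactly once), so $H$ is quasi-eulerian.

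For the second statement, suppose in addition that $G'$ has at most one nontrivial connected component. Then $G''$, being obtained from $G'$ by deleting isolated vertices, also has at most one nontrivial connected component; but $G''$ contains every e-vertex and each e-vertex has degree $2>0$, so $G''$ has \emph{exactly} one nontrivial connected component, containing all of $E(H)$. Applying the construction above to this single component yields a single closed strict trail of $H$ that traverses every edge of $H$ exactly once — that is, an Euler tour — so $H$ is eulerian.

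The main obstacle, and the step deserving the most care, is the bookkeeping in the correspondence between an Euler tour of a component of $G''$ and a strict closed trail of $H$: one must check that consecutive terms genuinely alternate v-vertex/e-vertex (so that the pulled-back sequence has the form $v_0e_1v_1\cdots e_kv_0$ required of a walk in $H$), that consecutive anchors are distinct (so it is a trail and not something like $vev$), and that no e-vertex is repeated (so it is \emph{strict} and each edge of $H$ is used exactly once). All three follow from bipartiteness of $G$ together with $\deg_{G'}(e)=2$, but writing this out cleanly — ideally by directly invoking Remark~\ref{remark:traversals}(2b) rather than re-deriving it — is where the proof needs to be precise. A minor subtlety is handling trivial components and isolated vertices of $G'$ so that the claimed trails are all nontrivial, which is why I pass to $G''$ at the outset.
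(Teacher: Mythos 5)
Your proposal is correct and follows essentially the same route as the paper: take Euler tours of the non-trivial connected components of the even subgraph $G'$ (Theorem~\ref{thm:eulertour}), use $\deg_{G'}(e)=2$ and Remark~\ref{remark:traversals} to pull each back to a closed strict trail of $H$, and observe that vertex-disjointness of the components gives an Euler family, with a single non-trivial component yielding an Euler tour. Your passage to $G''$ by deleting isolated v-vertices is just a cosmetic variant of the paper's restriction to non-trivial components, so there is nothing substantive to add.
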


We can almost immediately conclude the following three results by using Theorem~\ref{thm:incidencegraph1}.\\

\begin{cor}{\rm \cite[Corollary 2.19]{BS2}} Let $H$ be a hypergraph and let $G$ be its incidence graph.  If $G$ has a 2-factor, then $H$ is quasi-eulerian.  If $G$ is hamiltonian, then $H$ is eulerian.\qed
\end{cor}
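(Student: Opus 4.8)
The plan is to derive both conclusions directly from Theorem~\ref{thm:incidencegraph1} by exhibiting an appropriate spanning subgraph $G'$ of the incidence graph $G$. First I would handle the quasi-eulerian conclusion: suppose $G$ has a $2$-factor $F$. Since a $2$-factor is a spanning subgraph in which every vertex has degree exactly $2$, we have $\deg_F(e)=2$ for every e-vertex $e\in E(H)$ and $\deg_F(v)=2$ for every v-vertex $v\in V(H)$; in particular, $\deg_F(v)$ is even for all $v\in V(H)$. Thus $F$ satisfies the hypotheses on $G'$ in Theorem~\ref{thm:incidencegraph1}, and we conclude that $H$ is quasi-eulerian.

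For the eulerian conclusion, suppose $G$ is hamiltonian, and let $C$ be a Hamilton cycle of $G$. Then $C$ is a spanning subgraph of $G$ in which every vertex has degree exactly $2$, so as before $\deg_C(e)=2$ for all $e\in E(H)$ and $\deg_C(v)=2$ (hence even) for all $v\in V(H)$. Moreover, a cycle is connected, so $C$ has exactly one non-trivial connected component. By the strengthened form of Theorem~\ref{thm:incidencegraph1}, this forces $H$ to be eulerian.

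The only subtlety — and really the only place any care is needed — is to confirm that the incidence graph $G$ has no isolated vertices and in fact has the v- and e-vertex partition behaving as expected, so that ``$2$-factor'' and ``Hamilton cycle'' genuinely impose degree $2$ on every e-vertex; this is immediate once one recalls that $G$ is bipartite with bipartition $(V,E)$ and that any edge $e\in E(H)$ is incident (in $G$) with each of its vertices. Since the substantive content is entirely contained in Theorem~\ref{thm:incidencegraph1}, there is no real obstacle here: the corollary is a routine specialization, obtained by noting that a $2$-factor, respectively a Hamilton cycle, is precisely the kind of spanning subgraph $G'$ that theorem demands (with the Hamilton cycle additionally supplying the single-non-trivial-component hypothesis).
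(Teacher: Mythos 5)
Your proof is correct and is exactly the argument the paper intends: the corollary is stated with a \qed as an immediate consequence of Theorem~\ref{thm:incidencegraph1}, and your specialization (a 2-factor gives the required spanning subgraph with every e-vertex of degree 2 and every v-vertex of even degree; a Hamilton cycle additionally has a single non-trivial connected component) is precisely that routine deduction. No gaps.
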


\begin{cor}{\rm \cite[Corollary 2.20]{BS2}} Let $H$ be an $r$-regular, $r$-uniform hypergraph for $r\geq 2$.  Then $H$ is quasi-eulerian.\qed
\end{cor}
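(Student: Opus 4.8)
The plan is to apply Theorem~\ref{thm:incidencegraph1} to the incidence graph $G$ of $H$. First I would observe that, since $H$ is $r$-uniform, every e-vertex of $G$ has degree $r$, and since $H$ is $r$-regular, every v-vertex of $G$ has degree $r$; thus $G$ is an $r$-regular bipartite graph with bipartition $(V,E)$. (Counting the edges of $G$ from each side also shows $|V|=|E|$, so the two parts have equal size, as one expects for a graph with a perfect matching.)

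Next I would extract from $G$ a spanning subgraph in which every vertex has degree exactly $2$. By K{\"o}nig's edge-colouring theorem, an $r$-regular bipartite graph decomposes into $r$ perfect matchings $M_1,\dots,M_r$; equivalently, one peels off perfect matchings one at a time, each existing by Hall's theorem, since deleting a perfect matching from an $s$-regular bipartite graph leaves an $(s-1)$-regular bipartite graph. As $r\geq 2$, set $G'=(V(G),M_1\cup M_2)$. Because $M_1$ and $M_2$ are disjoint perfect matchings, $G'$ is a spanning subgraph of $G$ in which every vertex --- v-vertex or e-vertex --- has degree exactly $2$. In particular $\deg_{G'}(e)=2$ for all $e\in E(H)$, and $\deg_{G'}(v)=2$, which is even, for all $v\in V(H)$.

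Finally, Theorem~\ref{thm:incidencegraph1} applies directly and yields that $H$ is quasi-eulerian. (Alternatively, $G'$ is a $2$-factor of $G$, so the preceding corollary on incidence graphs with a $2$-factor gives the same conclusion.)

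There is no serious obstacle here; the only points requiring a little care are verifying that the incidence graph really is $r$-regular and bipartite, so that the standard bipartite matching results apply, and noting why the hypothesis $r\geq 2$ is needed --- a $1$-uniform hypergraph has singleton edges, which cannot lie on any nontrivial closed trail, so it fails to be quasi-eulerian unless it is edgeless.
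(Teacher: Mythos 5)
Your proof is correct and follows essentially the route the paper intends: the corollary is stated as an immediate consequence of Theorem~\ref{thm:incidencegraph1} (equivalently, the preceding corollary on $2$-factors of the incidence graph), and your observation that the incidence graph is $r$-regular bipartite, hence decomposes into perfect matchings two of which form a $2$-regular spanning subgraph, is exactly that argument. No gaps; the peeling-off-matchings detail via Hall/K{\"o}nig is standard and applies since the incidence graph is simple and bipartite.
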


\begin{cor}{\rm \cite[Corollary 2.21]{BS2}} Let $H$ be a $2k$-uniform hypergraph with no vertices of odd degree.  Then $H$ is quasi-eulerian.  \qed
\end{cor}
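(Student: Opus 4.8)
Let $H$ be a $2k$-uniform hypergraph with no vertices of odd degree. Then $H$ is quasi-eulerian.

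The plan is to apply Theorem~\ref{thm:incidencegraph1} by exhibiting a suitable spanning subgraph $G'$ of the incidence graph $G$ of $H$. Recall that in $G$ every e-vertex has degree exactly $2k$ (since $H$ is $2k$-uniform), and every v-vertex $v$ has degree $\deg_H(v)$, which is even by hypothesis. So what I need is a spanning subgraph $G'$ in which each e-vertex is trimmed down to degree exactly $2$, while the parity of each v-vertex degree is preserved (i.e.\ kept even); then Theorem~\ref{thm:incidencegraph1} immediately yields that $H$ is quasi-eulerian. The natural tool for ``trimming a regular-ish bipartite graph down to a $2$-regular-on-one-side subgraph while controlling parities'' is a proper edge-colouring of a bipartite multigraph, equivalently a decomposition into perfect matchings on the relevant side.

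First I would reduce to the uniform case on the e-vertex side. Each e-vertex has degree exactly $2k$ in $G$; I want to pair these incident edges up into $k$ groups of two. Concretely, consider the bipartite graph $G$ and orient attention to the e-vertices: I can decompose the edge set of $G$ into $k$ subgraphs $G_1,\dots,G_k$, each of which is $2$-regular at every e-vertex, by colouring the $2k$ edges at each e-vertex with the $k$ colours $\{1,\dots,k\}$ so that each colour is used exactly twice at that e-vertex — this is just an arbitrary local pairing, done independently at each e-vertex, no global compatibility needed. Then each $G_i$ is a spanning subgraph of $G$ with $\deg_{G_i}(e)=2$ for all $e\in E(H)$. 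The v-vertex degrees in $G_i$ now sum over $i$ to $\deg_H(v)$, which is even, but an individual $G_i$ need not have all v-vertices of even degree — so a single $G_i$ does not yet finish the job, and I cannot just take $G'=G_1$.

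To fix the v-vertex parities I would instead build $G'$ as a union of an even number of the $G_i$, or adjust by symmetric differences. Here is the cleaner route: form the $2$-regular-at-every-e-vertex graph differently. Take $G$ itself; at each e-vertex it has even degree $2k$, and at each v-vertex it has even degree, so \emph{every} vertex of $G$ has even degree, i.e.\ $G$ is an even graph. Hence each connected component of $G$ is eulerian (Theorem~\ref{thm:eulertour}), so admits an Euler tour; orient each component's edges along its Euler tour, which produces an orientation of $G$ in which every vertex has equal in-degree and out-degree. Now at each e-vertex, in-degree $=$ out-degree $=k$; pair each of the $k$ in-edges with one of the $k$ out-edges arbitrarily to form $k$ ``transition pairs,'' and let $G'$ keep, at each e-vertex, exactly one such transition pair — so $\deg_{G'}(e)=2$. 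Doing this for \emph{every} e-vertex simultaneously: at a v-vertex $v$, the edges of $G'$ incident with $v$ come in pairs (one in, one out through each e-vertex chosen), hence... actually the cleanest is to keep, at each e-vertex $e$, the union over all $k$ pairs but that gives degree $2k$ again; so instead keep one distinguished transition pair per e-vertex and check that the induced v-vertex degrees are even.

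The main obstacle — and the step I would spend the most care on — is exactly this last coordination: choosing, simultaneously at all e-vertices, which $2$ of the $2k$ incident $G$-edges to retain, so that \emph{every} v-vertex ends up with even degree in $G'$. The Euler-tour orientation is the key device: along each Euler tour $T$ of a component of $G$, the edges alternate v-vertex $/$ e-vertex $/$ v-vertex $/\cdots$, and consecutive edges of $T$ at an e-vertex form a natural transition pair. Decompose $T$ into sub-trails by ``short-cutting'' at each e-vertex — equivalently, at each e-vertex with its $k$ transition pairs, split the e-vertex into $k$ copies each of degree $2$; the result is a disjoint union of closed trails covering all edges of $G$ and hitting each e-vertex $k$ times. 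Now among these $k$ ``copies'' at a given e-vertex, retain just one copy's two edges to form $G'$. A v-vertex $v$ then lies on $G'$ only through retained copies at its incident e-vertices; since within a single closed trail each visit to $v$ contributes $2$ to its degree and the retained pieces still form (possibly partial) closed pieces at $v$... — here I would argue that $G'$ can be taken to be precisely one of these closed trails (the one of minimum positive length, or any one), which is automatically connected and even, hence eulerian, so that not only $\deg_{G'}(e)=2$ for the $e$ it covers but — and this is where I'd need to be honest — it covers only \emph{some} edges. So the genuinely correct construction is: $G'$ $=$ the spanning subgraph consisting of, for each e-vertex, exactly one arbitrarily chosen transition pair from the Euler-tour pairing; then at each v-vertex the retained edges decompose into in/out matched pairs coming from the trail structure, giving even degree. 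Verifying that parity claim rigorously is the crux, and I would do it by the local counting argument just sketched, invoking Theorem~\ref{thm:incidencegraph1} to conclude once $G'$ is in hand.
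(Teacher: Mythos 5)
Your reduction to Theorem~\ref{thm:incidencegraph1} is the right frame, and your observation that an arbitrary local pairing at the e-vertices cannot control the v-vertex parities is correct. But the construction you finally commit to --- fix an Euler tour of each component of the (even) incidence graph $G$, pair the $2k$ edges at each e-vertex into the $k$ transition pairs of that tour, and retain one arbitrarily chosen transition pair per e-vertex --- does not work, and the parity claim you defer (``at each v-vertex the retained edges decompose into in/out matched pairs coming from the trail structure'') is false: which edges survive at a v-vertex $v$ is decided by choices made at the e-vertices adjacent to $v$, and nothing in the trail structure forces them to arrive at $v$ in pairs. Concretely, let $H$ have vertex set $\{a,b,c,d\}$ and two parallel edges $e_1=e_2=\{a,b,c,d\}$, so $2k=4$ and every vertex has even degree $2$. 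The incidence graph is $K_{2,4}$ with Euler tour $a\,e_1\,b\,e_2\,c\,e_1\,d\,e_2\,a$; its transition pairs at $e_1$ are $\{a,b\}$ and $\{c,d\}$, and at $e_2$ they are $\{b,c\}$ and $\{d,a\}$. Whichever single transition pair you keep at each of $e_1$ and $e_2$, exactly two v-vertices get degree $1$ in $G'$. So not only is an arbitrary choice insufficient --- for this tour \emph{no} choice works at all, even though $H$ is eulerian (the closed trail $a\,e_1\,b\,e_2\,a$ traverses both edges). The ``crux'' you flagged is therefore a genuine gap, not a routine verification, and it cannot be patched while keeping the pairing at the e-vertices induced by a fixed Euler tour.

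A workable route couples the pairing and the selection differently: pair up the flags at each \emph{v-vertex} (possible precisely because $\deg_H(v)$ is even), form the multigraph on vertex set $E(H)$ whose edges are these flag-pairs --- it is loopless and $2k$-regular because $H$ is $2k$-uniform --- and take a $2$-factor of it (Petersen's theorem). The $2$-factor selects exactly two anchor flags in each edge of $H$, while at every v-vertex the selected flags come in whole pairs, hence in even number; the corresponding spanning subgraph of $G$ has every e-vertex of degree $2$ and every v-vertex of even degree, and Theorem~\ref{thm:incidencegraph1} finishes the proof. (Alternatively one can invoke Lov\'asz's $(g,f)$-factor theorem, as is done later in the chapter for the general characterization.) Note that the thesis itself does not prove this corollary --- it is quoted from Bahmanian and \v{S}ajna with only the remark that it follows from Theorem~\ref{thm:incidencegraph1} --- so the missing argument was exactly the construction of $G'$, and as it stands your key step fails.
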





Bahmanian and \v{S}ajna also settled the question of complexity for the problem of recognizing a quasi-eulerian hypergraph, as follows.\\

\begin{problem}\label{problem:eulerfamily}{\rm \cite{BS2}} {\rm EULER FAMILY}

{\rm \textsc{Given:}} A hypergraph $H$.

{\rm \textsc{Decide:}} Does $H$ have an Euler family?
\end{problem}

They show that EULER FAMILY on the class of all hypergraphs is in P by reducing it to the problem of finding a 1-factor in a graph.

We will present more of their results in Section~\ref{chapter:tools}, for they will be quite pertinent to the problems we wish to tackle.

\section{Spanning Euler Families and Vertex Cuts}
A recent paper by Steimle and \v{S}ajna \cite{SS} has characterized the necessary and sufficient conditions for a spanning Euler family (or tour) to exist by examining {\em vertex cuts} and some related subhypergraphs.  We first give a definition of vertex cuts, and explain what subhypergraphs will be used.\\

\begin{defn}{\rm {\bf (Vertex Cut)} \cite{SS}
Let $H=(V,E)$ be a hypergraph and let $S\subsetneq V$ be a proper subset of its vertices.  We call $S$ a {\em vertex cut} of $H$ if $H- S$ is disconnected.  If $S$ has the property that no proper subset of $S$ is a vertex cut of $H$, then we call $S$ a {\em minimal} vertex cut of $H$.
}
\end{defn}

Generally, we are most interested in vertex cuts of connected hypergraphs.  Note that the empty set is a vertex cut of a disconnected hypergraph, which is usually not very interesting.  However, if $H$ is connected, then we can usefully define the following subhypergraphs of $H$.\\

\begin{defn}{\rm {\bf (Derived Hypergraph)} \cite{SS}
Let $H$ be a hypergraph with no empty edges, let $S$ be a vertex cut of $H$, and let $H_i$, for $i\in I$, be the connected components of $H-S$.  Then the {\em $S$-component of $H$ corresponding to $H_i$} is the subhypergraph $H_i'$ with $V(H_i')=V(H_i)\cup S$ and $E(H_i')=\{e\in E(H): e\subseteq V(H_i')\}$.

If $|S|$ is even, then we additionally define the $S^*$- and $S^{**}$-components of $H$ corresponding to $H_i$ as the hypergraphs $H_i^*$ and $H_i^{**}$ obtained by adjoining $\frac{|S|}{2}$ or $|S|$ copies, respectively, of the edge $S$ to $H_i'$.

The hypergraphs $H_i', H_i^*,$ and $H_i^{**}$, for all $i\in I$, are collectively referred to as the {\em derived hypergraphs of $H$ (with respect to $S$).}
}
\end{defn}

The authors of \cite{SS} used the derived hypergraphs to give necessary and sufficient conditions for the existence of spanning Euler families and tours in two kinds of situations: first, if there exists a vertex cut of small cardinality (1 or 2); second, if there exists a vertex cut whose vertices all have degree 2.

These results all came in pairs: one for spanning Euler families and one for spanning Euler tours.  

First, the conditions when $H$ has a cut vertex:\\

\begin{thm}{\rm \cite[Theorem 3.11]{SS}} Let $H$ be a connected hypergraph with a cut vertex $v$, and let $H_i$, for $i\in I$, be the connected components of $H-v$.  Let $H_i'$, for $i\in I$, denote the $\{v\}$-components corresponding to $H_i$.  Then $H$ has a spanning Euler family if and only if the following hold.
\begin{description}
\item[(1)] For some $i\in I$, we have that $H_i'$ admits a spanning Euler family.
\item[(2)] For all $i\in I$, at least one of $H_i$ and $H_i'$ admits a spanning Euler family.\qed
\end{description}
\end{thm}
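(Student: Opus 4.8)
The plan is to prove both directions by restricting closed strict trails to, and re-assembling them from, the $\{v\}$-components, with all the action taking place at the cut vertex $v$. Write $H_i$ ($i\in I$) for the connected components of $H-v$ and $H_i'$ for the corresponding $\{v\}$-components; we may assume every edge of $H$ has cardinality at least $2$, since otherwise neither side of the equivalence can hold. First I would record three structural facts. \textbf{(a)} Because $v$ is a cut vertex, no edge of $H$ can meet two distinct components $H_i,H_j$ (such an edge minus $v$ would join them in $H-v$); hence every edge of $H$ is contained in exactly one set $V(H_i')$, and the edge sets $E(H_i')$ partition $E(H)$. \textbf{(b)} If $T$ is a closed strict trail of $H$ that does not traverse $v$, then any two edges consecutive on $T$ share an anchor different from $v$, so all edges of $T$ lie in one component; by (a), $T$ is a closed strict trail of a single $H_i'$. \textbf{(c)} Deleting $v$ from every edge of $H_i'$ turns $H_i'$ into $H_i$, and this carries any family of pairwise edge- and anchor-disjoint non-trivial closed strict trails of $H_i'$ that avoid $v$ to such a family in $H_i$ covering the corresponding edges and the same anchors, and conversely (with $v$ reappearing as a floater); consequently $H_i$ has a spanning Euler family if and only if $H_i'$ has an Euler family that avoids $v$ and traverses all of $V(H_i)$ --- which differs from a spanning Euler family of $H_i'$ only in whether the cut vertex must be traversed.

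\textbf{Necessity.} Let $\F$ be a spanning Euler family of $H$. Since $\F$ is anchor-disjoint and spanning, exactly one component $T_v$ of $\F$ traverses $v$, and by (b) every other component lies in a single $H_i'$. Break $T_v$ at its successive visits to $v$ into closed strict subtrails $C_1,\dots,C_t$, each running from $v$ to $v$ with no internal visit to $v$; by (b) each $C_j$ lies in one $H_{i(j)}'$. For $i\in I$ let $D_i$ be the concatenation at $v$ of all $C_j$ with $i(j)=i$ --- a closed strict trail of $H_i'$, or empty if there is none. I would then check, for each $i$: if $D_i\neq\emptyset$, then $\{D_i\}$ together with the $v$-avoiding components of $\F$ that lie in $H_i'$ is a spanning Euler family of $H_i'$; and if $D_i=\emptyset$, then $T_v$ uses no edge of $H_i'$, so the $v$-avoiding components of $\F$ lying in $H_i'$ already cover $E(H_i')$ and traverse $V(H_i)$, whence by (c) they yield a spanning Euler family of $H_i$. (Edge-disjointness and coverage come from (a) together with the fact that the $C_j$ partition $E(T_v)$; anchor-disjointness holds because the anchors of $D_i$ lie in $\{v\}\cup V(T_v)$, which is disjoint from the anchors of the other components of $\F$; spanning holds because $\F$ is spanning and, by (a)--(b), each vertex of $V(H_i)$ is traversed inside $H_i'$.) This gives (2); and (1) holds because $T_v$ has an edge, which lies in some $H_i'$ and forces $D_i\neq\emptyset$ there.

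\textbf{Sufficiency.} Assume (1) and (2), and let $J=\{i\in I: H_i'$ has a spanning Euler family$\}$; by (1), $J\neq\emptyset$. For $i\in J$ fix a spanning Euler family $\F_i$ of $H_i'$, which has a unique component $T^{(i)}$ traversing $v$. For $i\notin J$, condition (2) forces $H_i$ to have a spanning Euler family, which by (c) we regard as an Euler family $\F_i$ of $H_i'$ that traverses $V(H_i)$ but not $v$. By (a) the trails $T^{(i)}$ ($i\in J$) are pairwise edge-disjoint and pairwise meet only in $v$, so their concatenation at $v$ is a single non-trivial closed strict trail $T^*$. Put
\[ \F^*=\{T^*\}\cup\bigcup_{i\in J}\bigl(\F_i\setminus\{T^{(i)}\}\bigr)\cup\bigcup_{i\notin J}\F_i. \]
By (a) the trails in $\F^*$ are pairwise edge-disjoint and cover $E(H)=\bigcup_iE(H_i')$; they are pairwise anchor-disjoint because anchor sets from different $H_i'$ meet only in $v$ and $v$ is an anchor of $T^*$ alone; and $\F^*$ is spanning because $T^*$ traverses $v$ (as $J\neq\emptyset$) and contains every $T^{(i)}$, while each $\F_i$ traverses all of $V(H_i)$. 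Hence $\F^*$ is a spanning Euler family of $H$.

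\textbf{Main obstacle.} The delicate part is the necessity direction: decomposing $T_v$ at its visits to $v$ and regrouping the pieces by component, while making sure the pieces are genuine closed strict subtrails, that regrouping produces valid trails, and that anchor-disjointness survives both the regrouping and the coexistence with the inherited $v$-avoiding components. In the sufficiency direction the analogous care is needed when gluing all the $T^{(i)}$ at $v$ into $T^*$. Underlying everything is fact (c): the precise bookkeeping that an edge of $H_i'$ through $v$ becomes an edge of $H_i$ with $v$ demoted to a floater, so that ``$H_i$ has a spanning Euler family'' and ``$H_i'$ has a spanning Euler family'' differ only in whether the cut vertex is traversed --- exactly the distinction that conditions (1) and (2) are designed to track.
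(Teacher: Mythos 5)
Your proof is correct: fact (a) on the partition of $E(H)$ among the $\{v\}$-components, the splitting of the unique trail of $\F$ through $v$ into closed subtrails regrouped by component, the $H_i$ versus ``$v$-avoiding family in $H_i'$'' correspondence (which is exactly the paper's Lemma~\ref{lem:truncatedhypergraph} applied to $e\mapsto e\setminus\{v\}$), and the concatenation of the $T^{(i)}$ at $v$ in the converse all hold up, including the degenerate cases with singleton edges that you dispose of at the start. The thesis itself only cites this theorem from Steimle and \v{S}ajna without reproducing a proof, and your argument is essentially the same split-at-$v$/reglue-at-$v$ approach used in that source, so there is nothing further to compare.
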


\begin{thm}{\rm \cite[Corollary 3.12]{SS}} Let $H$ be a connected hypergraph with a cut vertex $v$.  Then $H$ has a spanning Euler tour if and only if every $\{v\}$-component of $H$ admits a spanning Euler tour.\qed
\end{thm}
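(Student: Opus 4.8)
The statement to prove is the characterization of when a connected hypergraph $H$ with a cut vertex $v$ has a spanning Euler tour: namely, that this happens if and only if every $\{v\}$-component of $H$ admits a spanning Euler tour.

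\medskip

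\textbf{Proof proposal.} The plan is to prove both directions by leveraging the fact that a spanning Euler tour is a single closed strict trail passing through every vertex, and that the cut vertex $v$ is the only vertex shared by the various $\{v\}$-components. For the forward direction, suppose $H$ has a spanning Euler tour $T$. Since $v$ is a cut vertex, every edge of $H$ lies entirely within a single $\{v\}$-component $H_i'$ (an edge meeting two different components $H_i$ would contradict the definition of $H-v$). The idea is to ``split'' $T$ at the vertex $v$: each maximal subtrail of $T$ lying between consecutive visits to $v$ is confined to one $\{v\}$-component, and grouping these subtrails by component and concatenating them at $v$ yields a closed strict trail in each $H_i'$. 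I would need to check that each such concatenation traverses every edge of $H_i'$ exactly once (it does, since $T$ traverses every edge exactly once and these are partitioned by component) and every vertex of $H_i'$ (it does, since $T$ is spanning and $v$ is visited by $T$, hence by each component's trail since $T$ must enter and leave each component through $v$). Because $H$ is connected and $v$ is a cut vertex, $T$ genuinely visits $v$, so none of the component trails is trivial or fails to include $v$.

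\medskip

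For the converse, suppose each $\{v\}$-component $H_i'$, for $i \in I$, admits a spanning Euler tour $T_i$. Each $T_i$ is a closed strict trail based at $v$ (after a cyclic rotation, since $T_i$ visits $v$), traversing every edge of $H_i'$ exactly once and every vertex of $H_i'$. Since the $H_i'$ are pairwise edge-disjoint and share only the vertex $v$, and together they cover all edges and vertices of $H$, the concatenation $T = T_{i_1} T_{i_2} \cdots T_{i_{|I|}}$ at $v$ is a closed strict trail of $H$: it is strict because the edge sets are disjoint, and it is anchored at $v$ throughout the concatenation. This $T$ traverses every edge of $H$ exactly once and every vertex of $H$, so it is a spanning Euler tour of $H$.

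\medskip

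The main obstacle — and the step requiring the most care — is the forward direction: cleanly decomposing the Euler tour $T$ into subtrails and verifying that the reassembled pieces form valid closed strict trails in each component that are genuinely \emph{spanning}. The subtlety is ensuring that every vertex of each $H_i'$ is traversed: a vertex $w \ne v$ in $H_i$ is traversed by some edge of $H$, which lies in $H_i'$, and $T$ traverses $w$ via that edge, so the subtrail containing that traversal belongs to $H_i'$; and $v$ is traversed by construction of the concatenation. One should also confirm the edge case where some $H_i'$ might a priori seem to receive no edges — but this cannot happen, since $H_i$ is non-empty and connected with $w \ne v$, forcing at least one edge of $H$ to lie in $H_i'$ — and handle the interplay with Theorem~2.24 (the block characterization) if one prefers a more structural argument, though the direct concatenation argument above should suffice.
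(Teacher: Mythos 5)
Your argument is correct, but note that the thesis itself does not prove this statement: it is quoted from Steimle and \v{S}ajna \cite[Corollary 3.12]{SS}, where it is obtained as a consequence of their machinery for vertex cuts and derived hypergraphs (the Euler-family analogue, Theorem 3.11, proved via the $\{v\}$-components). Your route is the direct one: split the spanning Euler tour $T$ at its visits to $v$, observe that each maximal $vv$-segment lives in a single $\{v\}$-component because any edge $e\neq\{v\}$ satisfies $e\subseteq V(H_i')$ for exactly one $i$ (otherwise $e\setminus\{v\}$ would merge two components of $H-v$), and regroup/concatenate; conversely, rotate each spanning Euler tour $T_i$ to start at $v$ and concatenate, which stays a strict trail since the $H_i'$ are edge-disjoint and meet only in $v$. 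What your approach buys is a short, self-contained proof of exactly the tour statement, without passing through the Euler-family version or the block decomposition; what the cited route buys is uniformity, since the same derived-hypergraph framework also yields the (harder) family and cardinality-two vertex-cut results. The only point you leave implicit is the degenerate edge $e=\{v\}$: such an edge belongs to every $\{v\}$-component and cannot be traversed by any walk, so it falsifies both sides of the equivalence simultaneously and does not disturb your partition argument, but it is worth one sentence. Your verification that each $H_i'$ actually receives at least one segment (via a vertex $w\neq v$ of $H_i$ being traversed by $T$) and that $T$ visits $v$ because it is spanning are exactly the right checks.
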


There are two possibilities when $H$ has a vertex cut of cardinality 2, given by Cases (1) and (2) in each of the following two results:\\

\begin{thm}{\rm \cite[Theorem 3.18, 3.20]{SS}}
Let $H$ be a hypergraph with a vertex cut $S=\{u,v\}$ of cardinality 2.  For each $i\in I$, let $H_i, H_i'$, and $H_i^*$ denote the connected components of $H-S$, the $S$-components of $H$, and the $S^*$-components of $H$, respectively.  Let $E_S=\{e\in E: e = S\}$.  Then $H$ has a spanning Euler family if and only if one of (1) and (2) hold.
\begin{description}
\item[(1)] $|E_S|$ is even and there exists $J\subseteq I$ of even cardinality such that the following hold.
	\begin{description}
	\item[(a)] For all $i\in J$, we have that $H_i^*$ admits a spanning Euler family.
	\item[(b)] For each $i\in I\setminus J$, at least one of $H_i, H_i',H_i'-u,$ and $H_i'-v$ admits a spanning Euler family.
	\item[(c)] If $J=\emptyset$, then at least one of the following holds:
		\begin{description}
		\item[(i)] There exists some $i\in I$ such that $H_i'$ admits a spanning Euler family; or
		\item[(ii)] There exist distinct $i,j\in I$ such that $H_i'-u$ and $H_j'-v$ admit spanning Euler families.
		\end{description}
	\end{description}
\item[(2)] $|E_S|$ is odd and there exists $J\subseteq I$ of odd cardinality such that the following hold.
	\begin{description}
	\item[(a)] For each $i\in J$, we have that $H_i'$ admits a spanning Euler family.
	\item[(b)] For each $i\in I\setminus J$, at least one of $H_i$ and $H_i^*$ has a spanning Euler family.\\\phantom{aaaaa}\qed
	\end{description}
\end{description}
\end{thm}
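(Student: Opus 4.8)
The statement is a biconditional, so the plan is to prove necessity and sufficiency separately. Both halves rest on the same idea: analyse how the trails of a spanning Euler family \emph{locally} interact with the two vertices $u,v$ of the cut $S$, and then reassemble (or, conversely, disassemble) this local data \emph{globally}. The central bookkeeping device is a multigraph $R$ on the two-element vertex set $\{u,v\}$: every copy of the edge $S$ and every strict trail that enters one $S$-component at one vertex of $S$ and leaves at the other contributes one edge $uv$ to $R$, while a closed trail confined to a single $S$-component that is anchored at $u$ (respectively $v$) contributes a loop there. A trail of $\F$ that meets $S$ then corresponds to a closed trail in $R$, so each vertex of $R$ that is used has even degree; this parity condition on $R$ is exactly what forces the even/odd dichotomy on $|E_S|$ and the corresponding parity of $|J|$.

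For the forward direction, suppose $\F$ is a spanning Euler family of $H$. For each $i\in I$ I would define the \emph{trace} $\F_i$ of $\F$ on $H_i'$ to be the collection of maximal subwalks of trails of $\F$ all of whose edges lie in $H_i'$; using the fact that $S$ separates $H$, each such trace is either a closed strict trail inside $H_i'$ or an open strict $uv$-trail inside $H_i'$. Let $a_i$ be the number of open $uv$-traces in $\F_i$, and set $J=\{i\in I:a_i\text{ is odd}\}$. Counting the ends at $u$ among the $uv$-traces and the $|E_S|$ copies of $S$ shows $\sum_{i\in I}a_i\equiv|E_S|\pmod 2$, hence $|J|\equiv|E_S|\pmod 2$, which separates case (1) from case (2). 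For $i\in J$, adjoining a single copy of $S$ that closes one unmatched $uv$-trace of $\F_i$ (and pairing the remaining $uv$-traces of $\F_i$ with one another) turns $\F_i$ into a spanning Euler family of $H_i^*$, giving (1)(a); in case (2) the analogous component contributes a spanning Euler family of $H_i'$ after rerouting. For $i\in I\setminus J$ the $uv$-traces of $\F_i$ pair up among themselves into closed trails, and according to whether the resulting closed trails anchor $u$, $v$, both, or neither, $\F_i$ yields a spanning Euler family of $H_i'$, $H_i'-v$, $H_i'-u$, or $H_i$, giving (1)(b). Finally, if $J=\emptyset$ then no trace links $u$ to $v$, so $u$ and $v$ can only be anchored inside single $S$-components; examining which closed traces anchor them yields (c)(i) (one component anchors both) or (c)(ii) (one anchors $u$, another anchors $v$).

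For the converse, given spanning Euler families of the prescribed derived hypergraphs satisfying (1) or (2), I would run this construction in reverse. From each $H_i^*$-family (case (1)) delete the adjoined copy of $S$, producing an open $uv$-trail together with closed trails inside $H_i'$, and leave the $H_i$-, $H_i'$-, $H_i'-u$-, and $H_i'-v$-families intact. This produces $|J|$ ``connecting'' $uv$-trails, and since $|E_S|+|J|$ is even these, together with the $|E_S|$ copies of $S$, can be stitched at the vertices $u$ and $v$ into closed strict trails of $H$; the closed pieces anchored only at $u$ or only at $v$ are concatenated into these trails at the appropriate vertex, and the remaining closed trails become components of $\F$ outright. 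One then checks edge-disjointness and anchor-disjointness — using that two pieces coming from different $S$-components can share only a vertex of $S$ — and that every edge and every vertex of $H$ is traversed; the subcase conditions (c)(i)/(c)(ii) are precisely what is needed, when $J=\emptyset$, to guarantee that $u$ and $v$ are anchored at all.

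I expect the main obstacle to be the local classification step and its interaction with the parities: showing that a trace $\F_i$ on a single $S$-component corresponds to exactly one of the six derived hypergraphs, with the number of copies of $S$ to be adjoined dictated by the parity of $a_i$, and that this correspondence is reversible. The genuinely delicate point is the $J=\emptyset$ subcase of case (1): there the copies of $S$ cannot on their own form closed trails (and when $|E_S|=0$ there are none at all), so some $S$-component must absorb the job of anchoring both $u$ and $v$, which is exactly what (c) encodes; getting this, and the anchor-disjointness it forces, correct is where the argument is least routine. The cut-vertex characterisations quoted earlier (the $|S|=1$ results) can be invoked whenever a derived hypergraph itself contains a cut vertex, so I would not reprove those.
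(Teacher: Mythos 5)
First, a caveat about the comparison itself: the thesis does not prove this statement --- it is quoted from Steimle and \v{S}ajna~\cite{SS} without proof --- so your proposal has to stand on its own. Your overall strategy (split each trail of a spanning Euler family into maximal traces inside the $S$-components, count edge-ends at $u$ to get $|J|\equiv|E_S|\pmod 2$, reassemble the traces component by component, and use condition (c) to guarantee that $u$ and $v$ are anchored when $J=\emptyset$) is the right one, and most of it would go through after the routine step, which you omit, of concatenating closed trails that share an anchor so that the final families are anchor-disjoint.

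The genuine gap is your treatment of $E_S$ relative to the derived hypergraphs. By the paper's definition, $E(H_i')=\{e\in E(H):e\subseteq V(H_i)\cup S\}$, so \emph{every} copy of the edge $\{u,v\}$ belongs to \emph{every} $H_i'$, and $H_i^*$ contains $|E_S|+1$ copies of $\{u,v\}$; your proposal instead treats the $|E_S|$ copies as global objects disjoint from the $H_i'$. This matters in both directions once $E_S\neq\emptyset$. Forward, case (2)(a): the component with $a_i$ odd does not in general admit a spanning Euler family of its own edges, and ``after rerouting'' hides exactly the missing step. For example, with $E(H)=\{\{u,v,w_1\},\{u,v,w_1\},\{u,v,w_2\},\{u,v,w_2\},\{u,v\}\}$ the hypergraph $H$ has a spanning Euler tour and $a_1=1$, but the two triples through $w_1$ alone admit no spanning Euler family; condition (2)(a) holds only because $H_1'$ also contains the copy of $\{u,v\}$, and the correct argument closes the odd number of open $uv$-traces against the odd number $|E_S|$ of copies of $S$ that are part of $H_i'$ itself. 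Conversely, if you ``leave the $H_i'$- and $H_i^*$-families intact'' and then additionally stitch in the $|E_S|$ copies of $S$, each edge of $E_S$ is traversed once inside every such derived family and once more in your stitching, so the union is not an Euler family of $H$. The fix is to cut every traversal of a copy of $\{u,v\}$ (original or adjoined) out of every given family, reducing each to closed trails plus segments with both endpoints in $S$, and only then redo the parity bookkeeping at $u$ and $v$ with the genuine $|E_S|$ copies each used exactly once. With that correction your argument should succeed; as written, the case (2)(a) step is unjustified and the reassembly step fails whenever $E_S\neq\emptyset$.
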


\begin{thm}{\rm \cite[Theorem 3.19, 3.21]{SS}}
Let $H$ be a hypergraph with a vertex cut $S=\{u,v\}$ of cardinality 2.  For each $i\in I$, let $H_i, H_i', H_i^*$, and $H_i^{**}$ denote the connected components of $H-S$, the $S$-components of $H$, the $S^*$-components of $H$, and the $S^{**}$-components of $H$, respectively.  Let $E_S=\{e\in E: e = S\}$.  Then $H$ has a spanning Euler tour if and only if one of (1) and (2) hold.
\begin{description}
\item[(1)] $|E_S|$ is even and there exists $J\subseteq I$ of even cardinality such that the following hold.
	\begin{description}
	\item[(a)] For all $i\in J$, we have that $H_i^*$ admits a spanning Euler tour.
	\item[(b)] For each $i\in I\setminus J$, at least one of $H_i'$ and $H_i^{**}$ admits a spanning Euler tour.
	\item[(c)] If $J=\emptyset$, then there exists $i\in I$ such that $H_i'$ admits a spanning Euler tour.
	\end{description}
\item[(2)] $|E_S|$ is odd and there exists $J\subseteq I$ of odd cardinality such that the following hold.
	\begin{description}
	\item[(a)] For each $i\in J$, we have that $H_i'$ admits a spanning Euler tour.
	\item[(b)] For each $i\in I\setminus J$, we have that $H_i^*$ has a spanning Euler tour.\qed
	\end{description}
\end{description}
\end{thm}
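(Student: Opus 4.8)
The plan is to characterise the spanning Euler tours of $H$ by relating them to spanning Euler tours of the derived hypergraphs, using the two vertices of $S=\{u,v\}$ as the ``hubs'' through which every piece of the tour must pass. We may assume $H$ is connected, since otherwise no spanning Euler tour exists; and since $S$ is a vertex cut, $H-S$ has at least two components, so $I\neq\emptyset$. The guiding principle is that any closed trail in $H$, having to enter and leave each component $H_i$ of $H-S$ only through $u$ or $v$, breaks into excursions on the individual sides that are stitched together at $u$ and $v$; the derived hypergraphs $H_i'$, $H_i^*$, $H_i^{**}$ are precisely the gadgets that record the three qualitatively different ways side $i$ can be visited (traversed by one closed trail; entered and exited once through distinct hubs; or entered and exited twice), the adjoined copies of the edge $S$ standing in for ``the rest of the tour'' as seen from side $i$.

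For the forward implication, suppose $T$ is a spanning Euler tour of $H$. I would cut $T$ at every occurrence of $u$ and of $v$, obtaining a cyclic concatenation of non-trivial segments, each with both ends in $S$ and all internal anchors outside $S$; hence each segment is either a single traversal of an edge of $E_S$ or uses only edges of a single $S$-component $H_i'$, and each is of type $uu$, $vv$, or $uv$ according to its ends. For each $i$, concatenating the side-$i$ segments at $u$ and at $v$ --- inserting a copy of $S$ to reverse $uv$-segments in pairs where needed --- exhibits the side-$i$ portion of $T$ as a spanning Euler tour of exactly one of $H_i'$, $H_i^*$, $H_i^{**}$, and I would take $J$ to be the set of $i$ for which this forces $H_i^*$ (in the $|E_S|$-even case) or $H_i'$ (in the $|E_S|$-odd case). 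Recording how all the segments attach at $u$ and $v$ by an auxiliary multigraph on vertex set $\{u,v\}$, the fact that $T$ is a single closed trail says exactly that this multigraph is connected and has all degrees even; reading these two conditions off yields the split into Cases (1) and (2) and the parity of $|J|$, while the connectivity, in the sub-case where there are no $E_S$-edges available to bridge the hubs, forces condition (1c).

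For the reverse implication I would run the construction backwards: given the prescribed data, take a spanning Euler tour of each listed derived hypergraph, delete its (fictional) adjoined copies of $S$ --- turning an $H_i^*$-tour into a spanning $u$--$v$ trail on side $i$, and an $H_i^{**}$-tour into either such a trail or a pair of closed trails based at $u$ and at $v$ --- and then splice all of these trails together with single traversals of the (real) $E_S$-edges around the hubs $u$ and $v$. The prescribed parities of $|J|$ and $|E_S|$ are what make the loose ends at $u$ and at $v$ pair up into a single trail, and the presence of an $E_S$-edge, a member of $J$, or (via condition (1c)) a side on which $H_i'$ admits a spanning Euler tour guarantees at least one genuine $u$--$v$ link, so that the splice is a single closed trail rather than a disconnected Euler family; since every derived hypergraph was traversed spanningly and every edge of $H$ is used exactly once, this is a spanning Euler tour of $H$.

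The crux, and the step I expect to be hardest, is the bookkeeping of the copies of $S$: making precise, in both directions, the correspondence between ``which derived hypergraph side $i$ uses'' and ``the multiset of segment-types on side $i$'', carefully distinguishing the real $E_S$-edges from the fictional ones adjoined in the $H_i^*$ and $H_i^{**}$, and proving that the stated parity and covering conditions are exactly what is needed for the auxiliary multigraph on $\{u,v\}$ to be connected and even. This is where Cases (1) and (2), the parities of $J$ and $E_S$, and the special clause (1c) all originate, and checking their simultaneous necessity and sufficiency is the delicate part. It is also worth noting that this result refines the cut-vertex characterisations stated above and runs in close parallel with the companion characterisation of spanning Euler \emph{families}, so its proof can reuse the analysis of the derived hypergraphs developed there, the only genuinely new demand being that the reassembled pieces form a single trail rather than a family.
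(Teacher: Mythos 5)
First, a point of comparison: this theorem is not proved in the thesis at all --- it is quoted, together with the other vertex-cut results, from Steimle and \v{S}ajna [SS] as background, so there is no proof here to measure your attempt against; I can only assess the sketch on its own terms. Your skeleton (cut the tour at every occurrence of $u$ and $v$, classify the segments by their ends, reassemble each side into a spanning Euler tour of one of $H_i'$, $H_i^*$, $H_i^{**}$, and govern the splice by an even, connected auxiliary multigraph on $\{u,v\}$) is the natural strategy and is surely close in spirit to the proof in [SS].

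However, the step you yourself defer --- the bookkeeping of real versus adjoined copies of $S$ --- is not a finishing touch but the substance of the theorem, and as sketched it does not go through. By the definitions quoted in this thesis, $S\subseteq V(H_i')$, so \emph{every} $S$-component $H_i'$ (hence every $H_i^*$ and $H_i^{**}$) already contains all the real edges of $E_S$. Consequently, in your reverse direction, deleting only the fictional adjoined copies from the chosen tours still leaves each selected side traversing every real $E_S$-edge, and splicing two or more sides would traverse those edges more than once; and in your forward direction, the side-$i$ segments of $T$ alone cannot witness a spanning Euler tour of $H_i'$, because such a tour must also use the real $E_S$-edges (and must reach $v$ even when side $i$'s segments never anchor there --- in that case your concatenation misses $v$ and only $H_i^{**}$ works, not ``exactly one'' of the three as claimed). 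This inclusion is load-bearing: take $V=\{u,v,x,y\}$ with two copies each of $\{u,x\}$, $\{v,y\}$, $\{u,v\}$. A spanning Euler tour exists, $|E_S|=2$, the only admissible $J$ is $\emptyset$, and condition (1c) holds \emph{only} because the real $\{u,v\}$-edges lie in $H_1'$; under your framing, in which a side sees only fictional copies of $S$, no $H_i'$ would be spanning-eulerian and your argument could not produce (1c), whose necessity you assert via ``connectivity forces it'' but never derive. So: right decomposition and right parity/connectivity heuristics, but the correspondence between segment-types, the distribution of the real $E_S$-edges among the sides, and the necessity of (1c) remain unproved, and these are exactly where the theorem lives.
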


The situation is a bit simpler when $H$ has a minimal vertex cut $S$ whose vertices all have degree 2.  It can be shown that, in this case, we have just two connected components in $H-S$.\\

\begin{thm}{\rm \cite[Theorem 3.7]{SS}}
Let $H$ be a connected hypergraph with a minimal vertex cut $S$ such that $\deg(v)=2$ for all $v\in S$.  Then the following hold.
\begin{description}
\item[(1)] If $|S|$ is odd, then $H$ does not have a spanning Euler family.
\item[(2)] If $|S|$ is even, then $H$ has a spanning Euler family if and only if both $S^*$-components of $H$ have spanning Euler families.\qed
\end{description}
\end{thm}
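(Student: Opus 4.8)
The plan is to first extract the structure forced by the hypotheses, then determine how any spanning Euler family must interact with it.

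\textbf{Structure.} I would begin by showing that $H-S$ has exactly two connected components, $H_1$ and $H_2$; that every edge of $H$ lies in exactly one of $E(H_1')$ and $E(H_2')$ (in particular no edge is contained in $S$); and that every $v\in S$ is incident with exactly one edge of $H_1'$ and exactly one of $H_2'$. Fix $v\in S$. By minimality of $S$, the hypergraph $H-(S\setminus\{v\})$ is connected, and in it the only way two distinct components of $H-S$ can become connected is through $v$, via one of its two incident edges. Since any edge $e$ of $H$ with $|e\setminus S|\geq 2$ restricts to an edge of $H-S$ and so meets a single component, each of the two edges at $v$ reaches at most one component of $H-S$; connectivity of $H-(S\setminus\{v\})$ then forces $H-S$ to have at most --- hence exactly --- two components, with one edge of $v$ reaching $V(H_1)$ and the other reaching $V(H_2)$. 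Ranging over $v\in S$ gives the edge partition and shows $\deg_{H_i'}(v)=1$ for every $v\in S$ and $i\in\{1,2\}$.

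\textbf{Anatomy of a spanning Euler family; Part (1).} Call an edge \emph{white} if it lies in $E(H_1')$ and \emph{black} otherwise, and let $\mathcal{F}$ be a spanning Euler family of $H$. Because $\deg(v)=2$ while $\mathcal{F}$ is spanning with anchor-disjoint components, each $v\in S$ is an anchor of exactly one component of $\mathcal{F}$, traversed there exactly once and using both of its flags --- that is, both its (unique) white edge and its (unique) black edge --- while at any anchor not in $S$ the two edges used have the same colour (by the edge partition). Hence each component of $\mathcal{F}$ changes colour precisely at its anchors in $S$, so its number of colour changes is even; summing over all components, that total is $|S|$. Therefore $|S|$ is even, which is Part (1).

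\textbf{Part (2), necessity.} Given a spanning Euler family $\mathcal{F}$ of $H$, cut every component at its $S$-anchors into \emph{segments}: open trails between two distinct vertices of $S$, alternately white and black, together with any monochromatic closed component left whole (these necessarily avoid $S$). Each $v\in S$ is an endpoint of exactly one white segment, so the white segments form a perfect matching of $S$; there are $|S|/2$ of them, and jointly they traverse every edge of $H_1'$ and every vertex of $H_1$. Build $\mathcal{F}_1^*$: to each white segment, from $a$ to $b$ say, append one of the $|S|/2$ copies of the edge $S$ (traversed via $b$ and $a$), using the copies bijectively, and keep the monochromatic white closed components unchanged. These are pairwise edge-disjoint closed strict trails of $H_1^*$ jointly traversing every edge and vertex of $H_1^*$; no two share an anchor in $S$, by the matching property, and if two share an anchor in $V(H_1)$ we concatenate them and repeat. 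The result is a spanning Euler family of $H_1^*$, and reversing the colours gives one for $H_2^*$.

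\textbf{Part (2), sufficiency --- the main obstacle.} Conversely, from spanning Euler families $\mathcal{F}_1^*$ of $H_1^*$ and $\mathcal{F}_2^*$ of $H_2^*$ I would delete the $|S|/2$ copies of $S$ from each, obtaining segments in $H_1'$ and in $H_2'$, and weave them together at the vertices of $S$ to recover a spanning Euler family of $H$. The delicate point --- and, I expect, the crux of the whole proof --- is that the copies of $S$ in $H_i^*$ may be traversed in many different ways, so one must first \emph{normalise} each $\mathcal{F}_i^*$ so that the copies of $S$ are traversed along a perfect matching of $S$ and each $v\in S$ is visited exactly once; here one exploits that the only edge of $H_i'$ at $v$ is a single edge, traversed only once, to show the normalisation is essentially forced (up to discardable trivial segments), and then re-routes through copies of $S$ to remove the residual degeneracies. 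Once both families are normalised, the two perfect matchings of $S$ they induce decompose into a disjoint union of even cycles, and following each cycle alternately through white and black segments yields closed trails in $H$ which, after concatenating any that share an anchor, constitute the desired spanning Euler family. The cleanest way to execute this step (and to handle edges containing several vertices of $S$) is probably to carry out the whole argument in the incidence graph $G=\mathcal{G}(H)$, where segments become ordinary trails, the weaving becomes a routine splice of even subgraphs, and the translation back is supplied by the incidence-graph criterion developed in Section~\ref{chapter:tools}.
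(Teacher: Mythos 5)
Your structural analysis (exactly two components of $H-S$, the colour partition of $E(H)$, one white and one black edge at each $v\in S$), your proof of part (1) via counting colour changes, and your proof of the forward half of (2) are all correct and essentially complete. (Note that the thesis itself does not prove this theorem — it is quoted from [SS] with the proof omitted — so the comparison here is against what your argument actually establishes.) The one genuine gap is the backward half of (2): the ``normalisation'' of $\mathcal{F}_i^*$ is precisely the crux, and you assert it (``essentially forced, up to discardable trivial segments'', followed by unspecified ``re-routes through copies of $S$ to remove the residual degeneracies'') rather than prove it. The justification you offer — that the unique edge of $H_i'$ at $v$ is traversed only once — does not by itself give the claim, because a priori that edge need not be traversed \emph{via} $v$ at all: since $\deg_{H_i^*}(v)=1+|S|/2$, a spanning Euler family of $H_i^*$ could conceivably visit $v$ using two copies of $S$ and route $v$'s $H_i'$-edge through other vertices, and nothing in your sketch rules this out or explains what the promised re-routing would do.

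The gap closes with a short counting argument in the incidence graph (the setting you yourself propose at the end). Let $G_i'$ be the spanning subgraph of the incidence graph of $H_i^*$ corresponding to a \emph{spanning} Euler family, so every e-vertex has degree $2$ and every v-vertex has positive even degree. Each copy of $S$ is an e-vertex all of whose neighbours lie in $S$, so the $|S|/2$ copies contribute exactly $2\cdot\frac{|S|}{2}=|S|$ flags at vertices of $S$. A vertex $v\in S$ has in $G_i'$ only its unique $H_i'$-flag plus copy-flags; to have even degree $\geq 2$ it needs at least two copy-flags if its $H_i'$-flag is unused and at least one (an odd number) if it is used. Summing over $S$ gives $|S|\geq |S|+k$, where $k$ is the number of $v\in S$ whose $H_i'$-flag is unused; hence $k=0$ and every $v\in S$ uses exactly one copy-flag. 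So the normal form is forced outright — each $v\in S$ is traversed exactly once, via its $H_i'$-edge and one copy of $S$, and the copies are traversed along a perfect matching of $S$ — and no re-routing or discarding is ever needed. Deleting the copy e-vertices from $G_i'$ then leaves every $H_i'$-e-vertex of degree $2$, every vertex of $V(H_i)$ of positive even degree, and every $v\in S$ of degree exactly $1$; the union of the two leftover subgraphs is a spanning subgraph of the incidence graph of $H$ with all e-vertices of degree $2$ and all v-vertices of positive even degree, which by Theorem~\ref{thm:incidencegraph} yields a spanning Euler family of $H$. This also replaces your matching-cycle weaving by a one-line union, so with the count above supplied your proposal becomes a complete proof.
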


Steimle and \v{S}ajna were only able to prove a necessary condition for spanning Euler tours, and showed that the converse does not hold.\\

\begin{thm}{\rm \cite[Corollary 3.8]{SS}}
Let $H$ be a connected hypergraph with a minimal vertex cut $S$ such that $\deg(v)=2$ for all $v\in S$.  If $H$ admits a spanning Euler tour, then $|S|$ is even and both $S^*$-components of $H$ admit a spanning Euler tour.\qed
\end{thm}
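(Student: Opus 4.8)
The plan is to take a spanning Euler tour $T$ of $H$, read off from $T$ how it crosses between the two sides of the cut $S$, and then re-stitch the pieces of $T$ lying on one side into a spanning Euler tour of the corresponding $S^{*}$-component. Two structural facts about the cut come first. Since $S$ is a \emph{minimal} vertex cut all of whose vertices have degree $2$, $H-S$ has exactly two connected components $H_{1},H_{2}$ (the fact recalled just before \cite[Theorem 3.7]{SS}), with $S$-components $H_{1}',H_{2}'$. Minimality forces, for each $v\in S$, that $v$ has exactly one edge meeting $V(H_{1})$ and exactly one meeting $V(H_{2})$: if both edges at $v$ lay inside $V(H_{1})\cup S$ then $H-(S\setminus\{v\})$ would be disconnected, contradicting minimality; and the same reasoning shows no edge of $H$ is contained in $S$. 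Hence $E(H)=E(H_{1}')\sqcup E(H_{2}')$, $V(H_{1}')\cap V(H_{2}')=S$, and each $v\in S$ lies in exactly one edge of $H_{1}'$ and one of $H_{2}'$.

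Next I would analyse $T$ through the incidence graph $G$ of $H$. As in Remark~\ref{remark:traversals} and Remark~\ref{remark:closedtrails}, $T$ corresponds to a spanning subgraph $G_{T}$ of $G$ that is eulerian, hence even by Theorem~\ref{thm:eulertour}, and that contains every e-vertex with degree $2$. For $v\in S$ we have $\deg_{G}(v)=2$, so $\deg_{G_{T}}(v)=2$; therefore $T$ traverses $v$ exactly once, and at that single passage it uses $v$'s unique $H_{1}'$-edge and its unique $H_{2}'$-edge. Writing $T$ cyclically as a sequence of edges, two consecutive edges lie on opposite sides only at a common vertex of $H_{1}'$ and $H_{2}'$, i.e.\ a vertex of $S$, and every passage through a vertex of $S$ is such a crossing. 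Thus $T$ has exactly $|S|$ crossings, these alternate between $H_{1}\!\to\! H_{2}$ and $H_{2}\!\to\! H_{1}$ around the closed tour, so $|S|$ is even and $T$ decomposes into $|S|/2$ maximal subwalks inside $H_{1}'$ and $|S|/2$ inside $H_{2}'$, each such subwalk meeting $S$ only at its two endpoints. (The parity conclusion also follows at once from \cite[Theorem 3.7(1)]{SS}, since a spanning Euler tour is in particular a spanning Euler family.)

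Now I would reassemble one side. Let $W_{1},\dots,W_{|S|/2}$ be the maximal $H_{1}'$-subwalks of $T$ in their cyclic order of appearance; say $W_{i}$ runs from $a_{i}$ to $b_{i}$. They are pairwise edge-disjoint; jointly they traverse every edge of $H_{1}'$ (each such edge is traversed once by $T$, necessarily within an $H_{1}'$-piece) and, because $T$ is spanning, every vertex of $V(H_{1})$; and $\{a_{1},b_{1},\dots,a_{|S|/2},b_{|S|/2}\}$ is exactly $S$, each vertex occurring once (a start at each $H_{2}\!\to\! H_{1}$ crossing, an end at each $H_{1}\!\to\! H_{2}$ crossing), so in particular $a_{i}\neq b_{i}$ and $b_{i}\neq a_{i+1}$. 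The $S^{*}$-component $H_{1}^{*}$ is $H_{1}'$ with $|S|/2$ fresh parallel copies of the edge $S$ adjoined, so I concatenate $W_{1},\dots,W_{|S|/2}$ cyclically, bridging $b_{i}$ to $a_{i+1}$ (indices mod $|S|/2$) by one new copy of $S$ traversed via those two vertices. Each copy of $S$ is used once and no edge of $H_{1}'$ is repeated, so the result is a strict closed trail; it traverses every edge of $H_{1}^{*}$ exactly once, and every vertex of $V(H_{1})\cup S$ (the $W_{i}$ cover $V(H_{1})$, the junctions $a_{i},b_{i}$ cover $S$), hence it is a spanning Euler tour of $H_{1}^{*}$. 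Symmetry gives one for $H_{2}^{*}$.

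I expect the main obstacle to be the first paragraph: pinning down, from minimality of $S$ and the degree-$2$ hypothesis, that no edge of $H$ sits inside $S$ and that each $v\in S$ has exactly one edge on each side --- this is where the hypotheses are genuinely spent, and it is what makes the crossing decomposition and the partition $E(H)=E(H_{1}')\sqcup E(H_{2}')$ work cleanly. The remaining difficulty is purely bookkeeping: checking that the maximal same-side subwalks inherit all of $V(H_{i})$ and meet $S$ only at their (distinct) endpoints, and that the cyclic re-stitching produces a single closed trail rather than several.
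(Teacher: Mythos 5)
Your proof is correct; note that the thesis itself gives no proof of this statement — it is quoted from Steimle and \v{S}ajna \cite{SS} with the proof omitted — so there is no in-paper argument to compare against, and your argument stands as a complete, self-contained derivation. The two points where the hypotheses are genuinely spent are handled soundly: (i) the minimality-plus-degree-$2$ argument that each $v\in S$ has exactly one incident edge meeting $V(H_1)$ and one meeting $V(H_2)$, hence no edge inside $S$ and $E(H)=E(H_1')\,\dot\cup\,E(H_2')$, goes through even under the paper's convention that vertex deletion truncates edges rather than discarding them (if both edges at $v$ avoid $V(H_2)$, no truncated edge of $H-(S\setminus\{v\})$ can join $V(H_2)$ to $\{v\}\cup V(H_1)$, so $S\setminus\{v\}$ is a smaller vertex cut); and (ii) since $\deg(v)=2$ and the tour is spanning, each $v\in S$ is traversed exactly once, with its two incident edges on opposite sides, so the crossings of $T$ are in bijection with $S$; as both $E(H_1')$ and $E(H_2')$ are nonempty, no maximal one-sided subwalk is the whole tour, the $|S|$ endpoint slots of the $|S|/2$ side-$1$ pieces are filled bijectively by the vertices of $S$, and the cyclic re-stitching through the $|S|/2$ fresh copies of $S$ yields a single closed strict trail that covers $V(H_i)\cup S$ and every edge of $H_i^*$ exactly once. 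The parity of $|S|$ could alternatively be imported from \cite[Theorem 3.7(1)]{SS}, as you remark, but your direct crossing count is equally valid and keeps the proof self-contained.
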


We will be proving analogous results to these, but for edge cuts, in Chapter~\ref{chapter:edgecuts}.  Although we will not be exclusively looking for {\em spanning} Euler families and tours, our conditions will look similar in appearance to the ones shown above.  Instead of investigating derived hypergraphs, we will look at some analogous hypergraphs associated to $H$ and the edge cut.

\section{Tools}\label{chapter:tools}
In this section, we will give results that have either previously been proven by other authors, or are general enough to be applied in multiple upcoming chapters.  Most of the previously existing results are due to Bahmanian and \v{S}ajna, and have to do with finding Euler tours or Euler families via the incidence graph.  Some of these results lead to very rudimentary results that are used to begin our investigation; we also include these here.  

We must first explain how the incidence graph can be used to find eulerian traversals by presenting the full version of Theorem~\ref{thm:incidencegraph1}.\\

\begin{thm}\label{thm:incidencegraph}{\rm \cite[Theorem 2.18]{BS2}} Let $H=(V,E)$ be a hypergraph and $G$ be its incidence graph.  Then the following hold:
\begin{description}
\item[(1)] $H$ is quasi-eulerian if and only if $G$ has a spanning subgraph $G'$ such that $\text{deg}_{G'}(e) = 2$ for all $e\in E(H)$ and $\text{deg}_{G'}(v)$ is even for all $v\in V(H)$.  
\item[(2)] $H$ is eulerian if and only if $G$ has a spanning subgraph $G'$ such that $\text{deg}_{G'}(e) = 2$ for all $e\in E(H)$ and $\text{deg}_{G'}(v)$ is even for all $v\in V(H)$, and $G'$ has at most one non-trivial connected component.
\end{description} 
\end{thm}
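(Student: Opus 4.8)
The plan is to prove both directions by translating between eulerian traversals of $H$ and suitable spanning subgraphs of the incidence graph $G$, using the correspondence between strict trails in $H$ and trails in $G$ that traverse each e-vertex at most once (Remark~\ref{remark:traversals}, parts (2b) and (4)), together with the characterization of eulerian graphs in terms of cycle decompositions (Veblen's Theorem~\ref{thm:veblen}) and the even-degree criterion (Theorem~\ref{thm:eulertour}).

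For the forward direction of (1), suppose $\F$ is an Euler family of $H$ with components $T_1,\dots,T_r$. Each $T_j$ is a closed strict trail, so by Remark~\ref{remark:traversals} it corresponds to a closed trail $T_j'$ in $G$ that traverses each e-vertex at most once; let $G_{T_j}$ be the associated subgraph of $G$. Since the $T_j$ are pairwise edge-disjoint and anchor-disjoint in $H$, the subgraphs $G_{T_j}$ share no e-vertices and no v-vertex that is an anchor of two different trails; in particular they are edge-disjoint in $G$. Take $G'$ to be the spanning subgraph of $G$ whose edge set is $\bigcup_j E(G_{T_j})$. Because $\F$ traverses every edge of $H$ exactly once, every e-vertex lies in exactly one $G_{T_j}$ and has degree exactly $2$ there (an e-vertex is traversed via two anchors in a strict closed trail), so $\deg_{G'}(e)=2$ for all $e\in E(H)$. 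Because each $G_{T_j}$ is eulerian (Remark~\ref{remark:closedtrails}) hence even (Theorem~\ref{thm:eulertour}), and the $G_{T_j}$ are edge-disjoint, every v-vertex has even degree in $G'$. For the converse of (1), given such a $G'$, restrict attention to the non-trivial connected components of $G'$: each is a connected even graph, hence eulerian (Theorem~\ref{thm:eulertour}), and an Euler tour of it traverses each incident e-vertex exactly once (degree $2$), so it corresponds to a closed strict trail in $H$. These trails are anchor- and edge-disjoint because they live in distinct components of $G'$, and since every e-vertex has $\deg_{G'}(e)=2>0$ it lies in a non-trivial component, so collectively they traverse every edge of $H$; thus they form an Euler family. (A minor check: a non-trivial component containing an e-vertex cannot be a single edge, since that e-vertex needs degree $2$, so each such trail is genuinely non-trivial.)

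For (2), the forward direction is the same construction applied to a single Euler tour $T$: then $G'=G_T$ is connected (it is a single closed trail traversing every e-vertex), so it has at most one non-trivial connected component, and the degree conditions hold as before. For the converse, if $G'$ has at most one non-trivial component, then the Euler family produced above has at most one component, which by the convention in the definition of Euler family/tour is an Euler tour of $H$ (and if $G'$ has no non-trivial component then $H$ is edgeless and any trivial walk is an Euler tour).

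The step requiring the most care is the bookkeeping around \emph{e-vertex degrees and strictness}: one must verify that ``strict closed trail in $H$'' corresponds precisely to ``closed trail in $G$ visiting each e-vertex at most once,'' that such a trail visits each e-vertex it uses exactly twice in the incidence-graph degree sense, and conversely that an Euler tour of an even graph $G'$ with all e-vertex degrees equal to $2$ pulls back to a \emph{strict} (not merely non-strict) closed trail of $H$. All of this is supplied by \cite[Lemma 3.6]{BS1} as summarized in Remark~\ref{remark:traversals}, so the argument is essentially a careful assembly of already-available pieces rather than a new combinatorial difficulty.
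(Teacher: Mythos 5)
Your proposal is correct and follows essentially the same route as the paper's proof: translate each component of the Euler family into a closed trail of the incidence graph via Remark~\ref{remark:traversals}, take the union of the associated (anchor-disjoint, hence edge-disjoint) subgraphs to get the spanning subgraph $G'$, and conversely take Euler tours of the non-trivial even connected components of $G'$ (using Theorem~\ref{thm:eulertour}) and pull them back to strict closed trails forming an Euler family, with the single-component case giving an Euler tour. The extra checks you flag (e-vertices of degree $2$ forcing strictness, and non-trivial components yielding non-trivial trails) are exactly the points the paper also handles, so no gap remains.
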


\begin{proof}
$\Rightarrow:$ Let $\F$ be an Euler family for $H$.  Fix some $T\in\F$.  Then $T$ has a corresponding trail $T'$ in $G$, per Remark~\ref{remark:traversals}, that traverses each e-vertex at most once.  Let $G_T=G(T')$ be the graph associated with $T'$.  Observe that $G_T$ is a connected subgraph of $G$ that has $T'$ as an Euler tour.  Then Theorem~\ref{thm:eulertour} indicates that $G_T$ is even.

Now, since the $G_T$, for $T\in\F$, are pairwise edge-disjoint, we get that $G'=\bigcup\limits_{T\in\F}G_T$ is an even subgraph of $G$.  Furthermore, since $\F$ is an Euler family of $H$, we have that the collection $\{T':T\in\F\}$ traverses every e-vertex of $G$ exactly once; hence each e-vertex $e$ has two neighbours in $G'$, namely the two v-vertices that correspond to the vertices via which $e$ is traversed in $\F$.  We conclude that $G'$ is an even subgraph of $G$ that contains each e-vertex of $G$, and in which each e-vertex has degree 2.  Then $\G=(V(G), E(G'))$ is a spanning subgraph of $G$ in which every v-vertex has even degree and every e-vertex has degree 2.

Now, if $H$ is eulerian, then we may assume $\F=\{T\}$.  Hence $G'=G_T$, which is connected.  Then $\G=(V(G), E(G'))$ has just one non-trivial connected component.

$\Leftarrow:$ Suppose that $G'$ is a spanning subgraph of $G$ with $\deg_{G'}(e)=2$ for all $e\in E$ and $\deg_{G'}(v)$ even for all $v\in V$.  Let $G_i$, for $i\in I$, be the non-trivial connected components of $G'$.  Then each $G_i$ admits an Euler tour $T_i'$.  Since $\deg_{G'}(e)=2$ for each e-vertex $e$, we have that $T_i'$ traverses each e-vertex at most once.  Then Remark~\ref{remark:traversals} says that $T_i'$ corresponds to a (strict) trail $T_i$ of $H$ such that $T_i$ traverses the vertices and edges of $H$ that correspond to the v-vertices and e-vertices, respectively, that are traversed by $T_i'$ in $G'$.

Then we claim $\F=\{T_i:i\in I\}$ is an Euler family of $H$: first of all, the collection of $T_i'$ is pairwise vertex- and edge-disjoint since each $T_i'$ is a trail of a distinct connected component of $G'$.  Secondly, each edge of $H$ is traversed exactly once by $\F$ because each e-vertex of $G$ is traversed exactly once by some $T_i'$, for $i\in I$.  Finally, since each $T_i'$ is an Euler tour of a non-trivial (and hence non-empty) connected component of $G'$, the corresponding trail $T_i$ is non-trivial as well.

Furthermore, if $G'$ has at most one non-trivial connected component, then we have $|\F|=1$, and so it yields an Euler tour of $H$.
\end{proof}

We will be making great use of Theorem~\ref{thm:incidencegraph}, especially in Chapters~\ref{chapter:STS}, \ref{chapter:covering}, and~\ref{chapter:quasi-eulerian}, so we would do well to equip ourselves with further terminology.\\

\begin{def2}{\rm Let $H$ be a hypergraph with incidence graph $G$.  If $\F$ is an Euler family of $H$, then we formally define the {\em subgraph of $G$ corresponding to $\F$}, usually denoted $G_{\F}$, as described in the proof of Theorem~\ref{thm:incidencegraph}:
\begin{itemize}
\item $G_{\F}$ is a spanning subgraph of $G$;
\item For v-vertex $v$ and e-vertex $e$ in $G_{\F}$, we have that $v$ and $e$ are adjacent if and only if $(v,e)$ is an anchor flag of a component of $\F$.\qed
\end{itemize}
}
\end{def2}

The justification for calling the trails of $\F$ ``components'' is given in the following result, which we treat as a corollary of Theorem~\ref{thm:incidencegraph}.\\

\begin{cor}\label{cor:correspondence} Let $H$ be a hypergraph with incidence graph $G$.  Let $\F$ be an Euler family of $H$ and $\G$ be the corresponding subgraph of $G$.  Let $G_T$, for each $T\in\F$, be the subgraph of $G$ corresponding to $T$.

Then $\{G_T: T\in\F\}$ is the set of non-trivial connected components of $\G$.
\end{cor}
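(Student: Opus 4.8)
The plan is to exhibit $\{G_T : T \in \F\}$ as a family of pairwise vertex-disjoint, non-trivial, connected subgraphs of $\G$ whose edge sets together exhaust $E(\G)$, and then to observe that such a family can only be the collection of non-trivial connected components of $\G$.

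First I would record the basic properties of a single $G_T$. By Remark~\ref{remark:traversals}(2b), a non-trivial closed strict trail $T \in \F$ corresponds to a closed trail $T'$ in $G$ that traverses each e-vertex at most once, and $G_T = G(T')$ is the subgraph of $G$ whose vertices and edges are exactly those traversed by $T'$. Since $T'$ visits every vertex of $G_T$, the graph $G_T$ is connected; since $T$ is non-trivial, $T'$ has at least one edge, so $G_T$ is non-trivial. Moreover $V(G_T)$ consists precisely of the anchors of $T$ together with the e-vertices corresponding to $E(T)$ --- in particular it contains no floater of $T$. Finally, every edge $ve$ of $G_T$ corresponds to an anchor flag $(v,e)$ of $T$, hence to an anchor flag of a component of $\F$, so $ve \in E(\G)$; thus each $G_T$ is a subgraph of $\G$.

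Next I would invoke the hypotheses on $\F$. Because the trails of $\F$ are pairwise edge-disjoint in $H$, for distinct $T, T' \in \F$ the sets of e-vertices of $G_T$ and $G_{T'}$ are disjoint; because they are anchor-disjoint, the sets of v-vertices (which, by the previous paragraph, are exactly the anchors) are also disjoint. Hence $G_T$ and $G_{T'}$ are vertex-disjoint, and therefore edge-disjoint. I would then check that $\bigcup_{T \in \F} E(G_T) = E(\G)$: the inclusion $\subseteq$ is the last sentence of the previous paragraph, and $\supseteq$ holds because, by the definition of $\G$, every edge $ve$ of $\G$ is an anchor flag of some component $T$ of $\F$, and the anchor flags of $T$ are precisely the edges of $G_T$.

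The remaining --- and only mildly delicate --- step is to conclude. Since $\G$ is a spanning subgraph of $G$ whose edge set is the disjoint union of the edge sets of the vertex-disjoint connected graphs $G_T$, every non-trivial connected component $C$ of $\G$ contains an edge, say $ve$, and that edge lies in a unique $G_T$; as $G_T$ is connected and is a subgraph of $\G$, it lies inside $C$, so $G_T \subseteq C$. To see $C \subseteq G_T$, I would argue that if $C$ had a vertex outside $V(G_T)$ then, by connectedness of $C$, it would contain an edge $xy$ with $x \in V(G_T)$ and $y \notin V(G_T)$; but $xy$ belongs to some $G_{T''}$, and the vertex-disjointness of the $G_{T''}$ forces $T'' = T$ and hence $y \in V(G_T)$, a contradiction. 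Thus $C = G_T$, so every non-trivial component is some $G_T$; conversely each $G_T$ --- being non-trivial, connected, and equal to the component of $\G$ containing any of its edges --- is itself a non-trivial component. This gluing argument is the main obstacle, though it is entirely routine; everything else is just unwinding the definitions of $\G$ and of $G_T$.
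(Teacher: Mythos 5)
Your proposal is correct and follows essentially the same route as the paper: both arguments exhibit $\G$ (up to isolated v-vertices) as the vertex-disjoint union of the connected, non-trivial subgraphs $G_T$, using edge-disjointness of the trails for the e-vertices and anchor-disjointness for the v-vertices, and then read off that these are exactly the non-trivial connected components. You merely spell out explicitly the disjointness and maximality checks that the paper dispatches by citing the construction in the proof of Theorem~\ref{thm:incidencegraph} and calling the rest evident.
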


\begin{proof} Since $\G$ is uniquely defined from $\F$ (although not vice-versa), we can construct $\G$ (and a graph $G'$) in the way described in the proof of the forward direction of Theorem~\ref{thm:incidencegraph}.  Note that $G'$ can be obtained from $\G$ by deleting isolated v-vertices.  Then $G'$ is a subgraph of $G$ containing all the edges of $\G$ and it is evident that $G'$ is a vertex-disjoint union of connected subgraphs $G_T$ of $G$.  Hence each $G_T$, for $T\in\F$, is a connected component of $G'$, and since each trail $T$ is non-trivial, so is its corresponding connected component $G_T$.  Then $\G$ has the same non-trivial connected components as $G'$, so $\{G_T: T\in\F\}$ is the set of non-trivial connected components of $\G$.
\end{proof}

We also have a basic new result about the nature of some of the connected components of $\G$ if $H$ is a covering 3-hypergraph.\\

\begin{lem}\label{lem:onecc} Let $H$ be a covering 3-hypergraph with incidence graph $G$.  Let $\F$ be an Euler family of $H$ and $\G$ be the corresponding subgraph of $G$.  Then $\G$ has at most one trivial connected component.
\end{lem}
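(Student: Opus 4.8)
The plan is to describe the trivial connected components of $\G$ explicitly and then rule out having two of them using the covering property.

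First I would note that, by the definition of $\G=G_{\F}$, every e-vertex of $G$ has degree $2$ in $\G$: indeed each component of $\F$ is a strict trail and $\F$ traverses every edge of $H$ exactly once, so each e-vertex $e$ is adjacent precisely to the two v-vertices via which the corresponding edge of $H$ is traversed. Hence no e-vertex is isolated in $\G$, and every trivial connected component of $\G$ consists of a single isolated v-vertex. Moreover, again directly from the definition of $\G$, a v-vertex $v$ is isolated in $\G$ if and only if $(v,e)$ fails to be an anchor flag of any component of $\F$ for every e-vertex $e$; equivalently, if and only if $v$ is not an anchor of any component of $\F$.

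Now suppose, towards a contradiction, that $\G$ has two trivial connected components, corresponding to distinct vertices $u,v\in V(H)$, neither of which is an anchor of any component of $\F$. Since $H$ is a covering $3$-hypergraph, the pair $\{u,v\}$ lies together in some edge $e$; as $H$ is $3$-uniform, $e=\{u,v,w\}$ for some $w\in V(H)$. Because $\F$ is an Euler family, exactly one component $T$ of $\F$ traverses $e$, and it does so via two anchors $a,b\in e$ with $a\neq b$ (consecutive vertices of a walk are adjacent, hence distinct). But $a$ and $b$ are anchors of a component of $\F$, whereas $u$ and $v$ are not, so $a,b\in e\setminus\{u,v\}=\{w\}$, forcing $a=b=w$ --- a contradiction.

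Therefore $\G$ has at most one trivial connected component. I do not anticipate a genuine obstacle in this argument; the only points requiring care are the bookkeeping identifying isolated v-vertices of $\G$ with non-anchor vertices of $\F$, and the observation that the two anchors through which a given edge of $H$ is traversed by a (strict) trail are necessarily distinct.
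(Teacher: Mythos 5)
Your proof is correct and follows essentially the same route as the paper: both arguments observe that trivial components must be isolated v-vertices (since e-vertices have degree $2$ in $\G$), take an edge containing two supposedly isolated vertices via the covering property, and derive a contradiction from the fact that this edge's two distinct anchors in $\F$ cannot both be the remaining third vertex. The extra bookkeeping you include (identifying isolated v-vertices with non-anchors and noting the distinctness of consecutive anchors) is sound and just makes explicit what the paper leaves implicit.
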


\begin{proof}
Suppose $x$ and $y$ are distinct isolated vertices of $\G$.  Since the e-vertices of $\G$ have degree 2, we have that $x$ and $y$ must both be v-vertices.  Additionally, since $H$ is a covering 3-hypergraph, there exists some $e\in E(H)$ containing both $x$ and $y$, along with exactly one other vertex $z$.  Then the e-vertex $e\in V(\G)$ must be adjacent to two of $x,y,$ and $z$, contradicting the fact that neither $x$ nor $y$ has neighbours.

Therefore, there can be at most one trivial connected component in $\G$.
\end{proof}

Bahmanian and \v{S}ajna also began to investigate the eulerian properties of certain designs.  One important tool that these results are based on is due to Fleischner, as follows.\\

\begin{thm}\label{thm:fleischner}{\rm \cite{F3}} Let $G$ be a graph with no cut edges and no vertices of degree less than or equal to 2.  Then $G$ has a spanning even subgraph in which every vertex has degree at least 2.\qed
\end{thm}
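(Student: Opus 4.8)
The plan is to translate the statement into the language of cycle covers, using results already established, and then run an extremal argument. Two easy reductions come first: since $\delta(G)\geq 3$ every component of $G$ is nontrivial, and since $G$ has no cut edge every component is $2$-edge-connected, so, working one component at a time, I may assume $G$ is connected and $2$-edge-connected. Next, by the ``every even graph has a cycle decomposition'' step established inside the proof of Theorem~\ref{thm:eulertour} (together with Veblen's Theorem~\ref{thm:veblen}), a spanning even subgraph of $G$ with minimum degree at least $2$ is \emph{exactly} the same data as a family of pairwise edge-disjoint cycles of $G$ whose vertex sets cover $V(G)$: the union of such a family is spanning, even, and meets every vertex, hence has minimum degree $2$; conversely, decomposing a spanning even subgraph with $\delta\geq 2$ into cycles gives edge-disjoint cycles covering $V(G)$. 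So it suffices to cover every vertex of $G$ by pairwise edge-disjoint cycles.

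For this I would argue by extremality. Among all families of pairwise edge-disjoint cycles of $G$ that cover the maximum possible number of vertices, pick one, $\mathcal{C}$, that is also extremal for some further lexicographic objective (for instance, fewest edges used). Suppose a vertex $v$ is uncovered. Then no cycle of $\mathcal{C}$ is incident with $v$, so all $\deg_G(v)\geq 3$ edges at $v$ are unused (``free''). Launching free walks out of $v$ along distinct free edges, I would show that either (i) some free walk closes into a cycle, which can be adjoined to $\mathcal{C}$ to cover $v$, contradicting maximality; or (ii) two such walks run into a common cycle $C\in\mathcal{C}$, whereupon I reroute $C$ through $v$ using the two free walks plus one arc of $C$, and must reassemble the other arc of $C$ together with the remaining free structure into edge-disjoint cycles without uncovering any previously covered vertex. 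Step (ii) is where both hypotheses do real work --- $2$-edge-connectedness keeps free walks from dead-ending, and $\delta(G)\geq 3$ supplies the spare incidences needed to absorb the leftover arc --- and it is the step I expect to be the crux; making the reassembly provably non-worsening, so that extremality is genuinely contradicted, is the main obstacle, and I would attack it by engineering the extremal objective so that every local move strictly improves it.

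A cleaner-to-state but differently hard alternative is to reduce to the cubic case: inflate each vertex $v$ of degree $d\geq 4$ into a cycle $C_v$ of length $d$, reattaching the $d$ edges formerly at $v$ one per vertex of $C_v$; the result $\widehat{G}$ is a $2$-edge-connected cubic multigraph and hence, by Petersen's $2$-factor theorem, has a $2$-factor $\widehat{F}$. Pulling $\widehat{F}$ back to $G$ uses, at each inflated $v$, an even number of the original edges at $v$ (the number of degree-one vertices of $\widehat{F}$ inside $C_v$), so it is a spanning even subgraph of $G$ --- and it has minimum degree at least $2$ \emph{provided} $\widehat{F}$ does not use any whole inflation cycle $C_v$ as a component. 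On this route the obstacle is precisely to strengthen the choice of $2$-factor so that it never swallows an inflation cycle whole, which should be fixable by a local exchange along $C_v$ using the $2$-edge-connectivity of $\widehat{G}$.
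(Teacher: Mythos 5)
You should first be aware that the thesis does not prove this statement at all: Theorem~\ref{thm:fleischner} is imported from Fleischner \cite{F3}, whose proof runs through his splitting lemma together with Petersen's theorem. So the question is whether your sketch actually closes the argument, and in both of your routes the step you defer is exactly the substance of the theorem. Your translation via Theorem~\ref{thm:veblen} and the cycle-decomposition step inside Theorem~\ref{thm:eulertour} (spanning even subgraph with minimum degree $2$ $\Leftrightarrow$ edge-disjoint cycles covering all vertices) is correct, and the reduction to connected $2$-edge-connected components is fine. But in the extremal route, step (ii) --- rerouting a cycle of $\mathcal{C}$ through the uncovered vertex and reassembling the leftover arc and the free walks into edge-disjoint cycles ``without uncovering any previously covered vertex'' --- is precisely where a counterexample to naive rerouting would live, and you exhibit no extremal objective for which the move is provably improving; as written this is a plan, not a proof.

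The same is true of the inflation route, which is the more promising one. The construction of $\widehat{G}$, its bridgelessness, Petersen's theorem for cubic bridgeless multigraphs, and the parity computation showing the pullback is even are all fine. The gap is the clause ``provided $\widehat{F}$ does not use any inflation cycle $C_v$ as a component,'' which you propose to fix ``by a local exchange along $C_v$.'' Petersen gives you some $2$-factor, and Plesn\'{i}k/Sch\"{o}nberger-type strengthenings do let you force a prescribed cycle edge of one chosen $C_v$ into a $2$-factor; but the exchange along the corresponding alternating cycle may push all attached edges of a \emph{different} inflation cycle into the complementary perfect matching, so that $\widehat{F}$ now swallows some other $C_u$ whole. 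You need the avoidance simultaneously at every inflated vertex, and after contracting the inflation cycles back to the original vertices, producing such an even subgraph is at least as strong as the theorem itself --- which is exactly why Fleischner's argument in \cite{F3} does not use plain vertex inflation but instead splits off pairs of edges at vertices of degree at least $4$ while preserving bridgelessness, so that the cubic reduction carries the degree condition back automatically. To complete your route you would need either to import that splitting lemma, or to prove a statement of the form ``a $2$-factor of $\widehat{G}$ minimizing the number of inflation cycles occurring as components has none,'' and the latter requires an argument that the corrective exchange never increases the count, which is currently missing.
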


Fleischner's Theorem can be used to show that a hypergraph is quasi-eulerian, as long as we can prove that the incidence graph satisfies the hypothesis of the theorem.  In fact, we can obtain an auxiliary graph from the incidence graph, apply Fleischner's Theorem to the auxiliary graph, and still conclude that the incidence graph gives rise to an Euler family.  We will see one such example in the following theorem.\\

\begin{thm}\label{thm:eulerfamily}{\rm \cite[Theorem 2.39]{BS2}} Let $H=(V,E)$ be a 3-uniform hypergraph without cut edges.  Then $H$ is quasi-eulerian.
\end{thm}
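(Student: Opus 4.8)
The plan is to use Theorem~\ref{thm:incidencegraph} (part (1)): it suffices to find a spanning subgraph $G'$ of the incidence graph $G$ of $H$ in which every e-vertex has degree exactly $2$ and every v-vertex has even degree. Since $H$ is $3$-uniform, every e-vertex of $G$ has degree $3$, so producing $G'$ amounts to selecting, for each e-vertex $e$, exactly one of its three incident edges in $G$ to delete, in such a way that the resulting v-vertex degrees are all even. I would encode this as a problem about an auxiliary graph rather than working in $G$ directly. For each e-vertex $e = \{x,y,z\}$ of $G$, the three v-vertices $x,y,z$ together with the three incidence edges $ex, ey, ez$ form a ``claw'' $K_{1,3}$; choosing which edge to delete is the same as choosing which \emph{pair} from $\{x,y,z\}$ to keep. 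So I would build an auxiliary graph $A$ on vertex set $V(H)$ in which, for each edge $e=xyz\in E(H)$, we place a triangle on $x,y,z$ (a $3$-cycle, with its three edges all corresponding to the edge $e$). Equivalently, $A$ is essentially the $2$-section of $H$, but organized so that each hyperedge contributes a triangle. Finding $G'$ as above is then equivalent to choosing, in each such triangle of $A$, exactly one of its three edges, so that the chosen edges form an even subgraph of $A$.

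Next I would reformulate ``choose one edge from each triangle'' so that Fleischner's Theorem~\ref{thm:fleischner} applies. The trick, which parallels the proof of Theorem~\ref{thm:eulerfamily2} as hinted in the surrounding text, is to pass to a further auxiliary graph $A'$ obtained from $A$ by subdividing: replace each triangle $T_e$ (on $x,y,z$, coming from $e\in E(H)$) by a new vertex $w_e$ joined to $x$, $y$, and $z$ by single edges — i.e. $A'$ is precisely the incidence graph $G$ itself viewed appropriately. Choosing exactly one incident edge at each $w_e$ to \emph{keep} (rather than two to keep) does not directly fit Fleischner. Instead I would apply Fleischner's Theorem to $G$ after first verifying its two hypotheses: $G$ has no cut edge, and $G$ has minimum degree at least $3$. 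The minimum degree condition holds because every e-vertex has degree $3$ (as $H$ is $3$-uniform) and every v-vertex has degree at least... here is the subtlety: a v-vertex of $H$ could in principle have degree $1$ or $2$, so $G$ need not have minimum degree $3$. So direct application fails and we must do something more careful.

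The resolution — and what I expect to be the main obstacle — is handling v-vertices of low degree in $H$. I would proceed by an argument on the structure of $G$: since $H$ has no cut edges, by Theorem~\ref{thm:cutvvertex} no e-vertex of $G$ is a cut vertex of $G$, which forces a fair amount of connectivity. For a v-vertex $v$ of degree $1$ in $H$, its unique incident edge $e$ would have $v$ contribute nothing constraining (we can always keep both other pairs, i.e. delete $ev$, making $v$ have degree $0$, which is even); so degree-$1$ v-vertices are harmless and can be handled separately by deleting their pendant edges from $G$ first. For v-vertices of degree $2$, one suppresses them (replaces the path of length $2$ through $v$ by a single edge joining its two e-vertex neighbours), obtaining a smaller graph on which we want to run Fleischner; one must check this suppression preserves ``no cut edge'' and does not create the forbidden low-degree e-vertices — and then translate a spanning even subgraph of the suppressed graph back to the desired $G'$ in $G$, checking that $v$ ends up with even degree ($0$ or $2$) in the pullback. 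Once $G$ (after these reductions) has no cut edges and minimum degree at least $3$, Fleischner's Theorem~\ref{thm:fleischner} gives a spanning even subgraph $F$ with $\delta(F)\ge 2$; since e-vertices have degree $3$ in $G$ and even degree $\ge 2$ in $F$, they have degree exactly $2$ in $F$, so $F$ is the sought $G'$, and Theorem~\ref{thm:incidencegraph}(1) yields that $H$ is quasi-eulerian. The delicate bookkeeping is entirely in the reduction steps for low-degree v-vertices and in confirming the ``no cut edge'' hypothesis survives both the deletion of pendants and the suppression of degree-$2$ vertices.
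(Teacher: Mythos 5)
Your plan is essentially the paper's proof: pass to the incidence graph $G$, suppress the degree-2 v-vertices, apply Fleischner's Theorem~\ref{thm:fleischner} to the resulting graph, pull the spanning even subgraph back to $G$, and conclude via Theorem~\ref{thm:incidencegraph}(1) (the opening detour through a triangle/2-section auxiliary graph plays no role and you rightly abandon it). Two points in your write-up need repair, though. First, the Fleischner hypothesis you defer as ``bookkeeping'' --- that $G$, and the suppressed graph, have no cut edges --- is the one substantive verification in this proof, and you never actually establish it: your appeal to Theorem~\ref{thm:cutvvertex} only gives that no e-vertex is a cut vertex of $G$. That does suffice, but you must close the loop: every edge of $G$ is incident with an e-vertex, which has degree $3$, and an end of a cut edge having degree at least $2$ is a cut vertex, so $G$ has no cut edge. (The paper argues instead that if $ve$ were a cut edge then, $e$ not being isolated in $G\setminus ve$, the graph $G-e$ --- which is the incidence graph of $H\setminus e$, connected because $e$ is not a cut edge of $H$ --- would be disconnected.) One should also note that a cut edge of the suppressed graph lifts to a cut edge of $G$, so the property survives suppression, and that suppression does not change e-vertex degrees.

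Second, your treatment of degree-$1$ v-vertices is both unnecessary and, as sketched, unsound. Such a vertex cannot exist under the hypotheses: its unique incident edge of $H$ would be a cut edge of $H$ (equivalently, its pendant edge in $G$ would be a cut edge of $G$), which is exactly how the paper dismisses the case. And if the case did arise, your fix of deleting the pendant edge $ve$ would leave the e-vertex $e$ with degree $2$; it would then have to be suppressed too, and the pullback could give $e$ degree $0$, violating the requirement $\deg_{G'}(e)=2$ in Theorem~\ref{thm:incidencegraph}. With these two repairs your argument coincides with the published one.
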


\begin{proof}
Let $G$ be the incidence graph of $H$.  We first show that $G$ has no cut edges.  Suppose that $ve\in E(G)$ is a cut edge of $G$, where $v\in V(H)$ and $e\in E(H)$.  Since $H$ is 3-uniform, we know that $e$ is not isolated in $G\setminus ve$, hence $G-e$ is a subgraph of $G\setminus ve$, and $G-e$ has at least as many connected components as $G\setminus ve$ has.   But now $G-e$ is the incidence graph of $H\setminus e$ \cite[Lemma 2.19]{BS1}, which is connected since $H$ has no cut edges.  This contradicts the fact that $G-e$ is disconnected.  Hence $G$ has no cut edges, as claimed.

Now, if $G$ has any vertices of degree 2 or less, they must be v-vertices, for all of the e-vertices of $G$ have degree 3.  If $G$ has a v-vertex $v$ of degree 1, then its incident edge is a cut edge of $G$, contradicting our earlier finding.  Hence there are no vertices of degree 1.

Now we may obtain an auxiliary graph $G'$ from $G$ by the following process:
\begin{description}
\item[(1)] Delete all isolated vertices.
\item[(2)] For each v-vertex $v$ of degree 2, replace the 2-path $e_1ve_2$ with a single edge $e_1e_2$, and then delete $v$.
\end{description}  
Then $G'$ is a graph that has no vertices of degree 2 or less, and --- as $G$ has --- no cut edges.

We may apply Theorem~\ref{thm:fleischner} to obtain a spanning even subgraph $G'_1$ of $G'$ with no isolated vertices.  Then we construct a graph $G_1$ by modifying $G'_1$ as follows:
\begin{description}
\item[(1)] Restore the isolated vertices of $G$ that were deleted previously.
\item[(2)] For every v-vertex $v\in V(G)$ that has degree 2, if its ends $e_1$ and $e_2$ are adjacent to each other in $G'_1$, then replace the edge $e_1e_2\in E(G'_1)$ with a 2-path $e_1ve_2$ in $G_1$; otherwise, add an isolated v-vertex $v$ in $G_1$.
\end{description}   
Then $G_1$ is a spanning even subgraph of $G$.  By Theorem~\ref{thm:incidencegraph}, this subgraph gives rise to an Euler family for $H$, so $H$ is quasi-eulerian.
\end{proof}

We have an immediate corollary of Theorem~\ref{thm:eulerfamily} that applies to covering hypergraphs.\\

\begin{cor}\label{cor:quasieuleriancovering}{\rm \cite[Corollary 2.40]{BS2}} Let $H$ be a covering 3-hypergraph with at least two edges.  Then $H$ is quasi-eulerian.
\end{cor}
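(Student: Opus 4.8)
The plan is to reduce the statement to Theorem~\ref{thm:eulerfamily}, which says that every $3$-uniform hypergraph with no cut edge is quasi-eulerian. Since a covering $3$-hypergraph is by definition $3$-uniform, it suffices to prove that a covering $3$-hypergraph $H$ with $|E(H)|\geq 2$ has no cut edge; the conclusion then follows at once.

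So I would argue by contradiction: suppose some $e\in E(H)$ is a cut edge. First note that $H$ is connected (the covering property makes every pair of vertices adjacent), so $c(H)=1$ and $H\setminus e$ must be disconnected. The key observation is that any vertex $w\in V(H)\setminus e$ is adjacent in $H\setminus e$ to every other vertex of $H$: given $v\neq w$, the covering property yields an edge $g\supseteq\{w,v\}$, and $g\neq e$ because $w\in g$ but $w\notin e$, so $g$ survives in $H\setminus e$. Hence if such a vertex $w$ exists, $H\setminus e$ is connected, a contradiction. Therefore every vertex of $H$ lies in $e$; since $H$ has an edge we have $|V(H)|\geq 3=|e|$, which forces $V(H)=e$.

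But then every edge of $H$ is a $3$-subset of the $3$-element set $V(H)$, i.e.\ every edge is a parallel copy of $e$. As $|E(H)|\geq 2$, there is an edge $f\neq e$ with $f=V(H)$, and $f$ is still present in $H\setminus e$; being incident with all three vertices, $f$ makes $H\setminus e$ connected --- the final contradiction. Thus $H$ has no cut edge, and Theorem~\ref{thm:eulerfamily} gives that $H$ is quasi-eulerian. The argument is short and I expect no real obstacle; the only point requiring a little care is the bookkeeping with parallel edges (deleting one copy of $e$ may leave another), and the one idea to isolate is that the covering property propagates connectivity so strongly that removing a single edge cannot disconnect $H$ unless the entire vertex set collapses onto that edge.
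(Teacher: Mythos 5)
Your proof is correct and follows essentially the same route as the paper: reduce to Theorem~\ref{thm:eulerfamily} by showing $H$ has no cut edge, using a vertex $w\notin e$ together with the covering property to reconnect $H\setminus e$, and handling the $V(H)=e$ case via a surviving parallel edge. The only cosmetic difference is that you derive the order-3 case as a forced consequence rather than splitting on $|V(H)|$ up front, which does not change the substance.
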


\begin{proof} It suffices to show that $H$ has no cut edges.  Suppose, for the sake of obtaining a contradiction, that $e\in E(H)$ is a cut edge of $H$.  

If $|V(H)|=3$, then $e=V(H)$, and since $H$ has at least two edges, there is another edge parallel to $e$.  In this case, we obtain a contradiction because $H\setminus e$ is connected.  

Hence assume $|V(H)|\geq 4$, and let $w\in V(H)\setminus e$.  Suppose $u,v\in V(H)$ lie in different connected components of $H\setminus e$.  Then there is an edge of $H\setminus e$ containing both $u$ and $w$ and an edge of $H\setminus e$ containing both $v$ and $w$.  Hence $u$ and $v$ are, in fact, in the same connected component of $H\setminus e$, a contradiction.

Therefore, there are no cut edges of $H$.  We may apply Theorem~\ref{thm:eulerfamily} to see that $H$ is quasi-eulerian.
\end{proof}

We now present a useful new tool that has broad enough use to feature in Chapters~\ref{chapter:covering},~\ref{chapter:quasi-eulerian}, and~\ref{chapter:edgecuts}.  It demonstrates how to convert an Euler family from one hypergraph to another if the incidence graph of one of them is a subgraph (spanning on the e-vertices) of the incidence graph of the other.\\

\begin{lem}\label{lem:truncatedhypergraph}
Let $H_1$ and $H_2$ be hypergraphs.  Assume $V(H_1)\subseteq V(H_2)$ and let $\varphi:E(H_1)\rightarrow E(H_2)$ be any bijection satisfying $e\subseteq\varphi(e)$ for all $e\in E(H_1)$.  Then the following hold:
\begin{description}
\item[(1)] If $H_1$ has an Euler family $\F_1$, then $H_2$ has an Euler family $\F_2$ obtained from $\F_1$ by replacing each edge $e$ with $\varphi(e)$.
\item[(2)] If $H_2$ has an Euler family $\F_2$, and for all $e\in E(H_2)$, we have that $e$ is traversed in $\F_2$ via vertices in $\varphi^{-1}(e)$, then $H_1$ has an Euler family $\F_1$ obtained from $\F_2$ by replacing each edge $e$ with $\varphi^{-1}(e)$.
\end{description}
\end{lem}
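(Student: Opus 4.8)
The plan is to prove both parts directly from the definition of an Euler family, manipulating the sequences of the component trails. In both directions the \emph{vertex} sequences of the trails are left completely unchanged; only the edge labels are rewritten, using $\varphi$ in part (1) and $\varphi^{-1}$ in part (2). All the defining properties of an Euler family — non-triviality, closedness, strictness, pairwise edge-disjointness, anchor-disjointness, and traversing every edge exactly once — then either survive unchanged or follow from the bijectivity of $\varphi$; the only point needing care is that after rewriting, each sequence is still a legitimate walk in the target hypergraph, i.e.\ that each rewritten edge still contains the two anchor vertices via which it is traversed.

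For part (1), take a component $W = v_0 e_1 v_1 \cdots v_{k-1} e_k v_0$ of $\F_1$ and define $W' = v_0 \varphi(e_1) v_1 \cdots v_{k-1} \varphi(e_k) v_0$. Since $v_{i-1}, v_i \in e_i \subseteq \varphi(e_i)$ and $v_{i-1} \neq v_i$ (consecutive anchors of a walk are adjacent, hence distinct), $W'$ is a walk of $H_2$ with the same vertex sequence as $W$; in particular it is closed, non-trivial, and has the same anchor set. Because $\varphi$ is a bijection and $\F_1$ traverses every edge of $H_1$ exactly once, the edges $\varphi(e)$ occurring across all the $W'$ are pairwise distinct and exhaust $E(H_2)$, so each $W'$ is a strict trail, the $W'$ are pairwise edge-disjoint, and $\F_2 = \{\, W' : W \in \F_1 \,\}$ traverses every edge of $H_2$ exactly once; anchor-disjointness is inherited from $\F_1$. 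There is no real obstacle here: enlarging edges never destroys a traversal.

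For part (2), reverse the construction: in each component $W = v_0 \varphi(f_1) v_1 \cdots v_{k-1} \varphi(f_k) v_0$ of $\F_2$ replace $\varphi(f_i)$ by $f_i$, obtaining $W' = v_0 f_1 v_1 \cdots v_{k-1} f_k v_0$. The single nontrivial verification is that $W'$ is a walk of $H_1$, which requires $v_{i-1}, v_i \in f_i$; this is exactly what the extra hypothesis delivers, since $\varphi(f_i)$ is traversed in $\F_2$ via $v_{i-1}$ and $v_i$ and, by assumption, via vertices of $\varphi^{-1}(\varphi(f_i)) = f_i$. The same observation shows every anchor of $\F_2$ lies in some $f_i \subseteq V(H_1)$, so each $W'$ lives in $H_1$ (floaters of $W'$ automatically lie in $V(H_1)$ as well). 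Strictness, edge-disjointness, non-triviality, anchor-disjointness, and covering every edge of $H_1$ exactly once then follow from the bijectivity of $\varphi^{-1}$ and the unchanged vertex sequences, just as in part (1). The main obstacle, such as it is, lies entirely in this step — shrinking $\varphi(f_i)$ back to $f_i$ could in principle leave behind an anchor that no longer belongs to the edge, and the hypothesis on how the edges of $H_2$ are traversed is precisely what rules this out. (One could instead run the whole argument through incidence graphs and Theorem~\ref{thm:incidencegraph}, relabelling each e-vertex $e$ of $G_{\F_1}$ as $\varphi(e)$ and noting its two neighbours still lie in $\varphi(e)$, with the converse using that the two neighbours of $\varphi(f)$ in $G_{\F_2}$ lie in $f$; but the direct argument above is cleaner.)
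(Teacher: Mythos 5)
Your proposal is correct and follows essentially the same route as the paper's proof: replace each edge by its image under $\varphi$ (or $\varphi^{-1}$) while keeping the vertex sequences fixed, check that each rewritten edge still contains its two anchors (automatic in (1) since $e\subseteq\varphi(e)$, and guaranteed by the extra traversal hypothesis in (2)), and let bijectivity of $\varphi$ deliver strictness, edge-disjointness, and exact coverage. The paper's argument is just a terser version of yours, so no further comment is needed.
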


\begin{proof}
{\bf (1)} Let $\F_1$ be an Euler family of $H_1$.  Obtain $\F_2$ from $\F_1$ by replacing every edge $e$ in a trail of $\F_1$ with $\varphi(e)$.  Then $\F_2$ is a collection of anchor-disjoint closed trails of $H_2$: they are trails because each edge is used at most once and is traversed via two vertices of $H_2$ that lie in that edge; they are anchor-disjoint and closed because the anchors are the same as in the trails of $\F_1$.  Hence $\F_2$ is an Euler family of $H_2$ with the claimed properties.

{\bf (2)} Let $\F_2$ be an Euler family of $H_2$ such that, for all $e\in E(H_2)$, we have that $e$ is traversed in $\F_2$ via vertices in $\varphi^{-1}(e)$.  Obtain $\F_1$ from $\F_2$ by replacing every edge $e$ in a trail of $\F_2$ with $\varphi^{-1}(e)$.  As in (1), we have that $\F_1$ is an Euler family of $H_1$, since our extra assumption ensures that every edge $\varphi^{-1}(e)$ is traversed in $\F_1$ only by anchors that it contains in $H_1$.
\end{proof}

A complete characterization of quasi-eulerian hypergraphs comes by way of a theorem of Lov\'{a}sz \cite{LL}.  There are a few ways we can use this theorem, and we will see how it can be directly applied in Chapter~\ref{chapter:quasi-eulerian}.  First, we must define a $(g,f)$-factor:\\

\begin{defn}{\rm \cite{AK} Let $G=(V,E)$ be a graph, and $f,g:V\rightarrow\mathds{N}$ be functions.  A {\em $(g,f)$-factor} of $G$ is a spanning subgraph $F$ of $G$ such that $g(v)\leq\deg_F(v)\leq f(v)$ for all $v\in V$.  An $(f,f)$-factor is also called an {\em $f$-factor}.
}
\end{defn}
And now, Lov\'{a}sz's theorem:\\

\begin{thm}\label{thm:lovasz2} {\em (The $(g,f)$-factor Theorem, Lov\'{a}sz \cite{LL,AK})} Let $G=(V,E)$ be a graph and $f,g:V\rightarrow\mathds{N}$ be functions such that $g(x)\leq f(x)$ and $g(x)\equiv f(x)$ (mod 2) for all $x\in V$.  Then $G$ has a $(g,f)$-factor $F$ such that $\deg_F(x)\equiv f(x)$ (mod 2) for all $x\in V$ if and only if, for all disjoint $S,T\subseteq V$, we have
\begin{equation}\label{eqn:lovasz}
\sum_{x\in S}f(x) + \sum_{x\in T}(\deg_G(x) - g(x)) - \varepsilon_G(S,T) - q(S,T)\geq 0,
\end{equation}
where $\varepsilon_G(S,T)$ denotes the number of edges with one end in $S$ and the other in $T$, and $q(S,T)$ is the number of connected components $C$ of $G-(S\cup T)$ such that $\sum\limits_{x\in V(C)}f(x) + \varepsilon_G(V(C),T)\equiv 1\text{ (mod 2).}\qed$
\end{thm}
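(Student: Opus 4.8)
The plan is to prove the two implications separately. Necessity will come from a direct edge-counting argument applied to a hypothetical factor; sufficiency will come by reducing the existence of a parity $(g,f)$-factor to the existence of a perfect matching in a suitable auxiliary ``gadget'' graph, and then invoking Tutte's $1$-factor theorem.

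\textbf{Necessity.} Suppose $G$ has a $(g,f)$-factor $F$ with $\deg_F(x)\equiv f(x)\pmod 2$ for all $x$, and fix disjoint $S,T\subseteq V$; let $C_1,\dots,C_r$ be the components of $G-(S\cup T)$. For edges of $F$ write $e_F(X,Y)$ for the number joining $X$ to $Y$, and similarly $e_{G\setminus F}(X,Y)$. From $\sum_{x\in S}f(x)\ge\sum_{x\in S}\deg_F(x)$, from $\sum_{x\in T}(\deg_G(x)-g(x))\ge\sum_{x\in T}(\deg_G(x)-\deg_F(x))$, and from $\varepsilon_G(S,T)=e_F(S,T)+e_{G\setminus F}(S,T)$ --- noting that edges of $F$ inside $S$ and edges of $G\setminus F$ inside $T$ only help --- a short computation gives
\[
\sum_{x\in S}f(x)+\sum_{x\in T}(\deg_G(x)-g(x))-\varepsilon_G(S,T)\;\ge\;\sum_{i=1}^{r}\bigl(e_F(S,V(C_i))+e_{G\setminus F}(T,V(C_i))\bigr).
\]
It now suffices to show each $C_i$ counted by $q(S,T)$ contributes at least $1$ to the right-hand sum. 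If some $q$-component $C_i$ contributed $0$, then $F$ would have no edge from $C_i$ to $S$ and would contain every $G$-edge from $C_i$ to $T$; hence $\varepsilon_G(V(C_i),T)=e_F(V(C_i),T)\equiv\sum_{x\in V(C_i)}\deg_F(x)\equiv\sum_{x\in V(C_i)}f(x)\pmod 2$, so $\sum_{x\in V(C_i)}f(x)+\varepsilon_G(V(C_i),T)$ is even, contradicting the parity defining a $q$-component. This yields \eqref{eqn:lovasz}.

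\textbf{The gadget.} For sufficiency, assume \eqref{eqn:lovasz} for all disjoint $S,T$. First reduce to the case $0\le g(x)\le f(x)\le\deg_G(x)$: if $g(x)>\deg_G(x)$ the factor is impossible and \eqref{eqn:lovasz} already fails for $S=\emptyset$, $T=\{x\}$; otherwise replacing $f(x)$ by the largest integer at most $\deg_G(x)$ in the parity class of $f(x)$ changes neither the set of admissible factors nor \eqref{eqn:lovasz}. Now build $G^{*}$: each edge $e=uv$ of $G$ becomes a single edge of $G^{*}$ joining a ``port'' vertex $p^{u}_{e}$ of $u$'s gadget to a ``port'' $p^{v}_{e}$ of $v$'s gadget; and the gadget of a vertex $v$ with $d=\deg_G(v)$ consists of its $d$ ports, together with $d-f(v)$ ``absorber'' vertices (each joined to all $d$ ports) and a clique on the $f(v)-g(v)$ ``optional'' vertices (an even number, by hypothesis), each also joined to all $d$ ports. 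In any matching of $G^{*}$ that saturates a gadget, the absorbers must consume exactly $d-f(v)$ ports and the optional clique a further even number of ports, so the number of ports matched along $G$-edges is precisely some element of $\{g(v),g(v)+2,\dots,f(v)\}$, and each such value is realizable. Hence $G$ has a parity $(g,f)$-factor if and only if $G^{*}$ has a perfect matching.

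\textbf{Translating Tutte's criterion.} By Tutte's $1$-factor theorem, $G^{*}$ has a perfect matching iff $o(G^{*}-U)\le|U|$ for all $U\subseteq V(G^{*})$, where $o$ denotes the number of odd components. The crux of the proof is that it is enough to test sets $U$ of a canonical form, indexed by a pair of disjoint vertex sets $S,T$ of $G$: for $v\in S$ put all $\deg_G(v)$ ports of $v$ into $U$; for $v\in T$ put the $\deg_G(v)-g(v)$ absorber-and-optional vertices of $v$ into $U$; for the remaining $v$ put nothing. For such a $U$ one computes $|U|=\sum_{v\in S}\deg_G(v)+\sum_{v\in T}(\deg_G(v)-g(v))$, while the odd components of $G^{*}-U$ are exactly the $\sum_{v\in S}(\deg_G(v)-f(v))$ now-isolated absorbers of $S$-gadgets, the $\varepsilon_G(S,T)$ isolated ports coming from edges between $S$ and $T$, and $q(S,T)$ ``large'' components (one for each component of $G-(S\cup T)$, odd precisely when that component is a $q$-component, using $g\equiv f\pmod 2$); the inequality $o(G^{*}-U)\le|U|$ then rearranges to exactly \eqref{eqn:lovasz}. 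The main obstacle I anticipate is justifying the reduction to canonical $U$: given an arbitrary $U\subseteq V(G^{*})$, one must normalize it gadget by gadget --- deleting or restoring ports, absorbers, and optional vertices by local exchanges --- in such a way that the deficiency $o(G^{*}-U)-|U|$ never decreases. Once that normalization is in place, comparing the two criteria finishes the proof.
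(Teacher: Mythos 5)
A preliminary remark: the thesis states Theorem~\ref{thm:lovasz2} as a known result of Lov\'{a}sz, cited from the literature and given without proof, so there is no internal argument to compare yours against; your attempt has to stand on its own as a proof of the theorem. Much of it does. The necessity half is complete and correct: the degree-counting inequality bounding $\sum_{x\in S}f(x)+\sum_{x\in T}(\deg_G(x)-g(x))-\varepsilon_G(S,T)$ from below by $\sum_i\bigl(e_F(S,V(C_i))+e_{G\setminus F}(T,V(C_i))\bigr)$ is right, and your parity argument that a component contributing $0$ cannot be a $q$-component is exactly the needed step. The gadget is also sound: the absorber/optional-clique construction does make perfect matchings of $G^{*}$ correspond precisely to parity $(g,f)$-factors, and your count of $|U|$ and $o(G^{*}-U)$ for the canonical sets $U$ (isolated absorbers of $S$-gadgets, isolated $T$-side ports of $S$--$T$ edges, and one large component per component of $G-(S\cup T)$, odd exactly for $q$-components) is correct and does rearrange to \eqref{eqn:lovasz}.

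The genuine gap is the one you flag yourself: the reduction of Tutte's condition from arbitrary $U\subseteq V(G^{*})$ to the canonical sets. This is not a cosmetic normalization; it is where essentially all of the work in deriving factor theorems from Tutte's $1$-factor theorem is concentrated. One must show, gadget by gadget, that an intersection of $U$ with a gadget that is neither ``all ports,'' ``all non-ports,'' nor empty can be replaced by one of these three patterns without decreasing the deficiency $o(G^{*}-U)-|U|$, and the case analysis (a removed port may split or merge components, a removed absorber changes component parities, the optional clique behaves differently from absorbers) requires parity bookkeeping of the same delicacy as the $q$-component count. As written, the hardest step of the sufficiency direction is a stated intention rather than an argument, so the proof is incomplete. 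A second, smaller unproved claim is the preliminary truncation: you assert that replacing $f(x)$ by the largest integer at most $\deg_G(x)$ in its parity class ``changes neither the set of admissible factors nor \eqref{eqn:lovasz}.'' The factors are indeed unchanged, and truncation can only decrease the left-hand side of \eqref{eqn:lovasz}, so the implication you actually need --- that \eqref{eqn:lovasz} for $f$ forces \eqref{eqn:lovasz} for the truncated $f$ --- is the nontrivial direction. It is true, but proving it takes an argument (for instance, remove from $S$, one at a time, the vertices $x$ with $f(x)>\deg_G(x)$ and verify, using the parity of the component of $G-(S\cup T)$ created by returning $x$, that the left-hand side computed with the truncated $f$ never increases); as stated it is an unjustified step. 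With those two pieces supplied, your Tutte-gadget route would give a valid proof, one that differs from Lov\'{a}sz's original deficiency-type argument.
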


Bahmanian and \v{S}ajna use Theorem~\ref{thm:lovasz2} to give a characterization of quasi-eulerian hypergraphs, by regarding an Euler family as a certain $(g,f)$-factor of the incidence graph.\\

\begin{thm}{\rm \cite[Corollary 2.35]{BS2}} Let $H=(V,E)$ be a hypergraph and $G$ be its incidence graph.  For $X,Y\subseteq V(G)$, let $\varepsilon_G(X,Y)$ denote the number of edges of $G$ with one end in $X$ and the other in $Y$.  Then $H$ is quasi-eulerian if and only if, for all disjoint sets $S\subseteq E$ and $T\subseteq V\cup E$, we have 
\begin{equation}\label{eqn:lovasz2}
2|S| + \sum_{x\in T}\deg_G(x) - 2|T\cap E| - \varepsilon_G(S,T\cap V) -q'(S,T)\geq 0,
\end{equation}
where $q'(S,T)$ is the number of connected components $C$ of $G-(S\cup T)$ such that $\varepsilon_G(V(C),T)$ is odd.
\end{thm}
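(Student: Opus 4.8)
The plan is to combine the incidence-graph characterization of quasi-eulerian hypergraphs (Theorem~\ref{thm:incidencegraph}(1)) with Lov\'{a}sz's $(g,f)$-factor theorem (Theorem~\ref{thm:lovasz2}). By Theorem~\ref{thm:incidencegraph}(1), $H$ is quasi-eulerian if and only if its incidence graph $G$, whose vertex set is $V\cup E$, has a spanning subgraph $G'$ with $\deg_{G'}(e)=2$ for every e-vertex $e\in E$ and $\deg_{G'}(v)$ even for every v-vertex $v\in V$. I would recognize such a subgraph as a $(g,f)$-factor satisfying the parity condition of Theorem~\ref{thm:lovasz2}, taking $g(e)=f(e)=2$ for $e\in E$ and $g(v)=0$, $f(v)=2\lfloor\deg_G(v)/2\rfloor$ for $v\in V$. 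These functions satisfy $g\le f$ and $g\equiv f\pmod 2$, and a spanning subgraph $F$ of $G$ with $g(x)\le\deg_F(x)\le f(x)$ and $\deg_F(x)\equiv f(x)\pmod 2$ for all $x\in V(G)$ is precisely a subgraph of the required type (for an e-vertex $e$ the constraints force $\deg_F(e)=2$; for a v-vertex $v$ they are equivalent to $\deg_F(v)$ being even, necessarily at most $\deg_G(v)$). Hence, by Theorem~\ref{thm:lovasz2}, $H$ is quasi-eulerian if and only if inequality~\eqref{eqn:lovasz} (with these $f$ and $g$) holds for all disjoint $S,T\subseteq V\cup E$.

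Next I would simplify inequality~\eqref{eqn:lovasz} for this choice of $f$ and $g$. Since every value of $f$ is even, $\sum_{x\in V(C)}f(x)$ is even for every component $C$ of $G-(S\cup T)$, so the parity condition in the definition of $q(S,T)$ reduces to ``$\varepsilon_G(V(C),T)$ is odd''; that is, $q(S,T)=q'(S,T)$. Also $\sum_{x\in T}(\deg_G(x)-g(x))=\sum_{x\in T}\deg_G(x)-2|T\cap E|$. It therefore remains to show that it is enough to test~\eqref{eqn:lovasz} over pairs $(S,T)$ with $S\subseteq E$: for such pairs, bipartiteness of $G$ gives $\varepsilon_G(S,T)=\varepsilon_G(S,T\cap V)$ and $\sum_{x\in S}f(x)=2|S|$, so~\eqref{eqn:lovasz} becomes exactly inequality~\eqref{eqn:lovasz2}. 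Restricting the family of test pairs can only weaken the condition, so the ``only if'' direction is immediate; the work is in the ``if'' direction.

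For the ``if'' direction I would prove that deleting a v-vertex $v$ from $S$ never increases the left-hand side of~\eqref{eqn:lovasz}, which I denote $\Phi(S,T)$; iterating over all v-vertices in $S$ then reduces an arbitrary test pair to one with $S\subseteq E$, and the hypothesis finishes the argument. To carry this out, let $p$ be the number of neighbours of $v$ that lie in $S$ and $q_0$ the number of neighbours of $v$ that lie outside $S\cup T$ (all neighbours of $v$ are e-vertices, and $G$ is simple). Adding $v$ back to $G-(S\cup T)$ merges the at most $q_0$ components containing a neighbour of $v$ into a single component together with $v$, which yields
\[
\Phi(S,T)-\Phi(S\setminus\{v\},T)=\bigl(f(v)-\varepsilon_G(\{v\},T)\bigr)+\bigl(q(S\setminus\{v\},T)-q(S,T)\bigr),
\]
where $f(v)-\varepsilon_G(\{v\},T)$ equals $p+q_0$ if $\deg_G(v)$ is even and $p+q_0-1$ if $\deg_G(v)$ is odd, and the second bracket is at least $-q_0$. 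This gives $\Phi(S,T)\ge\Phi(S\setminus\{v\},T)$ whenever $\deg_G(v)$ is even or $p\ge 1$; the remaining case ($\deg_G(v)$ odd and $p=0$) requires a short parity refinement of the lower bound on the $q$-difference. I expect this last case analysis --- tracking how $q(S,T)$ changes when a v-vertex is removed from $S$ --- to be the main obstacle; the rest is routine bookkeeping. Chaining the equivalences above then gives the claimed characterization.
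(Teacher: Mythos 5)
Your outline is sound, and it diverges from the paper's proof precisely at the one point where real work is needed. Both arguments identify the subgraph of Theorem~\ref{thm:incidencegraph}(1) with a $(g,f)$-factor satisfying the parity condition of Theorem~\ref{thm:lovasz2}, and both observe that for pairs with $S\subseteq E$ the evenness of $f$ gives $q(S,T)=q'(S,T)$, bipartiteness gives $\varepsilon_G(S,T)=\varepsilon_G(S,T\cap V)$, and $\sum_{x\in S}f(x)=2|S|$, so that (\ref{eqn:lovasz}) becomes exactly (\ref{eqn:lovasz2}). The difference is how pairs with $S\cap V\neq\emptyset$ are discharged. The paper simply takes $f(v)=2K$ on v-vertices with $K$ sufficiently large (any $K$ with $2K\geq mn+m+n$ suffices): a single summand $f(v)$ with $v\in S\cap V$ then dominates $\varepsilon_G(S,T)+q(S,T)$, so (\ref{eqn:lovasz}) holds trivially for such pairs and no further analysis is needed. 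Your tighter choice $f(v)=2\lfloor\deg_G(v)/2\rfloor$ is also a legitimate encoding of ``even degree at most $\deg_G(v)$,'' but it forces the vertex-deletion monotonicity argument you sketch; it buys nothing here beyond avoiding the ``sufficiently large constant'' device.

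The only incomplete step is the subcase you flag ($\deg_G(v)$ odd and $p=0$), and it does close, so the route works. With your notation, write $t=\varepsilon_G(\{v\},T)$, let $A$ be the number of odd components of $G-(S\cup T)$ meeting $N(v)\setminus(S\cup T)$, and let $B\in\{0,1\}$ record whether the merged component $C^*$ containing $v$ in $G-((S\setminus\{v\})\cup T)$ is odd; then the $q$-difference is exactly $B-A$, and $\Phi(S,T)-\Phi(S\setminus\{v\},T)=(q_0-1)+(B-A)$ in this subcase. If $A\leq q_0-1$, this is already $\geq B\geq 0$. If $A=q_0$, then the neighbours of $v$ outside $S\cup T$ lie in $q_0$ pairwise distinct components, each odd, and since $\varepsilon_G(V(C^*),T)=t+\sum_i\varepsilon_G(V(C_i),T)\equiv t+q_0=\deg_G(v)\equiv 1\pmod 2$ (the sum being empty when $q_0=0$), the merged component is odd, so $B=1$ and the difference is $(q_0-1)+(1-q_0)=0$. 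Iterating over the v-vertices of $S$ then completes your ``if'' direction as claimed. So: correct approach, genuinely different from the paper's at this step, with the flagged case needing (and admitting) exactly this parity argument — though you should note that the paper's choice of a huge even $f(v)$ makes the whole difficulty evaporate in one line.
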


\begin{proof} We begin by defining functions $g,f:V(G)\rightarrow\mathds{N}$ as follows:
\[  g(x)= \left\{
\begin{array}{lcl}
      0 & \text{if} & x\in V \\
      2 & \text{if} & x\in E\\
\end{array} 
\right. \hspace{15px}\text{ and }\hspace{15px} f(x)=\left\{
\begin{array}{lcl}
      2K & \text{if} & x\in V\\
      2 & \text{if} & x\in E\\
\end{array} 
\right.
\]
where $K$ is a sufficiently large integer.

Now, we can see that a $(g,f)$-factor $F$ of $G$, with $\deg_F(x)$ even for all $x\in V(F)$, corresponds to an Euler family for $H$ by Theorem~\ref{thm:incidencegraph}.

We will next show that, for any disjoint sets $S\subseteq E$ and $T\subseteq V\cup E$, we have that Inequalities~\ref{eqn:lovasz} and~\ref{eqn:lovasz2} are equivalent.

From the definitions of $g$ and $f$ we have 
\begin{align*}
&\sum_{x\in T}(\deg_G(x)-g(x))\\
=&\sum_{x\in T\cap V}(\deg_G(x) - 0) + \sum_{x\in T\cap E}\deg_G(x) - 2|T\cap E|\\
=&\sum_{x\in T}\deg_G(x) - 2|T\cap E|, \text{ and}
\end{align*}
$$\sum_{x\in S}f(x) = 2|S|.$$

Since $G$ is bipartite with $S$ and $T\cap E$ being subsets of the same part, we have $\varepsilon_G(S,T\cap E)=0$.  Hence $\varepsilon_G(S,T) = \varepsilon_G(S,T\cap V)$.

Finally, since $f(x)$ is even for all $x\in V(G)$, we observe that $q(S,T)=q'(S,T)$.

Therefore, Inequalities~\ref{eqn:lovasz} and~\ref{eqn:lovasz2} are equivalent in the case where $S\subseteq E$ and $T\subseteq V\cup E$.

Now, when $S\cap V\neq\emptyset$, it suffices to show that Inequality~\ref{eqn:lovasz} holds.  But since the left-hand side of the inequality contains a summand of $f(x)$ for some $x\in V$, and this quantity is arbitrarily large, it is evident that the inequality holds.

Therefore, if $H$ is quasi-eulerian, then there exists a $(g,f)$-factor $F$ of $G$ such that, for any $x\in V(G)$, we have that $\deg_F(x)$ is even if $x$ is a v-vertex, or $\deg_F(x)=2$ if $x$ is an e-vertex.  Then Theorem~\ref{thm:lovasz2} states that Inequality~\ref{eqn:lovasz} holds for all disjoint sets $S,T\subseteq V\cup E$, and so Inequality~\ref{eqn:lovasz2} holds for all disjoint sets $S\subseteq E$ and $T\subseteq V\cup E$.  Conversely, if Inequality~\ref{eqn:lovasz2} holds for all disjoint sets $S\subseteq E, T\subseteq V\cup E$, then Inequality~\ref{eqn:lovasz} holds for all disjoint sets $S,T\subseteq V\cup E$.  Then Theorem~\ref{thm:lovasz2} says that $G$ admits a $(g,f)$-factor, and this corresponds to an Euler family of $H$ by Theorem~\ref{thm:incidencegraph}. \phantom{aa}
\end{proof}

Finally, we present a condition of necessity that will be taken into consideration in Chapter~\ref{chapter:edgecuts}.\\

\begin{thm}{\rm \cite[Theorem 2.7]{BS2}}\label{thm:nocutedges} Let $H$ be a hypergraph.  If $H$ is eulerian, then $H$ has no non-trivial cut edges.  If $H$ is quasi-eulerian, then $H$ has no strong cut edges.
\end{thm}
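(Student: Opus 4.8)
The plan is to prove both halves with the standard ``splice out an edge'' manipulation already used in the proof of Proposition~\ref{prop:2-edge-connected}. The key observation is this: if a closed strict trail $T$ of $H$ traverses an edge $e$, then $e$ is traversed via two distinct anchors $u,v\in e$ (consecutive anchors of a trail differ), and deleting that single occurrence of $e$ from $T$ leaves a $vu$-walk $T'$ lying entirely in $H\setminus e$ and traversing exactly the edges $E(H)\setminus\{e\}$ that $T$ did. In particular $u$ and $v$ are connected in $H\setminus e$, and all of $E(H)\setminus\{e\}$ lies on the single walk $T'$.

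For the eulerian case I would first dispose of $|E(H)|=0$ (then $H$ has no cut edges at all), and then note $|E(H)|\geq 2$: the non-trivial connected component of $H$ is itself eulerian, hence admits an Euler family, so by Lemma~\ref{lem:trivialcases} it cannot have just one edge. Since an Euler tour traverses every edge, each edge of $H$ has at least two vertices and all edges of $H$ lie in one connected component, so $H$ has exactly one non-trivial connected component. Now take any $e\in E(H)$ and an Euler tour $T$, and form the spliced walk $T'$ as above. Then $T'$ is a single walk of $H\setminus e$ covering the non-empty edge set $E(H)\setminus\{e\}$, so $H\setminus e$ again has exactly one non-trivial connected component. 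Hence $H\setminus e$ and $H$ have the same number of non-trivial connected components, which means $e$ --- if it is a cut edge at all --- is a trivial cut edge; since $e$ was arbitrary, $H$ has no non-trivial cut edges.

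For the quasi-eulerian case I would argue by contradiction: if $e$ were a strong cut edge, take an Euler family $\F$, let $T\in\F$ be the component traversing $e$ (via distinct vertices $u,v\in e$), and splice out $e$ to obtain a $vu$-walk in $H\setminus e$, witnessing that $u$ and $v$ lie in the same connected component of $H\setminus e$. But by Remark~\ref{remark:cutedge}, a strong cut edge has all of its vertices in pairwise distinct connected components of $H\setminus e$, contradicting $u$ and $v$ being connected there. Hence $H$ has no strong cut edges.

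I do not expect a serious obstacle here; the only point worth flagging is the bookkeeping in the first half. A \emph{non-trivial} cut edge is one whose deletion raises the count of \emph{non-trivial} connected components (isolated vertices that split off are harmless), so the whole content is that the walk $T'$ still covers every remaining edge, which pins both $H$ and $H\setminus e$ to exactly one non-trivial component --- precisely the equality needed to conclude triviality.
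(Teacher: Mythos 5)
Your proposal is correct and follows essentially the same argument as the paper: splice the edge $e$ out of the Euler tour (respectively, out of the trail of the Euler family traversing $e$), note that the remaining walk keeps all other edges in a single connected component of $H\setminus e$ for the first claim, and that it connects two distinct vertices of $e$ in $H\setminus e$, contradicting Remark~\ref{remark:cutedge}, for the second. The extra bookkeeping you add in the eulerian case (ruling out $|E(H)|=1$ via Lemma~\ref{lem:trivialcases}) is harmless but not needed, since the paper simply observes that every edge of $H\setminus e$ lies on the spliced walk.
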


\begin{proof} Let $H$ be a hypergraph.  We first assume that $H$ is eulerian.  If $H$ is edgeless, then it is evident that it has no non-trivial cut edges, so assume $H$ is not edgeless.  Let $e\in E(H)$, and without loss of generality, let $T=v_0ev_1e_2v_2\dotso v_{m-1}e_mv_0$ be an Euler tour for $H$.  Then $W=v_1e_2v_2\dotso v_{m-1}e_mv_0$ is an open Euler trail for $H\setminus e$.

Now, every edge of $H\setminus e$ is in the same connected component of $H\setminus e$, so $H\setminus e$ has no more non-trivial connected components than $H$ has.  Hence $e$ is not a non-trivial cut edge of $H$, and we conclude that $H$ does not have any non-trivial cut edges.

Now, assume instead that $H$ is quasi-eulerian.  Let $\F=\{T_1,\dotso , T_k\}$ be an Euler family for $H$ and suppose, for the sake of obtaining a contradiction, that $e\in E(H)$ is a strong cut edge of $H$.  Without loss of generality, let $T_1=v_0ev_1e_2v_2\dotso v_{\ell-1}e_{\ell}v_0$ be the component of $\F$ that traverses $e$.

Since $e$ is a strong cut edge of $H$, its vertices lie in distinct connected components of $H\setminus e$ (see Remark~\ref{remark:cutedge}).  However, we can see that $v_1e_2v_2\dotso v_{\ell-1}e_{\ell}v_0$ is a walk in $H\setminus e$ from one of the vertices of $e$ to another, a contradiction.  Hence $H$ contains no strong cut edges.
\end{proof}
\cleardoublepage

\chapter{Eulerian Properties of Triple Systems}\label{chapter:STS}

As we have seen, there are some existing results about Euler families and Euler tours for 3-uniform hypergraphs, and particularly triple systems.  It seems natural to begin our exploration with triple systems since they are highly structured hypergraphs.  In this chapter, we first find that it is not difficult to show that a TS($n,\lambda$) is eulerian if $\lambda\geq 2$.  We present the result as it was originally proven in a paper by \v{S}ajna and the author \cite{SW}, although more recent developments have greatly simplified the proof. These updated tools will be discussed in Chapter~\ref{chapter:covering}.

After giving the proof for $\lambda\geq 2$, we introduce some new notation in order to prove that TS($n,1$) is eulerian for all $n\geq 13$, and we use techniques that are specific to Steiner triple systems.  There are three other Steiner triple systems for $n<13$; in two cases we construct an Euler tour, while the remaining case TS($3,1$) is degenerate and does not admit an Euler tour.

\section{Basic Tools and Lemmas}

The following Theorem~\ref{thm:inc1} and Lemma~\ref{thm:inc2} are simple observations about a hypergraph and its incidence graph, but are nevertheless important and will be used freely in our upcoming proof of Theorem~\ref{thm:eulerianTS}, regarding triple systems of index 2 or greater.\\

\begin{thm}\label{thm:inc1}{\rm \cite[Theorem 3.11]{BS1}} Let $H$ be a hypergraph without empty edges, and let $G$ be its incidence graph.  Then $H$ is connected if and only if $G$ is connected.\\
\end{thm}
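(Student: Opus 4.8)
The plan is to prove both implications by translating walks back and forth between $H$ and its incidence graph $G=\mathcal{G}(H)$, using the fact (noted earlier) that $G$ is simple and bipartite with bipartition $(V(H),E(H))$, and that its edges are exactly the pairs $ve$ with $v\in e$.

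First I would dispatch the implication ``$G$ connected $\Rightarrow$ $H$ connected.'' Given $u,v\in V(H)$, connectedness of $G$ gives a $uv$-walk in $G$; since $G$ is bipartite with the v-vertices on one side and $u,v$ are both v-vertices, this walk alternates v-vertex, e-vertex, v-vertex, $\dots$, so it has the form $u\,e_1\,v_1\,e_2\,v_2\cdots e_k\,v$ with $v_{i-1},v_i\in e_i$ for each $i$. That very sequence is a $uv$-walk in $H$, because $v_{i-1}$ and $v_i$ are adjacent via $e_i$. Hence $u$ and $v$ are connected in $H$, and since they were arbitrary, $H$ is connected. (Note this direction does not use the hypothesis that $H$ has no empty edges.)

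For the converse, assume $H$ is connected. The principal case is a pair of v-vertices $u,v$: a $uv$-walk $u\,e_1\,v_1\cdots e_k\,v$ in $H$ maps verbatim to a $uv$-walk in $G$, since each incidence $v_{i-1}\in e_i$ and $v_i\in e_i$ supplies the required edges $v_{i-1}e_i$ and $e_iv_i$ of $G$. For a vertex of $G$ that is an e-vertex $e$, I would invoke the hypothesis that $H$ has no empty edges: choose any $w\in e$, so that $we\in E(G)$, connect to $w$ via the v-vertex case, and then append the edge $we$. Applying this at one or both endpoints as needed covers the remaining cases, so any two vertices of $G$ are connected and $G$ is connected.

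This argument is routine; the only place the hypothesis ``no empty edges'' is used — and the only mild subtlety — is in guaranteeing that every e-vertex of $G$ has at least one neighbour, so that e-vertices cannot be isolated in $G$ while $H$ is connected. I do not anticipate any real obstacle here.
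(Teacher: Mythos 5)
The paper does not give its own proof of this statement; it is quoted directly from Bahmanian and \v{S}ajna \cite{BS1}, so your argument stands on its own, and the route you take --- translating walks back and forth between $H$ and $\mathcal{G}(H)$, with the ``no empty edges'' hypothesis used only to keep e-vertices from being isolated --- is exactly the natural one and matches the walk/incidence-graph correspondence the thesis itself uses elsewhere (Remark~\ref{remark:traversals}).

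One small repair is needed in the direction ``$G$ connected $\Rightarrow$ $H$ connected.'' An arbitrary $uv$-walk in $G$ may backtrack, producing a segment $v_{i-1}\,e_i\,v_i$ with $v_{i-1}=v_i$; since adjacency in a hypergraph is defined only for \emph{distinct} vertices, the transcribed sequence is then not a walk in $H$, so the claim ``that very sequence is a $uv$-walk in $H$'' fails as stated. The fix is immediate: take a $uv$-path in $G$ rather than an arbitrary walk (the thesis remarks that in a graph a $uv$-walk exists if and only if a $uv$-path does), so that consecutive v-vertices along it are distinct and the transcription really is a $uv$-walk (in fact a path) in $H$. The converse direction needs no such adjustment, since there the consecutive entries of the transcribed sequence are a v-vertex and an e-vertex and hence automatically distinct; your handling of the e-vertex endpoints via a vertex $w\in e$ is correct.
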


\begin{lem}\label{thm:inc2}{\rm \cite[Lemma 2.8]{BS1}} Let $H=(V,E)$ be a hypergraph, and suppose $|V|\geq 2$ and that $H$ has no empty edges.  If $v\in V$ is such that $\{v\}\not\in E$, then $\mathcal{G}(H- v)=\mathcal{G}(H)- v$.
\end{lem}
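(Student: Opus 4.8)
The plan is to verify directly that the two graphs have the same vertex set and the same edge set under the obvious identification of e-vertices. Write $V' = V \setminus \{v\}$. By definition, $\mathcal{G}(H) - v$ has vertex set $(V \cup E)\setminus\{v\} = V' \cup E$ and is obtained from $\mathcal{G}(H)$ by deleting $v$ together with every edge incident with $v$. On the other side, $H - v = H[V']$ has vertex set $V'$ and edge set $\{e \cap V' : e \in E,\ e \cap V' \neq \emptyset\}$, so $\mathcal{G}(H-v)$ has vertex set $V' \cup E(H-v)$.

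The first (and essentially only) thing to check is that no edge disappears when passing from $H$ to $H-v$. Since $H$ has no empty edges, every $e \in E$ satisfies $e \neq \emptyset$; and since $\{v\} \notin E$, no $e \in E$ equals $\{v\}$. Hence $e \cap V' = e \setminus \{v\} \neq \emptyset$ for every $e \in E$, so $E(H-v) = \{e \setminus \{v\} : e \in E\}$ and the assignment $e \mapsto e \setminus \{v\}$ is a bijection from $E(H)$ onto $E(H-v)$ (respecting multiplicities, since the incidence function of $H-v$ is just that of $H$ composed with intersection by $V'$). This supplies the identification of the e-vertices of $\mathcal{G}(H-v)$ with those of $\mathcal{G}(H) - v$; the v-vertices of both graphs are literally $V'$.

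It then remains to compare incidences. Let $u \in V'$ and $e \in E$. In $\mathcal{G}(H) - v$, the pair $u\,e$ is an edge if and only if $u\,e$ is an edge of $\mathcal{G}(H)$ not incident with $v$, i.e.\ if and only if $u \in e$ (the requirement $u \neq v$ being automatic). In $\mathcal{G}(H-v)$, the pair $u\,(e\setminus\{v\})$ is an edge if and only if $u \in e \setminus \{v\}$, which, since $u \neq v$, is again equivalent to $u \in e$. Thus the two graphs have the same v-vertices, the same e-vertices after identification, and the same incidences, giving $\mathcal{G}(H-v) = \mathcal{G}(H) - v$.

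I do not anticipate a genuine obstacle: the only place the hypotheses are needed is the non-vanishing of $e \setminus \{v\}$, which is exactly where ``$H$ has no empty edges'' and ``$\{v\} \notin E$'' enter (the assumption $|V| \geq 2$ merely guarantees that $V'$ is a nonempty vertex set so that $H-v$ is defined at all). It is worth noting explicitly why the hypothesis $\{v\} \notin E$ cannot be dropped: a singleton edge $\{v\}$ would become an empty edge upon deletion of $v$ and hence be discarded in forming $H-v$, so it would contribute an isolated e-vertex to $\mathcal{G}(H) - v$ with no counterpart in $\mathcal{G}(H-v)$.
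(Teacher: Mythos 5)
Your verification is correct: the only substantive point is that $e\setminus\{v\}\neq\emptyset$ for every $e\in E$, which is exactly where the hypotheses ``no empty edges'' and $\{v\}\notin E$ enter, and your incidence check and the remark about the role of $|V|\geq 2$ are accurate. Note that the thesis does not prove this lemma at all --- it is quoted from Bahmanian and \v{S}ajna \cite[Lemma 2.8]{BS1} --- so there is no in-paper argument to compare against; your direct element-by-element verification (keeping edge identities via the restricted incidence function, so that parallel edges and multiplicities are respected) is the standard proof of this statement.
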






\section{Main Results}
We use a different proof strategy depending on the index of the triple system, so we divide the results into two main sections.

\subsection{Results on Triple Systems of Index 2 or Greater}
Here we present the main theorem for triple systems that are not Steiner triple systems.  We develop new tools later, in Chapter~\ref{chapter:covering}, that enable us to prove this very simply.  However, we present a proof without those tools in order to properly motivate their development.

\begin{thm}\label{thm:eulerianTS} Let $H$ be a 3-uniform hypergraph in which every pair of vertices lie together in at least two edges.  Then $H$ is eulerian.
\end{thm}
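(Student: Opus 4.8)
The plan is to start with an Euler family and, as long as it has more than one component, merge two components by a small local modification in the incidence graph, until a single component — i.e.\ an Euler tour — remains. We may assume $|V(H)|\ge 3$ (if $|V(H)|=1$ then $H$ is trivial and eulerian, and $|V(H)|=2$ is incompatible with the hypothesis); then $H$ is a connected covering $3$-hypergraph with at least two edges, so by Corollary~\ref{cor:quasieuleriancovering} it is quasi-eulerian. Fix a \emph{minimum} Euler family $\F=\{T_1,\dots,T_t\}$; if $t\le 1$ we are done, so suppose for a contradiction that $t\ge 2$. Since the $T_i$ are pairwise anchor-disjoint, the anchor sets $V(T_1),\dots,V(T_t)$ are pairwise disjoint, and by Lemma~\ref{lem:onecc} they cover $V(H)$ with at most one vertex left over.

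Let $G$ be the incidence graph of $H$ and $\G$ the corresponding subgraph: every e-vertex has degree $2$ in $\G$, every v-vertex has even degree, and by Corollary~\ref{cor:correspondence} the non-trivial connected components of $\G$ are exactly the graphs $G_{T_i}$, each of which is eulerian and hence $2$-edge-connected by Remark~\ref{remark:closedtrails} and Proposition~\ref{prop:2-edge-connected}. Two further facts will be used. First, each $G_{T_i}$ has a v-vertex that is not a cut vertex of $G_{T_i}$: if $G_{T_i}$ is $2$-connected this is immediate, and otherwise a leaf block of its block tree must contain such a vertex, since otherwise every e-vertex of that block, having degree $2$ in $G_{T_i}$ and (as $G$ is simple) at most one edge to the block's single v-vertex, would be a cut vertex of $G_{T_i}$. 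Second, the dichotomy: if $i\ne j$, $a\in V(T_i)$, $b\in V(T_j)$, and $e=\{a,b,c\}$ is an edge of $H$, then $e$ is traversed in $\F$ by exactly one component, via two of its vertices, and those two must both be anchors of that component; as $a$ and $b$ lie in different anchor sets they cannot be that pair, so $e$ is traversed either by $T_i$ via $\{a,c\}$ (so $c\in V(T_i)$) or by $T_j$ via $\{b,c\}$ (so $c\in V(T_j)$). By hypothesis there are at least two edges through $\{a,b\}$.

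Now I would choose $a\in V(T_1)$ that is not a cut vertex of $G_{T_1}$, $b\in V(T_2)$ that is not a cut vertex of $G_{T_2}$, and two distinct edges $e_1,e_2$ through $\{a,b\}$, and consider the $4$-cycle $Q=a\,e_1\,b\,e_2\,a$ in $G$. At each e-vertex of $Q$ exactly one of the two incident $Q$-edges lies in $\G$: if $e_1$ is traversed by $T_1$ then $ae_1\in E(\G)$ and $e_1b\notin E(\G)$, while if $e_1$ is traversed by $T_2$ then by the dichotomy $e_1$ is traversed via $b$, so $be_1\in E(\G)$ and $e_1a\notin E(\G)$; similarly at $e_2$. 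Hence $\G'=\G\,\Delta\,Q$ still has every e-vertex of degree $2$ and every v-vertex of even degree, and differs from $\G$ only in edges incident with $a,b,e_1,e_2$. If $e_1$ and $e_2$ are traversed by different components (say $e_1$ by $T_1$ and $e_2$ by $T_2$), then the degrees of $a$ and $b$ are unchanged, only one edge has been removed from each of $G_{T_1}$ and $G_{T_2}$ (so both stay connected), and the new edges $e_1b$ and $e_2a$ join $G_{T_1}\setminus ae_1$ to $G_{T_2}\setminus be_2$; thus $\G'$ has $t-1$ non-trivial components, and by Theorem~\ref{thm:incidencegraph} it yields an Euler family with $t-1<t$ components, contradicting minimality.

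The remaining case is that $e_1$ and $e_2$ are traversed by the same component; by symmetry (exchanging the roles of $a,b$ and of $G_{T_1},G_{T_2}$) assume both are traversed by $T_1$, via $\{a,c_1\}$ and $\{a,c_2\}$ with $c_1,c_2\in V(T_1)\setminus\{a\}$. Then in $\G'=\G\,\Delta\,Q$ the vertex $a$ loses the two edges $ae_1,ae_2$ while $b$ gains $be_1,be_2$. Since $a$ is not a cut vertex of $G_{T_1}$, the graph $G_{T_1}-a$ is connected and non-trivial, and in $\G'$ it is joined to $G_{T_2}$ through $e_1$ (now incident with $c_1$ and $b$) and $e_2$ (now incident with $c_2$ and $b$), so these two components merge; the only vertex that could have become isolated is $a$, and only when $\deg_{\G}(a)=2$. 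If $a$ is not isolated in $\G'$, then $\G'$ has $t-1$ non-trivial components and contradicts minimality; if $a$ is isolated in $\G'$ and $\G$ already had an isolated vertex, then $\G'$ has two isolated vertices, contradicting Lemma~\ref{lem:onecc}; and if $a$ is isolated in $\G'$ but $\G$ had none, then $\G'$ has one isolated vertex and $t-1$ non-trivial components, again contradicting minimality. In every case $t\ge 2$ leads to a contradiction, so $t\le 1$ and $H$ is eulerian. I expect this last case — running the merge while keeping the count of isolated vertices under control, which is what forces the choice of $a$ as a non-cut vertex of $G_{T_1}$ — to be the main obstacle; everything else is a routine application of the incidence-graph characterization of (quasi-)eulerian hypergraphs in Theorem~\ref{thm:incidencegraph}.
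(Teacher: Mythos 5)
Your proposal is correct and follows essentially the same route as the paper's proof: take a minimum Euler family, pass to the incidence graph, pick a non-cut v-vertex in each of two components, use two edges through that pair, observe that each such e-vertex is attached in $\G$ to exactly one of the two chosen v-vertices, and rewire to merge two components (your two cases match the paper's Cases 1 and 2, including the possible isolation of $a$ when its degree is 2). The only cosmetic differences are that you phrase the rewiring as a symmetric difference with a $4$-cycle --- exactly the interchanging-cycle device the paper introduces later in Chapter~\ref{chapter:covering} --- and you prove the existence of a non-cut v-vertex via a leaf-block argument rather than the paper's detour through the graph whose incidence graph is $G_i$.
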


\begin{proof} Since $H$ is a covering 3-hypergraph, Corollary~\ref{cor:quasieuleriancovering} implies that $H$ has an Euler family. Let $\F$ be a minimum Euler family of $H$, with $|\F| = k$. Suppose $k\geq 2$.  Let $G$ be the incidence graph of $H$ and $\G$ the spanning subgraph of $G$ corresponding to $\F$. By Corollary~\ref{cor:correspondence},  $\G$ has exactly $k$ non-trivial connected components, say $G_1, \ldots, G_k$. We show that two of these components can be re-routed to yield an Euler family with fewer than $k$ closed strict trails.

Fix $i \in \{ 1,2\}$. We first wish to find a  v-vertex $u_i \in V(G_i)$ that is not a cut vertex. Let $H_i$ be a hypergraph whose incidence graph is $G_i$.  Note that $H_i$ is a graph since each e-vertex of $G_i$ has degree 2. Moreover, Theorem~\ref{thm:inc1} implies that $H_i$ is connected since $G_i$ is. Hence $H_i$ has a vertex $u_i$ that is not a cut vertex \cite[Corollary 2.7]{BM2}. It follows that $H_i - u_i$ is connected, and since $\mathcal{G}(H_i- u_i) = \mathcal{G}(H_i)- u_i=G_i- u_i$ by Theorem~\ref{thm:inc2}, the graph $G_i- u_i$ is connected by Theorem~\ref{thm:inc1}. Therefore, we have that $u_i$ is a v-vertex of $G_i$ that is not a cut vertex.

By our assumption on $H$, v-vertices $u_1$ and $u_2$ are adjacent in $G$ to two common e-vertices, say $e$ and $f$. In $G$, $e$ is adjacent to $u_1$, $u_2$, and exactly one other vertex, because $|e|=3$.  Since $u_1$ and $u_2$ are in distinct connected components of $\G$ and $\deg_{\G}(e)=2$, exactly one of $u_1$ and $u_2$ is a neighbour of $e$  in $\G$, and the same is true of $f$.

{\sc Case 1: $e$ and $f$ are  adjacent in $\G$ to the same v-vertex in $\{u_1,u_2\}$,} say to $u_1$.  Since $u_1$ is not a cut vertex of $G_1$, we have that $G_1- u_1$ is connected.  Hence  $G_1\setminus\{u_1e,u_1f\}$ is either connected, or has $u_1$ as an isolated vertex and $G_1- u_1$ as the unique non-trivial connected component.  In either case, $G_1\setminus\{u_1e,u_1f\}$ has a unique non-trivial connected component; call it $G_1^*$.  Then, since $u_2e$ joins a vertex of $G_1^*$ to a vertex of $G_2$, we have that $(G_1^*\cup G_2)+\{u_2e,u_2f\}$ is connected.  Obtain a graph $G^*$ from $\G$ by replacing connected components $G_1$ and $G_2$ with the connected component $(G_1^*\cup G_2)+\{u_2e,u_2f\}$.

Now, $G^*$ is a  subgraph of $G$ that either does not contain $u_1$, or else $\deg_{G^*}(u_1)=\deg_{\G}(u_1) - 2$. Furthermore, $\deg_{G^*}(u_2)=\deg_{\G}(u_2) + 2$, and $\deg_{G^*}(x)=\deg_{\G}(x)$ for all $x\in V(G)\setminus\{u_1,u_2\}$.  Therefore, Theorem~\ref{thm:incidencegraph} states that $G^*$ corresponds to an Euler family $\F^*$ of $H$.  Since $G^*$ has fewer non-trivial connected components than $\G$, we conclude that $\F^*$ contains fewer closed strict  trails than $\F$, a contradiction.

{\sc Case 2: $e$ and $f$ are adjacent in $\G$ to distinct v-vertices in $\{u_1,u_2\}$;}  say $e$ is adjacent to $u_1$, and $f$ is adjacent to $u_2$.  Since $G_1$ and $G_2$ are even graphs, they have no cut edges \cite[Theorem 2.10]{BM}, and so  $G_1^*=G_1\setminus u_1e$ and $G_2^*=G_2\setminus u_2f$ are connected. It follows that $(G_1^*\cup G_2^*)+ \{u_1f,u_2e\}$ is connected.  Obtain a graph $G^*$ from $\G$ by replacing connected components $G_1$ and $G_2$ with the connected component $(G_1^*\cup G_2^*)+ \{u_1f,u_2e\}$.  Then the degree of each vertex in $G^*$ is the same as in $\G$, so Theorem~\ref{thm:incidencegraph} implies that $G^*$ corresponds to an Euler family $\F^*$ of $H$.  Since $G^*$ has fewer non-trivial connected components than $\G$, as in the previous case, we see that $\F^*$ contradicts the minimality of $\F$.

We conclude that $k=1$, and the unique member of $\F$ is an Euler tour of $H$.
\end{proof}









The following corollary settles the matter for triple systems of order 3.\\

\begin{cor}\label{cor:TS3} Let $H=(V,E)$ be a TS-($3,\lambda$).  Then $H$ is eulerian if and only if $\lambda\geq 2$.
\end{cor}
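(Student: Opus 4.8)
The plan is to first pin down the structure of a TS($3,\lambda$): since $|V|=3$ and $H$ is $3$-uniform, the only subset of $V$ of size $3$ is $V$ itself, so every edge of $H$ equals $\{a,b,c\}$, where $V=\{a,b,c\}$. The balance condition --- that every pair of vertices lies together in exactly $\lambda$ edges --- then forces $E$ to consist of exactly $\lambda$ parallel copies of the edge $abc$. In particular, since a triple system is non-empty by definition, $\lambda\geq 1$, so the content of the statement is that $H$ is eulerian precisely when $\lambda\geq 2$.

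For the ``if'' direction I would simply invoke Theorem~\ref{thm:eulerianTS}: when $\lambda\geq 2$, every pair of vertices of $H$ lies together in at least two edges, so $H$ is eulerian. (Alternatively, one can exhibit an Euler tour by hand: labelling the edges $e_1,\dotsc,e_\lambda$ and choosing any cyclic sequence $v_0,v_1,\dotsc,v_{\lambda-1},v_0$ of vertices from $\{a,b,c\}$ with consecutive entries distinct --- which is possible exactly because $\lambda\geq 2$ --- the walk $v_0e_1v_1e_2\dotsb v_{\lambda-1}e_\lambda v_0$ is a closed strict trail traversing every edge of $H$ exactly once, hence an Euler tour.)

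For the ``only if'' direction I would argue the contrapositive on the remaining case $\lambda=1$. Here $H$ is an STS($3$) with the single edge $abc$, and $H$ is connected because that edge makes all three vertices pairwise adjacent. By Lemma~\ref{lem:trivialcases}(2), a connected hypergraph with exactly one edge has no Euler family, hence no Euler tour; equivalently, the only candidate for a closed trail traversing the single edge $e$ is $vev$, which is not a trail because consecutive anchor vertices must be distinct. Thus $H$ is not eulerian when $\lambda=1$, completing the equivalence.

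I expect no real obstacle here: the corollary follows directly from Theorem~\ref{thm:eulerianTS} together with the trivial-case Lemma~\ref{lem:trivialcases}. The only point requiring a moment's care is confirming the combinatorial structure of TS($3,\lambda$) and observing that the non-emptiness convention for triple systems rules out $\lambda=0$, so that $\lambda=1$ is the sole case to dispatch in the forward implication.
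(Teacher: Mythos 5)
Your proof is correct and follows essentially the same route as the paper: the $\lambda\geq 2$ case is dispatched by Theorem~\ref{thm:eulerianTS}, and the $\lambda=1$ case by observing that a hypergraph with a single edge cannot admit an Euler tour (which you justify, a bit more carefully than the paper, via Lemma~\ref{lem:trivialcases}). No gaps.
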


\begin{proof} If $\lambda = 1$, then it is evident that $H$ has only a single edge.  Clearly, a hypergraph with a single edge cannot be eulerian.

On the other hand, if $\lambda\geq 2$, then Theorem~\ref{thm:eulerianTS} implies that $H$ is eulerian.
\end{proof}

\subsection{Results on Steiner Triple Systems}
The proof that Steiner triple systems are eulerian is more difficult than for triple systems of higher index.  We will need some extra tools to accomplish this.  We define an {\em edge-corresponding function} and the {\em labeled 2-section} so that we can recreate the hypergraph from its 2-section.  Following these definitions, Lemma~\ref{lem:cyclecorrespondence} will tell us when cycles in the 2-section correspond to cycles in the hypergraph.\\

\begin{defn}{\rm Let $H$ be a hypergraph with incidence function $\psi_H$ and let $G$ be the 2-section of $H$ with incidence function $\psi_G$.  An {\em edge-corresponding function} from $G$ to $H$ is any function $\varphi: E(G)\rightarrow E(H)$ satisfying the following:
\begin{itemize}
\item for all $e\in E(G)$, we have $\varphi(e)=e'$ for some $e'\in E(H)$ such that $\psi_G(e)\subseteq \psi_H(e')$; and
\item for all $e_1, e_2\in E(G)$, $e_1\neq e_2$, with $\psi_G(e_1)=\psi_G(e_2)$, we have $\varphi(e_1)\neq \varphi(e_2)$.
\end{itemize}
}
\end{defn}

\begin{defn}{\rm Let $H=(V,E)$ be a 3-uniform hypergraph, $G$ be the 2-section of $H$, and $\varphi$ be an edge-corresponding function from $G$ to $H$.  Define a labeling function $\ell:E(G) \rightarrow V(G)$, for all $uv\in E(G)$, by $\ell(uv)=x$ where $\varphi(uv)=\{u,v,x\}$.  Then we call $(G, \ell)$ the {\em labeled 2-section} of $H$.
}
\end{defn}

\begin{lem}\label{lem:cyclecorrespondence} Let $H=(V,E)$ be a simple 3-uniform hypergraph with 2-section $G$.  Let $\varphi$ be an edge-corresponding function from $G$ to $H$, and let $(G,\ell)$ be the labeled 2-section of $H$.

Let $C=v_0e_1v_1\dotso v_{k-1}e_kv_0$ be a cycle in $G$, let $C'=v_0\varphi(e_1)v_1\dotso v_{k-1}\varphi(e_k)v_0$, and assume that for all $i\in\mathds{Z}_k$, we have $(\ell(v_{i-1}v_i), \ell(v_iv_{i+1}))\neq (v_{i+1}, v_{i-1})$.  Then $C'$ is a cycle in $H$.
\end{lem}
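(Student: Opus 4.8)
The plan is to translate the hypothesis back into the language of the incidence graph and verify directly that $C'$ satisfies the definition of a cycle in $H$. Recall from Remark~\ref{remark:traversals} that a sequence is a cycle in $H$ exactly when the corresponding vertex–edge alternating sequence is a cycle in the incidence graph $G(H)$; equivalently, I need to check that $C'$ is a closed walk of length $k\geq 2$ whose anchors $v_0,\dots,v_{k-1}$ are pairwise distinct and whose edges $\varphi(e_1),\dots,\varphi(e_k)$ are pairwise distinct, and that each traversal ``$v_{i-1}\,\varphi(e_i)\,v_i$'' is legitimate, i.e.\ $\{v_{i-1},v_i\}\subseteq\varphi(e_i)$.

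The first step is to confirm the easy parts. Since $C$ is a cycle in $G$, we have $k\geq 2$ (in fact $k\geq 3$, as $G$ is simple, but $k\geq 2$ is all we need), the vertices $v_0,\dots,v_{k-1}$ are pairwise distinct, and $v_k=v_0$, so $C'$ is closed with distinct anchors. By the definition of the edge-corresponding function $\varphi$, each $e_i=v_{i-1}v_i\in E(G)$ has $\psi_G(e_i)=\{v_{i-1},v_i\}\subseteq\psi_H(\varphi(e_i))$, so each traversal in $C'$ is valid. It remains only to check that $\varphi(e_1),\dots,\varphi(e_k)$ are pairwise distinct edges of $H$; this is the crux of the argument.

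For the distinctness of the edges, suppose for contradiction that $\varphi(e_i)=\varphi(e_j)$ for some $i\neq j$. Since $H$ is $3$-uniform and each edge of $H$ contains $3$ vertices, and $\varphi(e_i)\supseteq\{v_{i-1},v_i\}$ while $\varphi(e_j)\supseteq\{v_{j-1},v_j\}$, a common edge $\varphi(e_i)=\varphi(e_j)$ of size $3$ forces $\{v_{i-1},v_i,v_{j-1},v_j\}$ to have at most $3$ elements. Because $v_0,\dots,v_{k-1}$ are distinct, the only way this can happen (after noting $i\neq j$) is if $\{v_{i-1},v_i\}$ and $\{v_{j-1},v_j\}$ share exactly one vertex and the edge of $H$ is $\{v_{i-1},v_i\}\cup\{v_{j-1},v_j\}$ viewed as a $3$-set; that is, $e_i$ and $e_j$ are consecutive in $C$, say $j=i+1$, with the shared vertex $v_i$, and $\varphi(e_i)=\varphi(e_{i+1})=\{v_{i-1},v_i,v_{i+1}\}$. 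But then, by the definition of the labeling function $\ell$, we have $\ell(v_{i-1}v_i)=\ell(e_i)=v_{i+1}$ and $\ell(v_iv_{i+1})=\ell(e_{i+1})=v_{i-1}$, so $(\ell(v_{i-1}v_i),\ell(v_iv_{i+1}))=(v_{i+1},v_{i-1})$, directly contradicting the hypothesis. (One must also rule out the case of non-consecutive $e_i,e_j$ sharing a vertex, e.g.\ $e_1$ and $e_k$ at the vertex $v_0$: this is handled identically, as indices are read in $\mathds{Z}_k$, so ``consecutive'' includes the wrap-around pair $e_k,e_1$ at $v_0$; if $e_i,e_j$ share no vertex they would need $4$ distinct vertices inside a $3$-set, impossible.) Hence $\varphi(e_1),\dots,\varphi(e_k)$ are pairwise distinct, and $C'$ is a cycle in $H$.

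The main obstacle is the bookkeeping in the last step: one has to be careful that the only coincidence of edges that the $3$-uniformity and distinctness of anchors permit is the ``consecutive-edges-sharing-one-vertex'' case, and that this is precisely the configuration the hypothesis on $\ell$ forbids — including correctly treating the cyclic wrap-around index $i=0$ (the pair $e_k, e_1$). Everything else is a direct unwinding of definitions.
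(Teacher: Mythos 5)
Your proposal is correct and follows essentially the same route as the paper's proof: verify that $C'$ is a closed walk with pairwise distinct anchors, then rule out a repeated hyperedge $\varphi(e_i)=\varphi(e_j)$ by splitting into the non-consecutive case (two disjoint pairs would force four distinct vertices into a $3$-set) and the consecutive case (which forces $\varphi(e_i)=\varphi(e_{i+1})=\{v_{i-1},v_i,v_{i+1}\}$ and hence $(\ell(e_i),\ell(e_{i+1}))=(v_{i+1},v_{i-1})$, contradicting the label hypothesis, including the wrap-around pair). One small aside: your parenthetical claim that $G$ is simple is not warranted --- the $2$-section as defined in the paper is a multigraph --- but since you only use $k\geq 2$ this has no effect on the argument.
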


\begin{proof}
Let $C$ and $C'$ be be defined as above.  Since each edge $\varphi(e_i)$ of $H$ is incident with both $v_{i-1}$ and $v_i$ (because $e_i$ is, and $e_i\subseteq \varphi(e_i)$), and $v_{i-1}\neq v_i$ for all $i\in\mathds{Z}_k$, we have that $C'$ is a walk.  Clearly, $C'$ is closed.  We must show that no vertices and no edges are repeated in the sequence.

Since $C$ is a cycle, we have that $v_0,\dotso ,v_{k-1}$ are pairwise distinct, so $C'$ does not repeat any vertices.  Suppose, however, that $\varphi(e_i)=\varphi(e_j)$ for some $i,j\in\{1,\dotso , k\}, i\neq j$.

If $|i-j|\not\equiv 1$ (mod $k$), then $\varphi(e_i)$ and $\varphi(e_j)$ are nonconsecutive in $C'$, and consequently $e_i$ and $e_j$ are disjoint.  Since $e_i\subseteq \varphi(e_i)$ and $e_j\subseteq \varphi(e_j)=\varphi(e_i)$, we conclude that $|\varphi(e_i)|\geq 4$, a contradiction.

On the other hand, if $|i-j|\equiv 1$ (mod $k$), then without loss of generality, we have $j\equiv i+1$ (mod $k$).  Then we must have $\varphi(e_i)=\varphi(e_{i+1})=\{v_{i-1},v_i,v_{i+1}\}$.  However, $e_i=v_{i-1}v_i$ and $e_{i+1}=v_iv_{i+1}$.  Therefore, we must have ($\ell(e_i), \ell(e_{j}))=(v_{i+1},v_{i-1})$; that is, $(\ell(v_{i-1}v_i), \ell(v_iv_{i+1}))= (v_{i+1}, v_{i-1})$.  This contradicts our assumption on the labels.

Therefore, the edges of $C'$ are pairwise distinct, so $C'$ is a cycle in $H$.
\end{proof}

We need just one more small definition that will only be used in Theorem~\ref{thm:eulerianSTS}.\\

\begin{defn}{\rm {\bf (Cycle Exchange)} \cite{BM}
Let $G$ be a graph, and $C=v_0v_1v_2\dotso v_{k-1}v_0$ be a cycle of $G$.  Fix some $i,j\in\mathds{Z}_k$ such that $|j-i|\geq 2$.

If $v_iv_j,v_{i+1}v_{j+1}\in E(G)$, then we can perform a {\em cycle exchange} on $v_iv_{i+1}$ and $v_jv_{j+1}$ by deleting $v_iv_{i+1}$ and $v_jv_{j+1}$ from the cycle and adding $v_iv_j$ and $v_{i+1}v_{j+1}$ to the cycle and rerouting appropriately.  The cycle we obtain from this cycle exchange is $C'=v_0v_1\dotso v_{i-1}v_iv_jv_{j-1}v_{j-2}\dotso v_{i+1}v_{j+1}v_{j+2}\dotso v_{k-1}v_0$.
}
\end{defn}

We are now ready to present the main theorem for Steiner triple systems.  Our strategy is to obtain a subgraph of the incidence graph $G$ that corresponds to an Euler tour.  Recall from Theorem~\ref{thm:incidencegraph} that we must find a spanning subgraph of $G$ that is even and connected, except that it may have up to one isolated v-vertex.  We take advantage of this exception by first finding a cycle $C$ of $H$ that traverses all of the vertices except one.  We then show that $H\setminus E(C)$ is quasi-eulerian using Theorem~\ref{thm:eulerfamily}.  We translate both $C$ and the trails of the Euler family into $G$, and this gives a subgraph of $G$ that is connected and satisfies the degree requirements.\\

\begin{thm}\label{thm:eulerianSTS}Let $H$ be an STS($n$) with $n\geq 13$.  Then $H$ is eulerian.
\end{thm}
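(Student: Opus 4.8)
The proof is carried out in three stages, along the lines sketched in the paragraph preceding the statement: first build a near-Hamilton cycle $C$ of $H$; then show that the remaining edges decompose into closed strict trails; and finally combine these two pieces inside the incidence graph and invoke Theorem~\ref{thm:incidencegraph}(2).

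\emph{Stage 1 --- a near-Hamilton cycle.} Since $H$ is an STS($n$), its 2-section is the complete graph $K_n$. Fix a vertex $w$, and let $M_w=\{\{a,b\}:\{a,b,w\}\in E(H)\}$, which is a perfect matching on $V(H)\setminus\{w\}$. The goal is to find a Hamilton cycle of $K_{n-1}$ on $V(H)\setminus\{w\}$ that uses no edge of $M_w$ and in which no three consecutive vertices form a block of $H$; I call the latter triples \emph{bad}. By Lemma~\ref{lem:cyclecorrespondence}, the absence of bad triples guarantees that this cycle of the 2-section lifts to a genuine cycle $C$ of $H$ with anchor set $V(H)\setminus\{w\}$, and the avoidance of $M_w$ ensures that $w$ is not a floater of $C$, so that $C$ lies entirely within $V(H)\setminus\{w\}$. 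To produce such a cycle I would begin with any Hamilton cycle of $K_{n-1}$ avoiding $M_w$ (these exist since $K_{n-1}\setminus M_w$ has minimum degree $n-3$) and repeatedly perform the cycle-exchange operation defined above to destroy bad triples. This step is the main obstacle: one must show that for $n$ large the two swapped chords can always be chosen so that the number of bad triples strictly decreases while no edge of $M_w$ is reintroduced, which is precisely where the hypothesis $n\geq 13$ is used; the finitely many smaller admissible orders are treated separately, and $n=3$ genuinely fails.

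\emph{Stage 2 --- the leftover edges.} Let $H'=H\setminus E(C)$; this is a 3-uniform hypergraph, and I claim it has no cut edges, so that Theorem~\ref{thm:eulerfamily} yields an Euler family $\F$ of $H'$. Since $C$ avoids $w$, every block of $H$ containing $w$ survives intact in $H'$, and these $\tfrac{n-1}{2}$ blocks join $w$ to every other vertex. Hence for any $f\in E(H')$ and any $x,y\in V(H)\setminus(f\cup\{w\})$, the blocks of $H$ joining $x$ to $w$ and $y$ to $w$ both lie in $H'\setminus f$ and connect $x$ to $y$; the remaining cases, where $x$ or $y$ lies in $f$, follow from the observation that for $n\geq 13$ each vertex of $f$ belongs to a block of $H'$ together with some vertex outside $f$. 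Thus $H'\setminus f$ is connected for every $f\in E(H')$, so $H'$ has no cut edges and $\F$ exists.

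\emph{Stage 3 --- assembling an Euler tour.} Let $G$ be the incidence graph of $H$, let $G_C$ be the subgraph of $G$ corresponding to $C$ --- an even cycle alternating between the $n-1$ anchor v-vertices of $C$ and the $n-1$ e-vertices of $E(C)$ --- and let $\G$ be the subgraph of $G$ corresponding to $\F$. Put $G'=G_C\cup\G$. Every e-vertex of $G$ has degree exactly $2$ in $G'$ (degree $2$ in $G_C$ and $0$ in $\G$ for the e-vertices of $E(C)$, and the reverse for those of $E(H)\setminus E(C)$), and every v-vertex has even degree in $G'$ (degree $0$ or $2$ in $G_C$, even degree in $\G$). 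Moreover $G_C$ is connected, and every non-trivial connected component of $\G$ contains an e-vertex $f$, which is joined in $\G$ to the two vertices via which $\F$ traverses $f$; at most one of those two is $w$, so at least one is an anchor of $C$, and hence that component meets $G_C$. Consequently every vertex of positive degree in $G'$ lies in a single non-trivial connected component, and $w$ is at worst an isolated v-vertex. Therefore $G'$ is a spanning subgraph of $G$ satisfying all the hypotheses of Theorem~\ref{thm:incidencegraph}(2), and we conclude that $H$ is eulerian.
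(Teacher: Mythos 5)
Your Stage 3 is fine, and your overall architecture is a genuine variant of the paper's: the paper puts \emph{every} block through the distinguished vertex $u_0$ \emph{into} the near-Hamilton cycle (the cycle of the $2$-section is forced to contain the perfect matching labelled $u_0$) and then rules out cut edges of the leftover by a pair-counting argument, whereas you keep all blocks through $w$ \emph{out} of the cycle and use $w$ as a connectivity hub, which would indeed make the cut-edge argument pleasantly short. But there is a genuine gap where the two stages meet. Your Stage 2 rests on the ``observation'' that each vertex of $f$ lies in a block of $H'$ other than $f$ together with a vertex outside $f$. When $w\notin f$ this is automatic (use the block through $v$ and $w$), but when $f$ is itself one of the $\tfrac{n-1}{2}$ blocks through $w$ --- all of which you deliberately leave in $H'$ --- the only block you are guaranteed at a vertex $v\in f\setminus\{w\}$ is $f$. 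Whether $v$ has a second surviving block is a quantitative question: $\deg_{H'}(v)=\tfrac{n-1}{2}-2-m_v$, where $m_v$ is the number of cycle edges whose assigned block has $v$ as its third vertex, so you need $m_v\le\tfrac{n-9}{2}$ for every $v$. Nothing in your Stage 1 controls $m_v$: your exchanges only destroy bad triples and avoid $M_w$, and a cycle in which some vertex is the ``label'' of $\tfrac{n-7}{2}$ cycle edges is perfectly compatible with both constraints. In that case the block through $v$ and $w$ is a cut edge of $H'$ (deleting it isolates $v$), Theorem~\ref{thm:eulerfamily} does not apply, and Stage 2 collapses. This label-multiplicity bound is exactly the bound $M_C\le\tfrac{n-9}{2}$ that the paper establishes by minimizing $M_C$ over admissible Hamilton cycles and running a case analysis on the labelling profile; it is the bulk of the published proof, and your proposal omits it entirely.

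A second, lesser gap is that Stage 1's own key step --- that a cycle exchange can always be chosen to strictly decrease the number of bad triples without reintroducing an edge of $M_w$ --- is asserted rather than proved, even though you identify it as ``the main obstacle'' and the place where $n\ge13$ enters. (In the paper this issue disappears: because every second edge of its cycle is labelled $u_0$, Lemma~\ref{lem:cyclecorrespondence} applies with no extra work, and the hypothesis $n\ge13$ is instead consumed by the $M_C$ bound and by the counting contradiction $3(n-4)\le\tfrac{3n+1}{2}$.) So to repair your route you would have to strengthen Stage 1 so that the cycle simultaneously avoids $M_w$, has no bad triples, \emph{and} satisfies $m_v\le\tfrac{n-9}{2}$ for all $v$ --- at which point you are reproving the paper's labelling-profile argument, with your hub-at-$w$ idea buying you only the simpler replacement for the paper's cut-edge counting.
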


\begin{proof}
Let $H=(V,E)$. We shall construct an Euler tour of $H$ by concatenating an $(n-1)$-cycle $C_H$ with an Euler family of $H\setminus E(C_H)$.

Let $G=(V,E_G)$ be the complete graph on vertex set $V$. Let $\varphi$ be an edge-corresponding function from $G$ to $H$, and let $\ell$ be the corresponding labeling function. Observe that, for every $v\in V$, the set $\{ e\in E_G: \ell(e)=v\}$ is a perfect matching of $G-v$, and keep in mind that $n$ is odd.

Fix a vertex $u_0\in V$, and let $F=\{ e\in E_G: \ell(e)=u_0\}$. Since  $G - u_0$ is a complete graph, we can extend $F$ to a Hamilton cycle $C=v_0v_1\ldots v_{n-2}v_0$ of $G - u_0$.  We may assume without loss of generality that $\ell(v_{2i}v_{2i+1})=u_0$ for all $0 \leq i \leq  \frac{n-3}{2}$,  while all other edges of $C$ have labels distinct from $u_0$.

For each $v \in V$, define $m_v=|\{ e \in E(C): \ell(e)=v\}|$, and let $M_C=\max \{m_v: v\in V\setminus\{u_0\}\}$.  Moreover, we define the {\em labeling profile}  of $C$ as the sequence ${\bf L}_C=(m_{u_1}, m_{u_2}, \ldots )$ containing all positive terms $m_{u_i}$ such that $m_{u_1} \ge m_{u_2} \ge \ldots$, for $u_1,u_2,\ldots , u_{n-1} \in V\setminus \{u_0\}$. Observe that $\sum\limits_{i=1}^{n-1} m_{u_i}=\frac{n-1}{2}$.

Let $C$ be a Hamilton cycle of $G - u_0$ that contains $F$ and minimizes  $M_C$. We shall now show that $M_C \le\frac{n-9}{2}$.

Suppose $M_C>\frac{n-9}{2}$.  Since $n \ge 13$, we have $M_C \ge \frac{n-7}{2} \ge 3$. Let $v\in V\setminus\{u_0\}$ be such that $m_v=M_C$.  Observe that at most three edges of $C$ have their label in  $V\setminus\{u_0,v\}$, and the following cases arise.

{\sc Case 1: ${\bf L}_C = (M_C)$.}  Let  $v_{2i-1}v_{2i}$ and $v_{2j-1}v_{2j}$ be two edges of $C$ with label $v$.  Perform a cycle exchange on these two edges to obtain a Hamilton cycle $C'$ of $G - u_0$. Since adjacent edges in $G$ cannot have the same label, we have exchanged edges labeled $v$ for edges that are not labeled $v$, so we can see that $M_{C'} \le \max\{M_C-2, 2\} < M_C$, a contradiction.

{\sc Case 2: ${\bf L}_C = (M_C, i)$, where $i \in \{1,2,3\}$.}  Let $u\in V\setminus \{u_0\}$ be such that $m_u=i$, and let $e$ and $e'$ be edges of $C$ with label $u$ and $v$, respectively. Performing a cycle exchange on these two edges, we obtain a Hamilton cycle $C'$ of $G - u_0$ that contains $F$ and satisfies $M_{C'}= \max \{M_C-1, 2\}<M_C$, a contradiction.

{\sc Case 3: ${\bf L}_C = (M_C, 2, 1)$ or ${\bf L}_C = (M_C, 1, 1)$.} Let $u$ and $x$ be distinct vertices in $V\setminus \{u_0\}$ such that $m_u=1$ and $m_x \in \{ 1,2\}$.  In $C$, there are two edges incident to $u$: one is labeled $u_0$ and the other edge, $e$, has a label in $\{v,x\}$.  If $\ell(e)=v$, then let $e'$ be an edge of $C$ labeled $x$; otherwise, let $e'$ be an edge labeled $v$.  Performing a cycle exchange on  $e$ and $e'$ we obtain a Hamilton cycle $C'$ of $G - u_0$ that contains $F$.  Now  $M_{C'}=\max\{M_C-1, 2\}<M_C$, a contradiction.

{\sc Case 4: ${\bf L}_C = (M_C, 1, 1, 1)$.}  Let $u,x,y\in V\setminus\{u_0\}$ be such that $m_u=m_x=m_y=1$.  If $n \ge 15$, then $M_C \ge \frac{n-7}{2} \ge 4$. We perform a cycle exchange to replace two of the edges labeled $v$ in $C$ to obtain a Hamilton cycle $C'$ of $G - u_0$ that contains $F$. Now $M_{C'} \le \max \{M_C-2, 3\} < M_C$, a contradiction. 

We may thus assume $n=13$, and since ${\bf L}_C = (M_C, 1, 1, 1)$, we have $M_C=3$.  Obtain $C'$ by performing a cycle exchange on an edge labeled $v$ and the edge labeled $u$.  Unless both of the new edges are labeled $x$ or both are labeled $y$, we have that $M_{C'} < M_C$, a contradiction.  Hence assume, without loss of generality, that both of the new edges are labeled $x$.  Then ${\bf L}_{C'}=(3, 2, 1)$, and we proceed as in Case 3 to obtain a contradiction.

We conclude that $M_C \le \frac{n-9}{2}$.

Next, we show that our cycle $C=v_0 v_1 \dotso v_{n-2} v_0$ corresponds to a cycle of $H$. Let $C_H=v_0 \varphi(v_0 v_1) v_1 \dotso v_{n-2} \varphi(v_{n-2} v_0) v_0$. In order to apply Lemma~\ref{lem:cyclecorrespondence}, we must show that, for all $i\in\mathds{Z}_k$, we have $(\ell(v_{i-1}v_i),\ell(v_iv_{i+1}))\neq (v_{i+1},v_{i-1})$.  However, this follows immediately from the fact that every second label of an edge in $C$ is $u_0$, and $u_0$ is not equal to $v_i$ for any $i\in\mathds{Z}_k$.  Hence we may apply Lemma~\ref{lem:cyclecorrespondence} to find that $C_H$ is a cycle in $H$.  Since $C_H$ has length $n-1$ and it does not traverse $u_0$, it must traverse all the vertices of $V\setminus\{u_0\}$.

Let $E(C_H)$ denote the set of edges of $H$ traversed by $C_H$, and let  $H'=(H\setminus E(C_H))- u_0$.  We wish to show that $H'$ has no cut edges. Since $H'$ is 3-uniform, Theorem~\ref{thm:eulerfamily} will then imply that $H'$ has an Euler family.

Take any $u \in V(H')=V\setminus\{u_0\}$.  We have that $\deg_H(u) = \frac{n-1}{2}$.  In $C$, there are exactly two edges incident with $u$, and $u$ appears as the label of at most $M_C$ other edges. Therefore,  $u$ is contained in at most $M_C+2$ edges traversed by $C_H$, and $\deg_{H'}(u) \ge \frac{n-1}{2}-(M_C+2)$. In particular, since  $M_C \le \frac{n-9}{2}$, we have $\deg_{H'}(u) \ge 2$.

Suppose $H'$ has a cut edge $e=\{x_1,x_2,x_3\}$.  Let $X_1$ be the vertex set of a connected component of $H'\setminus e$, and $X_2=V(H')\setminus X_1$.  For $i=1,2$, we may assume without loss of generality that $x_i \in X_i$, and since $\deg_{H'\setminus e}(x_i) \ge 1$,  we must have that $|X_i| \ge 3$.

Consider the set $S=\{ \{y_1,y_2\}: y_i \in X_i, i=1,2 \}$. Observe that every pair in $S$ must be contained in an edge in $E(C_H)\cup \{e\}$.  There are $\frac{n-1}{2}$ edges in $E(C_H)$ that contain $u_0$, each containing one pair of vertices that could be an element of $S$; there are $\frac{n-1}{2}$ edges in $E(C_H)$ that contain three vertices of $X_1 \cup X_2$, each containing up to two pairs from $S$; and finally, $e$ contains exactly two pairs of $S$.  Therefore, edges in $E(C_H)\cup\{e\}$ contain at most $\frac{n-1}{2}+2\cdot\frac{n-1}{2}+2=\frac{3n+1}{2}$ pairs of $S$. Hence
$$|S|=|X_1| \cdot |X_2| \le \frac{3n+1}{2}.$$

Since $|X_i| \geq 3$ for $i=1,2$, and $|X_1|+|X_2|=n-1$, the left-hand side of this inequality is a quadratic function in $|X_1|$, and is minimum when $|X_1|=3$.  Hence $3(n-4) \le |X_1|\cdot |X_2| \le \frac{3n+1}{2}$, which simplifies to $3n \le 25$, contradicting the assumption $n\ge 13$.  We conclude that $H'$ has no cut edges.

Theorem~\ref{thm:eulerfamily} now implies that $H'$ admits an Euler family $\F=\{ T_1,\ldots,T_k\}$. Since $C_H$ contains a common anchor with each trail in $\F$, a concatenation of $T_1,\ldots,T_k$ with $C_H$ results in an Euler tour of $H$.
\end{proof}

Finally, we give constructions of Euler tours in Steiner triple systems of order less than 13.\\

\begin{lem}\label{lem:smallSTS}Let $H=(V,E)$ be a TS($n,1$).  If $4\leq n < 13$, then $H$ is eulerian.
\end{lem}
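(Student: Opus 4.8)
The plan is to reduce the statement to two explicit hypergraphs and exhibit an Euler tour in each. Since an STS($n$) exists only when $n\equiv 1$ or $3\pmod 6$, the only orders satisfying $4\le n<13$ are $n=7$ and $n=9$. Up to isomorphism there is a unique STS(7), the Fano plane, and a unique STS(9), the point--line design of the affine plane of order $3$; as being eulerian is invariant under isomorphism, it suffices to produce an Euler tour in one copy of each.

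First I would handle $H=\mathrm{STS}(7)$ on $V=\{1,\dots,7\}$ with edge set $\{123,145,167,246,257,347,356\}$ (writing triples without braces, as in the paper). Observe first that every Euler tour of $H$ visits each vertex exactly once: if such a tour visits a vertex $v$ a total of $t_v$ times, then the two edges flanking each occurrence of $v$ are traversed via $v$, and these $2t_v$ edges are pairwise distinct (a strict trail repeats no edge, and consecutive anchors of a walk differ), so $2t_v\le\deg_H(v)=3$, forcing $t_v\in\{0,1\}$; since the tour has $7$ vertex positions and $|V|=7$ we get $t_v=1$ for every $v$. Hence an Euler tour of $H$ is precisely a cyclic ordering of $V$ whose $7$ consecutive pairs lie on $7$ distinct edges. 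The cyclic ordering $1,2,4,5,7,3,6$ has this property and yields the Euler tour
$$1\,123\,2\,246\,4\,145\,5\,257\,7\,347\,3\,356\,6\,167\,1,$$
which traverses every edge exactly once.

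Next I would handle $H=\mathrm{STS}(9)$, viewing $V=\{1,\dots,9\}$ as a $3\times 3$ array so that the $12$ edges are the $3$ rows $123,456,789$, the $3$ columns $147,258,369$, the $3$ broken diagonals $159,348,267$ of one slope, and the $3$ broken diagonals $168,249,357$ of the opposite slope. Here I would invoke Theorem~\ref{thm:incidencegraph}: for each edge choose the pair of vertices via which it is to be traversed, so that in the incidence graph $G$ the resulting spanning subgraph has every e-vertex of degree $2$, every v-vertex of even degree, and a single non-trivial component. Traversing the rows via $12,46,78$, the columns via $14,25,39$, the diagonals $159,348,267$ via $19,48,27$, and the diagonals $168,249,357$ via $16,24,35$, one checks that the associated multigraph on $V$ with edge multiset $\{12,46,78,14,25,39,19,48,27,16,24,35\}$ is connected and has all degrees even; Theorem~\ref{thm:incidencegraph}(2) (equivalently, Theorem~\ref{thm:eulertour} applied to this multigraph) then yields an Euler tour of $H$, for instance
$$1\,123\,2\,267\,7\,789\,8\,348\,4\,249\,2\,258\,5\,357\,3\,369\,9\,159\,1\,147\,4\,456\,6\,168\,1.$$

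The main obstacle is purely one of bookkeeping: there is no conceptual difficulty, only the finite verification for $n=9$ that the chosen transversal of pairs produces a connected even multigraph (so that the corresponding subgraph of the incidence graph has a single non-trivial component) and that the displayed sequence really does traverse each of the $12$ edges once with distinct consecutive anchors. Routing the construction through Theorem~\ref{thm:incidencegraph} — choosing, for each edge, which of its three vertices to omit so that every vertex is omitted an even number of times — is what keeps this step manageable rather than an unstructured search.
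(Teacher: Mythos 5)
Your proposal is correct and takes essentially the same route as the paper: reduce to the unique STS(7) and STS(9) (the only admissible orders with $4\le n<13$) and exhibit an explicit Euler tour in each, both of which I have checked traverse every edge exactly once with distinct consecutive anchors. The extra observations (that an Euler tour of STS(7) must visit each vertex once, and the detour through Theorem~\ref{thm:incidencegraph} for STS(9)) are fine but not needed once the explicit tours are displayed, which is exactly what the paper does.
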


\begin{proof} Given that $4\leq n< 13$ and $H$ is a Steiner triple system, there are precisely two possibilities for $H$ up to isomorphism \cite{CD}.

{\sc Case 1: $H$ is the unique STS(7).} Let $V=\{1,2,\dotso ,7\}$ and (without loss of generality) $E=\{123,145,167,247,256,346,357\}$.  Then $$(1,145,5,256,2,123,3,357,7,247,4,346,6,167,1)$$ is an Euler tour of $H$.

{\sc Case 2: $H$ is the unique STS(9).}  Let $V=\{1,2,\dotso , 9\}$ and (without loss of generality) $E=\{123, 456, 789, 147, 258, 369, 159, 267, 348, 168,249,357\}$.  Then $$(1,147,7,357,3,369,6,267,2,123,3,348,8,168,6,456,4,249,9,789,8,258,5,159,1)$$ is an Euler tour of $H$.
\end{proof}

We summarize our results in this chapter with a single theorem.

\begin{thm}\label{thm:STS} Let $H$ be a TS($n,\lambda$).  Then $H$ is eulerian if and only if $H$ is not a TS$(3,1)$.
\end{thm}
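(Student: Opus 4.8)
The plan is to assemble the pieces proved in this chapter by a short case analysis. First I would dispose of the forward implication: if $H$ is a TS$(3,1)$, then $V(H)=\{1,2,3\}$, and the unique $3$-subset $\{1,2,3\}$ must appear exactly once, so $H$ has a single edge; by Lemma~\ref{lem:trivialcases}(2) a connected hypergraph with exactly one edge admits no Euler family, so $H$ is certainly not eulerian. This is exactly the ``only if'' direction, and it was already recorded in Corollary~\ref{cor:TS3}.

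For the converse, suppose $H$ is a TS$(n,\lambda)$ with $H$ not a TS$(3,1)$. Note first that $n\geq 3$, since a non-empty $3$-uniform hypergraph needs at least three vertices. I would then split on the value of $\lambda$. If $\lambda\geq 2$, then every pair of vertices of $H$ lies in at least $\lambda\geq 2$ common edges, so $H$ satisfies the hypothesis of Theorem~\ref{thm:eulerianTS} and is therefore eulerian; in particular this handles every TS$(3,\lambda)$ with $\lambda\geq 2$. If instead $\lambda=1$, then $H$ is a Steiner triple system, so $n\equiv 1$ or $3\pmod 6$; since $n=3$ is the only admissible order smaller than $7$ and we have excluded TS$(3,1)$, we conclude $n\geq 7$. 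Now if $7\leq n<13$ then Lemma~\ref{lem:smallSTS} gives that $H$ is eulerian, and if $n\geq 13$ then Theorem~\ref{thm:eulerianSTS} gives that $H$ is eulerian.

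In every case $H$ is eulerian, which completes the proof. There is no real obstacle here beyond verifying that the cases are exhaustive: the one point requiring care is recalling the Steiner triple system spectrum ($n\equiv 1$ or $3\pmod 6$) in order to know that the only admissible order below $7$ for $\lambda=1$ is the degenerate $n=3$, so that every non-degenerate Steiner triple system is covered by either Lemma~\ref{lem:smallSTS} or Theorem~\ref{thm:eulerianSTS}, while for $\lambda\geq 2$ no spectrum fact is needed since Theorem~\ref{thm:eulerianTS} applies uniformly.
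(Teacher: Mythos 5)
Your proposal is correct and follows essentially the same route as the paper: it disposes of TS$(3,1)$ via its single edge (Corollary~\ref{cor:TS3}), and handles the converse by the same case split, citing Theorem~\ref{thm:eulerianTS} for $\lambda\geq 2$, Theorem~\ref{thm:eulerianSTS} for $\lambda=1$, $n\geq 13$, and Lemma~\ref{lem:smallSTS} for the small Steiner triple systems. The only cosmetic difference is that you invoke the STS spectrum to rule out orders between $3$ and $7$, whereas the paper simply lets Lemma~\ref{lem:smallSTS} cover all of $4\leq n<13$ directly; both are fine.
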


\begin{proof} If $H$ is a TS($3,1)$, then Corollary~\ref{cor:TS3} states that $H$ is not eulerian.

Conversely, assume $H$ is not a TS$(3,1)$.  If $\lambda\geq 2$, then Theorem~\ref{thm:eulerianTS} states that $H$ is eulerian.  If $\lambda=1$ and $n\geq 13$, then Theorem~\ref{thm:eulerianSTS} states that $H$ is eulerian.  If $\lambda=1$ and $4\leq n<13$, then Lemma~\ref{lem:smallSTS} demonstrates that $H$ is eulerian.
\end{proof}

Having settled the matter for triple systems, we shall set our sights on more general hypergraphs in upcoming chapters.  We will see that some of the strategies we used (for triple systems of index at least 2) have great value when proving the more advanced results.
\cleardoublepage

\chapter{Eulerian Properties of Covering Hypergraphs}\label{chapter:covering}

In Chapter~\ref{chapter:STS}, we saw that all triple systems, with one exception, are eulerian.  However, for Steiner triple systems, we relied on the highly structured nature of the edge set in order to establish an Euler tour; for triple systems of higher index, we needed the existence of two edges containing a certain pair of vertices.

The motivation for this chapter is to prove that Steiner quadruple systems are eulerian.  An easy observation is that a Steiner quadruple system has, embedded in it, a Steiner triple system plus some additional edges.  To see this, delete any vertex from a Steiner quadruple system, and note that the edges that contained that vertex now have cardinality 3 and jointly contain every pair of vertices.  However, the ``leftover'' edges present a problem because it is not clear that we can add them to our Euler tour of the Steiner triple system.  What we have, in fact, can be reduced to a covering 3-hypergraph, so we need to know whether such hypergraphs are eulerian in order to easily show that Steiner quadruple systems are eulerian.

We will spend the majority of our time in this chapter proving that covering 3-hypergraphs are indeed eulerian, formally written as follows.\\

\setcounter{theo}{0}

\begin{thm}\label{thm:coveringinductionbase} Let $H$ be a covering 3-hypergraph.  Then $H$ is eulerian if and only if it has at least two edges.
\end{thm}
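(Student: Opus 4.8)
\textbf{The \emph{only if} direction} is immediate: a covering $3$-hypergraph is nonempty, and if $H$ has exactly one edge $e$ then (since every pair of vertices must lie in $e$) $H$ is the connected hypergraph on the three vertices of $e$, which by Lemma~\ref{lem:trivialcases}(2) has no Euler family, hence is not eulerian. So if $H$ is eulerian then $|E(H)|\ge 2$.

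\textbf{For the \emph{if} direction}, assume $H$ has at least two edges. By Corollary~\ref{cor:quasieuleriancovering}, $H$ is quasi-eulerian; let $\F$ be a \emph{minimum} Euler family, let $G$ be the incidence graph of $H$, and let $\G=G_{\F}$ be the corresponding spanning subgraph. By Corollary~\ref{cor:correspondence}, the non-trivial connected components $G_1,\dots,G_k$ of $\G$ are exactly the subgraphs corresponding to the $k=|\F|$ trails of $\F$, and by Lemma~\ref{lem:onecc} $\G$ has at most one further (trivial) component, necessarily an isolated v-vertex. I would assume $k\ge 2$ and derive a contradiction by producing an Euler family with fewer than $k$ trails. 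First I would record the structure of $\G$: every e-vertex has degree $2$ in $\G$ (each edge of $H$ is traversed once, via two distinct anchors), so each e-vertex lies in a single $G_i$ together with both of its anchors. Writing $A_i=V(G_i)\cap V(H)$, the sets $A_1,\dots,A_k$ together with at most one extra isolated v-vertex partition $V(H)$, each $|A_i|\ge 2$, and \emph{every edge $e$ of $H$ has at least two of its three vertices in a single $A_i$} (it cannot meet three distinct $A_i$'s, nor two $A_i$'s together with the isolated vertex, for then its degree-$2$ e-vertex would join vertices lying in different components of $\G$); moreover $e$ is traversed in $\F$ via those two vertices of $A_i$, and $e\in V(G_i)$. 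I would also note that each $G_i$ is connected and even (e-vertices have degree $2$; v-vertices have even degree by Theorem~\ref{thm:incidencegraph}), hence has no cut edge, and I would fix in each $G_i$ a v-vertex $u_i\in A_i$ that is not a cut vertex (this exists exactly as in the proof of Theorem~\ref{thm:eulerianTS}, via Theorems~\ref{thm:inc1} and~\ref{thm:inc2}).

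\textbf{The plan is then to build an \emph{interchanging cycle}}: a cycle $D=v_0 e_1 v_1\cdots v_{m-1} e_m v_0$ in $H$ such that, for every $i$, the edge $e_i$ is traversed by $\F$ via a pair \emph{other} than $\{v_{i-1},v_i\}$ — equivalently, a cycle in $G$ at each e-vertex of which exactly one of the two incident cycle-edges lies in $\G$. For such a $D$, passing from $\G$ to $\G\Delta D$ (toggling the edges of $D$) leaves every e-vertex of degree $2$ and every v-vertex of even degree, so by Theorem~\ref{thm:incidencegraph} it corresponds to a new Euler family $\F'$; and if $D$ meets two (or three) of the $G_i$'s then $\F'$ has strictly fewer trails than $\F$, contradicting minimality. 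Concretely, when $k\ge 3$ I would pick $u_1,u_2,u_3$ in three distinct components and let $e_{12},e_{13},e_{23}$ be the edges covering the pairs $\{u_1,u_2\},\{u_1,u_3\},\{u_2,u_3\}$; the hexagon $D=u_1 e_{12} u_2 e_{23} u_3 e_{13} u_1$ is then automatically an interchanging cycle, because each covering edge is traversed via two vertices of a single $A_i$, which forces its ``external'' vertex (the one via which it is not traversed) into the pair it shares with the hexagon at that position. Taking $\G\Delta D$ merges $G_1,G_2,G_3$: the edges it deletes from $G_i$ are all incident to $u_i$, so, $u_i$ being non-cut, the non-trivial part of each $G_i$ stays connected, while the external edges added at $e_{12},e_{13},e_{23}$ bridge the three pieces; hence $|\F'|\le k-2$. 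When $k=2$ one has $V(H)=A_1\sqcup A_2$ (up to the lone isolated vertex), so there is a cross-edge $p$ with exactly two vertices in one part, say $p=\{a,a',b\}$ with $a,a'\in A_1$, $b\in A_2$; analysing the edge $q$ covering a suitable cross-pair through $a$ or $a'$ and $b$ produces (after at most one re-choice of the second vertex) a $4$-cycle $D=x\,p\,b\,q\,x$ with $x\in\{a,a'\}$ that is interchanging, so that $\G\Delta D$ deletes just one edge from each of $G_1,G_2$ and bridges them, giving $|\F'|=1$.

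\textbf{The main obstacle} is precisely the existence and good behaviour of this interchanging cycle. Because a covering $3$-hypergraph guarantees only \emph{one} edge through each pair of vertices, we cannot use the short two-edge ``digon'' available in the index-$\ge 2$ case (Theorem~\ref{thm:eulerianTS}); instead we must chase the covering edges on several pairs straddling the parts $A_i$ and carry out a case analysis according to which $A_i$ contains the third vertex of each such edge, while choosing the representatives $u_i$ to be non-cut vertices so that the edge-deletions performed by the symmetric difference never fragment a component into pieces that $D$ fails to reconnect. Getting this bookkeeping right — in particular ruling out the degenerate configurations in which the chosen covering edges coincide, and handling them by selecting a different third vertex from $A_i$ (possible since $|A_i|\ge 2$) — is where the real work lies.
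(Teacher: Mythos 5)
Your ``only if'' direction and your treatment of the case $|\F|\geq 3$ are fine --- the hexagon through non-cut vertices of three distinct components is essentially the paper's Corollary~\ref{cor:3comps}, and the distinctness of the three covering edges is automatic (an e-vertex meeting three components would be incident with two non-$\G$-edges). The genuine gap is the case $|\F|=2$, which is the heart of the theorem. Your plan there is to find a $4$-cycle $D=x\,p\,b\,q\,x$ with $p\neq q$ two edges both containing the pair $\{x,b\}$. But a covering $3$-hypergraph guarantees only \emph{one} edge through each pair, so such a $q$ need not exist: in any linear covering $3$-hypergraph --- in particular in every Steiner triple system, which is exactly the hardest instance of this theorem --- no two distinct edges share two vertices, so the incidence graph contains no $4$-cycles at all, and ``re-choosing the second vertex'' cannot help, since the $4$-cycle forces both $p$ and $q$ to contain the same cross-pair. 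This is precisely why the digon/short-cycle interchange that works for triple systems of index $\geq 2$ (Theorem~\ref{thm:eulerianTS}) does not transfer, and your proposal gives no replacement for it.

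What the two-component case actually requires is much heavier: in the paper one fixes a vertex $v_0$, takes a minimum Euler family that additionally minimizes $\deg_{\G}(v_0)$, and exploits interchanging cycles that are \emph{not} immediately diminishing --- Lemma~\ref{lem:mindegree} controls what any interchange can do under this double minimality, Lemma~\ref{lem:order7setup} pins down where the neighbours $a,b$ of $v_0$ must sit, and Theorem~\ref{thm:order7} then runs an extensive case analysis (sometimes performing an interchange, passing to the new family $\F'$, and re-applying the setup lemma to it), all of which needs at least seven vertices; orders $3\leq n\leq 6$ are then handled by separate counting arguments (Lemmas~\ref{lem:coveringorder3}--\ref{lem:coveringorder6}). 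None of this machinery --- the secondary minimization, the analysis of non-diminishing interchanges, or the small-order cases --- is present in your sketch, so as written the proof does not go through once both parts of the Euler family contain more than one edge per cross-pair's unique covering edge, i.e.\ in the generic Steiner-like situation.
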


However, we need not stop at Steiner quadruple systems.  Using the process described earlier allows us to reduce {\em any} covering $(k+1)$-hypergraph to a covering $k$-hypergraph.  We will use Theorem~\ref{thm:coveringinductionbase} as the basis of induction for the main result of this chapter, as follows.\\

\begin{thm}\label{thm:coveringinduction}Let $k\geq 3$, and let $H$ be a covering $k$-hypergraph.  Then $H$ is eulerian if and only if it has at least two edges.
\end{thm}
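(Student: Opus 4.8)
The natural approach is induction on $k$, with Theorem~\ref{thm:coveringinductionbase} serving as the base case $k=3$; so fix $k\ge 4$ and assume the statement for $k-1$. The forward implication needs no induction: a covering $k$-hypergraph is nonempty by definition and connected (since $n\ge k>k-1\ge 2$, every pair of vertices extends to a $(k-1)$-subset and hence lies in a common edge), so if $H$ had exactly one edge $e$ it would admit no closed strict trail through $e$ at all, and by Lemma~\ref{lem:trivialcases}(2) it is not even quasi-eulerian. Hence we may assume $|E(H)|\ge 2$ and must produce an Euler tour.

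The plan is to reduce $H$ to a covering $(k-1)$-hypergraph by deleting a single vertex, apply the induction hypothesis, and lift the resulting Euler tour back via Lemma~\ref{lem:truncatedhypergraph}. Fix any $v\in V(H)$, put $V_1=V(H)\setminus\{v\}$, and for each $e\in E(H)$ set $\rho(e)=e\setminus\{v\}$ if $v\in e$ and $\rho(e)=e\setminus\{w_e\}$ for some fixed $w_e\in e$ otherwise; let $H_1$ be the $(k-1)$-uniform hypergraph on $V_1$ whose edge multiset consists of the sets $\rho(e)$, one for each $e\in E(H)$. Then $|E(H_1)|=|E(H)|\ge 2$, and there is an obvious bijection $\varphi\colon E(H_1)\to E(H)$ sending $\rho(e)$ back to $e$, satisfying $f\subseteq\varphi(f)$ for every $f\in E(H_1)$. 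Crucially, $H_1$ is again a covering hypergraph: given any $(k-2)$-subset $S\subseteq V_1$, the set $S\cup\{v\}$ is a $(k-1)$-subset of $V(H)$, hence is contained in some $e\in E(H)$; since $v\in e$ we get $\rho(e)=e\setminus\{v\}\supseteq S$, so $S$ lies in an edge of $H_1$.

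By the induction hypothesis, $H_1$ is eulerian; fix an Euler tour $T_1$ of $H_1$, regarded as an Euler family with the single component $T_1$. Since $V(H_1)\subseteq V(H)$ and $\varphi$ is a bijection with $f\subseteq\varphi(f)$ for all $f$, Lemma~\ref{lem:truncatedhypergraph}(1) yields an Euler family of $H$ obtained from $\{T_1\}$ by replacing each edge with its image under $\varphi$; this replacement does not change the number of components, so the result is a single closed strict trail traversing every edge of $H$ exactly once, i.e.\ an Euler tour of $H$. This closes the induction.

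I expect the only genuine content to be the verification that the reduced hypergraph $H_1$ is still covering, so that the induction hypothesis applies; once that is in place, Lemma~\ref{lem:truncatedhypergraph} does the remaining work mechanically, and in particular the ``leftover'' edges of $H$ that avoid $v$ cause no difficulty — they are merely truncated to cardinality $k-1$ and then lifted back. The rest is routine bookkeeping: that $H_1$ inherits at least two edges (it has exactly $|E(H)|$ of them, with multiplicity) and that $n\ge k$, so that $(k-2)$-subsets of $V_1$ are available.
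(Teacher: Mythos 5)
Your proof is correct and follows essentially the same route as the paper: delete a fixed vertex $v$, truncate the edges containing $v$ by removing $v$ and the remaining edges by removing an arbitrary vertex, verify the resulting $(k-1)$-uniform hypergraph is still covering, apply the induction hypothesis (with Theorem~\ref{thm:coveringinductionbase} as the base case), and lift the Euler tour back via Lemma~\ref{lem:truncatedhypergraph}. The only cosmetic difference is that the paper packages the two kinds of truncated edges as separate sets $\mathcal{B}$ and $\mathcal{C}$ before taking their multiset union, whereas you define a single truncation map $\rho$.
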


\section{Basic Facts}
In order to prove a helpful result about cut vertices in graphs, we will need to present some additional facts about graphs.\\

\begin{defn}{\rm \cite{BM} A graph $T$ is called a {\em tree} if it is connected and has no cycles.  If $G$ is a graph, then a {\em spanning tree} of $G$ is a connected spanning subgraph of $G$ that has no cycles.  A {\em leaf} of a tree is a vertex of degree 1.
}
\end{defn}
Note that every connected graph has a spanning tree, which can be obtained simply by successively deleting edges of the graph that are not cut edges.\\

\begin{prop}{\rm \cite[Proposition 4.2]{BM}} Let $T$ be a tree of order at least 2.  Then $T$ has at least two leaves.\qed
\end{prop}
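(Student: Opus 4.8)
The plan is to argue via a longest path, using only the definition of a tree (connected and acyclic) together with the observation that a finite graph possesses a path of maximum length. First I would note that since $T$ is connected and has order at least $2$, it contains at least one edge, hence a nontrivial path; so let $P = v_0 v_1 \dotso v_k$ be a path in $T$ of maximum length, which gives $k \geq 1$ and in particular $v_0 \neq v_k$.

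Next I would show that both endpoints $v_0$ and $v_k$ are leaves. Suppose $\deg_T(v_0) \geq 2$; then $v_0$ has a neighbour $u \neq v_1$, and $u \neq v_0$ since a tree has no loops. If $u \notin \{v_0, \dotso, v_k\}$, then $u v_0 v_1 \dotso v_k$ is a path longer than $P$, contradicting maximality. If $u = v_i$ for some $i \in \{2, \dotso, k\}$, then $v_0 v_1 \dotso v_i v_0$ is a cycle in $T$, contradicting that $T$ is acyclic. Hence $\deg_T(v_0) = 1$, so $v_0$ is a leaf; applying the same argument to the reversed path $v_k v_{k-1} \dotso v_0$ shows $v_k$ is a leaf as well. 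Since $v_0 \neq v_k$, these are two distinct leaves, as required.

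There is no real obstacle here; the only point needing a moment's care is the case analysis on the neighbour $u$ of $v_0$, where one must separately rule out ``$u$ lies off $P$'' (which would yield a longer path) and ``$u$ lies on $P$'' (which would yield a cycle). I would mention in passing the alternative route --- establishing first that a tree of order $n$ has exactly $n-1$ edges and then invoking the Handshaking Lemma, since at most one vertex of degree $1$ forces $\sum_{v}\deg_T(v) \geq 2(n-1)+1 > 2(n-1)$ --- but this relies on the edge-count lemma, which is not developed in the excerpt, so the longest-path argument is the cleaner choice.
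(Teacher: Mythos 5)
Your proof is correct. Note that the paper does not actually prove this proposition --- it is stated with a citation to Bondy and Murty and a \qed --- so there is no internal argument to compare against; your longest-path argument is the standard and complete way to fill it in. One tiny point of care, since the paper's graphs are not assumed simple: when you pass from $\deg_T(v_0)\geq 2$ to ``$v_0$ has a neighbour $u\neq v_1$,'' you are implicitly using that $T$ has no parallel edges, which holds because two parallel edges would form a $2$-cycle; it is worth one clause to say so, after which your case analysis (off the path gives a longer path, on the path gives a cycle) goes through exactly as written.
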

\begin{prop}\label{prop:noncut} Let $G$ be a connected graph of order at least 2.  Then $G$ has at least two vertices that are not cut vertices.  Furthermore, if $G$ admits a cycle of length $k$, then $G$ has at least $k$ vertices that are not cut vertices.
\end{prop}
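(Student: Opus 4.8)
The plan is to prove the two assertions in turn, using only spanning trees together with the fact that a tree of order at least two has at least two leaves \cite[Proposition 4.2]{BM}. For the first assertion, take a spanning tree $T$ of $G$ and pick two distinct leaves $x_1,x_2$ of $T$; for each $i$, deleting the leaf $x_i$ leaves a tree $T-x_i$ spanning $V(G)\setminus\{x_i\}$, so $G-x_i\supseteq T-x_i$ is connected and $x_i$ is not a cut vertex of $G$.

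For the second assertion, let $C$ be a cycle of length $k$ in $G$ and put $S=V(C)$, so $|S|=k$; we may assume $k\ge 3$, since for $k\le 2$ the statement is weaker than the first assertion. The heart of the argument will be the inequality
\[
|\{\,\text{cut vertices of }G\text{ lying in }S\,\}|\ \le\ |\{\,\text{non-cut vertices of }G\text{ lying outside }S\,\}| .
\]
Granting this, the number of non-cut vertices of $G$ is at least the number of non-cut vertices in $S$ plus the number of non-cut vertices outside $S$, hence at least the number of non-cut vertices in $S$ plus the number of cut vertices in $S$, which equals $|S|=k$.

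To obtain the displayed inequality, list the cut vertices of $G$ in $S$ as $c_1,\dots,c_j$. For each $i$, the subgraph $C-c_i$ is a path (this is where $k\ge 3$ is used), so $S\setminus\{c_i\}$ lies in a single component $M_i$ of $G-c_i$; as $c_i$ is a cut vertex, $G-c_i$ has a further component $D_i$, and $D_i\cap S=\emptyset$. The induced subgraph $G[V(D_i)\cup\{c_i\}]$ is connected and has order at least two, so, exactly as in the proof of the first assertion, it has a spanning tree $T_i$ with a leaf $x_i\neq c_i$; then $x_i\in V(D_i)$, so $x_i\notin S$. Moreover $x_i$ is not a cut vertex of $G$: the graph $G-x_i$ is the union of $G-V(D_i)$, which is connected because every component of $G-c_i$ has an edge to $c_i$, and $T_i-x_i$, which is connected and contains $c_i$, and these two connected subgraphs share the vertex $c_i$.

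The step I expect to be the main obstacle is showing that $x_1,\dots,x_j$ are pairwise distinct, equivalently that the side-components $D_i$ can be chosen pairwise disjoint. The point to exploit is that a component $D_i$ of $G-c_i$ avoiding $S$ is joined to the rest of $G$ only through $c_i$: if $V(D_i)\cap V(D_{i'})\neq\emptyset$ for some $i\neq i'$, then, using $c_i\notin D_{i'}$ and $c_{i'}\notin D_i$, the connectedness of $D_i$ and $D_{i'}$ forces $V(D_i)=V(D_{i'})$, and this common set would then be joined to its complement only through $c_i$ and only through $c_{i'}$ --- impossible, as $c_i\neq c_{i'}$ while $G$ is connected. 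With distinctness established, the displayed inequality holds, and the proposition follows.
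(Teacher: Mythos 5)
Your proof is correct and follows essentially the same route as the paper's: spanning-tree leaves give the first claim, and for the second you replace each cut vertex $c$ on the cycle by a non-cut vertex of $G$ found inside a component of $G-c$ disjoint from the cycle (via a leaf of a spanning tree of that component together with $c$), exactly the paper's construction. The only real divergence is the distinctness step, where your argument that the side components $D_i$ are pairwise disjoint (hence the $x_i$ distinct) is a clean structural alternative to the paper's shortest-path contradiction.
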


\begin{proof} Since $G$ is connected, there exists a spanning tree $T$ in $G$.  Then there exist two leaves $u$ and $v$ of $T$, which are not cut vertices of $T$.  These leaves cannot be cut vertices of $G$ either, for $G-u$ and $G-v$ contain $T-u$ and $T-v$ as spanning trees, respectively.  Hence $G$ has at least two vertices that are not cut vertices. 

Now suppose there exists a cycle $C$ of length $k\geq 3$ in $G$.  Each vertex of $C$ is either a cut vertex of $G$ or not.  Fix some $v\in V(C)$, and suppose $v$ is a cut vertex of $G$.  Then $G-v$ has connected components $G_1,\dotso , G_{\ell}$, with $\ell\geq 2$.  The vertices in $C-v$ are all in the same connected component, so assume $G_{\ell}$ is the connected component containing the vertices of $C-v$.  Now consider the following two connected subgraphs of $G$: let $X_1=G[V(G_1)\cup \dotso\cup V(G_{\ell-1})\cup\{v\}]$ and let $X_2 = G[V(G_{\ell})\cup\{v\}]$.  

We first show that $X_1$ and $X_2$ are connected.  Let $a,b\in V(X_i)$ for some $i\in\{1,2\}$.  Since $G$ is connected, it admits an $av$-path $P_1$.  Since $v$ is a cut vertex of $G$, we have that $a$ and the internal vertices of $P_1$ all lie in $V(G_j)$ for some $j$.  Likewise, there exists a $vb$-path $P_2$ in $G$ such that $b$ and the internal vertices of $P_2$ all lie in $V(G_j)$ for some (possibly different) $j$.  Hence $aP_1vP_2b$ is an $ab$-path in $X_i$, so $X_i$ is connected for $i=1,2$.

Note that $X_1$ is a connected graph of order at least 2, since it contains $v$ and at least one other vertex in $G_1$.  We have shown that such a graph contains two vertices that are not cut vertices.  Let $u$ be one of the non-cut vertices of $X_1$ that is distinct from $v$.

Observe that $G-u$ is connected: for any $x\in V(X_1-u)$, since $X_1-u$ is connected, there exists an $xv$-walk in $X_1-u$. For any $y\in V(X_2)$, since $X_2$ is connected, there exists a $yv$-walk in $X_2$.  Therefore, for any $a,b\in V(G-u)$, there exists an $ab$-walk obtained by concatenating an appropriate $av$-walk with a $vb$-walk.  Hence $G-u$ is connected, so $u$ is not a cut vertex of $G$.

Therefore, letting $v$ vary over all $V(C)$, we obtain a collection $N=\{v':v\in V(C)\}$ in which $v'=v$ if $v$ is not a cut vertex of $G$, and $v'$ is the non-cut vertex determined by $v$, as above, otherwise.  Note that if $v'$ is of the latter form, then it is not connected to vertices of $V(C)\setminus \{v\}$ in $G-v$.  What remains to be seen is that $N$ contains $|V(C)|$ distinct vertices.

Let $x,y\in V(C)$ be different from each other, with corresponding vertices $x',y'\in N$, and suppose $x'=y'$.  Denote $z=x'=y'$.  If $z\in V(C)$, then we have $x=y$, so assume $z\not\in V(C)$.  Let $P_1$ be a shortest $zx$-path in $G$ and $P_2$ be a shortest $zy$-path in $G$.

Since $x\neq y$, we have $x\in V(C)\setminus\{y\}$, so $x$ and $z$ are not connected in $G-y$.  From this, we conclude that $P_1$ traverses $y$, so $P_1$ contains a $zy$-path.  Similarly, we find that $P_2$ traverses $x$, so $P_2$ contains a $zx$-path.  By the minimality of $P_1$ and $P_2$, this implies that $|V(P_1)| > |V(P_2)| > |V(P_1)|$, a contradiction.  

Therefore, if $x\neq y$, then $x'\neq y'$.  Hence $N$ contains $|V(C)| = k$ distinct vertices that are not cut vertices of $G$, so $G$ has at least $k$ vertices that are not cut vertices.
\end{proof}

\begin{cor}\label{cor:noncut} Let $H$ be a hypergraph with an Euler family $\F$ and let $G$ be the incidence graph of $H$.  Let $\G$ be the subgraph of $G$ corresponding to $\F$, and let $G_1$ be one of the non-trivial connected components of $\G$.  Then at least two v-vertices of $G_1$ are not cut vertices of $\G$.  Furthermore, if $G_1$ admits a cycle containing $k$ of its v-vertices, then at least $k$ v-vertices of $G_1$ are not cut vertices of $\G$.
\end{cor}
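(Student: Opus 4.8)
The plan is to derive the corollary from Proposition~\ref{prop:noncut}, but applied not to $G_1$ directly --- where its degree-$2$ e-vertices could absorb both of the guaranteed non-cut vertices --- but to the graph obtained from $G_1$ by suppressing those e-vertices, exactly the device used in the proof of Theorem~\ref{thm:eulerianTS}.

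First I would build the auxiliary graph. Since $\G$ is the subgraph of $G$ corresponding to an Euler family, every e-vertex of $\G$, hence of $G_1$, has degree $2$; so there is a hypergraph $H_1$ whose incidence graph is $G_1$, and every edge of $H_1$ has cardinality $2$, making $H_1$ a loopless (possibly non-simple) graph whose vertex set is precisely the set of v-vertices of $G_1$ and whose edge set is the set of e-vertices of $G_1$. By Theorem~\ref{thm:inc1}, $H_1$ is connected (as $G_1$ is), and $|V(H_1)|\ge 2$ because the non-trivial component $G_1$ contains an e-vertex, which has two distinct v-neighbours. Note that $G_1$ is exactly $H_1$ with each edge subdivided once.

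Next I would transfer non-cut vertices back. Apply Proposition~\ref{prop:noncut} to $H_1$: it has at least two non-cut vertices, and at least $k$ of them if $H_1$ admits a cycle of length $k$. If $u$ is a non-cut vertex of $H_1$, then $H_1-u$ is connected; since $\{u\}\notin E(H_1)$ and $|V(H_1)|\ge 2$, Theorem~\ref{thm:inc2} gives $\mathcal G(H_1-u)=\mathcal G(H_1)-u=G_1-u$, and then Theorem~\ref{thm:inc1} shows $G_1-u$ is connected, i.e.\ $u$ is not a cut vertex of $G_1$. Finally, as $G_1$ is a connected component of $\G$, we have $\G-u=(G_1-u)\ \sqcup\ (\text{the remaining components of }\G)$ with $G_1-u$ connected and non-empty, so $c(\G-u)=c(\G)$ and $u$ is not a cut vertex of $\G$. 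This establishes the first assertion. For the ``furthermore'' clause, suppose $G_1$ has a cycle $C$ through $k$ of its v-vertices; the case $k\le 2$ follows from the first part, so take $k\ge 3$. Since $G_1$ is bipartite with parts the v-vertices and the e-vertices, $C$ alternates between them, so it has length $2k$ and passes through $k$ e-vertices; suppressing each such degree-$2$ e-vertex $x$, i.e.\ replacing the subpath $a\,x\,b$ of $C$ by the edge $\{a,b\}$ of $H_1$, turns $C$ into a closed walk of $H_1$ with $k$ distinct vertices and $k$ distinct edges, that is, a $k$-cycle of $H_1$. Proposition~\ref{prop:noncut} then yields $k$ non-cut vertices of $H_1$, and the argument above converts them into $k$ v-vertices of $G_1$ that are not cut vertices of $\G$.

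The only delicate point is bookkeeping: the notions of vertex deletion, and hence of ``cut vertex'', have to be kept consistent across the graph $H_1$, the incidence-graph picture $G_1$, and the ambient graph $\G$. Once one observes that $G_1$ is simply the one-subdivision of $H_1$ (so that a v-vertex is a cut vertex of $G_1$ exactly when it is a cut vertex of $H_1$), and that deleting a vertex of one component of $\G$ leaves the other components untouched, the cycle-transfer and the final counting are routine.
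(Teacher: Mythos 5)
Your proposal is correct and follows essentially the same route as the paper: pass to the graph $H_1$ whose incidence graph is $G_1$, apply Proposition~\ref{prop:noncut} there, and transfer the non-cut vertices (and the cycle) back to $G_1$. The only cosmetic differences are that the paper invokes Theorem~\ref{thm:cutvvertex} and Remark~\ref{remark:traversals} for the transfer steps where you use Theorems~\ref{thm:inc1} and~\ref{thm:inc2} and an explicit suppression of the degree-$2$ e-vertices, and you are in fact slightly more careful than the paper in passing from ``not a cut vertex of $G_1$'' to ``not a cut vertex of $\G$.''
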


\begin{proof} Since $G_1$ corresponds to a component of $\F$, it has at least two vertices, its v-vertices have even degree at least 2, and its e-vertices have degree 2.  Hence $G_1$ is the incidence graph to some connected hypergraph $H'$ that has order at least 2 and whose edges have cardinality 2.  Then Proposition~\ref{prop:noncut} implies that $H'$ has at least two vertices that are not cut vertices.  Since $H'$ has only edges of cardinality 2, we may apply Theorem~\ref{thm:cutvvertex} to find that these two vertices that are not cut vertices correspond to v-vertices of $G_1$ that are not cut vertices.  Hence $G_1$ has two v-vertices that are not cut vertices.

Furthermore, suppose $C$ is a cycle in $G_1$ traversing $k$ v-vertices.  Then $C$ corresponds to a cycle of length $k$ in $H'$ by Remark~\ref{remark:traversals}.  In this case, Proposition~\ref{prop:noncut} implies that $H$ has at least $k$ vertices that are not cut vertices.  Once again, the cut vertices of $H$ correspond exactly to cut v-vertices of $G$ by Theorem~\ref{thm:cutvvertex}, so we conclude that at least $k$ v-vertices of $G$ are not cut vertices.
\end{proof}

\section{Main Tool: Interchanging Cycles}

As proved in Theorem~\ref{thm:incidencegraph}, an Euler family of a hypergraph $H$ can be represented in its incidence graph $G$ by an even subgraph of $G$ in which every e-vertex has degree 2.  If this subgraph has only one non-trivial connected component, then $H$ is eulerian.  Therefore, our strategy in attacking Theorem~\ref{thm:coveringinductionbase} is to start with a subgraph corresponding to an Euler family, which may have many non-trivial connected components.  We will then swap some edges of the subgraph with some edges outside of it to obtain a new subgraph with fewer non-trivial connected components.

We accomplish this by taking the symmetric difference of the corresponding subgraph with an {\em interchanging cycle.}\\

\begin{defn}{\rm {\bf (Interchanging Cycles)} Let $H$ be a hypergraph with Euler family $\F$.  Denote by $G$ the incidence graph of $H$, and by $\G$ the subgraph of $G$ corresponding to $\F$.  For any $e\in E(G)$, we call $e$ a {\em $\G$-edge} if $e\in E(\G)$, and a {\em non-$\G$-edge} otherwise.

Suppose $C$ is a cycle in $G$.  If every e-vertex of $G$ traversed by $C$ is incident with exactly one $\G$-edge of $C$ (that is, an edge in $E(\G)\cap E(C)$), then we call $C$ an {\em $\F$-interchanging cycle}.  Further, if $C$ is an $\F$-interchanging cycle and $\G\Delta C$ has fewer non-trivial connected components than $\G$, then we call $C$ an {\em $\F$-diminishing cycle}. \phantom{aaaaa}
}
\end{defn}

\begin{lem}\label{lem:interchange} Let $H$ be a hypergraph and let $\F$ be an Euler family of $H$.  Let $G$ be the incidence graph of $H$, and $\G$ be the subgraph of $G$ corresponding to $\F$.  Suppose there is an $\F$-interchanging cycle $C$ in $G$.  Then $\G\Delta C$ is a subgraph of $G$ corresponding to an Euler family of $H$.  Additionally, if $\F$ is minimum, then $C$ is not $\F$-diminishing.
\end{lem}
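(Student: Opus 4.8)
The plan is to show that $\G\Delta C$ is a spanning subgraph of $G$ in which every e-vertex has degree $2$ and every v-vertex has even degree; the reverse direction of Theorem~\ref{thm:incidencegraph}(1) then produces an Euler family $\F'$ of $H$ with $G_{\F'}=\G\Delta C$, and the ``additionally'' clause will follow by comparing $|\F|$ with $|\F'|$ via Corollary~\ref{cor:correspondence}.

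First I would note that, since $G$ (and hence $\G$ and $C$) is simple, the symmetric difference $\G\Delta C$ is well defined, and since $\G$ is spanning we have $V(\G\Delta C)=V(G)\cup V(C)=V(G)$, so $\G\Delta C$ spans $G$. Next I would check degrees, recalling that in $\G=G_{\F}$ every e-vertex has degree $2$ and every v-vertex has even degree. For an e-vertex $e$ not traversed by $C$ we have $\deg_{\G\Delta C}(e)=\deg_{\G}(e)=2$. For an e-vertex $e$ traversed by $C$, exactly two edges of $C$ meet $e$; by the definition of an $\F$-interchanging cycle one of them, say $a$, lies in $E(\G)$ and the other, say $b$, does not, and since $\deg_{\G}(e)=2$ there is exactly one further $\G$-edge $a'$ at $e$, which is not on $C$. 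In $\G\Delta C$ the edge $a$ (lying in $E(\G)\cap E(C)$) is deleted, $b$ (lying in $E(C)\setminus E(\G)$) is added, and $a'$ is retained, so $\deg_{\G\Delta C}(e)=2$. For a v-vertex $v$ not traversed by $C$ we have $\deg_{\G\Delta C}(v)=\deg_{\G}(v)$, which is even; for a v-vertex $v$ traversed by $C$, of the two edges of $C$ at $v$ some number $t\in\{0,1,2\}$ lie in $E(\G)$, and passing to $\G\Delta C$ removes those $t$ and adds the other $2-t$, changing $\deg(v)$ by the even number $2-2t$, so it stays even. This e-vertex computation is the only place the hypothesis on $C$ is really used, and I expect it to be the crux (modest as it is): being $\F$-interchanging is exactly what guarantees that at each traversed e-vertex precisely one incident edge leaves $\G$ and precisely one enters it, so the degree stays at $2$.

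With the degree conditions established, Theorem~\ref{thm:incidencegraph}(1) produces an Euler family $\F'$ of $H$ from $\G\Delta C$; because the anchor flags of the trails of $\F'$ are exactly the edges of $\G\Delta C$ (each non-trivial component of $\G\Delta C$ being traversed by one component of $\F'$), we get $G_{\F'}=\G\Delta C$, so $\G\Delta C$ is a subgraph of $G$ corresponding to an Euler family of $H$. Finally, for the ``additionally'' clause, suppose $\F$ is minimum and, towards a contradiction, that $C$ is $\F$-diminishing. By Corollary~\ref{cor:correspondence} the number of non-trivial connected components of $\G$ is $|\F|$ and the number of non-trivial connected components of $G_{\F'}=\G\Delta C$ is $|\F'|$; since $C$ is $\F$-diminishing the latter is strictly smaller, so $|\F'|<|\F|$, contradicting the minimality of $\F$. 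Hence $C$ cannot be $\F$-diminishing.
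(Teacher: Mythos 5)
Your proposal is correct and follows essentially the same route as the paper: verify that every e-vertex of $\G\Delta C$ has degree $2$ (using exactly the $\F$-interchanging property) and every v-vertex has even degree, invoke Theorem~\ref{thm:incidencegraph} to get the corresponding Euler family, and derive the last clause from the definition of $\F$-diminishing together with the minimality of $\F$. The only difference is that you spell out the final step via Corollary~\ref{cor:correspondence}, which the paper leaves as an immediate consequence of the definition.
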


\begin{proof}We show that, in $\G\Delta C$, every v-vertex has even degree and every e-vertex has degree 2.  

Since $C$ is a cycle of $G$, every vertex of $G$ is incident with exactly 0 or 2 of its edges.  Let $v$ be a v-vertex of $G$.  If $v$ is incident with no edges of $C$, then $\deg_{\G\Delta C}(v)=\deg_{\G}(v)$.  If $v$ is incident with two edges of $C$, then any number of these may be $\G$-edges: if zero, then $\deg_{\G\Delta C}(v)=\deg_{\G}(v) + 2$; if one, then $\deg_{\G\Delta C}(v)=\deg_{\G}(v)$; if two, then $\deg_{\G\Delta C}(v)=\deg_{\G}(v) - 2.$  In any case, we have that $\deg_{\G\Delta C}(v)$ is even because $\deg_{\G}(v)$ is.

Now let $e$ be an e-vertex of $G$.  If $e$ is incident with no edges of $E(C)$, then $\deg_{\G\Delta C}(e)=\deg_{\G}(e)=2$.  Otherwise, $e$ is incident with two edges of $E(C)$, and exactly one of them is a $\G$-edge because $C$ is an $\F$-interchanging cycle.  Once again, we have $\deg_{\G\Delta C}(e)=\deg_{\G}(e)=2$.

Therefore, $\G\Delta C$ is a subgraph of $G$ corresponding to an Euler family of $H$, as claimed.

The last statement follows immediately from the definition of $\F$-diminishing.
\end{proof}

\section{Technical Lemmas}

In this section, we produce results that will be repeatedly used in the larger theorems to come.  These reflect broad techniques or strategies that we employ to find $\F$-diminishing cycles, leaving only specific case work to be done in the main theorems. 

Lemma~\ref{lem:dimcycle} gives us a simple way of determining whether an $\F$-interchanging cycle is $\F$-diminishing, by investigating the connectedness of the incidence graph.

Corollaries~\ref{cor:simpledimcycle} and~\ref{cor:dimcycle} provide ways to find a cycle that satisfies the conditions of Lemma~\ref{lem:dimcycle}.

Corollary~\ref{cor:3comps} is a direct result of Corollary~\ref{cor:dimcycle}, assuring us that we can always find an $\F$-diminishing cycle of the kind described therein as long as the corresponding subgraph of the incidence graph has at least three connected components.

Finally, Corollary~\ref{cor:isolated} summarizes these results, reducing the set of cases we need to check.  We conclude that the only non-eulerian covering 3-hypergraphs must have Euler families of cardinality exactly 2.\\

\begin{lem}\label{lem:dimcycle} Let $H$ be a hypergraph with incidence graph $G$.  Let $\F$ be an Euler family of $H$ and let $\G$ be the subgraph of $G$ corresponding to $\F$.  Let $C$ be an $\F$-interchanging cycle in $G$, and suppose $C$ traverses vertices that lie in distinct connected components $G_1,\dotso , G_k$ of $\G$.

If, for each $i=1,\dotso , k$, we have that $G_i\setminus E(C)$ has just one non-trivial connected component, then $(\G\Delta C)[V(G_1)\cup \dotso \cup V(G_k)]$ has just one non-trivial connected component.  
\end{lem}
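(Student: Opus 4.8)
The plan is to reduce the statement to a claim about the single graph $D:=(\G\,\Delta\,C)\bigl[V(G_1)\cup\cdots\cup V(G_k)\bigr]$ and then to analyze the connected components of $D$. Set $V^*:=V(G_1)\cup\cdots\cup V(G_k)$. Since $C$ meets only the components $G_1,\dots,G_k$, we have $V(C)\subseteq V^*$, so every edge of $C$ lies within $V^*$; as every $\G$-edge also lies inside a single component of $\G$, it follows that $D=(G_1\cup\cdots\cup G_k)\,\Delta\,C$, and in particular $D$ contains $\bigcup_{i=1}^{k}\bigl(G_i\setminus E(C)\bigr)$ as a subgraph. Let $K_i$ be the unique non-trivial connected component of $G_i\setminus E(C)$ (it exists by hypothesis) and let $I_i$ be its set of isolated vertices, so that $V(G_i)$ is the disjoint union of $V(K_i)$ and $I_i$. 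Note $I_i\subseteq V(C)$: a vertex not on $C$ keeps all of its $\G$-edges, hence cannot become isolated in $G_i\setminus E(C)$. Also, each $V(K_i)$ lies in a single connected component of $D$, since $K_i$ is a connected subgraph of $D$.

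Next I would show the stranded vertices really disappear, i.e.\ that every $v\in I_i$ is isolated in $D$. As $v$ has even degree in $\G$ but all of its $\G$-edges lie on $C$, and $v$ (being on the cycle $C$) is incident with exactly two edges of $C$, the vertex $v$ must be a v-vertex with $\deg_{\G}(v)=2$ whose two $\G$-edges are precisely its two $C$-edges, say $vx$ and $vy$ with $x,y$ e-vertices. Since $C$ is $\F$-interchanging, $x$ is incident with exactly one $\G$-edge of $C$, which must be $vx$; hence $vx$ is deleted in $\G\,\Delta\,C$, and likewise $vy$ is deleted, while $v$ acquires no new edge. Thus $\deg_D(v)=0$. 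Consequently every non-isolated vertex of $D$ lies in $\bigcup_{i=1}^{k}V(K_i)$; as $D$ contains the non-trivial graph $K_1$, it now suffices to prove that $K_1,\dots,K_k$ all lie in one connected component of $D$.

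For this I would thread the $K_i$ together along $C$. Write $C=u_0u_1\cdots u_{n-1}u_0$, and for each $t$ let $i(t)$ be the index with $u_t\in V(G_{i(t)})$. Any edge $u_tu_{t+1}$ of $C$ joining two different components of $\G$ is a non-$\G$-edge, hence an edge of $D$; and since endpoints of edges of $D$ are non-isolated, such an edge joins a vertex of $V(K_{i(t)})$ to a vertex of $V(K_{i(t+1)})$. Now $i(0)i(1)\cdots i(n-1)i(0)$ is a closed walk in the multigraph $\widehat{\Gamma}$ on vertex set $\{1,\dots,k\}$ whose edges are exactly the cross-component edges of $C$, and this walk visits every vertex of $\widehat{\Gamma}$ because $C$ meets each $G_i$; hence $\widehat{\Gamma}$ is connected. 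Since every edge $ij$ of $\widehat{\Gamma}$ witnesses an edge of $D$ between $V(K_i)$ and $V(K_j)$, the connectedness of $\widehat{\Gamma}$ forces $V(K_1)\cup\cdots\cup V(K_k)$ into one connected component of $D$. Therefore $D$ has exactly one non-trivial connected component.

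I expect the main obstacle to be the middle step — proving that the vertices stranded by deleting $E(C)$ from each $G_i$ are exactly the vertices that become isolated in $\G\,\Delta\,C$. This is the one place where the hypothesis that $C$ is $\F$-interchanging is genuinely needed, and it takes some care to keep the roles of v-vertices and e-vertices straight; the reduction in the first paragraph and the threading argument in the third are then fairly routine graph surgery.
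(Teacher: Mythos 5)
Your proposal is correct, and it is built on the same underlying idea as the paper's proof---contract the cycle $C$ to an auxiliary multigraph and exploit the fact that the image of $C$ is a closed walk meeting every relevant part---but the bookkeeping is genuinely different. The paper's auxiliary graph $L$ takes as vertices \emph{all} connected components of the graphs $G_i\setminus E(C)$ (trivial ones included), with one edge per edge of $C$, and argues through the induced Euler tour $T_L$ of its unique non-trivial component; the fact that the stranded isolated vertices and the deleted $\G$-edges of $C$ do no harm in $\G\Delta C$ is dispatched in a terse parenthetical. You prove that fact head-on: a vertex $v$ stranded by deleting $E(C)$ from $G_i$ has exactly two $\G$-edges, both lying on $C$, hence $\deg_{\G\Delta C}(v)=0$; consequently every non-$\G$-edge of $C$ has both ends among the $V(K_i)$, and your coarser multigraph $\widehat{\Gamma}$ on $\{1,\dots,k\}$ (cross-component edges of $C$ only) then threads the $K_i$ together. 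This makes explicit precisely the point the paper glosses over, at the cost of the extra middle step. Two small patches you should make: to exclude $\deg_{\G}(v)=0$ for a stranded $v$, note that $G_i$ is connected and non-trivial because $G_i\setminus E(C)$ has a non-trivial component; and the claim that $v$ ``must be a v-vertex'' does not follow from the parity count alone---it needs the interchanging property applied to $v$ itself (an e-vertex traversed by $C$ has exactly one $\G$-edge of $C$, not two). In fact, once you know $\deg_{\G}(v)=2$ with both $\G$-edges on $C$, the computation $\deg_{\G\Delta C}(v)=0$ goes through without deciding whether $v$ is a v-vertex or an e-vertex, so---contrary to your closing remark---this step does not really hinge on $C$ being $\F$-interchanging. (Also, strictly speaking the sequence $i(0)i(1)\cdots i(n-1)i(0)$ becomes a closed walk in $\widehat{\Gamma}$ only after merging consecutive repeats, but that changes nothing.)
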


\begin{proof}  Note that all vertices of $C$ lie in $\G$, but not all edges.  Hence we can write $C=x_0P_0y_0x_1P_1y_1\dotso x_{\ell-1}P_{\ell-1}y_{\ell-1}x_0$, where
\begin{itemize}
\item $x_0,\dotso , x_{\ell-1},$ and $y_0,\dotso , y_{\ell-1}$ are vertices of $\G$;
\item $P_i$, for each $i\in\mathds{Z}_{\ell}$, is a maximal $x_iy_i$-subpath of $C$ contained in a single connected component, say $G_{f(i)}$, of $\G$.
\end{itemize}

Hence $y_0x_1,y_1x_2,\dotso y_{\ell-2}x_{\ell-1},$ and $y_{\ell-1}x_0$ each join a pair of vertices that are in different connected components of $\G$.  Hence these edges are non-$\G$-edges of $C$, so we see that these are in $E(\G\Delta C)$.  Note that some $P_i$ may be trivial, and $f(i)=f(j)$ for $i\neq j$ is possible.

Define an auxiliary graph $L$ as follows:
\begin{itemize}
\item $V(L)= \{\text{connected components of }G_i\setminus E(C),\text{ for }i=1,\dotso, k\}$;
\item $L$ has an edge $gh$ for each edge of $C$ with one end in $g$ and the other in $h$.
\end{itemize}

$L$ has a single non-trivial connected component, with an Euler tour $T_L$ corresponding to $C$.  If $T_L$ traverses the non-trivial connected component of $G_i\setminus E(C)$ for all $i\in\{1,\dotso,k\}$, then $(\G\Delta C)[V(G_1)\cup\dotso\cup V(G_k)]$ has a unique non-trivial connected component.  (Observe that this graph is the union of the connected components of all $G_i\setminus E(C)$, together with the edges of $C\setminus E(\G)$, that is, edges corresponding to $L$.)

Suppose, to the contrary, that there exists $s\in\{1,\dotso ,k\}$ such that the non-trivial connected component of $G_s\setminus E(C)$ is not traversed by $T_L$.  Then for all $i\in\{0,\dotso,\ell-1\}$ such that $f(i)=s$, we have that $P_i$ is trivial, and so $G_s$ contains no edges of $C$.  Hence $G_s\setminus E(C)=G_s$.  But $C$ traverses a vertex of $G_s$ by assumption, so $G_s$ must be trivial --- a contradiction.

We conclude that $C_L$ does traverse the non-trivial connected component of $G_i\setminus E(C)$ for all $i\in\{1,\dotso,k\}$, so $(\G\Delta C)[V(G_1)\cup\dotso\cup V(G_k)]$ has a unique non-trivial connected component.
\end{proof}

\begin{cor}\label{cor:simpledimcycle}Let $H$ be a hypergraph with incidence graph $G$.  Let $\F$ be an Euler family of $H$ and let $\G$ be the subgraph of $G$ corresponding to $\F$.  Let $C=v_0e_1v_1e_2\dotso v_{k-1}e_kv_0$ be a cycle in $G$.

If $C$ traverses exactly one $\G$-edge of each of at least two connected components of $\G$, then $C$ is an $\F$-diminishing cycle.
\end{cor}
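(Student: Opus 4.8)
The plan is to verify the two defining properties of an $\F$-diminishing cycle: that $C$ is $\F$-interchanging, and that $\G\Delta C$ has strictly fewer non-trivial connected components than $\G$. Once $C$ is known to be $\F$-interchanging, the second property will be extracted from Lemma~\ref{lem:dimcycle}; Lemma~\ref{lem:interchange} then tells us (as a bonus) that $\G\Delta C$ already corresponds to an Euler family of $H$.

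First I would check that $C$ is $\F$-interchanging, i.e.\ that every e-vertex $e_i$ traversed by $C$ is incident with exactly one $\G$-edge of $C$. Such an $e_i$ has $\deg_{\G}(e_i)=2$, so it lies in a non-trivial connected component $G_j$ of $\G$. If both edges of $C$ at $e_i$ were $\G$-edges, they would both lie in $E(G_j)$, so $C$ would traverse at least two $\G$-edges of $G_j$, contradicting the hypothesis; hence at most one of the two $C$-edges at $e_i$ is a $\G$-edge. Conversely, $C$ uses exactly two of the edges incident with $e_i$ in $G$, while only $|e_i|-2$ of the edges at $e_i$ in $G$ fail to be $\G$-edges, so as long as $|e_i|\le 3$ — in particular whenever $H$ is $3$-uniform, which is the setting of this chapter — $C$ must use at least one $\G$-edge at $e_i$. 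Thus each such $e_i$ lies on exactly one $\G$-edge of $C$, so $C$ is $\F$-interchanging.

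Next I would let $G_1,\dots,G_t$ be the connected components of $\G$ that $C$ meets (i.e.\ that contain an anchor of $C$), chosen so that $G_1$ and $G_2$ are two of the components of which $C$ traverses exactly one $\G$-edge. For every $G_s$ met by $C$ the cycle $C$ traverses at most one edge of $G_s$ — each edge of $G_s$ being a $\G$-edge — and in particular each such $G_s$ is non-trivial. Each non-trivial $G_s$ is $G_T$ for some component $T$ of $\F$ (Corollary~\ref{cor:correspondence}), hence is connected and eulerian, hence $2$-edge-connected by Remark~\ref{remark:closedtrails} together with Proposition~\ref{prop:2-edge-connected}; deleting at most one edge therefore leaves it connected, so $G_s\setminus E(C)$ has exactly one non-trivial connected component. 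Lemma~\ref{lem:dimcycle} now gives that $(\G\Delta C)[V(G_1)\cup\dots\cup V(G_t)]$ has exactly one non-trivial connected component. Since all anchors of $C$ lie in $V(G_1)\cup\dots\cup V(G_t)$, the graphs $\G$ and $\G\Delta C$ coincide outside this vertex set, while inside it $\G$ has at least the two non-trivial components $G_1,G_2$ whereas $\G\Delta C$ has just one. Hence $\G\Delta C$ has strictly fewer non-trivial connected components than $\G$, and $C$ is $\F$-diminishing.

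I expect the only delicate point to be the ``at least one $\G$-edge at $e_i$'' half of the second paragraph: this is the sole place the cardinalities of the edges enter, and it is automatic for $3$-uniform $H$ (where each e-vertex of $G$ has a unique non-$\G$-edge) but would need an extra hypothesis to hold verbatim for arbitrary hypergraphs. A minor secondary technicality is ensuring $C$ does not pass through a trivial component of $\G$ when invoking Lemma~\ref{lem:dimcycle}; this is either read into the hypothesis or dispatched directly, since any such isolated v-vertex merely merges into a larger component of $\G\Delta C$ and so cannot affect the count of non-trivial components.
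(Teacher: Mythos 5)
Your proposal is correct and follows essentially the same route as the paper's proof: each non-trivial component of $\G$ met by $C$ is even, hence eulerian, hence 2-edge-connected (Theorem~\ref{thm:eulertour} with Proposition~\ref{prop:2-edge-connected}), so deleting the at most one edge of $C$ it contains leaves it connected, and Lemma~\ref{lem:dimcycle} then merges the components, giving strictly fewer non-trivial components in $\G\Delta C$. The extra care you take --- explicitly verifying that $C$ is $\F$-interchanging (where, as you rightly flag, 3-uniformity or a separate hypothesis is genuinely needed; in the paper the corollary is only ever invoked for covering 3-hypergraphs after $C$ has already been shown $\F$-interchanging) and feeding \emph{every} component met by $C$, not just the two distinguished ones, into Lemma~\ref{lem:dimcycle} --- addresses precisely the points the paper's shorter proof leaves implicit.
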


\begin{proof}
Let $G_i$, for $i=1,\dotso ,k$, be the non-trivial connected components of $\G$.  Since the connected components that have an edge traversed by $C$ in the assumption must be non-trivial, let us assume that $G_1$ and $G_2$ are two of the connected components of $\G$ that each have exactly one $\G$-edge traversed by $C$.

Let $i\in\{1,2\}$.  Then Theorem~\ref{thm:incidencegraph} demonstrates that $G_i$ is even, so Theorem~\ref{thm:eulertour} implies that it is eulerian.  Proposition~\ref{prop:2-edge-connected} states that $G_i$ is 2-edge-connected, hence $G_i\setminus E(C)$ is connected.  Finally, Lemma~\ref{lem:dimcycle} implies that $(\G\Delta C)[V(G_1)\cup V(G_2)]$ has just one non-trivial connected component. 

Hence $\G\Delta C$ has $k-1$ non-trivial connected components.  By definition, we have that $C$ is an $\F$-diminishing cycle.
\end{proof}

\begin{cor}\label{cor:dimcycle}Let $H$ be a covering 3-hypergraph with incidence graph $G$.  Let $\F$ be an Euler family of $H$ and let $\G$ be the subgraph of $G$ corresponding to $\F$.  Let $C=v_0e_1v_1e_2\dotso v_{k-1}e_kv_0$ be a cycle in $G$.
\begin{description}
\item[(i)] Suppose $v_0,\dotso , v_{k-1}$ are v-vertices of pairwise distinct connected components of $\G$, and that none of $v_0,\dotso , v_{k-1}$ are cut vertices of $\G$.  Further, suppose that at least two of these v-vertices are non-isolated in $\G$.  Then $C$ is an $\F$-diminishing cycle.
\item[(ii)] If $C$ traverses a v-vertex in every non-trivial connected component of $\G$, then $H$ is eulerian, with an Euler tour corresponding to $\G\Delta C$.
\end{description}
\end{cor}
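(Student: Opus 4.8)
The plan is to check that $C$ fits the framework of Lemma~\ref{lem:interchange} and then control the connected components of $\G\Delta C$ by an argument modelled on Lemma~\ref{lem:dimcycle}, taking care of the two features not covered there: $C$ may run through isolated v-vertices of $\G$, and a v-vertex $v_i$ may itself lose all its edges in $\G\Delta C$.

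First I would prove that $C$ is an $\F$-interchanging cycle; this is the one place $3$-uniformity is used. Take an e-vertex $e_j$ of $C$, with $C$-neighbours $v_{j-1}$ and $v_j$. Since $\F$ traverses $e_j$ exactly once, $e_j$ has degree $2$ in $\G$, its two $\G$-neighbours being the two distinct vertices $a,b\in e_j$ via which some component of $\F$ traverses $e_j$. As $|e_j|=3$, the $2$-subsets $\{v_{j-1},v_j\}$ and $\{a,b\}$ of $e_j$ intersect, so $e_j$ lies in the $\G$-component of $v_{j-1}$ or of $v_j$; these being distinct by hypothesis, exactly one of $v_{j-1}e_j$ and $v_je_j$ is a $\G$-edge. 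Hence $C$ is $\F$-interchanging, so by Lemma~\ref{lem:interchange} the graph $\G\Delta C$ already corresponds to an Euler family of $H$; the same observation shows every vertex (hence every edge) of $C$ lies in one of the pairwise distinct components $\tilde G_0,\dots,\tilde G_{k-1}$ of $\G$ that contain $v_0,\dots,v_{k-1}$ respectively. Put $J=\{\,i:\tilde G_i\text{ is non-trivial}\,\}$, so $|J|\ge 2$ by hypothesis.

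Next I would show that $(\G\Delta C)\bigl[\bigcup_i V(\tilde G_i)\bigr]$ has exactly one non-trivial connected component. For $i\in J$: the only edges of $C$ with both ends in $V(\tilde G_i)$ are among the two edges of $C$ at $v_i$; since $\tilde G_i$ is non-trivial, connected and even, hence eulerian by Theorem~\ref{thm:eulertour} and $2$-edge-connected by Proposition~\ref{prop:2-edge-connected}, and since $v_i$ is not a cut vertex of $\G$ so that $\tilde G_i-v_i$ is connected, a short count of how many of those two edges are $\G$-edges shows $\tilde G_i\setminus E(C)$ has a unique non-trivial component $K_i$ with $V(\tilde G_i)\setminus\{v_i\}\subseteq V(K_i)$; moreover, in every case the ends in $V(\tilde G_i)$ of the two non-$\G$-edges of $C$ meeting $\tilde G_i$ lie in $K_i$ (the only way $v_i$ becomes isolated in $\tilde G_i\setminus E(C)$ forces those ends to be e-vertices of $\tilde G_i$, which lie in $K_i$). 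For $i\notin J$, $\tilde G_i=\{v_i\}$ with $v_i$ isolated in $\G$, so both edges of $C$ at $v_i$ are non-$\G$-edges and $v_i$ acquires degree $2$ in $\G\Delta C$. Now $C$ visits $\tilde G_0,\dots,\tilde G_{k-1}$ in this cyclic order, so there is exactly one non-$\G$-edge of $C$ between each consecutive pair, and by the previous sentences these edges link the pieces $K_i$ (and the lone vertices $v_i$, $i\notin J$) cyclically into a single non-trivial component $\Gamma_0$ of $\G\Delta C$; any other vertex of $\bigcup_i V(\tilde G_i)$ is isolated in $\G\Delta C$.

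Finally, since $\G\Delta C$ coincides with $\G$ away from $\bigcup_i V(\tilde G_i)$, its non-trivial components are $\Gamma_0$ together with the non-trivial components of $\G$ disjoint from all $\tilde G_i$ — that is $|J|-1\ge 1$ fewer than $\G$ has — so $C$ is $\F$-diminishing, proving (i). For (ii), the extra hypothesis forces every non-trivial component of $\G$ to be one of the $\tilde G_i$, so $\Gamma_0$ is the only non-trivial component of $\G\Delta C$; by Lemma~\ref{lem:interchange} it corresponds to an Euler family of $H$, and having at most one non-trivial component it corresponds to an Euler tour by Theorem~\ref{thm:incidencegraph}, so $H$ is eulerian. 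The main obstacle will be the third paragraph: Lemma~\ref{lem:dimcycle} cannot be quoted verbatim, since it presumes every component traversed by $C$ is non-trivial, and one must additionally rule out that the surviving piece $K_i$ is cut off from the non-$\G$-edges when $v_i$ drops to degree $0$ — this is exactly where the hypothesis that $v_i$ is not a cut vertex does its work.
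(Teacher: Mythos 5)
Your proof is correct, and its skeleton matches the paper's: establish that $C$ is $\F$-interchanging from 3-uniformity plus the distinct-components hypothesis, then analyse $\tilde G_i\setminus E(C)$ by cases on how many of the two $C$-edges at $v_i$ are $\G$-edges, using 2-edge-connectivity (via Theorem~\ref{thm:eulertour} and Proposition~\ref{prop:2-edge-connected}) for the 0/1-edge cases and the non-cut-vertex hypothesis for the 2-edge case. The one genuine divergence is the last step: the paper simply invokes Lemma~\ref{lem:dimcycle} to merge the pieces, even though that lemma, read literally, requires each $G_i\setminus E(C)$ to have \emph{exactly one} non-trivial component and so does not cover the trivial components $\tilde G_i=\{v_i\}$ that the corollary's hypothesis explicitly permits (and that arise in its application in Corollary~\ref{cor:3comps}); you instead inline the cyclic-linking argument, checking that the two attachment points of the non-$\G$-edges meeting each $\tilde G_i$ always land in the surviving piece $K_i$ (or at the lone vertex $v_i$), including the case where $\deg_{\G}(v_i)=2$ and $v_i$ becomes isolated. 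This costs some length but buys a self-contained argument that handles exactly the edge cases the appeal to Lemma~\ref{lem:dimcycle} glosses over; your reading of (ii) as an add-on under the hypotheses of (i) also agrees with how the paper proves and uses it.
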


\begin{proof} We first show that $C$ is an $\F$-interchanging cycle by showing that every e-vertex traversed by $C$ is incident with exactly one edge in $E(\G)\cap E(C)$.  The e-vertices traversed by $C$ are $e_1,e_2,\dotso , e_k$, so fix $i\in\{1,2,\dotso ,k\}$.  Then $e_i$ is adjacent to $v_{i-1}$ and $v_i$ in $C$ (where indices of v-vertices are evaluated modulo $k$).  Since $v_{i-1}$ and $v_i$ lie in distinct connected components of $\G$, we cannot have that both $v_{i-1}e_i$ and $v_ie_i$ are $\G$-edges.  On the other hand, since $\deg_{G}(e_i)=3$, we know that $e_i$ is incident with only one non-$\G$-edge in $G$, hence at least one of $v_{i-1}e_i$ and $v_ie_i$ is a $\G$-edge.  Thus we conclude that exactly one of $v_{i-1}e_i$ and $v_ie_i$ is a $\G$-edge, demonstrating that $C$ is an $\F$-interchanging cycle.

Now, let $G_0,\dotso ,G_{k-1}$ be the connected components of $\G$ containing $v_0,\dotso , v_{k-1}$, respectively.  We will show that $G_0\setminus E(C),\dotso , G_{k-1}\setminus E(C)$ each have just one non-trivial connected component, so that we may apply Lemma~\ref{lem:dimcycle}.

Consider connected component $G_i$, for some $i\in\mathds{Z}_k$.  It contains a single v-vertex $v_i$ traversed by $C$, so necessarily there are only up to two edges of $C$ contained in $G_i$.  If zero or one edge, then $G_i\setminus E(C)$ is connected because $G_i$ is a 2-edge-connected graph.  However, if there are two edges of $C$ in $G_i$, then we observe that, since $v_i$ is not a cut vertex of $G_i$, we know that $G_i-v_i$ is connected.  Then $G_i-v_i$ is a connected subgraph of $G_i\setminus E(C)$ with just one vertex fewer: this vertex may be isolated in $G_i\setminus E(C)$ or not.  In either case, there is just one non-trivial connected component in $G_i\setminus E(C)$.

Therefore, we may apply Lemma~\ref{lem:dimcycle} and conclude that $(\G\Delta C)[V(G_0)\cup \dotso\cup V(G_{k-1})]$ has just one non-trivial connected component.  By assumption, at least two of $v_0,\dotso, v_{k-1}$ are not isolated in $\G$, so at least two of $G_0,\dotso , G_{k-1}$ are non-trivial.  Hence $\G\Delta C$ has fewer non-trivial connected components than $\G$, so $C$ is an $\F$-diminishing cycle.

In addition, if $\G$ has no non-trivial connected components except those among $G_0,\dotso , G_{k-1}$, then $\G\Delta C$ has just one non-trivial connected component.  In that case, Lemma~\ref{lem:interchange} and Theorem~\ref{thm:incidencegraph} imply that $\G\Delta C$ corresponds to an Euler tour, and thus $H$ is eulerian.
\end{proof}

\begin{cor}\label{cor:3comps}Let $H$ be a covering 3-hypergraph with incidence graph $G$.  Let $\F$ be an Euler family of $H$ and $\G$ be the subgraph of $G$ corresponding to $\F$.  If $\G$ has at least three connected components, then $H$ is eulerian.
\end{cor}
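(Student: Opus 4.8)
The plan is to derive the statement from Corollary~\ref{cor:dimcycle}(ii): I will produce a single $\F$-interchanging cycle $C$ in $G$ whose v-vertices lie in pairwise distinct connected components of $\G$, none of which is a cut vertex of $\G$, and which meets a v-vertex of \emph{every} non-trivial connected component of $\G$; part~(ii) then immediately gives that $H$ is eulerian, with an Euler tour corresponding to $\G\Delta C$. First note that by Lemma~\ref{lem:onecc} the graph $\G$ has at most one trivial connected component, so the hypothesis that $\G$ has at least three connected components forces $\G$ to have $m\geq 2$ non-trivial components $G_1,\dotso,G_m$, with moreover either $m\geq 3$, or else $m=2$ and $\G$ has exactly one trivial component, namely an isolated v-vertex $w$. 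In both cases the collection of components I want $C$ to pass through (the $G_i$, together with $w$ when $m=2$) has size $k\geq 3$.

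For each $i$, Corollary~\ref{cor:noncut} supplies (at least) two v-vertices of $G_i$ that are not cut vertices of $\G$; pick one of them, $u_i$, and keep the other, $u_i^{\ast}$, in reserve. If $m=2$, set the last representative to be $w$, which as an isolated vertex is not a cut vertex of $\G$ either. Arrange the $k\geq 3$ chosen representatives in a cyclic order $x_0,x_1,\dotso,x_{k-1}$; they lie in distinct components, hence are distinct. Since $H$ is a covering $3$-hypergraph, for every $i\in\mathds{Z}_k$ there is an edge of $H$ containing $\{x_i,x_{i+1}\}$, and I select such edges $e_0,\dotso,e_{k-1}$ greedily, at each step avoiding the edges already chosen when possible. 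Because each $e_i$ has only three vertices and the $x_j$ are distinct, the only way two of the selected edges can coincide is for consecutive indices, and then the common edge must equal $\{x_{i-1},x_i,x_{i+1}\}$ for the relevant $i$; whenever the greedy choice is forced into such a coincidence, I replace the ``middle'' representative by the reserve non-cut v-vertex of its component (using $u_i^{\ast}$; if that middle representative happens to be $w$, I instead vary one of its two cyclic neighbours, which lie in non-trivial components and so have reserves), and re-select the two incident edges. Since the new representative lies outside the offending hyperedge, this destroys the coincidence, and a short bookkeeping argument --- using that $k\geq 3$ and that every non-trivial component supplies a second non-cut v-vertex --- shows the procedure terminates with pairwise distinct $e_0,\dotso,e_{k-1}$, hence with a genuine cycle $C=x_0e_0x_1\dotso x_{k-1}e_{k-1}x_0$ of $G$.

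By construction, the v-vertices of $C$ lie in pairwise distinct connected components of $\G$, none of them a cut vertex of $\G$, at least two of them (namely the $u_i$) are non-isolated, and $C$ traverses a v-vertex of every non-trivial component of $\G$. Thus $C$ satisfies the hypotheses of Corollary~\ref{cor:dimcycle}, and part~(ii) yields that $H$ is eulerian. The main obstacle in this argument is precisely the construction of $C$ with pairwise distinct connecting edges: the covering property guarantees that each required edge exists but not that distinct choices are available, and circumventing the degenerate situation in which three consecutive representatives are forced to sit inside a single hyperedge is exactly where both $k\geq 3$ and the spare non-cut v-vertices provided by Corollary~\ref{cor:noncut} are essential. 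Everything else is a direct appeal to Lemma~\ref{lem:onecc}, Corollary~\ref{cor:noncut}, and Corollary~\ref{cor:dimcycle}.
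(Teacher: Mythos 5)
Your overall route is the same as the paper's: pick a non-cut v-vertex in each relevant component of $\G$ via Corollary~\ref{cor:noncut}, join consecutive representatives by edges supplied by the covering property, and feed the resulting $\F$-interchanging cycle into Corollary~\ref{cor:dimcycle}, using Lemma~\ref{lem:onecc} to guarantee at least two of the representatives are non-isolated. The place where your write-up falls short of a proof is exactly the point you identify as the main obstacle: the pairwise distinctness of the connecting edges $e_0,\dotso,e_{k-1}$. The greedy-selection-with-repair procedure is not justified as written --- the sentence ``a short bookkeeping argument shows the procedure terminates'' is carrying all the weight and is never supplied. After you swap $x_i$ for the reserve $u_i^{\ast}$ and re-select the two incident edges, the new edge through $\{x_{i-1},u_i^{\ast}\}$ could in principle be forced to coincide with the edge already chosen for $\{x_{i-2},x_{i-1}\}$, triggering a further swap; since each component carries only one guaranteed reserve, it is not at all clear that this cascade stops. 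As it stands, that is a genuine gap.

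In fact no repair is needed, and this is how the paper disposes of the issue. As you observe, a coincidence $e_i=e_j$ (with $k\geq 3$) forces the common edge to contain three of your representatives, which lie in three pairwise distinct connected components of $\G$. But every e-vertex $e$ of $G$ satisfies $\deg_{\G}(e)=2$, because $\G$ corresponds to an Euler family; the two v-vertices adjacent to $e$ in $\G$ therefore lie in the same connected component of $\G$ as $e$, so at most one of the three vertices of any edge of $H$ can lie outside that component. Hence no edge of $H$ meets three distinct components of $\G$, the feared configuration cannot occur for \emph{any} choice of connecting edges, and distinctness is automatic. With that observation your construction needs no greedy ordering, no reserves, and no termination argument, and the remainder of your proof (the appeal to Corollary~\ref{cor:dimcycle}) goes through verbatim, coinciding with the paper's proof.
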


\begin{proof}
Let $G_i$, for $i\in\mathds{Z}_k$, be the connected components of $\G$, and suppose $k\geq 3$.  Corollary~\ref{cor:noncut} implies that each non-trivial connected component of $\G$ has a v-vertex that is not a cut vertex of $\G$, whereas any v-vertices of trivial connected components are not cut vertices of $\G$, either.  From each $G_i$, choose a v-vertex $v_i$ that is not a cut vertex.  We will construct an $\F$-interchanging cycle that traverses $v_i$ for each $i\in\mathds{Z}_k$.

Since $H$ is a covering 3-hypergraph, there exists an edge $e_i\in E(H)$ containing $v_{i-1}$ and $v_i$, for each $i=1,\dotso , k$ (where $v_k$ denotes $v_0$). Define $C=v_0e_1v_1e_2\dotso v_{k-1}e_kv_0,$ a closed walk of $G$.

To show that $C$ is a cycle, we must show that $e_1,\dotso , e_k$ are pairwise distinct.  Suppose, however, that $e_i=e_j$ for some $i < j\leq k$.  Then $v_{i-1}, v_i, v_{j-1}, v_j\in e_i$.  However, since $|e_i|=3$ in $H$, we must have either that $v_i=v_{j-1}$, or $v_j=v_{i-1}$ (the latter implying $i=1$ and $j=k$).  In either case, we see that $e_i$ is adjacent in $G$ to three v-vertices that lie in different connected components in $\G$.  This implies that $e_i$ is incident with at least two non-$\G$-edges, a contradiction since $e_i$ is incident with two $\G$-edges.  Hence $e_i\neq e_j$ for every $i<j\leq k$.

Now, by our selection of $v_1,\dotso ,v_k$, we see that $C$ is a cycle that traverses every connected component of $\G$.  By Lemma~\ref{lem:onecc}, at most one of these connected components is trivial, so at least two are not.  Therefore, Corollary~\ref{cor:dimcycle} allows us to conclude that $H$ is eulerian.
\end{proof}

\begin{cor}\label{cor:isolated}Let $H$ be a covering 3-hypergraph with incidence graph $G$.  Let $\F$ be a minimum Euler family of $H$ and $\G$ be the subgraph of $G$ corresponding to $\F$.  Then $|\F|\leq 2$.  If $\G$ has an isolated vertex, then $H$ is eulerian.
\end{cor}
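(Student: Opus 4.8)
The plan is to combine Corollary~\ref{cor:3comps} with a separate argument handling the case where $\G$ has an isolated vertex. For the first claim, suppose $\F$ is a minimum Euler family and $|\F|\geq 3$. By Corollary~\ref{cor:correspondence}, $\G$ has at least three non-trivial connected components, so certainly at least three connected components. Then Corollary~\ref{cor:3comps} says $H$ is eulerian, which means $H$ has an Euler family of cardinality $\leq 1$, contradicting the minimality of $\F$ (which has $|\F|\geq 3$). Hence $|\F|\leq 2$.

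For the second claim, suppose $\G$ has an isolated vertex $w$; by the degree conditions (e-vertices of $\G$ have degree $2$), $w$ must be a v-vertex, and it is trivially not a cut vertex of $\G$. Assume for contradiction that $H$ is not eulerian. Combined with the first claim, a minimum Euler family $\F$ has $|\F|=2$, so $\G$ has exactly two non-trivial connected components $G_1$ and $G_2$, plus possibly some trivial components. By Lemma~\ref{lem:onecc}, there is at most one trivial component, so the trivial component is exactly $\{w\}$, and $V(\G)=V(G_1)\cup V(G_2)\cup\{w\}$. The idea is to build an $\F$-diminishing cycle through $w$: using Corollary~\ref{cor:noncut}, pick a v-vertex $v_1\in V(G_1)$ and a v-vertex $v_2\in V(G_2)$, neither of which is a cut vertex of $\G$. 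Since $H$ is a covering 3-hypergraph, there is an edge $e_1\in E(H)$ containing $\{w,v_1\}$ and an edge $e_2\in E(H)$ containing $\{v_1,v_2\}$ and an edge $e_3\in E(H)$ containing $\{v_2,w\}$. I would then argue that $C=w\,e_1\,v_1\,e_2\,v_2\,e_3\,w$ is a cycle in $G$: the three v-vertices $w,v_1,v_2$ lie in three distinct connected components of $\G$, so if two of the $e_i$ coincided, that e-vertex would be adjacent to three v-vertices in three distinct components and hence incident with at least two non-$\G$-edges, contradicting $\deg_\G(e_i)=2$ (here $\deg_G(e_i)=3$). Thus $e_1,e_2,e_3$ are distinct and $C$ is a cycle traversing a v-vertex of every non-trivial connected component of $\G$, with two of those v-vertices ($v_1,v_2$) non-isolated in $\G$. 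Corollary~\ref{cor:dimcycle}(ii) then gives that $H$ is eulerian, a contradiction.

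The main obstacle I anticipate is being careful about the degenerate possibilities: that $w$ might actually coincide with $v_1$ or $v_2$ (it cannot, since $w$ is in its own trivial component while $v_1,v_2$ are in non-trivial components), that the edges $e_1,e_2,e_3$ guaranteed by the covering property might overlap (handled by the $|e_i|=3$ argument above), and verifying that $C$ indeed traverses a v-vertex in \emph{every} non-trivial connected component so that Corollary~\ref{cor:dimcycle}(ii) applies cleanly rather than merely Corollary~\ref{cor:dimcycle}(i). One should also confirm that the hypotheses of Corollary~\ref{cor:dimcycle} are met, namely that the three v-vertices are pairwise in distinct components and none is a cut vertex of $\G$ — the latter is exactly why we invoked Corollary~\ref{cor:noncut} for $v_1,v_2$, and $w$ is not a cut vertex because it is isolated. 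Once these bookkeeping points are settled, the contradiction is immediate and the corollary follows.
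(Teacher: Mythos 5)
Your proof is correct and follows essentially the same route as the paper: minimality of $\F$ plus Corollary~\ref{cor:3comps} gives $|\F|\leq 2$, and the diminishing-cycle machinery (Lemma~\ref{lem:onecc}, Corollary~\ref{cor:noncut}, Corollary~\ref{cor:dimcycle}) handles the isolated-vertex case. Note, though, that your second half merely re-derives a special case of Corollary~\ref{cor:3comps}: since that corollary counts \emph{all} connected components, trivial ones included, an isolated vertex together with the two non-trivial components already yields three components, so the paper simply cites it and is done.
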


\begin{proof}Suppose $|\F|\geq 3$.  Then $\G$ has at least three non-trivial connected components.  Corollary~\ref{cor:3comps} implies that $H$ is eulerian, a contradiction.  So $|\F|\leq 2$.

Suppose $\G$ has an isolated vertex.  If $|\F|=1$, then $H$ is eulerian.  On the other hand, if $|\F|=2$, then $\F$ has three connected components: two non-trivial and one trivial.  Corollary~\ref{cor:3comps} implies that $H$ is eulerian.
\end{proof}

\section{Proof of Theorem~\ref{thm:coveringinductionbase} for $n\geq 7$}

This is a long proof, so it is necessary to summarize it in sections.

We first prove Lemma~\ref{lem:mindegree}, which will lay down the groundwork for our proof strategy.  Lemma~\ref{lem:order7setup} will furnish our proof with most of the details needed to set up the case work.  In Theorem~\ref{thm:order7}, we will begin with a minimum Euler family $\F$ for $H$ that has two components and which minimizes the degree of an arbitrary, but fixed, vertex $v_0$.  Lemma~\ref{lem:mindegree} assures us that we need only find an $\F$-interchanging cycle --- not an $\F$-diminishing cycle, as expected --- as long as it traverses $v_0$.  No matter what the Euler family looks like after we perform such an interchange, it will either contradict one of the minimality conditions, or be an Euler family with three components.

There are two main stages involved in proving Theorem~\ref{thm:order7}.

In the first stage, handled by Lemma~\ref{lem:order7setup}, we explore the neighbourhood of $v_0$.  Since we would like to find an $\F$-interchanging cycle traversing $v_0$, we need to consider every way that this might not be possible.  Such a cycle would have to traverse two of $v_0$'s neighbours in $H$, called $a$ and $b$, which are not joined to $v_0$ via a 2-path in $\G$.  This reduces the problem to finding a particular $ab$-path in $G$.  We assume such a path does not exist, and so we force $a$ and $b$ to be in certain locations relative to $v_0$.

Once Lemma~\ref{lem:order7setup} is completed, we use these facts about $a, b$, and the rest of $\G$ and start the proof of Theorem~\ref{thm:order7} by breaking it into cases.  The cases depend on the location of at least two v-vertices that have not been explored already.  We need both of these v-vertices to complete the proof, which is why the proof only applies when the order of $H$ is at least 7.  Generally, we find an explicit $\F$-diminishing cycle, but sometimes we find an $\F$-interchanging cycle that brings us back to one of the previous, solved cases.\\

\begin{lem}\label{lem:mindegree} Let $H$ be a covering 3-hypergraph with incidence graph $G$, and let $v_0\in V(H)$.  Let $\mathcal{F}$ be a minimum Euler family for $H$ such that $\deg_{\G}(v_0)$ is minimum over all minimum Euler families, where $\G$ is the subgraph of $G$ corresponding to $\F$.

Suppose $C$ is an $\F$-interchanging cycle.  If $H$ is not eulerian, then $\deg_{\G\Delta C}(v_0) \geq \deg_{\G}(v_0)$ and $\G\Delta C$ has exactly two connected components, both non-trivial.
\end{lem}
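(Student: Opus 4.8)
The crux is to notice that, once $H$ is assumed \emph{not} eulerian, every Euler family of $H$ is already forced into the shape claimed for $\G\Delta C$. So the first thing I would record is the following fact: if $H$ is not eulerian and $\mathcal{F}'$ is \emph{any} Euler family of $H$, then the subgraph of $G$ corresponding to $\mathcal{F}'$ has exactly two connected components, both non-trivial, and $|\mathcal{F}'|=2$. Indeed, $\mathcal{F}'$ cannot be empty (a covering $3$-hypergraph has at least one edge) and cannot consist of a single closed trail (that trail would be an Euler tour), so $|\mathcal{F}'|\ge 2$; by Corollary~\ref{cor:correspondence}, the number of non-trivial connected components of the corresponding subgraph equals $|\mathcal{F}'|\ge 2$; and by the contrapositive of Corollary~\ref{cor:3comps}, since $H$ is not eulerian that subgraph has at most two connected components in total. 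Hence it has exactly two, both non-trivial, and $|\mathcal{F}'|=2$. In particular every Euler family of $H$ is minimum.

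Then I would simply apply this to the family coming from the interchange. By Lemma~\ref{lem:interchange}, $\G\Delta C$ is the subgraph of $G$ corresponding to some Euler family $\mathcal{F}'$ of $H$; invoking the fact above for $\mathcal{F}'$ gives immediately that $\G\Delta C$ has exactly two connected components, both non-trivial, which settles the second assertion. For the first assertion, note $|\mathcal{F}'|=2=|\mathcal{F}|$, so $\mathcal{F}'$ is a minimum Euler family; since $\mathcal{F}$ was chosen to make $\deg_{\G}(v_0)$ minimum over all minimum Euler families, we get $\deg_{\G\Delta C}(v_0)\ge\deg_{\G}(v_0)$, as required.

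The argument requires essentially no computation; the only places to be careful — and the nearest thing to an obstacle — are bookkeeping checks: that Corollary~\ref{cor:3comps} is stated for an arbitrary Euler family rather than only a minimum one (so it legitimately applies to $\mathcal{F}'$); that Corollary~\ref{cor:correspondence} really does identify the non-trivial components of the corresponding subgraph with the trails of the family, so that ``number of non-trivial components'' equals the cardinality; and that $|\mathcal{F}|=2$ as well, so that ``minimum Euler family'' and ``Euler family'' coincide here and the degree-minimality hypothesis on $\mathcal{F}$ transfers to $\mathcal{F}'$. Once these are in hand, both assertions follow with no case analysis.
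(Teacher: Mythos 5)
Your proposal is correct and takes essentially the same route as the paper: both arguments hinge on Corollary~\ref{cor:3comps} (three or more components of the corresponding subgraph would force $H$ to be eulerian) together with Lemma~\ref{lem:interchange}, so that $\G\Delta C$ corresponds to a minimum Euler family and the degree-minimality in the choice of $\F$ transfers to give $\deg_{\G\Delta C}(v_0)\geq\deg_{\G}(v_0)$. The only difference is organizational: you argue directly by first showing every Euler family of a non-eulerian covering 3-hypergraph has cardinality $2$ (via Corollary~\ref{cor:correspondence}), whereas the paper reaches the same conclusions by contradiction using Corollary~\ref{cor:isolated}.
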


\begin{proof}
Assume $H$ is not eulerian.  Then Corollary~\ref{cor:isolated} implies that $\G$ has exactly two connected components, both non-trivial.  Suppose, to the contrary, that $C$ is an $\F$-interchanging cycle with $\deg_{\G\Delta C}(v_0)$ $< \deg_{\G}(v_0).$  If $\G\Delta C$ has more non-trivial connected components than $\G$, then $\G\Delta C$ has at least three connected components, so Corollary~\ref{cor:3comps} implies that $H$ is eulerian, a contradiction.  Hence $\G\Delta C$ has exactly two non-trivial connected components, and so it corresponds to a minimum Euler family, contradicting the fact that $\deg_{\G}(v_0)$ is minimum over all minimum Euler families.  

Therefore, we find that $\deg_{\G\Delta C}(v_0) \geq \deg_{\G}(v_0)$.  Furthermore, Corollary~\ref{cor:isolated} implies that $\G\Delta C$ cannot have any isolated vertices.  Hence $\G\Delta C$ has exactly two connected components, both non-trivial.
\end{proof}

\begin{lem}\label{lem:order7setup} Let $H$ be a covering 3-hypergraph of order $n\geq 7$ that is not eulerian.  Let $G$ be the incidence graph of $H$, and let $v_0$ be a v-vertex of $G$.  Let $\F$ be a minimum Euler family of $H$ with the property that $\deg_{\G}(v_0)$ is minimum over all minimum Euler families of $H$, where $\G$ is the subgraph of $G$ corresponding to $\F$.   

Suppose further that there exist distinct e-vertices $e_1,e_2\in V(G)$ such that $v_0e_1, v_0e_2\in E(\G).$  Let $e_1=v_0v_1a$ for some $v_1$ and $a$, and let $e_2=v_0v_2b$ for some vertices $v_2$ and $b$ of $H$.  Assume that $v_1e_1$ and $v_2e_2$ are $\G$-edges.

Then the following hold:
\begin{description}
\item[(0)] $|\F|=2$ and $a\neq b$;
\item[(1)] If there exists an e-vertex $e$ of $G$ with $e\not\in\{e_1,e_2\}$, such that $ea, eb\in E(G)$, then both $ea$ and $eb$ are $\G$-edges.
\item[(2)] $a$ and $b$ lie in the same connected component of $\G$;
\item[(3)] Let $x\in V(H)\setminus\{v_0,v_1,v_2\}$ such that $x$ lies in a different connected component of $\G$ from $a$ and $b$.  Then there exists an edge $e_x=abx$ in $H$; any edge of $H$ containing $x$ and $a$, or $x$ and $b$, must be parallel to $e_x$; and $xe_x$ is a non-$\G$-edge.
\item[(4)] $a$ and $b$ do not lie in the same connected component of $\G$ as $v_0$.

\end{description}
\end{lem}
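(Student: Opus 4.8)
The engine of the proof is Lemma~\ref{lem:mindegree}: since $H$ is not eulerian and $\F$ is chosen to minimise $\deg_{\G}(v_0)$, every $\F$-interchanging cycle $C$ satisfies $\deg_{\G\Delta C}(v_0)\geq\deg_{\G}(v_0)$, and $\G\Delta C$ again has exactly two non-trivial components. Hence for each of (0)--(4) the plan is identical: assume the conclusion fails and construct an $\F$-interchanging cycle $C$ through $v_0$ that uses \emph{two} of the $\G$-edges incident with $v_0$; this forces $\deg_{\G\Delta C}(v_0)=\deg_{\G}(v_0)-2$, a contradiction. To begin, Corollary~\ref{cor:isolated} gives $|\F|\leq 2$, and as $H$ is not eulerian we get $|\F|=2$, settling half of (0); moreover that corollary shows $\G$ has no isolated vertex, so $\G$ has exactly two connected components, both non-trivial.

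Next I record the shape such a cycle must take. Since $\G$ corresponds to an Euler family, every e-vertex has degree $2$ in $\G$; as $v_0e_1,v_1e_1\in E(\G)$, the third $G$-edge $ae_1$ at $e_1$ is a non-$\G$-edge, and similarly $be_2$ is the unique non-$\G$-edge at $e_2$. Thus, if a cycle through $v_0$ uses the $\G$-edges $v_0e_1$ and $v_0e_2$ and is $\F$-interchanging, it must leave $e_1$ via $ae_1$ and enter $e_2$ via $be_2$, so it has the form $v_0\,e_1\,a\,P\,b\,e_2\,v_0$, where $P$ is an $a$--$b$ path in $G$ avoiding $v_0,e_1,e_2$ along which every e-vertex carries exactly one $\G$-edge of $P$. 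Throughout, the key local fact is that $H$ is $3$-uniform, so each e-vertex $e$ has $\deg_G(e)=3$ and is incident with at most one non-$\G$-edge.

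The parts now run as follows. For (0): if $a=b$ then $P$ is trivial and $C=v_0\,e_1\,a\,e_2\,v_0$ is $\F$-interchanging (at $e_1$ it uses $v_0e_1$ and $ae_1$, at $e_2$ it uses $v_0e_2$ and $ae_2=be_2$), a contradiction. For (1): given an e-vertex $e\notin\{e_1,e_2\}$ with $ea,eb\in E(G)$ and, say, $ea\notin E(\G)$, the bound $\deg_G(e)=3$ forces $eb\in E(\G)$, so $P=a\,e\,b$ gives the contradiction. For (2): if $a$ and $b$ lay in distinct components, the covering property supplies an edge $e\supseteq\{a,b\}$; distinct components forbid $ea,eb\in E(\G)$ simultaneously, and $\deg_G(e)=3$ forbids $ea,eb\notin E(\G)$ simultaneously, so exactly one of them is a $\G$-edge and $P=a\,e\,b$ works provided $e\notin\{e_1,e_2\}$; the degenerate possibilities $e=e_1$ or $e=e_2$ force $b=v_1$ or $a=v_2$ (placing $b$ or $a$ in $v_0$'s component), and then (1) shows no edge outside $\{e_1,e_2\}$ contains $\{a,b\}$, a situation one rules out by rerouting $P$ through a vertex of the other component, which exists because $n\geq 7$. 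For (3): with $x$ in the component disjoint from $\{a,b\}$, covering yields edges $g\supseteq\{a,x\}$ and $h\supseteq\{b,x\}$, and since $x$ is separated from $a$ (resp.\ $b$) exactly one of $ag,xg$ (resp.\ $bh,xh$) is a non-$\G$-edge; if $g\neq h$ then $P=a\,g\,x\,h\,b$ is $\F$-interchanging and gives a contradiction, so $g=h$, i.e.\ there is an edge $e_x=\{a,b,x\}$, and the same comparison shows every edge through $\{x,a\}$ or through $\{x,b\}$ is a copy of $e_x$; finally $xe_x\notin E(\G)$, since otherwise $e_x$ would lie in $x$'s component and its other $\G$-edge would pull $a$ or $b$ into that component, contradicting (2). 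For (4): if $v_0,a,b$ all lay in one component $K$ with $L$ the other, then since $v_1,v_2\in K$ every v-vertex $x$ of $L$ is outside $\{v_0,v_1,v_2\}$, so (3) applies and supplies edges $e_x=\{a,b,x\}$ with $ae_x,be_x\in E(\G)$ and $xe_x\notin E(\G)$; these let one build an $\F$-interchanging $a$--$b$ path detouring through $L$ (for instance $a\,e_x\,x\,e_x'\,b$ with $e_x'$ a parallel copy of $e_x$, or a longer detour through two v-vertices of $L$), again producing an $\F$-interchanging cycle through $v_0$ with two incident $\G$-edges, and hence the contradiction.

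I expect the main difficulty to be exactly the degenerate configurations in (2) and (4) --- those in which the pair one needs to route across ($\{a,b\}$, or $\{x,b\}$, etc.)\ is covered by only one edge, or only by $e_1$ or $e_2$, so that the short interchanging path is unavailable. There one has to spend one or two of the extra vertices guaranteed by $n\geq 7$ to build a longer interchanging cycle, and check in each case that it is still $\F$-interchanging and still meets $v_0$ in two $\G$-edges; this case analysis is the bulk of the proof.
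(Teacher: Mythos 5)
Your handling of (0), (1), and (3) is correct and essentially identical to the paper's proof (short interchanging cycles through $v_0e_1$ and $v_0e_2$, contradicting Lemma~\ref{lem:mindegree}). The genuine gaps are exactly the two places you defer to ``rerouting'' and ``case analysis'': parts (2) and (4). In (2), once $b=v_1$ and $e_1$ is the only edge of $H$ containing $\{a,b\}$, your proposed detour $a\,g\,c\,h\,b$ through a single extra vertex $c$ is not guaranteed to be $\F$-interchanging: $c$ necessarily lies in the same component of $\G$ as one of $a,b$ (there are only two components), and the edge joining two vertices of the same component may have \emph{both} of its relevant incidences in $E(\G)$, so the interchanging condition can fail at that e-vertex; moreover, when $c$ is sought in $v_0$'s component there may be no admissible $c$ outside $\{v_0,v_1,v_2\}$ at all. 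The paper's proof of (2) uses an idea absent from your plan: it first interchanges along $C_1=ae_1be_2v_2e_3a$ (a cycle avoiding $v_0$'s $\G$-edges), applies Lemma~\ref{lem:mindegree} to conclude that $\G\Delta C_1$ again corresponds to a minimum, degree-minimal Euler family in which $v_2$ is now separated from $v_0,a,b$, proves along the way that $v_2bc\notin E(H)$ for a suitable vertex $c$ of $a$'s original component, and only then builds the degree-decreasing cycle $v_0e_1be_4ce_5v_2e_2v_0$ inside the \emph{new} family. This ``interchange to a new extremal family, then interchange again'' step is the missing key.

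In (4) your plan is not merely incomplete but aimed at the wrong mechanism: the paper does not obtain the contradiction from a degree-decreasing cycle through both $v_0e_1$ and $v_0e_2$, and your concrete suggestions fail. A parallel copy $e_x'$ of $e_x=abx$ need not exist, and for distinct $x,y$ in the other component $L$, the covering edge $f\ni x,y$ satisfies $f\not\ni a,b$ by (3), but both $xf$ and $yf$ may be $\G$-edges --- indeed they must be whenever the third vertex of $f$ lies in $v_0$'s component, since the two $\G$-neighbours of an e-vertex lie in one component --- so $a\,e_x\,x\,f\,y\,e_y\,b$ need not be interchanging. The paper instead takes two vertices $c,d$ of $L$, forms $C=v_0e_1ae_cce_3v_0$ with $e_3\ni v_0,c$ (a cycle using only \emph{one} $\G$-edge at $v_0$), and argues a dichotomy: if $v_0e_3\in E(\G)$ then $\deg(v_0)$ drops, contradicting Lemma~\ref{lem:mindegree}; otherwise $e_3$ lies in $L$, and using $e_d=abd$, the $2$-edge-connectivity of the components and Proposition~\ref{prop:G-uv}, the cycle is $\F$-diminishing by Lemma~\ref{lem:dimcycle}, contradicting minimality of $\F$. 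Without this degree-decrease-or-diminishing dichotomy (or the two-stage interchange of (2)), the uniform strategy ``always close up through two $\G$-edges at $v_0$'' cannot be pushed through the degenerate configurations, which are the heart of the lemma.
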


\begin{proof}
{\bf (0):} If $|\F|\geq 3$, then Corollary~\ref{cor:isolated} implies that $H$ is eulerian, contradicting our assumption.  Likewise, if $|\F|\leq 1$, then $H$ is eulerian or empty, a contradiction. Hence $|\F|=2$.

Suppose $a=b$.  Then $C=v_0e_1ae_2v_0$ is an $\F$-interchanging cycle with $\deg_{\G\Delta C}(v_0) < \deg_{\G}(v_0)$, a contradiction by Lemma~\ref{lem:mindegree}.  Hence $a\neq b$.

{\bf (1):} Suppose to the contrary that at least one (and hence exactly one) of $ea$ and $eb$ is a non-$\G$-edge.  Then $C=v_0e_1aebe_2v_0$ is an $\F$-interchanging cycle with $\deg_{\G\Delta C}(v_0) < \deg_{\G}(v_0)$, a contradiction by Lemma~\ref{lem:mindegree}.

Therefore, both $ea$ and $eb$ are $\G$-edges, as claimed.

{\bf (2):} Suppose $a$ and $b$ lie in different connected components of $\G$.  Since $H$ is a covering 3-hypergraph, there exists an edge $e\in E(H)$ containing both $a$ and $b$.  By our supposition, one of $ea$ and $eb$ is necessarily a non-$\G$-edge, so (1) implies that $e=e_1$ or $e=e_2$.  Without loss of generality, assume $e=e_1$; hence $v_1=b$.

Observe that $v_0, v_1,$ and $v_2$ lie in the same connected component of $\G$.  We must have $a\neq v_2$, otherwise $a$ and $b$ are in the same connected component of $\G$.  Let $e_3\in E(H)$ be an edge containing $a$ and $v_2$.  Let $C_1=ae_1be_2v_2e_3a$.  Then $C_1$ is an $\F$-interchanging cycle because $a$ and $v_2$ are in distinct connected components of $\G$, so at least one (and hence exactly one) of $v_2e_3$ and $ae_3$ is not in $\G$.

Define $\F'$ to be an Euler family corresponding to $\G\Delta C_1$, and let $G_{\F'}=\G\Delta C_1$ for brevity's sake.  Since $H$ is not eulerian, Lemma~\ref{lem:mindegree} implies that $G_{\F'}$ has exactly two connected components, both non-trivial. We claim that $v_0$ and $v_2$ lie in different connected components of $G_{\F'}$: first of all, we have that $be_2v_0e_1a$ is a path in $G_{\F'}$, so these v-vertices all lie in a connected component of $G_{\F'}$ together.  If $v_2$ also lies in this connected component of $G_{\F'}$, then all the other v-vertices of the graph lie in the same connected component with them as well, since their connectivity to $v_0, b, v_2,$ and $a$ are the same in $\G$ as in $G_{\F'}$.  Since we know $G_{\F'}$ must have two connected components, we must have $v_2$ disconnected from $v_0,b,$ and $a$.

Now, we cannot have that $a$ is isolated in $\G$ because Corollary~\ref{cor:isolated} implies that $H$ is eulerian, contradicting our assumptions on $H$.  Let $c\neq a$ be a v-vertex in the same connected component of $\G$ as $a$.  We wish to construct an $\F'$-interchanging cycle traversing $b,v_2,c,$ and $v_0$, but we must first find an edge containing $c$ and $b$, and a different edge containing $c$ and $v_2$.  

Suppose, however, that there exists an edge $e'=v_2bc\in E(H)$.  Since $c$ is in a different connected component of $\G$ from $b$ and $v_2$, we have that $e'c$ is a non-$\G$-edge.  Since $e'\not\in V(C_1)$, this implies $e'c$ is a non-$G_{\F'}$-edge as well.  However, since $v_2$ and $b$ are in different connected components of $G_{\F'}$, we have that either $e'v_2$ or $e'b$ is a non-$G_{\F'}$-edge, contradicting the fact that $e'$ is only incident with one non-$G_{\F'}$-edge.

So $v_2bc\not\in E(H)$.  Then there exist distinct edges $e_4,e_5\in E(H)$, where $e_4$ contains $b$ and $c$, and $e_5$ contains $v_2$ and $c$.  Since $c$ is in a different connected component of $\G$ from both $v_2$ and $b$, we have that one of $be_4$ and $ce_4$ is a non-$\G$-edge and one of $v_2e_5$ and $ce_5$ is a non-$\G$-edge.  As neither $e_4$ nor $e_5$ is in $V(C_1)$, the status of these edges as non-$\G$-edges is preserved so that they are non-$G_{\F'}$-edges as well.

Now, let $C_2=v_0e_1be_4ce_5v_2e_2v_0$.  Observe that $C_2$ is an $\F'$-interchanging cycle of $G_{\F'}$ with $\deg_{G_{\F'}\Delta C_2}(v_0) < \deg_{\G}(v_0).$ Since $|\F'|=2$ and $\deg_{G_{\F'}}(v_0)=\deg_{\G}(v_0)$ is minimum over all minimum Euler families, we may apply Lemma~\ref{lem:mindegree} to obtain a contradiction.

Therefore, we must have $a$ and $b$ lying in the same connected component of $\G$, completing the proof of (2).

{\bf (3):} Let $x\in V(H)\setminus\{v_0,v_1,v_2\}$ and assume $x$ lies in a different connected component of $\G$ from $a$ and $b$.  Since $H$ is a covering 3-hypergraph, there exists an edge $e\in E(H)$ containing $a$ and $x$, and some $e'\in E(H)$ containing $b$ and $x$.  If $e\neq e'$, then $C=v_0e_1aexe'be_2v_0$ is an $\F$-interchanging cycle with $\deg_{\G\Delta C}(v_0) < \deg_{\G}(v_0),$ contradicting Lemma~\ref{lem:mindegree}.  Hence $e=abx=e'$, and $ae$ and $be$ are $\G$-edges by (1), so $xe$ is a non-$\G$-edge.  

{\bf (4):} Suppose that $a$ and $b$ lie in the same connected component of $\G$ as $v_0$.  Since $\G$ consists of two non-trivial connected components, there must be at least two v-vertices in the other connected component.  Let $G_1$ be the connected component containing $v_0$ and let $G_2$ be the other connected component, which contains some v-vertices $c$ and $d$.

(3) states that there are edges $e_c=abc$ and $e_d=abd$ such that $ce_c$ and $de_d$ are non-$\G$-edges.

Now, there exists an edge $e_3\in E(H)$ containing $v_0$ and $c$.  Note $e_3\not\in\{e_1,e_2\}$ because $e_3$ contains $c$, but neither $e_1$ nor $e_2$ do.  Let $C=v_0e_1ae_cce_3v_0$.  Then $C$ is an $\F$-interchanging cycle because $c$ and $v_0$ do not lie in the same connected component of $\G$.  If $v_0e_3\in V(G_1)$, then there are two $\G$-edges of $C$ incident with $v_0$ and so $\deg_{\G\Delta C}(v_0) < \deg_{\G}(v_0)$, contradicting Lemma~\ref{lem:mindegree}.  Hence $v_0e_3$ is a non-$\G$-edge and so $e_3\in V(G_2)$.

Since $G_2$ is a 2-edge-connected graph and $G_2 \setminus E(C) = G_2 \setminus ce_3$, we have that $G_2\setminus E(C)$ is connected.

Since $G_1$ is 2-edge-connected, we have that $G_1\setminus v_0e_1$ is connected.  Then $ae_dbe_c$ is an $ae_c$-path in $(G_1\setminus v_0e_1)\setminus ae_c$, so $(G_1\setminus v_0e_1)\setminus ae = G_1\setminus E(C)$ is connected, by Proposition~\ref{prop:G-uv}.  Then $\G\Delta C$ has just one non-trivial connected component by Lemma~\ref{lem:dimcycle}, contradicting the fact that $H$ is not eulerian.

We conclude that $a$ and $b$ are not in the same connected component of $\G$ as $v_0$, completing the proof of (4).
\end{proof}

With the setup afforded by Lemma~\ref{lem:order7setup}, we know a lot about five of the v-vertices of $\G$, namely $v_0,v_1,v_2,a,$ and $b$.  In Theorem~\ref{thm:order7}, we need only investigate the positions of two other vertices, and this will be enough to show that there must be an $\F$-interchanging cycle that contradicts our assumption that $H$ is not eulerian.\\

\begin{thm}\label{thm:order7} Let $H$ be a covering 3-hypergraph of order $n\geq 7$.  Then $H$ is eulerian.
\end{thm}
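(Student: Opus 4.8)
The plan is to argue by contradiction: assume $H$ is not eulerian and extract enough structure to build a cycle that cannot exist. First, a covering $3$-hypergraph of order $n\ge 7$ has at least $\binom{7}{2}/3=7$ edges, so by Corollary~\ref{cor:quasieuleriancovering} it is quasi-eulerian; let $\F$ be a minimum Euler family and $\G$ the corresponding subgraph of the incidence graph $G$. Since $H$ is assumed not eulerian, Corollary~\ref{cor:isolated} forces $|\F|=2$, and (via Corollary~\ref{cor:correspondence}) $\G$ has exactly two connected components, both non-trivial and with no isolated v-vertices; in particular every v-vertex has degree at least $2$ in $\G$. Now fix an arbitrary vertex $v_0$ of $H$ and re-choose $\F$ so that $\deg_{\G}(v_0)$ is minimum over all minimum Euler families. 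Then $\deg_{\G}(v_0)\ge 2$ gives two $\G$-edges $v_0e_1,v_0e_2$; write $e_1=v_0v_1a$ and $e_2=v_0v_2b$, where $v_1,v_2$ are chosen so that $v_1e_1$ and $v_2e_2$ are also $\G$-edges (each $e_i$ has exactly two incident $\G$-edges, one being $v_0e_i$ and the other reaching one of the two remaining vertices of $e_i$). This is exactly the hypothesis of Lemma~\ref{lem:order7setup}, which then tells us that $a\ne b$, that $a$ and $b$ lie in a common connected component $G_2$ of $\G$, that $v_0,v_1,v_2$ lie in the other component $G_1$, and that any further vertex $x\in G_1\setminus\{v_0,v_1,v_2\}$ is joined to $a$ and $b$ only through a single edge $e_x=abx$, whose incidences have the prescribed $\G$-status ($ae_x,be_x$ are $\G$-edges, $xe_x$ is not).

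Next I bring in the bound $n\ge 7$. At most five vertices, namely $v_0,v_1,v_2,a,b$, have been named, so there are at least two further vertices $x$ and $y$, each lying in $G_1$ or in $G_2$. This is precisely the place where $n\ge 7$ is needed: two ``fresh'' v-vertices are required, and when $v_1\ne v_2$ only $n-5$ are available. I would then split into cases according to which of $G_1,G_2$ contains $x$ and $y$ (both in $G_1$; both in $G_2$; one in each). For a fresh vertex lying in $G_1$, Lemma~\ref{lem:order7setup}(3) supplies the edge $abx$ together with the exact $\G$-status of its three incidences; for any connecting edge (between $v_0$ and a fresh vertex, or between two fresh vertices) I invoke the covering property of $H$ to obtain an edge, bearing in mind that such an edge may coincide with one already named, i.e.\ be parallel to it, which must be handled separately. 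Non-cut v-vertices of each component, guaranteed by Corollary~\ref{cor:noncut}, will be used to make the resulting cycles $\F$-diminishing.

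In each case the objective is to produce an $\F$-interchanging cycle $C$ leading to one of three contradictions: (i) $C$ passes through $v_0$ using two of its $\G$-edges, so $\deg_{\G\Delta C}(v_0)<\deg_{\G}(v_0)$, contradicting Lemma~\ref{lem:mindegree}; (ii) $C$ is $\F$-diminishing (via Corollary~\ref{cor:simpledimcycle} or Corollary~\ref{cor:dimcycle} after checking that the relevant components meet the ``exactly one $\G$-edge of $C$'' or ``traversed v-vertices are non-cut'' hypotheses), so that, since $\G$ has only two components, $\G\Delta C$ has a single non-trivial component and hence corresponds to an Euler tour by Lemma~\ref{lem:interchange} and Theorem~\ref{thm:incidencegraph} --- contradicting that $H$ is not eulerian; or (iii) $\G\Delta C$ ends up with an isolated vertex or with three components, again contradicting Corollary~\ref{cor:isolated}/Lemma~\ref{lem:mindegree}. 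When a cycle written down naively fails to be $\F$-interchanging --- the typical snag being an e-vertex whose unique non-$\G$-edge is not one of the two edges $C$ uses there --- I would first perform a preliminary $\F$-interchanging swap (producing an Euler family still satisfying the minimality conditions, by Lemma~\ref{lem:mindegree}) that moves the configuration into one already treated by an earlier case, and finish there. Since the cases are exhaustive and every one terminates in a contradiction, $H$ must be eulerian.

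The main obstacle is the case analysis itself: verifying, in every configuration, that the cycle one writes down is genuinely $\F$-interchanging requires tracking e-vertex by e-vertex which incident edge is the non-$\G$-edge, and this is exactly where the covering property and all four conclusions of Lemma~\ref{lem:order7setup} have to be pushed to their limit. The most delicate situations are those in which both fresh vertices fall in $G_2$ --- where Lemma~\ref{lem:order7setup}(3) gives no direct control over their relation to $a$ and $b$ --- and those in which a connecting covering edge coincides with $e_1$, $e_2$, or with some $e_x$; these are what force the ``reduce to an earlier case via a preliminary interchange'' device and are where hidden sub-cases are most likely to lurk.
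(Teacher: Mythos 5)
You have correctly reproduced the paper's framing --- a minimum Euler family chosen to minimize $\deg_{\G}(v_0)$, the two $\G$-edges $v_0e_1,v_0e_2$ with $e_1=v_0v_1a$, $e_2=v_0v_2b$, the invocation of Lemma~\ref{lem:order7setup}, and a case split on where two additional v-vertices lie --- but the proposal stops precisely where the actual proof begins. Everything after ``I would then split into cases'' is a statement of intent rather than an argument: you assert that in each configuration some $\F$-interchanging cycle can be produced that either lowers $\deg_{\G}(v_0)$ (contradicting Lemma~\ref{lem:mindegree}), or is $\F$-diminishing, or becomes available after a preliminary interchange, but you never exhibit these cycles, never verify e-vertex by e-vertex that they are interchanging, and never check the hypotheses of Corollary~\ref{cor:simpledimcycle} or Corollary~\ref{cor:dimcycle} in any concrete configuration; you yourself concede that ``hidden sub-cases are most likely to lurk.'' The difficulty of the theorem lies entirely in those verifications. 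For instance, in the mixed case (fresh vertices $c$ in the component of $v_0$ and $d$ in the component of $a,b$), the paper must first show that the covering edges through $\{a,d\}$ and $\{b,d\}$ are forced to be $adv_2$ and $bdv_1$, then perform an interchange, use Lemma~\ref{lem:mindegree} to see the new subgraph still has exactly two non-trivial components, and re-apply Lemma~\ref{lem:order7setup} to the \emph{new} family with the roles of $v_2$ and $b$ (and of $c$ and $d$) swapped --- which is what conjures the edge $v_1v_2c$ needed for the final contradiction. None of that is recoverable from your outline.

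The two situations you flag as delicate are exactly where further ideas are required, and your sketch does not contain them. When all fresh vertices lie with $a$ and $b$ (so the $v_0$-side has only $v_0,v_1,v_2$), the paper needs Corollary~\ref{cor:noncut} to select a non-cut vertex $y\in\{v_0,v_1\}$, an interchange through $y$, and then a re-application of Lemma~\ref{lem:order7setup} to the resulting family with $v_1$ and $a$ exchanged, to force a connectivity contradiction. When all fresh vertices lie with $v_0$, one must first prove the auxiliary claim that the only edge giving a non-$\G$-edge at a fresh vertex $x$ is $abx$, then use an interchange to establish $v_1\neq v_2$, and finally analyze the covering edge through $\{a,v_2\}$ case by case. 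A proposal that names the right tools but defers all of these constructions --- and explicitly allows that the ``reduce to an earlier case'' device might meet unforeseen sub-cases --- has identified the paper's strategy but has not proved the theorem; as it stands there is a genuine gap consisting of the entire case analysis.
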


\begin{proof} Suppose $H$ is not eulerian.  We establish the same setup as in the statement of Lemma~\ref{lem:order7setup}.  Let $G$ be the incidence graph of $H$, and fix some v-vertex $v_0$.  Corollary~\ref{cor:quasieuleriancovering} states that $H$ is quasi-eulerian, so let $\F$ be a minimum Euler family of $H$ with the property that $\deg_{\G}(v_0)$ is minimum over all minimum Euler families of $H$.  By Corollary~\ref{cor:isolated}, we know that $|\F|=2$ and that $v_0$ is not isolated in $\G$, hence $\deg_{\G}(v_0)\geq 2$.

We have distinct e-vertices $e_1,e_2\in V(G)$ such that $v_0e_1, v_0e_2\in E(\G).$  Let $e_1=v_0v_1a$ for some $v_1$ and $a$, and let $e_2=v_0v_2b$ for some vertices $v_2$ and $b$.  (Note that we cannot assume that $v_1,v_2,a,$ and $b$ are pairwise distinct, except that $v_1\neq a$ and $v_2\neq b$.)  Since the e-vertices $e_1$ and $e_2$ are each incident with one non-$\G$-edge, let $ae_1$ and $be_2$ be non-$\G$-edges.  Then $v_1e_1$ and $v_2e_2$ are $\G$-edges.

Now $H$ satisfies the assumptions of Lemma~\ref{lem:order7setup}, with v-vertices and e-vertices named exactly as in the statement of that lemma.  We have that $v_0,v_1,$ and $v_2$ lie in the same connected component $G_1$ of $\G$, and $a$ and $b$ lie in the other connected component $G_2$ of $\G$.  (Hence $\{v_1,v_2\}\cap \{a,b\}=\emptyset$ and $a\neq b$, but it is possible that $v_1=v_2$.)

Since $|V(H)|\geq 7$, we must have at least two v-vertices in $\G$ besides $v_0,v_1,v_2,a$, and $b$.  We shall complete the proof by splitting into cases depending on where two of these additional v-vertices lie.

{\sc Case 1: there exist v-vertices $c\in V(G_1)\setminus \{v_0,v_1,v_2\}$ and $d\in V(G_2)\setminus \{a,b\}$.}

Since $c$ lies in a different connected component from $a$ and $b$, Lemma~\ref{lem:order7setup} (3) states that there exists an edge $e_c=abc$.

Let $e_3\in E(H)$ be an edge containing $a$ and $d$, and let $e_4\in E(H)$ be an edge containing $b$ and $d$.  Suppose $ae_3$ or $de_3$ is a non-$\G$-edge.  We know there exists an edge $f\in E(H)$ containing $c$ and $d$, and $f\neq e_3$ because $e_3$ must contain three v-vertices of $G_2$, but $f$ does not.  Hence $C=v_0e_1ae_3dfce_cbe_2v_0$ is an $\F$-interchanging cycle with $\deg_{\G\Delta C}(v_0) < \deg_{\G}(v_0),$ a contradiction with Lemma~\ref{lem:mindegree}.  Hence $ae_3$ and $de_3$ are $\G$-edges.  By a similar argument, we conclude that $be_4$ and $de_4$ are $\G$-edges.  (Note that this also implies that $e_3\neq e_4$.)

Let $x$ be the v-vertex such that $xe_3$ is a non-$\G$-edge (so $x\not\in\{a,d\}$).  If $x=b$, then $C=v_0e_1ae_3be_2v_0$ is an $\F$-interchanging cycle with $\deg_{\G\Delta C}(v_0) < \deg_{\G}(v_0),$ contradicting Lemma~\ref{lem:mindegree}.  Hence $x\neq b$.

If $x\in V(G_2)$, then there exists an edge $f$ containing $x$ and $c$, and as we have seen before, we have that $C=v_0e_1ae_3xfce_cbe_2v_0$ is an $\F$-interchanging cycle with $\deg_{\G\Delta C}(v_0) < \deg_{\G}(v_0),$ contradicting Lemma~\ref{lem:mindegree}.  Hence $x\in V(G_1)$.

If $x\in \{v_0,v_1\}$, then $C=xe_1ae_3x$ is an $\F$-interchanging cycle with just one $\G$-edge in each of $G_1$ and $G_2$.  Then Corollary~\ref{cor:simpledimcycle} states that $C$ is an $\F$-diminishing cycle, contradicting Lemma~\ref{lem:interchange}.

If $x\in V(G_1)\setminus\{v_0,v_1,v_2\}$, then by Lemma~\ref{lem:order7setup} (3), there exists an edge $e_x=abx$.  Then $C=v_0e_1ae_3xe_xbe_2v_0$ is an $\F$-interchanging cycle with $\deg_{\G\Delta C}(v_0) < \deg_{\G}(v_0),$ contradicting Lemma~\ref{lem:mindegree}.

We thus conclude that $x=v_2$ (and $v_1\neq v_2$), so $e_3=adv_2$.  By an analogous argument (swapping the roles of $a$ and $b$, of $v_1$ and $v_2$, and of $e_1$ and $e_2$), we may conclude that $e_4=bdv_1$.

Let $e_5$ be an edge containing $c$ and $d$; note that $e_5\not\in \{e_c,e_3,e_2\}$ because none of those edges contain both $c$ and $d$.  Let $C=be_cce_5de_3v_2e_2b$. Then $C$ is an $\F$-interchanging cycle. Define $G_{\F'}=\G\Delta C$ and let $\F'$ be an Euler family corresponding to $G_{\F'}$.  Lemma~\ref{lem:mindegree} states that $G_{\F'}$ has exactly two connected components, both non-trivial.

Notice that $be_2$ and $v_0e_2$ are $G_{\F'}$-edges, so $v_0$ and $b$ are in the same connected component of $G_{\F'}$.  We also have that $be_4d$ is a 2-path of $\G$ that is edge-disjoint from $C$, so $be_4d$ is a 2-path of $G_{\F'}$ as well.  Finally, we have that $v_0e_1v_1$ is a 2-path of $G_{\F'}$ as it is in $\G$ and is edge-disjoint from $C$.  Hence $v_0,b,d,$ and $v_1$ are all in the same connected component of $G_{\F'}$.

On the other hand, we have that $ae_3v_2$ and $ae_cc$ are 2-paths of $G_{\F'}$, so $a, v_2$, and $c$ are in a connected component of $G_{\F'}$ together.  Of course, this must be a different connected component from the one containing $v_0$, for if not, then every non-isolated vertex of $G_{\F'}$ lies in the same connected component (any non-isolated v-vertex other than these seven must lie in the same connected component as one of them, for they do so in $\G$).

To summarize what we know about $G_{\F'}$, we have $v_0,v_1,b,$ and $d$ in one connected component, and $v_2,a,$ and $c$ in the other.  (Any additional v-vertices may be in either connected component.)

Observe that, since $\F$ is a minimum Euler family such that $\deg_{G_{\F'}}(v_0) = \deg_{\G}(v_0)$ is a minimum, we have that $G_{\F'}$ satisfies the initial assumptions made of $\G$.  We may apply Lemma~\ref{lem:order7setup} to $G_{\F'}$, with the roles of $v_2$ and $b$ swapped.  Since $d$ is an additional v-vertex in $v_0$'s connected component and $c$ is an additional v-vertex in $a$'s connected component, we conclude that $G_{\F'}$ falls into Case 1 of this proof as well, with the roles of $c$ and $d$ swapped.

We conclude that, in particular, there exists an e-vertex in $G_{\F'}$ that corresponds to $e_4$ in $\G$.  Since $e_4=bdv_1\in E(H)$, the corresponding edge must be $v_1v_2c$.

Call this edge $e_6=v_1v_2c\in E(H)$.  Let $C'=v_0e_1ae_cbe_4v_1e_6v_2e_2v_0$.  Observe that $C'$ is an $\F'$-interchanging cycle with $\deg_{G_{\F'}\Delta C'}(v_0) < \deg_{G_{\F'}}(v_0),$ contradicting Lemma~\ref{lem:mindegree}.  This concludes Case 1.

{\sc Case 2: $a$ and $b$ are the only v-vertices in $G_2$.}

Let $X=V(H)\setminus\{v_0,v_1,v_2,a,b\}.$  We know $|X|\geq 2$ since $|V(H)|\geq 7$, and $X\subseteq V(G_1)$ by assumption.

Fix some $x\in X$.  There exists an edge $e_x=abx$ by Lemma~\ref{lem:order7setup} (3). 

{\sc Claim:} {\em For any $e\in E(H)$: if $xe$ is a non-$\G$-edge, then $e=abx$.}

Suppose $xe$ is a non-$\G$-edge, but $e\neq abx$.  Evidently, we must have $a\not\in e$ and $b\not\in e$ by Lemma~\ref{lem:order7setup}.3.

If $v_0\in e$, then $C=v_0e_1ae_xxev_0$ is an $\F$-interchanging cycle with $\deg_{\G\Delta C}(v_0) < \deg_{\G}(v_0),$ since $v_0e$ is a $\G$-edge.  This contradicts Lemma~\ref{lem:mindegree}.

If $e=xv_1v_2$, then this implies $v_1\neq b$ and so there must be an edge $e_4$ (distinct from $e_2$) containing $b$ and $v_1$.  Let $C=v_0e_1ae_xxev_1e_4be_2v_0$.  Observe that $C$ is an $\F$-interchanging cycle with $\deg_{\G\Delta C}(v_0) < \deg_{\G}(v_0),$ contradicting Lemma~\ref{lem:mindegree}.

Therefore, we must have $y\in e$ for some $y\in X, y\neq x$.  Lemma~\ref{lem:order7setup} (3) states that there exists an edge $e_y=aby\in E(H)$.  Let $C=v_0e_1ae_xxeye_ybe_2v_0$.  Observe that $C$ is an $\F$-interchanging cycle with $\deg_{\G\Delta C}(v_0) < \deg_{\G}(v_0),$ contradicting Lemma~\ref{lem:mindegree}.

This completes the proof of the claim.

Now, there exists an edge $e_3$ containing $v_0$ and $x$, and, by the above claim, we know that $xe_3$ is a $\G$-edge.  Suppose first that $v_0e_3$ is a $\G$-edge, and let $y$ be the v-vertex such that $ye_3$ is a non-$\G$-edge.

If $y=a$, then $C=v_0e_1ae_3v_0$ is an $\F$-interchanging cycle with $\deg_{\G\Delta C}(v_0) < \deg_{\G}(v_0),$ contradicting Lemma~\ref{lem:mindegree}.  If $y=b$, then $C=v_0e_2be_3v_0$ similarly yields a contradiction.

Hence $y\in V(G_1)$.  Note that we may apply Lemma~\ref{lem:order7setup} to $\G$ in which $x$ takes the role of $v_2$, and $y$ takes the role of $b$, and $e_3=v_0xy$ takes the role of $e_2=v_0v_2b$ (all other named vertices and edges take the role of themselves). Then Lemma~\ref{lem:order7setup} (2) implies that $a$ and $y$ lie in the same connected component, a contradiction. 

Since we obtain a contradiction if $v_0e_3$ is a $\G$-edge, it must be a non-$\G$-edge.  Let $C=v_0e_1ae_xxe_3v_0$.  Then $C$ is an $\F$-interchanging cycle.  Define $G_{\F'}=\G\Delta C$ and let $\F'$ be an Euler family corresponding to $G_{\F'}$.  Lemma~\ref{lem:mindegree} implies that $G_{\F'}$ has exactly two connected components, both non-trivial.  We will use $G_{\F'}$ to show that $v_1\neq v_2$.

We have that $v_0e_2v_2$ is a 2-path of $G_{\F'}$, so $v_0$ and $v_2$ are in the same connected component.

Let $z$ be an arbitrary element of $X\setminus\{x\}$.  By Lemma~\ref{lem:order7setup} (3), there exists an edge $e_z=abz\in E(H)$, and $ae_zb$ is a 2-path in $\G$.  Hence $ae_zb$ is also a 2-path in $G_{\F'}$.  This implies that $a$ and $b$ are in the same connected component of $G_{\F'}$.

We have that $ae_1v_1$ is a 2-path of $G_{\F'}$, so $a$ and $v_1$ are in the same connected component as well.

We also have that $be_xx$ is a 2-path of $G_{\F'}$, so $b$ and $x$ are in the same connected component of $G_{\F'}$.


To summarize what we know of $G_{\F'}$ so far, we have $a, b, v_1,$ and $x$ together in a connected component, and $v_0$ and $v_2$ together in a connected component.  For any $v\in V(H)\setminus\{v_0,v_1,v_2,a,b,x\}$, there exists a minimum-length path $P$ in $\G$ connecting $v$ to one of these six v-vertices that does not traverse any of the other six v-vertices, so $P$ is also a path in $G_{\F'}$.  Therefore, every vertex in $V(H)\setminus\{v_0,v_1,v_2,a,b,x\}$ is connected in $G_{\F'}$ to one of these six vertices.  Hence we cannot have $b$ and $v_2$ together in the same connected component, otherwise $G_{\F'}$ has only one connected component; but we know $G_{\F'}$ has exactly two connected components.

If $v_1=v_2$, then we immediately obtain a contradiction since that would imply that $G_{\F'}$ has just one non-trivial connected component.  Hence $v_1\neq v_2$, completing our claim, so we return our attention to $\G$ and $H$.  We now have that there exists an edge $e_5\neq e_1$ containing $a$ and $v_2$.

If $e_5=av_2y$ for some $y\in X$, then we obtain a contradiction since there cannot be an edge containing $a$ and $y$ without $b$, by Lemma~\ref{lem:order7setup} (3).

If $e_5=av_2b$, then $C'=v_2e_2be_5v_2$ is an $\F$-interchanging cycle with one $\G$-edge in each connected component.  Since the connected components are even graphs, Theorem~\ref{thm:eulertour} and Proposition~\ref{prop:2-edge-connected} together imply that they are 2-edge-connected, so $G_i\setminus E(C')$, for each $i\in\{1,2\}$, is connected.  Then $C'$ satisfies the conditions of Lemma~\ref{lem:dimcycle}, so $C'$ is $\F$-diminishing, contradicting Lemma~\ref{lem:interchange}.

If $e_5=av_2v_0$, then $C'=v_0e_1ae_5v_0$ is an $\F$-interchanging cycle with $\deg_{\G\Delta C'}(v_0) < \deg_{\G}(v_0),$ contradicting Lemma~\ref{lem:mindegree}.

Hence $e_5=av_2v_1$, so $C'=v_1e_1ae_5v_1$ is an $\F$-interchanging cycle.  Let $G_{\F^*}=\G\Delta C'$, and let $\F^*$ be an Euler family corresponding to $G_{\F^*}$. We have that $v_0e_1$ and $v_0e_2$ are $G_{\F^{*}}$-edges, while $e_1v_1$ and $e_2b$ are non-$G_{\F^*}$-edges.  Lemma~\ref{lem:mindegree} assures us that $G_{\F^*}$ has exactly two connected components, both non-trivial.

Note that $\F^*$ is a minimum Euler family with $\deg_{G_{\F^*}}(v_0) = \deg_{\G}(v_0)$ a minimum.  We may apply Lemma~\ref{lem:order7setup} to $G_{\F^*}$ with the roles of $a$ and $v_1$ swapped.  Lemma~\ref{lem:order7setup} (2) then shows that $b$ and $v_1$ lie in the same connected component of $G_{\F^*}$.  However, note that $G_2$ is a subgraph of $G_{\F^*}$, and $a$ and $v_0$ are in the same connected component in $G_{\F^*}$.  This implies that all the v-vertices lie in a single connected component in $G_{\F^*}$, contradicting our assumption that there are exactly two connected components.

In all cases, we obtain a contradiction, concluding Case 2.

{\sc Case 3: $v_0, v_1,$ and $v_2$ are the only v-vertices of $G_1$.}

Let $e_x=abx$ be an edge of $H$ containing $a,b$, and some vertex $x$.  By Lemma~\ref{lem:order7setup} (1), we have that $ae_x$ and $be_x$ are $\G$-edges, and hence $xe_x$ is a non-$\G$-edge.

If $x\in\{v_0,v_1\}$, then $C=xe_1ae_xx$ is an $\F$-diminishing cycle because $G_1\setminus E(C)$ and $G_2\setminus E(C)$ are each connected, meaning $\G\Delta C$ is connected.

If $x=v_2$, then $C=v_2e_2be_xv_2$ is an $\F$-diminishing cycle for the same reason as above.

In either case, we obtain a contradiction by Lemma~\ref{lem:interchange}.  Hence $x\in V(G_2)$.  

Since $G_1$ has at least two v-vertices that are not cut vertices (by Corollary~\ref{cor:noncut}), at least one of $v_0$ and $v_1$ is not a cut vertex.  Let $y\in\{v_0,v_1\}$ be a vertex that is not a cut vertex of $G_1$.  Let $e_3$ be an edge of $H$ containing $x$ and $y$.

Then $C=ye_1ae_xxe_3y$ is an $\F$-interchanging cycle.  Let $G_{\F'}=\G\Delta C$ and let $\F'$ be an Euler family corresponding to $G_{\F'}$.

Since $H$ is not eulerian, Lemma~\ref{lem:mindegree} implies that $G_{\F'}$ has exactly two connected components, both non-trivial.

Suppose $y=v_0$.  If $v_0e_3$ is a $\G$-edge, then $\deg_{G_{\F'}}(v_0) < \deg_{\G}(v_0),$ which contradicts Lemma~\ref{lem:mindegree}.  Hence $v_0e_3$ must be a non-$\G$-edge and we can conclude that $G_1\setminus E(C)$ is connected. Now, observe that $\F'$ is a minimum Euler family that minimizes $\deg_{G_{\F'}}(v_0) = \deg_{\G}(v_0)$, so it satisfies our assumptions on $\F$.  Since $a$ is also in the same connected component of $G_{\F'}$ as $v_0,v_1,$ and $v_2$ via edge $ae_1\in E(G_{\F'})$, we have that $v_0$ lies in the same connected component in $G_{\F'}$ as at least three other v-vertices. Then if we attempt to apply this theorem to $G_{\F'}$, we conclude that $\F'$ falls into either Case 1 or 2, and so we obtain a contradiction.

Hence $y=v_1.$  Assume first that $v_1$ and $x$ are in the same connected component of $G_{\F'}$.  Since $v_1$ is not a cut vertex of $G_1$, we know that either $G_1\setminus E(C)$ is connected, or $(G_1\setminus E(C)) - v_1$ is connected and $v_1$ is isolated in $G_1\setminus E(C)$.  If $v_1$ is isolated in $G_1\setminus E(C)$, then we deduce that it is isolated in $G_{\F'}$ as well, contradicting the assumption that it is in the same connected component as $x$.  Hence $G_1\setminus E(C)$ is connected.  Since $v_1$ and $x$ are assumed to be in the same connected component in $G_{\F'}$, and $b$ and $x$ lie in the same connected component of $G_{\F'}$ as each other (via path $be_xx$), we see that $v_0,v_1,v_2,a,x,$ and $b$ lie in a connected component of $G_{\F'}$ together.  Since all other v-vertices are in the same connected component as one of these in $G_{\F'}$, we see that $G_{\F'}$ is connected, a contradiction.

Hence $v_1$ and $x$ must be in different connected components of $G_{\F'}$.  Since $b$ and $x$ lie in the same connected component of $G_{\F'}$ (again via path $be_xx$), we conclude that $b$ and $v_1$ are in different connected components from each other in $G_{\F'}$.  Now $e_1=v_0v_1a$ and $e_2=v_0v_2b$, and $v_1e_1$ and $be_2$ are non-$G_{\F'}$-edges, so we may apply Lemma~\ref{lem:order7setup} (2) to $\F'$ --- swapping the roles of $v_1$ and $a$ --- to conclude that $v_1$ and $b$ lie in the same connected component of $G_{\F'}$. However, this is a contradiction, concluding Case 3.

Therefore, we conclude that $H$ is eulerian.
\end{proof}

\section{Proof of Theorem~\ref{thm:coveringinductionbase} for Orders $3\leq n\leq 6$}

In this section, we prove Theorem~\ref{thm:coveringinductionbase} for the orders that were missed by Theorem~\ref{thm:order7}.  The proof for orders 3, 4, and 5 --- found in Lemmas~\ref{lem:coveringorder3},~\ref{lem:coveringorder4}, and~\ref{lem:coveringorder5}, respectively, are relatively short due to the small number of possibilities in the hypergraph.\\

\begin{lem}\label{lem:coveringorder3} Let $H$ be a covering 3-hypergraph of order 3 and with at least two edges.  Then $H$ is eulerian.
\end{lem}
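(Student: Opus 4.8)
The plan is to exploit the fact that a $3$-uniform hypergraph on only $3$ vertices is extremely rigid: every edge, having cardinality $3$, must equal the entire vertex set $V(H)=\{x,y,z\}$. Hence $H$ is simply a collection of $m\ge 2$ mutually parallel copies of the edge $\{x,y,z\}$ (equivalently, $H$ is a TS$(3,m)$), and it suffices to exhibit one non-trivial closed strict trail traversing all $m$ of these edges.

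First I would reduce the construction of such a trail to a purely combinatorial question. A closed strict trail $W=v_0e_1v_1e_2\cdots v_{m-1}e_mv_0$ using each edge exactly once is obtained by choosing a cyclic sequence $v_0,v_1,\ldots,v_{m-1}$ of vertices from $\{x,y,z\}$ and then pairing the $m$ distinct edges with the $m$ consecutive pairs in any order: since every edge equals $\{x,y,z\}$, the incidence requirement $v_{i-1},v_i\in e_i$ is automatic, strictness holds because the edges are pairwise distinct, and closedness holds by construction. The only genuine constraint is that $W$ be a walk, i.e.\ that consecutive anchors be distinct; cyclically, this says $v_{i-1}\ne v_i$ for all $i$ (indices taken mod $m$).

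Next I would produce the required cyclic sequence for every $m\ge 2$. If $m$ is even, take $x,y,x,y,\ldots,x,y$; if $m$ is odd (so $m\ge 3$), take $x,y,x,y,\ldots,x,y,z$, that is, $m-1$ alternating terms $x,y$ followed by a single $z$ --- here $m-1$ is even, so the terminal $z$ differs both from the preceding $y$ and from the initial $x$, and no two cyclically consecutive entries coincide. In either case the resulting $W$ is a non-trivial closed strict trail of $H$ traversing every edge exactly once, hence an Euler tour, so $H$ is eulerian. (Alternatively, since $H$ is a TS$(3,m)$ with $m\ge 2$, one could simply invoke Corollary~\ref{cor:TS3}.)

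I expect no real obstacle here: this is a tiny base case, and the only care needed is in the bookkeeping --- checking that the sequence genuinely avoids equal cyclic neighbours (in particular handling $m=2$, which gives the trail $x\,e_1\,y\,e_2\,x$), and confirming that the object produced meets the definition of an Euler tour (non-trivial, closed, strict, edge-covering).
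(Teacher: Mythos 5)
Your proposal is correct, but your primary argument takes a different (and more elementary) route than the paper. The paper's proof is a one-liner: it observes that $H$ is a TS$(3,\lambda)$ with $\lambda\geq 2$ and invokes Corollary~\ref{cor:TS3} — exactly the alternative you mention parenthetically — which in turn rests on Theorem~\ref{thm:eulerianTS} and its minimum-Euler-family/incidence-graph re-routing machinery. Your main argument instead constructs an Euler tour explicitly: since every edge of a $3$-uniform hypergraph on $3$ vertices equals $V(H)$, the incidence constraints are vacuous, and the whole problem reduces to exhibiting a cyclic vertex sequence of length $m$ over $\{x,y,z\}$ with no two cyclically consecutive entries equal; your alternating sequences (with the single terminal $z$ when $m$ is odd, and the length-$2$ trail $x\,e_1\,y\,e_2\,x$ when $m=2$) do this correctly, and strictness follows because the $m$ parallel edges are distinct as edges. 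What your approach buys is a self-contained, constructive proof independent of the heavier results of the chapter on triple systems; what the paper's approach buys is brevity by reusing an already-established theorem. Both are valid.
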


\begin{proof} Note that $H$ is a TS-(3,$\lambda$) with $\lambda\geq 2$ equal to the number of edges of $H$.  Then Corollary~\ref{cor:TS3} states that $H$ is eulerian.
\end{proof}

In the upcoming Lemma~\ref{lem:coveringorder4}, we employ a sort of counting strategy that has not been used since the proof of Theorem~\ref{thm:eulerianSTS}, where we showed that Steiner triple systems are eulerian.  We count the number of pairs of vertices that must be contained in edges, since the hypergraph is a covering 3-hypergraph.  We use this information to deduce that one pair must be in multiple edges, so we can form an $\F$-diminishing cycle with those edges.

In the case where the hypergraph has order 4, this method keeps the proof reasonably neat, though the number of possibilities is rather small.  However, this strategy returns in Lemma~\ref{lem:coveringorder6}, where the number of cases is more unwieldy, and it is there that it proves its worth.\\

\begin{lem}\label{lem:coveringorder4} Let $H$ be a covering 3-hypergraph of order 4.  Then $H$ is eulerian.
\end{lem}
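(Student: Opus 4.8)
The plan is to argue by contradiction, reusing the structure theory already built for covering 3-hypergraphs. Suppose $H$ is not eulerian. A covering 3-hypergraph of order $4$ must cover all $\binom{4}{2}=6$ pairs while each edge covers only $3$ of them, so $H$ has at least two edges; hence Corollary~\ref{cor:quasieuleriancovering} provides an Euler family and we may take a minimum one, $\F$. By Corollary~\ref{cor:isolated}, since $H$ is not eulerian we have $|\F|=2$, the corresponding subgraph $\G$ of the incidence graph $G$ has exactly two non-trivial connected components $G_1,G_2$, and $\G$ has no isolated vertex. Each $G_i$ corresponds to a non-trivial closed strict trail, hence contains at least two v-vertices; as $H$ has only four vertices in all and $\G$ is spanning with no isolated vertices, each of $G_1,G_2$ contains exactly two v-vertices, say $V(G_1)\cap V(H)=\{a,b\}$ and $V(G_2)\cap V(H)=\{c,d\}$. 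Finally, Corollary~\ref{cor:noncut} says each $G_i$ has at least two v-vertices that are not cut vertices of $\G$, so every one of $a,b,c,d$ is a non-cut-vertex of $\G$.

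Next I would split into two cases according to whether some ``crossing'' pair is doubled. \emph{Case 1:} some pair $\{p,q\}$ with $p\in\{a,b\}$ and $q\in\{c,d\}$ lies in two distinct edges $e,f$ of $H$. Then $C = p\,e\,q\,f\,p$ is a $4$-cycle in $G$ whose two v-vertices $p,q$ lie in distinct connected components of $\G$, are not cut vertices of $\G$, and are non-isolated in $\G$. Corollary~\ref{cor:dimcycle} then applies: $C$ is an $\F$-diminishing cycle, which contradicts the minimality of $\F$ by Lemma~\ref{lem:interchange}; equivalently, part (ii) of the corollary yields directly that $\G\Delta C$ corresponds to an Euler tour of $H$, contradicting our assumption.

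\emph{Case 2:} no crossing pair lies in two edges. Since $H$ is covering, each of the four crossing pairs $\{a,c\},\{a,d\},\{b,c\},\{b,d\}$ then lies in \emph{exactly} one edge. Every edge of $H$, being a $3$-subset of $\{a,b,c,d\}$, omits exactly one vertex; let $x_1,x_2,x_3,x_4$ denote the numbers of edges equal to $\{a,b,c\},\{a,b,d\},\{a,c,d\},\{b,c,d\}$ respectively. Counting the edges through each crossing pair gives $x_1+x_3=1$, $x_2+x_3=1$, $x_1+x_4=1$, $x_2+x_4=1$, and this system forces $(x_1,x_2,x_3,x_4)=(1,1,0,0)$ or $(0,0,1,1)$. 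In the first case no edge contains both $c$ and $d$; in the second no edge contains both $a$ and $b$; either way $H$ fails to be a covering 3-hypergraph, a contradiction. Hence both cases are impossible and $H$ is eulerian.

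The only step needing a little care is the opening reduction---nailing down that $\G$ has exactly two non-trivial components, each with exactly two v-vertices, all of which are non-cut-vertices---but this is immediate from Corollaries~\ref{cor:isolated} and~\ref{cor:noncut}. After that there is no real obstacle: $3$-uniformity together with order $4$ leaves only four possible edge types, so the counting argument that finishes Case~2 is short, and Case~1 is handled wholesale by Corollary~\ref{cor:dimcycle}. The proof is essentially bookkeeping on top of the tools already in hand.
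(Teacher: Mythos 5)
Your proof is correct and follows essentially the same route as the paper: both reduce to a minimum Euler family with exactly two components of two v-vertices each, find a crossing pair $\{p,q\}$ lying in two distinct edges, and kill it with the $2$-cycle $pe\,q\,f\,p$ via Corollary~\ref{cor:dimcycle} and Lemma~\ref{lem:interchange}. The only difference is bookkeeping: the paper gets the doubled crossing pair by pigeonhole (each component has at least two e-vertices, so at least four edges cover eight crossing-pair slots among four pairs), whereas you assume no pair is doubled and solve the small system on the four possible edge types to show a within-component pair would go uncovered — both are fine.
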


\begin{proof} Let $G$ be the incidence graph of $H$.  Since $H$ necessarily has at least two edges, Corollary~\ref{cor:quasieuleriancovering} implies that $H$ is quasi-eulerian.  Let $\F$ be a minimum Euler family for $H$, with corresponding subgraph $\G$ of $G$.  Suppose $H$ is not eulerian: then Corollary~\ref{cor:3comps} implies that $\G$ has exactly two connected components, both non-trivial.

Let $G_1$ be one connected component of $\G$, and let $G_2$ be the other.  Define $S=\{\{x,y\}: x\in V(G_1), y\in V(G_2), x\text{ and }y\text{ are v-vertices}\}$.

Since $H$ has order 4, it must be the case that $G_1$ and $G_2$ contain two v-vertices each.  Since each edge of $H$ contains two vertices of one connected component and one vertex of the other, we have that each edge contains two elements from $S$.  Now, each connected component must have at least two e-vertices, so $H$ must have at least four edges.  This implies that the edges of $H$ contain eight elements (counting multiplicities) of $S$.  Since $|S|=4$, there exists an element $\{x,y\}$ of $S$ that is contained in two edges $e_1$ and $e_2$ of $H$, by Pigeonhole Principle.

Now, Corollary~\ref{cor:noncut} says that $G_1$ and $G_2$ each contain at least two v-vertices that are not cut vertices, so neither $x$ nor $y$ can be a cut vertex.

Let $C=xe_1ye_2x$.  Then $C$ is an $\F$-diminishing cycle by Corollary~\ref{cor:dimcycle}, contradicting Lemma~\ref{lem:interchange}.

We conclude that $H$ is eulerian.
\end{proof}

\begin{lem}\label{lem:coveringorder5} Let $H$ be a covering 3-hypergraph of order 5.  Then $H$ is eulerian.
\end{lem}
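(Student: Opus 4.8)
The plan is to argue by contradiction, following the counting strategy of Lemma~\ref{lem:coveringorder4} but with a slightly finer bookkeeping of the edges of $H$.

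\textbf{Setup.} Suppose $H$ is not eulerian. Since $H$ has order $5$ it has at least four edges (each of the $\binom{5}{2}=10$ pairs lies in an edge, and an edge covers only three pairs), so by Corollary~\ref{cor:quasieuleriancovering} it is quasi-eulerian. Let $\F$ be a minimum Euler family, let $G$ be the incidence graph of $H$, and let $\G$ be the subgraph of $G$ corresponding to $\F$. By Corollary~\ref{cor:isolated}, $|\F|\le 2$; since $H$ is not eulerian we cannot have $|\F|\le 1$, so $|\F|=2$ and $\G$ has no isolated vertex (again by Corollary~\ref{cor:isolated}). Thus $\G$ has exactly two non-trivial connected components $G_1,G_2$; since each of them corresponds to a closed strict trail and hence contains at least two v-vertices, the five v-vertices of $H$ split as $V(G_1)\cap V(H)=\{a_1,a_2\}$ and $V(G_2)\cap V(H)=\{b_1,b_2,b_3\}$. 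Using that each e-vertex has degree $2$ in $\G$, every edge of $H$ is of exactly one of the following types: $\{a_1,a_2,b_j\}$, whose e-vertex lies in $G_1$ with $\G$-neighbours $a_1,a_2$; $\{a_i,b_j,b_k\}$ with $i\in\{1,2\}$, whose e-vertex lies in $G_2$ with $\G$-neighbours $b_j,b_k$; or $\{b_1,b_2,b_3\}$. Write $Q\subseteq\{b_1,b_2,b_3\}$ for the set of those $b_j$ appearing in an edge $\{a_1,a_2,b_j\}$; since $G_1$ is non-trivial, $Q\neq\emptyset$.

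\textbf{Producing a diminishing cycle.} In each configuration I aim to build an $\F$-interchanging cycle $C$ for which $\G\Delta C$ has at most one non-trivial component; by Lemma~\ref{lem:interchange} (together with Corollary~\ref{cor:correspondence}) this contradicts the minimality of $\F$. First, if for some $i$ there exist edges $\{a_1,a_2,b\}$ and $\{a_i,b,b'\}$ of $H$, set $C=a_i\,\{a_1,a_2,b\}\,b\,\{a_i,b,b'\}\,a_i$: at the e-vertex $\{a_1,a_2,b\}$ the two edges of $C$ are $a_i\{a_1,a_2,b\}$ (a $\G$-edge) and $\{a_1,a_2,b\}\,b$ (not), and at $\{a_i,b,b'\}$ they are $b\{a_i,b,b'\}$ (a $\G$-edge) and $\{a_i,b,b'\}\,a_i$ (not), so $C$ is $\F$-interchanging with exactly one $\G$-edge in $G_1$ and one in $G_2$, whence Corollary~\ref{cor:simpledimcycle} makes $C$ an $\F$-diminishing cycle. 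So assume no such pair of edges exists. Then the set $R$ of $b$-vertices occurring in mixed edges $\{a_i,b,b'\}$ is disjoint from $Q$, and since every cross pair $\{a_i,b_j\}$ is covered by an edge, $Q\cup R=\{b_1,b_2,b_3\}$; as a mixed edge carries two distinct $b$-vertices we have $|R|\neq 1$, so $|Q|\in\{1,3\}$. If $|Q|=1$, say $Q=\{b_1\}$, then every edge through $\{a_1,a_2\}$ is a copy of $\{a_1,a_2,b_1\}$ and there are at least two such copies $e,e'$; the cycle $C=a_1\,e\,b_1\,e'\,a_1$ is $\F$-interchanging, and $\G\Delta C$ deletes the two edges at $a_1$ going to $e$ and $e'$ and attaches $e,e'$ to $b_1\in V(G_2)$, merging $G_2$ and all of $G_1$ (except possibly the now-isolated $a_1$) into a single non-trivial component. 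If $|Q|=3$, there are no mixed edges, so $G_2$ consists of $b_1,b_2,b_3$ together with at least two parallel copies of $\{b_1,b_2,b_3\}$; fixing one such copy $g$, letting $b_j$ be its unique non-$\G$-neighbour and $b_k$ one of its $\G$-neighbours, and using the edges $\{a_1,a_2,b_j\},\{a_1,a_2,b_k\}$ (present because $Q=\{b_1,b_2,b_3\}$), the cycle $C=a_1\,\{a_1,a_2,b_j\}\,b_j\,g\,b_k\,\{a_1,a_2,b_k\}\,a_1$ is $\F$-interchanging, and Lemma~\ref{lem:dimcycle} applies: $G_1\setminus E(C)$ remains connected since $a_1$ keeps its edge to the third $G_1$-e-vertex $\{a_1,a_2,b_l\}$, and $G_2\setminus E(C)=G_2\setminus g b_k$ remains connected because $G_2$, being even and connected, is $2$-edge-connected (Theorem~\ref{thm:eulertour}, Proposition~\ref{prop:2-edge-connected}).

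\textbf{Main obstacle.} I expect the $|Q|=3$ configuration to be the only delicate one: there the cycle $C$ carries two $\G$-edges in $G_1$ and one in $G_2$, so the clean Corollary~\ref{cor:simpledimcycle} does not apply and one must verify the connectivity hypotheses of Lemma~\ref{lem:dimcycle} by hand — this is exactly where $|Q|=3$ (forcing three distinct $G_1$-e-vertices, so $a_1$ survives the interchange) and the $2$-edge-connectivity of $G_2$ get used. Everything else reduces to short, explicit $4$-cycles once the classification of edges and the auxiliary sets $Q$ and $R$ are in place.
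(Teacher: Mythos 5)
Your proof is correct and follows essentially the same strategy as the paper's: take a minimum Euler family, observe that its corresponding subgraph has exactly two non-trivial components with a $2$--$3$ split of the v-vertices, and in every configuration exhibit an $\F$-interchanging cycle that is $\F$-diminishing, contradicting Lemma~\ref{lem:interchange}. The only real difference is organizational: you bookkeep with the sets $Q$ and $R$ and, in the $|Q|=3$ case, use a $6$-cycle through a $\{b_1,b_2,b_3\}$-edge verified via Lemma~\ref{lem:dimcycle} and $2$-edge-connectivity, where the paper instead uses a parity argument to force a parallel pair of $\{a,b,x\}$-edges and reuses its parallel-edge $4$-cycle.
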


\begin{proof}Let $G$ be the incidence graph of $H$.  Since $H$ necessarily has at least two edges, Corollary~\ref{cor:quasieuleriancovering} implies that $H$ is quasi-eulerian.  Let $\F$ be a minimum Euler family for $H$, with corresponding subgraph $\G$ of $G$.  Suppose $H$ is not eulerian: then Corollary~\ref{cor:3comps} implies that $\G$ has exactly two connected components, and they must both be non-trivial by Corollary~\ref{cor:isolated}.

Without loss of generality, let $\{a,b\}$ be the set of v-vertices of one connected component $G_1$, and let $\{u,v,w\}$ be the set of v-vertices of the other connected component $G_2$.  Now, there must be at least two e-vertices $e_1$ and $e_2$ in $G_1$.

If $e_1$ and $e_2$ are parallel edges of $H$ --- say $e_1=abu=e_2$ --- then let $C=ae_1ue_2a$.  Observe that $C$ is an $\F$-interchanging cycle, and that $b, u, v,$ and $w$ are in the same connected component of $\G\Delta C$.  This implies that $\G\Delta C$ has just one non-trivial connected component, and so $C$ is an $\F$-diminishing cycle, contradicting Lemma~\ref{lem:interchange}.

Hence, without loss of generality, assume that $e_1=abu$ and $e_2=abv$.  Since $H$ is a covering 3-hypergraph, there exists an edge $e_3$ containing $a$ and $w$.  Suppose first that $e_3\in V(G_1)$, that is, that $e_3=abw$; then, as there must be an even number of e-vertices in $G_1$, there exists another edge $e_4=abx$, where $x\in\{u,v,w\}$.  In this case, let $C=aexe_4a$, where $e$ is chosen appropriately from among $e_1,e_2,$ and $e_3$.  As above, we see that $C$ is an $\F$-diminishing cycle, a contradiction with Lemma~\ref{lem:interchange}.

Hence $e_3\not\in V(G_1)$.  Then $u\in e_3$ or $v\in e_3$.  Suppose, without loss of generality, that $u\in e_3$.  Let $C=ae_1ue_3a$.  Then $C$ is an $\F$-interchanging cycle with one $\G$-edge in each connected component of $\G$, so Corollary~\ref{cor:simpledimcycle} says that $C$ is an $\F$-diminishing cycle.  However, this is a contradiction with Lemma~\ref{lem:interchange}.  

Therefore, we conclude that $H$ is eulerian.
\end{proof}

Now we approach the proof for covering 3-hypergraphs of order 6, an order that is too large to be proved simply, yet too small to be accomplished by the proof of Theorem~\ref{thm:order7}.  There are enough vertices in the hypergraph that we must now break the proof down into two broad cases, employing a different proof strategy for each.  Since we know the Euler family must have at most two components, those components can either have a 3-3 split or a 4-2 split on the vertices.

In the 3-3 case, we use the familiar counting argument to show that there exists an interchanging cycle of length 4.  If it is not a diminishing cycle, then we can modify it to find one that is.

In the 4-2 case, we have somewhat more difficulty.  We perform an interchange on a cycle that might not be diminishing, but can be used to scout information about the hypergraph.  In the end, this information will be vital in counting the pairs that are covered by the hypergraph, which leads to a contradiction.\\

\begin{lem}\label{lem:coveringorder6} Let $H$ be a covering 3-hypergraph of order 6.  Then $H$ is eulerian.
\end{lem}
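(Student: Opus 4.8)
The plan is to argue by contradiction, exactly as in the proofs of Lemmas~\ref{lem:coveringorder4} and~\ref{lem:coveringorder5}, reducing everything to a counting argument on the pairs of $V(H)$. Since $H$ is a covering $3$-hypergraph of order $6$ it has at least five edges, so by Corollary~\ref{cor:quasieuleriancovering} it is quasi-eulerian; let $\F$ be a minimum Euler family, $G$ the incidence graph, and $\G$ the corresponding subgraph of $G$. Assuming $H$ is not eulerian, Corollaries~\ref{cor:isolated} and~\ref{cor:3comps} force $\G$ to have exactly two connected components $G_1$ and $G_2$, both non-trivial; hence each contains at least two v-vertices, so the v-vertices of $\G$ split as $3+3$ or $4+2$ between $G_1$ and $G_2$. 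Throughout I will use the fact that an edge $e$ of $H$ corresponds to an e-vertex of degree $2$ in $\G$ whose two $\G$-neighbours (the anchors of $e$) lie in a common component, while the third vertex of $e$ is joined to it only by a non-$\G$-edge; consequently $e$ covers two \emph{cross pairs} (pairs with one endpoint in each $V(G_i)$) when its third vertex lies in the component not containing its anchors, and no cross pairs otherwise.

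For the $3$-$3$ split, there are nine cross pairs, all of which must be covered, and each edge covers $0$ or $2$ of them, so at least five edges each cover two cross pairs; since each edge covers a given pair at most once, the pigeonhole principle yields a cross pair $\{x,y\}$ with $x\in V(G_1)$, $y\in V(G_2)$, lying in two distinct edges $e_1,e_2$. The $4$-cycle $C=x\,e_1\,y\,e_2\,x$ of $G$ is $\F$-interchanging: each of $e_1,e_2$ has degree $3$ in $G$, so it is incident with exactly one non-$\G$-edge, and its two neighbours on $C$ lie in different components of $\G$, hence exactly one of those two incident edges of $C$ is a $\G$-edge. If the two $\G$-edges of $C$ lie one in $G_1$ and one in $G_2$, then $C$ is $\F$-diminishing by Corollary~\ref{cor:simpledimcycle}, contradicting minimality of $\F$ via Lemma~\ref{lem:interchange}. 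Otherwise both $\G$-edges of $C$ sit in one component, say at $x$ in $G_1$; then $y$ is the floater of $e_1$ and of $e_2$, and passing to $\G\Delta C$ reconnects the far anchors of $e_1,e_2$ (in $G_1$) to $y$, merging $G_1$ and $G_2$ into a single non-trivial component and forcing $H$ to be eulerian --- \emph{provided} $x$ is not a cut vertex of $G_1$. By Corollary~\ref{cor:noncut} each $G_i$ has at least two non-cut v-vertices, and the plan is to combine this with a refinement of the count above, together with rerouting $C$ through a parallel edge or an auxiliary edge whenever the single cut vertex of $G_1$ (or $G_2$) intervenes, so that the merging case can always be forced.

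For the $4$-$2$ split, $G_2$ has only two v-vertices $a,b$, so every e-vertex of $G_2$ has anchor set $\{a,b\}$; since $G_2$ is non-trivial and even, it contains an even number (hence at least two) of such e-vertices $e_1=\{a,b,z_1\}$ and $e_2=\{a,b,z_2\}$ with $z_1,z_2\in V(G_1)$. If $z_1=z_2$, the $4$-cycle $a\,e_1\,z_1\,e_2\,a$ is $\F$-interchanging with both $\G$-edges incident to $a$, and $\G\Delta C$ reconnects $b$ to $z_1\in V(G_1)$, again collapsing the two components and forcing $H$ eulerian. If $z_1\ne z_2$, I would perform a \emph{scouting} interchange along a cycle through $e_1$, $e_2$, and a carefully chosen further edge joining $z_1$ and $z_2$ (chosen so that the resulting cycle really is $\F$-interchanging): either this interchange already reduces the number of non-trivial components and we are done, or it pins down exactly which edges of $H$ can carry the remaining cross pairs between $\{a,b\}$ and $V(G_1)$, and a tight pigeonhole count over the fifteen pairs of $V(H)$ --- there being too few edges to cover all of them under those constraints --- delivers the contradiction.

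The main obstacle is precisely the two subcases just flagged: the cut-vertex subcase of the $3$-$3$ split and the $z_1\ne z_2$ subcase of the $4$-$2$ split. In each, a single short interchanging cycle need not be $\F$-diminishing, and one must either reroute the cycle through parallel or auxiliary edges --- taking care that the chosen auxiliary edge yields a genuine $\F$-interchanging cycle --- or extract enough structural information from a scouting interchange to make the edge-counting bound bite. Getting the bookkeeping of all these cases right, rather than any single idea, is what makes the order-$6$ proof substantially longer than the order-$3$, $4$, and $5$ cases.
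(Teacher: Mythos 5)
Your setup and case split (minimum Euler family, exactly two non-trivial components, a $3$-$3$ or $4$-$2$ distribution of v-vertices, pigeonhole on cross pairs, interchanging $4$-cycles) matches the paper's strategy, but the proposal stops short precisely where the real work lies, and both places you flag as "plans" are genuine gaps rather than routine bookkeeping. In the $3$-$3$ split, when both $\G$-edges of $C=x\,e_1\,y\,e_2\,x$ are incident with $x$ and $x$ is a cut vertex of $G_1$, "rerouting $C$ through a parallel edge or an auxiliary edge" is not something you can simply assert: any replacement cycle must again be $\F$-interchanging, and no refinement of the nine-pair count tells you such a cycle exists. The paper's treatment of this subcase is structural, not counting-based: it first rules out $\deg_{\G}(x)=2$ (then $x$ is isolated in $G_1\setminus E(C)$ and $C$ is diminishing after all), so $x$ is a cut vertex of degree at least $4$; then, since $e_1,e_2$ lie in $G_1$, each contains one of the two non-cut v-vertices $b,c$ of $G_1$, and either both contain $b$ (reroute through $b$, which is not a cut vertex) or $b\in e_1$, $c\in e_2$, in which case one shows there is no $2$-path from $b$ to $c$ in $\G$ (else $x$ lies on a cycle through $x,b,c$ and is not a cut vertex by Corollary~\ref{cor:noncut}), yet $b$ and $c$ are each joined to $x$ by $2$-paths avoiding $e_1,e_2$, so $G_1\setminus E(C)$ is connected --- a contradiction. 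None of this is recoverable from your sketch.

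The $4$-$2$ split has the same problem in sharper form. Your proposed $6$-cycle through $e_1=abz_1$, $e_2=abz_2$ and "a carefully chosen further edge joining $z_1$ and $z_2$" need not be $\F$-interchanging at all: an edge $f\ni z_1,z_2$ lies in the big component, so either both $z_1f,z_2f$ are $\G$-edges (if $z_1,z_2$ are the anchors of $f$) or neither is, and in both situations the e-vertex $f$ violates the interchanging condition; nothing guarantees a usable $f$ exists. Likewise the claimed "tight pigeonhole count over the fifteen pairs --- there being too few edges" is not substantiated and is not how the contradiction actually arises: the paper first proves the key structural claim that, apart from $e_1$, no edge contains the floater $w$ together with $a$ (or with $b$), which forces an edge of the form $wxy$ with all three vertices in the big component; it then performs a scouting interchange along $a\,e_1\,w\,e_2\,y\,e_3\,a$, analyzes the two possibilities for which flag of $e_2$ is the non-$\G$-edge, and in each derives forced edges (such as $abx$ and $byz$) whose existence contradicts the component structure of the new subgraph $\G\Delta C$. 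Your $z_1=z_2$ observation is fine, but the $z_1\neq z_2$ branch --- which is the generic one --- remains unproved, so the lemma is not established by the proposal as written.
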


\begin{proof} Let $G$ be the incidence graph of $H$, and suppose $H$ is not eulerian.  Corollary~\ref{cor:quasieuleriancovering} assures us that $H$ is quasi-eulerian, so let $\F$ be a minimum Euler family for $H$, and let $\G$ be the subgraph of $G$ corresponding to $\F$.

If $\G$ has an isolated vertex or any number of non-trivial connected components other than two, then $H$ is eulerian, by Corollary~\ref{cor:isolated} or~\ref{cor:3comps}, respectively.  Hence we may assume that $\G$ has exactly two connected components $G_1$ and $G_2$, and neither of them is trivial.

{\sc Case 1: $G_1$ and $G_2$ each have three v-vertices.} Let $a, b,$ and $c$ be the v-vertices of $G_1$, and let $x,y,$ and $z$ be the v-vertices of $G_2$.  Now, there are no $\G$-edges joining vertices of $G_1$ to vertices of $G_2$, and yet every pair in $S=\{\{u,v\}: u\in \{a,b,c\}, v\in \{x,y,z\}\}$ must be contained in some edge of $H$.

In fact, any edge of $H$ containing one pair in $S$ must contain exactly two such pairs, since the edge is a triple containing two v-vertices of one connected component of $\G$ and one v-vertex of the other.  Since there are nine pairs in $S$, and every edge of $H$ contains an even number of such pairs, at least one pair must be contained in two edges.

Without loss of generality, assume that $a,x\in e_1$ and $a,x\in e_2$, where $e_1,e_2\in E(H)$.  Then $C=ae_1xe_2a$ is an $\F$-interchanging cycle of $\G$, and since $\F$ is minimum, we have that $C$ is not an $\F$-diminishing cycle by Lemma~\ref{lem:interchange}.

If $G_1\setminus E(C)$ and $G_2\setminus E(C)$ each have just one non-trivial connected component, $C$ is an $\F$-diminishing cycle by Lemma~\ref{lem:dimcycle}, a contradiction.  Hence, without loss of generality, we assume that $G_1\setminus E(C)$ has two non-trivial connected components.  Since $G_1$ has no cut edge, we have that both $ae_1$ and $ae_2$ are $\G$-edges.  Consequently, both $xe_1$ and $xe_2$ are non-$\G$-edges, and so $G_2\setminus E(C)$ has just one non-trivial connected component.  If $\deg_{\G}(a)=2$, then $a$ is isolated in $G_1\setminus E(C)$.  Then $G_1\setminus E(C)$ must have just one connected component, for if not, then deleting $xe_2$ from the connected graph $G_1\setminus xe_1$ yields three connected components, in violation of Remark~\ref{remark:cutedge}.  Since $G_1\setminus E(C)$ and $G_2\setminus E(C)$ each have just one non-trivial connected component, again Lemma~\ref{lem:dimcycle} says that $C$ is an $\F$-diminishing cycle, a contradiction.  Hence $\deg_{\G}(a) \geq 4$ and, since deleting two edges incident with $a$ disconnects $G_1$ without isolating $a$, we deduce that $a$ is a cut vertex of $G_1$.

Since $a$ is a cut vertex of $G_1$, Corollary~\ref{cor:noncut} implies that $b$ and $c$ are not cut vertices of $G_1$.  Observe that, since each of $e_1$ and $e_2$ lies in $G_1$, each contains $b$ or $c$.  Without loss of generality, assume $b\in e_1$.  If we also have $b\in e_2$, then let $C=be_1xe_2b$.  Observe that $C$ is an $\F$-interchanging cycle, and since $b$ is not a cut vertex of $G_1$, we additionally have that $G_1\setminus E(C)$ (as well as $G_2\setminus E(C)$) is connected.  Then Lemma~\ref{lem:dimcycle} implies that $C$ is an $\F$-diminishing cycle, a contradiction with Lemma~\ref{lem:interchange} since $\F$ is minimum.  Hence $c\in e_2$.

Suppose $P$ is a 2-path from $c$ to $b$.  Then $ae_2cPbe_1a$ is a cycle of $G_1$ containing $a,b,$ and $c$.  Then Corollary~\ref{cor:noncut} implies that $a$ is not a cut vertex of $G_1$, contradicting our assumption on $a$.  Hence no 2-path from $c$ to $b$ exists in $\G$.

However, since $b$ and $c$ each have degree at least 2 in $\G$, they must each be connected to $a$ by a 2-path that does not traverse $e_1$ or $e_2$.  But then $G_1\setminus E(C)$ is connected, a contradiction.

We conclude that $G_1$ and $G_2$ cannot each have three v-vertices. 

{\sc Case 2: $G_1$ has two v-vertices and $G_2$ has four, without loss of generality.} Let $a$ and $b$ be the v-vertices of $G_1$, and let $w,x,y,$ and $z$ be the v-vertices of $G_2$.  Note that neither $a$ nor $b$ is a cut vertex of $G_1$, by Corollary~\ref{cor:noncut}.

There must exist an e-vertex $e_1\in V(G_1)$, adjacent to both $a$ and $b$ in $G_1$.  It must also be joined to another v-vertex --- say $w$ --- via a non-$\G$-edge.

Suppose there exists an edge $e$, distinct from $e_1$, containing $w$ and $a$.  Then let $C=ae_1wea$, and observe that $C$ is an $\F$-interchanging cycle. Then $G_2\setminus E(C)$ is connected, and either $G_1\setminus E(C)$ is connected or has $a$ as an isolated vertex, since $a$ is not a cut vertex of $G_1$, so $G_1\setminus E(C)$ has just one non-trivial connected component.  Lemma~\ref{lem:dimcycle} states that $C$ is in fact an $\F$-diminishing cycle, contradicting Lemma~\ref{lem:interchange}.  

Therefore, no edge containing $w$ also contains $a$, except for $e_1$. ($\ast$)

An analogous statement holds for $w$ and $b$.

Now, since $H$ is a covering 3-hypergraph, there must be edges of $H$ containing $w$ and the other v-vertices of $G_2$.  By ($\ast$), any such edge must contain three v-vertices of $G_2$.  Without loss of generality, let $e_2=wxy$ be such an edge of $H$.  Exactly one edge of $G$ incident with $e_2$ is a non-$\G$-edge.  We may assume, without loss of generality, that this non-$\G$-edge is either $we_2$ or $ye_2$. (Assuming $xe_2$ is the non-$\G$-edge would be equivalent to assuming $ye_2$ is.)

Let $e_3$ be an edge of $H$ containing $y$ and $a$.  We have that $e_3$ is distinct from both $e_1$ and $e_2$ because neither of those contains both $y$ and $a$.

Let $C=ae_1we_2ye_3a$, and observe that $C$ is an $\F$-interchanging cycle.  

Suppose $ae_3$ is a $\G$-edge.  Then $G_2\setminus E(C)$ is connected as it is $G_2$ with one edge ($we_2$ or $ye_2$) deleted.  Since $a$ is not a cut vertex of $G_1$, we also have that $G_1\setminus E(C)=G_1\setminus\{ae_1,ae_3\}$ is either connected or has a single non-trivial connected component with vertex set $V(G_1)\setminus\{a\}$.  In either case, Lemma~\ref{lem:dimcycle} implies that $C$ is an $\F$-diminishing cycle, contradicting Lemma~\ref{lem:interchange}.  Hence $ae_3$ is a non-$\G$-edge, and so $ye_3$ must be a $\G$-edge.

Since $\F$ is minimum, Lemma~\ref{lem:interchange} implies that $C$ cannot be an $\F$-diminishing cycle.  Let $\F'$ be an Euler family corresponding to $\G\Delta C$, so that we may denote $\G\Delta C=G_{\F'}$.  Note that we must have $|\F'|=2$, for if $|\F'|\geq 3$, then Corollary~\ref{cor:3comps} implies that $H$ is eulerian, a contradiction.

We now split into two cases, depending on which of $we_2$ and $ye_2$ is a $\G$-edge.  Since we assumed earlier that one of them is a non-$\G$-edge, exactly one of them must be a $\G$-edge.

{\sc Case A: $ye_2$ is a $\G$-edge and $we_2$ is a non-$\G$-edge.}  Since $G_{\F'}$ has two connected components, let $G_1'$ be the connected component of $G_{\F'}$ containing $a$, and let $G_2'$ be the other connected component.  Since $a$ and $b$ are the sole v-vertices of $G_1$ and they each have degree at least 2 in $G_{\F'}$, there must be another edge $e_4\in E(H)$ containing them both, where $ae_4$ and $be_4$ are both $\G$- (and hence $G_{\F'}$-) edges.  Then $b\in V(G_1')$.  We also have $w\in V(G_1')$ since $we_1b$ is a path in $G_{\F'}$, and $x\in V(G_1')$ since $xe_2w$ is a path in $G_{\F'}$.  This leaves $y$ and $z$ in $V(G_2')$, since $G_2'$ must contain two v-vertices.

Now, we have that $a,y\in e_3$, but do not yet know the third vertex of $e_3$.  However, this third vertex corresponds to a v-vertex that is adjacent to $e_3$ in both $\G$ and $G_{\F'}$.  It must be a v-vertex that is in both $G_2$ (since $ye_3$ is a $\G$-edge) and $G_1'$ (since $ae_3$ is a $G_{\F'}$-edge): we must have $x\in e_3$.  (By ($\ast$), it cannot be $w$.)

In $G_1'$, we now have a cycle $ae_4be_1we_2xe_3a$ traversing four v-vertices.  Corollary~\ref{cor:noncut} asserts that none of the four v-vertices of $G_1'$ are cut vertices of $G_1'$; in particular, we have that $x$ is not a cut vertex of $G_1'$.  Observe that $C'=xe_3ye_2x$ is an $\F'$-interchanging cycle in which $xe_3$ and $xe_2$ are $G_{\F'}$-edges.  Then $G_1'\setminus E(C')$ has at most one non-trivial connected component, since all $G_{\F'}$-edges of $C'$ are incident to $x$, which is not a cut vertex in $G_1'$.  We also have that $G_2'\setminus E(C') = G_2'$ is connected, so Lemma~\ref{lem:dimcycle} implies that $G_{\F'}\Delta C'$ has just one non-trivial connected component; hence $C'$ is $\F'$-diminishing.  Since $\F'$ is minimum, this contradicts Lemma~\ref{lem:interchange}.

{\sc Case B: $we_2$ is a $\G$-edge and $ye_2$ is a non-$\G$-edge.}  Now, since $G_{\F'}$ is disconnected, we have $a,b,$ and $w$ in one connected component of $G_{\F'}$, called $G_1'$; and $x$ and $y$ in another connected component, called $G_2'$.  If $z$ is in $G_2'$ as well, then we have two connected components with three v-vertices each, a contradiction with ($\ast$).  Hence $z$ is in $G_1'$.

First, consider what vertices are contained in $e_3\in E(H)$: we already know it contains $a$ and $y$.  Since the e-vertex $e_3$ is adjacent to $y$ in $G_2$ and adjacent to $a$ in $G_1'$, it must be adjacent to another v-vertex in both $G_2$ and $G_1'$.  The only candidates are $w$ and $z$, but it cannot be $w$ because $e_3$ also contains $a$ and this would lead to a contradiction with ($\ast$).  Therefore, we conclude that $z\in e_3$.

Since $x$ and $y$ are the only v-vertices in a connected component of $G_{\F'}$, there must be two 2-paths from $x$ to $y$ in $G_2'$, of which one exists in $\G$; call this path $P_1$.

Since $\deg_{\G}(z)\geq 2$ and $\deg_{G_{\F'}}(z)\geq 2$ and we only know of edge $ze_3$, vertex $z$ must be adjacent in $G_2$ and in $G_1'$ to another e-vertex.  This e-vertex cannot be any of $e_1,e_2,$ and $e_3$, so there must be one in both $G_2$ and $G_1'$; call it $e_5$.  Now, since $e_5$ is not traversed by $C$, it must have the same neighbours in both $G_2$ and $G_1'$, hence it must be adjacent to $w$ in both.  So $ze_5w$ is a $zw$-path of length 2 in $\G$.

Finally, we already know that $we_2x$ and $ye_3z$ are 2-paths in $\G$.  Hence $we_2xP_1ye_3ze_5w$ is a cycle of $\G$ containing all the v-vertices of $G_2$.  Then Corollary~\ref{cor:noncut} implies that $G_2$ has no v-vertices that are cut vertices.  It should be clear, again by Corollary~\ref{cor:noncut}, that neither $a$ nor $b$ are cut vertices of $G_1$, since they are the only v-vertices in $G_1$.

In order to derive a contradiction, we consider the set of pairs of v-vertices in opposite connected components of $\G$.  Let $S=\{\{u,v\}: u\in\{a,b\}, v\in\{w,x,y,z\}\}$ be this set of all such pairs.  Since $H$ is a covering 3-hypergraph, every element of $S$ must be contained in at least one edge.  We already know $e_1 = abw, e_2 = wxy, e_3 = ayz,$ and also that $w,z\in e_5$, so we will be able to deduce what pairs are contained in edges of $H$ and which have yet to be covered (by edges that we know about).

In terms of what pairs of $S$ are covered by edges we know about, we have the following:

\begin{itemize}
\item $e_1$ contains the pairs $\{a,w\}$ and $\{b,w\}$;
\item As in Case A, there exists an edge $e_4$, distinct from $e_1$, containing $a$ and $b$ in $\G$, so $e_4$ contains the pairs $\{a,v\}$ and $\{b,v\}$ for some $v\in\{w,x,y,z\}$; and
\item $e_3$ contains the pairs $\{a,y\}$ and $\{a,z\}$.
\end{itemize}

Suppose there exists a pair $\{s,t\}\in S$ contained in two edges $e,f\in E(H)$.  Then $C=setfs$ is an $\F$-interchanging cycle traversing one v-vertex from each connected component, and neither $s$ nor $t$ is a cut vertex of $\G$ or isolated in $\G$.  Then Corollary~\ref{cor:dimcycle} implies that $H$ is eulerian, a contradiction.

This implies that no pair can be contained in two edges.  We see, therefore, that $v$ cannot be equal to $w,y,$ or $z$, so it must be $x$.  Then the only pairs left to cover are $\{b,y\}$ and $\{b,z\}$, implying that there exists an edge $e=byz.$  Since $b\in V(G_1)$ and $y,z\in V(G_2)$, we see that $e\in V(G_2)$, so $ye$ and $ze$ are $\G$-edges.  Since $e\not\in V(C)$, we must have that $ye$ and $ze$ are edges in $G_{\F'}$ as well, contradicting the fact that $y$ and $z$ lie in different connected components of $G_{\F'}$.

In all cases, we have obtained a contradiction, so we conclude that $H$ is eulerian, as required.
\end{proof}

\section{Summary of Main Results}

\begin{thm}\label{thm:covering} Let $H$ be a covering 3-hypergraph of order at least 3 and with at least two edges.  Then $H$ is eulerian.
\end{thm}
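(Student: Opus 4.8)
The plan is simply to assemble the case analysis on the order $n=|V(H)|$ that has been carried out piecemeal throughout this chapter, since each order range has already been handled. First I would observe that the hypothesis ``at least two edges'' is only doing work in the degenerate case $n=3$: because $H$ is a covering 3-hypergraph, every one of the $\binom{n}{2}$ pairs of vertices lies in some edge, while each edge accounts for exactly $\binom{3}{2}=3$ such pairs, so $|E(H)|\geq\binom{n}{2}/3$. In particular $|E(H)|\geq 2$ is automatic as soon as $n\geq 4$, and the stated hypothesis serves only to exclude the (unique, one-edged) TS$(3,1)$.

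With that remark in place, I would split into the cases $n\geq 7$, $n=6$, $n=5$, $n=4$, and $n=3$, and in each case cite the appropriate earlier result: Theorem~\ref{thm:order7} for $n\geq 7$; Lemma~\ref{lem:coveringorder6} for $n=6$; Lemma~\ref{lem:coveringorder5} for $n=5$; Lemma~\ref{lem:coveringorder4} for $n=4$; and Lemma~\ref{lem:coveringorder3} (which in turn rests on Corollary~\ref{cor:TS3}) for $n=3$, where the assumption of at least two edges is invoked. Since $H$ has order at least 3 by hypothesis, these cases are exhaustive, and $H$ is eulerian in every one of them.

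There is essentially no obstacle left at this stage: the real difficulty lay in the constituent lemmas, above all in the long case analysis of Theorem~\ref{thm:order7} (reducing to an Euler family of cardinality $2$ via $\F$-diminishing cycles and then killing the two-component case by the degree-minimality trick of Lemma~\ref{lem:mindegree} together with the neighbourhood analysis of Lemma~\ref{lem:order7setup}) and in the $4$–$2$ split of Lemma~\ref{lem:coveringorder6}. The final theorem is just the bookkeeping that collects these into a single uniform statement, so the only thing to be careful about is making sure the order ranges in the cited results partition $\{n : n\geq 3\}$ without gaps or overlaps, which they do.

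\begin{proof}
Since $H$ is a covering 3-hypergraph, every pair of vertices of $H$ lies in at least one edge, and each edge contains exactly $\binom{3}{2}=3$ pairs of vertices; hence $|E(H)|\geq \binom{n}{2}/3$, where $n=|V(H)|\geq 3$. In particular, $|E(H)|\geq 2$ automatically whenever $n\geq 4$.

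If $n\geq 7$, then Theorem~\ref{thm:order7} implies that $H$ is eulerian. If $n=6$, then Lemma~\ref{lem:coveringorder6} implies that $H$ is eulerian. If $n=5$, then Lemma~\ref{lem:coveringorder5} implies that $H$ is eulerian. If $n=4$, then Lemma~\ref{lem:coveringorder4} implies that $H$ is eulerian. Finally, if $n=3$, then since $H$ has at least two edges, Lemma~\ref{lem:coveringorder3} implies that $H$ is eulerian.

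As $n\geq 3$, these cases are exhaustive, so $H$ is eulerian.
\end{proof}
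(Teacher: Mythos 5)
Your proposal is correct and follows essentially the same route as the paper: a case split on the order $n$, citing Lemma~\ref{lem:coveringorder3}, Lemma~\ref{lem:coveringorder4}, Lemma~\ref{lem:coveringorder5}, Lemma~\ref{lem:coveringorder6}, and Theorem~\ref{thm:order7} for $n=3,4,5,6$, and $n\geq 7$ respectively. The added remark that the two-edge hypothesis is automatic for $n\geq 4$ is accurate but not needed, since the cited lemmas for those orders do not assume it.
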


\begin{proof} Let $n$ be the order of $H$.  If $n=3,4,5,$ or 6, then $H$ is eulerian by Lemma~\ref{lem:coveringorder3},~\ref{lem:coveringorder4},~\ref{lem:coveringorder5}, or~\ref{lem:coveringorder6}, respectively.

If $n\geq 7$, then $H$ is eulerian by Theorem~\ref{thm:order7}.

Therefore, $H$ is eulerian.
\end{proof}

Having settled the matter for covering 3-hypergraphs, we now turn our attention to covering $k$-hypergraphs, for any $k\geq 3$.  We use induction to prove Theorem~\ref{thm:coveringinduction}, for which the induction step is quite simple.  Theorem~\ref{thm:covering} serves as the induction basis necessary to complete this proof.

{\bf Theorem~\ref{thm:coveringinduction}.} Let $k\geq 3$, and let $H$ be a covering $k$-hypergraph.  Then $H$ is eulerian if and only if $H$ has at least two edges.

\begin{proof}
We may assume that $H$ is non-empty, for an empty hypergraph cannot be a covering $k$-hypergraph.

It should be clear that a hypergraph with only one edge cannot admit a closed walk, so it is not eulerian.  We need only prove sufficiency.

We prove this using induction on $k$.  When $k=3$, we have that $H$ is a covering 3-hypergraph with at least two edges, so Theorem~\ref{thm:covering} implies that $H$ is eulerian.  Suppose that, for some fixed $k\geq 3$, our result holds: that is, any covering $k$-hypergraph with at least two edges is eulerian.

Let $H=(V,E)$ be a covering $(k+1)$-hypergraph with $|E(H)|\geq 2.$  Fix some $v\in V$ and let $\mathcal{B}=\{e\setminus\{v\}:e\in E\text{ such that }v\in e\}$.  Let $H'=(V\setminus\{v\}, \mathcal{B})$.  Note that $H'$ is a $k$-uniform hypergraph.  It also has the property that every $(k-1)$-tuple of $V(H')$ lies in at least one edge: to see this, let $X\subset V(H')$ be a set of cardinality $k-1$.  Then, in $H$, the $k$-subset $X\cup \{v\}$ lies in some edge $e$.  Then there exists in $\mathcal{B}$ a corresponding edge $e\setminus \{v\}$, which contains $X$.

Now, let $E'=\{e\in E: v\not\in e\}$.  Obtain a set $\mathcal{C}$ of edges, each of cardinality $k$, by taking all the edges of $E'$ and removing an arbitrary vertex from each of them (the choice of vertex does not matter).  Let $H^*=(V\setminus\{v\},\mathcal{B}\cup\mathcal{C}),$ in which the edges of $H^*$ are obtained by the multiset union of $\mathcal{B}$ and $\mathcal{C}$.  Then $H^*$ is a $k$-uniform hypergraph such that every $(k-1)$-tuple of $V(H^*)$ lies in at least one edge: that is, we have that $H^*$ is a covering $k$-hypergraph.  Since $|E(H^*)|=|E(H)|\geq 2$, we may apply the induction hypothesis to get that $H^*$ is eulerian, so it admits an Euler tour $T$.  

Define a function $\varphi: E(H^*)\rightarrow E(H)$ by mapping $e$ to $e\cup \{v\}$ if $e$ originally comes from $\mathcal{B}$, or by mapping $e$ to its corresponding edge in $E'$ if $e$ originally comes from $\mathcal{C}$.  Then $\varphi$ is a bijection, and since $V(H^*)\subseteq V(H)$, we may apply Lemma~\ref{lem:truncatedhypergraph} to obtain an Euler tour of $H$ (by regarding $\{T\}$ as an Euler family of cardinality 1).

Therefore, the result holds by induction.
\end{proof}
\cleardoublepage

\chapter{Eulerian Properties of $\ell$-Covering \protect\hidemath-Hypergraphs}\label{chapter:quasi-eulerian}


\section{Introduction}
Following the results of Chapter~\ref{chapter:covering}, we would like to prove that all $\ell$-covering $k$-hypergraphs are eulerian.  Recall that an $\ell$-covering $k$-hypergraph is a $k$-uniform hypergraph in which every $\ell$-subset of vertices lie together in at least one edge.  We have already proven that when $k=\ell+1$, such hypergraphs are eulerian, so perhaps we can use similar techniques to extend these results to larger $k$ (relative to $\ell$).

Unfortunately, the first thing we would need to do is prove a result about $(k-2)$-covering $k$-hypergraphs, necessarily with $k\geq 4$.  But the interchanging cycles that we used to prove Theorem~\ref{thm:coveringinductionbase} are much more unwieldy if the hypergraph is not 3-uniform.  When we looked at the incidence graph of a 3-uniform hypergraph admitting an Euler family $\F$, we knew that every pair of edges incident with an e-vertex had at least one $G_{\F}$ edge.  This is not the case for $k$-uniform hypergraphs with $k\geq 4$, so it is much harder to find interchanging cycles.

Instead, we will have to be satisfied to prove that $\ell$-covering $k$-hypergraphs are quasi-eulerian using Lov\'{a}sz's $(g,f)$-factor Theorem.  We will be able to prove the following result.\\

\begin{thm} Let $H$ be an $\ell$-covering $k$-hypergraph for some $2\leq\ell<k$.  Then $H$ is quasi-eulerian if and only if it has at least two edges.
\end{thm}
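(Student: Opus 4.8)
The plan is to invoke the characterization of quasi-eulerian hypergraphs via Lov\'asz's $(g,f)$-factor Theorem, namely the result stating that $H=(V,E)$ is quasi-eulerian if and only if, for all disjoint sets $S\subseteq E$ and $T\subseteq V\cup E$, Inequality~\ref{eqn:lovasz2} holds. So the entire proof reduces to verifying that inequality for every such pair $(S,T)$ in the incidence graph $G$ of an $\ell$-covering $k$-hypergraph $H$ with at least two edges. First I would dispense with the easy direction: a hypergraph with a single edge cannot support a closed strict trail (as in Lemma~\ref{lem:trivialcases}), and an $\ell$-covering $k$-hypergraph with $\ell<k$ is automatically non-empty, so ``at least two edges'' is necessary. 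For sufficiency, I would set $m=|E(H)|\geq 2$, note every e-vertex of $G$ has degree $k$ and every v-vertex $v$ has $\deg_G(v)=\deg_H(v)$, and split $T=T_V\cup T_E$ with $T_V=T\cap V$, $T_E=T\cap E$.

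The key estimates to carry out, in order: rewrite the left-hand side of Inequality~\ref{eqn:lovasz2} as $2|S| + \sum_{x\in T_V}\deg_G(x) + \sum_{x\in T_E}(k-2) - \varepsilon_G(S,T_V) - q'(S,T)$, using $\deg_G(e)=k$ for e-vertices. Since $S$ and $T_E$ are disjoint subsets of $E$, and every e-vertex has degree $k$, each term $2|S|$ and each $(k-2)$ summand is nonnegative and the only genuinely negative contributions are $-\varepsilon_G(S,T_V)$ and $-q'(S,T)$. I would bound $\varepsilon_G(S,T_V)\leq k|S|$ (each e-vertex of $S$ has only $k$ incident v-vertices), so $2|S|-\varepsilon_G(S,T_V)\geq (2-k)|S|\geq -(k-2)|S|$; this negative term must be absorbed by the $\sum_{x\in T_V}\deg_G(x)$ contribution. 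The crucial point is to control $q'(S,T)$, the number of connected components $C$ of $G-(S\cup T)$ with $\varepsilon_G(V(C),T)$ odd: I would argue that, because $H$ is $\ell$-covering with $\ell\geq 2$, the v-vertices of $G$ are very densely connected to the e-vertices, so $G-(S\cup T)$ has at most one ``large'' component plus possibly a bounded number of trivial components (isolated v-vertices whose every incident e-vertex was removed, i.e.\ whose incident edges all lie in $S\cup T_E$). A trivial component consisting of an isolated v-vertex $v$ contributes to $q'$ only if $\varepsilon_G(\{v\},T)=\varepsilon_G(\{v\},T_E)$ is odd, which in particular requires $\deg_H(v)\geq 1$ edges of $v$ to lie in $S\cup T_E$; an $\ell$-covering condition with $\ell\ge2$ forces $\deg_H(v)$ to be reasonably large (at least $\binom{n-1}{\ell-1}/\binom{k-2}{\ell-2}$ or similar), and more importantly, two distinct isolated v-vertices $u,v$ must share a common edge $e\ni u,v$ (since $\{u,v\}$ is an $\ell$-subset when $\ell=2$, or is contained in one when $\ell>2$), forcing $e\in S\cup T_E$; this lets me charge isolated v-vertices against elements of $S\cup T_E$ and thereby bound their number, hence bound $q'$, in terms of $|S|+|T_E|$, with the constant absorbed by the slack from the $(k-2)$ per element of $T_E$ plus the $\deg_G$ terms.

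More precisely, the counting I would carry out is: let $Z$ be the set of isolated v-vertices of $G-(S\cup T)$; the edges of $H$ incident to vertices of $Z$ all belong to $S\cup T_E$, and every pair from $Z$ lies in at least one such edge (using $\ell\le 2\le |Z|$ when $|Z|\ge 2$, or embedding the pair into an $\ell$-set otherwise — I'd verify $\ell$-covering gives pairwise coverage as in Corollary~\ref{cor:quasieuleriancovering}), so by a pigeonhole/double-counting argument $\binom{|Z|}{2}\leq (k-2)$ \,$(|S|+|T_E|)\cdot$(something), giving $|Z|=O(\sqrt{(k-2)(|S|+|T_E|)})$, which is easily dominated. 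The non-trivial components: I would show there is at most one, because any two v-vertices surviving in $G-(S\cup T)$ with at least one surviving incident e-vertex each are in fact connected — an $\ell$-covering condition with $\ell\geq 2$ means for any two surviving v-vertices $u,v$ there is an edge $e\ni u,v$, and if $e\notin S\cup T_E$ then $e$ survives and connects them, while if $e\in S\cup T_E$ we use the abundance of common edges. Since $m\geq 2$, there is no degenerate obstruction. Then $q'(S,T)\leq 1 + |Z|$, and the whole inequality follows by combining: $2|S| - (k-2)|S| + \sum_{x\in T_V}\deg_G(x) - 1 - |Z| + (k-2)|T_E|\geq 0$, where the dominant positive terms are the per-edge $(k-2)$ slack and the v-vertex degrees (each at least $2$ in the relevant regime), which I would check case-by-case on whether $S,T_V,T_E$ are empty.

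The main obstacle I anticipate is making the bound on $q'(S,T)$ fully rigorous and uniform: specifically, ruling out the possibility that $G-(S\cup T)$ breaks into several moderately-sized components each with an odd cut to $T$, while simultaneously $|S|+|T_E|$ and $\sum_{x\in T_V}\deg_G(x)$ are small. I expect to need a genuine lower bound on vertex degrees from the $\ell$-covering property — something like $\deg_H(v)\geq \binom{n-1}{\ell-1}\big/\binom{k-1}{\ell-1}$ — together with the pairwise-coverage fact, and to handle the boundary cases ($n$ close to $k$, $S=\emptyset$, etc.) separately, perhaps invoking Corollary~\ref{cor:quasieuleriancovering} or the $r$-regular $r$-uniform corollary when $H$ happens to also be regular. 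The subtlety is that the excerpt's version of the Lov\'asz characterization ranges over \emph{all} disjoint $S\subseteq E$, $T\subseteq V\cup E$, so I cannot restrict attention to ``natural'' cuts; the argument must be a clean inequality chase valid for arbitrary $S,T$, and getting the constants to close in the worst case — small $|T_V|$, moderate $|Z|$ — will be where the real work lies.
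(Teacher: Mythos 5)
Your overall route is the same as the paper's for the core case --- verify a Lov\'asz-type $(g,f)$-factor condition on the incidence graph --- but the step you lean on to control $q'(S,T)$ contains a genuine gap, and it is exactly where the real content of the theorem lies. Your claim that $G-(S\cup T)$ has at most one non-trivial component is false: the covering hypothesis gives only \emph{at least one} edge per pair, so when the (possibly unique) edge covering a cross pair lies in $S\cup T_E$ there is no ``abundance of common edges'' to fall back on, and nothing prevents several non-trivial components. For instance, on $V=\{1,\dots,8\}$ the $2$-covering $4$-hypergraph with edges $1234$, $5678$, $1256$, $1278$, $3456$, $3478$ splits into two non-trivial components after deleting the four cross edges. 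Consequently $q'(S,T)\leq 1+|Z|$ is unavailable, and what must actually be proved is a quantitative trade-off between the number of components of $G-(S\cup T)$ and $|S\cup(T\cap E)|$. That trade-off is precisely the condition $|X|\geq 2\big\lfloor\frac{c(G^*-X)+3}{k}\big\rfloor$ isolated in Lemma~\ref{lem:relax}, and verifying it for $2$-covering $k$-hypergraphs is the bulk of Theorem~\ref{thm:2coveringkhypergraphs}: a two-case analysis (isolated v-vertices present or not) that needs the edge-count bound of Lemma~\ref{lem:numedges}, the optimization Lemma~\ref{lem:notturan} to bound the number of uncovered cross pairs when all components have at least $k$ v-vertices, a discriminant estimate on the resulting quadratic in the number of components, and separate treatment of small orders via Lemma~\ref{lem:2-intersection} and Corollary~\ref{cor:smallcases}. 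None of this bookkeeping appears in your sketch, and your own closing paragraph concedes that closing the constants in the worst case is open; as written, the inequality chase does not go through.

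Two secondary points. First, the paper does not attack Inequality~\ref{eqn:lovasz2} with arbitrary $T\cap V$: it appends many loops at the v-vertices (the graph $G^*$ in Lemma~\ref{lem:relax}), which makes any $S$ or $T$ meeting $V$ harmless and reduces everything to edge subsets $X\subseteq E(H)$; handling arbitrary $T_V$ head-on, as you propose, only adds friction (and note $\varepsilon_G(S,T_V)\leq\sum_{x\in T_V}\deg_G(x)$ is the useful bound, not $\varepsilon_G(S,T_V)\leq k|S|$, whose deficit you cannot in general recover from the $T_V$ degrees you have already spent). Second, your per-element slack $(k-2)|T_E|$ vanishes in usefulness at $k=3$, which the statement includes; the paper covers $k=3$ (and in fact all $\ell=k-1$) through the eulerian result Theorem~\ref{thm:coveringinduction}, and then lifts from $\ell=2$ to general $\ell$ by the vertex-deletion induction using Lemma~\ref{lem:truncatedhypergraph} in Theorem~\ref{thm:lcoveringkhypergraphs}. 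Your observation that pairwise coverage follows from $\ell$-coverage is correct and would let one skip that induction in principle, but only once the $2$-covering counting argument is actually in place.
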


We will prove this by induction on $k$ using a method similar to Theorem~\ref{thm:coveringinduction}.  Once again, the induction step will be relatively simple, but the basis of induction is the focus of a majority of this chapter.  Part of the basis of induction comes from Theorem~\ref{thm:coveringinduction}, and another part comes from the following result, which we will prove later in this chapter.\\

\begin{thm} Let $k\geq 4$, and let $H$ be a 2-covering $k$-hypergraph of order at least $k$ and size at least 2.  Then $H$ is quasi-eulerian.
\end{thm}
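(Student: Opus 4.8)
The plan is to prove the theorem by exhibiting a suitable spanning subgraph of the incidence graph $G$ of $H$, invoking the quasi-eulerian characterization of Theorem~\ref{thm:incidencegraph}(1): it suffices to find a spanning subgraph $G'$ of $G$ with $\deg_{G'}(e)=2$ for every e-vertex $e$ and $\deg_{G'}(v)$ even for every v-vertex $v$. Equivalently, by the Lov\'asz-based characterization (the corollary of Theorem~\ref{thm:lovasz2}), it suffices to verify Inequality~\ref{eqn:lovasz2} for all disjoint $S\subseteq E(H)$ and $T\subseteq V(H)\cup E(H)$. I would take the Lov\'asz route, since for a $2$-covering hypergraph the key structural fact --- every pair of vertices lies in a common edge --- translates directly into strong connectivity and high-degree statements about $G$ that make the counting tractable. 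First I would record the easy reductions: $H$ has no isolated vertices, every v-vertex of $G$ has degree at least $n-1\ge k-1\ge 3$ (since every other vertex shares an edge with it), every e-vertex has degree exactly $k\ge 4$, and $|E(H)|\ge 2$ is assumed; also $G$ is connected by Theorem~\ref{thm:inc1}, indeed $2$-edge-connected-ish because deleting any single edge cannot disconnect (mirroring the argument of Corollary~\ref{cor:quasieuleriancovering}).

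Next I would set up the inequality. Write $m=|E(H)|$, $n=|V(H)|$. For disjoint $S\subseteq E$, $T=(T\cap V)\cup(T\cap E)$, the quantity to bound below is
\begin{equation*}
\Phi(S,T)=2|S|+\sum_{x\in T}\deg_G(x)-2|T\cap E|-\varepsilon_G(S,T\cap V)-q'(S,T).
\end{equation*}
The term $\sum_{x\in T\cap V}\deg_G(x)\ge (n-1)|T\cap V|$ is large, and $\sum_{x\in T\cap E}\deg_G(x)-2|T\cap E|=(k-2)|T\cap E|\ge 2|T\cap E|\ge 0$. The two terms that can hurt us are $\varepsilon_G(S,T\cap V)$ --- bounded crudely by $\sum_{v\in T\cap V}\deg_G(v)$ but also by $k|S|$, and more carefully by $\min(\cdot)$ --- and $q'(S,T)$, the number of components $C$ of $G-(S\cup T)$ with $\varepsilon_G(V(C),T)$ odd. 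The heart of the argument is to show $q'(S,T)$ is small: because $H$ is $2$-covering, a component $C$ of $G-(S\cup T)$ containing two v-vertices $u,w$ must contain (the e-vertex of) every edge through $u$ and $w$ unless that e-vertex lies in $S\cup T$; since every such pair forces such an edge, I expect to show that at most one component of $G-(S\cup T)$ contains $\ge 2$ v-vertices, analogous in spirit to Lemma~\ref{lem:onecc}. The remaining ``small'' components each consist of a single v-vertex (possibly with attached e-vertices all of whose other neighbours are in $S\cup T$) or e-vertices only; I would bound their number by $|T\cap V|+|S|+|T\cap E|$ or better, using that each isolated-in-$G-(S\cup T)$ v-vertex $v$ has all $\deg_G(v)\ge n-1$ of its incident e-vertices inside $S\cup T$.

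With those structural bounds in hand the verification splits into a short case analysis: (i) if $S\cup T$ is small --- say $|S|+|T|$ below a threshold like $n-2$ --- then $G-(S\cup T)$ has exactly one ``big'' component and $q'$ is at most a small constant plus the few singleton components, while $\varepsilon_G(S,T\cap V)\le k|S|$ is dwarfed by the slack coming from $\sum_{x\in T\cap V}\deg_G(x)\ge(n-1)|T\cap V|$ and from $2|S|$; (ii) if $|T\cap V|$ is large, the $(n-1)|T\cap V|$ term dominates everything; (iii) if $S$ is large (close to all of $E$), then $q'(S,T)$ is tiny because almost all e-vertices have been removed, and $2|S|$ carries the inequality; (iv) the boundary mixed regime, handled by combining $2|S|$, $(k-2)|T\cap E|$, and $(n-1)|T\cap V|$ against $\varepsilon_G(S,T\cap V)+q'(S,T)$ using $\varepsilon_G(S,T\cap V)\le\min\bigl(k|S|,\sum_{v\in T\cap V}\deg_G(v)\bigr)$. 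The main obstacle --- the step I expect to require the most care --- is the precise bookkeeping for $q'(S,T)$ when $S$ removes many but not all e-vertices and $T\cap V$ is moderate: I must rule out many odd ``medium'' components, and here is exactly where the hypothesis $k\ge 4$ (so e-vertices have degree $\ge 4$, giving more redundancy than the $k=3$ case) and the $2$-covering property do the real work. Once $q'(S,T)$ is shown to be $O(|S|+|T|/(n-1))$ with an explicit small constant, $\Phi(S,T)\ge 0$ follows for all $n\ge k\ge 4$ and $m\ge 2$, completing the proof.
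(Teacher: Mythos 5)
Your overall route (verifying the Lov\'asz condition on the incidence graph) is the same one the paper takes, so the question is whether your plan for the hard term $q'(S,T)$ goes through --- and there it has a genuine gap. The structural claim you lean on, that at most one component of $G-(S\cup T)$ contains two or more v-vertices, is false. For example, with $k=4$ split $V(H)$ into two $4$-sets $A$ and $B$, take the edges $A$, $B$, and six ``cross'' edges with two vertices in each part chosen to cover all $16$ cross pairs; deleting the six cross e-vertices leaves two components each containing four v-vertices. In general the $2$-covering property only tells you that every pair of v-vertices lying in distinct components of $G-(S\cup T)$ must be covered by an e-vertex inside $S\cup(T\cap E)$, and since each such e-vertex can account for up to $\binom{k}{2}$ pairs (at most $\lfloor k^2/4\rfloor$ cross pairs for a fixed bipartition), the correct conclusion is quantitative, not structural: many components force $|S|+|T\cap E|$ to be large. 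Turning that into the bound $q'(S,T)\le 2|S|+(k-2)|T\cap E|+\dotsb$ is exactly the real work, and it is what the paper spends its effort on --- a lower bound on $|E(H)|$ (its Lemma~\ref{lem:numedges}), an optimization lemma for the pair count over component sizes (Lemma~\ref{lem:notturan}), and a two-case analysis (isolated v-vertices present or not) ending in polynomial inequalities in $k$ and the number of components (Theorem~\ref{thm:2coveringkhypergraphs}). Your proposal acknowledges this is where the care is needed but supplies no argument, and the one structural shortcut it offers in place of the counting does not hold.

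Two further points. First, your reduction ``every v-vertex of $G$ has degree at least $n-1$'' is wrong: a vertex shares each incident edge with $k-1$ others, so the $2$-covering property only gives $\deg_G(v)\ge\lceil (n-1)/(k-1)\rceil$, which can be as small as $2$ (e.g.\ two parallel edges on $n=k$ vertices). Several of your cases rely on the term $(n-1)|T\cap V|$ dominating, so this error propagates. (The paper sidesteps all bookkeeping involving $T\cap V$, $\varepsilon_G(S,T\cap V)$ and v-vertex degrees by attaching $2r$ loops to each v-vertex and setting $f(v)=2r$ huge, reducing the whole verification to a condition on subsets $X$ of e-vertices only --- see Lemma~\ref{lem:relax}; adopting that device would simplify your cases (i)--(iv) considerably.) Second, the counting genuinely fails when $n$ is small relative to $k$: the paper must dispose of $n=k$, $n\le 2k-3$, and $(k,n)=(4,6)$ separately by direct constructions (Lemma~\ref{lem:order4}, Corollary~\ref{cor:smallcases}) before the Lov\'asz-type argument applies, and it also needs $H$ to have no cut edges (Lemma~\ref{lem:2-connected}). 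Your uniform inequality scheme would have to absorb these regimes, and as sketched it does not.
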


We have already presented Lov\'a{s}z's Theorem as Theorem~\ref{thm:lovasz2}, but we will present it again here.  The techniques that Bahmanian and \v{S}ajna used to modify Theorem~\ref{thm:lovasz2} form the starting point for our ideas, and we expand on them to suit our purposes for $\ell$-covering $k$-hypergraphs.\\

\begin{thm}\label{thm:lovasz}{\em (The $(g,f)$-factor Theorem, Lov\'{a}sz \cite{LL,AK})} Let $G=(V,E)$ be a graph and $f,g:V\rightarrow\mathds{N}$ be functions such that $g(x)\leq f(x)$ and $g(x)\equiv f(x)$ (mod 2) for all $x\in V$.  Then $G$ has a $(g,f)$-factor $F$ such that $\deg_F(x)\equiv f(x)$ (mod 2) for all $x\in V$ if and only if, for all disjoint $S,T\subseteq V$, we have
\begin{equation}\label{eqn:lovaszcondition}
\sum_{x\in S}f(x) + \sum_{x\in T}(\deg_G(x) - g(x)) - e_G(S,T) - q(S,T)\geq 0,
\end{equation}
where $e_G(S,T)$ denotes the number of edges with one end in $S$ and the other in $T$, and $q(S,T)$ is the number of connected components $C$ of $G-(S\cup T)$ such that $$\sum_{x\in V(C)}f(x) + e_G(V(C),T)\equiv 1\text{ (mod 2).}$$
\end{thm}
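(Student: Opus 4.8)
The plan is to prove both implications separately: necessity by a direct parity-counting argument, and sufficiency by reducing the existence of the desired factor to the existence of a perfect matching in an auxiliary graph, to which Tutte's $1$-factor theorem can be applied. Throughout, write $\delta(S,T)$ for the left-hand side of Inequality~\ref{eqn:lovaszcondition}. For necessity, suppose $F$ is a $(g,f)$-factor with $\deg_F(x)\equiv f(x)\pmod 2$ for all $x$, and fix disjoint $S,T\subseteq V$. Since $\deg_F(x)\le f(x)$ on $S$ and $\deg_F(x)\ge g(x)$ on $T$, I would first bound $\sum_{x\in S}f(x)\ge\sum_{x\in S}\deg_F(x)$ and $\sum_{x\in T}(\deg_G(x)-g(x))\ge\sum_{x\in T}(\deg_G(x)-\deg_F(x))$, then account for the edges of $F$ running between $S$, $T$, and the components of $G-(S\cup T)$. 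The term $-e_G(S,T)$ and the contributions of the non-odd components cancel against these degree sums; the crux is that each component $C$ counted by $q(S,T)$ contributes an unavoidable surplus of at least $1$. Indeed, $\sum_{x\in V(C)}\deg_F(x)\equiv\sum_{x\in V(C)}f(x)\pmod 2$, so the parity that defines an odd component forces at least one extra edge incident with $C$ to be ``spent'' relative to the naive count. Summing these surpluses gives $\delta(S,T)\ge 0$.

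The substantial direction is sufficiency, which I would attack by constructing an auxiliary graph $H$ from $G$ so that $H$ has a perfect matching if and only if $G$ has a $(g,f)$-factor $F$ with $\deg_F\equiv f\pmod 2$. For each edge $e=uv$ of $G$ introduce two \emph{edge-vertices} $e_u,e_v$ joined by an edge, and for each $v\in V$ attach a \emph{Lov\'asz gadget} $T_v$ wired to the edge-vertices $\{e_v:e\ni v\}$, engineered so that in every perfect matching of $H$ the number of incident edges of $G$ selected at $v$ (those $e$ with $e_ue_v$ matched) ranges over exactly $\{g(v),g(v)+2,\dots,f(v)\}$. This gadget must simultaneously enforce the lower bound $g(v)$, the upper bound $f(v)$, and the parity $\equiv f(v)$; the parity requirement is precisely what makes the plain $f$-factor gadget insufficient and forces Lov\'asz's more delicate construction. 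Granting a correct gadget, the selected-edge sets of perfect matchings of $H$ biject with the desired factors of $G$, so it remains to characterize when $H$ has a perfect matching.

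I would then apply Tutte's $1$-factor theorem to $H$: a perfect matching exists if and only if for every $U\subseteq V(H)$ the number of odd components of $H-U$ is at most $|U|$. The main obstacle, and the technical heart of the argument, is the \emph{translation} of this condition back to $G$. The goal is to show that a worst-case (most violating) set $U$ may be taken in a canonical form induced by a pair $S,T\subseteq V(G)$ --- roughly, $S$ records the vertices $v$ whose entire gadget $T_v$ is deleted and $T$ records the vertices whose gadget is only partly deleted --- and that under this correspondence the Tutte deficiency $(\text{number of odd components of }H-U)-|U|$ equals $-\delta(S,T)$, with the odd components of $H-U$ matching the components $C$ counted by $q(S,T)$. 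Establishing this numerical identity, and verifying that no non-canonical choice of $U$ can be more violating, is where essentially all of the gadget bookkeeping is concentrated; once it is in place, Tutte's condition for $H$ becomes exactly ``$\delta(S,T)\ge 0$ for all disjoint $S,T$,'' completing the proof. I take Tutte's $1$-factor theorem as the foundational input, as it gives the most economical route; should full self-containment be desired, Tutte's theorem itself can be recovered from the Tutte--Berge formula by standard alternating-path arguments, which I would not reproduce here.
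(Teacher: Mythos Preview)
The paper does not prove this theorem at all: it is quoted from Lov\'asz \cite{LL} and Akiyama--Kano \cite{AK} as a black-box tool (it appears first as Theorem~\ref{thm:lovasz2} with a \qed, and is restated here without proof). So there is no ``paper's own proof'' to compare your proposal against.

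That said, your outline is the standard approach to Lov\'asz's theorem and is essentially correct in spirit. The necessity argument via parity counting is routine. For sufficiency, the reduction to Tutte's 1-factor theorem via a gadget construction is indeed how the result is typically established; your description of the gadget's role (enforcing $g(v)\le\deg_F(v)\le f(v)$ together with the parity constraint) and of the translation step (showing a worst-case Tutte set $U$ can be canonicalized to a pair $(S,T)$ with deficiency $-\delta(S,T)$) accurately identifies where the work lies. You are right that the bookkeeping in this translation is the technical heart and that you have not carried it out; a complete proof would need the explicit gadget and the case analysis showing non-canonical $U$ never do worse. But as a plan this is sound, and it matches the treatment in the cited references rather than anything the thesis itself supplies.
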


Lov\'{a}sz's Theorem gives us the necessary and sufficient conditions for existence of spanning subgraphs with specific degree requirements.  Since we will only be using it to find subgraphs corresponding to Euler families in bipartite graphs, we can strengthen the conditions to obtain a sufficient (but not necessary) condition instead.  These strengthened conditions, however, are much easier to work with, and are satisfiable when applied to 2-covering $k$-hypergraphs, as we shall see.

Recall that $c(G^*-X)$ denotes the number of connected components of $G^*-X$.\\

\begin{lem}\label{lem:relax}Fix $k\geq 4$.  Let $H$ be a $k$-hypergraph of order $n$ and size $m$, free of cut edges, and let $G$ be its incidence graph.  Define $r=(m+n)^2$, and obtain a graph $G^*$ from $G$ by appending $2r$ loops to every v-vertex.

Define $f:V(G^*)\rightarrow \mathds{Z}$ by 
\begin{equation*}
f(x) = 	\left\{
      			\begin{array}{l c l}
      				2 & : & x\text{ is an e-vertex} \\
      				2r & : & x\text{ is a v-vertex}.
			\end{array}
		\right.
\end{equation*}

If, for all $X\subseteq E(H)$ with $|X|\geq 2$, we have $|X|\geq 2\lfloor\frac{c(G^*-X)+3}{k}\rfloor$, then $G^*$ has an $(f,f)$-factor.  Furthermore, this $(f,f)$-factor can be used to obtain an Euler family for $H$, so $H$ is quasi-eulerian.
\end{lem}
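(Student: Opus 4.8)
The plan is to apply the $(g,f)$-factor Theorem (Theorem~\ref{thm:lovasz}) to $G^*$ with $g=f$. Since $f$ is even everywhere, a $(g,f)$-factor is automatically an $(f,f)$-factor with $\deg_F\equiv f\pmod 2$, and the term $\sum_{x\in V(C)}f(x)$ in the definition of $q(S,T)$ is always even, so for $G^*$ the quantity $q(S,T)$ is just the number of connected components $C$ of $G^*-(S\cup T)$ joined to $T$ by an odd number of edges. Thus it suffices to verify, for all disjoint $S,T\subseteq V(G^*)$, that
\[
\Phi(S,T):=\sum_{x\in S}f(x)+\sum_{x\in T}\bigl(\deg_{G^*}(x)-f(x)\bigr)-e_{G^*}(S,T)-q(S,T)\ \ge\ 0 .
\]
Granting this, $G^*$ has an $(f,f)$-factor $F$; deleting from $F$ all of its loops yields a spanning subgraph $F_0$ of the incidence graph $G$ in which every e-vertex still has degree $2$ (loops occur only at v-vertices) and every v-vertex has even degree (each deleted loop lowers the degree by $2$, starting from $2r$). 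By Theorem~\ref{thm:incidencegraph}(1), $F_0$ corresponds to an Euler family of $H$, so $H$ is quasi-eulerian.

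The purpose of the $2r$ loops is to make $\Phi$ visibly nonnegative whenever $S$ or $T$ meets a v-vertex, reducing us to $S,T\subseteq E(H)$. Indeed $e_{G^*}(S,T)=e_G(S,T)\le km$ (a loop never crosses between disjoint sets) and $q(S,T)\le c\bigl(G^*-(S\cup T)\bigr)\le n+m$, and since $km\le nm\le\tfrac14(n+m)^2$ one gets $e_{G^*}(S,T)+q(S,T)\le km+n+m\le\tfrac14(n+m)^2+(n+m)<2(n+m)^2=2r$; one also checks $\deg_{G^*}(x)\ge f(x)$ for every $x$, with $\deg_{G^*}(x)-f(x)=\deg_G(x)+2r$ at v-vertices. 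Hence if $S$ contains a v-vertex then $\sum_{x\in S}f(x)\ge 2r>e_{G^*}(S,T)+q(S,T)$ and the $T$-sum is nonnegative, so $\Phi(S,T)>0$; if $T$ contains a v-vertex then its $T$-summand alone is $\ge 2r$ and the same estimate applies. So assume $S,T\subseteq E(H)$; then bipartiteness of $G$ forces $e_{G^*}(S,T)=0$, and $\Phi(S,T)=2|S|+(k-2)|T|-q(S,T)$. Write $X=S\cup T$. If $X=\varnothing$, $\Phi=0$; if $X=\{e\}$ with $e\in S$, $\Phi=2$; if $X=\{e\}$ with $e\in T$, $\Phi=k-2-q(\varnothing,\{e\})$, and here the hypothesis that $H$ has no cut edges is used: $q(\varnothing,\{e\})$ counts components of $G^*-e$ meeting $\psi(e)$ in an odd number of vertices, and if there were $\ge k-1$ of them the $k$ vertices of $e$ would lie in $\ge k-1$ components of $H\setminus e$ (components of $G^*-e$ being those of $H\setminus e$, up to loops), giving $c(H\setminus e)\ge c(H)+k-2>c(H)$ — contradiction; so $q(\varnothing,\{e\})\le k-2$ and $\Phi\ge 0$.

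The crux is the case $|X|\ge 2$, where the standing hypothesis $|X|\ge 2\bigl\lfloor\tfrac{c(G^*-X)+3}{k}\bigr\rfloor$ enters. I would argue by contradiction: suppose $q(S,T)>2|S|+(k-2)|T|$. The handshake identity $\sum_C e_{G^*}(V(C),T)=k|T|$ (every edge leaving $T$ lands at a v-vertex of $G^*-X$) gives both $q(S,T)\le k|T|$ and $q(S,T)\equiv k|T|\pmod 2$; the first, with the contradiction hypothesis, forces $|S|<|T|$. The hypothesis rearranges to $c(G^*-X)<\tfrac{k|X|}{2}+k-3$, and $q(S,T)\le c(G^*-X)$; substituting and using $k\ge 4$ gives, for $k=4$ an immediate contradiction (by integrality of $c(G^*-X)$), and for $k\ge 5$ the pinch $|T|=|S|+1$. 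This confines $q(S,T)$ to $\{k|T|-1,\,k|T|\}$; the parity congruence excludes $k|T|-1$, while $q(S,T)=k|T|$ forces each component of $G^*-X$ meeting $T$ to do so in exactly one edge, so $c(G^*-X)\ge k|T|$, whence the hypothesis yields $|X|\ge 2|T|$, i.e.\ $|S|\ge|T|$ — again contradicting $|S|<|T|$. Hence $\Phi(S,T)\ge 0$ always, Lovász's condition holds, and the conclusion follows as in the first paragraph.

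I expect this last case to be the only real difficulty: the bound $c(G^*-X)<\tfrac{k|X|}{2}+k-3$ coming from the hypothesis is, by itself, too weak to yield $q(S,T)\le 2|S|+(k-2)|T|$ when $k\ge 5$, and the argument closes only after it is played off against the two elementary "handshake" facts $q(S,T)\le k|T|$ and $q(S,T)\equiv k|T|\pmod 2$. Identifying that combination — rather than settling for a cruder counting estimate — is where the work lies; the rest is bookkeeping with Theorem~\ref{thm:lovasz} and Theorem~\ref{thm:incidencegraph}.
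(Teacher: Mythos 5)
Your overall route is the paper's: apply Theorem~\ref{thm:lovasz} with $g=f$ on the loop-augmented incidence graph, use the $2r$ loops to dispose of any $S$ or $T$ meeting a v-vertex, use bipartiteness to get $e_{G^*}(S,T)=0$ when $S,T\subseteq E(H)$, use the no-cut-edge hypothesis for the singleton-$T$ case (your component-counting version of that step is fine, and even avoids the connectivity the paper implicitly assumes there), and finish by deleting loops and invoking Theorem~\ref{thm:incidencegraph}. Where you diverge is the case $|X|\geq 2$: the paper bounds $q(S,T)\leq\min\{c(G^*-X),k|T|\}$ and splits on whether $|T|$ exceeds $t=\lfloor\frac{c(G^*-X)+3}{k}\rfloor$, whereas you argue by contradiction using the parity relation $q\equiv k|T|\pmod 2$.

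The gap is your ``pinch'' claim. From $q>2|S|+(k-2)|T|$, $q\leq c(G^*-X)$ and $c(G^*-X)<\frac{k|X|}{2}+k-3$, the substitution yields only $(k-4)(|T|-|S|)<2k-6$, i.e.\ $|T|-|S|\leq 2$ for $k\geq 6$ and $|T|-|S|\leq 3$ for $k=5$; it does \emph{not} give $|T|=|S|+1$. Your finishing move is stated only for $|T|=|S|+1$: if $|T|-|S|=2$, parity leaves $q\in\{k|T|-2,\,k|T|\}$, and $q=k|T|-2$ does not force each component to meet $T$ in exactly one edge, so the step ``$c(G^*-X)\geq k|T|$'' is unavailable; the cases $|T|-|S|\in\{2,3\}$ are therefore uncovered as written. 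The hole is repairable with the same ingredients: writing $d=|T|-|S|\geq 1$, the contradiction hypothesis plus parity give $q\geq k|T|-2d+2$, hence $c(G^*-X)\geq k|T|-2d+2$, and the lemma's hypothesis then gives $|X|\geq 2\lfloor\frac{k|T|-2d+5}{k}\rfloor$; for $d=2$ this is $|X|\geq 2|T|$ against $|X|=2|T|-2$, and for $d=3$ (only possible when $k=5$) it is $|X|\geq 2|T|-2$ against $|X|=2|T|-3$, a contradiction in each case. Alternatively, the paper's direct estimate via $\min\{c(G^*-X),k|T|\}$ and the threshold $t$ avoids this case analysis altogether.
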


\begin{proof} Let $G^*$ and $f$ be defined as in the statement of the lemma, and let $g=f$.  Assume that for all $X\subseteq E(H)$ satisfying $|X|\geq 2$, we have $|X|\geq 2\lfloor\frac{c(G^*-X)+3}{k}\rfloor$.

Let $S,T\subseteq V(G^*)$ be disjoint subsets.  We show that (\ref{eqn:lovaszcondition}) holds for $G^*, f, g, S,$ and $T$.

First we note that Condition~(\ref{eqn:lovaszcondition}) becomes 
\begin{equation}\label{eqn:newcondition}
\gamma(S,T)=\sum_{x\in S}f(x) + \sum_{x\in T}(\deg_{G^*}(x)-f(x)) - e_{G^*}(S,T)-q(S,T)\geq 0,
\end{equation}
where $q(S,T)$ is the number of connected components $C$ of $G^*-(S\cup T)$ such that $e_{G^*}(V(C), T)$ is odd.

Furthermore, observe that $e_{G^*}(S,T)\leq mn$, the number of edges in $K_{n,m}$; and $q(S,T)\leq m+n,$ the number of vertices in $G^*$.  We also have that $\deg_{G^*}(x) \geq f(x)$ for any $x\in V(G^*)$.

{\sc Case 1: $S$ contains a v-vertex $v$.}  Then
\begin{align*}
\gamma(S,T)&=\sum_{x\in S}f(x) + \sum_{x\in T}(\deg_{G^*}(x)-f(x)) - e_{G^*}(S,T)-q(S,T)\\
&\geq 2r + 0 - mn - m - n,
\end{align*}
since $\deg_{G^*}(x)\geq f(x)$ for all $x\in T$, and $e_{G^*}(S,T)\leq mn$ and $q(S,T)\leq m+n$.  Then $2r - mn - m - n\geq 0$ since $2r = 2(m+n)^2,$ so Condition (\ref{eqn:newcondition}) is satisfied.

{\sc Case 2: $T$ contains a v-vertex $v$.}  Then
\begin{align*}
\gamma(S,T)&=\sum_{x\in S}f(x) + \sum_{x\in T}(\deg_{G^*}(x)-f(x)) - e_{G^*}(S,T)-q(S,T)\\
&\geq \sum_{x\in S}f(x) + (\deg_{G^*}(v)-f(v)) - e_{G^*}(S,T)-q(S,T)\\
&\geq 0 + (4r - 2r) - mn - m - n\\
&\geq 2r - mn - m - n,
\end{align*}
since $f(x)\geq 0$ for all $x\in V(G^*)$, and $\deg_{G^*}(v) - f(v) \geq 4r - 2r$, and $e_{G^*}(S,T)\leq mn$ and $q(S,T)\leq m+n$.  Then $2r - mn - m - n\geq 0$ since $2r = 2(m+n)^2,$ so Condition (\ref{eqn:newcondition}) is satisfied.

{\sc Case 3: Neither $S$ nor $T$ contains v-vertices.}  Then $e_{G^*}(S,T) = 0$ since $S$ and $T$ are subsets of the set of e-vertices, which is an independent set.
\begin{itemize}
\item {\sc Case 3A: $T=\emptyset$.} We have that $e_{G^*}(V(C),T) = 0$ for all connected components $C$ of $G^*-(S\cup T)$, and $f$ takes only even values.  Then by the definition of $q(S, T)$, we have $q(S, T) = 0$.  Hence 
\begin{align*}
\gamma(S,T)=\sum_{x\in S}f(x)\geq 0,
\end{align*}
since $f$ is nonnegative.  Therefore, Condition (\ref{eqn:newcondition}) holds.

\item {\sc Case 3B: $S=\emptyset$ and $|T| = 1$.}  Note that $T$ contains a single e-vertex.  Since $H$ has no cut edges, Theorem~\ref{thm:cutvvertex} implies that $G$ has no cut e-vertices, so neither does $G^*$. Then $q(S,T) \leq 1$ since there is only one connected component of $G^* - (S\cup T)$.  Then
\begin{align*}
\gamma(S,T)&= \sum_{x\in S}f(x) + \sum_{x\in T}(\deg_{G^*}(x)-f(x)) - e_{G^*}(S,T)-q(S,T)\\
&\geq 0 + (k-2) - 0 - 1\\
&= k - 3\\
&\geq 0.
\end{align*}

\item {\sc Case 3C: $T\neq\emptyset$ and $|S\cup T|\geq 2$.}

Since $q(S,T)$ is the number of connected components $C$ of $G^*-(S\cup T)$ such that $e_{G^*}(V(C),T)$ is odd, it cannot be greater than either the number of connected components of $G^*-(S\cup T)$ or the number of edges in an edge cut $[T,\overline{T}]$ of $G^*$.  Since all the vertices of $T$ are e-vertices that have degree $k$ in $G^*$, we have that the latter quantity is at most $k|T|$. Hence $q(S,T)\leq \text{min}\{c(G^*-(S\cup T)), k|T|\}$.  Then
\begin{align*}
\gamma(S,T)&= \sum_{x\in S}f(x) + \sum_{x\in T}(\deg_{G^*}(x)-f(x)) - e_{G^*}(S,T)-q(S,T)\\
&=2|S| + (k-2)|T| - q(S,T)\\
&\geq 2|S| + 2|T| + (k-4)|T| - \text{min}\{c(G^*-(S\cup T)), k|T|\}
\end{align*}
\begin{equation}\label{eqn:3B}=2|S\cup T| + (k-4)|T| - \text{min}\{c(G^*-(S\cup T)), k|T|\}.
\end{equation}

Define $t = \lfloor\frac{c(G^*-(S\cup T)) + 3}{k}\rfloor$, and suppose first that $|T|>t$. Then $|T|\geq t + 1$.  Furthermore, we note that $kt-3\leq c(G^*-(S\cup T))\leq kt+k-4$ from the definition of $t$. In particular, since $k|T|\geq kt+k$, we have $\text{min}\{c(G^*-(S\cup T)), k|T|\}=c(G^*-(S\cup T))\leq kt+k-4$.  Then
\begin{align*}
\gamma(S,T)&\geq 2|S\cup T| + (k-4)(t + 1) - kt-k+4\\
&= 2|S\cup T| - 4t\\
&= 2|S\cup T| - 4\Big\lfloor\frac{c(G^*-(S\cup T)) + 3}{k}\Big\rfloor.
\end{align*}

Letting $X=S\cup T$, we have that $|X|\geq 2$.  By the assumption of the lemma that $|X| \geq 2\lfloor\frac{c(G^*-X) + 3}{k}\rfloor,$ we see that $\gamma(S,T)\geq 0$.  Therefore, in the case where $|T|>t$, Condition (\ref{eqn:newcondition}) holds.

Hence we may assume that $|T|\leq t$.  Then from $\gamma(S,T)\geq 2|S\cup T| + (k-4)|T| - \text{min}\{c(G^*-(S\cup T)), k|T|\}$ in (\ref{eqn:3B}), we have
\begin{align*}
\gamma(S,T)&\geq 2|S\cup T| + (k-4)|T| - k|T|\\
&=2|S\cup T| - 4|T|\\
&\geq 2|S\cup T| - 4t\\
&= 2|S\cup T| - 4\Big\lfloor\frac{c(G^*-(S\cup T)) + 3}{k}\Big\rfloor,
\end{align*}
and again we see that our assumption on $X=S\cup T$ suffices to show that $\gamma(S,T)\geq 0$.

Hence Condition (\ref{eqn:newcondition}) holds in this case.
\end{itemize}

Since Condition (\ref{eqn:newcondition}) holds for all disjoint $S,T\subseteq V(G^*)$, we conclude that $G^*$ has an $(f,f)$-factor $F$ by Theorem~\ref{thm:lovasz}.

If we delete loops of $F$, we obtain a spanning subgraph $F'$ of $G$ in which all v-vertices have even degree and all e-vertices have degree 2.  This corresponds to an Euler family for $H$ by Theorem~\ref{thm:incidencegraph}, and so $H$ is quasi-eulerian.
\end{proof}

Even with the strengthened condition of Lemma~\ref{lem:relax}, we will be able to prove that all 2-covering $k$-hypergraphs satisfy it, for all $k\geq 4$.

\section{Technical Lemmas}

In preparation to prove that all 2-covering $k$-hypergraphs are quasi-eulerian, we need a few technical lemmas.  Most of them are used to clean up certain cases in the main proof.\\

\begin{lem}\label{lem:2-connected} Let $k\geq 4$, and let $H$ be a 2-covering $k$-hypergraph of order at least $k+1$.  Then $H$ has no cut edges.
\end{lem}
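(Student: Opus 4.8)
The plan is to show that deleting any single edge $e$ from $H$ cannot disconnect $H$, using the 2-covering property together with the lower bound $n \geq k+1$ on the order. The key observation is that since $|e| = k$ and $n \geq k+1$, there is at least one vertex $w \in V(H) \setminus e$; this ``external'' vertex will serve as a hub connecting everything in $H \setminus e$.

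First I would fix an arbitrary edge $e \in E(H)$ and suppose, for contradiction, that $e$ is a cut edge, so $H \setminus e$ has two vertices $u, v$ lying in distinct connected components of $H \setminus e$. Since $n \geq k+1 > k = |e|$, choose $w \in V(H) \setminus e$. Because $H$ is a 2-covering $k$-hypergraph, the pair $\{u, w\}$ lies in some edge $f_1 \in E(H)$, and the pair $\{v, w\}$ lies in some edge $f_2 \in E(H)$. The point is that $f_1 \neq e$ and $f_2 \neq e$: this is where I need $w \notin e$, since $w \in f_1 \cap f_2$ but $w \notin e$ forces $f_1, f_2 \neq e$. Then $f_1$ gives a walk in $H \setminus e$ joining $u$ to $w$ (via the pair $\{u,w\} \subseteq f_1$), and $f_2$ gives a walk in $H \setminus e$ joining $w$ to $v$. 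Concatenating, $u$ and $v$ are connected in $H \setminus e$, contradicting the assumption that they lie in distinct connected components.

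Therefore no edge of $H$ is a cut edge, which is the claim. I expect this argument to be essentially the same in spirit as the proof of Corollary~\ref{cor:quasieuleriancovering} for covering 3-hypergraphs (the case $|V(H)| \geq 4$), just adapted to the $k$-uniform, 2-covering setting; the only subtlety is making sure the hypothesis $n \geq k+1$ is used exactly where it is needed — namely to guarantee the existence of the hub vertex $w$ outside $e$ — and this is really the only ``obstacle,'' which is minor. One small point worth stating carefully: I should note at the outset that $H$ being connected is implicit (or, if not assumed, the statement ``$H$ has no cut edges'' still makes sense since a cut edge requires $c(H \setminus e) > c(H)$, and the argument above shows that for any two vertices connected in $H$, they remain connected in $H \setminus e$, so $c(H \setminus e) \leq c(H)$ always).
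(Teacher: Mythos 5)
Your proof is correct, and it takes a somewhat different route from the paper's own proof of this lemma. The paper proves a stronger statement: every pair of vertices $u,v$ lies on a common cycle --- if $u,v$ share at least two edges they lie on a 2-cycle, and otherwise, with $e$ their unique common edge and $w\in V(H)\setminus e$, the edges $e_1\supseteq\{u,w\}$ and $e_2\supseteq\{v,w\}$ are shown to be distinct from $e$ and from each other, yielding the cycle $ue v e_2 w e_1 u$ --- and then deduces that no cut edge exists. You instead fix the candidate cut edge $e$ first and show directly that $H\setminus e$ remains connected through the hub vertex $w\notin e$, which is essentially the argument of Corollary~\ref{cor:quasieuleriancovering} for covering 3-hypergraphs, as you anticipated. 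Your route is a little more economical: you need neither the case split on whether $u,v$ share two edges nor the check $e_1\neq e_2$, since it is irrelevant whether $f_1=f_2$ when all you want is a $uv$-walk in $H\setminus e$; what the paper's version buys is the stronger conclusion that any two vertices lie on a cycle (two edge-disjoint $uv$-paths), which the lemma does not actually require. One small point to tidy up: nothing forces $u,v\in e$, so $w$ may coincide with $u$ or $v$, in which case the pair $\{u,w\}$ (or $\{v,w\}$) is not a pair of distinct vertices; simply drop the corresponding $f_i$ there, or, cleaner still, observe that every vertex $x\neq w$ satisfies $\{x,w\}\subseteq f$ for some edge $f\neq e$ (since $w\notin e$), so every vertex of $H\setminus e$ is connected to $w$ and $H\setminus e$ is connected outright.
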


\begin{proof} Fix distinct $u,v\in V(H)$.  We will show that $u$ and $v$ are in a cycle of $H$.

If $u$ and $v$ are contained in at least two edges together, then $u$ and $v$ are in a 2-cycle that traverses two of these edges.  

Now, assume that $u$ and $v$ are contained in exactly one edge together, say $e$.  Let $w\in V(H)\setminus e$.  Now, since $H$ is a 2-covering $k$-hypergraph, there exist edges $e_1, e_2\in E(H)$ such that $u$ and $w$ lie in $e_1$ together, and $v$ and $w$ lie in $e_2$ together.  We have $e\not\in\{e_1,e_2\}$ because $e$ does not contain $w$, but $e_1$ and $e_2$ do.  Furthermore, if $e_1=e_2$, then $e_1$ contains both $u$ and $v$, so there are at least two distinct edges containing $u$ and $v$, a contradiction.

Now, since $e, e_1,$ and $e_2$ are three distinct edges, we have that $ueve_2we_1u$ is a cycle of $H$ containing both $u$ and $v$.  We conclude that there are two edge-disjoint $uv$-paths in $H$ for any pair $u,v\in V(H)$.  Hence $H$ has no cut edges.
\end{proof}

\begin{lem}\label{lem:numedges} Let $k\geq 4$, and let $H$ be a 2-covering $k$-hypergraph of order $n\geq k+1$.  Suppose one of the following holds:
\begin{itemize}
\item $n>\frac{3k}{2};$ or
\item $k\geq 7.$
\end{itemize}
Then $H$ has at least $2\lfloor\frac{n+3}{k}\rfloor$ edges.
\end{lem}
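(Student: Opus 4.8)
\textbf{Proof plan for Lemma~\ref{lem:numedges}.}

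The plan is to lower-bound the number of edges of $H$ by a simple counting argument. Since $H$ is a 2-covering $k$-hypergraph, every one of the $\binom{n}{2}$ pairs of vertices must lie in at least one edge, while each edge, having cardinality $k$, covers only $\binom{k}{2}$ pairs. Hence $|E(H)|\geq \binom{n}{2}/\binom{k}{2}$. So the whole lemma reduces to the elementary inequality
\[
\frac{\binom{n}{2}}{\binom{k}{2}}\;\geq\; 2\left\lfloor\frac{n+3}{k}\right\rfloor,
\]
which I would verify under each of the two hypotheses. To make the floor manageable I would first note $\lfloor\frac{n+3}{k}\rfloor \leq \frac{n+3}{k}$, so it suffices to prove $\frac{n(n-1)}{k(k-1)}\geq \frac{2(n+3)}{k}$, i.e. $n(n-1)\geq 2(k-1)(n+3)$.

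First I would handle the case $n>\frac{3k}{2}$. Here I would substitute and use $n\geq\frac{3k}{2}$ wherever it strengthens the inequality; writing $n = \frac{3k}{2}+t$ for $t>0$ (or just using $n-1 \geq \frac{3k}{2}-1$ and $n \geq \frac{3k}{2}$) the left side is at least roughly $\frac{3k}{2}\cdot(\frac{3k}{2}-1)$ while the right side is $2(k-1)(n+3)$; since $n$ appears linearly on the right and quadratically on the left, for $n$ large the inequality is clear, and one checks the boundary $n = \lceil \frac{3k}{2}\rceil$ (and small $k$, since $k\geq 4$) directly. A cleaner route: show $n(n-1)-2(k-1)(n+3) = n^2 - (2k-1)n - (6k-6)$ is nonnegative, which as a quadratic in $n$ holds once $n \geq \frac{(2k-1)+\sqrt{(2k-1)^2+4(6k-6)}}{2}$; one then checks this threshold is at most $\frac{3k}{2}$ for all $k\geq 4$ (a finite, routine verification for small $k$ plus an asymptotic comparison), so $n > \frac{3k}{2}$ suffices.

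For the second case, $k\geq 7$ with only $n\geq k+1$ assumed, I would set $n=k+1$ first (the hardest instance, since the left side $\frac{\binom{n}{2}}{\binom{k}{2}}$ is increasing in $n$ while I must also track the behaviour of the floor). At $n=k+1$ we have $\frac{\binom{k+1}{2}}{\binom{k}{2}} = \frac{k+1}{k-1}$, which is just over $1$, and $\lfloor\frac{n+3}{k}\rfloor = \lfloor\frac{k+4}{k}\rfloor = 1$ for $k\geq 5$, so we'd need $\frac{k+1}{k-1}\geq 2$ — which is \emph{false} for $k\geq 4$. So the crude pair-counting bound is too weak at $n=k+1$, and the genuine obstacle is that one must instead argue more carefully that $H$ cannot have just one edge: with $n\geq k+1$ there is a vertex outside any given edge, and covering the pairs through that vertex forces a second edge, giving $|E(H)|\geq 2$, which matches $2\lfloor\frac{n+3}{k}\rfloor = 2$ exactly when $k\leq n+3$, i.e. always in this range until $n+3\geq 2k$. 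For $n$ in the window $k+1\leq n\leq \lceil\frac{3k}{2}\rceil$ with $k\geq 7$, note $\lfloor\frac{n+3}{k}\rfloor=1$ throughout (since $n+3\leq \frac{3k}{2}+3 < 2k$ once $k>6$), so the target is just $|E(H)|\geq 2$, which follows from $H$ having at least two edges as argued; for $n>\frac{3k}{2}$ we fall back on Case~1. Thus the two hypotheses together exactly cover the range where either the floor is pinned at $1$ (and two edges suffice) or $n$ is large enough for pair-counting to win; the main obstacle to watch is the overlap/boundary $n\approx\frac{3k}{2}$, where I must confirm $\lfloor\frac{n+3}{k}\rfloor$ is still $1$ for $k\geq 7$ so that the weak bound $|E(H)|\geq 2$ closes the gap, and separately confirm the quadratic inequality kicks in for $n>\frac{3k}{2}$ at all $k\geq 4$.
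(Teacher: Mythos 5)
Your reduction of the lemma to the smoothed pair-counting inequality $n(n-1)\geq 2(k-1)(n+3)$ does not survive the check you defer: the quadratic $n^2-(2k-1)n-6(k-1)$ has its positive root at roughly $2k+2$ (for $k=4$ it is exactly $n=9$, for $k=5$ about $11.2$, for $k=7$ about $15.4$), so your claim that the threshold is at most $\frac{3k}{2}$ is false for every $k\geq 4$; indeed at $n\approx\frac{3k}{2}$ the left side is about $\frac{9}{4}k^2$ while the right side is about $3k^2$. So your Case 1 fails throughout roughly $\frac{3k}{2}<n\leq 2k+1$, and since your Case 2 ($k\geq 7$) falls back on Case 1 as soon as $n>\frac{3k}{2}$, both branches of the proposal are broken in that window. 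Worse, this is not just an artifact of replacing $\lfloor\frac{n+3}{k}\rfloor$ by $\frac{n+3}{k}$: in part of the window pure pair-counting is genuinely too weak even with integrality. For $k=7$, $n=11$ the target is $2\lfloor\frac{14}{7}\rfloor=4$, but $\lceil\binom{11}{2}/\binom{7}{2}\rceil=\lceil 55/21\rceil=3$; similarly $k=8$, $n=13$ gives target $4$ versus $\lceil 78/28\rceil=3$. So a new idea, not just more care with floors, is needed there.

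The missing ingredient is a degree argument for the middle range, which is how the paper closes exactly this gap. The paper splits on $n$: for $n\leq 2k-4$ the target is $2$ and two edges exist trivially (as in your $n=k+1$ discussion); for $n\geq 3k-3$ the pair count does win, via $|E(H)|\geq\frac{n(n-1)}{k(k-1)}\geq\frac{3(n-1)}{k}=\frac{2n+n-3}{k}\geq\frac{2n+6}{k}\geq 2\lfloor\frac{n+3}{k}\rfloor$; and in the window $2k-3\leq n\leq 3k-4$, where the target is at most $4$, it argues that since $n\geq k+1$ no vertex can have degree $1$ (a degree-$1$ vertex has only $k-1\leq n-2$ neighbours), so if $|E(H)|\leq 3$ the handshaking lemma gives $2n\leq k|E(H)|\leq 3k$, i.e. $n\leq\frac{3k}{2}$, contradicting the first hypothesis directly, and contradicting the second because $2k-3\leq\frac{3k}{2}$ forces $k\leq 6$. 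Your handling of the regime $k\geq 7$, $k+1\leq n\leq\frac{3k}{2}$ (floor pinned at $1$, two edges suffice) is sound; what your plan lacks is any tool for $\frac{3k}{2}<n\leq 3k-4$, and the handshake bound $|E(H)|\geq\frac{2n}{k}$ (or an equivalent) is what has to be added.
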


\begin{proof} Since there are $\binom{n}{2}$ pairs of vertices to cover, and each edge covers $\binom{k}{2}$ pairs, we necessarily have $|E(H)|\geq \frac{n(n-1)}{k(k-1)}$.

{\sc Case 1: $n\leq 2k-4$.} It suffices to show that $|E(H)|\geq 2$ because now $2\lfloor\frac{n+3}{k}\rfloor\leq 2$. Clearly $H$ has at least two edges, since it has order at least $k+1$.  Therefore, in this case we have $|E(H)|\geq 2\lfloor\frac{n+3}{k}\rfloor$.

{\sc Case 2: $n\geq 3k-3$.}  Then
\begin{align*}
|E(H)|\geq \frac{n(n-1)}{k(k-1)} &\geq \frac{(3k-3)(n-1)}{k(k-1)}\\
&= \frac{3(n-1)}{k}\\
&= \frac{2n + n - 3}{k}\\
&\geq \frac{2n + 3k - 6}{k}\\
&\geq \frac{2n + 6}{k}\text{ since $k\geq 4$}\\
&\geq 2\Big\lfloor\frac{n+3}{k}\Big\rfloor.
\end{align*}

{\sc Case 3: $2k-3\leq n\leq 3k-4$.}  It suffices to show that $|E(H)|\geq 4$ because we now have $2\lfloor\frac{n+3}{k}\rfloor\leq 4$.  Suppose $|E(H)|\leq 3$.  Since $n\geq k+1$ by assumption, we cannot have any vertices of degree 1: such a vertex would only have $k-1$ neighbours via its incident edge, but it needs to be neighbours with at least $k$ others. Then the Handshake Lemma implies that
\begin{align*}
2n\leq \sum_{v\in V(H)}\deg(v) &= k|E(H)|\leq 3k,\\
\text{hence }2n&\leq 3k,\\
\text{so }n&\leq \frac{3k}{2}.
\end{align*}

Now, this brooks a contradiction if $n>\frac{3k}{2}$, so we must have $k\geq 7$.  Then from $2k-3\leq n\leq \frac{3k}{2}$ we get $k\leq 6$, a contradiction.

Therefore, in all cases we have $|E(H)|\geq 2\lfloor\frac{n+3}{k}\rfloor.$
\end{proof}

\begin{lem}\label{lem:2-intersection} Let $H$ be a hypergraph with $|E(H)|\geq 2$, and suppose $E(H)$ satisfies the following:
\begin{itemize}
\item For all $e,f\in E(H)$, we have $|e\cap f|\geq 2$; and
\item There exist distinct $e, f\in E(H)$ such that $|e\cap f|\geq 3$.
\end{itemize}

Then $H$ is eulerian.
\end{lem}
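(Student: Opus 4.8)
The plan is to build an Euler tour directly by exploiting the large pairwise intersections, using the incidence graph and Theorem~\ref{thm:incidencegraph}. Equivalently, we construct a spanning subgraph of the incidence graph $G$ in which every e-vertex has degree 2, every v-vertex has even degree, and there is at most one non-trivial connected component. The hypothesis that every pair of edges meets in at least two vertices, together with the existence of one pair meeting in at least three, gives us a great deal of freedom in choosing the two ``anchor'' vertices through which each edge is traversed.

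First I would order the edges as $e_1, e_2, \ldots, e_m$ (with $m = |E(H)| \geq 2$) so that the pair with $|e_i \cap e_j| \geq 3$ appears consecutively, say $|e_1 \cap e_2| \geq 3$. The idea is to traverse the edges in a cyclic sequence $e_1, e_2, \ldots, e_m, e_1$, choosing for each consecutive pair $e_i, e_{i+1}$ a vertex $v_i \in e_i \cap e_{i+1}$ at which the walk transitions from $e_i$ to $e_{i+1}$; since $|e_i \cap e_{i+1}| \geq 2$, there is at least one choice, and we must additionally ensure $v_{i-1} \neq v_i$ so that the walk is genuinely a strict trail (consecutive anchors distinct). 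The only place this could fail is if $|e_i \cap e_{i-1}| = |e_i \cap e_{i+1}| = 2$ and these two 2-sets coincide; but we have a pair with intersection $\geq 3$, and by relabelling we may arrange that whenever a forced conflict would arise we instead route through that larger intersection, or we simply note that with $m=2$ the claim is immediate (traverse $e_1$ via $u,v$ and $e_2$ via $v,u$ for any two distinct $u,v \in e_1 \cap e_2$) and for $m \geq 3$ each $e_i$ has only two neighbours in the cyclic order, so a conflict at $e_i$ forces its two neighbouring intersections to be equal 2-sets, which we can break by a local re-ordering that moves $e_1$ or $e_2$ next to $e_i$.

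So the key steps, in order, are: (1) handle $m = 2$ directly; (2) for $m \geq 3$, fix a cyclic ordering of the edges with the ``thick'' pair $e_1,e_2$ adjacent; (3) greedily pick transition vertices $v_i \in e_i \cap e_{i+1}$, and show the distinctness constraint $v_{i-1} \neq v_i$ can always be met, using the thick intersection as a safety valve when a 2--2 conflict appears; (4) conclude that $v_0 e_1 v_1 e_2 v_2 \cdots e_m v_0$ is a closed strict trail traversing every edge exactly once, i.e. an Euler tour. I expect step (3) to be the main obstacle: arguing carefully that the greedy choice never gets stuck. The cleanest way to present it is probably to observe that the subgraph of $G$ induced by this edge-cycle has every e-vertex of degree exactly $2$ and is connected (it is a single closed walk through all e-vertices), so by Theorem~\ref{thm:incidencegraph} it suffices to check the transition vertices can be chosen with distinct consecutive values — and for that, the worst case is exactly the $2$--$2$ clash, neutralised by the edge pair of intersection $\geq 3$, which can always be slotted adjacent to any problematic edge in a cyclic order on $m \geq 3$ elements.
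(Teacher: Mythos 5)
Your proposal is correct and takes essentially the same route as the paper: order the edges cyclically, greedily choose each transition vertex in the intersection of consecutive edges while avoiding the previously chosen one (always possible since each intersection has size at least $2$), and use the pair with intersection of size at least $3$ at the single doubly-constrained choice. The paper's version is slightly cleaner than your step (3): it places the ``thick'' pair as $e_1,e_m$ so that the only doubly-constrained choice is the closing vertex $v_0\in(e_1\cap e_m)\setminus\{v_1,v_{m-1}\}$; a greedy sweep can never clash mid-sequence, so no local re-ordering or safety-valve case analysis is actually needed.
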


\begin{proof}
Let $E(H)=\{e_1,\dotso , e_m\}$ and let $e_1,e_m$ be a pair of distinct edges such that $|e_1\cap e_m|\geq 3$.

Fix some $v_1\in e_2\cap e_1$. For $i=2,\dotso , m-1$, let $v_i$ be a vertex in $(e_{i+1}\cap e_i)\setminus \{v_{i-1}\}$.  Since $|e_{i+1}\cap e_i|\geq 2$, there is always at least one vertex to choose from.  Let $W=v_1e_2\dotso e_{m-1}v_{m-1}$ be the walk determined by these choices of $v_1, \dotso , v_{m-1}$.

To extend $W$ to an Euler tour, choose $v_0$ from among the vertices in $(e_1\cap e_m)\setminus\{v_1, v_{m-1}\}$.  By our assumption, this set is non-empty, so this choice is well defined.  Then $v_0e_1v_1\dotso v_{m-1}e_mv_0$ is a closed strict trail that traverses every edge of $E(H)$, so it is an Euler tour of $H$. Hence $H$ is eulerian.
\end{proof}

\begin{cor}\label{cor:smallcases}Let $H$ be a 2-covering $k$-hypergraph of order $n$.  If $(k,n)=(4,6)$ or $n\leq 2k-3$, then $H$ is eulerian.
\end{cor}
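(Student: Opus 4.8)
Looking at Corollary~\ref{cor:smallcases}, the plan is to reduce both cases to situations already handled. The key structural observation is that in a $k$-uniform hypergraph, if the order $n$ is small relative to $k$, then any two edges must overlap substantially, since their union cannot exceed $n$ vertices. Specifically, if $e,f\in E(H)$ with $|e|=|f|=k$, then $|e\cap f|=|e|+|f|-|e\cup f|\geq 2k-n$.

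\textbf{The case $n\leq 2k-3$.} Here $2k-n\geq 3$, so every pair of distinct edges satisfies $|e\cap f|\geq 3$. First I would dispense with the trivial subcase $|E(H)|=1$: but a 2-covering $k$-hypergraph of order $n\geq k+1$ (which holds since $n$ could be as small as $k$... actually we should check — a 2-covering $k$-hypergraph needs every pair in some edge, and if $n=k$ a single edge suffices, so $|E(H)|=1$ is possible only when $n=k$). If $n=k$ and $|E(H)|=1$, then $H$ is not eulerian, but this contradicts nothing unless we assume $|E(H)|\geq 2$; so I should note that the statement implicitly needs $|E(H)|\geq 2$, or handle $n=k$, $|E(H)|=1$ separately — though actually if $n\le 2k-3$ and $n=k$ then $k\le 2k-3$ forces $k\ge 3$, and a 2-covering $k$-hypergraph of order $k$ with one edge is the degenerate case. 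Assuming $|E(H)|\geq 2$ (as the corollary should), once $|E(H)|\geq 2$ and every pair of edges has $|e\cap f|\geq 3\geq 2$, Lemma~\ref{lem:2-intersection} applies directly (its hypotheses — all pairwise intersections $\geq 2$, and some pair with intersection $\geq 3$ — are both satisfied), giving that $H$ is eulerian.

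\textbf{The case $(k,n)=(4,6)$.} Here $2k-n=2$, so every pair of distinct edges satisfies $|e\cap f|\geq 2$, which gives the first hypothesis of Lemma~\ref{lem:2-intersection}. It remains to produce one pair with $|e\cap f|\geq 3$, or else handle the situation where all pairwise intersections are exactly $2$. I would argue by contradiction: suppose every pair of distinct edges meets in exactly $2$ vertices. A $4$-subset of a $6$-set has a complementary $2$-subset, so an edge $e$ is determined by $V(H)\setminus e$, a pair. Two distinct edges $e,f$ have $|e\cap f|=2$ iff their complementary pairs $\overline e,\overline f$ are disjoint, i.e. iff $\overline e\cap\overline f=\emptyset$. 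So the complements form a family of pairwise-disjoint $2$-subsets of a $6$-set, hence at most $3$ edges. But a 2-covering $4$-hypergraph on $6$ vertices needs $|E(H)|\geq\binom{6}{2}/\binom{4}{2}=15/6>2$, so $|E(H)|\geq 3$; moreover with exactly $3$ pairwise-disjoint complementary pairs, the three edges $\{1,2,3,4\},\{1,2,5,6\},\{3,4,5,6\}$ (up to relabeling) fail to cover the pair $\{1,5\}$... wait, $\{1,2,5,6\}$ covers $\{1,5\}$. Let me recheck: the three complements are $\{5,6\},\{3,4\},\{1,2\}$, edges $\{1,2,3,4\},\{1,2,5,6\},\{3,4,5,6\}$; the pair $\{1,3\}$ is in the first edge, $\{1,5\}$ in the second, $\{3,5\}$ in the third — actually this family does cover all pairs. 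So I need a different contradiction: with only $3$ edges each of size $4$ on $6$ vertices, degrees sum to $12$, so some vertex has degree $\leq 2$; but each vertex must be adjacent to all $5$ others, needing degree $\geq 2$ — degree exactly $2$ is fine, $2\cdot 3=6$ neighbors counted with multiplicity covering $5$ others is possible. Hmm, so the $3$-edge configuration might genuinely be 2-covering. Then I cannot use Lemma~\ref{lem:2-intersection}, and instead I would exhibit an explicit Euler tour of this specific hypergraph $\{1234, 1256, 3456\}$: for instance $1\,(1234)\,3\,(3456)\,5\,(1256)\,1$, a closed strict trail traversing all three edges. Combining: if some pair of edges meets in $\geq 3$ vertices use Lemma~\ref{lem:2-intersection}; otherwise $H$ is (isomorphic to) this $3$-edge hypergraph and the explicit tour works.

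\textbf{Main obstacle.} The delicate point is the $(4,6)$ case: Lemma~\ref{lem:2-intersection} does not apply when all pairwise intersections equal $2$, so I must verify that the unique such configuration (up to isomorphism) is eulerian by direct construction, and confirm it really is the only one — this requires the small combinatorial classification of $4$-subsets of a $6$-set with pairwise intersections of size exactly $2$, which amounts to a perfect matching on the complementary $2$-subsets. Everything else is immediate from the intersection-size inequality $|e\cap f|\geq 2k-n$ together with the already-established Lemma~\ref{lem:2-intersection}.
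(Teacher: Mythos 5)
Your proposal is correct and follows essentially the same route as the paper: the $n\leq 2k-3$ case via the intersection bound $|e\cap f|\geq 2k-n\geq 3$ and Lemma~\ref{lem:2-intersection}, and the $(4,6)$ case by reducing to the situation where all pairwise intersections equal $2$, pinning down the unique three-edge configuration $\{1234,1256,3456\}$ (your complement-matching phrasing is just a restatement of the paper's uniqueness argument for the third edge), and exhibiting an explicit Euler tour. Your side remark that the statement implicitly assumes $|E(H)|\geq 2$ (ruling out the single-edge case $n=k$) is a fair observation that the paper leaves tacit, and is harmless in context since the corollary is invoked only when $n>k$ or alongside that assumption.
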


\begin{proof} First, suppose $(k,n)=(4,6)$. For all $e,f\in E(H)$, we have $|e\cap f|\geq 2$.  If there exists a pair of distinct edges $e,f\in E(H)$ such that $|e\cap f|\geq 3$, then Lemma~\ref{lem:2-intersection} implies that $H$ is eulerian.  Hence assume $|e\cap f|=2$ for all $e,f\in E(H)$.

Let $V(H)=\{v_1,\dotso , v_6\}$ and $E(H)=\{e_1,\dotso , e_m\}$, where $m$ is the size of $H$.  Since $H$ is a 2-covering 4-hypergraph, for every $v_i\in V(H)$, we necessarily have $\deg(v_i)\geq \frac{n-1}{k-1} = \frac{5}{3}$, so every vertex of $H$ has degree at least 2.  Without loss of generality, let $e_1=v_1v_2v_3v_4$ and $e_2=v_1v_2v_5v_6$ be two edges of $H$.  Since $(V(H), \{e_1,e_2\})$ is not a 2-covering 4-hypergraph, there must be at least one more edge in $E(H)$.  Such an edge $e_3$ must contain exactly two vertices of $e_1$ and exactly two vertices of $e_2$.  If $e_3$ contains any vertices in $e_1\cap e_2$, then it will necessarily contain at least three vertices from either $e_1$ or $e_2$, so we must have $e_3=v_3v_4v_5v_6$.

Observe that $e_3$ is the unique 4-subset that satisfies $|e_3\cap e_1|=|e_3\cap e_2|=2$.  Hence $E(H)=\{e_1,e_2,e_3\}$ and $W=v_3e_1v_2e_2v_5e_3v_3$ is an Euler tour of $H$, so $H$ is eulerian.

Now suppose $n\leq 2k-3$.  Then every pair of edges $e,f\in E(H)$ satisfies $|e\cap f|\geq 3$.  Then $H$ is eulerian by Lemma~\ref{lem:2-intersection}.
\end{proof}

\begin{lem}\label{lem:order4} Let $k\geq 3$, and let $H$ be a 2-covering $k$-hypergraph of order $k$.  Then $H$ is eulerian if and only if $H$ has at least two edges.
\end{lem}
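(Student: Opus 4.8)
The plan is to exploit the fact that the hypotheses force an extremely rigid structure on $H$. First I would observe that since $H$ is $k$-uniform and $|V(H)| = k$, every edge --- being a $k$-subset of a $k$-element set --- must equal $V(H)$. Hence $H$ consists of $m := |E(H)|$ mutually parallel copies of the single edge $V(H)$. Since a covering hypergraph is non-empty by definition, $m \ge 1$; and conversely any such hypergraph with $m \ge 1$ is automatically a 2-covering $k$-hypergraph, because every pair of vertices lies inside $V(H)$. So the ``2-covering'' hypothesis carries no information here beyond $m \ge 1$, and the real content of the lemma is just the count of parallel edges.

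For the direction ``eulerian $\Rightarrow$ at least two edges'' I would argue by contraposition. If $m = 1$, then $H$ is connected --- its unique edge spans $V(H)$ --- and has exactly one edge, so Lemma~\ref{lem:trivialcases}(2) shows that $H$ has no Euler family. Since an Euler tour of a non-empty hypergraph is a non-trivial closed strict trail and therefore constitutes an Euler family with a single component, it follows that $H$ is not eulerian. Thus an eulerian such $H$ must have $m \ge 2$.

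For the converse, assume $m \ge 2$. Then any two distinct edges $e, f$ of $H$ satisfy $e \cap f = V(H)$, so $|e \cap f| = k \ge 3$; in particular $|e \cap f| \ge 2$ for every pair of edges of $H$, and (using $m \ge 2$) there is at least one pair of distinct edges with $|e \cap f| \ge 3$. Hence $H$ satisfies the hypotheses of Lemma~\ref{lem:2-intersection}, which immediately gives that $H$ is eulerian. Concretely, the Euler tour constructed there simply runs through the $m$ parallel edges, entering and leaving each one through a fresh pair of vertices of $V(H)$, which is possible precisely because $k \ge 3$.

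The only step that requires any thought --- and it is not really an obstacle --- is the opening structural observation that having order equal to the uniformity collapses all edges to the same vertex set. Once that is in hand, the statement follows with no case analysis and no computation, as a direct combination of Lemma~\ref{lem:trivialcases} and Lemma~\ref{lem:2-intersection}.
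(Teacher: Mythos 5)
Your proposal is correct and follows essentially the same route as the paper: note that order equal to uniformity forces every edge to equal $V(H)$, handle the single-edge case directly (the paper simply notes no closed trail can be formed, you invoke Lemma~\ref{lem:trivialcases}), and apply Lemma~\ref{lem:2-intersection} for the converse. No gaps.
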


\begin{proof} If $H$ has just one edge, then it is not eulerian as no closed trail can be formed.

Note that our assumptions indicate that each edge $e\in E(H)$ satisfies $e=V(H)$. In particular, we have $e\cap f = V(H)$ for all $e,f\in E(H)$.

Then $H$ satisfies the conditions of Lemma~\ref{lem:2-intersection}, so $H$ is eulerian.
\end{proof}

We now present an optimization problem that will be used to establish an upper bound on the number of edges in a certain graph.  Since we have not seen optimization problems elsewhere in this thesis, we will need to review some basic definitions.\\

\begin{defn}{\rm {\bf (Optimization)} Let {\bf x} = $(x_1,x_2,\dotso , x_q)\in\mathds{R}^q$ be a $q$-tuple of indeterminates.  An {\em optimization problem} (P) consists of a real-valued function $f$ in {\bf x} that is to be maximized or minimized, called the {\em objective function}, along with a set of constraints on {\bf x}.

If ${\bf x^*}\in\mathds{R}^q$ satisfies the given constraints, then ${\bf x^*}$ is called a {\em feasible solution} for (P).  The set $\{{\bf y}\in\mathds{R}^q: {\bf y}\text{ is a feasible solution for (P)}\}$ is called the {\em feasible set} of (P).  

If $x^*$ is a feasible solution, and $f({\bf x^*})\geq f({\bf y})$ (or $f({\bf x^*})\leq f({\bf y})$, if (P) is a minimization problem) for all feasible solutions ${\bf y}$, then ${\bf x^*}$ is an {\em optimal solution} for (P).
}
\end{defn}

\begin{lem}\label{lem:notturan} Let $n,k,q\in\mathds{Z}^+$ be such that $n\geq qk$. Define the following optimization problem:
\begin{align*}
\text{{\em maximize} \hspace{5px}}& f(x_1,\dotso ,x_q) = \binom{x_1}{2} + \cdots + \binom{x_q}{2}&(P)\\
\text{{\em subject to} \hspace{5px}}& x_1 + \cdots + x_q = n,\\
&x_1,\dotso , x_q\geq k,\\
&x_1,\dotso , x_q\in\mathds{Z}^+.
\end{align*}

Then $x_1=\cdots = x_{q-1}=k, x_q=n-k(q-1)$ is an optimal solution to (P).
\end{lem}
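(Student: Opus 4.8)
The plan is to show that the objective function $f$ is \emph{Schur-convex} on the feasible set, which immediately forces the maximum to occur at the most ``unbalanced'' feasible point. Concretely, I would argue that if $(x_1,\dots,x_q)$ is any feasible solution that is \emph{not} of the claimed form, then some pair of coordinates $x_i, x_j$ satisfies $x_i \ge x_j > k$, and I can ``push them apart'' — replace $x_i$ by $x_i+1$ and $x_j$ by $x_j-1$ — while staying feasible (the sum is preserved, and $x_j-1 \ge k$ by the strict inequality $x_j > k$).

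The key computational step is to check that this transfer strictly increases $f$. Since only the $i$ and $j$ terms change, it suffices to verify
\begin{equation*}
\binom{x_i+1}{2} + \binom{x_j-1}{2} > \binom{x_i}{2} + \binom{x_j}{2},
\end{equation*}
which, after expanding $\binom{a}{2} = \tfrac{a(a-1)}{2}$, reduces to $x_i - x_j + 1 > 0$, i.e. $x_i \ge x_j$ — true by our choice. So each such transfer strictly increases the objective.

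From there I would run a simple termination argument: starting from any feasible solution, repeatedly apply the transfer above to a pair with $x_i \ge x_j > k$. Each step strictly increases $f$, and $f$ is bounded above on the finite feasible set (the feasible set is finite since the $x_i$ are positive integers summing to $n$), so the process halts. It can only halt when no two coordinates both exceed $k$, i.e. when at most one coordinate is larger than $k$ and all others equal exactly $k$ — and since the coordinates sum to $n$ with $q-1$ of them equal to $k$, the remaining one must be $n-k(q-1)$ (which is $\ge k$ by the hypothesis $n \ge qk$, so this point is indeed feasible). Hence $f$ attains a value at this point that is at least as large as at any feasible solution, so it is optimal. (Uniqueness up to permutation also follows, since the transfer is \emph{strictly} increasing, though the statement only asks for optimality.)

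I don't anticipate a serious obstacle here — the only place to be careful is ensuring feasibility is preserved under the transfer (hence the need for the strict inequality $x_j > k$ rather than $x_j \ge k$) and confirming that the terminal configuration is forced to be exactly the claimed one rather than merely ``close'' to it. A minor alternative, if one prefers to avoid the iterative phrasing, is to pick a maximizer directly (it exists since the feasible set is finite and nonempty) and show it must have at most one coordinate exceeding $k$, else the transfer would contradict maximality; this is essentially the same argument repackaged and I would likely present it that way for brevity.
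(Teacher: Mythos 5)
Your proposal is correct and is essentially the paper's own argument: both hinge on the same unit-transfer computation, namely that moving one unit from a coordinate $x_j>k$ to a coordinate $x_i\ge x_j$ changes the objective by $x_i-x_j+1>0$, combined with finiteness of the feasible set. The paper packages this as a contradiction applied to a coordinate-sorted maximizer (transferring from the smallest coordinate exceeding $k$ to the largest), while you phrase it as iterative improvement with a termination argument — a repackaging you yourself acknowledge — so there is no substantive difference.
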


\begin{proof} First of all, observe that $x_1=x_2=\cdots = x_{q-1}=k, x_q=n-k(q-1)$ is a feasible solution to (P).  Furthermore, there exists an optimal solution for (P) because the feasible set of (P) is finite.

Let ${\bf x^*} =(x_1^*,\dotso , x_q^*)$ be an optimal solution for (P) in which $x_1\leq x_2\leq\dotso\leq x_q$.  Suppose, for the sake of obtaining a contradiction, that $x_q < n-k(q-1)$.

Then there exists $1\leq i\leq q-1$ such that $x_i>k$.  Let $i$ be the smallest index with this property.

We claim that $f(x_1,\dotso, x_{i-1}, x_i - 1,x_{i+1},\dotso , x_{q-1},x_q+1)>f({\bf x^*})$.  Indeed, we have
\begin{align*}
&f(x_1,\dotso, x_{i-1}, x_i - 1,x_{i+1},\dotso , x_{q-1},x_q+1)\\
=&\sum_{\substack{j=1\\j\neq i}}^{q-1}\binom{x_j}{2} + \binom{x_i - 1}{2} + \binom{x_q + 1}{2}\\
=&\sum_{\substack{j=1\\j\neq i}}^{q-1}\binom{x_j}{2} + \frac{(x_i - 1)(x_i - 2)}{2} + \frac{(x_q+1)x_q}{2}\\
=&\sum_{\substack{j=1\\j\neq i}}^{q-1}\binom{x_j}{2} + \frac{x_i(x_i - 1)}{2} - \frac{2(x_i-1)}{2} + \frac{x_q(x_q - 1)}{2} + \frac{2x_q}{2}\\
=&\sum_{j=1}^q\binom{x_j}{2} - (x_i - 1) + x_q\\
=&f({\bf x^*}) -x_i + 1 + x_q\\
>&f({\bf x^*})\text{ since $x_q\geq x_i$.}
\end{align*}

Observe that we have $x_1\leq \dotso\leq x_{i-1}\leq x_i - 1\leq x_{i+1}\leq\dotso\leq x_{q-1}\leq x_q + 1$ because we know $x_i - 1 \geq k$ and $x_{i-1} \leq k$ by our choice of $i$.  This contradicts the optimality of ${\bf x^*}$ since $(x_1,\dotso, x_{i-1}, x_i - 1,x_{i+1},\dotso , x_{q-1},x_q+1)$ is a feasible solution to (P) whose components are in nondecreasing order, and its objective value is higher than $f({\bf x^*})$.

Therefore, any optimal solution satisfying $x_1\leq\dotso\leq x_q$ must also satisfy $x_q\geq n-k(q-1)$.  However, if $x_q> n-k(q-1)$, then we have $x_1+\dotso + x_{q-1} + x_q > k + \dotso + k + n - k(q-1) = n$, a contradiction.  Therefore, we have $x_q = n-k(q-1)$ in such an optimal solution, and so we must have $x_1=\dotso = x_{q-1} = k$.

We conclude that $(x_1,\dotso , x_{q-1},x_q) = (k,\dotso ,k,n-k(q-1))$ is an optimal solution for (P).
\end{proof}

\section{Main Results}

We present our two main results of the chapter in this section.  First, we show in Theorem~\ref{thm:2coveringkhypergraphs} that all 2-covering $k$-hypergraphs with at least two edges are quasi-eulerian.  We then use this result in Theorem~\ref{thm:lcoveringkhypergraphs} as the basis of induction to show that all $\ell$-covering $k$-hypergraphs with at least two edges are quasi-eulerian.\\

\begin{thm}\label{thm:2coveringkhypergraphs} Let $k\geq 4$, and let $H$ be a 2-covering $k$-hypergraph of order $n$ with at least two edges.  Then $H$ is quasi-eulerian.
\end{thm}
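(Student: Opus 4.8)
The plan is to verify the hypothesis of Lemma~\ref{lem:relax}: namely, that for every $X\subseteq E(H)$ with $|X|\geq 2$, we have $|X|\geq 2\lfloor\frac{c(G^*-X)+3}{k}\rfloor$, where $G^*$ is the incidence graph of $H$ with loops appended to v-vertices. First I would dispose of the small cases. If $n=k$, then Lemma~\ref{lem:order4} already gives that $H$ is eulerian (hence quasi-eulerian); if $n\leq 2k-3$ or $(k,n)=(4,6)$, then Corollary~\ref{cor:smallcases} does the same. So I may assume $n\geq k+1$, $n\geq 2k-2$, and $(k,n)\neq(4,6)$; in particular $n>\frac{3k}{2}$ whenever $k\leq 6$, while if $k\geq 7$ the alternative hypothesis of Lemma~\ref{lem:numedges} is available. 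In either case Lemma~\ref{lem:2-connected} tells us $H$ has no cut edges (since $n\geq k+1$), so Lemma~\ref{lem:relax} applies once its counting inequality is checked, and Lemma~\ref{lem:numedges} gives $m=|E(H)|\geq 2\lfloor\frac{n+3}{k}\rfloor$.

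Next I would analyze $c(G^*-X)$ for a fixed $X\subseteq E(H)$ with $|X|\geq 2$. Deleting the e-vertices in $X$ from $G^*$ leaves all $n$ v-vertices (each still carrying its loops, hence non-isolated) plus the $m-|X|$ remaining e-vertices, each of which has degree $k$ and is thus attached to the v-vertices. So the v-vertices and surviving e-vertices together form some number of components, and there are no other components. The key observation is that a component of $G^*-X$ containing only v-vertices and edges induced among them must, since edges of $H\setminus X$ still cover many pairs, be large — in fact, each component that contains a v-vertex but meets no surviving edge is a single isolated v-vertex only if that v-vertex lay in every edge of $X$ and in no edge outside $X$, which the covering property severely restricts. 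More usefully: I would bound $c(G^*-X)$ by $n+m-|X|$ trivially, but that is too weak; instead I would argue that each surviving e-vertex $e\in E(H)\setminus X$ merges its $k$ incident v-vertices into one component, so $c(G^*-X)\leq n - (\text{number of v-vertices that are "used up" by mergers})$. The clean version: let $H'=H\setminus X$, a $k$-uniform hypergraph on $n$ vertices with $m-|X|$ edges; then $c(G^*-X)$ equals the number of connected components of the hypergraph $H'$ (counting isolated vertices), because each v-vertex with degree $0$ in $H'$ is its own component and the rest group exactly as $H'$ does. So I need $|X|\geq 2\lfloor\frac{c(H')+3}{k}\rfloor$ where $c(H')$ is the component count of $H\setminus X$.

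To finish, I would bound $c(H\setminus X)$ using the covering property. If $H\setminus X$ has components with vertex sets of sizes $x_1,\dots,x_q$ (where $q=c(H\setminus X)$ and $\sum x_i=n$), then every pair of vertices lying in different components is covered only by an edge of $X$; each edge of $X$ is a $k$-set and so covers at most $\binom{k}{2}-\sum_i\binom{|e\cap V_i|}{2}$ cross-pairs, which is at most $\binom{k}{2}$, but a sharper bound follows by noting the number of cross-pairs is $\binom{n}{2}-\sum\binom{x_i}{2}$ and each must be covered, giving $|X|\binom{k}{2}\geq \binom{n}{2}-\sum\binom{x_i}{2}$. Here is where Lemma~\ref{lem:notturan} enters: subject to $\sum x_i=n$ and $x_i\geq 1$, the sum $\sum\binom{x_i}{2}$ is maximized by making one block as large as possible, but I actually want the opposite — I want to show $q$ cannot be too large given $|X|$ is small. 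Since each component has size $\geq 1$, but more: any isolated vertex $v$ of $H\setminus X$ must have all its $\deg_H(v)\geq\lceil(n-1)/(k-1)\rceil$ edges in $X$, so $|X|\geq\lceil(n-1)/(k-1)\rceil$ as soon as there is even one isolated vertex — and then combining with $m\geq 2\lfloor\frac{n+3}{k}\rfloor$ should force $|X|\geq 2\lfloor\frac{c(H\setminus X)+3}{k}\rfloor$ after casework on how many components are trivial. I expect the main obstacle to be exactly this last bookkeeping: cleanly converting "few edges deleted" into "few components remain" uniformly across the ranges of $k$ and $n$, since $c(H\setminus X)$ can in principle be as large as $n$ (delete enough edges) but only at the cost of $|X|$ being large, and quantifying that trade-off tightly enough to beat the floor function $2\lfloor\frac{c+3}{k}\rfloor$ will require either the double-counting of covered pairs above, a degree argument on isolated vertices, or most likely a combination, with Lemma~\ref{lem:notturan} and Lemma~\ref{lem:numedges} supplying the needed numerical slack.
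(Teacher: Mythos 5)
Your frame is exactly the paper's: dispose of $n=k$, $n\leq 2k-3$, and $(k,n)=(4,6)$ via Lemma~\ref{lem:order4} and Corollary~\ref{cor:smallcases}; note that the surviving range gives $k\geq 7$ or $n>\frac{3k}{2}$, so Lemma~\ref{lem:numedges} yields $|E(H)|\geq 2\lfloor\frac{n+3}{k}\rfloor$; use Lemma~\ref{lem:2-connected} to rule out cut edges; and reduce everything to the counting hypothesis of Lemma~\ref{lem:relax}. Your identification of $c(G^*-X)$ with the number of components of $H\setminus X$ (isolated vertices included) is also correct. The gap is that the verification of that hypothesis --- which is the actual content of the theorem and occupies essentially all of the paper's proof --- is left as prospective ``bookkeeping.'' You have named the right ingredients (cross-pair double counting, a degree argument at isolated vertices, Lemma~\ref{lem:notturan}, Lemma~\ref{lem:numedges}) but not shown they combine to beat $2\lfloor\frac{q+3}{k}\rfloor$, and they do not do so in the loose form you state them.

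Concretely, what is missing is the following. First, one may assume $q=c(G^*-X)\geq 2k-3$, since otherwise $2\lfloor\frac{q+3}{k}\rfloor\leq 2\leq |X|$ is automatic; this observation is needed to land the eventual contradictions. Second, because every non-trivial component of $H\setminus X$ has at least $k$ vertices, the real dichotomy is: some isolated vertex exists ($\ell\geq 1$) or none does ($\ell=0$). In the first case your bound $|X|\geq\lceil\frac{n-1}{k-1}\rceil$ from a single isolated vertex is too weak on its own: with $\ell$ isolated vertices $q$ can be as large as $\ell+\frac{n-\ell}{k}$, so the target $2\lfloor\frac{q+3}{k}\rfloor$ grows like $\frac{2\ell}{k}$ while your bound stays near $\frac{n}{k}$; the paper instead double counts the $\ell(n-\ell)$ pairs between the isolated set and its complement, each edge covering at most $\frac{k^2}{4}$ of them, to get $|X|\geq\frac{4\ell(n-\ell)}{k^2}$, and a chain of substitutions then forces $\ell\leq 2$ and $q\leq 3$, contradicting $q\geq 2k-3\geq 5$ (the extreme $\ell=n$ being handled directly by Lemma~\ref{lem:numedges}). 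In the second case Lemma~\ref{lem:notturan} applies only with the constraint $x_i\geq k$ (not $x_i\geq 1$ as you wrote; with $x_i\geq 1$ the lemma as stated does not apply and the resulting bound on $\sum\binom{x_i}{2}$ is too weak), and even with it one must push the cross-pair count through a quadratic inequality in $q$, compare its larger root against $q\geq 2k-3$, and check sign conditions of explicit polynomials in $k$ to reach the contradiction. Until these two computations are actually carried out, what you have is an outline of the paper's argument rather than a proof.
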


\begin{proof} First, assume that $n=k$. Since $H$ has at least two edges, Lemma~\ref{lem:order4} implies that $H$ is eulerian, so it is quasi-eulerian.  Hence we may assume $n>k$ and, consequently, $n\geq 5$.

Suppose $n\leq 2k-3$. Then Corollary~\ref{cor:smallcases} implies that $H$ is eulerian, so $H$ is quasi-eulerian.  Hence we may assume $n\geq 2k-2.$

Now suppose $k\leq 6$ and $n\leq\frac{3k}{2}$.  Since $n\geq 2k-2$, this assumption implies $(k,n)=(4,6)$.  In this case, Corollary~\ref{cor:smallcases} implies that $H$ is eulerian, so $H$ is quasi-eulerian.  Hence we have $k\geq 7$ or $n>\frac{3k}{2}$.  Lemma~\ref{lem:numedges} then implies that $|E(H)|\geq 2\big\lfloor\frac{n+3}{k}\big\rfloor$.

We will show that $H$ satisfies the condition of Lemma~\ref{lem:relax}.

Let $m$ be the size of $H$, and define $r=(m+n)^2$.  Let $G$ be the incidence graph of $H$, and let $G^*$ be the graph obtained from $G$ by adjoining $2r$ loops to every v-vertex. 

Define $f,g:V(G^*)\rightarrow \mathds{Z}$ by 
\begin{equation*}
f(x) = g(x) =	\left\{
      				\begin{array}{l c l}
      					2 & \text{if} & x\text{ is an e-vertex} \\
      					2r & \text{if} & x\text{ is a v-vertex}.
				\end{array}
			\right.
\end{equation*}

Fix any $X\subseteq E(H)$ with $|X|\geq 2$, and denote $q=c(G^*-X)$.  Now, suppose 
\begin{equation}\label{assumption:X}
|X|<2\Big\lfloor\frac{q+3}{k}\Big\rfloor.
\end{equation} Note that any connected component of $G^*-X$ must contain at least one v-vertex, so $q\leq n$.  Furthermore, any non-trivial connected component of $G^*-X$ has at least $k$ v-vertices.

Observe that, if $q\leq 2k-4$, then from our assumption that $|X|<2\lfloor\frac{q+3}{k}\rfloor$ we get that $|X| < 2$, so $|X|=1$.  This is a contradiction with the assumption that $|X|\geq 2$, hence we may assume that 
\begin{equation}\label{assumption:q}
q\geq 2k-3.
\end{equation}

Let $\ell$ denote the number of isolated v-vertices of $G^*-X$.

{\sc Case 1: $\ell\geq 1$.}  First, suppose $\ell=n$.  Then $X=E(H), q = n$, and $|X|=|E(H)|\geq 2\lfloor\frac{n+3}{k}\rfloor = 2\lfloor\frac{q + 3}{2}\rfloor$. Then we have $|X|\geq 2\lfloor\frac{c(G^*-X)+3}{k}\rfloor$, contradicting our assumption on $X$.  Hence we may assume $\ell<n$.

 In fact, since any non-trivial connected component of $G^*-X$ has at least $k$ v-vertices, and there must be $q-\ell$ non-trivial connected components, so we have 
 \begin{equation}\label{eqn:n0}
 n\geq\ell + k(q-\ell).
 \end{equation}  Rearranging, we can write
 \begin{equation}\label{eqn:n1}
 \frac{n-\ell}{k}\geq q - \ell.
 \end{equation}We may rearrange this to get  
 \begin{equation}\label{eqn:q}
 q\leq\ell +\frac{n-\ell}{k},
 \end{equation} which will be used later.  Since $q>\ell$, we may also infer from~(\ref{eqn:n0}) that 
 \begin{equation}\label{eqn:n2}
 n\geq\ell + k.
 \end{equation}

Let $U=\{v_1,\dotso , v_{\ell}\}$ denote the set of isolated v-vertices in $G^*-X$.  Let $S=\{\{u,v\}: u\in U, v\in V(H)\setminus U\}$.  Then $|S|=\ell(n-\ell)$.

For $e\in E(H)$, if the edge $e$ contains $b$ vertices in $U$, then it contains $k-b$ vertices in $V(H)\setminus U$, and covers $b(k-b)$ pairs of $S$.  Then $b(k-b)$ is maximized when $b$ and $k-b$ are as close to equal as possible, so we may conclude that every edge of $E(H)$, and hence every edge of $X$, covers up to $\lfloor\frac{k}{2}\rfloor\lceil\frac{k}{2}\rceil\leq\frac{k^2}{4}$ pairs of $S$.

By dividing the number of pairs of $S$ that $X$ covers ({\em i.e.} $\ell(n-\ell)$) by the maximum number that each edge of $X$ can cover, we get $|X|\geq\frac{4\ell(n-\ell)}{k^2}$.

Our assumption (\ref{assumption:X}) that $|X|<2\lfloor\frac{q+3}{k}\rfloor$ implies that $|X|\leq\frac{2q+6}{k} - 1$.

By putting these together, we get 
\begin{equation}\label{eqn:master}
\frac{4\ell(n-\ell)}{k^2}\leq\frac{2q+6}{k} - 1.
\end{equation}  Then we substitute $q\leq \ell+\frac{n-\ell}{k}$ from Inequality (\ref{eqn:q}) and rearrange the inequality to arrive at
\begin{equation}\label{eqn:l>1}
n(4\ell -2)\leq 2k\ell -2\ell + 6k - k^2 + 4\ell^2.
\end{equation}

At this point, we will substitute $n\geq\ell + k$ from~(\ref{eqn:n2}) and isolate $\ell$ to get
\begin{equation}\label{eqn:lsmall}
\ell\leq 4 - \frac{k}{2}\leq 2.
\end{equation}

On the other hand, if in the left-hand side of Inequality (\ref{eqn:master}) we substitute $\frac{n-\ell}{k}\geq q - \ell$ from~(\ref{eqn:n1}), we get $4\ell(q-\ell) \leq 2q + 6 - k$.  We can simplify this to
\begin{equation}\label{eqn:step2}
(4\ell - 2)q - 4\ell^2 \leq 6 - k.
\end{equation}

Substitute $k\geq 4$ into Inequality (\ref{eqn:step2}) to obtain
\begin{equation}\label{eqn:step3}
(4\ell - 2)q - 4\ell^2 \leq 2.
\end{equation}

Now, we have $\ell\geq 1$ by our assumption at the beginning of this case, and $\ell\leq 2$ from Inequality~(\ref{eqn:lsmall}).  Substituting either $\ell=1$ or $\ell=2$ yields $q\leq 3$.  However, this contradicts our assumption that $q\geq 2k-3\geq 5$, concluding Case 1. 




{\sc Case 2: $\ell=0$.}  Therefore, every connected component of $G^*-X$ has at least $k$ v-vertices.

Let $C_1,C_2,\dotso , C_q$ be the connected components of $G^*-X$.  

We count the number of pairs of vertices that are not covered by edges in $H-X$, and count how many edges must be in $X$ for this to be the case.  We are counting pairs of v-vertices that lie in distinct connected components of $H-X$ (and hence, of $G^* -X)$: this is equivalent to counting the number of edges in the complete multipartite graph with part sizes $|V'(C_1)|, \dotso , |V'(C_q)|$, where $V'(C_i)$ represents the set of v-vertices in $V(C_i)$.  

There are
\begin{equation}\label{eqn:counting}
\binom{n}{2} - \sum_{i=1}^{q}\binom{|V'(C_i)|}{2}
\end{equation}

edges in such a graph; hence, at least that many pairs of vertices are covered by edges of $X$.  We apply Lemma~\ref{lem:notturan} to the sum, using the constraints $|V'(C_i)|\geq k$ for all $1\leq i\leq q$, noting that we have $n\geq kq$ as well.  Then the quantity on Line (\ref{eqn:counting}) is bounded from below by
\begin{equation}
\binom{n}{2} - \sum_{i=1}^{q}\binom{|V'(C_i)|}{2} \geq \binom{n}{2} - (q-1)\binom{k}{2}-\binom{n-k(q-1)}{2}.
\end{equation}

On the other hand, each edge of $X$ covers up to $\binom{k}{2}$ pairs of v-vertices in distinct connected components.  Hence 
\begin{align*}
|X|&\geq\frac{\binom{n}{2} - (q-1)\binom{k}{2}-\binom{n-k(q-1)}{2}}{\binom{k}{2}}\\
&=\frac{n(n-1)-(q-1)k(k-1)-(n-k(q-1))(n-k(q-1)-1)}{k(k-1)}.
\end{align*}

Now, by assumption we have $|X|\leq\frac{2q+6}{k}$, so $$\frac{n(n-1)-(q-1)k(k-1)-(n-k(q-1))(n-k(q-1)-1)}{k(k-1)}\leq\frac{2q+6}{k}.$$

Collecting $n$ on the left-hand side yields $$n(2kq-2k)\leq k^2q^2 - k^2q +2kq + 6k - 2q - 6.$$  When we substitute $n\geq kq$, we can obtain a quadratic in $q$ on the left-hand side:
\begin{equation}\label{eqn:case0}
k^2q^2 - (k^2 +2k - 2)q - 6(k-1)\leq 0.
\end{equation}

Let $f(x)= k^2x^2 - (k^2 + 2k - 2)x - 6(k-1)$.  Then the discriminant of $f(x)$ is $(k^2 + 2k - 2)^2 + 24k^2(k-1)$, which is non-negative since $k\geq 4$.  Hence we get two real roots of $f(x):$ 
\begin{equation}\label{eqn:roots}
x_1,x_2 = \frac{k^2+2k-2 \pm\sqrt{(k^2+2k-2)^2 + 24k^2(k-1)}}{2k^2}.
\end{equation}
Define
\begin{align*}
g(k)&=(k^2+2k-2)^2 + 24k^2(k-1) -k^6\\
&=-k^6 + k^4 + 28k^3 - 24k^2 -8k + 4.
\end{align*}

We can verify using a computer algebra system that the real roots of $g(k)$ are all less than 3, so we have $g(k)<0$ for $k\geq 4$.

Then $(k^2 + 2k -2)^2 + 24k^2(k-1) < k^6$.  Note that the left-hand side of this inequality is the discriminant of $f(x)$, so we know it is non-negative.  We may take square roots of both sides and substitute the resulting right-hand side into (\ref{eqn:roots}) to write $$x_1 < \frac{k^2+2k-2+k^3}{2k^2}.$$ 

Since $f(q)\leq 0$ from (\ref{eqn:case0}), we have $q<x_1$.  From our assumption (\ref{assumption:q}), we have $2k-3\leq q$.  Putting these together, we get
\begin{center}
\begin{tabular}{crl}
 & $2k-3 < x_1$ & \hspace{-10px}$<\frac{k^2+2k-2+k^3}{2k^2}$ \\ 
$\Rightarrow$ & $4k^3-6k^2$ & \hspace{-13px} $<k^2+2k-2+k^3$ \\ 
$\Rightarrow$ & $3k^3-7k^2 -2k + 2$ & \hspace{-8px}$<0$. \\ 
\end{tabular}
\end{center}

Let $h(k)$ denote $3k^3-7k^2 -2k + 2$.  It is easy to verify that $h(k)$ has no real roots on the interval $k\geq 4$.  Hence $h(k)\geq 0$, which is a contradiction.

Since we have obtained a contradiction in every case, we conclude that $|X|\geq \lfloor\frac{c(G^*-X)+3}{k}\rfloor$.  $H$ has no cut edges by Lemma~\ref{lem:2-connected}, so we may apply Lemma~\ref{lem:relax} and conclude that $H$ is quasi-eulerian.
\end{proof}

We are now ready to prove our main result, using Theorem~\ref{thm:2coveringkhypergraphs} in the basis of induction of Theorem~\ref{thm:lcoveringkhypergraphs}.  We use Theorem~\ref{thm:coveringinduction} as well as a strategy similar to its proof.

\begin{thm}\label{thm:lcoveringkhypergraphs}
Let $H$ be an $\ell$-covering $k$-hypergraph for $2\leq\ell < k$.  Then $H$ is quasi-eulerian if and only if $H$ has at least two edges.
\end{thm}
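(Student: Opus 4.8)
The plan is to mirror the inductive structure of Theorem~\ref{thm:coveringinduction}, but to induct on $\ell$ rather than on $k$, using a vertex-deletion reduction that lowers both parameters by one (so the ``covering gap'' $k-\ell$ is preserved). First I would dispatch the easy direction: an $\ell$-covering $k$-hypergraph with $\ell\geq 2$ is connected, since any two vertices lie in a common edge (extend the pair to an $\ell$-subset and invoke the covering property), and a connected hypergraph with at most one edge admits no non-trivial closed trail; thus by Lemma~\ref{lem:trivialcases} such an $H$ is not quasi-eulerian. So necessity of having at least two edges is immediate.

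For the base case $\ell=2$: if $k=3$ then $H$ is a covering $3$-hypergraph and Theorem~\ref{thm:coveringinduction} (equivalently Theorem~\ref{thm:covering}) gives that $H$ is eulerian, hence quasi-eulerian, whenever $|E(H)|\geq 2$; if $k\geq 4$ then Theorem~\ref{thm:2coveringkhypergraphs} gives the conclusion directly. So every $2$-covering $k$-hypergraph with at least two edges is quasi-eulerian.

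For the inductive step, fix $\ell\geq 3$ and assume the statement with $\ell-1$ in place of $\ell$. Let $H=(V,E)$ be an $\ell$-covering $k$-hypergraph with $k>\ell$ and $|E|\geq 2$. Choosing any $v\in V$, I would form, exactly as in the proof of Theorem~\ref{thm:coveringinduction}, the $(k-1)$-uniform hypergraph $H^{*}=(V\setminus\{v\},\mathcal{B}\cup\mathcal{C})$ (multiset union), where $\mathcal{B}=\{e\setminus\{v\}:e\in E,\ v\in e\}$ and $\mathcal{C}$ is obtained from $E'=\{e\in E:v\notin e\}$ by deleting one arbitrary vertex from each edge. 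Then I would verify three points: (i) $H^{*}$ is $(\ell-1)$-covering --- given an $(\ell-1)$-subset $X\subseteq V\setminus\{v\}$, the $\ell$-set $X\cup\{v\}$ lies in some $e\in E$, which forces $v\in e$, so $e\setminus\{v\}\in\mathcal{B}$ and $X\subseteq e\setminus\{v\}$; (ii) $|E(H^{*})|=|E(H)|\geq 2$ and $k-1>\ell-1\geq 2$, so the induction hypothesis applies and $H^{*}$ has an Euler family $\mathcal{F}^{*}$; (iii) sending $e\setminus\{v\}\mapsto e$ on $\mathcal{B}$ and each $\mathcal{C}$-edge to its $E'$-preimage defines a bijection $\varphi:E(H^{*})\to E(H)$ with $e\subseteq\varphi(e)$ for all $e$, and $V(H^{*})\subseteq V(H)$, so Lemma~\ref{lem:truncatedhypergraph}(1) transforms $\mathcal{F}^{*}$ into an Euler family of $H$. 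Hence $H$ is quasi-eulerian, completing the induction.

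Given Theorems~\ref{thm:2coveringkhypergraphs} and~\ref{thm:coveringinduction}, the argument is essentially routine; the one point that needs care --- and the reason the induction is on $\ell$ (bottoming out at the $2$-covering case) rather than on $k$ with $\ell$ fixed --- is that the reduction only drops $\ell$ to $\ell-1$: it is the $\ell$-set $X\cup\{v\}$, not an $(\ell+1)$-set, that the covering property of $H$ guarantees lies in an edge, so the surviving edges $\mathcal{B}$ are exactly what make $H^{*}$ an $(\ell-1)$-covering hypergraph (and $\mathcal{C}$ is present only to keep the edge count and supply the bijection $\varphi$, its vertex-deletion choices being irrelevant).
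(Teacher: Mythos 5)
Your proposal is correct and takes essentially the same route as the paper: the paper fixes the gap $k'=k-\ell$ and inducts on $\ell$, with Theorem~\ref{thm:2coveringkhypergraphs} (and Theorem~\ref{thm:coveringinduction} for the gap-one case) as the base, and uses exactly your vertex-deletion construction $\mathcal{B}\cup\mathcal{C}$ together with Lemma~\ref{lem:truncatedhypergraph} in the inductive step. The only difference is bookkeeping --- you quantify the induction hypothesis over all $k$ rather than fixing $k-\ell$ in advance, which is an equivalent parametrization.
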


\begin{proof}
$\Rightarrow$: Let $H$ be a quasi-eulerian $\ell$-covering $k$-hypergraph for $2\leq\ell < k$.  Then Lemma~\ref{lem:trivialcases} implies that we cannot have $|E(H)|=1$.  Since $H$ is non-empty by definition, we must have $|E(H)|\geq 2$.

$\Leftarrow$: Fix some $k'\geq 1$ and, for all $\ell\geq 2$, define the proposition \\$P_{k'}(\ell)$: ``All $\ell$-covering $(\ell+k')$-hypergraphs with at least two edges are quasi-eulerian.''

If $k'=1,$ then Theorem~\ref{thm:coveringinduction} states that $P_{k'}(\ell)$ holds for all $\ell\geq 2$.  Hence assume $k'\geq 2$.

We will prove $P_{k'}(\ell)$ by induction on $\ell$.  $P_{k'}(2)$ follows from Theorem~\ref{thm:2coveringkhypergraphs}.  Suppose that, for some $\ell\geq 2$, the proposition $P_{k'}(\ell)$ holds: that is, any $\ell$-covering $(\ell+k')$-hypergraph with at least two edges is quasi-eulerian.

Let $H=(V,E)$ be an $(\ell+1)$-covering $(\ell+k'+1)$-hypergraph with $|E|\geq 2.$  Fix some $v\in V$ and let $\mathcal{B}=\{e\setminus\{v\}:e\in E\text{ such that }v\in e\}$.  Let $H'=(V\setminus\{v\}, \mathcal{B})$.  Note that $H'$ is an $(\ell+k')$-uniform hypergraph.  It also has the property that every $\ell$-tuple of $V(H')$ lies in at least one edge.  To see this, let $X\subset V(H')$ be a set of cardinality $\ell$.  Then, in $H$, the $(\ell+1)$-subset $X\cup \{v\}$ lies in some edge $e$.  Then there exists in $\mathcal{B}$ a corresponding edge $e\setminus \{v\}$, which contains $X$.

Now, let $E'=\{e\in E: v\not\in e\}$.  Obtain a set $\mathcal{C}$ of edges, each of cardinality $\ell+k'$, by taking all the edges of $E'$ and removing an arbitrary vertex from each of them (the choice of vertex does not matter).  Define $H^*=(V\setminus\{v\},\mathcal{B}\cup\mathcal{C}),$ in which the edge set of $H^*$ is taken as a multiset union of $\mathcal{B}$ and $\mathcal{C}$.  Then $H^*$ is an $(\ell+k')$-uniform hypergraph such that every $\ell$-tuple of $V(H^*)$ lies in at least one edge: that is, we have that $H^*$ is an $\ell$-covering $(\ell+k')$-hypergraph.  Since $|E(H^*)|=|E(H)|\geq 2$, we may apply the induction hypothesis to get that $H^*$ is quasi-eulerian, so it admits an Euler family $\F^*$.

Define a function $\varphi: E(H^*)\rightarrow E(H)$ by defining $\varphi(e)=e\cup \{v\}$ if $e$ originally comes from $\mathcal{B}$, or $\varphi$ maps $e$ to its corresponding edge from $E'$ if $e$ originally comes from $\mathcal{C}$.  Since $V(H^*)\subset V(H)$ and $\varphi$ is a bijection, we may apply Lemma~\ref{lem:truncatedhypergraph} to obtain an Euler family $\F$ for $H$.  Hence $H$ is quasi-eulerian and $P_{k'}(\ell+1)$ holds.

Therefore, by induction, we have proven that $P_{k'}(\ell)$ holds for all $\ell\geq 2$.  Letting $k'$ vary over all $k'\geq 2$, we have proven that, for $2\leq\ell <k$, all $\ell$-covering $k$-hypergraphs with at least two edges are quasi-eulerian.
\end{proof}
\cleardoublepage

\chapter{Eulerian Properties of Hypergraphs with Particular Edge Cuts}\label{chapter:edgecuts}
\chaptermark{Hypergraphs with Particular Edge Cuts}


\section{Introduction}

In this chapter, we leave design hypergraphs behind and focus on constructing Euler families and Euler tours in hypergraphs that have certain kinds of edge cuts.  Analogous results on vertex cuts in hypergraphs have been produced by Steimle and \v{S}ajna \cite{SS}.  The reason edges of edge cuts are valuable to investigate is that they serve a very crucial role when constructing a closed walk: they allow a traversal to reach edges that are only accessible by using these edge-cut edges.  This is not an important issue for graphs since an Euler tour can be constructed efficiently using a greedy algorithm.  

Recall Theorem~\ref{thm:nocutedges}, which implies that a hypergraph that has a cut edge may be eulerian (though it must be a trivial cut edge).  One thing that makes dealing with graphs easier than dealing with hypergraphs is that a cut edge precludes an Euler tour in a graph, as we can see in the following result.\\

\begin{prop}\label{prop:nocutedges}
Let $G$ be a graph.  If $G$ is eulerian, then $G$ has no cut edges.
\end{prop}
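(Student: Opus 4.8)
The plan is to prove the contrapositive: if $G$ has a cut edge, then $G$ is not eulerian. So suppose $e = uv$ is a cut edge of $G$, and suppose for contradiction that $G$ has an Euler tour $T$. Since $T$ traverses every edge of $G$ exactly once, it in particular traverses $e$ exactly once. Because $T$ is closed, I may cyclically rotate it so that $e$ is the first edge: write $T = u\,e\,v\,e_2\,v_2\,\dotso\,v_{m-1}\,e_m\,u$, where every edge of $G$ appears exactly once among $e, e_2, \dotso, e_m$.

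The key step is to extract from $T$ a $uv$-walk that avoids $e$. Deleting the first edge $e$ from $T$ leaves the walk $W = v\,e_2\,v_2\,\dotso\,v_{m-1}\,e_m\,u$, which is a $vu$-walk (equivalently, a $uv$-walk) in $G$ that does not traverse $e$ at all, since $e$ was used only in the initial step. Hence $W$ is a $uv$-walk in $G\setminus e$. By Proposition~\ref{prop:G-uv}, since $G$ is connected and $G\setminus uv$ admits a $uv$-walk, $G\setminus uv$ is connected; therefore $c(G\setminus e) = c(G) = 1$, so $e$ is not a cut edge of $G$. This contradicts our assumption, completing the proof.

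There is essentially no hard step here: this is a direct consequence of the machinery already set up. In fact one could alternatively invoke Proposition~\ref{prop:2-edge-connected} (a non-trivial connected eulerian graph is $2$-edge-connected, hence has no cut edge) together with the trivial observation that an eulerian graph with a cut edge would in particular be non-trivial; but the self-contained argument above, mirroring the proof of Proposition~\ref{prop:2-edge-connected}, is cleaner to state in this context. The only minor point to handle carefully is the degenerate case where $G$ is trivial or edgeless: such a graph has no cut edges vacuously, so the implication holds. Thus the one genuine choice is whether to present the argument via Proposition~\ref{prop:2-edge-connected} as a one-line corollary or to reproduce the short direct argument; I would favour the direct argument for transparency.
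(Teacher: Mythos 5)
Your proof is correct and follows essentially the same route as the paper: remove the single traversal of $e$ from the Euler tour to obtain a $uv$-walk in $G\setminus e$, then invoke Proposition~\ref{prop:G-uv} to conclude $e$ is not a cut edge (the paper phrases it directly for an arbitrary edge rather than by contradiction, but the argument is the same). Your noted alternative via Proposition~\ref{prop:2-edge-connected} is also fine and is exactly the shortcut the paper chose not to take here.
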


\begin{proof}
Assume $G$ is eulerian and let $e\in E(G)$.  Let $T=v_0e_1v_1e_2v_2\dotso e_mv_0$ be an Euler tour of $G$, and without loss of generality, assume $e_1=e$.  Then $v_0e_mv_{m-1}e_{m-1}$\\$v_{m-2}\dotso e_2v_1$ is a $v_0v_1$-walk of $G\setminus e$, so Proposition~\ref{prop:G-uv} states that $G\setminus e$ is connected.  Hence $e$ is not a cut edge of $G$.  Letting $e$ vary over all $E(G)$, we see that $G$ has no cut edges, as required.
\end{proof}

Of course, we will not solely investigate hypergraphs that have just a single cut edge, but this distinction between graphs and hypergraphs is instructive nonetheless.  Between this result and Euler's result about connected graphs being eulerian if and only if they are even (Theorem~\ref{thm:eulertour}), there are easy ways to check whether a graph is eulerian or not.  As we saw in Chapter~\ref{chapter:previousresults} (and, in particular, Theorem~\ref{thm:complexity}), deciding whether a hypergraph is eulerian, even when restricted to a specific class of hypergraphs, is NP-complete.

This jump in difficulty when going from graphs to hypergraphs is due to (among other things) the fact that, in a hypergraph, there are multiple ways that any given edge can be traversed.  However, since edges in a fixed edge cut are in some way responsible for affording access to different parts of the hypergraph,  the possibilities for how and when these edges are traversed in a closed trail is more limited than it is for other kinds of edges.  

We can take advantage of the limitations of edge-cut edges to reduce the problem of existence of an Euler family or tour to finding one in some related, and smaller, hypergraphs.  The main idea is that these edge-cut edges get traversed in a way that is reliant on how the others get traversed, so in prescribing how one gets traversed, we gain information about the others.

This chapter is separated into sections based on the kind of edge cut the hypergraph has.  There are, generally speaking, similar results for Euler tours and Euler families in each section.  We will introduce some new tools that help us analyze these hypergraphs, and these tools can be used for problems that are beyond the scope of this chapter.

At the end of the chapter, we produce an algorithm that uses the accrued results to search for an Euler tour in any hypergraph.

\section{Definitions and Basic Facts}



Recall the definition of an edge cut for hypergraphs:\\

\begin{defn}
{\rm Let $H$ be a hypergraph and let $S\subsetneq V(H)$ be nonempty.  Then $F = [S,\bar{S}]= \{e\in E(H): e\cap S\neq\emptyset, e\cap\overline{S}\neq\emptyset\}$ is called an {\em edge cut} of $H$.  $F$ is called {\em minimal} if $H$ has no edge cut properly contained in $F$.
}
\end{defn}

If $F$ is an edge cut of $H$, then we necessarily have that $H\setminus F$ is disconnected.  We will usually denote the connected components of $H\setminus F$ by $H_i$, $i\in I$, for some index set $I$.

We first give a result that demonstrates that deleting an edge cut yields a disconnected subhypergraph, and vice-versa: any disconnected subhypergraph of $H$ is obtained from $H$ by deleting, at minimum, an edge cut from it (and then possibly some more edges).

\begin{lem}\label{lem:minedgecut}
Let $H$ be a hypergraph, and let $F\subseteq E(H).$ Then $H\setminus F$ is disconnected if and only if $H$ has an edge cut $F'\subseteq F$.
\end{lem}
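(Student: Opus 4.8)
The statement is a biconditional about when deleting an edge set $F$ disconnects a hypergraph $H$. The plan is to prove each direction separately, with the nontrivial direction being the forward one.

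\textbf{Proof plan.}

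($\Leftarrow$) This direction is essentially immediate. Suppose $H$ has an edge cut $F' = [S,\overline{S}] \subseteq F$ for some nonempty $S \subsetneq V(H)$. Since every edge meeting both $S$ and $\overline{S}$ lies in $F'$, and hence in $F$, every edge of $H \setminus F$ is contained entirely in $S$ or entirely in $\overline{S}$. Therefore no vertex of $S$ is connected to any vertex of $\overline{S}$ in $H \setminus F$ (a walk would need an edge straddling the partition). Since $S$ and $\overline{S}$ are both nonempty, $H \setminus F$ is disconnected. One small caveat I would address: the connected components of $H\setminus F$ are strong subhypergraphs with no empty edges, so I should note that deleting edges from $H$ does not delete vertices, so $V(H\setminus F) = V(H)$ and both $S$ and $\overline S$ genuinely contain vertices of $H\setminus F$.

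($\Rightarrow$) Suppose $H \setminus F$ is disconnected. Then $V(H) = V(H\setminus F)$ partitions into the vertex sets of two or more connected components; pick one component and let $S$ be its vertex set, so $\emptyset \neq S \subsetneq V(H)$. I claim $F' := [S,\overline{S}]$ (the edge cut of $H$ determined by $S$) satisfies $F' \subseteq F$. Indeed, take any $e \in F'$, so $e$ meets both $S$ and $\overline{S}$. If $e \notin F$, then $e \in E(H\setminus F)$, and $e$ joins a vertex of $S$ to a vertex of $\overline{S}$; but then those two vertices would be adjacent, hence connected, in $H\setminus F$, contradicting that $S$ is (exactly) the vertex set of a single connected component of $H\setminus F$. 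Hence $e \in F$, so $F' \subseteq F$, and $F'$ is an edge cut of $H$ as required.

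\textbf{Anticipated obstacle.} There is no deep obstacle here; the result is a routine structural observation. The only place requiring mild care is bookkeeping around the definitions: making sure that "connected component of $H \setminus F$" gives a genuine bipartition of $V(H)$ (which relies on the fact, stated earlier in the excerpt, that the vertex sets of the connected components of a hypergraph partition its vertex set, and that edge deletion preserves the vertex set), and confirming that $F'$ as defined is nonempty-complement-wise so that it legitimately qualifies as an edge cut under the paper's definition (which only requires $\emptyset \neq S \subsetneq V(H)$, automatically satisfied). I would also remark that $F'$ is in fact contained in $F$ but may be a proper subset, which is consistent with the phrasing "$F' \subseteq F$" in the statement and foreshadows the notion of minimal edge cuts used later.
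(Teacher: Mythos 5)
Your proof is correct and follows essentially the same route as the paper: in the forward direction both take $S$ to be the vertex set of one connected component of $H\setminus F$ and show $[S,\overline{S}]\subseteq F$ by noting any straddling edge outside $F$ would connect distinct components, and in the reverse direction both observe that a walk from $S$ to $\overline{S}$ would require an edge of $F'\subseteq F$. The only cosmetic difference is that you argue disconnectedness of $H\setminus F$ directly, while the paper first shows $H\setminus F'$ is disconnected and then deletes the remaining edges of $F$; the two are interchangeable.
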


\begin{proof}
$\Rightarrow$: Assume that $H\setminus F$ is disconnected, and let $H_1,\dotso , H_k$, with $k\geq 2$, be the connected components of $H\setminus F$.  Define $S=V(H_1)$, so that $\bar{S}=V(H_2)\cup\dotso\cup V(H_k)$.  Let $F'=[S,\bar{S}]$, so $F'$ is an edge cut of $H$.

We now show that $F'\subseteq F$.  Suppose $f\in F'$.  Then $f$ contains a vertex $u\in S$ and a vertex $v\in\bar{S}$.  If $f\not\in F$, then $u$ and $v$ are connected in $H\setminus F$, a contradiction because $u$ and $v$ lie in different connected components of $H\setminus F$.  Hence $f\in F$, and we conclude that $F'\subseteq F$.

$\Leftarrow$:  Assume $H$ has an edge cut $F'\subseteq F$. Let $F'=[S,\bar{S}]$, where $\emptyset\subsetneq S\subsetneq V(H)$.  Let $u\in S$ and $v\in\bar{S}$.  If there exists a $uv$-walk $W$ in $H\setminus F'$, then $W$ traverses an edge $e$ that contains a vertex in $S$ and a vertex in $\bar{S}$.  Then $e$ must be in $F'$, a contradiction.  Therefore, there is no such walk $W$.

Hence $H\setminus F'$ is disconnected.  Since $F\supseteq F'$, it follows that $H\setminus F$ is disconnected as well.
\end{proof}

A lot of results we will be developing in this chapter require minimal edge cuts, since they have a handy property that we will see in the upcoming lemma.  Accordingly, we will introduce some convenient terminology to help us along.

\begin{defn}
{\rm Let $H$ be a hypergraph and $H'$ be a subhypergraph of $H$. We say that an edge $e\in E(H)$ {\em intersects} $H'$ if $e\cap V(H')\neq\emptyset$.
}
\end{defn}

\begin{lem}\label{lem:intersects}
Let $H$ be a hypergraph, and let $F$ be an edge cut of $H$.  Let $H_i$, for $i\in I$, be the connected components of $H\setminus F$.

Then $F$ is a minimal edge cut if and only if every edge of $F$ intersects $H_i$ for all $i\in I$.
\end{lem}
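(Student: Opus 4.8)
The statement is a biconditional about when an edge cut $F$ of $H$ is minimal, phrased in terms of whether every edge of $F$ intersects each connected component $H_i$ of $H\setminus F$. I would prove both directions by contraposition, which turns each into a construction.

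For the forward direction, I would prove the contrapositive: if some edge $e\in F$ fails to intersect some $H_j$, then $F$ is not minimal. The plan is to build a strictly smaller edge cut. Write $F=[S,\overline{S}]$ for the partition of $V(H)$ that induces $F$. Since $e\in F$, the edge $e$ meets both $S$ and $\overline S$; since $e$ misses $V(H_j)$, the component $H_j$ lies entirely on one side, say $V(H_j)\subseteq S$ (the other case being symmetric). The natural candidate for a smaller cut is $F'=[S',\overline{S'}]$ where $S'=S\setminus V(H_j)$, equivalently we ``move'' $H_j$ to the other side. I would check that $F'\subseteq F$ (any edge meeting both $S'$ and $\overline{S'}$ also meets both $S$ and $\overline S$, because reassigning the vertices of $H_j$ — which form a union of edges within one component — cannot create a new crossing edge; here one uses that $H\setminus F$ has $H_j$ as a connected component, so no edge of $H\setminus F$ crosses between $V(H_j)$ and the rest, and edges of $F$ that touched $V(H_j)$ on the $S$-side still touch $e$ or some other vertex). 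Then I would exhibit that $e\notin F'$: $e$ meets $\overline S=\overline{S'}\setminus(\text{nothing})$, and on the $S$-side $e$ only meets $S\setminus V(H_j)=S'$ by hypothesis, so $e$ does not cross the new partition — wait, that shows $e$ could still cross if it also meets $\overline{S'}$; I need to be careful and instead pick the side so that $e$'s intersection with $S$ is entirely inside $V(H_j)$, OR argue directly. The cleaner route: by Lemma~\ref{lem:minedgecut}, $F$ is non-minimal iff $H\setminus(F\setminus\{e\})$ is still disconnected for some $e\in F$, i.e.\ iff removing one edge $e$ from $F$ does not reconnect $H\setminus F$. So I would show: if $e$ misses $V(H_j)$, then adding $e$ back to $H\setminus F$ does not merge $H_j$ with any other component (since $e$ has no vertex in $H_j$), hence $H\setminus(F\setminus\{e\})$ still has $H_j$ as a separate piece and remains disconnected; by Lemma~\ref{lem:minedgecut} it contains an edge cut $F''\subseteq F\setminus\{e\}\subsetneq F$, so $F$ is not minimal.

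For the reverse direction, again by contraposition: if $F$ is not minimal, I must produce an edge of $F$ missing some $H_i$. By Lemma~\ref{lem:minedgecut}, non-minimality gives a proper subset $F''\subsetneq F$ that is itself an edge cut, so $H\setminus F''$ is disconnected. Pick $e\in F\setminus F''$. Since $F''\subseteq E(H\setminus\{e\}$-complement$)$... more precisely, $H\setminus F''$ is disconnected and $e\notin F''$, so $e$ is an edge of $H\setminus F''$. If $e$ intersected every component $H_i$ of $H\setminus F$, I want a contradiction. The idea: $H\setminus F = (H\setminus F'')\setminus(F\setminus F'')$, and deleting the edges $F\setminus F''$ from the already-disconnected $H\setminus F''$ can only break components into smaller pieces — so the components $H_i$ of $H\setminus F$ refine those of $H\setminus F''$. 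The edge $e$, living in $H\setminus F''$, lies within a single component of $H\setminus F''$, hence $e$ cannot meet two distinct $H_i$ that sit in different components of $H\setminus F''$. If $H\setminus F''$ has at least two components (it does, being disconnected) and $F$ partitions things further, then provided there exist $H_i$'s on both sides, $e$ misses at least one of them, giving the required edge. I should double-check the edge case where all $H_i$ lie inside one component of $H\setminus F''$; but then that one component of $H\setminus F''$ is disconnected after deleting $F\setminus F''$, and the other component(s) of $H\setminus F''$ get no new edges from $F$, so actually $F''$ alone already fails to touch them appropriately — I expect this degenerate case either cannot occur or is handled by choosing $e$ and the component more carefully.

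\textbf{Main obstacle.} The bookkeeping in the reverse direction — tracking how the components of $H\setminus F$ refine those of $H\setminus F''$ and locating an edge $e\in F\setminus F''$ that provably misses some $H_i$ — is the delicate part; I expect to lean heavily on Lemma~\ref{lem:minedgecut} to avoid manipulating the vertex partitions $[S,\overline S]$ directly, since the partition description of an edge cut is not unique and that non-uniqueness is exactly what makes the direct approach fiddly. The forward direction, rephrased via Lemma~\ref{lem:minedgecut} as ``$e$ missing $H_j$ $\Rightarrow$ putting $e$ back leaves $H_j$ isolated $\Rightarrow$ $F\setminus\{e\}$ still contains an edge cut'', should be essentially immediate.
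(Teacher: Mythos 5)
Your forward direction, once you abandon the $[S,\overline S]$ manipulation and switch to the route through Lemma~\ref{lem:minedgecut}, is exactly the paper's argument: if $e\in F$ misses $V(H_j)$, then $H_j$ stays separated from the rest of the hypergraph in $H\setminus(F\setminus\{e\})$, which is therefore still disconnected, so $F\setminus\{e\}$ contains an edge cut and $F$ is not minimal. Your reverse direction, however, takes a genuinely different route. You argue the contrapositive: non-minimality gives an edge cut $F''\subsetneq F$, you pick $e\in F\setminus F''$, and since $e$ is an edge of $H\setminus F''$ its vertices all lie in a single component $K$ of $H\setminus F''$, while the components of $H\setminus F$ refine those of $H\setminus F''$; any component of $H\setminus F''$ other than $K$ is nonempty and hence contains the vertex set of some $H_i$, which $e$ therefore misses. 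The ``degenerate case'' you hedge on at the end cannot occur: the vertex sets of the $H_i$ partition $V(H)$ (connectedness is an equivalence relation on the vertices), so every component of $H\setminus F''$ --- in particular every one other than $K$ --- wholly contains at least one $H_i$. With that one line added your argument is complete. The paper proves this direction directly and more cheaply: if every $f\in F$ intersects every $H_i$, then $H\setminus(F\setminus\{f\})$, which is $(H\setminus F)$ with $f$ restored, is connected (each $H_i$ is connected and $f$ meets each of them), so by Lemma~\ref{lem:minedgecut} no edge cut lies inside $F\setminus\{f\}$; as $f$ was arbitrary, no edge cut is properly contained in $F$. Your contrapositive costs exactly the refinement bookkeeping you identify as the delicate part and buys nothing extra, so the paper's direct argument is the lighter one, but both are sound.
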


\begin{proof}
$\Rightarrow$: We prove the contrapositive: assume there is some $f\in F$ and some $i$ such that $f$ does not intersect $H_i$.  Let $F' = F\setminus\{f\}$.  Since $H_i$ is a connected component of $H\setminus F$, there is no edge in $H\setminus F$ that joins a vertex of $V(H_i)$ to a vertex of $V(H)\setminus V(H_i)$.  Likewise, since $f$ does not intersect $H_i$, there is no edge in $H\setminus F'$ that joins a vertex of $V(H_i)$ to a vertex of $V(H)\setminus V(H_i)$.  Then $H\setminus F'$ is disconnected, so Lemma~\ref{lem:minedgecut} implies that there is an edge cut $F''\subseteq F'\subset F$. Hence $F$ is not a minimal edge cut.

$\Leftarrow$: Suppose each edge of $F$ intersects every connected component of $H\setminus F$.  Fix some $f\in F$ and let $F'=F\setminus\{f\}$.

Since $f$ intersects every connected component of $H\setminus F$, we see that $H\setminus F'$ is connected.  Then Lemma~\ref{lem:minedgecut} implies that there is no edge cut contained in $F'$.  As $f$ is arbitrary, this shows that there is no edge cut properly contained in $F$, so $F$ is a minimal edge cut.
\end{proof}

We next present a simple result that shows that an Euler family of a hypergraph $H$ contains an Euler family of every union of connected components of $H$.\\

\begin{lem}\label{lem:decomposingfamilies}
Let $H$ be a hypergraph with connected components $H_i,$ for $i\in I$.  Then the following hold:

\begin{description}
\item[(1)] If, for each $i\in I$, we have that $H_i$ has an Euler family $\F_i$, then $\bigcup\limits_{i\in I}\F_i$ is an Euler family of $H$.  If each $\F_i$ is spanning in $H_i$, then so is $\F$ in $H$.
\item[(2)] If $H$ has an Euler family $\F$, then $\F$ has a partition $\{\F_i:i\in I\}$ such that, for each $i\in I$, we have that $\F_i$ is an Euler family of $H_i$.  If $\F$ is spanning in $H$, then so is each $\F_i$ in $H_i$.
\end{description}
\end{lem}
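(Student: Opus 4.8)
The plan is to prove both parts by reasoning directly from the definition of an Euler family and the fact that, in a hypergraph, each edge (and hence each trail) lives entirely inside one connected component. The two directions are essentially bookkeeping, so I would keep the proof short.

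First I would establish the elementary observation underpinning everything: if $e\in E(H)$ and $H_i$ is the connected component of $H$ containing some vertex of $e$, then $e\subseteq V(H_i)$, because any two vertices of $e$ are adjacent (via $e$) and hence lie in the same component. Consequently, any closed strict trail $T$ of $H$ has all its anchors, floaters, and edges contained in a single $H_i$; I will say $T$ \emph{belongs to} $H_i$. For part (1), given Euler families $\F_i$ of the $H_i$, I set $\F=\bigcup_{i\in I}\F_i$. The components of distinct $\F_i$ are anchor- and edge-disjoint automatically, since they belong to vertex-disjoint subhypergraphs; within a single $\F_i$ they are anchor- and edge-disjoint by hypothesis. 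Every edge $e\in E(H)$ lies in exactly one $E(H_i)$, so it is traversed exactly once — by the unique component of $\F_i$ that traverses it. Hence $\F$ is an Euler family of $H$. For the spanning claim: if each $\F_i$ traverses every vertex of $H_i$, then since $V(H)=\bigsqcup_i V(H_i)$, the family $\F$ traverses every vertex of $H$.

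For part (2), given an Euler family $\F$ of $H$, I partition $\F$ by defining, for each $i\in I$, the set $\F_i=\{T\in\F: T\text{ belongs to }H_i\}$. By the opening observation every $T\in\F$ belongs to exactly one $H_i$, so $\{\F_i:i\in I\}$ is indeed a partition of $\F$. Each $\F_i$ inherits anchor- and edge-disjointness and non-triviality of its trails from $\F$. To see $\F_i$ traverses every edge of $H_i$ exactly once: any $e\in E(H_i)$ is an edge of $H$, so it is traversed exactly once in $\F$, by some component $T$; since $e\subseteq V(H_i)$, this $T$ belongs to $H_i$, hence $T\in\F_i$. Thus $\F_i$ is an Euler family of $H_i$. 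Finally, if $\F$ is spanning in $H$, then any $v\in V(H_i)$ is traversed by some $T\in\F$, and that $T$ must belong to $H_i$ (it contains an anchor in $V(H_i)$), so $T\in\F_i$; hence $\F_i$ is spanning in $H_i$.

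I do not anticipate a genuine obstacle here; the only point requiring a moment's care is the claim that a single closed strict trail cannot ``straddle'' two components, i.e. that $e\subseteq V(H_i)$ whenever $e$ meets $V(H_i)$ — this is exactly the standard fact that connectedness is an equivalence relation on $V(H)$ whose classes are the vertex sets of the components, combined with the fact that an edge makes all of its vertices pairwise adjacent. Once that is stated cleanly, both parts follow by the disjoint-union bookkeeping sketched above. I would write the whole thing in perhaps fifteen lines.
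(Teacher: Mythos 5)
Your proposal is correct and follows essentially the same route as the paper: part (1) by taking the disjoint union and checking edge coverage and anchor-disjointness, part (2) by partitioning $\F$ according to the unique component each trail lies in. The only difference is that you explicitly justify the fact that every trail (indeed every edge) lies entirely within one connected component, a step the paper's proof takes for granted.
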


\begin{proof}
{\bf (1)} For each $i\in I$, assume $H_i$ has an Euler family $\F_i$.

Then $\F=\bigcup\limits_{i\in I}\F_i$ is a collection of anchor-disjoint closed trails that traverses every edge of $\bigcup\limits_{i\in I}H_i=H$ exactly once.  Hence $\F$ is an Euler family of $H$.

Suppose each $\F_i$ is spanning. Then $\F$ traverses all the vertices that are traversed by any $\F_i$.  Since each vertex of every connected component of $H$ is traversed by some $\F_i$, we see that $\F$ is spanning in $H$.

{\bf (2)} Assume $H$ has an Euler family $\F$.  For each $i\in I$, define $\F_i=\{T\in\F: T\text{ is a trail in }H_i\}$.  Since every trail of $\F$ is in exactly one $H_i$, we have that $\{\F_i: i\in I\}$ is a partition of $\F$.

Now, fix $i\in I$ and let $e\in E(H_i)$.  Then $e$ is traversed in $\F$, so there exists unique $T\in\F$ that traverses $e$. We then have that $T$ is a trail of $H_i$, so $T\in\F_i$.  Hence $e$ is traversed exactly once in $\F_i$.  Since the trails of $\F$ are anchor-disjoint, so too are the trails of $\F_i$, hence $\F_i$ is an Euler family of $H_i$.

Suppose now that $\F_i$ is not spanning in $H_i$, for some $i\in I$.  Then there exists $v\in V(H_i)$ not traversed by $\F_i$.  It must be the case that $v$ is not traversed by $\F$, so $\F$ is not spanning in $H$.  Therefore, by the contrapositive, each $\F_i$ is spanning in $H_i$ if $\F$ is spanning in $H$.
\end{proof}

\begin{cor}\label{cor:decomposingfamilies}
Let $H$ be a hypergraph with connected components $H_i$, for $i\in I$.  Assume that $\F$ is an Euler family of $H$, and $\{\F_i: i\in I\}$ a partition of $\F$ such that, for all $i\in I$, we have that $\F_i$ is an Euler family of $H_i$. 

Then, for each $J\subseteq I$, we have that $\bigcup\limits_{i\in J}\F_i$ is an Euler family of $\bigcup\limits_{i\in J}H_i$.
\end{cor}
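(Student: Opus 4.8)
The plan is to derive this corollary directly from Lemma~\ref{lem:decomposingfamilies}, treating the union $\bigcup_{i\in J}H_i$ as a hypergraph in its own right and recognizing that its connected components are precisely the $H_i$ for $i\in J$. First I would observe that since the $H_i$, $i\in I$, are the connected components of $H$, any subcollection indexed by $J\subseteq I$ is pairwise vertex-disjoint and edge-disjoint, so each $H_i$ with $i\in J$ is a connected component of the subhypergraph $H':=\bigcup_{i\in J}H_i$. This is the small structural fact that lets us invoke the earlier lemma with $H'$ in place of $H$ and $J$ in place of $I$.

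Next I would use the hypothesis to produce an Euler family of each piece. We are given that $\F$ is an Euler family of $H$ and that $\{\F_i:i\in I\}$ is a partition of $\F$ with each $\F_i$ an Euler family of $H_i$. In particular, for every $i\in J$, the collection $\F_i$ is an Euler family of $H_i$. Applying Lemma~\ref{lem:decomposingfamilies}(1) to the hypergraph $H'=\bigcup_{i\in J}H_i$, whose connected components are exactly $\{H_i:i\in J\}$, and to the families $\F_i$, $i\in J$, we conclude that $\bigcup_{i\in J}\F_i$ is an Euler family of $H'$, which is the desired statement.

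Honestly there is no real obstacle here: the corollary is essentially a restriction of Lemma~\ref{lem:decomposingfamilies} to a sub-index-set, and the only thing requiring a word of justification is that passing to a subcollection of connected components does not create new adjacencies — i.e.\ each $H_i$ remains a full connected component of the union, not merely a connected subhypergraph of it. Since the $H_i$ for $i\in I$ have disjoint vertex sets and no edge of $H$ meets two of them, this is immediate. I would phrase the proof in two or three sentences: note that $\{H_i : i \in J\}$ is the set of connected components of $\bigcup_{i\in J}H_i$; note that each $\F_i$, $i\in J$, is an Euler family of $H_i$; then cite Lemma~\ref{lem:decomposingfamilies}(1) to assemble $\bigcup_{i\in J}\F_i$ into an Euler family of $\bigcup_{i\in J}H_i$.
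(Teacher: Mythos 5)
Your proposal is correct and matches the paper's own argument: the paper likewise fixes $J\subseteq I$, sets $H'=\bigcup_{i\in J}H_i$, and cites Lemma~\ref{lem:decomposingfamilies}~(1) to conclude that $\bigcup_{i\in J}\F_i$ is an Euler family of $H'$. Your extra sentence justifying that each $H_i$, $i\in J$, is a full connected component of $H'$ is a harmless (and slightly more careful) addition, not a different route.
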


\begin{proof}Let $\F$ be an Euler family of $H$.  Then Lemma~\ref{lem:decomposingfamilies} (2) produces the required partition $\{\F_i:i\in I\}$ of $\F$.  Fix some $J\subseteq I$ and let $H'=\bigcup\limits_{i\in J}H_i$.  Lemma~\ref{lem:decomposingfamilies} (1) demonstrates that $\bigcup\limits_{i\in J}\F_i$ is an Euler family of $H'$.
\end{proof}






\section{Technical Lemmas}
We are now ready to present the two main tools of this chapter that generate an auxiliary graph (or hypergraph).  The simpler nature of these auxiliary (hyper)graphs allows us to focus on how a trail might traverse edges of an edge cut.\\

\begin{defn}\label{defn:G}
{\rm Let $H$ be a connected hypergraph with a minimal edge cut $F$ and an Euler family $\F$.  Let $H_i$, for $i\in I$, denote the connected components of $H\setminus F$.  
\begin{description}
\item[(a)] For every $f\in F$ such that $f$ is traversed in $\F$ via an anchor in $H_i$ and an anchor in $H_j$, we denote $f_G=ij$.
\item[(b)] Define a multigraph $G=G(H,F,\F)$ as follows:
	\begin{itemize}
	\item $V(G)=I$, and
	\item $E(G)=\{f_G:f\in F\}$.
	\end{itemize}
\item[(c)] Let $\mathcal{P}_G$ be the partition of $I$ into vertex sets of the connected components of $G$; that is, $$\mathcal{P}_G=\{J\subseteq I: G[J]\text{ is a connected component of }G\}.$$
\item[(d)] For any $J\subseteq I$, we define the following:
	\begin{itemize}
	\item $F_J=\{f\in F: f_G\in E(G[J])\}$;
	\item for each $f\in F_J: f'=f\cap\big(\bigcup\limits_{i\in J}V(H_i)\big)$;
	\item $F_J'=\{f':f\in F_J\}$;
	\item $H_J'=\big(\bigcup\limits_{i\in J}H_i\big) + F_J'$.
	\end{itemize}
\end{description}
} \end{defn}

Note that, since $G$ is a multigraph, it may have loops and parallel edges.  A loop is produced when an edge $f\in F$ is traversed in $\F$ via two vertices of the same connected component of $H\setminus F$.  Parallel edges are produced when multiple edges of $F$ are traversed via anchors of the same pair of connected components (though not necessarily the same anchors).

We explore some ramifications of these definitions in the following lemma.\\

\begin{lem}\label{lem:G}
Let $H$ be a nonempty connected hypergraph with a minimal edge cut $F$ and Euler family $\F$, and let $H_i$, for $i\in I$, be the connected components of $H\setminus F$.  Then $G=G(H,F,\F)$ satisfies the following:
\begin{description}
\item[(1)] $G$ is an even graph with $|I|$ vertices and $|F|$ edges.
\item[(2)] $H'=\bigcup\limits_{J\in\mathcal{P}_G}H_J'$ has an Euler family $\F'$ obtained from $\F$ by replacing each edge $f\in F$ with $f'\in F_J'$ for some appropriate choice of $J\in\mathcal{P}_G$.
\item[(3)] $\F'$ has a partition $\{\F_J': J\in\mathcal{P}_G\}$ such that $\F_J'$ is an Euler family of $H_J'$ for each $J\in\mathcal{P}_G$.  Hence $|\F|=\sum\limits_{J\in\mathcal{P}_G}|\F_J'|$.
\item[(4)] If $\F$ is spanning in $H$, then $\F'$ is spanning in $H'$, and $\F_J'$ is spanning in $H_J'$ for all $J\in\mathcal{P}_G$.
\item[(5)] If $|\F|=1$, then there exists a unique $J\in\mathcal{P}_G$ such that $G[J]$ is nonempty.  For this $J$, we have that $H_J'$ has a nontrivial Euler tour, and $H_i$ is empty for all $i\not\in J$.
\end{description}
\end{lem}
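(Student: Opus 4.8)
The plan is to prove Lemma~\ref{lem:G} as a sequence of increasingly specific claims, each building on the definitions in Definition~\ref{defn:G} and the decomposition machinery of Lemma~\ref{lem:decomposingfamilies} and Corollary~\ref{cor:decomposingfamilies}. I will focus here on part (5), assuming parts (1)--(4) have been established.

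For part (1), I would observe that $V(G) = I$ gives $|I|$ vertices by construction, and that $E(G) = \{f_G : f \in F\}$ is a multiset in bijection with $F$ (each $f \in F$ is traversed in $\F$ via exactly two anchors, hence contributes exactly one edge $f_G$, possibly a loop), so $|E(G)| = |F|$. To see $G$ is even: each vertex $i \in I$ has degree in $G$ equal to the number of anchor-flags $(v,f)$ with $v \in V(H_i)$ and $f \in F$ traversed in $\F$ via $v$. Since $\F$ is an Euler family, the edges of $H$ incident to vertices of $H_i$ that are traversed with an anchor in $H_i$ split into those inside $H_i$ (contributing an even number, as each trail enters and leaves) and those in $F$; counting flags at each anchor of each closed trail restricted to $H_i$ shows the total is even. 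I would phrase this via the handshaking-type argument: restricting each trail $T \in \F$ to the maximal subwalks lying in $H_i \cup F_i$ shows each anchor of $H_i$ contributes evenly.

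For part (5), suppose $|\F| = 1$, say $\F = \{T\}$ with $T$ a nontrivial closed strict trail traversing every edge of $H$. Since $H$ is connected and nonempty, $H$ has at least one edge, and $T$ traverses all of them; in particular $T$ traverses every $f \in F$. Because $T$ is a single connected closed trail, the graph $G = G(H,F,\F)$ must be connected on the set of indices $i$ for which $H_i$ contains an anchor of $T$ — and since $T$ visits every edge, it visits every nonempty $H_i$, so those are exactly the $i$ with $H_i$ nonempty. This gives existence and uniqueness of $J \in \mathcal{P}_G$ with $G[J]$ nonempty: the connected component of $G$ containing any index incident to an edge of $F$ is unique because $T$ threads all of $F$ together. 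For the indices $i \notin J$: I claim $H_i$ is empty. If $H_i$ had an edge $e$, then $T$ traverses $e$, so $T$ has an anchor in $H_i$; but then $i$ must be incident in $G$ to some edge of $F$ (since $T$ must leave $H_i$ to reach the rest of $H$, which is nonempty), contradicting $i \notin J$. Finally, by part (3) applied with $|\F| = 1 = \sum_{J' \in \mathcal{P}_G} |\F_{J'}'|$, all but one summand vanishes, and $\F_J'$ is the unique nonempty one with $|\F_J'| = 1$; its single trail is a nontrivial Euler tour of $H_J'$ (nontrivial because $H_J'$ contains the edges of $F_J'$ and is nonempty).

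The main obstacle I anticipate is the bookkeeping around \emph{nontriviality} and the edge cases where $F$ is empty or where some $H_i$ is a single vertex with no edges. When $F = \emptyset$, $H \setminus F = H$ is connected so $|I| = 1$ and everything degenerates trivially; I would dispatch this at the start. The subtler point is arguing that the unique $J$ with $G[J]$ nonempty really does capture \emph{all} nonempty $H_i$: this needs the observation that a nonempty connected component $H_i$ of $H \setminus F$, if $H$ itself is connected with more than one component in $H \setminus F$, must be incident to an edge of $F$ (by minimality of $F$ and Lemma~\ref{lem:intersects}, every edge of $F$ intersects every $H_i$, so as soon as $F \neq \emptyset$, every $H_i$ touches $F$). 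Combining Lemma~\ref{lem:intersects} with the connectedness of $T$ is the crux, and I would make sure to invoke it explicitly rather than leaving it implicit.
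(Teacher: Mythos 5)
What you do prove is sound, and your argument for (5) takes a genuinely different route from the paper's. You argue structurally: the projection of the single closed trail $T$ onto $G$ is a closed trail traversing every edge of $G$ (each $f\in F$ is traversed exactly once), so all edges of $G$ lie in one connected component, giving existence and uniqueness of $J$; and you get $H_i=\emptyset$ for $i\notin J$ from the ``$T$ must leave $H_i$ through an $F$-edge anchored in $H_i$'' step. The paper instead derives all of (5) as a counting consequence of (3): for $K\neq J$ one has $1=|\F|\geq|\F_J'|+|\F_K'|$, and since $G[J]$ nonempty forces $H_J'$ nonempty (so $\F_J'\neq\emptyset$), each $\F_K'$ must be empty, hence $H_K'$, $F_K'$ and $G[K]$ are empty and $H_k$ is empty. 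Both are valid; yours reuses the segment decomposition of $T$ that also underlies (1), the paper's reuses (3) and never re-examines the trail. Two small cautions: your parenthetical claim that the indices visited are \emph{exactly} those with $H_i$ nonempty can fail (an $F$-edge may be traversed via the lone vertex of an empty component), though you only need the inclusion you actually argue; and Lemma~\ref{lem:intersects} is not the crux of (5) --- that $f$ \emph{intersects} $V(H_i)$ does not make $f_G$ incident to $i$ in $G$; what matters is the anchor via which $f$ is traversed, i.e.\ precisely your leave-the-component argument. Your sketch of (1) (flag/handshaking count per component) is essentially the paper's argument, which phrases it as a decomposition of $G$ into the closed trails obtained by projecting each $T\in\F$.

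The genuine gap is that parts (2)--(4) are assumed rather than proven, and that is where the technical content of the lemma sits. For (2) you must observe that each $f\in F$ is traversed in $\F$ via two anchors both lying in $\bigcup_{i\in J}V(H_i)$ for the unique $J\in\mathcal{P}_G$ with $f_G\in E(G[J])$, so that $f'=f\cap\bigcup_{i\in J}V(H_i)$ still contains those anchors; this is exactly the hypothesis needed to apply Lemma~\ref{lem:truncatedhypergraph}~(2) to the bijection sending $f$ to $f'$ and fixing all other edges. For (3) you must check that the hypergraphs $H_J'$, $J\in\mathcal{P}_G$, are precisely the connected components of $H'$ (connectivity of each $H_J'$ follows from connectivity of $G[J]$ together with the fact that each $f'$ contains its two traversal anchors), after which Lemma~\ref{lem:decomposingfamilies} yields the partition and the identity $|\F|=\sum_{J}|\F_J'|$ that your proof of (5) invokes at its final step; (4) is bookkeeping but still needs the observation that $\F'$ has the same anchors as $\F$. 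Leaning on (3) inside (5) is legitimate --- the paper does the same --- but since (2)--(4) are never established here, the proposal as written is a correct treatment of (1) and (5) modulo an unproven middle, not yet a proof of the lemma.
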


\begin{proof}
{\bf (1)}: It is evident that $G$ has $|I|$ vertices and $|F|$ edges from its definition, so we need only show that $G$ is an even graph.

Let $T\in\F$.  Then $T=u_1T_1v_1f_1u_2T_2v_2f_2\dotso v_sf_su_1$, where $T_i$ is a trail in $H_{j_i}$, $u_i,v_i\in V(H_{j_i})$, and $f_i\in\F$, for each $i=1,\dotso , s$.

Then $T'=j_1(f_1)_{G}j_2(f_2)_{G}\dotso (f_s)_{G}j_1$ is a closed trail in $G$, and $\{T':T\in\F\}$ is a decomposition of $G$ into closed trails.  Hence $G$ is even.

{\bf (2)} Let $f\in F$.  Then, since $\mathcal{P}_G$ is a partition of $I$ into the vertex sets of the connected components of $G$, we have that there exists a unique $J\in\mathcal{P}_G$ such that $f_G\in E(G[J])$.  Hence there is a unique $J\in\mathcal{P}_G$ such that $F_J'$ contains the corresponding edge $f'$.  Furthermore, since $f$ is traversed in $\F$ via vertices in $V(H_J')$, we know that $f'=f\cap V(H_J')$ contains those vertices.

We obtain $\F'$ from $\F$ by replacing each edge $f\in F$ with the corresponding edge $f'$ that lies in $F_J'$ for the unique $J'\in\mathcal{P}_G$ as described above.  Note that $H'$ is obtained from $H$ by replacing each $f\in F$ with $f'$, so this suggests that $\F'$ is an Euler family of $\F$.

Formally, we define a bijection $\varphi:E(H)\rightarrow E(H')$ that maps each $f\in F$ to the corresponding $f'$ described above, and preserves all the other edges.  Since we have $V(H)=V(H')$ and $\varphi(e)\subseteq e$ for all $e\in E(H)$, and every $e\in E(H)$ is traversed in $\F$ via vertices in $\varphi(e)$, we may apply Lemma~\ref{lem:truncatedhypergraph} (2) using the bijection $\varphi^{-1}$ to conclude that $\F'$ is an Euler family of $H'$.

{\bf (3)} Observe that $\{H_J':J\in\mathcal{P}_G\}$ is the set of connected components of $H'$.  Then Lemma~\ref{lem:decomposingfamilies} implies that $\F'$ has a partition $\{\F_J':J\in\mathcal{P}_G\}$ such that, for each $J\in\mathcal{P}_G$, we have that $\F_J'$ is an Euler family of $H_J'$.

It should be clear from our construction of $\F'$ that $|\F'|=|\F|$, so by summing cardinalities of a disjoint union of sets, we have $|\F|=|\F'|=\sum\limits_{J\in\mathcal{P}_G}|\F_J'|$.

{\bf (4)} Assume $\F$ is spanning in $H$.  

$\F'$ has the same set of anchors as $\F$, and $V(H')=V(H)$, so we have that $\F'$ is spanning in $H'$.

Let $v\in V(H_J')$ for some $J\in\mathcal{P}_G$.  Then there exists $T\in\F'$ that traverses $v$, since $\F'$ is spanning in $H'$ and $v\in V(H')$.  Then $T$ must be a closed trail in $H_J'$, so necessarily we have $T\in\F_J'$.  Hence $v$ is traversed by $\F_J'$, and so every vertex of $V(H_J')$ is traversed by $\F_J'$.

{\bf (5)} Assume $|\F|=1$. Since $F$ is nonempty, so is $G$, so assume that $J\in\mathcal{P}_G$ is such that $G[J]$ is nonempty.

Let $K\in\mathcal{P}_G$ be distinct from $J$.  Then by (3) of this lemma, there exists an Euler family $\F_J'$ of $H_J'$ and $\F_K'$ of $H_K'$.  Again by (3), we have that $1=|\F|\geq |\F_J'| + |\F_K'|$, hence one of $\F_J'$ and $\F_K'$ is empty.  But $G[J]$ is nonempty, which implies that $F_J$ (and hence $F_J'$) is nonempty.  So $H_J'$ is nonempty, so it cannot have an empty Euler family.  Therefore, we have that $\F_K'$ is empty, and so we must conclude that $H_K'$ is empty for all $K\neq J$.  This implies that $F_K'$ is empty, so we also conclude that $G[K]$ is empty for all $K\neq J$, hence $K$ is singleton.  Hence $H_K'=H_k$ for some $k\in I$ and $H_i$ is empty for all $i\not\in J$.

Furthermore, since $\F_J'$ is an Euler family of $H_J'$ of cardinality 1, it contains a nontrivial Euler tour of $H_J'$. 
\end{proof}

\begin{defn}
{\rm {\bf (Collapsed Hypergraph)} Let $H$ be a hypergraph and let \\$\{V_0, V_1, \dotso , V_k\}$ be a partition of $V(H)$ with $k\geq 1$.  We define the {\em collapsed hypergraph} $H\circ\{V_1,\dotso , V_k\}=(V^{\circ},E^{\circ})$ as follows:
\begin{itemize}
\item $V^{\circ}=V_0\cup\{v_1,\dotso , v_k\}$ for some distinct vertices $v_1,\dotso , v_k\not\in V(H)$;
\item First take $E'=\{(e\cap V_0) \cup \{v_i: 1\leq i\leq k\text{ and } e\text{ intersects } V_i\}: e\in E(H)\}$, then define $E^{\circ}=\{e\in E': |e|\geq 2\}$.
 \end{itemize}
 
The new vertices $v_1,\dotso , v_k$ are called the {\em collapsed vertices} of the collapsed hypergraph.
  
If $k=1$, then we may simply write $H\circ V_1$ instead of $H\circ\{V_1\}$, {\em i.e.}, we may omit the set braces around $V_1$, so long as this does not cause ambiguity.
} \end{defn}

The idea behind collapsed hypergraphs is that we identify all the vertices of $V_1$, then all the vertices of $V_2$, and so on.  The collapsed vertices are introduced, for clarity's sake, to stand in for these vertices that have been identified.  The edges of $E'$ are obtained from $E(H)$ in exactly the way one would expect after identifying sets of vertices; the difficulty that can arise is that we end up with edges of cardinality 1 if an edge of $E(H)$ lies entirely in one of the vertex sets $V_i$.  We discard these to obtain $E^{\circ}$, for we do not want edges of cardinality 1.\\

\begin{remark}{\rm
The goals of defining $G(H,F,\F)$ and a collapsed hypergraph are similar: we wish to simplify the parts of the hypergraph that we are not terribly interested in, so that we may focus on how the edge-cut edges must be used to navigate to each of those parts.  In the case of $G(H,F,\F)$, we are already given an Euler family $\F$, and so we use this auxiliary graph to analyze what the traversals must say about the connected components of $H\setminus F$.

In the case of the collapsed hypergraph, we do not need to already have an Euler family in mind, and we can choose to simplify only certain parts of the hypergraph.  This can be very useful in certain situations (as we will see in Theorem~\ref{thm:edgescard2}), but since we have a hypergraph, it is not permitted to have any ``loops,'' a restriction that $G(H,F,\F)$ does not have.  If the collapsed hypergraph is to be used in situations beyond what we explore in this chapter, then this weakness will have to be navigated with care, for it might not correctly model what is required.

We now present a lemma that allows us to show that certain collapsed hypergraphs are quasi-eulerian when the original hypergraph is.\\
}
\end{remark}

\begin{lem}\label{lem:collapsedG}
Let $H$ be a nonempty connected hypergraph with a minimal edge cut $F$ and Euler family $\F$.  Let $H_i$, for $i\in I$, be the connected components of $H\setminus F$, and assume that $G=G(H,F,\F)$ is connected.

Take any $K\subseteq I$ such that $G[K]$ is loopless, and denote $H^{(K)}=H\circ\{V(H_k):k\in K\}$.

Then $H^{(K)}$ has an Euler family $\F^{(K)}$ such that the following hold:

\begin{description}
\item[(1)] $|\F^{(K)}|\leq |\F|$;
\item[(2)] $\F$ and $\F^{(K)}$ have the same anchors in $\bigcup\limits_{i\in I\setminus K}V(H_i)$; and
\item[(3)] For any $T\in\F$ and $k\in K$, we have that $T$ traverses a vertex in $V(H_k)$ and a vertex outside of $V(H_k)$ if and only if $\F^{(K)}$ traverses the collapsed vertex $u_k$ corresponding to $V(H_k)$.
\end{description}
\end{lem}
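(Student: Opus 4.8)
The plan is to build the Euler family $\F^{(K)}$ of $H^{(K)}$ directly from $\F$, mirroring the strategy used for $G(H,F,\F)$ in Lemma~\ref{lem:G}. First I would set up a natural edge map. Since $G$ is connected, $\mathcal{P}_G=\{I\}$, and by Lemma~\ref{lem:G}(2) we may replace each $f\in F$ by $f'=f\cap V(H_I')=f$ itself, so we lose no generality in thinking of $F$ as already ``trimmed.'' Now define $\varphi: E(H)\to 2^{V(H^{(K)})}$ by sending each edge $e\in E(H)$ to its natural image after identifying each $V(H_k)$, $k\in K$, to the collapsed vertex $u_k$: that is, $\varphi(e)=(e\setminus\bigcup_{k\in K}V(H_k))\cup\{u_k: k\in K,\ e\cap V(H_k)\neq\emptyset\}$. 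The key observation is that for $e\in E(H)\setminus F$, the edge $e$ lies entirely inside a single $H_i$; if $i\in K$ then $|\varphi(e)|=1$ and $e$ disappears in the collapse, otherwise $\varphi(e)=e$. For $e\in F$, since $G[K]$ is loopless, $e$ is not traversed in $\F$ via two vertices of the same $H_k$ with $k\in K$, but one must still rule out that $e$ could become a cardinality-$1$ edge — I would argue that a minimal edge cut edge $f$ intersects every $H_i$ (Lemma~\ref{lem:intersects}), and hence intersects at least two components, so even after collapse $|\varphi(f)|\geq 2$ provided $f$ does not lie inside $\bigcup_{k\in K}V(H_k)$; and if it did lie inside that union it would be a loop edge of $G[K]$, contradicting looplessness unless it joins two distinct $H_k$'s, in which case $|\varphi(f)|=2$ still. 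So every edge of $F$ survives the collapse.

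Next I would transfer the trails. Given $T\in\F$, write $T=u_1T_1v_1f_1u_2T_2v_2\dots v_sf_su_1$ as in the proof of Lemma~\ref{lem:G}(1), where each $T_i$ is a trail inside some $H_{j_i}$. Replace each maximal sub-trail lying inside a component $H_k$ with $k\in K$ by the single collapsed vertex $u_k$, and replace every edge $f_i$ with $\varphi(f_i)$, and every anchor in $\bigcup_{k\in K}V(H_k)$ by the corresponding $u_k$. If this process leaves a ``degenerate'' closed trail (one that traverses no edge), discard it — this is exactly how $|\F^{(K)}|$ can drop below $|\F|$, giving (1). The resulting object $\F^{(K)}$ is a collection of closed strict trails of $H^{(K)}$; I would verify it is anchor-disjoint (two trails could only clash at a collapsed vertex, but distinct trails of $\F$ were anchor-disjoint in $H$, hence they never both visited the same $H_k$, $k\in K$), and that it traverses every surviving edge of $H^{(K)}$ exactly once (edges not in $F$ map bijectively, and each $f\in F$ is traversed once). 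For (2), the anchors of $\F^{(K)}$ inside $\bigcup_{i\in I\setminus K}V(H_i)$ are literally unchanged by the construction. For (3): a trail $T\in\F$ enters and leaves $H_k$ (for $k\in K$) precisely via edges of $F$, so it visits a vertex of $V(H_k)$ and a vertex outside iff the transferred trail visits $u_k$; conversely $u_k$ is visited in $\F^{(K)}$ iff some incident edge (necessarily from $F$) is traversed, which happens iff the preimage trail crossed into $H_k$ — so I would phrase (3) as a chain of iff's tracking incidences of $F$-edges.

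The cleanest route to make all the bookkeeping rigorous is probably to invoke Lemma~\ref{lem:truncatedhypergraph}(2) rather than re-verify the trail axioms by hand: let $H^*$ be the hypergraph with $V(H^*)=V(H^{(K)})$ and edge set $\{\varphi(e): e\in E(H),\ |\varphi(e)|\geq 2\}$ together with a bijection $\varphi$ (restricted to the non-vanishing edges), then show $\varphi^{-1}(e)\subseteq e$ in the appropriate sense and that each edge of $H^*$ is traversed in the pushed-forward family via vertices of its preimage — however, the subtlety is that $\varphi$ is \emph{not} injective on $E(H)$ in general, because two edges $e_1,e_2\in F$ joining the same pair of components could have the same image if their ``outside'' parts coincide after collapse. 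I expect this non-injectivity — and the accompanying possibility that collapsing creates parallel edges or erases distinctions — to be the main obstacle: one must either argue the images stay distinct as a multiset (keeping multiplicities so $\varphi$ is a bijection onto a multiset edge family, which is legitimate for hypergraphs here) or handle parallel collapsed edges explicitly, noting that the definition of the collapsed hypergraph $H\circ\{\dots\}$ already takes $E^\circ$ as a set obtained from $E'$, so one has to check that the family $\F^{(K)}$ still traverses each \emph{distinct} edge the right number of times. I would resolve this by working with the multiset version throughout and only at the end observing that $\F^{(K)}$ descends to an Euler family of the set-version $H^{(K)}$ because any two edges of $H$ with the same collapsed image are traversed by distinct trails, so identifying them in $E^\circ$ still leaves each traversed exactly once. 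The remaining steps are then routine verifications of the trail/anchor-disjointness conditions.
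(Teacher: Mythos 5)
Your construction is essentially the paper's: push each trail of $\F$ forward under the collapse map, note that an edge inside some $H_k$ with $k\in K$ becomes a singleton and is deleted while every edge of $F$ survives with at least two vertices (Lemma~\ref{lem:intersects}), discard the trails that become trivial, and read (1)--(3) off the construction; your use of the looplessness of $G[K]$ --- to rule out an $F$-edge being traversed via two vertices that collapse to the same $u_k$, which is what keeps consecutive anchors of the pushed-forward walks distinct --- is exactly where the hypothesis enters in the paper as well. Two side remarks: the proposed shortcut through Lemma~\ref{lem:truncatedhypergraph}(2) cannot be made literal, because the collapsed vertices $u_k$ are not vertices of $H$ and the image of an $F$-edge is not a subset of that edge, so both hypotheses $V(H_1)\subseteq V(H_2)$ and $e\subseteq\varphi(e)$ fail (and the edge map is not a bijection once edges vanish); the direct verification you sketch first is the right route. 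Your multiset reading of the collapse (each edge of $H$ contributing its own, possibly parallel, collapsed edge) is also the correct interpretation and matches how the paper later uses these hypergraphs.

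The genuine gap is your anchor-disjointness argument. You claim that since the trails of $\F$ are anchor-disjoint in $H$, ``they never both visited the same $H_k$, $k\in K$.'' That inference is false: anchor-disjointness forbids a \emph{common} anchor, not two trails anchoring at \emph{different} vertices of the same component, and this can happen under the lemma's hypotheses. For example, let $V(H_1)=\{x_1,x_2,x_3\}$ with the single edge $e_1=x_1x_2x_3$, $V(H_2)=\{a_1,a_2,a_3\}$ with the single edge $g_1=a_1a_2a_3$, and $F=\{f_1,f_2,f_3,f_4\}$ where $f_1$ and $f_2$ are parallel edges $x_1a_1$, $f_3=x_3a_2$, $f_4=x_2a_3$. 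Then $\F=\{x_1f_1a_1f_2x_1,\ x_2e_1x_3f_3a_2g_1a_3f_4x_2\}$ is an Euler family, $G$ is connected, and $G[\{2\}]$ is loopless; yet collapsing $V(H_2)$ makes both pushed-forward trails traverse $u_2$, so the resulting collection is not anchor-disjoint and hence not yet an Euler family. The lemma survives, but your proof (and, to be fair, the paper's write-up, which is silent on this point) needs one more step: whenever two pushed-forward trails share an anchor (necessarily a collapsed vertex), concatenate them at that anchor, and repeat. The merged family is still a set of edge-disjoint nontrivial strict closed trails covering every edge once, the number of trails only decreases (so (1) holds), the set of anchors is unchanged (so (2) holds), and which collapsed vertices are traversed is unchanged (so (3) holds). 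With that repair, your argument goes through.
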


\begin{proof}
Let $K\subseteq I$ and assume that $G[K]$ is loopless.  Let $u_k$, for $k\in K$, be the collapsed vertices of $H^{(K)}$.  Define a map $\varphi: V(H)\rightarrow V(H^{(K)})$ as follows:
\[   \varphi(v)= \left\{
\begin{array}{lcl}
      v & : & v\in V(H_i)\text{ for some }i\in I\setminus K \\
      u_k & : & v\in V(H_k)\text{ for some }k\in K. \\      
\end{array} 
\right. \]

That is, $\varphi$ maps each vertex of $H$ to its corresponding (possibly collapsed) vertex in $H^{(K)}$.  Then, for each $e\in E(H)$, we can refer to $\varphi(e)$ as the image of the set $e$ under $\varphi$, which is either an edge of $H^{(K)}$, or a singleton.

Now, for each $e\in E(H)$, we can investigate how $\varphi(e)$ looks depending on where $e$ is from.  If $e\in F$, then Lemma~\ref{lem:intersects} tells us that $\varphi(e)$ contains every collapsed vertex of $H^{(K)}$, in addition to vertices from $\bigcup\limits_{i\in I\setminus K}V(H_i)$.  On the other hand, if $e\not\in F$, then $e\in E(H_i)$ for some $i\in I$.  If $i\in K$, then $\varphi(e)$ is a singleton.  If $i\in I\setminus K$, then $\varphi(e) = e$.

Since we assume that $H$ has an Euler family $\F$, let $T=v_0e_1v_1\dotso v_{r-1}e_{r}v_0$ be a trail of $\F$.  Denote by $\varphi(T)$ the sequence obtained by applying $\varphi$ to each of its anchors and edges.  Since $T$ is a trail, we have that each $e_i$ contains $v_{i-1}$ and $v_i$, for $1\leq i\leq r$ (where $v_r$ denotes $v_0$).  Hence each $\varphi(e_i)$ contains $\varphi(v_{i-1})$ and $\varphi(v_i)$.

Now, we construct a new sequence $T^*$ from $\varphi(T)$ by deleting any subsequence $\varphi(e_i)\varphi(v_i)$ such that $\varphi(e_i)$ is singleton.  Observe that $\varphi(e_i)$ being singleton implies that $\varphi(v_i)=\varphi(v_{i-1})$, so $T^*$ has the property that each remaining $\varphi(e_j)$ contains the vertices immediately preceding and following it in the sequence.  Furthermore, we now have that no two consecutive anchors of $T^*$ are the same, so $T^*$ is a walk in $H^{(K)}$.

In fact, observe that all the remaining $\varphi(e_i)$ in $T^*$ are edges of $H^{(K)}$.  Since each edge of $T^*$ corresponds to a distinct edge of $T$, we have that $T^*$ is a trail.  Since $\varphi(E(H))$ includes every element of $E(H^{(K)})$ and $\F$ traverses every edge of $H$ exactly once, we have that the resulting collection $\F^{(K)}=\{T^*: T\in\F, T^*\text{ is nontrivial}\}$ traverses every edge of $H^{(K)}$ exactly once.  Hence $\F^{(K)}$ is an Euler family of $H^{(K)}$, and $|\F^{(K)}|\leq |\F|$.  This proves (1).

Furthermore, suppose $v\in\bigcup\limits_{i\in I\setminus K}V(H_i)$ is a vertex traversed by $\F$ but not by $\F^{(K)}$.  This implies that for each trail $T\in\F$ that traverses $v$, the corresponding trail $T^*$ is trivial and so is excluded from $\F^{(K)}$.  Since $T$ is not trivial, this implies that for any edge $e$ of $T$ that is traversed via $v$, we have that $\varphi(e)$ is singleton.  But this implies that $e\in E(H_k)$ for some $k\in K$, while $v\in e$ does not lie in $V(H_k)$ --- a contradiction.  It should be evident that the set of anchors of $\F^{(K)}$ in $\bigcup\limits_{i\in I\setminus K}V(H_i)$ is contained in the set of anchors of $\F$, so this is sufficient to show that $\F$ and $\F^{(K)}$ have the same anchors in $\bigcup\limits_{i\in I\setminus K}V(H_i)$, completing the proof of (2).

Finally, let $T\in\F$ and $k\in K$ be such that $T$ traverses a vertex of $V(H_k)$ and a vertex outside of $V(H_k)$.  Then $\varphi(T)$ contains $u_k$ as well as a vertex that is not $u_k$, so the resulting trail $T^*$ is nontrivial.  Hence $T^*\in\F^{(K)}$ and it traverses $u_k$.  On the other hand, now suppose $\F^{(K)}$ traverses the collapsed vertex $u_k$.  There must be some trail $T^*\in\F^{(K)}$ that traverses $u_k$, and, by virtue of $T^*$ being nontrivial, the corresponding trail $T\in\F$ cannot be a trail of $H_k$, although it traverses a vertex of $V(H_k)$.  Hence $T$ traverses a vertex in $V(H_k)$ and a vertex outside of $V(H_k)$, completing the proof of (3).
\end{proof}

We will begin to see how these auxiliary graphs and hypergraphs can help us pin down Euler families and Euler tours when we present the main results in subsequent sections.

\section{Eulerian Properties of Hypergraphs with Nonspecific Edge Cuts}

We first give a broad result about the existence of Euler families in hypergraphs with edge cuts.  It states that an Euler family is equivalent to some ``choice function'' $\alpha$ that determines into which two connected components of $H\setminus F$ each edge of the edge cut $F$ should go.  Of course, if we already have an Euler family $\F$, then the edges of our graph $G(H,F,\F)$ tell us how each edge of $F$ is assigned, and so it naturally produces such a choice function.  However, if we start off with a function and we want to ensure that it is a valid assignment of the edges of $F$, it needs to satisfy several conditions.\\

\begin{thm}\label{thm:fix} Let $H$ be a nonempty connected hypergraph with a minimal edge cut $F$.  Let $H_i$, for $i\in I$, be the connected components of $H\setminus F$.  Then $H$ has a (spanning) Euler family $\F$ if and only if there exist a function $\alpha: F\rightarrow\{ij: i,j\in I\}$ and a multigraph $G_{\alpha}$ defined by
\begin{itemize}
\item $V(G_{\alpha})=I$;
\item for each $i,j\in V(G_{\alpha})$, the multiplicity of the edge $ij$ in $G_{\alpha}$ is $|\alpha^{-1}(ij)|$.
\end{itemize}
such that for each connected component $C$ of $G_{\alpha}$, we have that the hypergraph $$H(C)=\big(\bigcup_{i\in V(C)}H_i\big) + \big\{f\cap\big(\bigcup_{i\in V(C)}V(H_i)\big) : f\in\alpha^{-1}(E(C))\big\}$$ has a (spanning) Euler family $\F_C$.

Furthermore, if $\mathcal{C}$ denotes the collection of connected components of $G_{\alpha},$ then these Euler families can be chosen to satisfy $|\F|=\sum\limits_{C\in\mathcal{C}}|\F_C|$.
\end{thm}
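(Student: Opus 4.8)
\textbf{Proof plan for Theorem~\ref{thm:fix}.}

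The plan is to prove both directions by translating between Euler families of $H$ and Euler families of the smaller hypergraphs $H(C)$, using the machinery of Definition~\ref{defn:G} and Lemma~\ref{lem:G} for the forward direction, and a gluing argument (via Lemma~\ref{lem:intersects} and Lemma~\ref{lem:decomposingfamilies}) for the reverse direction.

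For the forward direction, suppose $H$ has a (spanning) Euler family $\F$. I would set $G = G(H,F,\F)$ as in Definition~\ref{defn:G}, and define $\alpha: F\to\{ij:i,j\in I\}$ by $\alpha(f)=f_G$; that is, $\alpha(f)=ij$ where $f$ is traversed in $\F$ via an anchor in $H_i$ and an anchor in $H_j$. Then the multigraph $G_\alpha$ of the statement is exactly $G(H,F,\F)$, since the multiplicity of $ij$ in $G$ is the number of edges $f\in F$ with $f_G=ij$, which is $|\alpha^{-1}(ij)|$. The connected components $C$ of $G_\alpha$ correspond to the blocks $J\in\mathcal{P}_G$ of the partition in Definition~\ref{defn:G}(c), and for $J=V(C)$ one checks directly that $H(C)=H_J'$ in the notation of Definition~\ref{defn:G}(d): the edge set $\{f\cap(\bigcup_{i\in V(C)}V(H_i)):f\in\alpha^{-1}(E(C))\}$ is precisely $F_J'$. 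Lemma~\ref{lem:G}(3) then gives a partition $\{\F_J':J\in\mathcal{P}_G\}$ of the Euler family $\F'$ of $H'=\bigcup_{J}H_J'$ such that each $\F_J'$ is an Euler family of $H_J'=H(C)$, and moreover $|\F|=\sum_{J\in\mathcal{P}_G}|\F_J'|=\sum_{C\in\mathcal{C}}|\F_C|$. The spanning claim follows from Lemma~\ref{lem:G}(4).

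For the reverse direction, suppose such an $\alpha$, $G_\alpha$, and Euler families $\F_C$ exist. First I would note that $F$ is a minimal edge cut, so by Lemma~\ref{lem:intersects} every $f\in F$ intersects every $H_i$; in particular the set $f\cap(\bigcup_{i\in V(C)}V(H_i))$ appearing in the definition of $H(C)$ has cardinality at least $2$ whenever $|V(C)|\ge 2$, so it is a legitimate edge, and when $C$ is a loop (some $f$ with $\alpha(f)=ii$) the relevant restriction also has size at least $2$ since $f$ meets $H_i$ in at least one vertex and---wait, here lies the subtlety. When $\alpha(f)=ij$ with $i\ne j$ but $i,j$ lie in the same component $C$, the restricted edge $f'=f\cap(\bigcup_{k\in V(C)}V(H_k))$ contains a vertex of $H_i$ and a vertex of $H_j$, so $|f'|\ge 2$; when $\alpha(f)=ii$, we need $|f\cap V(H_i)|\ge 2$, which need \emph{not} hold in general. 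I would handle this by observing that the hypotheses still produce a valid $H(C)$ only when such loop-edges are genuine edges; more carefully, one shows that $H(C)$ is connected --- any two components $H_i,H_j$ with $i,j\in V(C)$ are joined by a path in $G_\alpha[V(C)]$, and each edge $kl$ of that path corresponds to some $f\in F$ whose restriction $f'$ joins $V(H_k)$ to $V(H_l)$ in $H(C)$ --- so concatenating with walks inside the connected $H_k$'s connects everything. Then $\bigcup_{C}H(C)$ has the $H(C)$ as its connected components (they are vertex-disjoint by construction since $\{V(H_i):i\in I\}$ partitions $V(H\setminus F)$ and the restricted edges stay within one component's vertex set), so Lemma~\ref{lem:decomposingfamilies}(1) assembles $\bigcup_C\F_C$ into a (spanning) Euler family of $\bigcup_C H(C)$. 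Finally I would apply Lemma~\ref{lem:truncatedhypergraph}(1) with the bijection $E(\bigcup_C H(C))\to E(H)$ that sends each $f'$ back to the original $f\in F$ and fixes the other edges (this bijection satisfies the containment hypothesis $f'\subseteq f$), obtaining an Euler family $\F$ of $H$ with $|\F|=|\bigcup_C\F_C|=\sum_{C\in\mathcal{C}}|\F_C|$, and the spanning property is preserved since the anchors are unchanged.

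\textbf{Main obstacle.} The delicate point --- and the step I expect to demand the most care --- is verifying the reverse direction is internally consistent regarding loops and edge cardinalities: namely that the hypotheses as stated implicitly force each $f'$ to have size at least $2$, and that $H(C)$ is genuinely connected with the $H(C)$'s being exactly the connected components of their union. Everything else is a bookkeeping translation through Definition~\ref{defn:G}, Lemma~\ref{lem:G}, Lemma~\ref{lem:decomposingfamilies}, and Lemma~\ref{lem:truncatedhypergraph}; the cardinality count $|\F|=\sum_{C}|\F_C|$ falls out immediately from the fact that all the maps involved are bijections on edge sets that restrict to bijections on the anchor-flag structure of each trail.
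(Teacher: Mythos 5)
Your proposal is correct and follows essentially the same route as the paper: the forward direction via $G(H,F,\F)$ and Lemma~\ref{lem:G}, and the reverse direction by transferring each $\F_C$ through the edge-restriction bijection of Lemma~\ref{lem:truncatedhypergraph}~(1) and gluing (the paper applies that lemma component by component and checks anchor- and edge-disjointness directly, so it never needs your connectivity argument for $H(C)$ or Lemma~\ref{lem:decomposingfamilies}). The obstacle you flag about a loop edge restricting to a singleton is moot: such an $H(C)$ would contain an edge that no walk can traverse and hence has no Euler family, so the hypothesis of the reverse direction simply cannot be satisfied in that configuration and no separate argument is required.
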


\begin{proof}
$\Rightarrow$: Assume $H$ has an Euler family $\F$, and let $G=G(H,F,\F)$.  Adopt the rest of the notation from Definition~\ref{defn:G}.

Now define $\alpha:F\rightarrow\{ij: i,j\in I\}$ by $\alpha(f) = f_G$.  Then $G_{\alpha}=G$.  

Let $C$ be a connected component of $G_{\alpha}$ (and, hence, a connected component of $G$).  Then $C=G[J]$ for some $J\in\mathcal{P}_G$, so by Lemma~\ref{lem:G} (3), we know that $H_J'$ has an Euler family, which we will call $\F_C$.

From Definition~\ref{defn:G}, we can see that 
\begin{align*}
F_J&=\{f\in F: f_G\in E(G[J])\}\\
&= \{f\in F:\alpha(f)\in E(C)\}\\
&= \{f\in F: f\in\alpha^{-1}(E(C))\}\text{, so}\\
F_J'&=\{f':f\in F_J\}\\ 
&= \{f\cap\big(\bigcup\limits_{i\in J}V(H_i)\big) : f\in\alpha^{-1}(E(C))\},
\end{align*}
hence $H_J'=H(C)$ and so $\F_C$ is an Euler family of $H(C)$.

Furthermore, if $\F$ is spanning in $H$, then Lemma~\ref{lem:G} (4) states that $\F_C$ is spanning in $H(C)$.

Finally, Lemma~\ref{lem:G} (3) implies that the collection of $\F_C$, for $C\in\mathcal{C}$, satisfies $|\F|=\sum\limits_{C\in\mathcal{C}}|\F_C|$.

$\Leftarrow$: Let $\alpha:F\rightarrow\{ij:i,j\in I\}$ be a function and $G_{\alpha}$ be the corresponding multigraph.  Assume that, for each connected component $C$ of $G_{\alpha}$, we have that $H(C)$ has an Euler family $\F_C$.

Let $C$ be a connected component of $G_{\alpha}$.  Define $E^{(C)}=E\big(\bigcup\limits_{i\in V(C)}H_i\big) \cup\alpha^{-1}(E(C))$ and $\varphi: E^{(C)}\rightarrow E(H(C))$ by
\[   \varphi(e)= \left\{
\begin{array}{lcl}
      e\cap\big(\bigcup\limits_{i\in V(C)}V(H_i)\big) & : & e\in F \\
      e & : & \text{otherwise}. \\      
\end{array} 
\right. \]
Then $\varphi$ is a bijection and $\varphi(e)\subseteq e$ for all $e\in E^{(C)}$.  Lemma~\ref{lem:truncatedhypergraph} (1) then states that $(V,E^{(C)})$ has an Euler family $\F_C'$ obtained from $\F_C$ by replacing each edge $e$ with $\varphi^{-1}(e)$.

We claim that $\F=\bigcup\limits_{C\in\mathcal{C}}\F_C'$ is the required Euler family of $H$.  Since $\{V(H(C)):C\in\mathcal{C}\}$ is a partition of $V(H)$, the components of $\F$ are pairwise anchor-disjoint, and since $\{E(H(C)):C\in\mathcal{C}\}$ is a partition of $E(H)$, every edge of $H$ is traversed exactly once in $\F$.

Furthermore, assume each $\F_C$ is spanning in $H(C)$.  Then each $\F_C'$ traverses the same set of anchors as $\F_C$, and $\bigcup\limits_{C\in\mathcal{C}}V(H(C)) = V(H)$, so $\F$ traverses every vertex of $H$.

Finally, since the families $\F_C$, for $C\in\mathcal{C}$, are pairwise disjoint, so are the families $\F_C'$.  Then we have that $|\F|=\sum\limits_{C\in\mathcal{C}}|\F_C'| =\sum\limits_{C\in\mathcal{C}}|\F_C|$. 
\end{proof}

We also present a version of Theorem~\ref{thm:fix} for Euler tours.\\

\begin{cor}\label{cor:fix}Let $H$ be a nonempty connected hypergraph with a minimal edge cut $F$.  Let $H_i$, for $i\in I$, be the connected components of $H\setminus F$.  Then $H$ has an Euler tour $T$ if and only if there exists a function $\alpha: F\rightarrow\{ij: i,j\in I\}$ and a multigraph $G_{\alpha}$ defined by
\begin{itemize}
\item $V(G)=I$;
\item For each $i,j\in V(G)$, the multiplicity of the edge $ij$ in $G_{\alpha}$ is $|\alpha^{-1}(ij)|$.
\end{itemize}
such that the following hold:
\begin{description}
\item[(1)] $G_{\alpha}$ has a single nonempty connected component $C^*$, and $$H(C^*)=\big(\bigcup\limits_{i\in V(C^*)}H_i\big) + \big\{f\cap\big(\bigcup\limits_{i\in V(C^*)} V(H_i)\big): f\in\alpha^{-1}(E(C^*))\big\}$$ has an Euler tour $T_{C^*}$;
\item[(2)] $H_i$ is an empty connected component of $H$, for each $i\not\in V(C^*)$.
\end{description}

Furthermore, we have that $T$ is spanning in $H$ if and only if $T_{C^*}$ is spanning in $H(C^*)$ and $C^*$ is the only connected component of $G_{\alpha}$.
\end{cor}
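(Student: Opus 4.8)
\textbf{Proof plan for Corollary~\ref{cor:fix}.}
The strategy is to specialize Theorem~\ref{thm:fix} to the case $|\F|=1$, since an Euler tour is exactly an Euler family of cardinality $1$ (together with the degenerate case where the family is empty, which we may handle by noting $H$ is nonempty and connected, so an Euler tour must traverse an edge and hence $|\F|=1$). First I would prove the forward direction: given an Euler tour $T$ of $H$, form $\F=\{T\}$, let $G=G(H,F,\F)$, and define $\alpha(f)=f_G$ so that $G_\alpha=G$. Then apply Lemma~\ref{lem:G}(5): since $|\F|=1$, there is a unique $J\in\mathcal{P}_G$ with $G[J]$ nonempty, the hypergraph $H_J'$ has a nontrivial Euler tour, and $H_i$ is empty for all $i\notin J$. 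Taking $C^*=G[J]$, the identification $H_J'=H(C^*)$ is exactly the one carried out in the proof of Theorem~\ref{thm:fix} (matching $F_J$ with $\alpha^{-1}(E(C^*))$ and $F_J'$ with the adjoined edges), so $H(C^*)$ has an Euler tour. For $i\notin V(C^*)$: such an $i$ lies in some $K\in\mathcal{P}_G$ with $K\ne J$, and Lemma~\ref{lem:G}(5) says $H_i$ is empty; it is a connected component of $H$ because its (empty) edge set is disjoint in $H$ from everything, and an empty $H_i$ with no incident edge of $F$ — which follows since $G[K]$ is empty so $F_K=\emptyset$, and by minimality of $F$ (Lemma~\ref{lem:intersects}) every edge of $F$ intersects every $H_i$, forcing $F=\emptyset$ unless $H_i$ has... — hmm, this needs care: if $F\ne\emptyset$ then by Lemma~\ref{lem:intersects} every $f\in F$ intersects $H_i$, but $H_i$ empty means $V(H_i)$ is a single vertex, which is fine. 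So $H_i$ is a trivial (single-vertex) empty connected component of $H$; I'd state it that way.

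For the converse, suppose such $\alpha$, $G_\alpha$, and $C^*$ exist with $H(C^*)$ having an Euler tour $T_{C^*}$ and $H_i$ empty for $i\notin V(C^*)$. Apply the $\Leftarrow$ direction of Theorem~\ref{thm:fix} with each $\F_C$ chosen as follows: $\F_{C^*}=\{T_{C^*}\}$, and for every other connected component $C$ of $G_\alpha$ (necessarily empty, i.e., a single vertex $i\notin V(C^*)$ with $H_i$ empty), $\F_C=\emptyset$, which is a valid Euler family of the empty connected hypergraph $H(C)=H_i$ by Lemma~\ref{lem:trivialcases}(1). Theorem~\ref{thm:fix} then yields an Euler family $\F$ of $H$ with $|\F|=\sum_{C}|\F_C|=1$, which is therefore an Euler tour of $H$.

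For the spanning addendum: if $T$ is spanning, then in the forward direction Theorem~\ref{thm:fix} (applied to the spanning case) gives that $\F_{C^*}=\{T_{C^*}\}$ is spanning in $H(C^*)$; moreover $C^*$ must be the only connected component of $G_\alpha$, because any other component $C$ is a single vertex $i$ with $H_i$ empty — but an empty $H_i$ still has its one vertex $v_i\in V(H)$, and a spanning Euler family must traverse $v_i$, which is impossible since $v_i$ lies in no edge of $\F$ (the only edges incident to $v_i$ would be in $F$, but $F_{\{i\}}=\emptyset$). So the existence of a second component contradicts spanning. Conversely, if $T_{C^*}$ is spanning in $H(C^*)$ and $C^*$ is the only component, then $V(H)=V(H(C^*))$ and the spanning $\Leftarrow$ direction of Theorem~\ref{thm:fix} gives a spanning Euler tour.

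\textbf{Main obstacle.} The routine work is all bookkeeping, and the one subtle point is handling the ``leftover'' empty connected components $H_i$ for $i\notin V(C^*)$: one must argue carefully that these really are connected components of $H$ (not just of $H\setminus F$) and reconcile this with the minimality of $F$ via Lemma~\ref{lem:intersects} — the resolution is that such an $H_i$ is a trivial (single-vertex, edgeless) component and that edges of $F$ may still intersect it as a set without obstructing anything. Getting the quantifiers right here, and correctly invoking Lemma~\ref{lem:trivialcases}(1) to say $\emptyset$ is an Euler family of an empty connected hypergraph, is where the proof needs the most attention; everything else is a direct transcription of Theorem~\ref{thm:fix} and Lemma~\ref{lem:G}(5) to the case $|\F|=1$.
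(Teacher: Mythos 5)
Your proposal is correct and takes essentially the same route as the paper: both directions are obtained by specializing Theorem~\ref{thm:fix} (equivalently Lemma~\ref{lem:G}(5)) to the case $|\F|=1$, taking $\F_{C^*}=\{T_{C^*}\}$ together with empty Euler families on the remaining single-vertex components for the converse, and handling the spanning addendum by the same counting of components. The point you flagged as the main obstacle --- whether an empty $H_i$ is literally a connected component of $H$ rather than of $H\setminus F$ --- is in fact glossed over by the paper too: its proof only establishes that $H_i$ is empty (a lone vertex), exactly as you propose to state it, so your treatment is at least as careful as the original.
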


\begin{proof} $\Rightarrow$: Let $T$ be an Euler tour of $H$, and $G=G(H,F,\{T\})$.  Define $\alpha$ as in the proof of Theorem~\ref{thm:fix} ($\Rightarrow$), so that $G_{\alpha}=G$.  Let $\mathcal{C}$ be the set of all connected components of $G$.  By Theorem~\ref{thm:fix}, for each $C\in\mathcal{C}$, we have that $H(C)$ has an Euler family $\F_C$, and $1=|\{T\}|=\sum\limits_{C\in\mathcal{C}}|\F_C|$.

Hence there exists $C^*\in\mathcal{C}$ such that $|\F_{C^*}|=1$ and $|\F_C|=0$ for all $C\in\mathcal{C}\setminus\{C^*\}$.  Therefore, we have that $H(C^*)$ has an Euler tour $T_{C^*}$.  On the other hand, for all $C\neq C^*$, we have that $H(C)$ is empty; hence $C$ is empty and $\bigcup\limits_{i\in V(C)}H_i$ is empty.  It follows that $C=\{i\}$ for some $i\not\in V(C^*)$ and $H_i$ is empty.

Further, assume $T$ is spanning.  Theorem~\ref{thm:fix} says that $T_{C^*}$ is spanning.  It also implies that $\F_C$ is spanning in $H(C)$ for each $C\neq C^*$.  However, we have established that $\F_C$ is empty for such $C$, and an empty Euler family cannot traverse any vertices.  Therefore, in this case, there are no connected components of $G_{\alpha}$ except for $C^*$.

$\Leftarrow$: Let $\alpha$ be a function and $G_{\alpha}$ be the corresponding multigraph satisfying (1) and (2).  If $C=\{i\}$ is an empty connected component of $G_{\alpha}$, then $H_i=H(C)$ is an empty connected component of $H$ and has an empty Euler family $\F_C$.  Let $\F_{C^*}=\{T_{C^*}\}$.  Then $\alpha$ and $G_{\alpha}$ satisfy the conditions of Theorem~\ref{thm:fix}, and $H$ has an Euler family $\F$.  Morever, we have $|\F|=\sum\limits_{C\in\mathcal{C}}|\F_C| = |\{T_{C^*}\}|=1$, so $H$ has an Euler tour.

Furthermore, if $T_{C^*}$ is spanning in $H(C^*)$ and $C^*$ is the only connected component of $G$, then Theorem~\ref{thm:fix} implies that $T$ is spanning in $H$.
\end{proof}

Theorem~\ref{thm:fix} can seem unwieldy, but it leads to more digestible results on both Euler families and Euler tours, which we now present.\\

\begin{thm}\label{thm:edgecutfamily} Let $H$ be a hypergraph with a nonempty minimal edge cut $F$.  Let $H_i$, for $i\in I$, denote the connected components of $H\setminus F$.  

Then $H$ has a (spanning) Euler family $\F$ if and only if there exists $J\subseteq I$ with $1\leq |J|\leq |F|$ such that the following hold:
\begin{description}
\item[(1)] $H[\bigcup\limits_{j\in J}V(H_j)]$ has a nonempty (spanning) Euler family $\F_J$; and
\item[(2)] $H_i$ has a (spanning) Euler family $\F_i$ for all $i\not\in J$.
\end{description}

Furthermore, the Euler families $\F, \F_J,$ and each $\F_i$ can be chosen so that they satisfy $|\F|=|\F_J|+\sum\limits_{i\in I\setminus J}|\F_i|$.
\end{thm}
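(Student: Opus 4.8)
The plan is to derive Theorem~\ref{thm:edgecutfamily} as a consequence of the general characterization in Theorem~\ref{thm:fix}, since the auxiliary multigraph $G_\alpha$ already encodes exactly the kind of bipartition-of-connected-components data that $J$ is meant to capture. First I would prove the forward direction. Assume $H$ has a (spanning) Euler family $\F$. Apply Theorem~\ref{thm:fix} to obtain the function $\alpha$ and multigraph $G_\alpha$ such that each connected component $C$ of $G_\alpha$ gives a hypergraph $H(C)$ with a (spanning) Euler family $\F_C$, and $|\F|=\sum_{C\in\mathcal C}|\F_C|$. Since $F$ is nonempty, $G_\alpha$ has at least one nonempty connected component; pick one such component $C^*$ and set $J=V(C^*)\subseteq I$. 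Then $|J|\ge 1$, and since $G_\alpha[J]$ is connected and nonempty it has at least $|J|-1$ edges, all coming from $\alpha^{-1}(E(C^*))\subseteq F$, so $|J|\le |F|+1$; I would need to sharpen this to $|J|\le|F|$, which follows because a connected graph on $|J|$ vertices with $|J|\ge 2$ already needs $|J|-1$ edges and the single-vertex case $|J|=1$ forces $C^*$ to contain a loop (hence an edge of $F$), so in every case $|F|\ge |J|$ — actually the cleanest argument is: if $|J|\ge 2$ then $|F|\ge |E(C^*)|\ge |J|-1$, and if additionally $|J|=|F|+1$ the component $C^*$ would be a tree using up all of $F$, leaving $G_\alpha$ with no other nonempty components, which is fine; so I will simply state $1\le|J|\le|F|$ may need replacing by $1\le |J|\le |F|+1$ — I will recheck against the exact phrasing, but in any case the bound is immediate from counting edges of $C^*$. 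Identify $H(C^*)$ with $H[\bigcup_{j\in J}V(H_j)]$: this requires observing that $E(H(C^*))$ consists of the edges internal to $\bigcup_{j\in J}H_j$ together with the truncations $f\cap(\bigcup_{j\in J}V(H_j))$ for $f\in F$ with both traversal-anchors inside $\bigcup_{j\in J}V(H_j)$, and that by minimality of $F$ (Lemma~\ref{lem:intersects}) these truncated edges have cardinality at least $2$; moreover any edge of $H$ contained in $\bigcup_{j\in J}V(H_j)$ is either internal or such a truncation, so $H(C^*)=H[\bigcup_{j\in J}V(H_j)]$ up to relabelling edges, and $\F_J:=\F_{C^*}$ is the required nonempty (spanning) Euler family. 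For each $i\notin J$, the singleton $\{i\}$ need not be a component of $G_\alpha$, so instead I would argue directly: each $H_i$ with $i\notin J$ is a connected component of $H\setminus F$, and $\F$ restricted (via Lemma~\ref{lem:decomposingfamilies}(2) applied to $H\setminus F$, or via the partition of $\F'$ in Lemma~\ref{lem:G}(3)) yields an Euler family $\F_i$ of $H_i$, spanning if $\F$ is. Collecting, $|\F|=|\F_J|+\sum_{i\in I\setminus J}|\F_i|$ follows from the cardinality identity in Theorem~\ref{thm:fix} together with the fact that every component $C\ne C^*$ of $G_\alpha$ is supported on a subset of $I\setminus J$ and its Euler family decomposes over the $H_i$.

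For the converse, suppose such a $J$ and families $\F_J$, $\{\F_i\}_{i\notin J}$ exist. I would build $\alpha$ and $G_\alpha$ by hand and invoke Theorem~\ref{thm:fix} ($\Leftarrow$). Since $H[\bigcup_{j\in J}V(H_j)]$ has a nonempty Euler family, at least one edge of $F$ is used with both anchors inside $\bigcup_{j\in J}V(H_j)$; more carefully, every edge of $F$ that is traversed in $\F_J$ has both its anchors in $\bigcup_{j\in J}V(H_j)$, and by minimality of $F$ each such edge actually meets every $H_j$, $j\in J$. Define $\alpha$ on those edges so that $\alpha(f)=jj'$ where $j,j'\in J$ are the indices of its two anchors in $\F_J$; for the remaining edges of $F$ — which have at least one endpoint in some $H_i$, $i\notin J$ — assign $\alpha(f)$ arbitrarily among pairs $ii'$ with $i,i'\notin J$ in a way that keeps the induced multigraph on $I\setminus J$ a disjoint union of loops at single vertices (e.g. $\alpha(f)=ii$ for some fixed $i$ with $f\cap V(H_i)\ne\emptyset$), so that no new component mixes $J$ with $I\setminus J$. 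Then $G_\alpha$ has one component $C^*$ with $V(C^*)=J$ (whose $H(C^*)=H[\bigcup_{j\in J}V(H_j)]$ has the Euler family $\F_J$) and, for each $i\notin J$, a component on $\{i\}$ whose associated hypergraph is $H_i$ with possibly some loops adjoined — here I must be a little careful since loops at $i$ correspond to edges $f\in F$ truncated to $f\cap V(H_i)$, which by minimality of $F$ has cardinality $\ge 2$, so these are legitimate edges; $H_i$ plus these extra parallel-ish edges still has an Euler family because I can route each added edge as a trivial-length detour... no — better: I should instead assign all of $F\setminus(\text{edges used in }\F_J)$ into pairs within a single convenient index $i_0\notin J$ only if $I\setminus J$ is a single index; in general I would just assign each leftover edge $f$ to $ii$ for some $i$ with $i\notin J$ meeting $f$, giving $H_i$ extra edges which must then be absorbed into $\F_i$. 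To make this clean I would actually strengthen the construction: leftover edges of $F$ each have an endpoint in $\bigcup_{i\notin J}V(H_i)$ but, since $F$ is minimal, also an endpoint in $\bigcup_{j\in J}V(H_j)$ — wait, that is false, minimality says each $f\in F$ meets every $H_i$, so in particular each leftover $f$ meets $\bigcup_{j\in J}V(H_j)$. This is the crux: by Lemma~\ref{lem:intersects}, every edge of $F$ meets every component $H_i$, so there are no ``leftover'' edges that avoid $J$ — all edges of $F$ have anchors available in $J$. Hence I can take $\F_J$ to already traverse all of $F$ (since $H[\bigcup_{j\in J}V(H_j)]$ contains every edge of $F$ truncated to $J$, and its Euler family traverses each such truncation exactly once), and then $G_\alpha$ has exactly the components: $C^*$ on $J$, and singletons $\{i\}$ with $H(\{i\})=H_i$ for $i\notin J$. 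Feed $\alpha$, $G_\alpha$, $\F_{C^*}=\F_J$, and $\F_{\{i\}}=\F_i$ into Theorem~\ref{thm:fix}($\Leftarrow$) to get the Euler family $\F$ of $H$, spanning if all inputs are spanning, with $|\F|=|\F_J|+\sum_{i\notin J}|\F_i|$.

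The main obstacle, and the step I would spend the most care on, is the bookkeeping in the converse: precisely matching $H[\bigcup_{j\in J}V(H_j)]$ with the hypergraph $H(C^*)$ from Theorem~\ref{thm:fix}, and confirming that \emph{every} edge of the minimal edge cut $F$ has a legal truncation of cardinality $\ge 2$ inside $\bigcup_{j\in J}V(H_j)$ (so that $\F_J$, which is an Euler family of $H[\bigcup_{j\in J}V(H_j)]$, indeed traverses all of $F$ and nothing is stranded). This is exactly where minimality of $F$ via Lemma~\ref{lem:intersects} is essential, and overlooking it would break the argument; I would state this observation as an explicit preliminary. Once that identification is pinned down, both directions are a direct translation through Theorem~\ref{thm:fix}, Lemma~\ref{lem:decomposingfamilies}, and Lemma~\ref{lem:truncatedhypergraph}, and the cardinality identity $|\F|=|\F_J|+\sum_{i\in I\setminus J}|\F_i|$ drops out of the corresponding identity in Theorem~\ref{thm:fix}. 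Finally I would double-check the bound on $|J|$: in the forward direction $G_\alpha[J]$ is connected, so it has at least $\max\{|J|-1,\,[\,|J|=1\,]\}$ edges drawn from $F$, giving $|F|\ge |J|-1$ at worst; if the theorem truly wants $|J|\le|F|$ I would note that when $|J|=|F|+1$ the component is a spanning tree of $J$ using all of $F$, which is still consistent, so either the inequality should read $|J|\le |F|+1$ or one shows a tight component can always be contracted — I will align the statement with whichever the source intends, but the substance of the proof is unaffected.
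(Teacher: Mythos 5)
Your forward direction has a genuine gap, and it stems from taking $J$ to be the vertex set of a \emph{single} nonempty component $C^*$ of $G_\alpha$. If $G_\alpha$ has another nonempty component, then $H(C^*)$ is \emph{not} $H[\bigcup_{j\in J}V(H_j)]$: by minimality of $F$ (Lemma~\ref{lem:intersects}) every $f\in F$ meets every $H_j$, so the induced subhypergraph contains the truncation $f\cap\bigcup_{j\in J}V(H_j)$ of \emph{every} edge of $F$, including those that $\alpha$ assigns to other components, and $\F_{C^*}$ does not traverse these edges --- so condition (1) fails for your $J$. Condition (2) fails too: for $i\notin J$ lying in another nonempty component of $G_\alpha$, the trails of $\F$ passing through $H_i$ use (truncations of) edges of $F$, so $H_i$ by itself need not admit any Euler family; your appeal to Lemma~\ref{lem:decomposingfamilies}(2) is not available because $\F$ is an Euler family of $H$, not of $H\setminus F$, and Lemma~\ref{lem:G}(3) only yields a family of $H_K'$ for the whole component $K$ containing $i$, not of each $H_i$ separately. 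Finally, your bound $|J|\leq|F|+1$ and the suggestion that the statement might need amending are symptoms of the same choice: the statement is correct as written. The paper takes $J$ to be the union of the vertex sets of \emph{all} nonempty components of $G=G(H,F,\F)$; then every edge of $F$ lies in $G[J]$, and since $G$ is even (Lemma~\ref{lem:G}(1)) each vertex of $J$ has degree at least $2$, so the Handshaking Lemma gives $|J|\leq|F|$ at once. This choice also repairs (1) and (2): with $F_J=F$ one has $H[\bigcup_{j\in J}V(H_j)]=H_J'$, the Euler families $\F_K'$ of the $H_K'$ for components $K\subseteq J$ are merged and then transferred to $H_J'$ via Lemma~\ref{lem:truncatedhypergraph} (the truncations to a single component are contained in the truncations to all of $J$), and every $i\notin J$ is an isolated vertex of $G$, so $H_i=H_{\{i\}}'$ has an Euler family by Lemma~\ref{lem:G}(3), spanning if $\F$ is.

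Your converse is essentially sound in spirit --- you correctly isolate the key point that minimality forces every edge of $F$ to have a truncation of the right kind inside $\bigcup_{j\in J}V(H_j)$, so $\F_J$ traverses all of $F$ and nothing is stranded --- but your claim that $G_\alpha$ then has a single component with vertex set $J$ is unjustified ($G_\alpha[J]$ may be disconnected), and routing through Theorem~\ref{thm:fix} then requires an extra transfer of the sub-families of $\F_J$ to the hypergraphs $H(C)$, again via Lemma~\ref{lem:truncatedhypergraph}(2). The paper avoids this bookkeeping entirely: it forms the disjoint union $\F'=\F_J\cup\bigcup_{i\in I\setminus J}\F_i$ as an Euler family of $H[\bigcup_{j\in J}V(H_j)]\cup\bigcup_{i\in I\setminus J}H_i$ and pulls it back to $H$ through the bijection fixing edges outside $F$ and sending $f\in F$ to $f\cap\bigcup_{j\in J}V(H_j)$, using Lemma~\ref{lem:truncatedhypergraph}(1); the cardinality identity and the spanning statement then fall out directly.
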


\begin{proof}$\Rightarrow$: Let $\F$ be an Euler family of $H$.  Construct $G=G(H,F,\F)$ and adopt the notation from Definition~\ref{defn:G}.

Let $\mathcal{C}'$ be the set of nonempty connected components of $G$, and let $J=\bigcup\limits_{C\in\mathcal{C}'}V(C)$.

Since $\bigcup\limits_{C\in\mathcal{C}'}C$ is a graph with $|J|$ vertices, $|F|$ edges, and minimum degree at least 2, the Handshaking Lemma~\ref{lem:handshaking} gives $2|F|\geq 2|J|$.  Furthermore, since $G$ is nonempty, we have $|J|\geq 1$.  Hence $1\leq |J|\leq |F|$.

If $i\in I\setminus J$, then $i$ is an isolated vertex of $G$, and $\{i\}\in\mathcal{P}_G$.  Hence $H_{\{i\}}=H_i$ has an Euler family $\F_i'$, by Lemma~\ref{lem:G} (3).

Also by Lemma~\ref{lem:G} (3), there exists an Euler family $\F_K'$ of $H_K'$ for each $K\in\mathcal{P}_G$.  Let $\F_J''=\bigcup\limits_{\substack{K\in\mathcal{P}_G\\ K\subseteq J}}\F_K'$ and $H_J''=\bigcup\limits_{\substack{K\in\mathcal{P}_G\\ K\subseteq J}}H_K'$.

Lemma~\ref{lem:decomposingfamilies} (1) states that $\F_J''$ is an Euler family of $H_J''$.

We use $\F_J''$ to construct an Euler family of $H[\bigcup\limits_{i\in J}V(H_i)]$ as follows.  First, observe that since $F_J=F$, we have $H[\bigcup\limits_{i\in J}V(H_i)]=H_J'$, and that $V(H_J')=V(H_J'')=\bigcup\limits_{i\in J}V(H_i)$.

Define $\varphi: E(H_J')\rightarrow E(H_J'')$ as follows.  If $e\in E(H_i)$ for some $i\in J$, then let $\varphi(e)=e$.  Otherwise, we have $e=f\cap V(H_J')$ for some $f\in F$.  We then let $\varphi(e)=f\cap V(H_K')$, where $K\in\mathcal{P}_G$ is such that $f_G\subseteq K$.  Then $\varphi$ is well defined since each edge of $G$ lies in exactly one connected component of $G$.  Moreover, we have that $\varphi$ is a bijection, and $\varphi(e)\subseteq e$ for all $e\in E(H_J')$.  It now follows from Lemma~\ref{lem:truncatedhypergraph} (1) that $H_J'$ has an Euler family $\F_J$ with $|\F_J|=|\F_J''|$.

Finally, from Lemma~\ref{lem:G}, we have that 
\begin{align*}
|\F|&=\sum\limits_{K\in\mathcal{P}_G}|\F_K'|\\
&=\sum\limits_{\substack{K\subseteq J\\K\in\mathcal{P}_G}}|\F_K'| +\sum\limits_{i\in I\setminus J}|\F_i'|\\
&= |\F_J''|+\sum\limits_{i\in I\setminus J}|\F_i'|\\
&= |\F_J|+\sum\limits_{i\in I\setminus J}|\F_i'|.
\end{align*}

Furthermore, assume now that $\F$ is spanning in $H$.  Then Lemma~\ref{lem:G} (4) implies that, for each $K\in\mathcal{P}_G$, we have that $\F_K'$ is spanning in $H_K'$.  Note that if $K\not\subseteq J$, we have that $H_K'=H_i$ for some $i\in I\setminus J$, so this implies that $\F_i'$, for each $i\in I\setminus J$, is spanning in $H_i$.

Now, if $K\subseteq J$, then we are given that $\F_J''$ is spanning in $H_J''$ by Lemmas~\ref{lem:G} (4) and~\ref{lem:decomposingfamilies} (1).  Hence $\F_J$, constructed from $\F_J''$ using Lemma~\ref{lem:truncatedhypergraph} (1), is spanning in $H_J'$, since $\F_J''$ and $\F_J$ traverse the same vertices and $V(H_J'')=V(H_J')$.

Therefore, the Euler families $\F_J$ and $\F_i$, for all $i\in I\setminus J$, are spanning in their respective hypergraphs if $\F$ is spanning in $H$.

$\Leftarrow$: Assume there exists $J\subseteq I$ with the properties listed in the statement of this theorem.  Let $\F_J$ be a nonempty Euler family of $H\big[\bigcup\limits_{j\in J}V(H_j)\big]$, and let $\F_i$ be an Euler family of $H_i$ for all $i\in I\setminus J$.  Then $\F'=\F_J\cup(\bigcup\limits_{i\in I\setminus J}\F_i)$ is an Euler family of $H':=H\big[\bigcup\limits_{j\in J}V(H_j)\big]\cup(\bigcup\limits_{i\in I\setminus J}H_i)$, since $H'$ is a disjoint union of hypergraphs.  We can observe that $\F'$ is spanning in $H'$ if and only if $\F_J$ and all $\F_i$ are spanning in their respective hypergraphs, and that $|\F'|=|\F_J| + \sum\limits_{i\in I\setminus J}|\F_i|$ since $\F'$ is a disjoint union of sets $\F_J$ and $\F_i$, for $i\in I\setminus J$.

Construct a bijection $\varphi: E(H)\rightarrow E(H')$ by defining, for all $e\in E(H)$:
\[  \varphi(e)= \left\{
\begin{array}{lcl}
      e & : & e\not\in F \\
      e\cap \big(\bigcup\limits_{j\in J}V(H_j)\big) & : & e\in F. \\      
\end{array} 
\right. \] 
Since $F$ is a minimal edge cut, we have that $f\cap V(H_J)\neq\emptyset$ for all $f\in F$ by Lemma~\ref{lem:intersects}.  Hence $\varphi$ is well defined since $f\cap V(H_J)\in E(H')$ for all $f\in F$.  Furthermore, we have that $\varphi$ is a bijection because each edge $f\in F$ is canonically mapped to a distinct $f\cap V(H_J)$ in the construction of the induced subhypergraph $H\big[\bigcup\limits_{j\in J}V(H_j)\big]$, and $\varphi$ does the same.

Then, since $V(H')=V(H)$ and $\varphi$ is a bijection with $\varphi(e)\subseteq e$ for all $e\in E(H)$, we may apply Lemma~\ref{lem:truncatedhypergraph} (1) (using $\varphi^{-1}$) to obtain an Euler family $\F$ of $H$ that satisfies $|\F|=|\F'|=|\F_J|+\sum\limits_{i\in I\setminus J}|\F_i|$.

Furthermore, if $\F_J$ and each $\F_i$, for $i\in I\setminus J$, are spanning in their respective hypergraphs, then $\F'$ is spanning in $H'$ as observed earlier, and so Lemma~\ref{lem:truncatedhypergraph} (1) states that $\F$ is spanning in $H$.
\end{proof}

\begin{cor}\label{cor:edgecuttour}
Let $H$ be a hypergraph with a nonempty minimal edge cut $F$.  Let $H_i$, for $i\in I$, denote the connected components of $H\setminus F$.  

Then $H$ has a (spanning) Euler tour $T$ if and only if there exists $J\subseteq I$ with $1\leq |J|\leq |F|$ such that the following hold:
\begin{description}
\item[(1)] $H\big[\bigcup\limits_{j\in J}V(H_j)\big]$ has a (spanning) Euler tour $T_J$; and
\item[(2)] $H_i$ is empty for all $i\in I\setminus J$.
\end{description}

Furthermore, if $|I|>|F|$, then $H$ does not have a spanning Euler tour.
\end{cor}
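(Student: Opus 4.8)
The plan is to deduce this corollary from Theorem~\ref{thm:edgecutfamily} by exploiting the fact that an Euler tour of a hypergraph with at least one edge is exactly an Euler family of cardinality $1$. Note first that since $F$ is a nonempty edge cut, $H$ has an edge, so any Euler tour $T$ of $H$ is non-trivial and $\{T\}$ is an Euler family with $|\{T\}|=1$; conversely, an Euler family of cardinality $1$ consists of a single non-trivial closed strict trail traversing every edge, i.e.\ an Euler tour.

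For the forward direction I would start from an Euler tour $T$ of $H$ and apply Theorem~\ref{thm:edgecutfamily} to the Euler family $\F=\{T\}$. This yields $J\subseteq I$ with $1\leq|J|\leq|F|$, a nonempty Euler family $\F_J$ of $H\big[\bigcup_{j\in J}V(H_j)\big]$, and Euler families $\F_i$ of $H_i$ for $i\in I\setminus J$, chosen so that $1=|\F|=|\F_J|+\sum_{i\in I\setminus J}|\F_i|$. Since $\F_J$ is nonempty, $|\F_J|\geq 1$, so this identity forces $|\F_J|=1$ and $|\F_i|=0$ for every $i\notin J$. Then the unique component $T_J$ of $\F_J$ is an Euler tour of $H\big[\bigcup_{j\in J}V(H_j)\big]$, giving (1); and $|\F_i|=0$ means $\F_i$ traverses no edges, so $E(H_i)=\emptyset$, i.e.\ $H_i$ is empty, giving (2). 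For the reverse direction, given such a $J$, set $\F_J=\{T_J\}$ and $\F_i=\emptyset$ for $i\notin J$ (a legitimate Euler family of the edgeless $H_i$ by Lemma~\ref{lem:trivialcases}(1)); Theorem~\ref{thm:edgecutfamily} then produces an Euler family $\F$ of $H$ with $|\F|=|\F_J|+\sum_{i\in I\setminus J}|\F_i|=1$, which is an Euler tour.

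The spanning statements and the final ``furthermore'' come from the same reduction plus one extra observation: a spanning Euler family traverses every vertex, while the empty Euler family traverses none, and every connected component $H_i$ has a nonempty vertex set, so $\emptyset$ is never a spanning Euler family of $H_i$. In the forward direction, if $T$ is spanning then Theorem~\ref{thm:edgecutfamily} additionally guarantees that the chosen $\F_J$ and each $\F_i$ are spanning in their respective hypergraphs; but we have shown $\F_i=\emptyset$ for $i\in I\setminus J$, which cannot be spanning in $H_i$, so $I\setminus J=\emptyset$, i.e.\ $J=I$. Then $|I|=|J|\leq|F|$, which immediately proves that $|I|>|F|$ rules out a spanning Euler tour; condition (1) becomes ``$H=H\big[\bigcup_{i\in I}V(H_i)\big]$ has a spanning Euler tour'' and (2) is vacuous. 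The reverse direction of the spanning equivalence is then just the reverse construction above with $J=I$, invoking the spanning half of Theorem~\ref{thm:edgecutfamily}.

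I do not expect a genuine obstacle here: the argument is essentially bookkeeping on top of Theorem~\ref{thm:edgecutfamily}. The only points requiring a little care are the equivalences ``$|\F_i|=0$ $\iff$ $H_i$ edgeless'' and ``$\emptyset$ is not a spanning Euler family of a component,'' both of which follow directly from Lemma~\ref{lem:trivialcases} and the definition of a spanning Euler family, together with tracking how the cardinality identity $|\F|=|\F_J|+\sum_{i\in I\setminus J}|\F_i|$ pins down $|\F_J|=1$ and $|\F_i|=0$.
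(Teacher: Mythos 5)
Your proposal is correct and follows essentially the same route as the paper: both reduce the corollary to Theorem~\ref{thm:edgecutfamily} applied to the Euler family $\{T\}$, use the cardinality identity $|\F|=|\F_J|+\sum_{i\in I\setminus J}|\F_i|$ to force $|\F_J|=1$ and $\F_i=\emptyset$ (hence $H_i$ edgeless), and obtain the final statement from the observation that an empty Euler family cannot be spanning. Your version is, if anything, slightly more explicit than the paper's in noting that a spanning tour forces $J=I$, but the underlying argument is identical.
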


\begin{proof}
$\Rightarrow$: Let $T$ be an Euler tour of $H$.  Then $\F=\{T\}$ is an Euler family of $H$ of cardinality 1. By Theorem~\ref{thm:edgecutfamily}, there exists $J\subseteq I$ with $1\leq |J|\leq |F|$ such that $H\big[\bigcup\limits_{j\in J}V(H_j)\big]$ has an Euler family $\F_J$ of cardinality 1, which is spanning if $\F$ is, and each $H_i$, for $i\in I\setminus J$, has an empty Euler family.

This implies that $H\big[\bigcup\limits_{j\in J}V(H_j)\big]$ has an Euler tour $T_J$ that is spanning if $T$ is, and each $H_i$ is empty, for $i\in I\setminus J$.

$\Leftarrow$: Assume we have $J\subseteq I$ with $1\leq |J|\leq |F|$ satisfying properties (1) and (2) in the statement of this corollary.  Then $\F_J=\{T_J\}$ is an Euler family of $H\big[\bigcup\limits_{j\in J}V(H_j)\big]$, and each $H_i$ has an empty Euler family $\F_i$ for $i\in I\setminus J$.  We may apply Theorem~\ref{thm:edgecutfamily} to obtain an Euler family $\F$ of $H$ of cardinality 1, so $\F$ gives rise to an Euler tour of $H$.

Finally, suppose that $T$ is a spanning Euler tour of $H$, but $|I|>|F|$.  Then Theorem~\ref{thm:edgecutfamily} implies that there exists an empty spanning Euler family of $H_i$ for all $i\in I\setminus J$.  Since $I\setminus J$ is nonempty, this implies the existence of an empty spanning Euler family; however, an empty Euler family, by definition, cannot be spanning.  Therefore, there cannot be any spanning Euler tour of $H$ if $|I|>|F|$.

\end{proof}

\section{Eulerian Properties of Hypergraphs with a Cut Edge}

Situations in which our hypergraph $H$ has a cut edge are rather straightforward.  In order for the cut edge $f$ to be traversed in a closed trail, it must be traversed via two vertices in the same connected component of $H\setminus f$.  What we state here is simply a specific example of Theorem~\ref{thm:edgecutfamily}.\\

\begin{thm} Let $H$ be a connected hypergraph with a cut edge $f$.  Let $H_i,$ for $i\in I$, be the connected components of $H\setminus f$.  Then
\begin{description}
\item[(1)] $H$ has a (spanning) Euler family if and only if there exists $i\in I$ such that
	\begin{itemize}
	\item $H[V(H_i)]$ has a nonempty (spanning) Euler family; and
	\item $H_j$ has a (spanning) Euler family for all $j\neq i$.
	\end{itemize}
\item[(2)] $H$ has an Euler tour if and only if there exists $i\in I$ such that
	\begin{itemize}
	\item $H[V(H_i)]$ has an Euler tour; and
	\item $H_j$ is empty for all $j\neq i$.
	\end{itemize}
\item[(3)] $H$ has no spanning Euler tour.
\end{description}
\end{thm}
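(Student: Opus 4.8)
The plan is to obtain all three parts as immediate specializations of Theorem~\ref{thm:edgecutfamily} and Corollary~\ref{cor:edgecuttour}, applied with the minimal edge cut $F = \{f\}$. The first observation to record is that a single cut edge $f$ is indeed a minimal edge cut of $H$: since $H \setminus f$ is disconnected, Lemma~\ref{lem:minedgecut} gives an edge cut $F' \subseteq \{f\}$, and $F'$ cannot be empty (as $H$, being connected, has no empty edge cut), so $F' = \{f\}$; minimality is then automatic because the only proper subset of $\{f\}$ is $\emptyset$, which is not an edge cut.

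For part (1), I would invoke Theorem~\ref{thm:edgecutfamily} with $F = \{f\}$. That theorem asserts $H$ has a (spanning) Euler family if and only if there exists $J \subseteq I$ with $1 \le |J| \le |F| = 1$ such that $H[\bigcup_{j\in J} V(H_j)]$ has a nonempty (spanning) Euler family and $H_i$ has a (spanning) Euler family for all $i \notin J$. The condition $1 \le |J| \le 1$ forces $J = \{i\}$ for a single index $i \in I$, so $\bigcup_{j\in J} V(H_j) = V(H_i)$, and the statement reduces verbatim to the claim in part (1). Part (2) is handled the same way via Corollary~\ref{cor:edgecuttour}: with $|F| = 1$ the set $J$ must again be a singleton $\{i\}$, giving that $H$ has an Euler tour iff $H[V(H_i)]$ has an Euler tour and $H_j$ is empty for all $j \ne i$.

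For part (3), the key point is a counting one: deleting the cut edge $f$ produces at least two connected components, so $|I| \ge 2 > 1 = |F|$. The final sentence of Corollary~\ref{cor:edgecuttour} states precisely that if $|I| > |F|$ then $H$ has no spanning Euler tour, so the conclusion follows immediately. (Intuitively, $f$ intersects every $H_i$ by Lemma~\ref{lem:intersects}, yet a closed strict trail can traverse $f$ only once and hence can ``visit'' at most two of the components through $f$; when there are more than two components, or even exactly two but the trail stays on one side, vertices in the unvisited component go untouched.)

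I do not anticipate a genuine obstacle here — the whole proof is a matter of correctly specializing the general machinery. The one place to be careful is the bookkeeping in part (1): Theorem~\ref{thm:edgecutfamily} speaks of the induced subhypergraph $H[\bigcup_{j\in J} V(H_j)]$, and when $J=\{i\}$ this is $H[V(H_i)]$, which in general is a strict supergraph of $H_i$ (it may pick up extra vertices from $f$ and the truncated copy of $f$ as an edge), so I must keep the distinction between $H[V(H_i)]$ and $H_i$ exactly as the general theorem does rather than conflating them. Beyond that, the argument is purely a translation step, and I would present it in three short paragraphs mirroring the three parts.
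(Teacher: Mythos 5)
Your proposal is correct and follows exactly the paper's route: the paper's proof is a one-line appeal to Theorem~\ref{thm:edgecutfamily} and Corollary~\ref{cor:edgecuttour} with $|F|=1$, which is precisely your specialization (your extra care in checking that $\{f\}$ is a minimal edge cut and that $J$ must be a singleton simply makes explicit what the paper leaves tacit).
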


\begin{proof}
These results follow directly from Theorem~\ref{thm:edgecutfamily} and Corollary~\ref{cor:edgecuttour} when $|F|=1$.
\end{proof}

\section{Euler Tours in Hypergraphs With Edge Cuts of Cardinality 2}

When our hypergraph $H$ has an edge cut $F$ with just two edges in it, we can apply a new strategy involving some collapsed hypergraphs.  This strategy enables us to find Euler tours in a new way that is usually computationally less expensive than the strategy of Corollary~\ref{cor:edgecuttour}, providing us with an additional avenue of attack.  We require that $H\setminus F$ have exactly two nonempty connected components; however, this is not much of an additional assumption.  If $H\setminus F$ has more than two nonempty connected components, then of course there cannot be an Euler tour at all, by Corollary~\ref{cor:edgecuttour}; if it has fewer than two, then the strategy of this theorem ends up being identical to the one employed by Corollary~\ref{cor:edgecuttour}.\\

\begin{thm}\label{thm:F=2}
Let $H$ be a hypergraph with a minimal edge cut $F=\{f_1,f_2\}$ of cardinality 2. Let $H_i$, for $i\in I$, denote the connected components of $H\setminus F$.  Further, assume that $H_1$ and $H_2$ are nonempty and that $H_i$ is empty for all $i\not\in\{1,2\}$.

For each $i\in \{1,2\}$, let $H_i^*$ be the hypergraph $H\circ\big(\bigcup\limits_{j\neq i}V(H_j)\big)$. Then $H$ has an Euler tour $T$ if and only if each $H_i^*$, for $i=1,2$, has an Euler tour $T_i$ that traverses the collapsed vertex of $H_i^*$.

Furthermore, we have that $T$ is spanning in $H$ if and only if $|I|=2$ and $T_i$, for $i=1,2$, is spanning in $H_i^*$.
\end{thm}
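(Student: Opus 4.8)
\textbf{Proof plan for Theorem~\ref{thm:F=2}.}

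The plan is to establish both directions by translating Euler tours of $H$ into Euler tours of the two collapsed hypergraphs $H_1^*, H_2^*$ and back, using the machinery of collapsed hypergraphs together with Lemma~\ref{lem:collapsedG} and Corollary~\ref{cor:edgecuttour}. First I would set up notation: since $F=\{f_1,f_2\}$ is a minimal edge cut, Lemma~\ref{lem:intersects} tells us each $f_t$ intersects every $H_i$, and in particular intersects both $H_1$ and $H_2$. Writing $u_i$ for the collapsed vertex of $H_i^*$ (the vertex standing in for $\bigcup_{j\neq i}V(H_j)$), the key observation is that in $H_i^*$ the only edges incident with $u_i$ are the (images of the) two edge-cut edges $f_1, f_2$, because every edge of $H$ that is not in $F$ lies wholly inside a single connected component of $H\setminus F$; so $\deg_{H_i^*}(u_i)=2$ (the two images are distinct since $F$ is minimal, hence they survive the collapse as genuine edges of cardinality $\ge 2$). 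This forces any Euler tour of $H_i^*$ that traverses $u_i$ to traverse it exactly once, entering via one of $f_1,f_2$ and leaving via the other.

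\textbf{($\Rightarrow$)} Suppose $H$ has an Euler tour $T$. By Corollary~\ref{cor:edgecuttour} (applied with $\F=\{T\}$), the non-empty connected components of $H\setminus F$ are exactly $H_1$ and $H_2$, and there is $J\subseteq I$ with $\{1,2\}\subseteq J$ — in fact we may take $J=\{1,2\}$ together with the empty components — so this is consistent with the hypothesis. Now form $G=G(H,F,\{T\})$: this is an even multigraph on vertex set $I$ with exactly $|F|=2$ edges, each arising from some $f_t$ traversed in $T$ via an anchor in $H_{a}$ and an anchor in $H_{b}$. Since $T$ visits every edge and $H$ is connected via $F$ with $H_1,H_2$ non-empty, $G$ must be connected on $\{1,2\}$ (the empty $H_i$ give isolated vertices), so $G$ restricted to $\{1,2\}$ is a 2-cycle on $\{1,2\}$: both $f_1$ and $f_2$ are traversed in $T$ via one anchor in $H_1$ and one anchor in $H_2$. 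Fix $i\in\{1,2\}$ and let $K=\{j\in I: j\neq i\}$; then $G[K]$ is loopless (its only potentially non-isolated vertex is the one $j\neq i$ in $\{1,2\}$, and no edge of $G$ joins that vertex to itself). Applying Lemma~\ref{lem:collapsedG} with this $K$ yields an Euler family $\F^{(K)}$ of $H^{(K)}=H_i^*$ with $|\F^{(K)}|\le|\{T\}|=1$; since $H_i^*$ is non-empty it has no empty Euler family, so $|\F^{(K)}|=1$, i.e. $H_i^*$ has an Euler tour $T_i$. By part (3) of Lemma~\ref{lem:collapsedG}, $T_i$ traverses the collapsed vertex $u_i$ if and only if $T$ traverses a vertex inside $\bigcup_{j\neq i}V(H_j)$ and a vertex outside it; but $T$ traverses every edge of $H$, including the edge $f_1$, which has an anchor in $H_1$ and an anchor in $H_2$ — hence $T$ does traverse vertices on both sides, so $T_i$ traverses $u_i$, as required. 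For the spanning statement: if $T$ is spanning then by Corollary~\ref{cor:edgecuttour} there are no empty components, so $|I|=2$; and Lemma~\ref{lem:collapsedG}(2) says $\F^{(K)}$ and $\{T\}$ have the same anchors in $\bigcup_{j\neq i}V(H_j)=V(H_i)$, so $T_i$ covers all of $V(H_i)$, and since it also covers $u_i$ it is spanning in $H_i^*$.

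\textbf{($\Leftarrow$)} Suppose each $H_i^*$ has an Euler tour $T_i$ traversing $u_i$, for $i=1,2$. I would reconstruct an Euler tour of $H$ by "un-collapsing" and splicing $T_1$ and $T_2$ along the two edge-cut edges. Concretely, in $T_i$ the vertex $u_i$ is visited exactly once (degree $2$ at $u_i$), so $T_i$ has the cyclic form $u_i\, f_{\sigma(1)}\, W_i\, f_{\sigma(2)}\, u_i$ where $W_i$ is a trail of $H_i$ from an anchor $x_i\in f_{\sigma(1)}\cap V(H_i)$ to an anchor $y_i\in f_{\sigma(2)}\cap V(H_i)$ traversing exactly the edges of $H_i$ (here the two images of $f_1,f_2$ in $H_i^*$ pull back to $f_1,f_2$ themselves, and the relabelling $\sigma$ is the same for $i=1,2$ after possibly swapping the names $f_1,f_2$). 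Then the sequence obtained by going $x_1\,W_1\,y_1\, f_{\sigma(2)}\, y_2\, \overline{W_2}\, x_2\, f_{\sigma(1)}\, x_1$ — that is, traversing $W_1$ in $H_1$, crossing $f_{\sigma(2)}$ into $H_2$, traversing $W_2$ (reversed), crossing $f_{\sigma(1)}$ back — is a closed walk of $H$ that uses each edge of $H_1$, each edge of $H_2$, and both $f_1,f_2$ exactly once; it is strict and closed, hence an Euler tour of $H$ (the empty components $H_i$, $i\notin\{1,2\}$, contribute nothing). A cleaner packaging: observe that $H_1^*$ with its edge-cut edges pulled back is precisely $H\big[\bigcup_{j\neq 2}V(H_j)\big]$-type object $\ldots$ and more efficiently, I would instead deduce the existence of an Euler tour of $H$ directly by invoking Corollary~\ref{cor:edgecuttour}: set $J=\{1,2\}$, and use the two tours $T_1,T_2$ to build an Euler tour of $H\big[\bigcup_{j\in J}V(H_j)\big] = H\big[V(H_1)\cup V(H_2)\big]$ via the splicing just described, while $H_i$ is empty for $i\notin J$ by hypothesis. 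For the spanning statement, if $|I|=2$ and each $T_i$ is spanning in $H_i^*$, then $T_i$ covers $V(H_i)$, so the spliced tour covers $V(H_1)\cup V(H_2)=V(H)$ and is spanning; conversely a spanning Euler tour of $H$ forces $|I|=2$ by Corollary~\ref{cor:edgecuttour} and forces each $T_i$ spanning by the anchor-preservation already used in the forward direction.

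\textbf{Main obstacle.} The delicate point is the splicing in the ($\Leftarrow$) direction: I must check that the two "crossing" edges used to glue $T_1$ and $T_2$ are $f_1$ and $f_2$ \emph{in the same pairing}, i.e. that after pulling back through the collapse, the images of $f_1,f_2$ in $H_1^*$ and in $H_2^*$ match up so that each of $f_1,f_2$ is used exactly once overall and the glued object is a single closed trail rather than two. This hinges on the fact that $\deg_{H_i^*}(u_i)=2$ with the two incident edges being exactly the images of $f_1$ and $f_2$ — which I would prove carefully from the definition of the collapsed hypergraph and minimality of $F$ (no edge of $H$ outside $F$ can touch two different $H_j$'s, so no spurious edges at $u_i$ appear, and the two images stay distinct and of cardinality $\ge 2$). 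Once this degree-$2$ fact is nailed down, the rest is routine bookkeeping, most of which can be outsourced to Lemma~\ref{lem:collapsedG}, Lemma~\ref{lem:intersects}, and Corollary~\ref{cor:edgecuttour}.
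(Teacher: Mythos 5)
Your overall route is the same as the paper's: the forward direction via $G(H,F,\{T\})$ together with Lemma~\ref{lem:collapsedG}, and the reverse direction by splicing the two tours at the degree-$2$ collapsed vertices. Your worry about a ``pairing'' obstruction in the splice dissolves exactly as you suspect: each image edge pulls back to $f_t$ itself, both anchors lie in $f_t$, and the only freedom needed is the direction in which the second tour is written, which is how the paper handles it. The reverse direction and the spanning statements in your proposal essentially coincide with the paper's proof.

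The gap is in the forward direction, at the step ``Applying Lemma~\ref{lem:collapsedG} with this $K$ yields an Euler family $\F^{(K)}$ of $H^{(K)}=H_i^*$.'' When $|I|>2$ (i.e.\ empty components are present) this fails twice over. First, $H^{(K)}=H\circ\{V(H_j):j\neq i\}$ is not $H_i^*=H\circ\big(\bigcup_{j\neq i}V(H_j)\big)$: the former has one collapsed vertex for each component $j\neq i$, and by minimality of $F$ (Lemma~\ref{lem:intersects}) each of $f_1,f_2$ meets every component, so its image in $H^{(K)}$ contains all $|I|-1$ collapsed vertices, whereas its image in $H_i^*$ contains a single collapsed vertex; these are different hypergraphs. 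Second, Lemma~\ref{lem:collapsedG} assumes $G(H,F,\F)$ is connected, but with empty components $G$ has isolated vertices and is disconnected, so the lemma does not apply to $H$ in the way you use it. The paper's repair is to first invoke Corollary~\ref{cor:edgecuttour} to pass from $H$ to $H\big[V(H_1)\cup V(H_2)\big]$ with the truncated cut $\{f_1',f_2'\}$ (checked to be a minimal edge cut there), where the associated graph is the $2$-cycle on $\{1,2\}$, hence connected, and $K$ is a singleton; one must then still verify that the two collapsed hypergraphs obtained there are isomorphic to $H_1^*$ and $H_2^*$, which uses precisely the minimality fact that any edge meeting a vertex of an empty component also meets $H_1$ and $H_2$, so the truncation does not change the collapsed edges. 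Alternatively, you could patch your version by showing that the extra collapsed vertices $u_j$, $j\notin\{1,2\}$, are never anchors of $\F^{(K)}$ and may be merged into one vertex, but that argument has to be supplied; as written, the identification $H^{(K)}=H_i^*$ is only valid in the special case $|I|=2$.
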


\begin{proof}
$\Rightarrow:$ Let $T$ be an Euler tour of $H$.  We will use the notation from Definition~\ref{defn:G} with respect to $G=G(H,F,\{T\})$, although we will not use $G$ itself.  By Corollary~\ref{cor:edgecuttour}, there exists $J\subseteq I$ with $|J|\leq 2$ such that $H_J'=H\big[\bigcup\limits_{j\in J} V(H_j)\big]$ has an Euler tour $T_J$ and each $H_i$ is empty, for $i\in I\setminus J$.  By our assumption that $H_1$ and $H_2$ are nonempty, we have $J=\{1,2\}$.  If $T$ is spanning in $H$, then Corollary~\ref{cor:edgecuttour} tells us that $T_J$ is spanning in $H_J'$.

Let $F_J'=\{f_1', f_2'\}$.  We claim that $F_J'$ must be a minimal edge cut of $H_J'$.  It is certainly an edge cut because, for example, we can write $F_J'=\{e\in E(H_J'):e\text{ intersects both }V(H_1)\text{ and }V(H_2)\} = \{f_1',f_2'\}$. 

Suppose, however, that $F_J'$ is not a minimal edge cut of $H_J'$.  Without loss of generality, assume $f_1'$ is a cut edge of $H_J'$.  Observe that $f_2$ intersects $H_1$ and $H_2$, so $f_2'$ does as well.  Since $H_1$ and $H_2$ are connected subhypergraphs of $H_J'$, we have that $(H_1\cup H_2)+f_2'$ is connected.  However, we have $(H_1\cup H_2)+f_2' = H_J'\setminus f_1'$, so $f_1'$ is not a cut edge of $H_J'$, contradicting our assumption.  We conclude that $F_J'$ is a minimal edge cut of $H_J'$.

Construct $G'=(H_J',F_J',\{T_J\})$.  Now, Lemma~\ref{lem:G} parts (1) and (5) imply that $G'$ has two vertices, two edges, and is even and connected.  We conclude that $G'$ is a 2-cycle, so it has no loops.

Since $H_J'$ has an Euler tour and $G'$ is connected and loopless, Lemma~\ref{lem:collapsedG} states that $H_J'\circ V(H_2)$ and $H_J'\circ V(H_1)$ have Euler families of cardinality at most 1.  These cardinalities must both be equal to 1, since neither $H_J'\circ V(H_2)$ nor $H_J'\circ V(H_1)$ are empty.  Let $T_1$ and $T_2$ be Euler tours of $H_J'\circ V(H_2)$ and $H_J'\circ V(H_1)$, respectively.  

Observe that $H_J'\circ V(H_2)$ is isomorphic to $H_1^*$: first of all, their vertex sets are $V(H_1)$ together with some collapsed vertex $v$.  Since $F$ is a minimal edge cut of $H$, we have that any edge containing a vertex of $H_i$ for $i\geq 3$, also contains vertices of $H_1$ and $H_2$.  Hence the edge sets of these collapsed hypergraphs will both consist of $E(H_1)$ along with $(f_1\cap V(H_1))\cup \{v\}$ and $(f_2\cap V(H_1))\cup \{v\}$.  A similar argument shows that $H_J'\circ V(H_1)$ is isomorphic to $H_2^*$.

Therefore, we conclude that each $T_i$ corresponds to an Euler tour $T_i^*$ of $H_i^*$, for $i=1,2$.  Furthermore, since $H_1$ and $H_2$ are nonempty, we know that $T$ traverses vertices of each.  Then $T_1$ and $T_2$ traverse the collapsed vertex in their respective hypergraphs by Lemma~\ref{lem:collapsedG} (3), so $T_1^*$ and $T_2^*$ do as well.

In addition, again by Lemma~\ref{lem:collapsedG}, if $T$ is a spanning Euler family of $H$, then $T_1^*$ and $T_2^*$ are spanning in $H_1^*$ and $H_2^*$, respectively, and by Corollary~\ref{cor:edgecuttour}, we have $|I|=2$.

$\Leftarrow:$ For each $i=1,2$, let $v_i$ be the collapsed vertex of $H_i^*$, and let $T_i^*$ be an Euler tour of $H_i^*$ traversing $v_i$.

Let $f_i^j=(f_i\cap V(H_j))\cup \{v_j\}$, for each $i,j\in\{1,2\}$, be the edge of $H_j^*$ corresponding to $f_i$.

Since $T_1^*$ traverses $v_1$, and $v_1$ has only two incident edges, namely $f_1^1$ and $f_2^1$, we can write $T_1^*=v_1f_1^1uRvf_2^1v_1$, where $u,v\in V(H_1)$ and $R$ is a trail in $H_1$ that traverses every edge of $H_1$.  Similarly, we can write $T_2^*=v_2f_2^2wSxf_1^2v_2$, where $w,x\in V(H_2)$ and $S$ is a trail in $H_2$ that traverses every edge of $H_2$.

Then $T=uRvf_2wSxf_1u$ is an Euler tour of $H$.

If $T_1^*$ and $T_2^*$ are both spanning, then $R$ and $S$ traverse all the vertices of $H_1$ and $H_2$, respectively.  Hence $T$ traverses all the vertices in $V(H_1)\cup V(H_2)$.  If $|I|=2$, then $V(H_1)\cup V(H_2)=V(H)$, so $T$ is spanning as well.
\end{proof}

\section{Eulerian Properties of Hypergraphs with Edge Cuts Whose Edges Have Cardinality 2}

We now turn our attention to something more unusual: edge cuts whose edges all have cardinality 2.  Investigating such edge cuts gives us a lot of information about how these edges are traversed, so it is in some sense a powerful property.  As it turns out, we will get some mileage out of collapsed hypergraphs in this section, too.  Whenever we are able to use a collapsed hypergraph, the computational cost of searching for an Euler tour is reduced, so we are motivated to do so whenever we can.  However, our technique will not help us find Euler tours, so we will have to settle for finding Euler families.\\

\begin{thm}\label{thm:edgescard2}
Let $H$ be a hypergraph with a minimal nonempty edge cut $F$ such that $|e|=2$ for all $e\in F$.  Let $H_i$, for $i\in I$, denote the connected components of $H\setminus F$.  

For each $i\in I$, let $H_i^*$ be the hypergraph $H\circ\{V(H_j):j\neq i\}$.  Then the following hold:
\begin{description}
\item[(1)] $|I|=2$; and
\item[(2)] $H$ has a (spanning) Euler family if and only if $H_i^*$ has a (spanning) Euler family for $i=1,2$.
\end{description}
\end{thm}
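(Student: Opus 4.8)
\textbf{Proof proposal for Theorem~\ref{thm:edgescard2}.}

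The plan is to establish part (1) first, then use collapsed hypergraphs to prove part (2). For part (1), suppose toward a contradiction that $|I|\geq 3$. Since $F$ is a minimal edge cut, Lemma~\ref{lem:intersects} tells us every edge of $F$ intersects every connected component $H_i$ of $H\setminus F$. But every edge of $F$ has cardinality $2$, so it can intersect at most two of the connected components. If $|I|\geq 3$, this is impossible (provided $F$ is nonempty, which is assumed). Hence $|I|\leq 2$; and since an edge cut by definition produces a disconnected $H\setminus F$, we have $|I|=2$.

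For part (2), I would first set up notation. Write $I=\{1,2\}$, and for each $i$ let $H_i^*=H\circ V(H_{3-i})$ (with the simplified notation since $k=1$ here), with collapsed vertex $v_i$ standing in for $V(H_{3-i})$. Since every edge of $F$ has cardinality $2$ and intersects both $H_1$ and $H_2$, each $f\in F$ consists of exactly one vertex in $V(H_1)$ and one in $V(H_2)$; in $H_i^*$, such an $f$ becomes an edge joining its $H_i$-endpoint to $v_i$. For the forward direction, suppose $H$ has a (spanning) Euler family $\F$. Build $G=G(H,F,\F)$. By Lemma~\ref{lem:G}(1), $G$ is an even multigraph on vertex set $I=\{1,2\}$ with $|F|$ edges; moreover, because each $f\in F$ has exactly one endpoint in each $H_i$, no edge of $F$ can be traversed via two vertices of the same component, so $G$ has no loops. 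Thus $G$ is connected (it is a nonempty even loopless multigraph on two vertices, i.e.\ a collection of parallel edges). Now apply Lemma~\ref{lem:collapsedG} with $K=\{3-i\}$ (a singleton, and $G[K]$ is trivially loopless): we obtain an Euler family $\F^{(K)}$ of $H\circ V(H_{3-i})=H_i^*$ with $|\F^{(K)}|\leq|\F|$, and by part (2) of that lemma $\F$ and $\F^{(K)}$ have the same anchors in $V(H_i)$. If $\F$ is spanning in $H$, then it traverses every vertex of $V(H_i)$ and at least one vertex of $V(H_{3-i})$ (since $H_{3-i}$ is nonempty and $F$ connects the components), so by Lemma~\ref{lem:collapsedG}(3) the collapsed vertex is traversed too, making $\F^{(K)}$ spanning in $H_i^*$.

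For the reverse direction, suppose $H_i^*$ has a (spanning) Euler family $\F_i^*$ for each $i=1,2$. The idea is to reconstruct an Euler family of $H$ by splicing these together along the edges of $F$. Each $f\in F$ appears as an edge of $H_1^*$ (incident with $v_1$) and as an edge of $H_2^*$ (incident with $v_2$); in $\F_1^*$ the edge $f$ is traversed via $v_1$ and its $H_1$-endpoint, and in $\F_2^*$ via $v_2$ and its $H_2$-endpoint. Concretely, I would argue via Theorem~\ref{thm:edgecutfamily}: there we must exhibit $J\subseteq I$ with $1\leq|J|\leq|F|$ such that $H[\bigcup_{j\in J}V(H_j)]$ has a nonempty (spanning) Euler family and $H_i$ has a (spanning) Euler family for $i\notin J$. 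Taking $J=\{1,2\}=I$ (so the second condition is vacuous), it suffices to produce a nonempty (spanning) Euler family of $H[V(H_1)\cup V(H_2)]=H$ itself — so really I should construct the Euler family of $H$ directly. The cleanest route: realize $\F_1^*$ as a collection of closed trails in $H_1^*$, delete the collapsed vertex $v_1$ from each trail to obtain a collection of (possibly open) trails and closed trails living in $H_1$ together with ``dangling'' edges of $F$; do the same with $\F_2^*$; then observe that each $f\in F$ has one dangling half from each side, and these can be glued. The gluing works because the trails of $\F_i^*$ passing through $v_1$ enter and leave via edges of $F$, and the multiset of $F$-edges used matches on both sides. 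I would formalize this by mapping each component of $\F_1^*$ not through $v_1$ directly into $H$, and pairing up the $v_1$-traversals with $v_2$-traversals to form longer closed trails; anchor-disjointness in $V(H_1)$ comes from $\F_1^*$, in $V(H_2)$ from $\F_2^*$, and the $F$-edges are each used exactly once on each side. Spanning-ness transfers vertex by vertex.

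\textbf{Main obstacle.} The delicate point is the reverse direction's gluing: I need to check that the collection of trail-fragments obtained by deleting collapsed vertices from $\F_1^*$ and $\F_2^*$ can genuinely be reassembled into closed strict trails of $H$ with no edge or anchor repeated — in particular that pairing the $v_1$-passages of $\F_1^*$ with the $v_2$-passages of $\F_2^*$ along common $F$-edges produces closed (not leftover open) trails. This amounts to an edge-counting/parity bookkeeping argument at $v_1$ and $v_2$ (each collapsed vertex has even degree in the corresponding subgraph of the incidence graph, and every incident edge is an $F$-edge), which should go through cleanly but requires care. An alternative that sidesteps the explicit splicing would be to run the forward direction's machinery in reverse using Lemma~\ref{lem:truncatedhypergraph} together with a direct construction of the subgraph of the incidence graph of $H$ corresponding to the desired Euler family, via Theorem~\ref{thm:incidencegraph}; I expect the incidence-graph formulation to make the parity argument most transparent.
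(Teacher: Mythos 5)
Your part (1) and the forward direction of part (2) are exactly the paper's argument: minimality of $F$ via Lemma~\ref{lem:intersects} forces $|I|=2$; then $G(H,F,\F)$ is a loopless even multigraph on two vertices, hence connected, and Lemma~\ref{lem:collapsedG} delivers (spanning) Euler families of both collapsed hypergraphs, with part (3) of that lemma handling traversal of the collapsed vertex. You also correctly recognize that Theorem~\ref{thm:edgecutfamily} is vacuous here and that the converse must be a direct construction.

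The gap is in your primary route for the converse. When you cut the trail $T_1\in\F_1^*$ through the collapsed vertex $v_1$ into fragments, those fragments are \emph{not} anchor-disjoint from one another: $T_1$ may visit the same vertex of $V(H_1)$ in two different fragments. After you pair $v_1$-passages with $v_2$-passages along the $F$-edges, two such fragments can end up in \emph{different} glued closed walks, and then the resulting collection is not an Euler family — pairwise anchor-disjointness fails. So the claim ``anchor-disjointness in $V(H_1)$ comes from $\F_1^*$'' does not hold as stated; some of the glued walks would have to be further merged at shared anchors, and your splicing argument does not provide that step. The repair is precisely the alternative you sketch at the end, and it is what the paper does: take the subgraphs of the incidence graph of $H$ corresponding to the fragments $S_i'$, $R_i'$ and the untouched trails of $\F_1^*\cup\F_2^*$, add the two flag-edges $ue,ev$ for each $e=uv\in F$, check that every e-vertex has degree $2$ and every v-vertex even degree, and invoke Theorem~\ref{thm:incidencegraph}. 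That theorem only needs the degree conditions — its proof re-partitions the connected components into closed trails, so anchor-disjointness of the intermediate pieces is never required. (This is also why the paper's statement, and yours, can only promise an Euler family and not an Euler tour in the converse: the component structure after gluing is not controlled.)
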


\begin{proof}
{\bf (1)} Lemma~\ref{lem:intersects} implies that, since $F$ is a minimal edge cut, every edge of $F$ intersects each $H_i$ for $i\in I$.  Since each of these edges has cardinality 2, there must be only two connected components of $H\setminus F$, hence $|I|=2$.

{\bf (2)}$\Rightarrow$: Let $\F$ be an Euler family of $H$.  Construct $G=G(H,F,\F)$.  Then $G$ consists of two vertices with $|F|$ parallel edges joining them.

Since $G$ is connected and has no loops, Lemma~\ref{lem:collapsedG} implies that $H_1^*=H\circ V(H_2)$ and $H_2^* = H\circ V(H_1)$ have Euler families $\F_1$ and $\F_2$, respectively.  Furthermore, if $\F$ is spanning in $H$, then Lemma~\ref{lem:collapsedG} implies that $\F_1$ and $\F_2$ are spanning in their respective hypergraphs as well.

$\Leftarrow$: Let $\F_i^*$ be an Euler family for $H_i^*$, for $i=1,2$.  Let $h_2\in V(H_1^*)$ be the collapsed vertex corresponding to $H_2$, and $h_1\in V(H_2^*)$ be the collapsed vertex corresponding to $H_1$.

Let $T_1$ be the (unique) trail of $\F_1^*$ traversing $h_2$ and let $T_2$ be the trail of $\F_2^*$ traversing $h_1$.  Note that $\deg_{H_1^*}(h_2)=|F| = \deg_{H_2^*}(h_1)$, so we write $T_1$ and $T_2$ as a concatenation of closed trails, as follows:
$$T_1=h_2S_0h_2S_1h_2\dotso h_2S_{\frac{|F|}{2} - 1}h_2,\\$$
$$T_2=h_1R_0h_1R_1h_1\dotso h_1R_{\frac{|F|}{2} - 1}h_1,$$ where each $S_i$ is a nontrivial closed strict trail whose internal vertices and edges are in $H_1$, and similarly for each $R_i$ in $H_2$, for all $i\in\mathds{Z}_{\frac{|F|}{2}}$.

We can further write $S_i=h_2e_{2i}u_{2i}S_i'u_{2i+1}e_{2i+1}h_2$ and $R_i=h_1f_{2i}v_{2i}R_i'v_{2i+1}f_{2i+1}h_1$ for each $i\in\mathds{Z}_{\frac{|F|}{2}}$, where 

\begin{itemize}
\item $S_i'$ is a trail in $H_1$ and $R_i'$ is a trail in $H_2$;
\item $u_0,\dotso , u_{|F|-1}\in V(H_1)$ and $v_0,\dotso , v_{|F|-1}\in V(H_2)$; and
\item $\{e_0, \dotso , e_{|F|-1}\} = F = \{f_0,\dotso , f_{|F|-1}\}$.
\end{itemize}

Now, let $G$ be the incidence graph of $H$.  Let $(S_i')_G$ and $(R_i')_G$ denote the trail of $G$ corresponding to $S_i'$ or $R_i'$, respectively, for each $i\in\mathds{Z}_{|F|}$, as described in Remark~\ref{remark:traversals}.  Similarly, let $T_G$ represent the trail in $G$ corresponding to any closed trail $T$ of $\F_1^*\setminus T_1$ or $\F_2^*\setminus T_2$.  Such a trail does correspond to a subgraph of $G$ because it traverses only vertices and edges of $H_1$ or $H_2$, which are subhypergraphs of $H$.

Consider the subgraph $G'$ of $G$ whose vertex set is $V(G)$ and whose edge set is $$\big(\bigcup\limits_{i\in\mathds{Z_{|F|}}}\big(E((S_i')_G)\cup E((R_i')_G))\big)\cup\big(\bigcup\limits_{T\in(\F_1^*\setminus T_1)\cup(\F_2^*\setminus T_2)}E(T_G)\big).$$

Observe that $G'$ is essentially an e-vertex-disjoint union of trails of $G$, and these trails jointly traverse every e-vertex of $G$ exactly once, except for those e-vertices corresponding to edges of $F$.  Hence the degree of any e-vertex $e$ in $G$ is 2 if $e\not\in F$, and 0 if $e\in F$.  The degree of a v-vertex $v$ is a bit more complicated: it is twice the number of times $v$ is traversed by one of the constituent trails as an internal vertex, plus the number of times $v$ is traversed by some $R_i'$ or $S_i'$ as an initial or terminal vertex.

Let $F'=\{ue, ev: e=uv\in F\}$, so $F'$ is a set of edges of $G$.  Note that $E(G')\cap F'=\emptyset$ because the e-vertices corresponding to edges of $F$ are isolated in $G'$.  Then $G'+F'$ is a subgraph of $G$ in which every e-vertex has degree 2, and, for any v-vertex $v$, we have $\deg_{(G'+F')}(v) = \deg_{G'}(v) + |\{e\in F: v\in e\}|$.  Note that $|\{e\in F:v\in e\}|$ is equal to the number of times that $v$ is traversed by some $R_i'$ or $S_i'$ as an initial or terminal vertex because precisely the vertices of edges of $F$ are traversed in that way.  Hence $\deg_{(G'+F')}(v)$ is even for each v-vertex $v$.  Then $G'+F'$ corresponds to an Euler family $\F$ of $H$ by Theorem~\ref{thm:incidencegraph}.



Finally, if $\F_1^*$ and $\F_2^*$ are spanning in their respective hypergraphs, then $G'+F'$ has no v-vertices of degree 0, so $\F$ must be spanning in $H$.

\end{proof}

The reason that Euler tours elude us in the above proof is that we cannot necessarily thread together our trails of the collapsed hypergraphs in a way that produces only one trail in $H$, even if we insist on having an Euler tour in each collapsed hypergraph. 

\section{Using Edge Cuts to Compute an Euler Tour}

As discussed in Problem~\ref{problem:eulerfamily}, the problem of determining whether a hypergraph has an Euler family is in P.  As such, though our results in this chapter could contribute to improvements in such algorithms, it is more useful to put these results to work in an algorithm to solve the Euler tour problem, which is NP-complete (see {\em e.g.} Theorem~\ref{thm:complexity}).

We use the results from this chapter to produce a branch-and-bound algorithm, called {\fontfamily{lmtt}\selectfont findEulerTour} (Algorithm~\ref{algo:1}), that improves upon the na{\" i}ve brute-force algorithm. {\fontfamily{lmtt}\selectfont findEulerTour} relies on the ability to compute minimal edge cuts, which we know can be done efficiently \cite{CC}.  {\fontfamily{lmtt}\selectfont findEulerTour} makes use of the results we have accumulated in this chapter and recursively pares down the hypergraph by removing vertices from edges.  It then applies a brute-force algorithm on the pared-down hypergraph.  We assume that a brute-force algorithm, which we call {\fontfamily{lmtt}\selectfont bruteForceEulerTour}, is available, but do not describe it here.  The search space is exponential in the number of edges of the hypergraph, as well as the cardinalities of each edge, so it is crucial to cut down on these values wherever we can. 

A large part of the simplification process relies on the ``choice function'' $\alpha$ described in Theorem~\ref{thm:fix}.  We can prune the search space by making sure that $\alpha$ satisfies some of the simple necessary requirements described in the theorem, before attempting an expensive search utilizing it.  The goal is to produce the graph $G_{\alpha}=G(H,F,\F)$ before we even have an Euler family (or in our case, an Euler tour) $\F$.

{\fontfamily{lmtt}\selectfont findEulerTour} cannot simplify the hypergraph any further once we have an edge cut whose deletion disconnects the hypergraph into just two connected components.

For organizational purposes, we also make use of three helper functions, and they each correspond to one of the results from this chapter. They are {\fontfamily{lmtt}\selectfont findETCollapsed}, for when we apply Theorem~\ref{thm:F=2}; {\fontfamily{lmtt}\selectfont findETOneComponent}, for when we apply Corollary~\ref{cor:edgecuttour}; and {\fontfamily{lmtt}\selectfont findETLargeEdgeCut}, for when we use Corollary~\ref{cor:fix}.

\begin{algorithm}[H]
\BlankLine
\begin{algo}\label{algo:1} 
\end{algo}

\SetKwFunction{FindET}{findEulerTour}
\SetKwFunction{BruteForceET}{bruteForceEulerTour}
\SetKwFunction{FindETCollapsed}{findETCollapsed}
\SetKwFunction{FindETOneComp}{findETOneComponent}
\SetKwFunction{FindETLargeCut}{findETLargeEdgeCut}
\SetKwFunction{init}{// initialization}
\SetKwFunction{Null}{null}
\SetKwFunction{Return}{return}
\SetKwInOut{Input}{input}
\SetKwInOut{Output}{output}

\FindET function: \\*
\Input{A connected hypergraph $H$ with at least two vertices}
\Output{An Euler tour of $H$, or \Null if $H$ is not eulerian}
\BlankLine
\init :\\*
$H \gets H-\{v\in V(H): \deg(v)=1\};$ \CommentSty{// Delete vertices of degree 1}\\*
\lIf{$H$ has edges of cardinality 1 or $H$ has no vertices}{\Return\Null}
$F \gets$ minimal edge cut of $H$\;
$H_i \gets$ connected components of $H\setminus F$, for $i\in I$, indexed from 1 and sorted in nonincreasing order of size\;
$k \gets$ number of nonempty connected components of $H\setminus F$\;
\BlankLine
\If{$k>|F|$} {
	\CommentSty{// Not possible to have an Euler tour (Corollary~\ref{cor:edgecuttour})}\\
	\Return \Null\;
} 
\CommentSty{// Note: $|F|=1, k\leq 1$ not possible due to line 3}\\*
\uIf{$|F|=2$} {
	\uIf{$k = 2$} {
		\BlankLine
		\CommentSty{// Check collapsed hypergraphs according to Theorem \ref{thm:F=2}}\\*
		$T_1,T_2 \gets$ \FindETCollapsed($H,H_1,H_2$)\;
		\uIf{$T_1=\Null$ or $T_2=\Null$} {
			\Return \Null
		}
		\Else {
			\BlankLine
			\CommentSty{// We have Euler tours in both collapsed hypergraphs}\\*
			\Return Euler tour of $H$ assembled from $T_1$ and $T_2$\;
		}
	}
	\uElseIf{$k=1$} {
		\BlankLine
		\CommentSty{/*}\\
		\CommentSty{ * We can get rid of all empty connected components of $H\setminus F$}\\
		\CommentSty{ * except for one, then search for an Euler tour.  This}\\
		\CommentSty{ * corresponds to Corollary \ref{cor:edgecuttour}, where $|J|=1$ or 2.}\\
		\CommentSty{ */}\\
		\Return$\FindETOneComp(H, F, H_1, H_2)$\;
	}
	\Else(\CommentSty{// $k=0$, so $H$ has just two edges}){
		\Return any 2-cycle\;
	}
}
\Else(\CommentSty{// if $|F|\geq 3$}){
	\uIf(\CommentSty{// We cannot reduce any further}){$|I|=2$}{
		\Return$\BruteForceET(H)$\;
	}
	\Else {
		\Return$\FindETLargeCut(H,F)$\;
	}
}

\end{algorithm}

\begin{algorithm}[H]
\SetKwFunction{FindET}{findEulerTour}
\SetKwFunction{FindETCollapsed}{findETCollapsed}
\SetKwFunction{Null}{null}
\SetKwFunction{Return}{return}
\SetKwInOut{Input}{inputs}
\SetKwInOut{Output}{output}

\FindETCollapsed function (called on line 15 of \FindET): \\*
\Input{\begin{description}
	\item[1)] A connected hypergraph $H$ with at least two vertices and no vertices of degree 1;
	\item[2-3)] Disjoint subhypergraphs $H_1$ and $H_2$ of $H$ that represent the nontrivial connected components of $H$ when an edge cut of cardinality 2 is deleted
	\end{description}}
\Output{A pair of closed trails $T_1,T_2$ such that each $T_i$ is either an Euler tour of $H\circ(V(H)\setminus V(H_i))$, or \Null if such an Euler tour does not exist}
\BlankLine

\For{$i=1$ to $2$} {
	$collapsedH_i\gets H\circ(V(H)\setminus V(H_i))$\;
	$v_i\gets$ collapsed vertex of $collapsedH_i$\;
	$T_i\gets\Null$\;
	\BlankLine
	\ForEach{edge $e$ containing $v_i$ in $collapsedH_i$} {
		\ForEach{vertex $v\in e\setminus v_i$} {
			\If{$T_i=\Null$} {
				\CommentSty{/*}\\
				\CommentSty{ * Try to find Euler Tour of $H\circ(V(H)\setminus V(H_i))$ that}\\
				\CommentSty{ * traverses $e$ via $v$ and $v_i$:}\\
				\CommentSty{ */}\\
				$T_i\gets\FindET(collapsedH_i\setminus e + \{v,v_i\})$\;
			}
		}
	}
}
\Return $T_1,T_2$\;
\end{algorithm}

\begin{algorithm}[H]
\SetKwFunction{FindET}{findEulerTour}
\SetKwFunction{BruteForceET}{bruteForceEulerTour}
\SetKwFunction{FindETCollapsed}{findETCollapsed}
\SetKwFunction{init}{// initialization}
\SetKwFunction{Null}{null}
\SetKwFunction{Return}{return}
\SetKwInOut{Input}{inputs}
\SetKwInOut{Output}{output}

\FindETOneComp function (called on line 27 of \FindET): \\*
\Input{
	\begin{description}
		\item[1)] A connected hypergraph $H$ with at least two vertices and no vertices of degree 1;
		\item[2)] An edge cut $F$ of $H$ of cardinality 2;
		\item[3)] $H_1$, the sole nonempty connected component of $H\setminus F$;
		\item[4)] $H_2$, an empty connected component of $H\setminus F$
	\end{description}
}
\Output{An Euler tour of $H$, or \Null if $H$ is not eulerian}
\BlankLine

\CommentSty{/*}\\
\CommentSty{ * An Euler tour of $H$ either doesn't traverse the vertex }\\
\CommentSty{ * of $H_2$, or it does.  We check these possibilities in order.}\\
\CommentSty{ */}\\

\init:\\
Let $F=\{f_1,f_2\}$\;
$f_1\gets f_1\cap V(H_1)$\;
$f_2\gets f_2\cap V(H_1)$\;
Let $v$ be the vertex of $H_2$\;
$T\gets\Null$\;

\If{$|f_1|\geq 2$ and $|f_2|\geq 2$} {
	\CommentSty{/*}\\
	\CommentSty{ * It might be possible to circumvent $v$.  We had to make}\\
	\CommentSty{ * sure that $f_1$ and $f_2$ are edges in $H[V(H_1)]$.}\\
	\CommentSty{ */}\\
	$T\gets\FindET(H[V(H_1)]$)\;
}
\If{$T=\Null$} {
	\CommentSty{/*}\\
	\CommentSty{ * We will try to find a traversal through $v$, but we must }\\
	\CommentSty{ * brute force it, or else we will get stuck in this case. }\\
	\CommentSty{ */}\\
	\ForEach{vertex $a\in f_1$ and vertex $b\in f_2$}{
		$T\gets\BruteForceET((H\setminus F) + \{av,bv\})$\;
		\lIf{$T\neq\Null$}{\Return $T$}
	}
}
\Return $T$\;
\end{algorithm}

\begin{algorithm}[H]
\BlankLine

\SetKwFunction{FindET}{findEulerTour}
\SetKwFunction{BruteForceET}{bruteForceEulerTour}
\SetKwFunction{FindETLargeCut}{findETLargeEdgeCut}
\SetKwFunction{init}{// initialization}
\SetKwFunction{Null}{null}
\SetKwFunction{Construct}{construct}
\SetKwFunction{Return}{return}
\SetKwInOut{Input}{inputs}
\SetKwInOut{Output}{output}

\FindETLargeCut function (called on line 34 of \FindET): \\*
\Input{
	\begin{description}
		\item[1)] A connected hypergraph $H$ with at least two vertices and no vertices of degree 1;
		\item[2)] A minimal edge cut $F$ of $H$ of cardinality at least 3
	\end{description}
}
\Output{An Euler tour of $H$, or \Null if $H$ is not eulerian}
\BlankLine

\CommentSty{/*}\\
\CommentSty{ * Here, we are making use of Corollary~\ref{cor:fix} to choose }\\
\CommentSty{ * how to traverse each edge of the cut.  We can tell right away}\\
\CommentSty{ * that some choice functions won't work.}\\
\CommentSty{ */}\\
\ForEach{function $\alpha:F\rightarrow\{ij:i,j\in I\}$}{
	\BlankLine
	\CommentSty{// Construct a multigraph $G_{\alpha}$ based on $\alpha$}\\
	$G_{\alpha}\gets (I,\alpha(F))$;\CommentSty{        // $\alpha(F)$ is the multiset image of $F$}\\
	\If{$G_{\alpha}$ is even and $G_{\alpha}$ has just one nonempty connected component}{
		\BlankLine
		\CommentSty{// Trim the edges of $F$ according to $\alpha$}\\
		\ForEach{edge $e\in F$} {
			$e'\gets e\cap\big(\bigcup\limits_{i\in\alpha(e)} V(H_i)\big)$\;
		}
		$F'\gets\{e':e\in F\}$\;
		\BlankLine
		\CommentSty{/*}\\
		\CommentSty{ * If $F'=F$, then we have not made any improvements}\\
		\CommentSty{ * to $H$, so we can just brute force a solution now.}\\
		\CommentSty{ */}\\
		\If{$F'=F$} {\Return$\BruteForceET(H)$\;}
		\BlankLine
		\CommentSty{/*}\\
		\CommentSty{ * It is possible that $F'$ now has edges of cardinality}\\
		\CommentSty{ * 1.  If it doesn't, we can continue to try to find}\\
		\CommentSty{ * more reductions.  If it does, just go to the next choice }\\
		\CommentSty{ * function.}\\
		\CommentSty{ */}\\
		\If{$(H\setminus F)+F'$ is connected and has no edges of cardinality 1}{
			$T\gets\FindET((H\setminus F)+F')$\;
			\lIf{$T\neq\Null$}{\Return $T$}
		}
	}
}
\BlankLine
\CommentSty{// If we get to this point, then $H$ has no Euler tour}\\
\Return\Null\;
\end{algorithm}

\newpage
\begin{remark}\label{remark:algo}{\rm
Though {\fontfamily{lmtt}\selectfont findEulerTour} can undoubtedly be improved, we have chosen to present it in a relatively basic form so that it can serve as a proof-of-concept.  We first remark that a {\em minimum} edge cut in a hypergraph, not just a minimal one, can be found efficiently \cite{CC}.  This means that the parts of the algorithm that deal with smaller edge cuts will be used more often, to our benefit, since we have more effective tools for such small edge cuts.

Because minimum edge cuts can be found efficiently, {\fontfamily{lmtt}\selectfont findEulerTour} can perhaps be improved by choosing a random minimum edge cut, or a ``best'' minimum edge cut.  Currently, we assume for now that it is chosen deterministically and indiscriminately.  The deterministic nature of the algorithm does give it some weaknesses, such as the necessity to resort to brute forcing quite early, so redesigning it to take advantage of random edge cuts could pay off.  A best edge cut might be one that includes edges of large cardinality because we can trim those edges earlier in the recursion rather than later or not at all.

Aside from randomness, there are other ways to possibly improve the efficacy of {\fontfamily{lmtt}\selectfont findEulerTour}. We have chosen to treat all edge cuts of cardinality 3 or greater in the same way as each other.  However, we could find an extension of Theorem~\ref{thm:F=2} to edge cuts of cardinality greater than 2 by investigating the possible cases for $G_{\alpha}$: an Euler tour is only possible if $G_{\alpha}$ is even and has just one nonempty connected component.  While the number of non-isomorphic even connected multigraphs of size $|F|$ is only two when $|F|=2$, this number (only) grows to three for $|F|=3$; seven for $|F|=4$; twelve when $|F|=5$; and at least twenty-eight when $|F|=6$: it is certainly within reach to extend these results a little further!  Note that, though there are two cases when $|F|=2$, we investigated the 2-cycle case in Theorem~\ref{thm:F=2}.  The other case is a single vertex with two loops, which, in practice, is treated similarly to the $|F|=1$ case.

Specifically, when $|F|=3$, the only extra case we must account for is when $G_{\alpha}$ is a 3-cycle.  In that case, we would investigate three collapsed hypergraphs --- $H\circ\{V(H_j):j\neq i\}$ for each $i=1,2,3$ --- and reassemble an Euler tour for $H$ from an Euler tour in each collapsed hypergraph that traverses all the collapsed vertices.  In fact, the author believes that this strategy generalizes whenever $G_{\alpha}$ is a cycle or a cycle with additional loops, and this would certainly be the next point of investigation for future research.
}
\end{remark}
\cleardoublepage

\chapter{Concluding Remarks}

\section{On Design Hypergraphs}
We have seen a few results about eulerian properties of design hypergraphs since the turn of the century.  Dewar and Stevens showed that certain kinds of triple systems are eulerian, using the terminology of universal cycles (Theorems~\ref{thm:ds1} and~\ref{thm:ds2} \cite{DS}), while Horan and Hurlbert constructively proved the existence of eulerian Steiner triple systems and Steiner quadruple systems of every admissible order, using the language of overlap cycles (Theorems~\ref{thm:hh1} \cite{HH} and~\ref{thm:hh2} \cite{HH2}).  Bahmanian and \v{S}ajna extended the language for Euler tours and families in particular, and answered the question of whether triple systems (among many other kinds of hypergraphs) are quasi-eulerian in the affirmative (Corollary~\ref{cor:quasieuleriancovering} \cite{BS2}).

We have expanded on these results by (non-constructively) proving that all Steiner triple systems are eulerian in Chapter~\ref{chapter:STS} (Theorem~\ref{thm:STS}), then proving that all covering $k$-hypergraphs, for $k\geq 3$, are eulerian in Chapter~\ref{chapter:covering} (Theorem~\ref{thm:coveringinduction}), which includes Steiner quadruple systems.  

Chapters \ref{chapter:STS}, \ref{chapter:covering}, and \ref{chapter:quasi-eulerian} culminate to yield the following result on design hypergraphs.\\

\begin{thm}
Let $H$ be an $\ell$-covering $k$-hypergraph with $2\leq\ell < k$.  If $H$ has at least two edges, then $H$ is quasi-eulerian; if we additionally have $\ell=k-1$, then $H$ is eulerian.
\end{thm}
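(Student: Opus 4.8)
The plan is to assemble this final theorem directly from the three main results already proved in Chapters~\ref{chapter:STS}, \ref{chapter:covering}, and~\ref{chapter:quasi-eulerian}, since the statement is essentially a summary. First I would observe that the quasi-eulerian half is exactly Theorem~\ref{thm:lcoveringkhypergraphs}: for any $\ell$-covering $k$-hypergraph $H$ with $2 \leq \ell < k$ and $|E(H)| \geq 2$, that theorem already gives that $H$ is quasi-eulerian. So the only work in the first half is to cite it.

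For the second half, I would split on whether $k = 3$ or $k \geq 4$. When $\ell = k - 1$, $H$ is a covering $k$-hypergraph, and Theorem~\ref{thm:coveringinduction} states precisely that such a hypergraph is eulerian if and only if it has at least two edges. Since we are assuming $|E(H)| \geq 2$, this gives that $H$ is eulerian, which in particular implies it is quasi-eulerian (an Euler tour is an Euler family of cardinality~1, so being eulerian is a strictly stronger property, as noted in the abstract and in the definition of Euler family). Hence the two conclusions are consistent: under $\ell = k-1$ we upgrade "quasi-eulerian" to "eulerian."

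So the proof is really just two invocations: Theorem~\ref{thm:lcoveringkhypergraphs} for the general case and Theorem~\ref{thm:coveringinduction} for the $\ell = k-1$ refinement, together with the trivial remark that eulerian implies quasi-eulerian. I would write it as: assume $H$ is an $\ell$-covering $k$-hypergraph with $2 \leq \ell < k$ and $|E(H)| \geq 2$; by Theorem~\ref{thm:lcoveringkhypergraphs}, $H$ is quasi-eulerian; moreover, if $\ell = k - 1$, then $H$ is a covering $k$-hypergraph, so Theorem~\ref{thm:coveringinduction} implies $H$ is eulerian.

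Since everything is already in place, there is no genuine obstacle here — the "hard part" was done in the earlier chapters (the interchanging-cycle machinery for Theorem~\ref{thm:coveringinductionbase} and the Lov\'asz $(g,f)$-factor estimates for Theorem~\ref{thm:2coveringkhypergraphs}). The only thing to be careful about is making sure the hypotheses line up exactly: the earlier theorems are stated with "at least two edges," which matches the hypothesis here, and a covering $k$-hypergraph is nonempty by definition, so no degenerate cases slip through. This is a pure corollary-style wrap-up.
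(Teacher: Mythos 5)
Your proposal is correct and matches the paper's (implicit) argument exactly: the statement is a summary corollary obtained by citing Theorem~\ref{thm:lcoveringkhypergraphs} for the quasi-eulerian conclusion and Theorem~\ref{thm:coveringinduction} for the $\ell=k-1$ case, noting that $\ell=k-1\geq 2$ forces $k\geq 3$ so its hypotheses are met. The case split you mention between $k=3$ and $k\geq 4$ is unnecessary (Theorem~\ref{thm:coveringinduction} already covers all $k\geq 3$), but this is harmless.
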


While it seems plausible that all $\ell$-covering $k$-hypergraphs are eulerian, there is as of yet no clear technique that would allow us to prove this.  However, due to the highly structured nature of such hypergraphs, we will make the following conjecture.\\

\begin{conjecture}
All $\ell$-covering $k$-hypergraphs with $2\leq\ell <k$ and at least two edges are eulerian.
\end{conjecture}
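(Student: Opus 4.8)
The plan I would pursue is to reduce the conjecture, via the very induction on $\ell$ used to prove Theorem~\ref{thm:lcoveringkhypergraphs}, to the single statement that every $2$-covering $k$-hypergraph with at least two edges is eulerian. This reduction is essentially free. The inductive step of Theorem~\ref{thm:lcoveringkhypergraphs} passes from an $\ell$-covering $(\ell+k')$-hypergraph $H^*$ to an $(\ell+1)$-covering $(\ell+k'+1)$-hypergraph $H$ by fixing a vertex, building a bijection $\varphi\colon E(H^*)\to E(H)$ with $e\subseteq\varphi(e)$, and transporting an Euler family of $H^*$ to one of $H$ through Lemma~\ref{lem:truncatedhypergraph}(1). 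But Lemma~\ref{lem:truncatedhypergraph}(1) replaces each trail of the family by a single closed trail on the same anchor sequence, so it sends a cardinality-one family to a cardinality-one family; hence the step transports \emph{Euler tours}, not merely Euler families. Since Theorem~\ref{thm:coveringinduction} already supplies the case $\ell=k-1$, and the genuinely small $2$-covering hypergraphs are already eulerian by Corollary~\ref{cor:smallcases} and Lemma~\ref{lem:order4}, the conjecture is in fact \emph{equivalent} to: every $2$-covering $k$-hypergraph with $k\ge 4$ and at least two edges is eulerian, i.e.\ to upgrading Theorem~\ref{thm:2coveringkhypergraphs} from ``quasi-eulerian'' to ``eulerian''. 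So all the real content lives in that base case, and stating this equivalence is the first thing I would do.

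For the base case I see two natural routes, both imitating earlier chapters. Route one is the minimum-Euler-family-and-interchange strategy of Chapter~\ref{chapter:covering}: take a minimum Euler family $\F$ of $H$, pass to the corresponding subgraph $\G$ of the incidence graph (Theorem~\ref{thm:incidencegraph}), and, assuming $\G$ has at least two non-trivial components, produce an $\F$-diminishing cycle, contradicting minimality. (A direct $(g,f)$-factor argument in the spirit of Lemma~\ref{lem:relax} cannot suffice on its own here, since ``at most one non-trivial connected component'' is not a degree condition.) The obstruction, already flagged in the introduction to Chapter~\ref{chapter:quasi-eulerian}, is that when $k\ge 4$ each e-vertex of the incidence graph has $k-2\ge 2$ non-$\G$-edges, so an arbitrary cycle through an e-vertex need not be $\F$-interchanging. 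The way around this appears to be to insist that whenever the cycle ``uses'' an edge $e$ of $H$ it passes through exactly one of the two vertices via which $e$ is currently traversed in $\F$, together with one other vertex of $e$; then exactly one of the two cycle-edges at $e$ is a $\G$-edge, the symmetric difference preserves $\deg_{\G}(e)=2$, and the apparatus of Chapter~\ref{chapter:covering} (non-cut anchors from Corollary~\ref{cor:noncut}, $2$-edge-connectedness of components from Proposition~\ref{prop:2-edge-connected}, and the denseness of $H$, cf.\ Lemma~\ref{lem:2-connected}) should let one connect any two components by such a cycle. Carrying this out — in particular finding a replacement for the bound $|\F|\le 2$ that came cheaply in the $3$-uniform setting — is where the difficulty concentrates.

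Route two imitates the Steiner-triple-system argument of Theorem~\ref{thm:eulerianSTS}: the $2$-section of a $2$-covering $k$-hypergraph is the complete graph $K_n$, so one would like to lift a near-Hamilton cycle of $K_n$ to a cycle $C_H$ of $H$, delete one vertex $u_0$ together with $E(C_H)$, show the residual hypergraph $H'$ has no cut edges, deduce $H'$ is quasi-eulerian (here one cannot quote Theorem~\ref{thm:eulerfamily}, since $H'$ need not be $3$-uniform, so a fresh Lovász/counting argument resembling Lemma~\ref{lem:relax} is needed), and concatenate the resulting Euler family of $H'$ with $C_H$ along shared anchors. The main obstacle here is the lift itself: Lemma~\ref{lem:cyclecorrespondence} \emph{fails} for $k\ge 4$ because two disjoint pairs can lie in a common $k$-set, so instead one needs a Hall-type / system-of-distinct-representatives argument choosing a Hamilton cycle of $K_n$ together with an injective assignment of its consecutive pairs to distinct edges of $H$ containing them, followed by careful control of which edges of $H$ survive in $H'$. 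Either way — the controlled diminishing interchange of route one, or the cycle-lift of route two — is the crux; the remaining bookkeeping is of a kind already carried out in Chapters~\ref{chapter:STS}, \ref{chapter:covering}, and~\ref{chapter:quasi-eulerian}, and I expect that once the base case is settled the conjecture follows immediately from the reduction above.
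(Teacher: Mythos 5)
You should be aware that the statement you were asked to prove is stated in the paper only as a conjecture: the paper offers no proof and explicitly says that no clear technique for proving it is yet available, so there is nothing to compare your argument against except the paper's own partial results. Judged on its own terms, your submission is a reduction plus a research plan, not a proof. The reduction itself is correct and worth making explicit: the induction of Theorem~\ref{thm:lcoveringkhypergraphs} does transport Euler \emph{tours}, since Lemma~\ref{lem:truncatedhypergraph}(1) keeps the anchors and hence the cardinality of the family, exactly as in Theorem~\ref{thm:coveringinduction}; together with Lemma~\ref{lem:order4} and Corollary~\ref{cor:smallcases} this shows the conjecture is equivalent to upgrading Theorem~\ref{thm:2coveringkhypergraphs} from ``quasi-eulerian'' to ``eulerian'' for $2$-covering $k$-hypergraphs with $k\geq 4$. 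Your diagnosis of the obstruction is also accurate (the paper makes the same point in Chapter~\ref{chapter:quasi-eulerian}: with $k-2\geq 2$ non-$\G$-edges at each e-vertex, arbitrary cycles are no longer $\F$-interchanging, and a $(g,f)$-factor condition alone cannot control the number of non-trivial components).

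The genuine gap is that this equivalent base case is precisely the open content of the conjecture, and neither of your two routes is carried out. In route one you do not supply the replacement for the bound $|\F|\leq 2$, nor a construction of the restricted cycles (through exactly one current anchor of each edge used) that is guaranteed to connect two components of $\G$; without these, no diminishing cycle is produced and the minimality argument never gets off the ground. In route two the lift of a (near-)Hamilton cycle of $K_n$ to a cycle of $H$ is not established (as you note, Lemma~\ref{lem:cyclecorrespondence} fails for $k\geq 4$, and the Hall-type assignment of consecutive pairs to distinct edges is only proposed, not proved), and the quasi-eulerianity of the residual hypergraph would require a fresh Lov\'{a}sz-type argument that is likewise not given. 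So what you have is a correct and useful reframing of the conjecture plus two plausible attack strategies with their cruxes openly unresolved; as a proof of the conjecture it is incomplete at exactly the point where the difficulty lies.
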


Some consideration was also given to a sort of ``pairwise-balanced block design,'' in which all $\ell$-tuples of vertices lie in at least one edge together, but edges have cardinalities taken from some permissible set.  Using an appropriate strengthening of Lov\'{a}sz's $(g,f)$-factor Theorem, as we have seen for $\ell$-covering $k$-hypergraphs, one may be able to prove that such hypergraphs are quasi-eulerian.\\

\begin{question}
Let $H$ be a hypergraph of order at least 3 and size at least 2. Let $\ell\geq 2$, and let $K\subseteq\mathds{N}_{\geq 2}$ be a set of the cardinalities of the edges in $E(H)$, each 2 or greater.  Assume that every $\ell$-subset of $V(H)$ lie together in at least one edge of $H$.  Under what conditions on $K$ is $H$ quasi-eulerian?  Under what conditions on $K$ is $H$ eulerian?
\end{question}

\section{On Edge Cuts}
On the side of edge cuts, there have already been results on the topic of using vertex cuts to reduce the problem of existence of spanning Euler family or tour, by Steimle and \v{S}ajna \cite{SS}.  Our work in Chapter~\ref{chapter:edgecuts} is a continuation of that.  Their results showed necessary and sufficient conditions for a spanning Euler family or tour to exist by checking whether certain derived hypergraphs admit a spanning Euler family or tour.

In Chapter~\ref{chapter:edgecuts}, we produced results analogous to theirs that are focused on edge cuts instead of vertex cuts.  Perhaps the most interesting avenue for future research builds off of Theorem~\ref{thm:F=2}, which looks into collapsed hypergraphs for Euler tours when an edge cut of cardinality 2 exists.  We have touched upon this in Remark~\ref{remark:algo}, but we would like to ask how the proof strategy might extend to larger edge cuts.\\

\begin{question}
For each $m\geq 0$, how many connected even multigraphs of size $m$ are there, down to isomorphism?  That is, how many possible configurations are there for $G=G(H,F,\F)$ if $|F|=m$?  For which of these multigraphs $G$ can we (simply) use the collapsed hypergraphs to reduce the Euler tour problem to finding an Euler tour in each collapsed hypergraph?
\end{question}

There is, at time of writing, no sequence registered in the On-line Encyclopedia of Integer Sequences (OEIS) \cite{OEIS} concerning the number of connected even multigraphs of size $m=0,1,2,\dotso$, and, it must be admitted, the author finds this question quite captivating.  If one were able to classify a number of these multigraphs, it could be put to use in Algorithm~\ref{algo:1}.
\cleardoublepage

%
%
%
%
%
%
%
%
%
%
%
\appendix

\include{appendix_A}
\cleardoublepage

\include{appendix_B}
\cleardoublepage

%
%
%
\PrintIndex

\end{document}